\title{Tilings of the Sphere by Congruent Quadrilaterals or  Triangles}
\author{Ho Man Cheung, Hoi Ping Luk, Min Yan\thanks{Research was supported by Hong Kong RGC General Research Fund 16303515 and 16305920.} \\
Hong Kong University of Science and Technology}
\newcommand{\sub}{\subset}
\newcommand{\mc}{\mathcal}
\newcommand{\bb}{\mathbb}
\newcommand{\ssum}{\textstyle \sum}
\newcommand{\arcThroughThreePoints}[4][]{
\coordinate (middle1) at ($(#2)!.5!(#3)$);
\coordinate (middle2) at ($(#3)!.5!(#4)$);
\coordinate (aux1) at ($(middle1)!1!90:(#3)$);
\coordinate (aux2) at ($(middle2)!1!90:(#4)$);
\coordinate (center) at ($(intersection of middle1--aux1 and middle2--aux2)$);
\draw[#1] 
 let \p1=($(#2)-(center)$),
      \p2=($(#4)-(center)$),
      \n0={veclen(\p1)},       
      \n1={atan2(\y1,\x1)}, 
      \n2={atan2(\y2,\x2)},
      \n3={\n2>\n1?\n2:\n2+360}
    in (#2) arc(\n1:\n3:\n0);
}
\newcommand{\dash}{\hspace{0.1em}\dashrule{0.7}{2.4 1 2.4 1 2.4}\hspace{0.1em}} 
\newcommand{\thin}{\hspace{0.1em}\rule{0.7pt}{0.8em}\hspace{0.1em}}
\newcommand{\thick}{\hspace{0.1em}\rule{1.5pt}{0.8em}\hspace{0.1em}}
\newtheorem{theorem}{Theorem}
\newtheorem{lemma}[theorem]{Lemma}
\newtheorem{proposition}[theorem]{Proposition}
\newtheorem*{theorem*}{Theorem}
\theoremstyle{definition}
\newtheorem*{definition*}{Definition}
\newtheorem*{case*}{Case}
\newtheorem*{subcase*}{Subcase}
\theoremstyle{remark}
\numberwithin{equation}{section}
\begin{document}

\maketitle

\begin{abstract}
We completely classify edge-to-edge tilings of the sphere by congruent quadrilaterals. As part of the classification, we also present a modern version of the classification of edge-to-edge tilings of the sphere by congruent triangles. Together with our series of papers that classifies edge-to-edge tilings of the sphere by congruent pentagons, we complete the classification of edge-to-edge tilings of the sphere by congruent polygons.

\medskip

\noindent {\it Keywords}: 
Spherical tiling, Quadrilateral, Triangle, Classification.
\end{abstract}

\tableofcontents

\section{Introduction}

The history of 2-dimensional spherical tilings can be traced as early as Plato, Archimedes, and Kepler. However, compared with the abundance of works about tilings of the plane \cite{rao,zong}, works about tilings of the sphere are relatively rare. One recent breakthrough is the complete classification of tilings of the sphere by regular polygons \cite{aehj,johnson, zal}. Another fundamental problem with long history is tilings of the sphere by congruent polygons. 

In an {\em edge-to-edge} tiling of the sphere by congruent polygons, the polygon must be a triangle, quadrilateral, or pentagon. The classification of edge-to-edge tilings of the sphere by congruent triangles was started in 1924 by Sommerville \cite{so}, somewhat completed by Davis \cite{davisH}, and fully completed in 2002 by Ueno and Agaoka \cite{ua}. In a series of papers \cite{ay1,awy,cly,gsy,wy1,wy2,wy3}, we  completely classified edge-to-edge tilings of the sphere by congruent pentagons. This paper completely classifies edge-to-edge tilings of the sphere by congruent quadrilaterals. Therefore the fundamental problem of edge-to-edge tilings of the sphere by congruent polygons is finally solved.

Not much is known about non-edge-to-edge tilings of the sphere by congruent polygons. Dawson and Doyal \cite{dawson1,dawson2,dd1,dd2,dd3} did some partial works on triangular tilings. We hope the method developed for edge-to-edge tilings, the algorithms developed by Rao \cite{rao}, and the trigonometric Diophantine analysis can be used to solve the problem.

The following is the main result of this paper.

\begin{theorem*}
Edge-to-edge tilings of the sphere by congruent quadrilaterals are the following:
\begin{enumerate}
\item Platonic: Cube $P_6$, quadrilateral subdivisions $Q_{\square}P_4,Q_{\square}P_6,Q_{\square}P_{12}$ of the Platonic solids, quadricentric subdivision $C_{\square}P_{12}$ of the Platonic solids, flip modifications $FQ_{\square}P_6,FQ_{\square}P_8$ of the quadrilateral subdivisions of the cube and octahedron.
\item Earth map: Two families $E_{\square}1,E_{\square}2$, their flip modifications $FE_{\square}1$, $F_1E_{\square}2$, $F_2E_{\square}2$, and a rearrangement $RE_{\square}1$.
\item Sporadic: $S_{12\square}1,S_{16\square}1,S_{16\square}2,S_{16\square}3,FS_{16\square}3,S_{16\square}4,S_{36\square}5,S_{36\square}6$.
\end{enumerate}
\end{theorem*}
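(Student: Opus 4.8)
The plan is to split the classification according to the four possible edge combinations of the quadrilateral tile — rhombus $a^4$, almost equilateral $a^3b$, kite $a^2b^2$, and general $a^2bc$ — after a preliminary combinatorial step that rules out the edge combinations which cannot occur and leaves exactly these four. Two global constraints drive everything. First, by Euler's formula a tiling by $f$ congruent quadrilaterals has $E=2f$ edges and $V=f+2$ vertices, so the mean vertex degree is $4f/(f+2)<4$ and a vertex of degree $3$ must exist. Second, the tile has spherical area $4\pi/f$, so its angles satisfy $\alpha+\beta+\gamma+\delta=2\pi+4\pi/f$. Combined with the local requirement that the angles around every vertex sum to $2\pi$ and that equal edges are glued only to equal edges, these constraints sharply restrict the possibilities.

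For the two symmetric combinations $a^4$ and $a^2b^2$ I would follow Akama, van Cleemput and Sakano: a rhombus tiling is derived from an edge-to-edge tiling of the sphere by congruent triangles by cutting along a diagonal, and a kite tiling by pairing two triangles across an edge, so both cases reduce to the classification of congruent triangular tilings. The paper re-establishes that classification — the ``modern version'' promised in the abstract — and then reads off the rhombus and kite tilings from it, including the cube $P_6$ and the various subdivisions $QP_*$, $CP_{12}$. The substantive new content is the cases $a^2bc$ and $a^3b$, which I would handle by the method of the authors' pentagon papers: enumerate the finitely many \emph{anglewise vertex combinations} — nonnegative integer solutions of $x\alpha+y\beta+z\gamma+w\delta=2\pi$ consistent with the angle-sum relation — then fix the local configuration at a degree-$3$ vertex and propagate the tiling tile by tile, using edge-length bookkeeping to force adjacencies. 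Each branch either closes up into one of the tilings named in the theorem (the earth-map families, their flips and rearrangements, the flip modifications $QP'_6,QP'_8$, or the sporadic tilings) or runs into a contradiction between the forced angle values and the spherical trigonometry of the tile.

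The main obstacle — as Ueno and Agaoka already warned — is the almost equilateral case $a^3b$. Because three of the four sides coincide, the four angles are not independent: they are linked by a single transcendental identity, first found by Ueno and Agaoka and sharpened by Coolsaet. I would use this identity together with the vertex equations to cut the list of admissible angle combinations down to size, and then run the propagation argument on what survives. The hard part is twofold. First, eliminating a candidate combination means showing that a concrete system of trigonometric equations has no solution in the relevant range; since the identity is transcendental this cannot be done by linear algebra alone, and in some subcases one must fall back on Coolsaet's rational-angle analysis. Second, the combinations that do survive produce not only the ``obvious'' earth-map families but also a proliferation of flip and rearrangement variants, so one has to be systematic about generating every way a prescribed collection of tiles can be reassembled to cover the sphere, lest a valid tiling be missed.

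Finally I would close the loop in the other direction: for each surviving configuration, and in particular for every entry on the list in the theorem, explicitly exhibit the tiling and check that it is edge-to-edge with congruent tiles, so the list is shown to be simultaneously complete and realized. Consistency checks — matching the rhombus and kite output against the independently re-derived triangular classification, and matching the $a^3b$ output against Coolsaet's list of convex almost equilateral quadrilaterals with rational angles — serve as a safeguard against errors in an enumeration of this length.
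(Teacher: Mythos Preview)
Your high-level strategy matches the paper's: split by the four edge combinations, use the angle-sum and parity constraints to enumerate anglewise vertex combinations, anchor at a degree-$3$ vertex, and propagate. However, two points of your plan diverge from what the paper actually does.

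First, your reduction of the rhombus case to the triangular classification is the Akama--Sakano route, but the paper reverses this: it classifies rhombus tilings \emph{directly} (Proposition~\ref{rhombus}, which is short because there are only two angles), and then derives isosceles triangular tilings \emph{from} the rhombus classification by subdividing each rhombus along a diagonal. For kites the paper does go through general triangles as you suggest, but again it classifies general triangular tilings from scratch (Propositions~\ref{2a2b}--\ref{4a}) rather than importing Ueno--Agaoka. This ordering is what lets the paper advertise a ``modern'' triangular classification as a by-product.

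Second, your treatment of the almost equilateral case underestimates the technical preparation the paper needs. Coolsaet's identity was proved only for convex quadrilaterals; the paper must re-prove it without convexity (Lemma~\ref{geometry6}) via an orthogonal-group argument, and crucially also proves the \emph{converse} --- that the three equalities \eqref{coolsaet_eq1}--\eqref{coolsaet_eq3} guarantee the quadrilateral exists --- which is what closes the existence gap you allude to at the end. The propagation step also relies on the paper's adjacent angle deduction (AAD) notation and the fan/balance lemmas (Lemmas~\ref{balance_aabc}, \ref{balance_aaab}, \ref{fbalance}, \ref{klem3}), which replace many tiling pictures by symbolic arguments; without something like this, the case analysis in Section~\ref{almostquad} (fifteen propositions) would be far harder to organize and verify.
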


We denote the Platonic solids by $P_f$, where $f$ is the number of tiles. The quadrilateral subdivision $Q_{\square}$ and the quadricentric subdivision $C_{\square}$ are uniform constructions that change Platonic solids to tilings by congruent quadrilaterals. The theorem actually includes all quadrilateral and quadricentric subdivisions because we have $Q_{\square}P_4=C_{\square}P_6=C_{\square}P_8$, $Q_{\square}P_6=Q_{\square}P_8$, $Q_{\square}P_{12}=Q_{\square}P_{20}$, and $C_{\square}P_{12}=C_{\square}P_{20}$. Moreover, the tilings $P_6,Q_{\square}P_6$ are deformed in the sense they allow free parameters. 

The earth map tilings are infinite families with no limit on the number of tiles. The flip modifies part (or several parts) of tilings to get new tilings. The pentagonal version of the earth map tiling was first introduced in \cite{yan} as a combinatorial concept, and then appeared as tilings by congruent almost equilateral pentagons \cite{cly}.  

Some quadrilateral tilings are intertwined with triangular tilings. As part of the classification argument, we also obtain the classification of triangular tilings in a more modern and efficient way than Davis \cite{davisH} and Ueno and Agaoka \cite{ua}.

\begin{theorem*}
Edge-to-edge tilings of the sphere by congruent triangles are the following:
\begin{enumerate}
\item Platonic: Tetrahedron $P_4$, octahedron $P_8$, icosahedron $P_{20}$, triangular subdivisions $T_{\triangle}P_4,T_{\triangle}P_6,T_{\triangle}P_8,T_{\triangle}P_{12},T_{\triangle}P_{20}$ of the Platonic solids, barycentric subdivisions $B_{\triangle}P_6,B_{\triangle}P_{12}$ of the Platonic solids, flip modification $FB_{\triangle}P_8$ of the barycentric subdivision of the octahedron.
\item Earth map: Three families $E_{\triangle}1,E_{\triangle}2,E_{\triangle}3$, and their flip modifications $FE_{\triangle}1,F'E_{\triangle}1,FE_{\triangle}2,FE_{\triangle}3$.
\item Sporadic: Simple triangular subdivisions $S_{\triangle}P_6$ of the cube.
\end{enumerate}
\end{theorem*}

The triangular subdivision $T_{\triangle}$ and the barycentric subdivision $B_{\triangle}$ are uniform constructions that change Platonic solids to tilings by congruent triangles. Again the theorem includes all triangular and barycentric subdivisions because we have $B_{\triangle}P_4=T_{\triangle}P_6$, $B_{\triangle}P_6=B_{\triangle}P_8$, and $B_{\triangle}P_{12}=B_{\triangle}P_{20}$.

The prime in the notation $F'E_{\triangle}1$ means further modification on the flip modification $FE_{\triangle}1$. The simple triangular subdivision $S_{\triangle}P_6$ of the cube independently divides each square face of the regular cube into half. There are seven such tilings, and five of the seven are already included in the first two categories. We may also regard $S_{\triangle}P_6$ as the Platonic type.

Edge-to-edge tilings of the sphere by congruent quadrilaterals can have four possible edge combinations: general $a^2bc$, kite $a^2b^2$, almost equilateral $a^3b$, and rhombus $a^4$ (see Figure \ref{quad} and Lemma \ref{edge_combo}). Kite and rhombus tilings can be derived from triangular tilings. Akama, van Cleemput, and Sakano \cite{akama1,akama2,ac,sakano-akama} used the relation to obtain the classification.

Therefore the main result of this paper is the classification of general and almost equilateral tilings. These are much more difficult than kite and rhombus tilings. In fact, Ueno and Agaoka \cite{ua2} tried the almost equilateral tilings and highlighted the difficulty in classifying the case. We use the insight from our work on pentagonal tilings, especially the almost equilateral $a^4b$ tilings \cite{cly}. We develop various new technical tools, especially the global counting argument, to completely classify quadrilateral tilings.

The two difficult cases can also be classified by the existing techniques, in less efficient and less insightful way. Liao, Qian, Wang, and Xu \cite{lqwx1} classified general $a^2bc$ tilings by using the older idea of special tile in \cite{wy1,wy2}. Moreover, Myerson \cite{myerson} and Coolsaet \cite{coolsaet} classified all the convex almost equilateral quadrilaterals, such that all four angles are rational multiples of $\pi$. The classification was based on the trigonometric Diophantine analysis by Conway and Jones \cite{cj}, and was intended as a way of classifying almost equilateral $a^3b$ tilings. Cheung and Luk \cite{cl} and Liao, Qian, Wang, and Xu \cite{lw,lqwx2} extended the work to non-convex quadrilaterals, and finished the classification of almost equilateral tilings (including the cases the angles are not rational multiples of $\pi$). 

The alternative approaches are more specialised.  The special tile method cannot be used for classifying almost equilateral quadrilateral tilings. The trigonometric Diophantine analysis is much more complicated for almost equilateral pentagons, and it remains to be seen whether the method can lead to the classification of almost equilateral pentagonal tilings. In \cite{cly}, we developed more sophisticated and systematic techniques to classify almost equilateral pentagonal tilings. These techniques are transferred to triangular and quadrilateral tilings, and become the key techniques in this paper.

Finally, we outline the content of this paper.

Section \ref{construction} gives detailed description of all the tilings. We first describe Platonic solids and earth map tilings, which are the primary tilings. Then we apply subdivision and flip modification operations to get new tilings from the primary tilings. We end the section with the list of sporadic tilings. We note that some sporadic tilings are also earth map tilings, except only three or four timezones are allowed. Therefore sporadic earth map tilings are not parts of infinite families.

Section \ref{technical} is the technical preparation. The first major technique is counting, which is combinatorial and global. The second major technique is geometrical, and uses straight edges (part of great arcs) and simple boundary. In particular, we give an updated treatment of the equality obtained by Coolsaet \cite{coolsaet}, but drop the convexity requirement. Moreover, we also show that the equality effectively implies the existence of the quadrilateral. The existence is very important because there are many tilings  that are combinatorially valid but are actually impossible due to geometrical reasons. The third major technique is the adjacent angle deduction, which was first developed for pentagonal tilings. It is a convenient language to describe tile arrangements around vertices, and can be regarded as mini-tiling pictures. The adjacent angle deduction argument saves us from lots of pictures (there are already 76 pictures in this paper!), and even enables certain argument that cannot be easily done by pictures. 

The subsequent sections are the classification proof. In Section \ref{rhombustiling}, we classify rhombus tilings, and then use this to classify isosceles triangular tilings. In Section \ref{kitetiling}, we classify general triangular tilings, and then use this to classify kite tilings. In Section \ref{generalquad}, we classify general quadrilateral tilings. In Section \ref{almostquad}, we classify almost equilateral quadrilateral tilings. 

The proof requires some calculations, especially for almost equilateral quadrilaterals. All calculations can be done symbolically, and give exact values. Moreover, when we present approximate values such as $\sin b=0.3246$ and $\alpha=0.4195\pi$, we mean $0.3246<\sin b<0.3247$ and $0.4195\pi<\alpha<0.4196\pi$.

\section{Construction}
\label{construction}

In general, there are three edge length combinations for triangles. See Figure \ref{triangle}. All three combinations are suitable for tiling the sphere. In all the tiling pictures, we indicate the edge lengths $a,b,c$ by normal, thick, and dashed lines. In this section, we draw triangular tiling pictures without indicating the angles. The reader can fill the angles by referring to Figure \ref{triangle}.

\begin{figure}[htp]
\centering
\begin{tikzpicture}[>=latex]


\draw
	(90:0.8) -- (210:0.8);
\draw[line width=1.2]
	(90:0.8) -- (-30:0.8);
\draw[dashed]
	(-30:0.8) -- (210:0.8);

\node at (90:0.4) {\small $\alpha$};
\node at (210:0.4) {\small $\beta$};
\node at (-30:0.4) {\small $\gamma$};

\node at (150:0.6) {\small $a$};
\node at (30:0.6) {\small $b$};
\node at (-90:0.6) {\small $c$};

\node at (0,-0.97) {general $abc$};


\begin{scope}[xshift=3cm]

\draw
	(-30:0.8) -- (90:0.8) -- (210:0.8);
	
\draw[line width=1.2]
	(210:0.8) -- (-30:0.8);

\node at (90:0.4) {\small $\alpha$};
\node at (210:0.4) {\small $\beta$};
\node at (-30:0.4) {\small $\beta$};

\node at (150:0.6) {\small $a$};
\node at (30:0.6) {\small $a$};
\node at (-90:0.6) {\small $b$};

\node at (0,-0.93) {isosceles $a^2b$};

\end{scope}


\foreach \a in {0,1,2}
{
\begin{scope}[xshift=6cm, rotate=120*\a]

\draw
	(-30:0.8) -- (90:0.8);

\node at (90:0.4) {\small $\alpha$};

\node at (30:0.6) {\small $a$};

\end{scope}
}

\node at (6,-0.95) {equilateral $a^3$};

\end{tikzpicture}
\caption{All possible triangles, with distinct $a,b,c$.}
\label{triangle}
\end{figure}
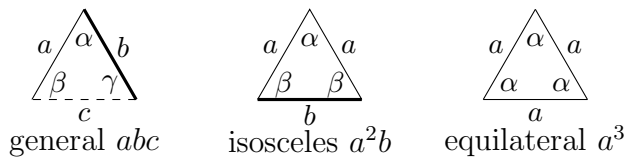

For quadrilaterals, there are four edge length combinations that are suitable for tiling the sphere. See Figure \ref{quad} and Lemma \ref{edge_combo}. The quadrilateral with the edge combination $a^2bc$ is the most {\em general} among the four. Moreover, the rhombus is exactly the equilateral quadrilateral.

\begin{figure}[htp]
\centering
\begin{tikzpicture}[>=latex,scale=1]


\draw
	(-0.6,0.6) -- (0.6,0.6) -- (0.6,-0.6);

\draw[line width=1.2]
	(-0.6,0.6) -- (-0.6,-0.6);

\draw[dashed]
	(-0.6,-0.6) -- (0.6,-0.6);	

\node at (0.4,0.4) {\small $\alpha$};
\node at (-0.4,0.4) {\small $\beta$};
\node at (-0.4,-0.4) {\small $\gamma$};
\node at (0.4,-0.4) {\small $\delta$};

\node at (0,0.8) {\small $a$};
\node at (0.8,0) {\small $a$};
\node at (-0.8,0) {\small $b$};
\node at (0,-0.8) {\small $c$};

\node at (0,-1.34) {general $a^2bc$};


\begin{scope}[xshift=3cm]

\draw
	(-0.6,0.6) -- (0.6,0.6) -- (0.6,-0.6);

\draw[line width=1.2]
	(-0.6,0.6) -- (-0.6,-0.6) -- (0.6,-0.6);	

\node at (0.4,0.4) {\small $\alpha$};
\node at (-0.4,0.4) {\small $\beta$};
\node at (0.4,-0.4) {\small $\beta$};
\node at (-0.4,-0.4) {\small $\gamma$};

\node at (0,0.8) {\small $a$};
\node at (0.8,0) {\small $a$};
\node at (-0.8,0) {\small $b$};
\node at (0,-0.8) {\small $b$};

\node at (0,-1.3) {kite $a^2b^2$};

\end{scope}


\begin{scope}[xshift=6cm]

\draw
	(-0.6,-0.6) -- (-0.6,0.6) -- (0.6,0.6) -- (0.6,-0.6);

\draw[line width=1.2]
	(-0.6,-0.6) -- (0.6,-0.6);	

\node at (0.4,0.4) {\small $\alpha$};
\node at (-0.4,0.4) {\small $\beta$};
\node at (-0.4,-0.4) {\small $\gamma$};
\node at (0.4,-0.4) {\small $\delta$};

\node at (0,0.8) {\small $a$};
\node at (0.8,0) {\small $a$};
\node at (-0.8,0) {\small $a$};
\node at (0,-0.8) {\small $b$};

\node at (0,-1.15) {almost};

\node at (0,-1.55) {equilateral $a^3b$};

\end{scope}


\begin{scope}[xshift=9cm]

\draw
	(-0.6,-0.6) rectangle (0.6,0.6);

\foreach \a in {0,...,3}
\node at (90*\a :0.8) {\small $a$};

\node at (0,-1.3) {rhombus $a^4$};

\node at (0.4,0.4) {\small $\alpha$};
\node at (-0.4,-0.4) {\small $\alpha$};
\node at (-0.4,0.4) {\small $\beta$};
\node at (0.4,-0.4) {\small $\beta$};

\end{scope}
	
\end{tikzpicture}
\caption{Quadrilaterals suitable for tiling, with distinct $a,b,c$.}
\label{quad}
\end{figure}
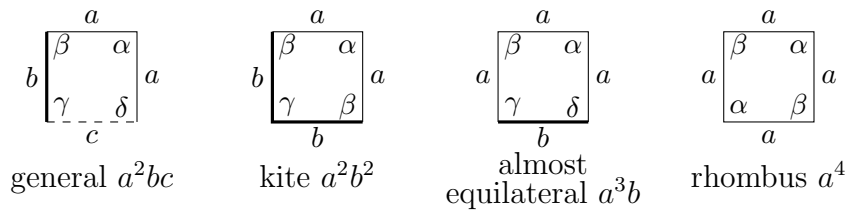

In this section, we also draw quadrilateral tiling pictures without indicating the angles. For general quadrilateral tilings and kite tilings, the reader can fill the angles by referring to Figure \ref{quad}. For almost equilateral quadrilateral tilings, however, we need to know the orientation of $\alpha\to\beta\to\gamma\to\delta$ in order to fill the angles. Our convention is that tiles without indication have the counterclockwise orientation, and tiles with ``$-$'' indication have the clockwise orientation. See the first and second of Figure \ref{quad_angle}. For rhombus tilings, we use $\bullet$ to indicate the $\beta$ angle. See the third of Figure \ref{quad_angle}. 

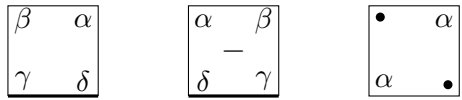
\begin{figure}[htp]
\centering
\begin{tikzpicture}[>=latex,scale=1]

\foreach \a in {1,-1}
{

\draw[xshift=1.2*\a cm]
	(-0.6,-0.6) -- (-0.6,0.6) -- (0.6,0.6) -- (0.6,-0.6);

\draw[line width=1.2, xshift=1.2*\a cm]
	(-0.6,-0.6) -- (0.6,-0.6);	

\begin{scope}[xshift=1.2*\a cm, xscale=\a]

\node at (-0.4,0.4) {\small $\alpha$};
\node at (0.4,0.4) {\small $\beta$};
\node at (0.4,-0.4) {\small $\gamma$};
\node at (-0.4,-0.4) {\small $\delta$};

\end{scope}

}

\node at (1.2,0) {$-$};

\begin{scope}[xshift=3.6cm]

\draw
	(-0.6,-0.6) rectangle (0.6,0.6);

\node at (0.4,0.4) {\small $\alpha$};
\node at (-0.4,-0.4) {\small $\alpha$};

\fill
	(0.45,-0.45) circle (0.06)
	(-0.45,0.45) circle (0.06);
	
\end{scope}

\end{tikzpicture}
\caption{Angle indications when not determined by edges.}
\label{quad_angle}
\end{figure}

The general triangle has several symmetries, such as exchanging $(\alpha,b)$ with $(\beta,c)$. The general quadrilateral has the symmetry of exchanging $(\beta,b)$ with $(\delta,c)$. The kite has the symmetry of exchanging $(\alpha,a)$ with $(\gamma,b)$. The almost equilateral quadrilateral has the symmetry of exchanging $(\alpha,\delta)$ with $(\beta,\gamma)$. The rhombus has the symmetry of exchanging $\alpha$ with $\beta$. If one tiling can be obtained from the other by an exchange symmetry, then we consider two tilings to be the same. When we say ``up to symmetry'', we mean up to one of the exchange symmetries above.

\subsection{Platonic Solids and Earth Map Tilings}

There are two primary sources of tilings by congruent polygons. The first is the five {\em Platonic solids}, given by Figure \ref{platonic_solids}. Note that the tiles are congruent, but are not required to be regular. In other words, these are {\em deformed} Platonic solids. In fact, the tetrahedron $P_4$, the cube $P_6$, and the dodecahedron $P_{12}$ allow two free parameters \cite{ay1,wy3}, and the octahedron $P_8$ allows one free parameter. The icosahedron $P_{20}$ cannot be deformed. 

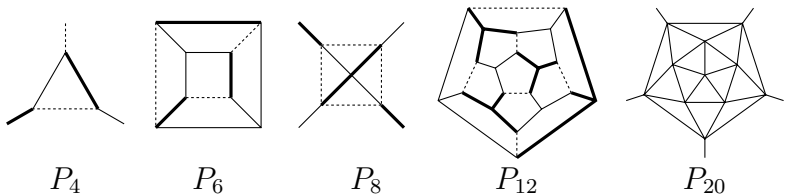
\begin{figure}[htp]
\centering
\begin{tikzpicture}[>=latex]


\begin{scope}[yshift=-0.2cm]

\draw
	(90:0.5) -- (210:0.5)
	(-30:0.5) -- (-30:0.9);
\draw[line width=1.2]
	(90:0.5) -- (-30:0.5)
	(210:0.5) -- (210:0.9);
\draw[dash pattern=on 1pt off 1pt]
	(-30:0.5) -- (210:0.5)
	(90:0.5) -- (90:0.9);

\node at (0,-1.2) {$P_4$};

\end{scope}


\begin{scope}[xshift=1.9cm]

\draw
	(0.3,0.3) -- (-0.3,0.3) -- (-0.3,-0.3)
	(-0.3,0.3) -- (-0.7,0.7) 
	(-0.7,-0.7) -- (0.7,-0.7) -- (0.7,0.7)
	(0.3,-0.3) -- (0.7,-0.7) ;

\draw[line width=1.2]
	(0.3,-0.3) -- (0.3,0.3)
	(-0.7,0.7) -- (0.7,0.7)
	(-0.3,-0.3) -- (-0.7,-0.7);

\draw[dash pattern=on 1pt off 1pt]
	(-0.3,-0.3) -- (0.3,-0.3)
	(-0.7,0.7) -- (-0.7,-0.7)
	(0.3,0.3) -- (0.7,0.7);
	
\node at (0,-1.4) {$P_6$};

\end{scope}	
	

\foreach \a in {1,-1}
{
\begin{scope}[xshift=3.8cm, scale=\a]

\draw[dash pattern=on 1pt off 1pt]
	(-0.4,0.4) -- (0.4,0.4) -- (0.4,-0.4);
	
\draw[line width=1.2]
	(0,0) -- (0.4,0.4)
	(-0.4,0.4) -- (-0.7,0.7);
	
\draw
	(0,0) -- (-0.4,0.4)
	(0.4,0.4) -- (0.7,0.7);

\end{scope}
}

\node at (4,-1.4) {$P_8$};


\begin{scope}[xshift=6cm]

\draw
	(90:0.3) -- (162:0.3) -- (234:0.3)
	(162:0.3) -- (162:0.56)
	(18:0.56) -- (54:0.76) -- (90:0.56)
	(54:0.76) -- (54:1.1)
	(-18:0.76) -- (-54:0.56) -- (-90:0.76)
	(-54:0.3) -- (-54:0.56)
	(126:1.1) -- (198:1.1) -- (-90:1.1)
	(198:0.76) -- (198:1.1);

\draw[line width=1.2]
	(90:0.3) -- (18:0.3) -- (-54:0.3)
	(18:0.3) -- (18:0.56)
	(162:0.56) -- (126:0.76) -- (90:0.56)
	(126:0.76) -- (126:1.1)
	(198:0.76) -- (234:0.56) -- (-90:0.76)
	(234:0.3) -- (234:0.56)
	(54:1.1) -- (-18:1.1) -- (-90:1.1)
	(-18:0.76) -- (-18:1.1);

\draw[dash pattern=on 1pt off 1pt]
	(234:0.3) -- (-54:0.3)
	(90:0.3) -- (90:0.56)
	(18:0.56) -- (-18:0.76)
	(162:0.56) -- (198:0.76)
	(-90:0.76) -- (-90:1.1)
	(54:1.1) -- (126:1.1);

\node at (0,-1.4) {$P_{12}$};

\end{scope}


\foreach \a in {0,...,4}
\draw[xshift=8.5cm, rotate=72*\a]
	(0,0) -- (18:0.45) -- (90:0.45)
	(-18:0.85) -- (54:0.85) -- (54:1.1)
	(18:0.45) -- (54:0.85) -- (90:0.45);

\node at (8.5,-1.4) {$P_{20}$};
	
\end{tikzpicture}
\caption{Platonic solids.}
\label{platonic_solids}
\end{figure}

The opposite of deformation is {\em reduction}. The $a=b=c$ reductions of $P_4,P_6,P_8,P_{12}$ are the regular Platonic solids. The $a=b$ reduction of $P_4$ is isosceles triangular tiling. The $a=b$ and $b=c$ reductions of $P_6$ are almost equilateral and kite tilings. 

Table \ref{platonic_data} gives the data for the triangular and quadrilateral Platonic solids, where $f$ is the number of tiles. 

\renewcommand{\arraystretch}{1.2}

\begin{table}[htp]
\centering
\scalebox{1}{
\begin{tabular}{|c|l|l|c|} 
\hline
Tiling
& \multicolumn{1}{|c|}{Angle}
& \multicolumn{1}{|c|}{Vertex}
& $f$ \\
\hline  \hline 
$P_4$
& $\alpha+\beta+\gamma=2\pi$ 
& $\alpha\beta\gamma$
& $4$  \\
\hline
$P_6$ 
& $\alpha=\frac{2}{3}\pi,\; \beta+\gamma+\delta=2\pi$ 
& $\alpha^3,\beta\gamma\delta$ 
& $6$ \\
\hline
$P_8$
& $\alpha=\tfrac{1}{2}\pi,\;\beta+\gamma=\pi$ 
& $\alpha^4,\beta^2\gamma^2$ 
& $8$ \\
\hline
$P_{20}$
& $\alpha=\frac{2}{5}\pi$ 
& $\alpha^5$ 
& $20$ \\
\hline
\end{tabular}
}
\caption{Angles and vertices for Platonic solids, $P_{12}$ not included. }
\label{platonic_data}
\end{table}

The second source of tilings is the {\em earth map tiling}. Figure \ref{emt} gives all the earth map tilings by congruent triangles or quadrilaterals. There are three triangular earth map tilings $E_{\triangle}1,E_{\triangle}2,E_{\triangle}3$ and two quadrilateral earth map tilings $E_{\square}1,E_{\square}2$. 

The term earth map tiling is inspired by our usual way of presenting the map of the world. It is a repetition of {\em timezones}, which is a combination of several tiles and is indicated by the shaded tiles. Three or four timezones are presented in Figure \ref{emt}. There is no limit on the number of timezones, which means there is no limit on the number of tiles. The vertical edges at the top converge to a vertex, and the vertical edges at the bottom converge to another vertex. The two vertices are the two {\em poles} of the earth map tiling. The boundary of a timezone is two paths connecting the two poles. Each such path is a {\em meridian}.

We note that the choice of timezones is not unique. For example, in Figure \ref{flip9}, we will choose a different timezone for $E_{\triangle}3$ in order to give another description of the flip modification. The choice of the timezone for $E_{\triangle}3$ in Figure \ref{emt} is intended to be compatible with the timezone for $E_{\square}1$, because $E_{\triangle}3=T_{\triangle}E_{\square}^R1$ is the triangular subdivision (see Section in \ref{division}) of $E_{\square}^R1$.

\begin{figure}[htp]
\centering
\begin{tikzpicture}[>=latex]



\begin{scope}

\node at (-0.5,0) {$E_{\triangle}1$};

\fill[gray!50]
	(0,0.6) -- (0,-0.6) -- (1.2,-0.6) -- (1.2,0.6);
	
\foreach \a in {0,2,4}
{
\draw[xshift=0.6*\a cm]
	(0,0) -- (0,0.6);

\draw[xshift=0.6*\a cm, line width=1.2]
	(0,0) -- (0,-0.6);
}

\foreach \a in {1,3}
{
\draw[xshift=0.6*\a cm]
	(0,0) -- (0,-0.6);

\draw[xshift=0.6*\a cm, line width=1.2]
	(0,0) -- (0,0.6);
}
	
\draw[dash pattern=on 1pt off 1pt]
	(0,0) -- (2.4,0);

\end{scope}


\begin{scope}[yshift=-1.5cm]

\node at (-0.5,0) {$E_{\triangle}^I1$};

\fill[gray!50]
	(0.6,0.6) -- (0.6,-0.6) -- (0,-0.6) -- (0,0.6);
	
\foreach \a in {0,...,4}
\draw[xshift=0.6*\a cm]
	(0,-0.6) -- (0,0.6);
	
\draw[line width=1.2]
	(0,0) -- (2.4,0);

\end{scope}


\begin{scope}[yshift=-3cm]

\node at (-0.5,0) {$E_{\triangle}^J1$};

\fill[gray!50]
	(0,0.6) -- (0,-0.6) -- (1.2,-0.6) -- (1.2,0.6);
	
\foreach \a in {0,2,4}
{
\draw[xshift=0.6*\a cm]
	(0,0) -- (0,0.6);

\draw[xshift=0.6*\a cm, line width=1.2]
	(0,0) -- (0,-0.6);
}

\foreach \a in {1,3}
{
\draw[xshift=0.6*\a cm]
	(0,0) -- (0,-0.6);

\draw[xshift=0.6*\a cm, line width=1.2]
	(0,0) -- (0,0.6);
}
	
\draw
	(0,0) -- (2.4,0);

\end{scope}


\begin{scope}[yshift=-4.5cm]

\node at (-0.5,0) {$E_{\triangle}2$};

\fill[gray!50]
	(0.4,0.6) -- (0.4,0.2) -- (0,-0.2) -- (0,-0.6) -- (0.8,-0.6) -- (0.8,-0.2) -- (1.2,0.2) -- (1.2,0.6);

\foreach \a in {0,1,2,3}
\draw[xshift=0.8*\a cm]
	(0.4,0.6) -- (0.4,0.2) -- (0,-0.2) -- (0,-0.6);

\foreach \a in {0,1,2}
\draw[xshift=0.8*\a cm]
	(0.4,0.2) -- (0.8,-0.2);
	
\draw[line width=1.2]
	(0.4,0.2) -- (2.8,0.2)
	(0,-0.2) -- (2.4,-0.2);

\end{scope}
	

\begin{scope}[yshift=-6cm]

\node at (-0.5,0) {$E_{\triangle}3$};

\fill[gray!50]
	(0.4,0.6) -- (0.4,0.2) -- (0,-0.2) -- (0,-0.6) -- (0.8,-0.6) -- (0.8,-0.2) -- (1.2,0.2) -- (1.2,0.6);
	
\foreach \a in {0,1,2,3}
\draw[xshift=0.8*\a cm]
	(0.4,0.6) -- (0.4,0.2) -- (0,-0.2) -- (0,-0.6);

\foreach \a in {0,1,2}
{
\draw[xshift=0.8*\a cm]
	(0.4,0.2) -- (0.8,-0.2);

\draw[xshift=0.8*\a cm, line width=1]
	(0.4,0.2) -- (0.4,-0.6)
	(0.8,-0.2) -- (0.8,0.6);
}	

\draw[dash pattern=on 1pt off 1pt]
	(0.4,0.2) -- (2.8,0.2)
	(0,-0.2) -- (2.4,-0.2);

\end{scope}


\begin{scope}[xshift=5cm]

\foreach \a in {0,...,3}
\fill[yshift=-1.5*\a cm, gray!50]
	(0,0.6) -- (0,0) -- (-0.4,0) -- (-0.4,-0.6) -- (0.4,-0.6) -- (0.4,0) -- (0.8,0) -- (0.8,0.6);
	

\begin{scope}

\node at (-0.9,0) {$E_{\square}1$};

\foreach \a in {0,...,4}
{
\draw
	(0.8*\a,0) -- ++(0,0.6)
	(-0.4+0.8*\a,0) -- ++(0,-0.6);
	
\draw[dash pattern=on 1pt off 1pt]
	(-0.4+0.8*\a,0) -- ++(0.4,0);
}

\foreach \a in {0,...,3}
\draw[line width=1.2]
	(0.8*\a,0) -- ++(0.4,0);

\end{scope}


\begin{scope}[yshift=-1.5cm]

\node at (-0.9,0) {$E_{\square}^A1$};

\foreach \a in {0,...,4}
\draw
	(0.8*\a,0.6) -- ++(0,-0.6)
	(-0.4+0.8*\a,0) -- ++(0,-0.6)
	(-0.4+0.8*\a,0) -- ++(0.4,0);

\foreach \a in {0,...,3}
{
\draw
	(0.8*\a,0) -- ++(0.4,0);

\draw[line width=1.2]
	(0.8*\a,0) -- ++(0.4,0);
}

\end{scope}


\begin{scope}[yshift=-3cm]

\node at (-0.9,0) {$E_{\square}^K1$};

\foreach \a in {0,...,4}
\draw
	(0.8*\a,0.6) -- ++(0,-0.6)
	(-0.4+0.8*\a,0) -- ++(0,-0.6);

\draw[line width=1.2]
	(-0.4,0) -- (3.2,0);

\end{scope}


\begin{scope}[yshift=-4.5cm]

\node at (-0.9,0) {$E_{\square}^R1$};

\foreach \a in {0,...,4}
{
\draw
	(0.8*\a,0.6) -- ++(0,-0.6)
	(-0.4+0.8*\a,0) -- ++(0,-0.6);

}

\draw
	(-0.4,0) -- (3.2,0);
	
\foreach \a in {0,...,3}
{
\begin{scope}[xshift=0.8*\a cm]

\fill
	(0.7,0.1) circle (0.04)
	(0.1,0.1) circle (0.04)
	(0.3,-0.1) circle (0.04)
	(-0.3,-0.1) circle (0.04);

\end{scope}
}

\end{scope}
	

\begin{scope}[yshift=-6cm]

\fill[gray!50]
	(-0.4,0.6) -- (-0.4,-0.6) -- (1.2,-0.6) -- (1.2,0.6);

\node at (-0.9,0) {$E_{\square}2$};

\foreach \a in {0,1,2,3}
{
\begin{scope}[xshift=-0.4 cm+ 1.6*\a cm]

\draw
	(0,0.2) -- (0,0.6);
	
\draw[dash pattern=on 1pt off 1pt]
	(0,0.2) -- (0,-0.6);
	
\end{scope}
}

\foreach \a in {0,1,2}
{
\begin{scope}[xshift=-0.4 cm+ 1.6*\a cm]

\draw
	(0.8,-0.2) -- (0.8,-0.6)
	(0.4,-0.2) -- (0.4,0.2)
	(1.2,-0.2) -- (1.2,0.2);

\draw[dash pattern=on 1pt off 1pt]
	(0.8,-0.2) -- (0.8,0.6);

\draw[line width=1.2]
	(0,-0.2) -- (0.4,-0.2)
	(1.6,-0.2) -- (1.2,-0.2)
	(0.4,0.2) -- (1.2,0.2);
		
\end{scope}
}

\draw
	(-0.4,-0.2) -- (4.4,-0.2)
	(-0.4,0.2) -- (4.4,0.2);
		
\end{scope}

\end{scope}

\end{tikzpicture}
\caption{Triangular and quadrilateral earth map tilings.}
\label{emt}
\end{figure}
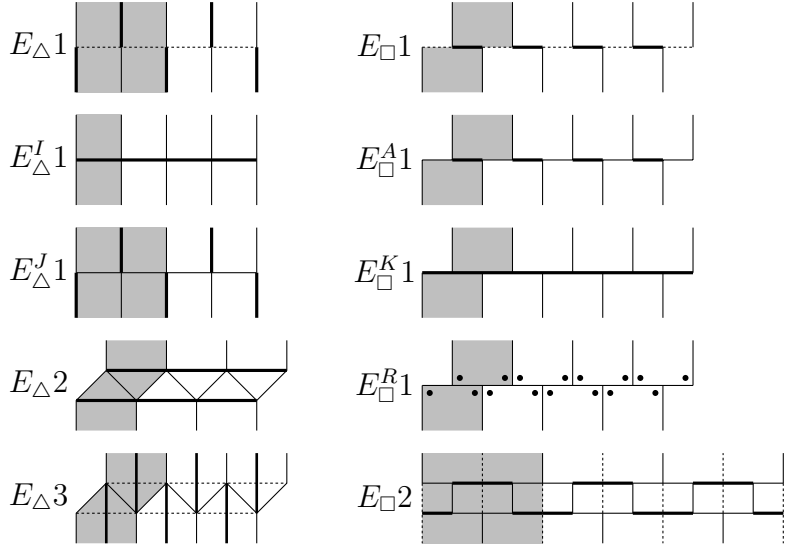

Like the Platonic solids, the edge lengths $a,b,c$ (corresponding to normal, thick and dashed edges) in the earth map tilings are not assumed to be distinct. For the convenience of subsequent discussions, we introduce additional decorations in case some edge lengths become equal. We denote the two isosceles reductions $a=b$ and $a=c$ of $E_{\triangle}1$ by $E_{\triangle}^I1$ and  $E_{\triangle}^J1$. In $E_{\triangle}1$ and $E_{\triangle}^J1$, the number $f$ of triangles in the tiling needs to be a multiple of $4$. In $E_{\triangle}^I1$, the number $f$ only needs to be even, which causes slight conflict with the word ``reduction''. The further reduction $a=b=c$ of $E_{\triangle}1$ happens only for $f=8$ and is the regular octahedron $P_8$.

The tiling $E_{\square}1$ has edge combination $a^2bc$. We denote the reductions $a=c$ (almost equilateral), $b=c$ (kite), and $a=b=c$ (rhombus) of $E_{\square}1$ by $E_{\square}^A1,E_{\square}^K1,E_{\square}^R1$. Moreover, $\bullet$ in $E_{\square}^R1$ indicates the $\beta$ angles.

We also use $E_{f\triangle}i$ and $E_{f\square}i$ to indicate the number $f$ of tiles. Then $E_{6\square}1, E_{8\triangle}1,E_{20\triangle}2$ are the Platonic solids $P_6,P_8,P_{20}$ in Figure \ref{platonic_solids}, and $E_{24\triangle}3$ is the triangular subdivision $T_{\triangle}P_6$ of the cube.

Table \ref{emt_data} gives the geometrical data for the triangular and quadrilateral earth map tilings. The angle values omit $\pi$, and $1-\gamma$ and $1-\gamma-\delta$ in the table mean $\beta+\gamma=\pi$ and $\beta+\gamma+\delta=\pi$. The parameter $p$ is the number of timezones. 

\renewcommand{\arraystretch}{1.2}

\begin{table}[h]
\centering
\scalebox{1}{
\begin{tabular}{|c|c|c|c|c|l|c|} 
\hline
Tiling
& $\alpha$ & $\beta$ & $ \gamma$ & $\delta$
& \multicolumn{1}{|c|}{Vertex}
& $f$ \\
\hline  \hline 
$E_{\triangle}1$
& $\frac{4}{f}$ & $1-\gamma$ & &
& $\beta^2\gamma^2,\alpha^{2p}$
& $4p$ \\
\hline
$E_{\triangle}^I1$
& $\frac{4}{f}$ & $\tfrac{1}{2}$  & &
& $\beta^4,\alpha^p$ 
& $2p$ \\
\hline
$E_{\triangle}2$
& $\frac{8}{f}$ & $\tfrac{1}{2}-\tfrac{2}{f}$ & &
& $\alpha\beta^4,\alpha^p$ 
& $4p$ \\
\hline
$E_{\triangle}3$
& $\frac{8}{f}$ & $\tfrac{1}{2}-\tfrac{4}{f}$ & $\tfrac{1}{2}$ &
& $\alpha^2\beta^4,\gamma^4,\alpha^{2p}$ 
& $8p$ \\
\hline
$E_{\square}1$
& $\frac{4}{f}$ & $1-\gamma-\delta$ & &
& $\beta\gamma\delta,\alpha^p$  
& $2p$ \\
\hline
$E_{\square}2$
& $1-\tfrac{8}{f}$ & $\tfrac{1}{2}+\tfrac{4}{f}$ & $\tfrac{1}{2}$ & $\tfrac{8}{f}$ 
& $\alpha\beta^2,\alpha^2\delta^2,\gamma^4,\delta^{2p}$
& $8p$  \\
\hline
\end{tabular}
}
\caption{Angles ($\pi$ omitted) and vertices for earth map tilings.}
\label{emt_data}
\end{table}

\renewcommand{\arraystretch}{1}

\subsection{Subdivision}
\label{division}

A {\em subdivision} is an operation of dividing polygons into smaller $n$-gons, where $n$ is fixed. Moreover, the operation should be applied to all the tiles in any edge-to-edge tiling, such that the divisions of the two tiles on the two sides of any edge are compatible. The triangular and quadrilateral subdivisions needed for constructing tilings in this paper are given in Figure \ref{subdivision}, where the original tiling is given by the thick edges, and the subdivisions are given by adding the normal edges. We remark that there are also two pentagonal subdivisions: \underline{p}entagonal subdivision $P_{\pentagon}P_i$, and \underline{d}ouble pentagonal subdivision $D_{\pentagon}P_i$. They are introduced in \cite[Section 3]{wy1}, and their combinatorial properties are studied in \cite{yan2}.

\begin{figure}[htp]
\centering
\begin{tikzpicture}[>=latex,scale=0.9]


\foreach \a/\b in {0,1,2,3}
\draw[xshift=3*\a cm, line width=1.2]
	(-0.6,-0.6) rectangle (0.6,0.6)
	(-0.6,0.6) -- (0,1.8) -- (0.6,0.6) -- (1.8,0) -- (0.6,-0.6)
	(0,1.8) -- (1,1.8) -- (1.8,1) --  (1.8,0)
	;

\draw[gray, dotted, xshift=12 cm]
	(-0.6,-0.6) rectangle (0.6,0.6)
	(-0.6,0.6) -- (0,1.8) -- (0.6,0.6) -- (1.8,0) -- (0.6,-0.6)
	(0,1.8) -- (1,1.8) -- (1.8,1) --  (1.8,0)
	;		


\foreach \a in {1,3,4}
\draw[xshift=3*\a cm]
	(-0.6,-0.6) -- (1,1)
	(0,1) -- (-0.6,0.6) -- (0.6,-0.6) -- (1,0)
	(0,1.8) -- (0,1) -- (0.6,0.6) -- (1,0) -- (1.8,0)
	(0,1.8) -- (1,1) -- (1.8,0)
	(1,1.8) -- (1,1) -- (1.8,1)
	;

\foreach \a in {2,3}
\draw[xshift=3*\a cm]
	(-0.6,0) -- (1,0) -- (1.2,-0.3)
	(0,-0.6) -- (0,1) -- (-0.3,1.2)
	(1,0) -- (1.2,0.3) -- (1,1) -- (0.3,1.2) -- (0,1)
	(0.5,1.8) -- (1,1) -- (1.8,0.5)
	(1,1) -- (1.4,1.4)
	;

\node at (0.6,-1) {original};
\node at (3.6,-1) {triangular};
\node at (6.6,-1) {quadrilateral};
\node at (9.6,-1) {barycentric};
\node at (12.6,-1) {quadricentric};

\end{tikzpicture}
\caption{Triangular and quadrilateral subdivisions.}
\label{subdivision}
\end{figure}
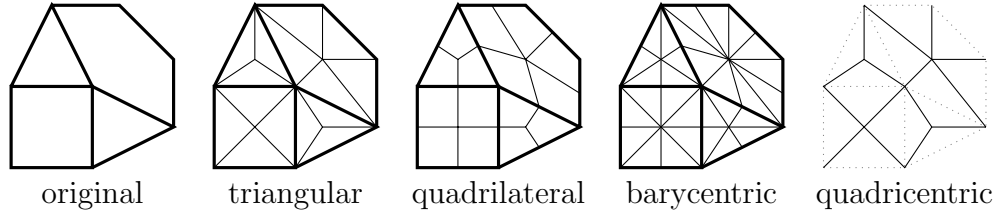

We keep original edges in the triangular subdivision, and subdivide the original edges in the quadrilateral and barycentric subdivisions. We drop the original edges in the quadricentric subdivision, which causes slight conflict with the word ``subdivision''. 

The barycentric, quadrilateral, quadricentric subdivisions of a tiling are the same as the subdivisions of the dual tiling. Moreover, applying the quadricentric subdivision twice gives the quadrilateral subdivision.

Each regular Platonic solid is a tiling by congruent regular polygons. If we apply the subdivisions uniformly to the regular faces of a Platonic solid, then we get tilings of the sphere by congruent polygons. Figure \ref{subdivision_platonic} gives all the triangular and quadrilateral subdivisions of Platonic solids, where the original Platonic solids are indicated by the thick edges, and the subdivisions are indicated by the normal and dashed edges.

\begin{figure}[htp]
\centering
\begin{tikzpicture}[>=latex, scale=0.75]

\node at (3,-1.8) {triangular subdivision $T_{\triangle}P_i$};
\node at (3,-6.3) {barycentric subdivision $B_{\triangle}P_i$};
\node at (3,-10.8) {quadrilateral subdivision $Q_{\square}P_i$};
\node at (3,-15.3) {quadricentric subdivision $C_{\square}P_i$};


\foreach \b in {0,1,2}
{
\begin{scope}[line width=1.2, yshift=-4.5*\b cm]


\foreach \a in {0,1,2}
\draw[rotate=120*\a]
	(-30:0.8) -- (90:0.8) -- (90:1.2);


\foreach \a in {0,1,2,3}
\draw[xshift=2.5cm, rotate=90*\a]
	(-0.4,0.4) -- (0.4,0.4) -- (1,1)
	(1,1) -- (-1,1);


\foreach \a in {0,1,2,3}
\draw[xshift=5.5cm, rotate=90*\a]
	(0.7,0.7) -- (-0.7,0.7)
	(0,0) -- (1.3,1.3);


\foreach \a in {0,...,4}
\draw[xshift=9.3cm, rotate=72*\a]
	(90:0.5) -- (18:0.5) -- (18:1) -- (54:1.4) -- (90:1)
	(54:1.4) -- (54:2) -- (-18:2) -- (-18:1.4);


\foreach \a in {0,...,4}
\draw[shift={(14cm,-0.2cm)}, rotate=72*\a]
	(0,0) -- (18:0.9) -- (90:0.9)
	(-18:1.8) -- (54:1.8) -- (54:2.2)
	(18:0.9) -- (54:1.8) -- (90:0.9);
	
\end{scope}	
}


\foreach \b in {0,1,3}
{
\begin{scope}[yshift=-4.5*\b cm]


\foreach \a in {0,1,2}
\draw[rotate=120*\a]
	(0,0) -- (-30:0.8) -- (30:0.8) -- (90:0.8)
	(30:0.8) -- (30:1.2);


\foreach \a in {0,1,2,3}
\draw[xshift=2.5cm, rotate=90*\a]
	(0,0) -- (0.4,0.4) -- (0.7,0) -- (0.4,-0.4)
	(1,1) -- (0.7,0) -- (1,-1)
	(1,1) -- (1.3,1.3);

	
\foreach \a in {0,1,2,3}
\draw[xshift=5.5cm, rotate=90*\a]
	(0,0) -- (0.4,0)
	(0.7,0.7) -- (0.4,0) -- (0.7,-0.7)
	(0.7,0.7) -- (1,0) -- (0.7,-0.7)
	(1,0) -- (1.3,0);


\foreach \a in {0,...,4}
\draw[xshift=9.3cm, rotate=72*\a]
	(0,0) -- (18:0.5) -- (54:0.9) -- (90:0.5)
	(18:1) -- (54:0.9) -- (90:1)
	(54:0.9) -- (54:1.4) -- (18:1.35) -- (-18:1.4)
	(18:1) -- (18:1.35)
	(54:2) -- (18:1.35) -- (-18:2) -- (-18:2.4);


\foreach \a in {0,...,4}
\draw[shift={(14cm,-0.2cm)}, rotate=72*\a]
	(0,0) -- (54:0.45)
	(18:0.9) -- (54:0.45) -- (90:0.9)
	(18:0.9) -- (54:1.1) -- (90:0.9)
	(54:1.1) -- (54:1.8)
	(-18:1.8) -- (18:1.2) -- (54:1.8)
	(18:0.9) -- (18:1.2)
	(-18:1.8) -- (18:1.8) -- (54:1.8)
	(18:2.2) -- (18:1.8);

\end{scope}
}

\begin{scope}[yshift=-9cm]


\foreach \a in {0,1,2}
\draw[rotate=120*\a]
	(0,0) -- (30:0.8) -- (90:1.05) -- (150:0.8);


\foreach \a in {0,1,2,3}
\draw[xshift=2.5cm, rotate=90*\a]
	(0,0) -- (1.3,0)
	(0.7,0.7) -- (0.7,-0.7);


\foreach \a in {0,1,2,3}
\draw[xshift=5.5cm, rotate=90*\a]
	(-0.4,0.4) -- (0.4,0.4) 
	(1,1) -- (-1,1)
	(0.4,0) -- (1,0);


\foreach \a in {0,...,4}
\draw[xshift=9.3cm, rotate=72*\a]
	(0,0) -- (54:0.9) -- (-18:0.9) -- (0:1.11) -- (18:1.35) -- (36:1.11) -- (54:0.9)
	(18:1.35) -- (54:1.7) -- (90:1.35) -- (90:2);


\foreach \a in {0,...,4}
\draw[shift={(14cm,-0.2cm)}, rotate=72*\a]
	(90:0.45) -- (54:0.45) -- (18:0.45)
	(54:0.45) -- (54:1.1) -- (42:1.3) -- (18:1.2) -- (-6:1.3) -- (-18:1.1)
	(18:1.2) -- (18:1.8) -- (54:2.05) -- (90:1.8);
	
\end{scope}


\begin{scope}[dash pattern=on 1pt off 1pt, yshift=-4.5cm]


\foreach \a in {0,1,2}
\draw[rotate=120*\a]
	(0,0) -- (30:0.8) -- (90:1.05) -- (150:0.8);


\foreach \a in {0,1,2,3}
\draw[xshift=2.5cm, rotate=90*\a]
	(0,0) -- (1.3,0)
	(0.7,0.7) -- (0.7,-0.7);


\foreach \a in {0,1,2,3}
\draw[xshift=5.5cm, rotate=90*\a]
	(-0.4,0.4) -- (0.4,0.4) 
	(1,1) -- (-1,1)
	(0.4,0) -- (1,0);


\foreach \a in {0,...,4}
\draw[xshift=9.3cm, rotate=72*\a]
	(0,0) -- (54:0.9) -- (-18:0.9) -- (0:1.11) -- (18:1.35) -- (36:1.11) -- (54:0.9)
	(18:1.35) -- (54:1.7) -- (90:1.35) -- (90:2);


\foreach \a in {0,...,4}
\draw[shift={(14cm,-0.2cm)}, rotate=72*\a]
	(90:0.45) -- (54:0.45) -- (18:0.45)
	(54:0.45) -- (54:1.1) -- (42:1.3) -- (18:1.2) -- (-6:1.3) -- (-18:1.1)
	(18:1.2) -- (18:1.8) -- (54:2.05) -- (90:1.8);
	
\end{scope}


\begin{scope}[yshift=-13.5cm]

\foreach \a in {0,1,2,3}
\fill[xshift=2.5cm, rotate=90*\a] 
	(0:0.15) circle (0.04)
	(0:0.58) circle (0.04)
	(0:0.8) circle (0.04)
	(15:0.7) circle (0.04)
	(-15:0.7) circle (0.04)
	(0:1.2) circle (0.04);

\foreach \a in {0,1,2,3}
\fill[xshift=5.5cm, rotate=90*\a] 
	(0.1,0.1) circle (0.04)
	(0.55,0.55) circle (0.04)
	(0.7,0.5) circle (0.04)
	(0.5,0.7) circle (0.04)
	(0.75,0.75) circle (0.04)
	(1,1) circle (0.04);

\foreach \a in {0,1,2,3,4}
\fill[xshift=9.3cm, rotate=72*\a] 
	(54:0.15) circle (0.04)
	(54:0.72) circle (0.04)
	(43:0.82) circle (0.04)
	(65:0.82) circle (0.04)
	(48:1) circle (0.04)
	(60:1) circle (0.04)
	(18:1.45) circle (0.04)
	(30:1.38) circle (0.04)
	(6:1.38) circle (0.04)
	(23:1.22) circle (0.04)
	(13:1.22) circle (0.04)
	(90:1.9) circle (0.04);
	
\foreach \a in {0,1,2,3,4}
\fill[shift={(14cm,-0.2cm)}, rotate=72*\a] 
	(18:0.15) circle (0.04)
	(18:0.7) circle (0.04)
	(7:0.82) circle (0.04)
	(29:0.82) circle (0.04)
	(12:1) circle (0.04)
	(24:1) circle (0.04)
	(54:1.9) circle (0.04)
	(63:1.65) circle (0.04)
	(45:1.65) circle (0.04)
	(57.5:1.55) circle (0.04)
	(50.5:1.55) circle (0.04)
	(54:2.2) circle (0.04);
		
\end{scope}

\end{tikzpicture}
\caption{Triangular and quadrilateral subdivisions of Platonic solids.}
\label{subdivision_platonic}
\end{figure}
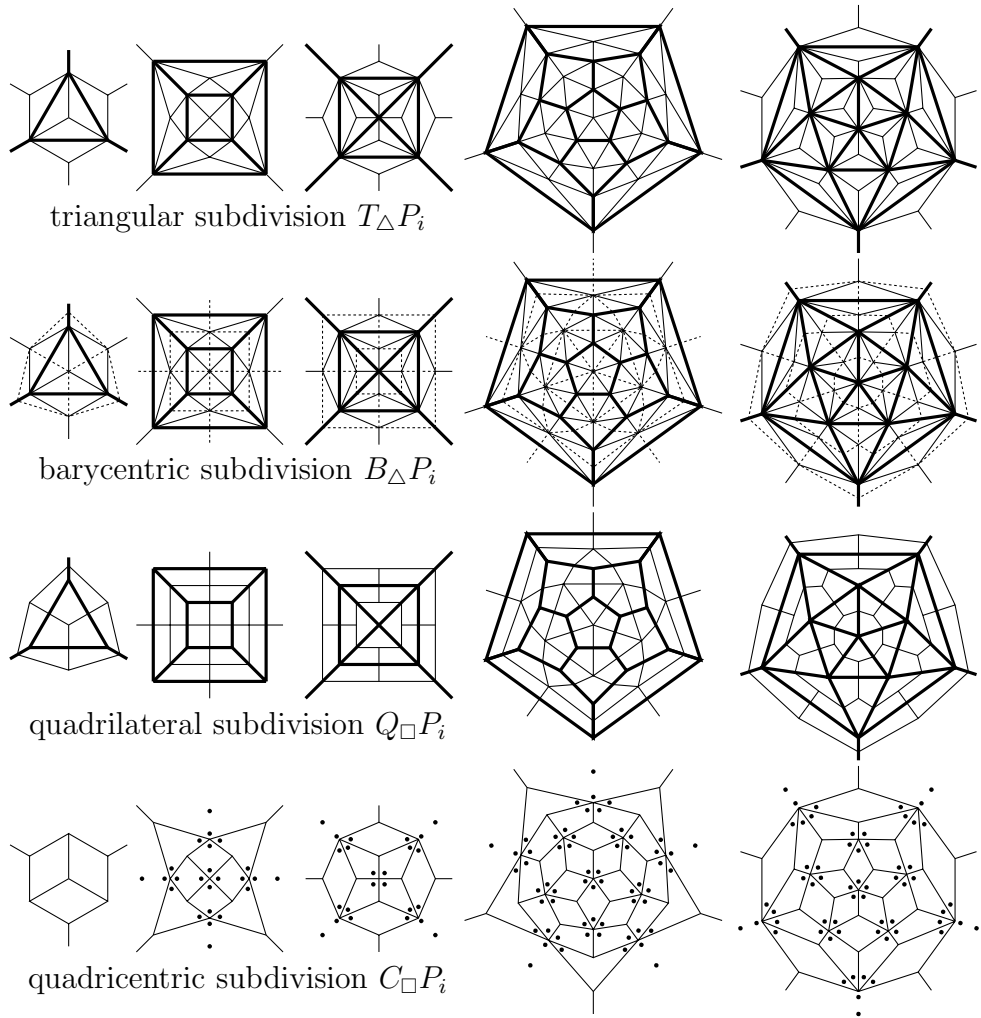

We denote the \underline{t}riangular, \underline{b}arycentric, \underline{q}uadrilateral, and quadri\underline{c}entric subdivisions of Platonic solids by $T_{\triangle}P_i,B_{\triangle}P_i,Q_{\square}P_i,C_{\square}P_i$. We note that the exchange of $(\alpha,b)$ with $(\beta,c)$ identifies $B_{\triangle}P_6$ with $B_{\triangle}P_8$, and identifies $B_{\triangle}P_{12}$ with $B_{\triangle}P_{20}$. Similarly, the exchange of $(\alpha,a)$ with $(\gamma,b)$ gives $Q_{\square}P_6=Q_{\square}P_8$ and $Q_{\square}P_{12}=Q_{\square}P_{20}$. Moreover, we have $b=c$ in $B_{\triangle}P_4$, and $B_{\triangle}P_4=T_{\triangle}P_6$, which is also the reduction $b=c$ of $E_{24\triangle}3$.

We note that $C_{\square}P_4$ is the regular cube $P_6$, with square (rhombus with $\alpha=\beta$) tiles. Moreover, $C_{\square}P_6,C_{\square}P_8,C_{\square}P_{12},C_{\square}P_{20}$ are (non-square) rhombus tilings, and we use $\bullet$ to indicate the $\beta$ angles. Since the quadricentric subdivision of a tiling and its dual tiling are the same, we have $C_{\square}P_6=C_{\square}P_8$ and $C_{\square}P_{12}=C_{\square}P_{20}$.  Moreover, we have $a=b$ in $Q_{\square}P_4$, making $Q_{\square}P_4$ into a rhombus tiling. In fact, we have $Q_{\square}P_4=C_{\square}P_6=C_{\square}P_8$.

The quadrilateral subdivision $Q_{\square}P_6$ ($=Q_{\square}P_8$) can be deformed, with one free parameter, as in Figure \ref{deformq6}. Figure \ref{flip1} shows another view of $Q_{\square}P_6$ that puts $\alpha^3$ at the center.

\begin{figure}[htp]
\centering
\begin{tikzpicture}[>=latex]


\foreach \a in {1,-1}
{
\begin{scope}[scale=\a]

\draw
	(-1,1) -- (1,1) -- (1,-1)
	(-0.4,0.4) -- (0.4,0.4) -- (0.4,-0.4)
	(0.4,0.4) -- (1,1)
	(-0.4,0.4) -- (-1,1);

\draw[line width=1.2]
	(0.7,0.7) -- (-0.7,0.7)
	(0,0.4) -- (0,0)
	(0.4,0) -- (1,0)
	(0,1) -- (0,1.3);
	
\draw[dash pattern=on 1pt off 1pt]
	(0.7,0.7) -- (0.7,-0.7)
	(0.4,0) -- (0,0)
	(0,0.4) -- (0,1)
	(1,0) -- (1.3,0);

\end{scope}
}

\node at (0,-1.5) {cube view};


\foreach \a in {1,-1}
{
\begin{scope}[xshift=3.5cm, scale=\a]

\draw
	(-1,1) -- (1,1) -- (1,-1)
	(-0.4,0.4) -- (0.4,0.4) -- (0.4,-0.4)
	(0.4,0) -- (1,0)
	(0,0.4) -- (0,1);

\draw[line width=1.2]
	(0,0) -- (-0.4,-0.4)
	(0,0.7) -- (0.7,0.7) -- (0.7,0)
	(-1,1) -- (-0.4,0.4)
	(1,1) -- (1.3,1.3);
	
\draw[dash pattern=on 1pt off 1pt]
	(0,0) -- (-0.4,0.4)
	(0,0.7) -- (-0.7,0.7) -- (-0.7,0)
	(1,1) -- (0.4,0.4)
	(-1,1) -- (-1.3,1.3);

\end{scope}
}

\node at (3.5,-1.5) {octahedron view};
	
\end{tikzpicture}
\caption{Deformed $Q_{\square}P_6$.}
\label{deformq6}
\end{figure}
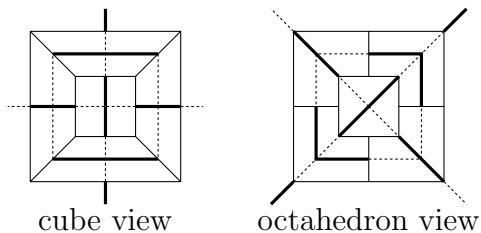

Table \ref{platonic_division_data} gives the data for the triangular and quadrilateral subdivisions of Platonic solids. If several tilings are equal, then the data is given for the first one. Again the angle values omit $\pi$, and $1-\delta$ in the table means $\beta+\delta=\pi$. For $Q_{\square}P_6$, the data are given for the deformed version. 

\renewcommand{\arraystretch}{1.2}

\begin{table}[htp]
\centering
\scalebox{1}{
\begin{tabular}{|c|c|c|c|l|c|} 
\hline
Tiling
& $\alpha$ & $\beta$ & $ \gamma$ 
& \multicolumn{1}{|c|}{Vertex}
& $f$ \\
\hline  \hline 
$T_{\triangle}P_4$
& $\frac{2}{3}$ & $\frac{1}{3}$ &
& $\alpha^3,\beta^6$ 
& $12$ \\
\hline
$T_{\triangle}P_6,B_{\triangle}P_4$
& $\frac{1}{2}$ & $\frac{1}{3}$ &
& $\alpha^4,\beta^6$ 
& $24$  \\
\hline
$T_{\triangle}P_8$
& $\frac{2}{3}$ & $\frac{1}{4}$ &
& $\alpha^3,\beta^8$ 
& $24$ \\
\hline
$T_{\triangle}P_{12}$
& $\frac{2}{5}$ & $\frac{1}{3}$ &
& $\alpha^5,\beta^6$ 
& $60$ \\
\hline
$T_{\triangle}P_{20}$
& $\frac{2}{3}$ & $\frac{1}{5}$ &
& $\alpha^3,\beta^{10}$ 
& $60$ \\
\hline
$B_{\triangle}P_6,B_{\triangle}P_8$
& $\frac{1}{3}$ & $\frac{1}{4}$ & $\frac{1}{2}$ 
& $\alpha^6,\beta^8,\gamma^4$ 
& $48$ \\
\hline
$B_{\triangle}P_{12},B_{\triangle}P_{20}$
& $\frac{1}{3}$ & $\frac{1}{5}$ & $\frac{1}{2}$ 
& $\alpha^6,\beta^{10},\gamma^4$ 
& $120$ \\
\hline
$Q_{\square}P_4,C_{\square}P_6,C_{\square}P_8$
& $\frac{2}{3}$ & $\frac{1}{2}$ &
& $\alpha^3,\beta^4$ 
& $12$ \\
\hline
$Q_{\square}P_6,Q_{\square}P_8$
& $\frac{2}{3}$ & $1-\delta$ & $\frac{1}{2}$ 
& $\alpha^3,\beta^2\delta^2,\gamma^4$ 
& $24$ \\
\hline
$Q_{\square}P_{12},Q_{\square}P_{20}$
& $\frac{2}{5}$ & $\frac{1}{2}$ & $\frac{2}{3}$ 
& $\alpha^5,\beta^4,\gamma^3$ 
& $60$ \\
\hline
$C_{\square}P_{12},C_{\square}P_{20}$
& $\frac{2}{3}$ & $\frac{2}{5}$ &
& $\alpha^3,\beta^5$ 
& $30$ \\
\hline
\end{tabular}
}
\caption{Angles ($\pi$ omitted) and vertices for subdivisions of Platonic solids. The data for multiple identical tilings are given for the first one. The data for $Q_{\square}P_6$ is given according to Figure \ref{deformq6}. }
\label{platonic_division_data}
\end{table}

\renewcommand{\arraystretch}{1}

Finally, we note that $E_{\triangle}3$ is the triangular subdivision of $E_{\square}^R1$. We denote the fact by writing $E_{\triangle}3=T_{\triangle}E_{\square}^R1$.  

\subsection{Simple Subdivision}
\label{simpledivision}

Section \ref{division} describes ``universal'' subdivisions that can be applied to any tiling. We also have {\em simple subdivisions} in Figure \ref{simple_subdivision}, that are applied to specific types of tilings.

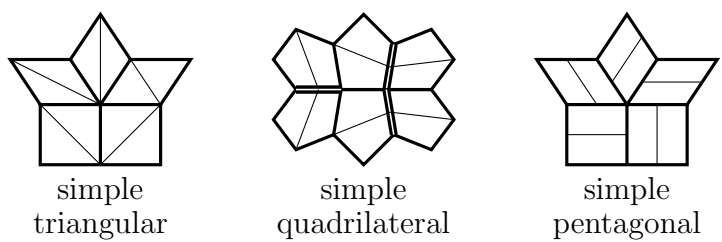
\begin{figure}[htp]
\centering
\begin{tikzpicture}[>=latex]


\draw[line width=1.2]
	(-0.8,0) rectangle (0.8,-0.8)
	(0,0) -- (0,-0.8)
	(-0.8,0) -- (-1.2,0.6) -- (-0.4,0.6) -- (0,0) -- (0.4,0.6) -- (1.2,0.6) -- (0.8,0)
	(-0.4,0.6) -- (0,1.2) -- (0.4,0.6);

\draw
	(-0.8,0) -- (0,-0.8) -- (0.8,0) -- (0.4,0.6)
	(-1.2,0.6) -- (0,0) -- (0,1.2);

\node at (0,-1.15) {simple};
\node at (0,-1.6) {triangular};


\begin{scope}[shift={(3.5cm,0.2cm)}]

\foreach \b in {1,-1}
{
\begin{scope}[yscale=\b]

\begin{scope}[line width=1.2]

\draw
	(-0.4,0.6) -- (-0.9,0.8) -- (-1.2,0.4) -- (-0.9,0) -- (0.9,0) -- (1.2,0.4) -- (0.9,0.8) -- (0.4,0.6)
	(0.3,0) -- (0.4,0.6) -- (0,1) -- (-0.4,0.6) -- (-0.3,0);

\draw[double, line width=1.2]
	(-0.9,0) -- (-0.3,0)
	(0.3,0) -- (0.4,0.6);

\end{scope}

\draw
	(-0.9,0.8) -- (-0.6,0)
	(-0.4,0.6) -- (0.35,0.3) -- (1.2,0.4);

\end{scope}
}

\node at (0,-1.35) {simple};
\node at (0,-1.8) {quadrilateral};

\end{scope}


\begin{scope}[xshift=7cm]

\draw[line width=1.2]
	(-0.8,0) rectangle (0.8,-0.8)
	(0,0) -- (0,-0.8)
	(-0.8,0) -- (-1.2,0.6) -- (-0.4,0.6) -- (0,0) -- (0.4,0.6) -- (1.2,0.6) -- (0.8,0)
	(-0.4,0.6) -- (0,1.2) -- (0.4,0.6);

\draw
	(-0.8,-0.4) -- (0,-0.4) 
	(0.4,0) -- (0.4,-0.8)
	(0.2,0.3) -- (1,0.3)
	(-0.4,0) -- (-0.8,0.6)
	(-0.2,0.3) -- (0.2,0.9);

\node at (0,-1.15) {simple};
\node at (0,-1.6) {pentagonal};

\end{scope}

\end{tikzpicture}
\caption{Simple subdivisions.}
\label{simple_subdivision}
\end{figure}

In the first of Figure \ref{simple_subdivision}, the {\em simple triangular subdivision} $S_{\triangle}{\mc T}$ of a quadrilateral tiling ${\mc T}$ divides each quadrilateral into two triangles by drawing a diagonal. For all tiles in $S_{\triangle}{\mc T}$ to be congruent, ${\mc T}$ can be the following:
\begin{itemize}
\item Suppose ${\mc T}$ is a tiling by congruent kites (second of Figure \ref{quad}). Then we may divide each tile by connecting the $\alpha$-vertex and the $\gamma$-vertex. For example, we may write $E_{\triangle}1=S_{\triangle}E_{\square}^K1$, to indicate $E_{\triangle}1$ is the simple triangular subdivision of the kite earth map tiling $E_{\square}^K1$. Moreover, all tiles in $Q_{\square}P_i$ are kites, and we have $B_{\triangle}P_i=S_{\triangle}Q_{\square}P_i$.
\item Suppose ${\mc T}$ is a tiling by congruent rhombi (fourth of Figure \ref{quad}, with $\alpha\ne\beta$). Then we may divide each tile by connecting the two $\alpha$-vertices, or divide each tile by connecting the two $\beta$-vertices. We get two simple triangular subdivision tilings. For example, we have $E_{\triangle}^J1,E_{\triangle}2=S_{\triangle}E_{\square}^R1$, and $T_{\triangle}P_i=S_{\triangle}C_{\square}P_i$.
\item The tiling by congruent squares (fourth of Figure \ref{quad}, with $\alpha=\beta$) is the regular cube $P_6$. We may divide each tile by drawing the diagonal independently. Altogether we get seven non-equivalent simple subdivision tilings $S_{\triangle}P_6$ by $12$ isosceles triangles, given by Figure \ref{simple_subdivision_cube} in two views. The non-equivalence can be detected by the graphs formed by the normal edges. Three of the seven tilings can be identified with the pentagonal subdivision $T_{\triangle}P_4$ and the earth map tilings $E_{12\triangle}^J1,E_{12\triangle}2$. The remaining four do not have such interpretations, and will be regarded as sporadic tilings in Section \ref{sporadic}. 
\end{itemize}

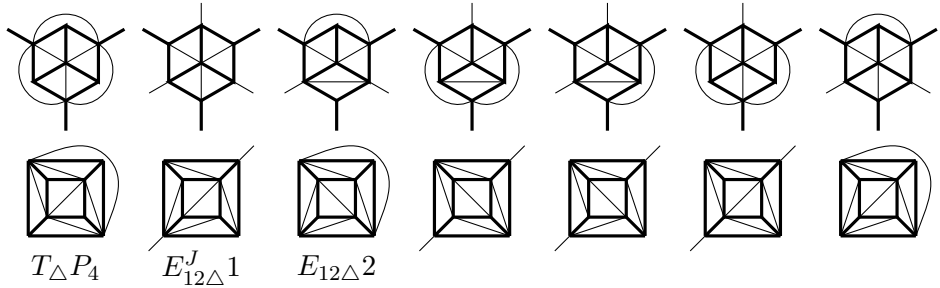
\begin{figure}[htp]
\centering
\begin{tikzpicture}[>=latex]


\foreach \a in {0,1,2}
\foreach \b in {0,...,6}
\draw[xshift=1.8*\b cm, rotate=120*\a, line width=1.2]
	(0,0) -- (-30:0.5) -- (30:0.5) -- (90:0.5)
	(30:0.5) -- (30:0.9);
	

c\foreach \a in {0,1,2}
\foreach \b in {0,1,5,6}
\draw[xshift=1.8*\b cm, rotate=120*\a]
	(0,0) -- (30:0.5);
	
\foreach \b in {2,3,4}
\draw[xshift=1.8*\b cm]
	(150:0.5) -- (0,0) -- (30:0.5)
	(210:0.5) -- (-30:0.5);

\foreach \a in {0,1,2}
\draw[rotate=120*\a]
	(30:0.5) to[out=90,in=0] 
	(90:0.65) to[out=180,in=90] 
	(150:0.5);

\foreach \a in {0,1,2}
\draw[xshift=1.8 cm, rotate=120*\a]
	(90:0.5) -- (90:0.8);	

\draw[xshift=3.6 cm]
	(30:0.5) to[out=90,in=0] 
	(90:0.65) to[out=180,in=90] 
	(150:0.5)
	(-30:0.5) -- (-30:0.8)
	(210:0.5) -- (210:0.8);
	
\foreach \a in {1,2}
\draw[xshift=3*1.8 cm, rotate=120*\a]
	(30:0.5) to[out=90,in=0] 
	(90:0.65) to[out=180,in=90] 
	(150:0.5);
\draw[xshift=3*1.8 cm]
	(90:0.5) -- (90:0.8);

\draw[xshift=4*1.8 cm, rotate=240]
	(30:0.5) to[out=90,in=0] 
	(90:0.65) to[out=180,in=90] 
	(150:0.5)
	(-30:0.5) -- (-30:0.8)
	(210:0.5) -- (210:0.8);
		
\foreach \a in {-1,1}
\draw[xshift=5*1.8 cm, rotate=120*\a]
	(30:0.5) to[out=90,in=0] 
	(90:0.65) to[out=180,in=90] 
	(150:0.5);
\draw[xshift=5*1.8 cm]
	(90:0.5) -- (90:0.8);

\draw[xshift=6*1.8 cm]
	(30:0.5) to[out=90,in=0] 
	(90:0.65) to[out=180,in=90] 
	(150:0.5)
	(-30:0.5) -- (-30:0.8)
	(210:0.5) -- (210:0.8);


\begin{scope}[yshift=-1.8cm]

\foreach \a in {0,1,2,3}
\foreach \b in {0,...,6}
\draw[xshift=1.8*\b cm, rotate=90*\a, line width=1.2]
	(-0.5,0.5) -- (0.5,0.5)
	(-0.25,0.25) -- (0.25,0.25) -- (0.5,0.5);
	
\foreach \b in {0,...,6}
\draw[xshift=1.8*\b cm] 
	(-0.5,0.5) -- (0.25,0.25) -- (0.5,-0.5);
	
\foreach \b in {0,1,5,6}
\draw[xshift=1.8*\b cm]
	(-0.25,-0.25) -- (0.25,0.25);

\foreach \b in {2,3,4}
\draw[xshift=1.8*\b cm]
	(-0.25,0.25) -- (0.25,-0.25);
	
\foreach \b in {0,3,5}
\draw[xshift=1.8*\b cm]
	(-0.5,0.5) -- (-0.25,-0.25) -- (0.5,-0.5);
	
\foreach \b in {0,2,6}
\draw[xshift=1.8*\b cm]
	(-0.5,0.5) to[out=20, in=135] 
	(0.55,0.55) to[out=-45, in=70] (0.5,-0.5);

\foreach \b in {1,3,4,5}
\draw[xshift=1.8*\b cm]
	(0.5,0.5) -- (0.7,0.7)
	(-0.5,-0.5) -- (-0.7,-0.7);

\foreach \b in {1,2,6}
\draw[xshift=1.8*\b cm]
	(-0.25,0.25) -- (-0.5,-0.5)
	(0.25,-0.25) -- (-0.5,-0.5);

\draw[xshift=4*1.8cm]
	(-0.5,0.5) -- (-0.25,-0.25)
	(-0.5,-0.5) -- (0.25,-0.25);
	
\end{scope}

\node at (0,-2.75) {\small $T_{\triangle}P_4$};
\node at (1.8,-2.75) {\small $E_{12\triangle}^J1$};
\node at (3.6,-2.75) {\small $E_{12\triangle}2$};

\end{tikzpicture}
\caption{Simple triangular subdivisions $S_{\triangle}P_6$ of the cube.}
\label{simple_subdivision_cube}
\end{figure}

A pentagonal tiling ${\mc T}$ is {\em paired}, if some edges of ${\mc T}$ are selected, such that each tile has exactly one selected edge. See the second of Figure \ref{simple_subdivision}, where the selected edges are indicated by thick double lines. The {\em simple quadrilateral subdivision} $S_{\square}{\mc T}$ divides each pair into four quadrilaterals by drawing a line perpendicular to the shared edge. For all tiles in $S_{\triangle}{\mc T}$ to be congruent, the pentagon in ${\mc T}$ needs to be symmetric. There are two pentagonal tilings with symmetric tiles:
\begin{itemize}
\item ${\mc T}$ is the pentagonal earth map tiling $E_{\pentagon}1$ in \cite[Proposition 1]{cly}. The simple quadrilateral subdivision of $E_{\pentagon}1$ is $E_{\square}2$, and we write $E_{\square}2=S_{\square}E_{\pentagon}1$. See Figure \ref{pemt}.

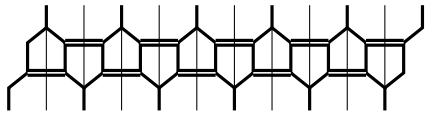
\begin{figure}[htp]
\centering
\begin{tikzpicture}[>=latex,scale=1]

\foreach \a in {0,...,5}
\draw[line width=1.2, xshift=\a cm]
	(-0.5,-0.7) -- (-0.5,-0.4) -- (-0.25,-0.2) -- (-0.25,0.2) -- (0,0.4) -- (0,0.7);
	
\foreach \a in {0,...,4}
\draw[line width=1.2, xshift=\a cm]
	(-0.25,-0.2) -- (0.25,-0.2) -- (0.5,-0.4) 
	(0,0.4) -- (0.25,0.2) -- (0.75,0.2)
	(0.25,0.2) -- (0.25,-0.2);	

\foreach \a in {0,...,4}
\draw[double, line width=1.2, xshift=\a cm]
	(-0.25,-0.2) -- (0.25,-0.2)
	(0.25,0.2) -- (0.75,0.2);

\foreach \a in {0,...,4}
\draw[xshift=\a cm]
	(0,0.4) -- (0,-0.7)
	(0.5,-0.4) -- (0.5,0.7);

\end{tikzpicture}
\caption{$E_{\square}2$ is the simple subdivision of $E_{\pentagon}1$.}
\label{pemt}
\end{figure}

\item The dodecahedron tiling $P_{12}$ can be paired in five ways, given by the thick part of Figure \ref{simple_subdivision_dodecahedron}. Geometrically (thick edge has length $a$ and double thick edge has length $b$), the first is the reduction $a=b$ of the deformed dodecahedron in Figure \ref{platonic_solids}, and allows one free parameter. By \cite{ay1,gsy}, the other four in Figure \ref{simple_subdivision_dodecahedron} are the regular dodecahedron.

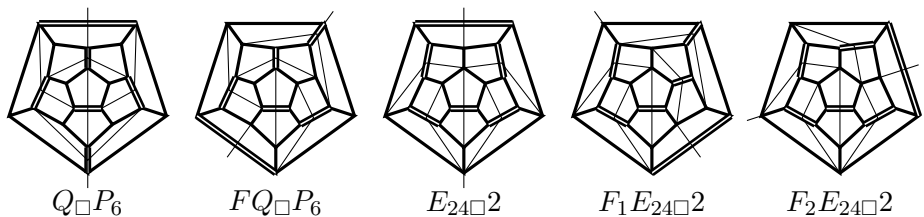
\begin{figure}[htp]
\centering
\begin{tikzpicture}

\foreach \a in {0,...,4}
\foreach \b in {0,...,4}
\draw[line width=1.2, xshift=2.5*\b cm, rotate=72*\a]
	(90:0.3) -- (18:0.3) -- (18:0.56) -- (54:0.76) -- (90:0.56)
	(54:0.76) -- (54:1.1) -- (-18:1.1);


\draw[double, line width=1]
	(234:0.3) -- (-54:0.3)
	(90:0.3) -- (90:0.56)
	(162:0.56) -- (198:0.76)
	(18:0.56) -- (-18:0.76)
	(-90:0.76) -- (-90:1.1)
	(126:1.1) -- (54:1.1);

\draw
	(-90:1.3) -- (90:1.1)
	(18:0.56) -- (90:0.43) -- (162:0.56)
	(54:1.1) -- (0:0.62) -- (-54:0.3)
	(126:1.1) -- (180:0.62) -- (234:0.3)
	(-18:0.76) -- (-90:0.93) -- (198:0.76);

\node at (0,-1.5) {\small $Q_{\square}P_6$};


\begin{scope}[xshift=2.5cm]

\draw[double, line width=1]
	(234:0.3) -- (-54:0.3)
	(90:0.3) -- (90:0.56)
	(162:0.56) -- (198:0.76)
	(-54:0.56) -- (-18:0.76)
	(54:0.76) -- (54:1.1)
	(198:1.1) -- (-90:1.1);

\draw
	(-90:0.76) -- (90:0.3)
	(18:0.56) -- (90:0.43) -- (162:0.56)
	(-90:1.1) -- (-36:0.62) -- (18:0.3)
	(126:1.1) -- (180:0.62) -- (234:0.3) -- (234:1.1)
	(126:0.76) -- (54:0.93) -- (-18:0.76)
	(54:1.1) -- (54:1.3);

\node at (0,-1.5) {\small $FQ_{\square}P_6$};

\end{scope}


\begin{scope}[xshift=5cm]

\draw[double, line width=1]
	(234:0.3) -- (-54:0.3)
	(18:0.56) -- (54:0.76)
	(162:0.56) -- (126:0.76)
	(-54:0.56) -- (-18:0.76)
	(234:0.56) -- (198:0.76)
	(126:1.1) -- (54:1.1);

\draw
	(-90:0.76) -- (90:1.1)
	(-90:1.3) -- (-90:1.1) -- (-36:0.62) -- (18:0.3)
	(-90:1.1) -- (216:0.62) -- (162:0.3)
	(-18:1.1) -- (36:0.62) -- (90:0.3)
	(198:1.1) -- (144:0.62) -- (90:0.3);

\node at (0,-1.5) {\small $E_{24\square}2$};

\end{scope}


\begin{scope}[xshift=7.5cm]
	
\draw[double, line width=1] 
	(234:0.3) -- (-54:0.3)
	(162:0.56) -- (126:0.76)
	(234:0.56) -- (198:0.76)
	(18:0.3) -- (18:0.56)
	(54:0.76) -- (54:1.1)
	(-18:1.1) -- (-90:1.1);

\draw
	(-90:0.76) -- (90:0.3)
	(-90:1.1) -- (216:0.62) -- (162:0.3)
	(198:1.1) -- (144:0.62) -- (90:0.3)
	(90:0.56) -- (18:0.43) -- (-54:0.56)
	(126:1.3) -- (126:0.76) -- (54:0.93) -- (-18:0.76)
	(-54:0.56) -- (-54:1.1);

\node at (0,-1.5) {\small $F_1E_{24\square}2$};
	
\end{scope}


\begin{scope}[xshift=10cm]

\draw[double, line width=1]
	(234:0.3) -- (-54:0.3)
	(90:0.56) -- (54:0.76)
	(162:0.56) -- (126:0.76)
	(-54:0.56) -- (-18:0.76)
	(234:0.56) -- (198:0.76)
	(-18:1.1) -- (54:1.1);

\draw
	(-90:0.76) -- (90:0.3)
	(-90:1.1) -- (-36:0.62) -- (18:0.3)
	(-90:1.1) -- (216:0.62) -- (162:0.3)
	(198:1.3) -- (198:1.1) -- (144:0.62) -- (90:0.3)
	(126:1.1) -- (72:0.62) -- (18:0.3)
	(18:0.56) -- (18:1.1);

\node at (0,-1.5) {\small $F_2E_{24\square}2$};

\end{scope}

\end{tikzpicture}
\caption{Simple quadrilateral subdivisions of the dodecahedron.}
\label{simple_subdivision_dodecahedron}
\end{figure}

The corresponding simple subdivisions give five quadrilateral tilings. The first and third are $Q_{\square}P_6,E_{\square}2$, and the remaining three are the flip modifications $FQ_{\square}P_6,F_1E_{\square}2,F_2E_{\square}2$ to be introduced in Section \ref{flip}. 
\end{itemize}

Finally, in the third of Figure \ref{simple_subdivision}, the {\em simple pentagonal subdivision} $S_{\pentagon}{\mc T}$ of a quadrilateral tiling ${\mc T}$ divides each quadrilateral tile by drawing a line connecting the middle points of two opposite edges. The lines are drawn in compatible way to yield a pentagonal tiling. The simple subdivision is discussed in \cite{yan2}, and underlies the pentagonal subdivision and the double pentagonal subdivision.

\subsection{Modification}
\label{flip}

If certain part ${\mc A}$ of a tiling ${\mc T}$ has the property that flipping ${\mc A}$ with respect to some axis does not change the angle values along the boundary of ${\mc A}$, then the flip gives another tiling $F{\mc T}$, called a {\em flip modification} of ${\mc T}$.

\medskip

\noindent {\bf Flip of Platonic type tilings}

\medskip

The first of Figure \ref{flip8} is another way of drawing the tiling $Q_{\square}P_6$. We can easily see the tiling is divided into the inner and outer halves. The second picture shows the angles along the boundary between the two halves. If $\alpha=\beta$, then the third picture shows the angle values of the inner half, where $\bar{\alpha}=\pi-\alpha$. There are many flips (or rotations) of the inner half that preserve the angle values, and one is the flip with respect to the gray line. All these give the same tiling $FQ_{\square}P_6$ in the fourth picture. We remark that $FQ_{\square}P_6$ is also the second of Figure \ref{simple_subdivision_dodecahedron}.

\begin{figure}[htp]
\centering
\begin{tikzpicture}

\foreach \a in {0,1,2}
\foreach \b in {0,1}
{
\begin{scope}[xshift=10*\b cm, rotate=120*\a]

\draw
	(90:0.8) -- (60:0.8) -- (30:0.8) -- (0:0.8) -- (-30:0.8)
	(30:0.8) -- (30:1.4);

\draw[dash pattern=on 1pt off 1pt]
	(60:0.8) -- (90:1.1) -- (150:1.1);

\draw[line width=1.2]
	(120:0.8) -- (90:1.1) -- (30:1.1);

\end{scope}

\begin{scope}[rotate=120*\a]

\draw
	(0,0) -- (90:0.8);
	
\draw[dash pattern=on 1pt off 1pt]
	(90:0.4) -- (0:0.8);

\draw[line width=1.2]
	(-30:0.4) -- (60:0.8);
			
\end{scope}

\begin{scope}[xshift=10cm, rotate=120*\a]

\draw
	(0,0) -- (60:0.8);
	
\draw[dash pattern=on 1pt off 1pt]
	(-60:0.4) -- (30:0.8);

\draw[line width=1.2]
	(60:0.4) -- (-30:0.8);
				
\end{scope}

}

\node at (0,-1.7) {\small $Q_{\square}P_6$};

\node at (10,-1.7) {\small $FQ_{\square}P_6$};


\begin{scope}[xshift=3.3cm]

\foreach \b in {0,1,2}
{
\begin{scope}[rotate=120*\b]

\foreach \a in {0,...,3}
\draw[rotate=30*\a]
	(0:1.4) -- (30:1.4);

\draw[line width=1.2]
	(60:1.4) -- (60:1)
	(0:1.4) -- (0:1.8);

\draw
	(-30:1.4) -- (-30:1)
	(30:1.4) -- (30:1.8);

\draw[dash pattern=on 1pt off 1pt]
	(60:1.4) -- (60:1.8)
	(0:1.4) -- (0:1);
		
\node at (90:1.55) {\small $\alpha$};
\node at (82:1.2) {\small $\alpha$};
\node at (98:1.2) {\small $\alpha$};

\node at (30:1.2) {\small $\alpha$};
\node at (24:1.55) {\small $\alpha$};
\node at (36:1.55) {\small $\alpha$};

\node at (54:1.55) {\small $\delta$};
\node at (66:1.55) {\small $\delta$};

\node at (-8:1.55) {\small $\beta$};
\node at (8:1.55) {\small $\beta$};

\end{scope}
}

\node at (52:1.15) {\small $\beta$};
\node at (68:1.15) {\small $\beta$};

\node at (-52:1.2) {\small $\beta$};
\node at (-70:1.2) {\small $\beta$};

\node at (170:1.15) {\small $\beta$};
\node at (190:1.15) {\small $\beta$};

\node at (-8:1.2) {\small $\delta$};
\node at (8:1.2) {\small $\delta$};

\node at (112:1.2) {\small $\delta$};
\node at (128:1.2) {\small $\delta$};

\node at (-113:1.2) {\small $\delta$};
\node at (-127:1.2) {\small $\delta$};

\end{scope}


\begin{scope}[xshift=6.8cm]

\foreach \b in {0,1,2}
{
\begin{scope}[rotate=120*\b]

\foreach \a in {0,...,3}
\draw[rotate=30*\a]
	(0:1.4) -- (30:1.4);
		
\node at (60:1.2) {\small $\bar{\alpha}$};
\node at (90:1.2) {\small $\bar{\alpha}$};
\node at (30:1.2) {\small $\alpha$};
\node at (0:1.2) {\small $\alpha$};

\end{scope}
}

\draw[gray]
	(75:1.6) -- (75:-1.6);

\end{scope}

\end{tikzpicture}
\caption{Flip modification $FQ_{\square}P_6$ of $Q_{\square}P_6$ in case $\alpha=\beta$.}
\label{flip8}
\end{figure}
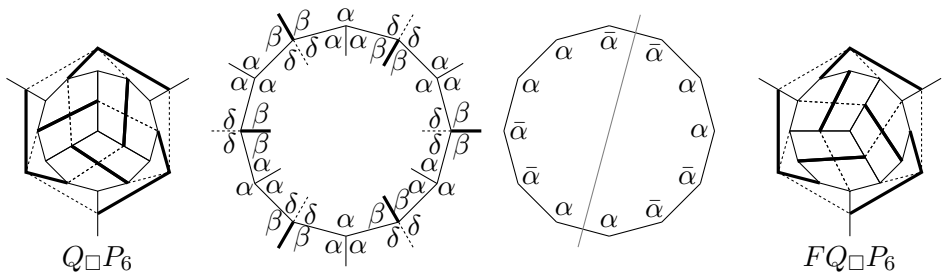

Figure \ref{flip1} shows the flip modifications $FB_{\triangle}P_8,FQ_{\square}P_8$ of $B_{\triangle}P_8,Q_{\square}P_8$. Again, $B_{\triangle}P_8$ and $Q_{\square}P_8$ consist of two half sphere tilings. The right of Figure \ref{flip1} shows the angle values to be $\pi$ along the boundary between the two halves. Therefore the two halves are actually genuine half spheres. All the rotations and flip modifications of the inner half give the same tiling as the flip with respect to the gray line.

\begin{figure}[htp]
\centering
\begin{tikzpicture}[>=latex]

\foreach \a in {0,...,3}
\foreach \b in {0,1}
{
\begin{scope}[xshift=3.5*\b cm, rotate=90*\a]

\draw
	(45:0.8) -- (0:1.1) -- (-45:0.8)
	(0:1.1) -- (0:1.4);

\draw[line width=1.2]
	(45:0.8) -- (45:1.4)
	(0:0.8) -- (45:0.8) -- (90:0.8);

\draw[dash pattern=on 1pt off 1pt]
	(0:0.8) -- (0:1.1)
	(45:1.1) -- (0:1.1) -- (-45:1.1);

\end{scope}

\begin{scope}[xshift=3.5*\b cm, rotate=45*\b+90*\a]

\draw
	(0,0) -- (0.45,0)
	(45:0.8) -- (0.45,0) -- (-45:0.8);

\draw[line width=1.2]
	(0,0) -- (45:0.8);

\draw[dash pattern=on 1pt off 1pt]
	(0:0.8) -- (0:0.45) -- (45:0.45) -- (90:0.45);

\end{scope}

}

\node at (1.6,-0.5) {\small $B_{\triangle}P_8$};
\node at (5.3,-0.5) {\small $FB_{\triangle}P_8$};


\begin{scope}[yshift=-3cm]

\foreach \a in {0,...,3}
\foreach \b in {0,1}
{
\begin{scope}[xshift=3.5*\b cm, rotate=90*\a]

\draw
	(0:0.8) -- (0:1.1)
	(45:1.1) -- (0:1.1) -- (-45:1.1);

\draw[line width=1.2]
	(45:0.8) -- (45:1.4)
	(0:0.8) -- (45:0.8) -- (90:0.8);

\end{scope}

\begin{scope}[xshift=3.5*\b cm, rotate=45*\b+90*\a]

\draw
	(0:0.8) -- (0:0.45) -- (45:0.45) -- (90:0.45);

\draw[line width=1.2]
	(0,0) -- (45:0.8);

\end{scope}
}

\node at (1.6,-0.5) {\small $Q_{\square}P_8$};
\node at (5.3,-0.5) {\small $FQ_{\square}P_8$};

\end{scope}


\foreach \a in {0,...,7}
{
\begin{scope}[shift={(7cm, -1.5cm)}]

\draw[gray]
	(22.5:1.2) -- (22.5:-1.2);
	
\draw[line width=1.2, rotate=45*\a]
	(0:1) -- (45:1);

\node at (45*\a:0.8) {\small $\pi$};
				
\end{scope}
}

\end{tikzpicture}
\caption{Flip modifications $FB_{\triangle}P_8,FQ_{\square}P_8$ of $B_{\triangle}P_8,Q_{\square}P_8$.}
\label{flip1}
\end{figure}

We remark that, despite $Q_{\square}P_6=Q_{\square}P_8$, the flip modifications $FQ_{\square}P_6$ and $FQ_{\square}P_8$ are different tilings. The former is obtained by flipping a half cube (therefore the subscript $6$), and the later is obtained by flipping a half octahedron (therefore the subscript $8$). 


\medskip

\noindent {\bf Modification of $E_{\square}1$, and subsequent flips of $E_{\square}^R1,E_{\triangle}^J1,E_{\triangle}2,E_{\triangle}3$}

\medskip

The general earth map tiling $E_{\square}1$ and the kite earth map tiling $E_{\square}^K1$ do not have modifications. Therefore the flip modification of $FE_{\square}1$ is only about $E_{\square}^A1$ and its $a=b$ reduction $E_{\square}^R1$. In a very special case, we also have a rearrangement $RE_{\square}^A1$, which we denote by $RE_{\square}1$.

The earth map tiling $E_{\square}^A1$ consists of  $p=\frac{f}{2}$ timezones. If $\alpha=s\beta$ for an integer $s$ (necessarily $>1$), then we collect $s$ timezones to get a {\em partial earth map tiling} ${\mc A}_s$, consisting of $2s$ tiles and with $\beta^s$ at the two ends. The first of Figure \ref{flip5} shows the partial earth map tiling ${\mc A}_7$. The second of Figure \ref{flip5} shows the angles along the boundary of ${\mc A}_s$. Then we may flip ${\mc A}_s$ with respect to the first gray line and get a new tiling. This turns the poles $\beta^p$ of the earth map tiling into $\alpha\beta^{p-s}$. In fact, if $t\alpha=ts\beta<2\pi$, then we may take $t$ non-overlapping copies of ${\mc A}_s$ and flip these ${\mc A}_s$ simultaneously. Then we get a new tiling which turns $\beta^p$ into $\alpha^t\beta^{p-st}$.  

Similarly, if $\gamma+\delta=s\beta$, then we may flip ${\mc A}_s$ with respect to the second gray line. Again we may flip $t$ non-overlapping copies of ${\mc A}_s$ simultaneously in case $t(\gamma+\delta)=ts\beta<2\pi$, which turns $\beta^p$ into $\beta^{p-st}\gamma^t\delta^t$.

\begin{figure}[htp]
\centering
\begin{tikzpicture}

\begin{scope}[xshift=-4cm, xscale=2]

\foreach \b in {1,-1}
{
\begin{scope}[scale=\b]

\draw
	(-90:1) -- (-30:1) -- (30:1) -- (90:1)
	(0,-1) -- (0.62,-0.47) -- (0.62,0.47) -- (0,1)
	(0,-1) -- (0.37,-0.44) -- (0.37,0.44) -- (0,1)
	(0,-1) -- (0.12,-0.41) -- (0.12,0.41) -- (0,1);

\draw[line width=1.2]
	(0.62,0.47) -- (-30:1)
	(0.37,0.44) -- (0.62,-0.47)
	(0.12,0.41) -- (0.37,-0.44)
	(0.12,-0.41) -- (0,0);

\end{scope}
}

\node at (0,-1.4) {${\mc A}_7$};

\end{scope}


\draw[gray]
	(-60:1.2) -- (120:1.2)
	(60:1.2) -- (240:1.2);

\node at (60:1.4) {\small $1$};
\node at (120:1.4) {\small $2$};

\foreach \b in {1,-1}
{
\begin{scope}[scale=\b]

\foreach \a in {0,...,5}
\draw[rotate=60*\a]
	(30:1) -- (90:1);

\draw[line width=1.2]
	(150:1) -- (150:0.6);
	
\node at (92:0.7) {\small $\beta^s$};
\node at (30:0.8) {\small $\alpha$};
\node at (133:0.7) {\small $\gamma$};
\node at (167:0.7) {\small $\delta$};

\node at (0,0) {\small ${\mc A}_s$};		

\end{scope}
}

\node at (0,-1.4) {$FE_{\square}^A1$};

\foreach \b in {1,-1}
{
\begin{scope}[xshift=3cm, scale=\b]

\draw[gray]
	(60:1.2) -- (60:-1.2);
	
\foreach \a in {0,...,5}
\draw[rotate=60*\a]
	(30:1) -- (90:1);

\draw
	(-30:1) -- (-30:0.6);
	
\node at (90:0.7) {\small $\alpha^q$};
\node at (30:0.75) {\small $\beta$};
\node at (-17:0.7) {\small $\alpha$};
\node at (-47:0.7) {\small $\beta$};		

\end{scope}
}

\node at (3,-1.4) {$FE_{\square}^R1$};

\end{tikzpicture}
\caption{Flip modifications $FE_{\square}^A1$ and $FE_{\square}^R1$.}
\label{flip5}
\end{figure}
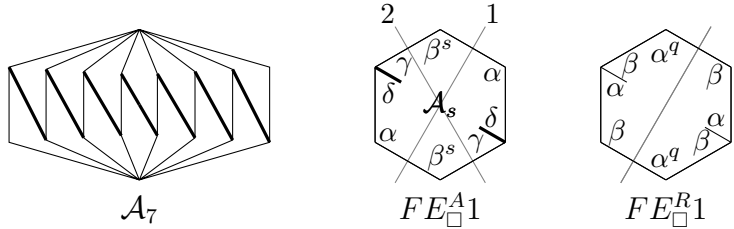

We remark that the necessary and sufficient condition for $s,t$ is given by \eqref{acdeqB}. The condition implies $t\le 3$. Moreover, by $\alpha+\gamma+\delta=2\pi$, we know $\alpha=s\beta$ if and only if $\gamma+\delta=s'\beta$, with $s+s'=p$. Then the earth map tiling $E_{\square}^A1$ is the disjoint union of ${\mc A}_s$ and ${\mc A}_{s'}$. The first flip of ${\mc A}_s$ and the second flip of ${\mc A}_{s'}$ actually give the same tiling. Therefore all we can do is the following:
\begin{enumerate}
\item If $\alpha=s\beta\le\pi$, then we may apply the first flip up to three times.
\item If $\gamma+\delta=s\beta<\pi$, which means $\alpha>\pi$, then we may apply the second flip up to three times.
\end{enumerate}
We may denote the flip modification by $FE_{\square}^A1$, without ambiguity on which flip is used.

For a very special combination of angle values
\[
\alpha=(\tfrac{4}{3}-\tfrac{4}{3f})\pi,\;
\beta=\tfrac{4}{f}\pi,\;
\gamma=(\tfrac{2}{3}-\tfrac{2}{3f})\pi,\;
\delta=\tfrac{2}{f}\pi,
\]
the tiling $E_{\square}^A1$ has yet another modification. Let $f=6q+4$, then the earth map tiling $E_{\square}^A1$ is three copies of ${\mc A}_q$ plus 4 tiles. Then we may glue three copies of ${\mc A}_q$ together as in the first of Figure \ref{flip7}. The complement of the union is two congruent hexagons, indicated by the grey shade. The hexagon is actually a union of two tiles. We may divide each of the two hexagons into two tiles by drawing the thick edge in any one of three ways. All the possible ways of dividing the two hexagons give three non-equivalent tilings that we call the {\em rearrangements} of $E_{\square}^A1$, and denote by $RE_{\square}^A1$. For example, the second of Figure \ref{flip7} gives the tiling with $q=4$, or $28$ tiles. For more details, see Figure \ref{acdD}.

\begin{figure}[htp]
\centering
\begin{tikzpicture}

\foreach \a in {0,1,2}
\fill[gray!50, rotate=120*\a] 
	(0,0) -- (60:0.8) -- (0:0.3) -- (-60:0.8)
	(60:1.4) -- (0:1.4) -- (-60:1.4) -- (-60:1.7) -- (0:1.7) -- (60:1.7);
	
\foreach \a in {0,1,2}
{
\begin{scope}[rotate=120*\a]

\draw
	(60:0.8) -- (0:0.3) -- (-60:0.8)
	(60:0.8) -- (60:1.4) -- (0:1.4) -- (-60:1.4);

\draw[line width=1.2]
	(0.3,0) -- (0.6,0)
	(1.4,0) -- (1.1,0);

\node at (0.85,0) {\small ${\mc A}_q$};	
		
\end{scope}
}

\draw[line width=1.2]
	(0.3,0) -- (-0.8,0)
	(1.4,0) -- (1.7,0)
	(-1.4,0) -- (-1.7,0);
	
\node at (0.1,-0.2) {$-$};
\node at (90:1.4) {$-$};

\node at (0,-1.7) {$RE_{\square}^A1$};


\begin{scope}[xshift=4.5cm]

\draw[line width=1.2, gray!50]
	(1,0) -- (1.3,0);
	
\foreach \a in {0,1,2}
{
\begin{scope}[rotate=120*\a]

\foreach \b in {0,...,3}
\draw[line width=1.2]
	(0.3+0.5*\b,0) -- ++(0.2,0);

\foreach \b in {0,1,2}
\draw
	(0.8+0.5*\b,0) -- (-60:0.8)
	(0.5+0.5*\b,0) -- (60:1.2);

\draw
	(60:0.8) -- (0:0.3) -- (-60:0.8) -- (-60:1.2) -- (0:2) -- (60:1.2)
	(0:0.3) -- (0:2);
	
\end{scope}
}

\draw[line width=1.2]
	(-0.8,0) -- (0.3,0)
	(2,0) -- (2.3,0)
	(-1.2,0) -- (-1.5,0);

\node at (0.1,-0.2) {$-$};
\node at (60:1.5) {$-$};

\node at (0.5,-1.7) {$RE_{28\square}1$};

\end{scope}

\end{tikzpicture}
\caption{Rearrangement $RE_{\square}^A1$.}
\label{flip7}
\end{figure}
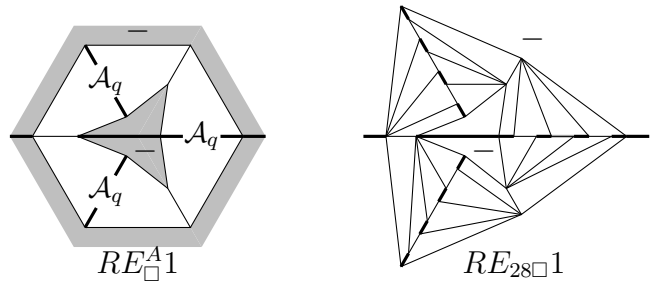 

The rhombus earth map tiling $E_{\square}^R1$ is the reduction $a=b$ of the almost equilateral earth map tiling $E_{\square}^R1$. The angles $\alpha=\frac{4}{f}\pi,\beta=(1-\frac{2}{f})\pi$ of the rhombus become angles $\alpha'=\beta,\beta'=\alpha,\gamma'=\beta,\delta'=\alpha$ of the almost equilateral quadrilateral. Then $\alpha'=q\beta'$ ($q=s$) for the flip modification of $E_{\square}^A1$ becomes $\beta=(1-\frac{2}{f})\pi=q\alpha=\frac{4q}{f}\pi$. This means $f=4q+2$. The flip $FE_{\square}^A1$ then becomes the flip $FE_{\square}^R1$ in the third of Figure \ref{flip5}. Since $\alpha'=\beta$ is close to $\pi$, we get $t=1$ and there is only one $FE_{\square}^R1$.

Figure \ref{flip3} gives another view of $FE_{\square}^R1$. The tiling $E_{\square}^R1$ is the union of two identical {\em half earth map tilings} in the first of Figure \ref{flip3}, each with $\alpha^q$ and $\alpha^{q+1}$ at the two ends. The second of Figure \ref{flip3} is the angles along the boundary of the (right) half earth map tiling. Then the flip with respect to the gray line also gives $FE_{\square}^R1$. The flip in Figure \ref{flip3} can also be obtained by a rotation, which is the viewpoint in \cite{ua}.

\begin{figure}[htp]
\centering
\begin{tikzpicture}[>=latex]

\foreach \b in {1,-1}
{
\begin{scope}[scale=\b]

\draw
	(0.2,0.5) -- (0.2,0) -- (-0.2,0) -- (-0.2,-0.5)-- (3.2,-0.5) -- (3.2,0) -- (2.8,0) -- (2.8,0.5) -- (0.2,0.5);
	
\node at (1.5,0.7) {\small $\alpha^q$};
\node at (1.5,-0.7) {\small $\alpha^{q+1}$};

\node at (1.5,0) {\small half EMT};

\end{scope}
}


\begin{scope}[xshift=5cm]

\draw[gray]
	(30:1.2) -- (30:-1.2);
	
\foreach \a in {0,...,5}
\draw[rotate=60*\a]
	(30:1) -- (90:1);

\foreach \b in {1,-1}
{
\begin{scope}[xscale=\b]

\draw
	(30:1) -- (30:0.6);

\node at (90:0.75) {\small $\alpha^q$};
\node at (-90:0.6) {\small $\alpha^{q+1}$};
\node at (-30:0.75) {\small $\beta$};
\node at (17:0.7) {\small $\alpha$};
\node at (45:0.7) {\small $\beta$};		

\end{scope}
}

\end{scope}

\end{tikzpicture}
\caption{Flip modification $FE_{\square}^R1$ of $E_{\square}^R1$, second viewpoint.}
\label{flip3}
\end{figure}
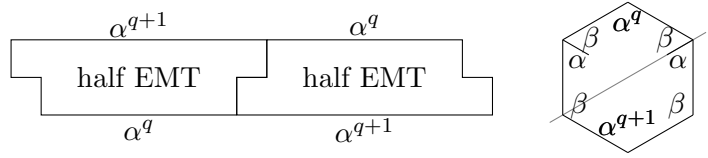

Since $E_{\triangle}^J1,E_{\triangle}2$ are the two simple triangular subdivisions of $E_{\square}^R1$, and $E_{\triangle}3$ is the triangular subdivision of $E_{\square}^R1$, the flip modification $FE_{\square}^R1$ induces flip modifications $FE_{\triangle}^J1,FE_{\triangle}2,FE_{\triangle}3$. We note that in this viewpoint, the timezone for $E_{\triangle}^J1$ follows from the timezone for $E_{\square}^R1$, and therefore consists of the shaded tiles in the first of Figure \ref{flip9}, which is different from the timezone indicated in Figure \ref{emt}.

\begin{figure}[htp]
\centering
\begin{tikzpicture}[>=latex]

\begin{scope}[xshift=-4cm]

\fill[gray!50]
	(1.2,0.6) -- (1.2,0) -- (0.6,0) -- (0.6,-0.6) -- (1.8,-0.6) -- (1.8,0) -- (2.4,0) -- (2.4,0.6);
	
\foreach \a in {0,2,4}
{
\draw[xshift=0.6*\a cm]
	(0,0) -- (0,0.6);

\draw[xshift=0.6*\a cm, line width=1.2]
	(0,0) -- (0,-0.6);
}

\foreach \a in {1,3}
{
\draw[xshift=0.6*\a cm]
	(0,0) -- (0,-0.6);

\draw[xshift=0.6*\a cm, line width=1.2]
	(0,0) -- (0,0.6);
}
	
\draw
	(0,0) -- (2.4,0);

\node at (1.4,-1) {\small timezone of $E_{\triangle}^J1$};
\node at (1.4,-1.5) {\small for flip modification};

\end{scope}


\fill[gray!50]
	(0,0.6) -- (0,-0.6) -- (0.8,-0.6) -- (0.8,0.6);

\draw[dash pattern=on 1pt off 1pt]
	(0,0.2) -- (2.8,0.2)
	(0,-0.2) -- (2.8,-0.2);

\draw
	(0.4,0.2) -- (0.8,-0.2);

\foreach \a in {0,1,3}
{

\draw[xshift=0.8*\a cm]
	(0,-0.6) -- (0,-0.2) -- (0.4,0.2) -- (0.4,0.6);

\draw[xshift=0.8*\a cm, line width=1.2]
	(0,0.6) -- (0,-0.2)
	(0.4,-0.6) -- (0.4,0.2);
}

\node at (1.5,0.85) {\small $\alpha^{2q+1}$};
\node at (1.5,-0.85) {\small $\alpha^{2q+1}$};

\node at (1.4,-1.5) {\small half EMT in $E_{\triangle}3$};


\begin{scope}[xshift=5cm]

\draw[gray]
	(60:1.4) -- (60:-1.4);

\foreach \b in {1,-1}
{
\begin{scope}[scale=\b]

\draw
	(30:1.2) -- (90:1.2)
	(30:1.2) -- (30:0.8);

\draw[line width=1.2]
	(30:1.2) -- (-30:1.2) -- (-90:1.2);
	
\node at (95:0.82) {\small $\alpha^{2q+1}$};

\node at (-30:0.85) {\small $\gamma^2$};

\node at (20:0.9) {\small $\alpha$};
\node at (45:0.8) {\small $\beta^2$};			

\end{scope}
}

\node at (0,-1.5) {$FE_{\triangle}3$};

\end{scope}

\end{tikzpicture}
\caption{Flip modifications $FE_{\triangle}^J1,FE_{\triangle}3$, with $\beta^2=\beta\dash\beta$ and $\gamma^2=\gamma\thin\gamma$.}
\label{flip9}
\end{figure}
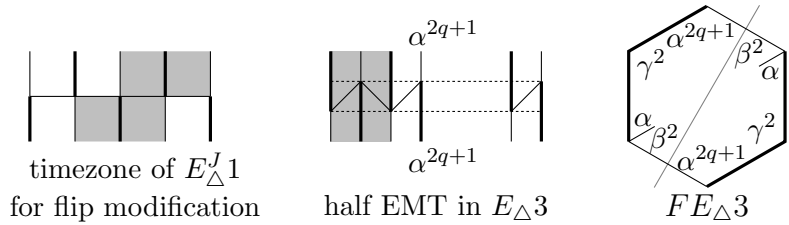

The second of Figure \ref{flip9} shows a choice of the timezone of $E_{\triangle}3$ different from the one in Figure \ref{emt}, and the corresponding half earth map tiling. Then the flip modification $FE_{\triangle}3$ is also obtained by the flip in the third of Figure \ref{flip9}.

\medskip

\noindent {\bf Flip of $E_{\square}2$}

\medskip


Figure \ref{pemt} shows $E_{\square}2$ is the simple pentagonal subdivision of the (symmetric) pentagonal earth map tiling $E_{\pentagon}1$. The pentagonal tiling has two flip modifications $F_1E_{\pentagon}1,F_2E_{\pentagon}1$ \cite[Figure 4]{cly}. Correspondingly, if $f=16q+8$, then $E_{\square}2$ is the union of two half earth map tilings, each with $\delta^{2q}=(\thin\delta\dash\delta\thin)^q$ and $\delta^{2q+2}=(\thin\delta\dash\delta\thin)^{q+1}$ at the two ends. The angles along the boundary of one half earth map tiling is given by Figure \ref{flip10}, and the flips with respect to the two gray lines give two flip modifications $F_1E_{\square}2,F_2E_{\square}2$. We remark that for $f=24$, $E_{24\square}2,F_1E_{24\square}2,F_2E_{24\square}2$ are the last three tilings in Figure \ref{simple_subdivision_dodecahedron}.

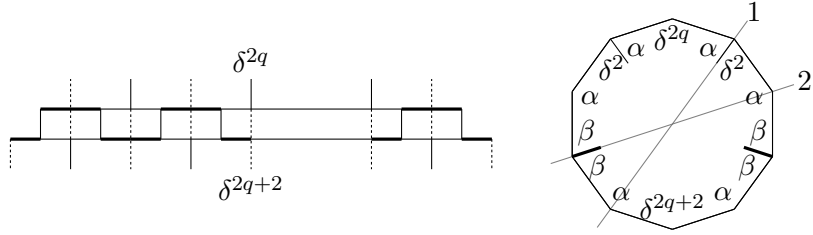
\begin{figure}[htp]
\centering
\begin{tikzpicture}[>=latex]

\foreach \a in {0,1,3}
{
\begin{scope}[xshift=1.6*\a cm]

\draw[dash pattern=on 1pt off 1pt]
	(0,-0.2) -- (0,0.6);

\draw
	(0,-0.2) -- (0,-0.6)
	(-0.4,0.2) -- (-0.4,-0.2)
	(0.4,0.2) -- (0.4,-0.2);

\draw[line width=1.2]
	(-0.4,0.2) -- (0.4,0.2)
	(-0.8,-0.2) -- (-0.4,-0.2)
	(0.8,-0.2) -- (0.4,-0.2);
	
\end{scope}
}

\foreach \a in {1,3,5}
{
\begin{scope}[xshift=0.8*\a cm]

\draw[dash pattern=on 1pt off 1pt]
	(0,0.2) -- (0,-0.6);

\draw
	(0,0.2) -- (0,0.6);
	
\end{scope}
}

\draw[dash pattern=on 1pt off 1pt]
	(-0.8,-0.2) -- (-0.8,-0.6)
	(5.6,-0.2) -- (5.6,-0.6);
	
\draw
	(-0.4,0.2) -- (5.2,0.2)
	(-0.8,-0.2) -- (5.6,-0.2);

\node at (2.4,0.85) {\small $\delta^{2q}$};
\node at (2.4,-0.85) {\small $\delta^{2q+2}$};


\begin{scope}[xshift=8cm]

\draw[gray]
	(54:1.7) -- (54:-1.7)
	(18:1.7) -- (18:-1.7);

\node at (54:1.85) {\small $1$};
\node at (18:1.85) {\small $2$};

\foreach \b in {1,-1}
{
\begin{scope}[xscale=\b]

\foreach \a in {0,...,9}
\draw[rotate=36*\a]
	(-18:1.4) -- (18:1.4);

\draw
	(54:1.4) -- (54:1);

\draw[line width=1.2]
	(-18:1.4) -- (-18:1);

\node at (18:1.15) {\small $\alpha$};
\node at (64:1.1) {\small $\alpha$};
\node at (43:1.1) {\small $\delta^2$};	
\node at (-6:1.15) {\small $\beta$};	
\node at (-30:1.15) {\small $\beta$};
\node at (-54:1.15) {\small $\alpha$};		

\end{scope}
}

\node at (90:1.15) {\small $\delta^{2q}$};
\node at (-90:1.1) {\small $\delta^{2q+2}$};

\end{scope}

\end{tikzpicture}
\caption{Flip modifications $F_1E_{\square}2,F_2E_{\square}2$, with $\delta^2=\delta\dash\delta$.}
\label{flip10}
\end{figure}

\medskip

\noindent {\bf Flip of $E_{\triangle}1$}

\medskip


If $f=4p=8q+4$, then $E_{\triangle}1$ is the union of two half earth map tilings in the first of Figure \ref{flip4}, each with $\alpha^p=\alpha^{2q+1}$ at the two ends. The second picture shows the angles along the boundary of the half earth map tiling. By $p\alpha=\beta+\gamma$, we may flip with respect to the gray line and get the flip modification $FE_{\triangle}1$. 

The reduction (by $a=b$) $E_{\triangle}^I1$ of $E_{\triangle}1$ only requires $f=2p$, and the induced flip modification $FE_{\triangle}^I1$ only requires $f=2p=4q$ to be a multiple of $4$. 

The reduction (by $a=c$) $E_{\triangle}^J1$ of $E_{\triangle}1$ is also the simple triangle subdivision of $E_{\square}^R1$. The flip modifications of $E_{\triangle}1$ and  $E_{\square}^R1$ induce the same flip modification $FE_{\triangle}^J1$. Therefore there is no ambiguity (especially in the main theorem in the introduction) to use $FE_{\triangle}1$ to denote the flip modification of $E_{\triangle}1$, including the flip modifications of the reductions $E_{\triangle}^I1,E_{\triangle}^J1$.

\begin{figure}[htp]
\centering
\begin{tikzpicture}[>=latex]

\begin{scope}[xshift=-5cm]

\draw[dash pattern=on 1pt off 1pt]
	(0,0) -- (3.2,0);

\foreach \a in {0,1,2,3.5}
{
\draw[xshift=0.8*\a cm]
	(0,0.4) -- (0,0);

\draw[xshift=0.8*\a cm, line width=1.2]
	(0,-0.4) -- (0,0);
}

\foreach \a in {0,1,3.5}
{
\draw[xshift=0.8*\a cm]
	(0.4,-0.4) -- (0.4,0);

\draw[xshift=0.8*\a cm, line width=1.2]
	(0.4,0.4) -- (0.4,0);
}

\node at (1.75,0.65) {\small $\alpha^{2q+1}$};
\node at (1.75,-0.65) {\small $\alpha^{2q+1}$};

\node at (1.6,-1.4) {\small half EMT in $E_{\triangle}1$};

\end{scope}
	

\draw[gray]
	(0.7,0.7) -- (-0.7,-0.7);

\foreach \b in {0,1}
{
\begin{scope}[xshift=3.5*\b cm]

\draw
	(0,1) -- (-1,0)
	(0,-1) -- (1,0);

\draw[line width=1.2]
	(1,0) -- (0,1)
	(-1,0) -- (0,-1);

\end{scope}
}

\draw[dash pattern=on 1pt off 1pt]
	(1,0) -- (0.6,0)
	(-1,0) -- (-0.6,0);

\foreach \b in {1,-1}
{
\begin{scope}[scale=\b]

\node at (-0.04,0.65) {\small $\alpha^p$};

\node at (0.6,0.15) {\small $\gamma$};
\node at (0.6,-0.2) {\small $\beta$};

\end{scope}
}

\node at (0,-1.4) {$FE_{\triangle}1$};


\begin{scope}[xshift=3.5cm]

\draw
	(-1.6,0) to[out=-90, in=180] (0,-1);

\draw[dash pattern=on 1pt off 1pt]
	(0,1) -- (0,0)
	(0,-1) -- (0,-0.6)
	(-1.6,0) -- (-1,0);

\draw[line width=1.2]
	(-1,0) -- (0,0)
	(-1.6,0) to[out=90, in=180] (0,1);

\node at (0.65,0) {\small $\alpha^p$};
\node at (-0.15,-0.6) {\small $\gamma$};
\node at (0.15,-0.6) {\small $\beta$};
\node at (0.15,0.6) {\small $\gamma$};
\node at (-0.35,-0.15) {\scriptsize $\alpha^{p-1}$};

\node at (-0.15,0.6) {\small $\beta$};
\node at (-0.15,0.15) {\small $\gamma$};
\node at (-0.65,0.15) {\small $\alpha$};

\node at (-0.4,0.8) {\small $\alpha$};
\node at (-1.4,0.2) {\small $\gamma$};
\node at (-1.05,0.2) {\small $\beta$};

\node at (-0.4,-0.8) {\small $\alpha$};
\node at (-1.4,-0.2) {\small $\beta$};
\node at (-1.05,-0.2) {\small $\gamma$};

\node[draw,shape=circle, inner sep=0.5] at (-0.4,0.3) {\footnotesize $1$};
\node[draw,shape=circle, inner sep=0.5] at (-0.8,0.6) {\footnotesize $2$};

\end{scope}


\begin{scope}[xshift=6cm]

\draw[gray]
	(0,0.9) -- (0,-0.9);

\foreach \b in {1,-1}
{
\begin{scope}[scale=\b]

\draw
	(0,0) -- (0.6,-0.6);

\draw[line width=1.2]
	(0.6,0.6) -- (0.6,-0.6);

\draw[dash pattern=on 1pt off 1pt]
	(0.6,0.6) -- (-0.6,0.6);

\node at (0.43,-0.2) {\small $\alpha$};
\node at (0.15,-0.42) {\small $\beta$};
\node at (0.43,0.4) {\small $\gamma$};

\end{scope}
}

\node[draw,shape=circle, inner sep=0.5] at (0.2,0.2) {\footnotesize $1$};
\node[draw,shape=circle, inner sep=0.5] at (-0.2,-0.2) {\footnotesize $2$};

\node at (0,-1.4) {$F'E_{\triangle}1$};

\end{scope}

\end{tikzpicture}
\caption{Flip modifications $FE_{\triangle}1$ and $F'E_{\triangle}1$.}
\label{flip4}
\end{figure}

For the special case $\alpha+\beta=\gamma$, the flip modification $FE_{\triangle}1$ has further modification. The third of Figure \ref{flip4} shows the flipped inner half earth map tiling of $FE_{\triangle}1$, and two tiles in the unflipped outer half earth map tiling. By $\alpha+\beta=\gamma$, the two tiles $T_1,T_2$ form the rectangle in the fourth of Figure \ref{flip4}. Then we may flip the rectangle with respect to the gray line, and still get a tiling. We note that there is another rectangle opposite to $T_1,T_2$. We may flip one or two rectangles, and denote both further flip modifications by $F'E_{\triangle}1$.

For the special case $f=12$, the condition $\alpha+\beta=\gamma$ becomes $\alpha=\beta=\frac{1}{3}\pi$ and $\gamma=\frac{2}{3}\pi$. Therefore the triangle is isosceles, and removing the normal edges gives the regular cube. Then adding the normal edges back means simple triangular subdivisions of the regular cube in Figure \ref{simple_subdivision_cube}. If we flip one rectangle (actually square), then $F'E_{\triangle}1$ is the fourth of Figure \ref{simple_subdivision_cube}. If we flip two rectangles, then $F'E_{\triangle}1$ is the fifth of Figure \ref{simple_subdivision_cube}. 

Table \ref{flip_data} gives the data for all the modified tilings. 

\renewcommand{\arraystretch}{1.2}

\begin{table}[htp]
\centering
\begin{tabular}{|c|c|c|c|c|l|c|} 
\hline
Tiling
& $\alpha$ & $\beta$ & $ \gamma$ & $\delta$
& \multicolumn{1}{|c|}{Vertex}
& $f$ \\
\hline  \hline 
$FB_{\triangle}P_8$
& $\frac{1}{4}$ & $\frac{1}{3}$ & $\frac{1}{2}$ & 
& $\alpha^8,\beta^6,\alpha^4\gamma^2,\gamma^4$ 
& $48$ \\
\hline
$FQ_{\square}P_6$
& $\frac{2}{3}$ & $\frac{2}{3}$ & $\frac{1}{2}$ & $\frac{1}{3}$ 
& $\alpha^3,\alpha\beta^2,\alpha^2\delta^2,\beta^2\delta^2,\gamma^4$
& $24$  \\
\hline
$FQ_{\square}P_8$
& $\frac{2}{3}$ & $\frac{1}{2}$ & $\frac{1}{2}$ &
& $\alpha^3,\beta^4,\beta^2\gamma^2,\gamma^4$
& $24$ \\
\hline
$FE_{\triangle}1$
& $\frac{4}{f}$ & $1-\gamma$ & &
& $\beta^2\gamma^2,\alpha^{2q+1}\beta\gamma$
& \multirow{2}{*}{$8q+4$} \\
\cline{1-6} 
$F'E_{\triangle}1$
& $\frac{4}{f}$ & $\frac{1}{2}-\frac{2}{f}$ & $\frac{1}{2}+\frac{2}{f}$ & 
& $\beta^2\gamma^2,\alpha\beta^3\gamma, \alpha^{2q}\gamma^2,\alpha^{2q+1}\beta\gamma$
& \\
\hline
$FE_{\triangle}^I1$
& $\frac{4}{f}$ & $\frac{1}{2}$ & &
& $\beta^4,\alpha^q\beta^2$ 
& $4q$ \\
\hline
$FE_{\triangle}2$
& $\frac{8}{f}$ & $\tfrac{1}{2}-\tfrac{2}{f}$ & &
& $\alpha\beta^4,\alpha^{q+1}\beta^2$ 
& $8q+4$ \\
\hline
$FE_{\triangle}3$
& $\frac{8}{f}$ & $\frac{1}{2}-\tfrac{4}{f}$ & $\frac{1}{2}$ &
& $\alpha^2\beta^4,\gamma^4,\alpha^{2q+2}\beta^2$
& $16q+8$ \\
\hline
\multirow{2}{*}{$FE_{\square}1$}
& $\tfrac{4s}{f}$ & $\tfrac{4}{f}$ & $\tfrac{4s'}{f}-\delta$ &
& $\alpha\gamma\delta,\alpha^t\beta^{p-st},\beta^s\gamma\delta$
& \multirow{2}{*}{$2p$} \\
\cline{2-6}
& \multicolumn{4}{|c|}{$s+s'=p$}
& $\alpha\gamma\delta,\alpha\beta^{s'},\beta^{p-s't}\gamma^t\delta^t$
& \\
\hline
$RE_{\square}1$
& $\tfrac{4}{3}-\tfrac{4}{3f}$ & $\tfrac{4}{f}\pi$ & $\tfrac{2}{3}-\tfrac{2}{3f}$ & $\tfrac{2}{f}$ 
& $\alpha\gamma\delta,\gamma^3\delta,\alpha\beta^{q+1},\alpha\beta^q\delta^2$
& $6q+4$  \\
\hline
$F_1E_{\square}2$
& \multirow{2}{*}{$1-\tfrac{8}{f}$} 
& \multirow{2}{*}{$\tfrac{1}{2}+\tfrac{4}{f}$} 
& \multirow{2}{*}{$\tfrac{1}{2}$}
& \multirow{2}{*}{$\tfrac{8}{f}$}
& $\alpha\beta^2,\alpha^2\delta^2,\gamma^4,\alpha\delta^{2q+2},\beta^2\delta^{2q}$
& \multirow{2}{*}{$16q+8$} \\
\cline{1-1} \cline{6-6}
$F_2E_{\square}2$
& & & & 
& $\alpha\beta^2,\alpha^2\delta^2,\gamma^4,\alpha\delta^{2q+2}$
& \\
\hline
\end{tabular}
\caption{Angles ($\pi$ omitted) and vertices for modified tilings. }
\label{flip_data}
\end{table}

\renewcommand{\arraystretch}{1}

\subsection{Sporadic Tilings}
\label{sporadic}

Besides tilings of Platonic and earth map types, there are four triangular sporadic tilings, and eight quadrilateral sporadic tilings. 

The sporadic triangular tilings are actually the simple subdivisions of the regular cube. There are seven such tilings, given by Figure \ref{simple_subdivision_cube}, in which four are not assigned notations. We call these four {\em triangular sporadic tilings}. Figure \ref{sporadic_triangular} gives the 3D pictures of these tilings. We note that normal and thick edges are exchanged in order to get the isosceles triangle in Figure \ref{triangle}. We also note that the first and second tilings (fourth and fifth of Figure \ref{simple_subdivision_cube}) can also be interpreted as $F'E_{12\triangle}1$. 

\begin{figure}[htp]
\centering
\begin{tikzpicture}[>=latex,scale=0.8]

\foreach \x in {0,...,3}
{
\begin{scope}[xshift=3.5*\x cm]

\coordinate (A0X\x) at (0.9,0.6);
\coordinate (A1X\x) at (2.7,0.6);
\coordinate (B0X\x) at (0,0);
\coordinate (B1X\x) at (1.8,0);

\coordinate (C0X\x) at (0.9,-1.2);
\coordinate (C1X\x) at (2.7,-1.2);
\coordinate (D0X\x) at (0,-1.8);
\coordinate (D1X\x) at (1.8,-1.8);

\draw
	(B0X\x) -- (B1X\x) -- (A1X\x) -- (A0X\x) -- (B0X\x) -- (D0X\x) -- (D1X\x) -- (C1X\x) -- (A1X\x)
	(B1X\x) -- (D1X\x);

\draw[dash pattern=on 1pt off 1pt]
	(D0X\x) -- (C0X\x) 
	(A0X\x) -- (C0X\x) -- (C1X\x);
	
\end{scope}
}

\begin{scope}[line width=1.2]

\draw
	(C1X0) -- (B1X0) -- (D0X0) 
	(B0X0) -- (A1X0)
	(C1X1) -- (B1X1) -- (D0X1) 
	(B0X1) -- (A1X1)
	(A0X2) -- (B1X2) -- (D0X2) 
	(C1X2) -- (B1X2)
	(A0X3) -- (B1X3) -- (D0X3) 
	(C1X3) -- (B1X3);

\draw[dash pattern=on 1pt off 1pt]
	(D0X0) -- (A0X0) -- (C1X0)
	(C0X0) -- (D1X0)
	(B0X1) -- (C0X1) -- (D1X1)
	(C1X1) -- (A0X1)
	(D0X2) -- (A0X2) -- (C1X2)
	(C0X2) -- (D1X2)
	(B0X3) -- (C0X3) -- (A1X3)
	(D0X3) -- (C1X3);
	
\end{scope}

\end{tikzpicture}
\caption{Sporadic triangular tilings $S_{\triangle}P_6$.}
\label{sporadic_triangular}
\end{figure}

Figure \ref{sporatic_tiling} gives the eight {\em quadralateral sporadic tilings}. The tilings $S_{12\square}1$, $S_{16\square}1$, $S_{16\square}2$, $S_{16\square}3$, $FS_{16\square}3$, $S_{36\square}5$ are actually earth map tilings with three or four timezones. Presumably, these belong to several infinite families of earth map tilings. Although such earth map tilings are combinatorially possible, the quadrilaterals exist only for the specific values of $f$, due to geometrical reason.

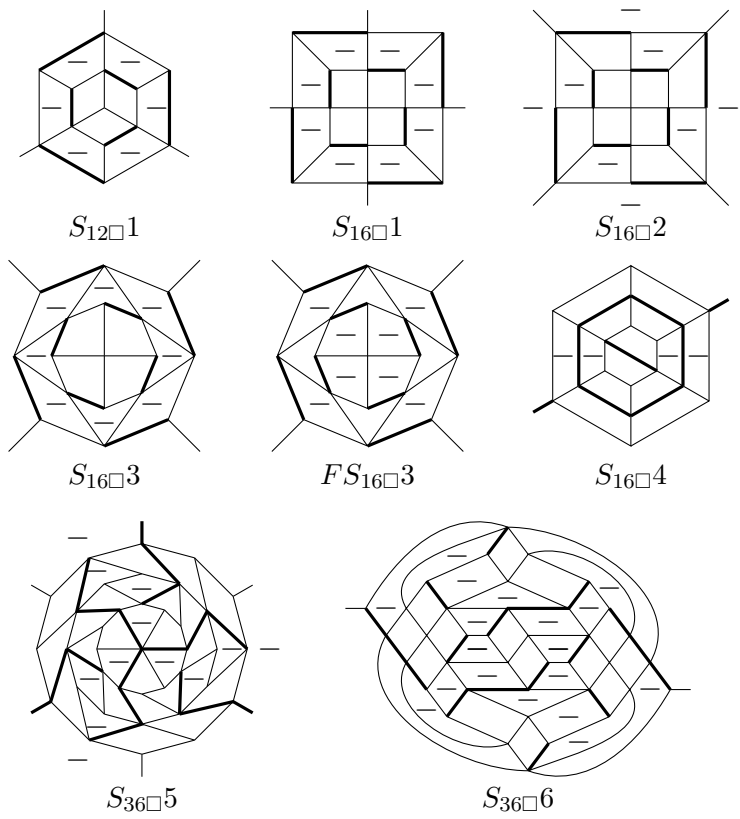
\begin{figure}[htp]
\centering
\begin{tikzpicture}[>=latex]


\foreach \a in {0,1,2}
{
\begin{scope}[rotate=120*\a]

\draw
	(0,0) -- (90:1.3)
	(-30:0.5) -- (30:0.5) -- (30:1) -- (90:1);

\draw[line width=1.2]
	(-30:1) -- (30:1)
	(30:0.5) -- (90:0.5);

\node at (60:0.7) {$-$};
\node at (120:0.7) {$-$};

\end{scope}
}

\node at (0,-1.6) {\small $S_{12\square}1$};


\foreach \a in {0,1,2,3}
{
\begin{scope}[xshift=3.5cm, rotate=90*\a]

\draw
	(0,0) -- (1.3,0)
	(0.5,0) -- (0.5,0.5) -- (1,1) -- (0,1);

\draw[line width=1.2]
	(0.5,0.5) -- (0,0.5)
	(1,0) -- (1,1);

\node at (0.75,0.3) {$-$};
\node at (0.75,-0.3) {$-$};

\end{scope}
}

\node at (3.5,-1.6) {\small $S_{16\square}1$};


\foreach \a in {0,1,2,3}
{
\begin{scope}[xshift=7cm,rotate=90*\a]

\draw
	(0,0) -- (1,0)
	(0.5,0) -- (0.5,0.5) -- (1.3,1.3)
	(1,1) -- (0,1);

\draw[line width=1.2]
	(0.5,0.5) -- (0,0.5)
	(1,0) -- (1,1);

\node at (0.3,0.75) {$-$};
\node at (1.3,0) {$-$};

\end{scope}
}

\node at (7,-1.6) {\small $S_{16\square}2$};


\begin{scope}[yshift=-3.3cm]

\foreach \a in {0,...,3}
{

\foreach \b in {0,1}
{
\begin{scope}[xshift=3.5*\b cm, rotate=90*\a]

\draw
	(1.2,0) -- (45:1.2) -- (0,1.2)
	(1.2,0) -- (45:0.7) -- (0,1.2)
	(90:0.7) -- (45:0.7) -- (0:0.7) -- (0,0)
	(45:1.2) -- (45:1.8);

\draw[line width=1.2]
	(45:1.2) -- (0:1.2);
	
\node at (45:0.9) {$-$};

\end{scope}
}

\begin{scope}[line width=1.2, rotate=90*\a]

\draw
	(45:0.7) -- (90:0.7);
	
\node at (0:0.9) {$-$};	

\end{scope}

\begin{scope}[xshift=3.5cm, line width=1.2, rotate=90*\a]

\draw[line width=1.2]
	(45:0.7) -- (0:0.7);

\node at (45:0.4) {$-$};

\end{scope}

}

\node at (0,-1.6) {\small $S_{16\square}3$};
\node at (3.5,-1.6) {\small $FS_{16\square}3$};


\begin{scope}[xshift=7cm]

\foreach \a in {0,...,5}
{
\begin{scope}[rotate=60*\a]

\draw
	(-30:0.4) -- (30:0.4) -- (30:1.2) -- (-30:1.2) ;
	
\draw[line width=1.2]
	(-30:0.8) -- (30:0.8);

\end{scope}
}

\draw[line width=1.2]
	(150:0.4) -- (-30:0.4)
	(30:1.5) -- (30:1.2)
	(210:1.5) -- (210:1.2);
	
\node at (0.5,0) {$-$};
\node at (0.9,0) {$-$};	
\node at (-0.5,0) {$-$};
\node at (-0.9,0) {$-$};

\node at (0,-1.6) {\small $S_{16\square}4$};

\end{scope}

\end{scope}


\begin{scope}[yshift=-7.2cm]

\begin{scope}[xshift=0.5 cm]

\foreach \a in {0,...,11}
\draw[rotate=30*\a]
	(0:0.6) -- (30:0.6)
	(0:1) -- (30:1)
	(0:1.4) -- (30:1.4);

\foreach \a in {0,1,2}
{
\begin{scope}[rotate=120*\a]

\draw
	(0,0) -- (60:0.6)
	(30:0.6) -- (60:1)
	(-30:0.6) -- (0:1)
	(-30:1) -- (0:1.4)
	(30:1) -- (60:1.4)
	(30:1.4) -- (30:1.7);
	
\draw[line width=1.2]
	(0,0) -- (0:0.6) -- (30:1) -- (0:1.4)
	(90:0.6) -- (60:1)
	(60:1) -- (90:1.4) -- (90:1.7);

\node at (90:0.35) {$-$};
\node at (90:0.8) {$-$};
\node at (0:1.2) {$-$};
\node at (0:1.7) {$-$};

\end{scope}
}

\node at (0,-2) {\small $S_{36\square}5$};

\end{scope}


\begin{scope}[xshift=5.5cm]

\foreach \a in {1,-1}
{
\begin{scope}[scale=0.45*\a]

\draw
	(0,0) -- (0.3,0.4) -- (2.1,0.4) 
	(-0.3,1.2) -- (0.9,-0.4) -- (1.5,-0.4) -- (2.1,-1.2) -- (3.3,0.4)
	(2.1,-1.2) -- (0.3,-1.2)
	(2.7,-2) -- (3.3,-1.2) -- (1.5,1.2)
	(2.1,0.4) -- (2.7,1.2) -- (2.1,2)
	(-1.5,-1.2) -- (0.3,-2) -- (2.1,-1.2)
	(2.7,-2) -- (0.9,-2.8) -- (0.3,-2) -- (-0.3,-2.8) -- (-2.1,-2)
	(0.3,-3.6) -- (-0.3,-2.8)
	(3.3,-1.2) -- (3.9,-0.4)
	(4.5,-1.2) -- (5.1,-1.2)
	(3.3,0.4) to[out=60,in=20] (0.3,2.8)
	(3.9,-0.4) to[out=60,in=0] (-0.3,3.6)
	(3.3,-1.2) to[out=-60,in=20] (0.3,-3.6)
	(4.5,-1.2) to[out=-120,in=-20] (0.3,-3.6)
	;

\draw[line width=1.2]
	(2.1,2) -- (1.5,1.2) -- (-0.3,1.2) -- (-0.9,0.4)
	(1.5,-0.4) -- (2.1,0.4) 
	(2.7,-2) -- (2.1,-1.2)
	(0.3,-3.6) -- (0.9,-2.8)
	(2.7,1.2) -- (4.5,-1.2);

\node at (-1.2,0) {$-$};
\node at (-1.2,0.8) {$-$};
\node at (-0.3,1.5) {$-$};
\node at (-1.5,2) {$-$};
\node at (0.9,0.8) {$-$};
\node at (1.2,0) {$-$};
\node at (2.1,1.2) {$-$};
\node at (2.7,1.8) {$-$};
\node at (-1.8,2.75) {$-$};
\node at (-3.9,1.2) {$-$};

\end{scope}
}

\node at (0,-2) {\small $S_{36\square}6$};

\end{scope}

\end{scope}

\end{tikzpicture}
\caption{Sporadic quadrilateral tilings.}
\label{sporatic_tiling}
\end{figure}

Both $S_{12\square}1,S_{16\square}1$ are labelled as the first sporadic tiling because they belong to the same combinatorial family of earth map tilings. Moreover, $FS_{16\square}3$ can be obtained by a flip of the interior half of $S_{16\square}3$. 

The tilings $S_{16\square}4,S_{36\square}6$ have two fold symmetry, and are not earth map tilings. Moreover, some pictures in Figure \ref{sporatic_tiling} look different from the ones appearing in the later proof (especially $S_{36\square}6$ and Figure \ref{add-abbbB}). They are actually the same tilings, and we draw different pictures just to give different views.

Table \ref{special_data} gives the data for all the sporadic tilings. Note that some angle values are not rational multiples of $\pi$, and we can only specify them by the cosine or tangent values:
\begin{align}
S_{12\square}1 &\colon \tan\gamma=-\tfrac{\sqrt{3}}{\sqrt{5}}. \label{seq1} \\
S_{16\square}1 &\colon \tan\gamma=2-\sqrt{5}-{\textstyle \sqrt{7-3\sqrt{5}}}. \label{seq2} \\
S_{16\square}2 &\colon \cos\delta=\tfrac{1-\sqrt{2}}{2}. \label{seq3} \\
S_{16\square}4 &\colon \tan\gamma=2+\sqrt{2}. \label{seq4}
\end{align}

\renewcommand{\arraystretch}{1.2}

\begin{table}[htp]
\centering
\scalebox{1}{
\begin{tabular}{|c|c|c|c|c|l|c|} 
\hline
Tiling
& $\alpha$ & $\beta$ & $ \gamma$ & $\delta$
& \multicolumn{1}{|c|}{Vertex}
& $f$ \\
\hline \hline 
$S_{\triangle}P_6$ 
& $\frac{2}{3}$ & $\frac{1}{3}$ & &
& $\alpha^3,\alpha^2\beta^2,\alpha\beta^4,\beta^6$
& $12$ \\
\hline
$S_{12\square}1$
& $\tfrac{2}{3}$ & $2-2\gamma$ & \eqref{seq1} & $\gamma-\tfrac{1}{3}$
& $\alpha^3,\beta\gamma^2,\alpha\beta\delta^2$
& $12$ \\
\hline
$S_{16\square}1$
& $\tfrac{1}{2}$ & $2-2\gamma$ & \eqref{seq2} & $\gamma-\tfrac{1}{4}$
& $\alpha^4,\beta\gamma^2,\alpha\beta\delta^2$
& $16$  \\
\hline
$S_{16\square}2$
& $\tfrac{1}{2}$ & $1-\delta$ & $\tfrac{3}{4}$ & \eqref{seq3}
& $\alpha^4,\alpha\gamma^2,\beta^2\delta^2$
& $16$  \\
\hline
$S_{16\square}3$
& \multirow{2}{*}{$\tfrac{1}{2}$} 
& \multirow{2}{*}{$1$} 
& \multirow{2}{*}{$\tfrac{1}{4}$}
& \multirow{2}{*}{$\tfrac{1}{2}$}
& \multirow{2}{*}{$\alpha^4,\beta\delta^2,\alpha\beta\gamma^2$}  
& \multirow{2}{*}{$16$} \\
\cline{1-1} 
$FS_{16\square}3$
& & & & & & \\
\hline
$S_{16\square}4$
& $\tfrac{1}{2}$ & $\tfrac{3}{4}$ & \eqref{seq4} & $1-\gamma$ 
& $\alpha\beta^2,\alpha^2\gamma\delta,\gamma^2\delta^2$
& $16$ \\
\hline
$S_{36\square}5$
& $\tfrac{4}{9}$ & $\tfrac{7}{9}$ & $\tfrac{1}{3}$ & $\tfrac{5}{9}$
& $\alpha\beta^2,\alpha^2\delta^2,\alpha\gamma^3\delta,\gamma\delta^3,\gamma^6$
& $36$ \\
\hline
$S_{36\square}6$
& $\tfrac{1}{3}$ & $\tfrac{5}{9}$ & $\tfrac{7}{18}$ & $\tfrac{5}{6}$
& $\alpha\delta^2,\alpha\beta^3,\alpha^2\beta\gamma^2,\gamma^3\delta$
& $36$ \\
\hline
\end{tabular}
}
\caption{Angles ($\pi$ omitted) and vertices for sporadic tilings.}
\label{special_data}
\end{table}

\renewcommand{\arraystretch}{1}

\section{Technique}
\label{technical}

By a {\em tiling}, we mean an edge-to-edge tiling of the sphere by congruent polygons. In particular, all vertices have degree $\ge 3$. Unless otherwise stated, the assumptions will always be implicit in the discussions, lemmas, and propositions. 
 
All edge lengths and angle values are strictly between $0$ and $2\pi$, with Lemma \ref{geometry6} as the only exception. 
By \cite[Lemma 1]{gsy}, the polygon is simple, in the sense the boundary is a simple closed curve. This implies $a<\pi$ in the isosceles and equilateral triangles in Figure \ref{triangle}, and in all the quadrilaterals in Figure \ref{quad}. This also implies $b<\pi$ in the kite.

The tiling problem has the exchange symmetry given in the introduction of Section 
\ref{construction}. The technical results in this section are sometimes stated in one version, and the version obtained by applying the exchange symmetry is also valid, and may not be mentioned.

\subsection{Notations and Conventions}

Let $v,e,f$ be the numbers of vertices, edges, and tiles in a tiling by $n$-gons, where $n$ is fixed. Let $v_k$ be the number of vertices of degree $k$. Then we have
\begin{align*}
2
&=v-e+f, \\
nf=2e
&=\ssum_{k=3}^{\infty}kv_k=3v_3+4v_4+5v_5+\cdots,  \\
v
&=\ssum_{k=3}^{\infty}v_k=v_3+v_4+v_5+\cdots. 
\end{align*}
Canceling $e$ and $v_3$ from the three equations, we get
\begin{equation}\label{generalf}
(6-n)f=12+\ssum_{k\ge 4}2(k-3)v_k\ge 12.
\end{equation}
This implies $n=3,4,5$. In other words, the polygon that tiles the sphere must be triangle, quadrilateral, or pentagon.

For a {\em triangular} tiling, the equality \eqref{generalf} becomes the first of the following
\begin{align} 
f
&=4+\ssum_{k\ge 4}\frac{2}{3}(k-3)v_k
=4+\tfrac{2}{3}v_4+\tfrac{4}{3}v_5+v_6+\cdots \label{trivcountf4} \\
&=8+\ssum_{k\ge 3}(k-4)v_k
=8-v_3+v_5+2v_6+3v_7+\cdots \label{trivcountf3}.
\end{align}
The second equality is obtained by canceling $e$ and $v_4$ from the three equations. The first equality implies $f\ge 4$.

For a {\em quadrilateral} tiling, the equality \eqref{generalf} becomes the first of the following   
\begin{align} 
f
&=6+\ssum_{k\ge 4}(k-3)v_k
=6+v_4+2v_5+3v_6+\cdots, \label{quadvcountf} \\
v_3
&=8+\ssum_{k\ge 4}(k-4)v_k
=8+v_5+2v_6+3v_7+\cdots. \label{quadvcount3}
\end{align}
The second equality is obtained by canceling $e$ and $f$. The first equality implies $f\ge 6$, and the second equality implies $v_3\ge 8$. Therefore a quadrilateral tiling has at least $6$ tiles, and at least $8$ degree $3$ vertices.

The similar formulae are used in the classification of tilings of the sphere by congruent pentagons \cite{awy,cly,wy1,wy2}. In the subsequent discussion, we will omit mentioning similar results for pentagonal tilings.

Figure \ref{triangle} gives all the possible edge combinations for triangles. All are suitable for tiling the sphere. Unlike triangles, and similar to pentagons \cite[Lemma 9]{wy1}, not all quadrilaterals are suitable for tiling the sphere.

\begin{lemma}\label{edge_combo}
In an (edge-to-edge) tiling of the sphere by congruent quadrilaterals, the edge lengths of the quadrilateral are arranged in one of the four ways in Figure \ref{quad}, with distinct edge lengths $a,b,c$. 
\end{lemma}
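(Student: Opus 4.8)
The statement is purely combinatorial about edge labels, so the plan is to use the edge-to-edge condition together with the parity constraints it forces at each vertex. A quadrilateral has four edges; denote the multiset of its edge lengths by considering how many distinct values occur. If all four edges have the same length we are in the rhombus case $a^4$. If exactly two distinct lengths occur, the partition of $4$ is either $3+1$ (giving $a^3b$, the almost equilateral case) or $2+2$; in the latter the two equal pairs are either adjacent (the kite $a^2b^2$) or opposite. If three distinct lengths occur the partition is $2+1+1$, giving a quadrilateral with edges $a^2bc$ where the two $a$-edges are either adjacent or opposite. The case of four distinct lengths $abcd$ also has to be excluded. So the real content is: (i) rule out the $2+2$ "opposite" arrangement $abab$; (ii) rule out four distinct lengths $abcd$; (iii) show that in the $a^2bc$ case the two $a$-edges must be adjacent, not opposite.

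The key tool is a counting/parity argument at vertices, exactly in the spirit of \eqref{quadvcountf}--\eqref{quadvcount3} and the pentagon analogue \cite[Lemma 9]{wy1}. Fix an edge length, say $x$, that appears on the quadrilateral. Count the total number of $x$-edge-ends meeting at each vertex: since the tiling is edge-to-edge, at every vertex the $x$-labelled edge-ends come in matching pairs (each interior edge at the vertex contributes two ends of the same label, one from the tile on each side), so the number of $x$-ends at any vertex is even; equivalently, each edge length must appear an \emph{even} number of times around every vertex. Globally, if the quadrilateral has $m$ edges of length $x$, then summing over all $f$ tiles the total count of $x$-ends is $mf$, and this must equal twice the number of $x$-edges, which is automatic. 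The sharper statement comes from looking at a single edge of the tiling: both tiles adjacent to it present the \emph{same} length on that edge, so the "local matching" at a vertex must pair up edge-ends of equal length coming from the corners of the surrounding tiles. This is the mechanism that kills bad arrangements.

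The main steps I would carry out: First, set up the "even count" principle precisely: at each vertex $v$, walking around $v$, consecutive tiles share an edge, and that shared edge has a well-defined length; so the cyclic sequence of edges at $v$ consists of that many edges, and each length class among them is forced to... (here one must be careful: it is the edges at $v$, not the angles, that matter). Actually the clean statement is: each edge of the tiling has a length, and at a degree-$k$ vertex there are $k$ edges; there is no parity constraint on $k$ itself, but there \emph{is} a constraint coming from how the four edge-lengths sit around each tile relative to its angles. Second, I would invoke the already-cited pentagon lemma's method: suppose the quadrilateral is $abab$ (opposite edges equal, the forbidden $2+2$ case). Then I would derive a contradiction by following a maximal path or cycle of, say, $b$-edges: because opposite edges are equal and adjacent edges differ, the $a$-edges and $b$-edges of the whole tiling organize into two "foliations" whose leaves are forced to be closed curves of even length; tracking the tiles along such a leaf and using \eqref{quadvcount3} ($v_3\ge 8$) to locate enough degree-$3$ vertices, one finds an edge that would have to carry two different lengths, a contradiction. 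The same path-following argument, applied to the unique edge that is neither of the repeated type, rules out $abcd$ (here \emph{no} edge length repeats within a tile, so the matching across each tiling-edge forces an infinite or inconsistent propagation) and forces the adjacency in $a^2bc$.

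**Expected main obstacle.** The delicate point is the $abcd$ exclusion and, relatedly, the "opposite $a$'s" subcase of $a^2bc$: for $abcd$ there is genuinely no internal repetition to anchor the matching, so one cannot argue purely tile-locally and must run a global propagation/parity argument (a leaf of the "$a$-edge graph" must close up, but the cyclic pattern around a tile, $a,b,c,d$, forces the two edges adjacent to any $a$-edge to be $b$ and $d$, and continuing around the leaf forces alternating incompatible labels on the tiles straddling it). Making this propagation rigorous — showing the relevant edge-graph is a disjoint union of cycles and that the induced labelling is inconsistent — is the technical heart, and is presumably where the paper leans most heavily on the counting inequalities \eqref{quadvcountf}--\eqref{quadvcount3} and the pentagon precedent. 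The rhombus, kite, and $a^3b$ cases, by contrast, are immediate once the parity principle is stated.
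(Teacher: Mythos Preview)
Your proposal has a genuine gap. You correctly identify the three arrangements to exclude (all four lengths distinct $abcd$; the separated-$a$ subcase of $a^2bc$; the alternating subcase of $a^2b^2$), but you miss the single observation that dispatches all three at once, and the propagation/foliation argument you sketch in its place is left vague and is not needed.

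Your initial parity claim---that every edge length appears an even number of times at every vertex---is simply false, and the justification you give (``each interior edge at the vertex contributes two ends of the same label, one from the tile on each side'') conflates edges with tile-corners. A single edge incident to a vertex contributes exactly one edge there, regardless of how many tiles flank it. Concretely, in the (allowed) kite $a^2b^2$ the vertex $\alpha^3$ has three $a$-edges and zero $b$-edges. You partially retract this, but then never recover a correct replacement; instead you pivot to a global leaf-tracking argument that you do not carry out.

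The paper's proof is a one-line parity argument, but the parity is not about individual edge lengths. Each of the three bad arrangements has the cyclic form $x,y,x,y$ around the quadrilateral, where $x$ and $y$ are two disjoint nonempty \emph{classes} of edge lengths: for $abcd$ take $x=\{a,c\}$ and $y=\{b,d\}$; for the separated-$a$ case of $a^2bc$ take $x=\{a\}$ and $y=\{b,c\}$; for the alternating $a^2b^2$ take $x=\{a\}$ and $y=\{b\}$. Now walk around any vertex $v$: consecutive edges at $v$ are the two edges of a single tile meeting at $v$, hence adjacent edges of the quadrilateral, hence one from class $x$ and one from class $y$. Thus the edges around $v$ alternate $x,y,x,y,\dots$, forcing $\deg v$ to be even. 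This contradicts $v_3\ge 8$ from \eqref{quadvcount3}. That is the entire argument---no path-following, no foliations, no case analysis beyond the 2-colouring.
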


\begin{proof}
Combinatorially, the possible edge combinations are $abcd,a^2bc,a^2b^2,a^3b,a^4$. For $abcd$, we may assume the edges are arranged as in the first of Figure \ref{edges2}. For $a^2bc$ and $a^2b^2$, depending on whether the two $a$-edges are adjacent or separated, we have two possibilities in the first and second of Figure \ref{quad} and the second and third of Figure \ref{edges2}. For $a^3b$ and $a^4$, we have the unique arrangement of edges in Figure \ref{quad}.

\begin{figure}[htp]
\centering
\begin{tikzpicture}[>=latex,scale=1]

\foreach \a in {0,1,2,4}
\draw[xshift=2*\a cm]
	(-0.6,-0.6) rectangle (0.6,0.6);

\node[fill=white, inner sep=0.5] at (-0.6,0) {\small $c$};
\node[fill=white, inner sep=0.5] at (0,0.6) {\small $b$};
\node[fill=white, inner sep=0.5] at (0.6,0) {\small $a$};
\node[fill=white, inner sep=0.5] at (0,-0.6) {\small $d$};

\begin{scope}[xshift=2cm]

\node[fill=white, inner sep=0.5] at (-0.6,0) {\small $a$};
\node[fill=white, inner sep=0.5] at (0,0.6) {\small $b$};
\node[fill=white, inner sep=0.5] at (0.6,0) {\small $a$};
\node[fill=white, inner sep=0.5] at (0,-0.6) {\small $c$};	

\end{scope}

\begin{scope}[xshift=4cm]

\node[fill=white, inner sep=0.5] at (-0.6,0) {\small $a$};
\node[fill=white, inner sep=0.5] at (0,0.6) {\small $b$};
\node[fill=white, inner sep=0.5] at (0.6,0) {\small $a$};
\node[fill=white, inner sep=0.5] at (0,-0.6) {\small $b$};	

\end{scope}

\begin{scope}[xshift=8cm]

\draw[dotted]
	(-0.6,-0.6) -- (0.6,-0.6)
	(-0.6,0.6) -- (0.6,0.6);

\node[fill=white, inner sep=0.5] at (-0.6,0) {\small $x$};
\node[fill=white, inner sep=0.5] at (0,0.6) {\small $y$};
\node[fill=white, inner sep=0.5] at (0.6,0) {\small $x$};
\node[fill=white, inner sep=0.5] at (0,-0.6) {\small $y$};	

\end{scope}

\end{tikzpicture}
\caption{Not suitable for tiling.}
\label{edges2}
\end{figure}
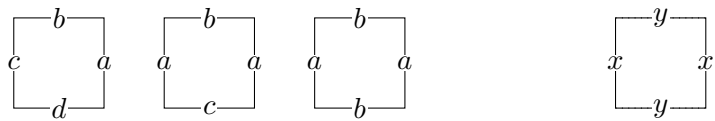

All three edge arrangements in Figure \ref{edges2} are of the form in the fourth of Figure \ref{edges2}, where each of $x,y$ may represent one or two edge lengths. Since two $x$ are separated, and two $y$ are separated, we know the edges at any vertex are alternatively arranged as $x,y,x,y,\dots$. In particular, all vertices have even degree, contradicting $v_3\ge 8$ by \eqref{quadvcount3}.
\end{proof}

Let $\alpha,\beta,\gamma,\delta$ be the four angles in a quadrilateral tiling. Then the area of the quadrilateral is $\alpha+\beta+\gamma+\delta-2\pi$, and is also the area $4\pi$ of the sphere divided by the number $f$ of tiles. This gives the {\em angle sum for quadrilateral} 
\begin{equation}\label{anglesum}
\alpha+\beta+\gamma+\delta
=(2 + \tfrac{4}{f})\pi.
\end{equation}
Similarly, we have the {\em angle sum for triangle}
\begin{equation}\label{3anglesum}
\alpha+\beta+\gamma=(1+\tfrac{4}{f})\pi.
\end{equation}
We remark that, by the proof of \cite[Lemma 4]{wy1}, the angle sum for polyhedron can be proved without using the area. Therefore the equalities \eqref{anglesum} and \eqref{3anglesum} are combinatorial facts.

In a quadrilateral tiling, by a vertex $\alpha^k\beta^l\gamma^m\delta^n$, we mean the numbers of $\alpha,\beta,\gamma,\delta$ at the vertex are respectively $k,l,m,n$. The {\em angle sum} of the vertex means the equality
\[
k\alpha+l\beta+m\gamma+m\delta=2\pi.
\]
A vertex $\alpha^2\beta\cdots$ means a vertex $\alpha^k\beta^l\gamma^m\delta^n$ with $k\ge 2$ and $l\ge 1$. For example, by the angle sum \eqref{anglesum} for quadrilateral, the angle sum of $\alpha\beta\gamma\delta\cdots$ is $>2\pi$. Therefore $\alpha\beta\gamma\delta\cdots$ is not a vertex. In other words, at least one of $k,l,m,n$ must be $0$. Moreover, the {\em remainder} $R(\alpha^2\beta)$ means either the remaining angle combination $\alpha^{k-2}\beta^{l-1}\gamma^m\delta^n$, or the value $R(\alpha^2\beta)=2\pi-2\alpha-\beta$ of this remaining combination.

If we know all the angle values, then we may find all the possible angle combinations by finding non-negative integer solutions of the angle sum formula. For example, a rhombus has angles $\alpha,\alpha,\beta,\beta$. The first case we consider is $\alpha=\tfrac{2}{3}\pi$ and $\beta=(\tfrac{1}{3}+\tfrac{2}{f})\pi$. The possible angle combinations are all non-negative integers $a,b$ satisfying $\tfrac{2}{3}a+(\tfrac{1}{3}+\tfrac{2}{f})b=2$. We clearly have $a\le 3$ and $b\le 5$. We may substitute various choices of $a,b$ satisfying $a\le 3$ and $b\le 5$, and see which ones yield integers $f\ge 6$ (by \eqref{quadvcountf}). The list of all vertices obtained at the end is the {\em anglewise vertex combination}, which we abbreviate as the AVC. For example, one list we get from $\alpha=\tfrac{2}{3}\pi$ and $\beta=(\tfrac{1}{3}+\tfrac{2}{f})\pi$ is $\{\alpha^3,\alpha\beta^3\}$ for $f=18$, and there are other possible lists. 

For another example, we have $\alpha=(1-\tfrac{2}{f})\pi$ and $\beta=\tfrac{4}{f}\pi$ for another case of the rhombus tiling. In this case, there is no upper bound for $l$ in $\alpha^k\beta^l$, and we get $\text{AVC}=\{\alpha^2\beta,\alpha\beta^k,\beta^k\}$. Here $k$ is a generic notation. In fact, we have the specific values $k=\frac{f+2}{4}$ in $\alpha\beta^k$, and $k=\frac{f}{2}$ in $\beta^k$. In this paper, we reserve $k,l$ for {\em generic} numbers. This means $k,l$ may appear in several vertices, and the values of $k,l$ may be different in different vertices. 

The discussion above about angle combinations at vertices also applies to triangular tilings.

\subsection{Counting}

Inspired by the parity argument in the proof of Lemma \ref{edge_combo}, we have the following result. The property is used so often that we give a specific name. Similar parity lemma for pentagonal tilings appeared in \cite[Lemma 1]{cly} and \cite[Lemma 10]{wy2}.

\begin{lemma}[Parity Lemma]\label{parity}
In a tiling of the sphere by congruent triangles or quadrilaterals, the numbers of the following angles have the same parity at any vertex:
\begin{enumerate}
\item $\alpha,\beta,\gamma$ in general triangle in the first of Figure \ref{triangle}.
\item $\beta,\gamma,\delta$ in general quadrilateral in the first of Figure \ref{quad}.
\item $\gamma,\delta$ in almost equilateral quadrilateral in the third of Figure \ref{quad}.
\end{enumerate} 
Moreover, for the isosceles triangle in the second of Figure \ref{triangle}, the number of $\beta$ at any vertex is even.
\end{lemma}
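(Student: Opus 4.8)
The plan is to prove all four assertions at once by a double-counting argument at a single vertex, in the same spirit as the parity argument used to prove Lemma \ref{edge_combo}. Fix any vertex $v$ of the tiling. Because the tiling is edge-to-edge and the tiles cover a neighbourhood of $v$, the tiles and the edges at $v$ alternate around $v$: each tile at $v$ contributes exactly one angle, bounded by the two edges of that tile meeting at $v$, and each edge at $v$ is a side of exactly two of these angles, namely the angles of the two tiles adjacent to that edge. For an angle $\theta\in\{\alpha,\beta,\gamma,\delta\}$ and an edge length $t\in\{a,b,c\}$, let $m(\theta,t)\in\{0,1,2\}$ denote the number of $t$-edges bounding $\theta$ in the polygon; these are read directly off Figures \ref{triangle} and \ref{quad}. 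Writing $n_\theta$ for the number of $\theta$-angles at $v$ and $n_t$ for the number of $t$-edges at $v$, and counting in two ways the pairs (angle at $v$, $t$-edge at $v$ bounding it), we get
\begin{equation*}
\ssum_{\theta} m(\theta,t)\,n_\theta = 2n_t
\qquad\text{for every edge length }t .
\end{equation*}
In particular, $\ssum_\theta m(\theta,t)\,n_\theta$ is even for each $t$.

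It then remains only to read off the incidences in each case. For the general triangle, $\alpha$ is flanked by $\{a,b\}$, $\beta$ by $\{a,c\}$, and $\gamma$ by $\{b,c\}$; taking $t=a,b,c$ gives that $n_\alpha+n_\beta$, $n_\alpha+n_\gamma$, and $n_\beta+n_\gamma$ are all even, hence $n_\alpha\equiv n_\beta\equiv n_\gamma\pmod 2$. For the general quadrilateral $a^2bc$, $\alpha$ is flanked by $\{a,a\}$, $\beta$ by $\{a,b\}$, $\gamma$ by $\{b,c\}$, and $\delta$ by $\{a,c\}$; taking $t=b$ gives $n_\beta+n_\gamma$ even and $t=c$ gives $n_\gamma+n_\delta$ even, hence $n_\beta\equiv n_\gamma\equiv n_\delta\pmod 2$. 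For the almost equilateral quadrilateral $a^3b$, both $\gamma$ and $\delta$ are flanked by $\{a,b\}$ while $\alpha,\beta$ are flanked by $\{a,a\}$; taking $t=b$ gives $n_\gamma+n_\delta$ even. Finally, for the isosceles triangle $a^2b$, $\alpha$ is flanked by $\{a,a\}$ and $\beta$ by $\{a,b\}$; taking $t=b$ gives $n_\beta=2n_b$, so $n_\beta$ is even.

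I expect no genuine obstacle here: the only substantive input is that each edge at a vertex bounds exactly two angles, and everything else is bookkeeping off the figures. The one place to be slightly careful is that an angle flanked by two edges of equal length (such as $\alpha$ in $a^2bc$, or $\alpha$ and $\beta$ in $a^3b$, or $\alpha$ in the isosceles triangle) contributes $2$ rather than $1$ to the sum for that particular length; this causes no trouble, since such an angle contributes $0$ to the sums for the other two lengths, and it is exactly those sums that yield the stated parity relations. I would also note in passing that using $t=a$ in the quadrilateral cases only reproduces relations already implied by the others (e.g.\ $2n_\alpha+n_\beta+n_\delta=2n_a$ for $a^2bc$), so it can be ignored.
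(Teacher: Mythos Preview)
Your proof is correct and follows essentially the same approach as the paper: both arguments count, for a chosen edge length $t$, the pairs (angle at $v$, $t$-edge bounding it), and observe that each $t$-edge at $v$ is counted twice. The paper states this more tersely (e.g.\ ``$k+l$ is twice the number of $b$-edges at the vertex'' for the general quadrilateral and then says ``the proof for the other tiles is similar''), while you have packaged it as a uniform formula $\ssum_\theta m(\theta,t)n_\theta=2n_t$ and then read off each case; the content is the same.
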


\begin{proof}
Let $\beta^k\gamma^l\cdots$ be a vertex in a tiling of the sphere by congruent general quadrilateral, with no $\beta,\gamma$ in the remainder. Then $k+l$ is twice of the number of $b$-edges at the vertex. Therefore $k,l$ have the same parity. By considering $c$-edges at a vertex, we also find the numbers of $\gamma$ and $\delta$ have the same parity at any vertex.

The proof for the other polygons are similar.
\end{proof}

By the edge length consideration, we clearly have the following.

\begin{lemma}\label{nod}
In a tiling of the sphere by congruent general quadrilaterals in the first of Figure \ref{quad}, $\alpha^k\gamma^l$ ($k,l>0$) is not a vertex. 
\end{lemma}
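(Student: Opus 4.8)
The plan is to read off from the first picture of Figure \ref{quad} which edges bound each angle of the general quadrilateral $a^2bc$, and then use the fact that an edge of the tiling has a single well-defined length to forbid certain neighbours around a vertex. In that picture the two $a$-edges are adjacent, with $\alpha$ the angle between them, $\beta$ between an $a$-edge and the $b$-edge, $\gamma$ between the $b$-edge and the $c$-edge, and $\delta$ between the $c$-edge and an $a$-edge. So first I would record: at any tile, the two edges meeting at an $\alpha$-corner are \emph{both} $a$-edges, whereas at a $\gamma$-corner one edge is a $b$-edge and the other is a $c$-edge. This is independent of the (possibly reflected) orientation in which the tile occurs.

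Next I would examine the cyclic arrangement of tiles at a hypothetical vertex $\alpha^k\gamma^l$ with $k,l>0$. The $k+l$ angles there, each equal to $\alpha$ or $\gamma$, are separated in cyclic order by $k+l$ edges of the tiling. Since both $\alpha$ and $\gamma$ occur, going once around the vertex the angle type must change somewhere, so there is an edge $E$ at the vertex with an $\alpha$-corner on one side and a $\gamma$-corner on the other. From the $\alpha$ side $E$ must be an $a$-edge; from the $\gamma$ side $E$ must be a $b$-edge or a $c$-edge. As $a,b,c$ are distinct (Lemma \ref{edge_combo}), this is a contradiction, so $\alpha^k\gamma^l$ is not a vertex.

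I do not expect any real obstacle: the only point requiring care is the bookkeeping of which edge flanks which angle, together with the observation that the vertex $\alpha^k\gamma^l$ has \emph{only} $\alpha$ and $\gamma$ angles, so no $\beta$ or $\delta$ is available as the other endpoint of an incident edge. Equivalently one may phrase it through the edges themselves: at such a vertex every $a$-edge must have $\alpha$ on both sides and every $b$- or $c$-edge must have $\gamma$ on both sides, so no edge can separate an $\alpha$ from a $\gamma$.
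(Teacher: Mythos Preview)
Your proof is correct and is precisely the ``edge length consideration'' that the paper alludes to without writing out: the paper states the lemma with only the one-line justification ``By the edge length consideration, we clearly have the following,'' and your argument is the natural unpacking of that sentence. Nothing further is needed.
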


The following is another useful property that deserves a special name. It is \cite[Lemma 5]{cly}.
 
\begin{lemma}[Counting Lemma]\label{counting}
In a tiling of the sphere by congruent polygons, suppose two different angles $\theta,\rho$ appear the same number of times in the polygon. If all vertices are of the form $\theta^k\rho^l\cdots$, with $k\le l$ and no $\theta,\rho$ in the remainder, then all vertices are of the form $\theta^k\rho^k\cdots$, with no $\theta,\rho$ in the remainder. 
\end{lemma}

The assumption means that, at every vertex, the number of $\theta$ is no more than the number of $\rho$. The conclusion is that the two numbers must be the same at every vertex.

\begin{proof}
Let $\#_V\theta$ and $\#_V\rho$ be the numbers of $\theta$ and $\rho$ at a vertex $V$. Then the assumption $k\le l$ means 
\[
\#_V\theta\le \#_V\rho\text{ for all }V.
\]
Taking the sum over all $V$, we get
\[
\sum_{\text{all }V}\#_V\theta\le \sum_{\text{all }V}\#_V\rho.
\]
However, the two sides of the second inequality are the total numbers $\#\theta$ and $\#\rho$ of $\theta$ and $\rho$ in the tiling. Suppose both $\theta$ and $\rho$ appear $p$ times in the polygon, then the total number of $\theta$ in the tiling is $\#\theta=pf$, and the total number of $\rho$ is also $\#\rho=pf$. Therefore the second inequality is actually an equality. This implies the first inequalities are all equalities.
\end{proof}

We remark that the counting in the lemma (as well as the subsequent Lemmas \ref{balance_aabc}, \ref{balance_aaab}, \ref{angle_pi}, \ref{notatdeg4}, \ref{deg3miss}, \ref{count-aaa}, \ref{count-att}) relies on the criterion for distinguishing angles. The criterion can be angle values. For example, we know $\alpha\ne \gamma$ (angle equality is always about values) in the second of Figure \ref{quad}. But it is possible to have $\alpha=\beta$. The criterion can also be the bounding edge lengths. For example, the four angles in the first of Figure \ref{quad} are distinguished as $a^2$-angle $\alpha$, $ab$-angle $\beta$, $bc$-angle $\gamma$, $ac$-angle $\delta$. 

Combining the counting lemma and the parity lemma, we get the following useful results about tilings by congruent general quadrilaterals $a^2bc$ or congruent almost equilateral quadrilaterals $a^3b$. The pentagon versions of the lemmas are \cite[Lemma 6]{cly} and \cite[Lemma 11]{wy2}.

\begin{lemma}[Balance Lemma]\label{balance_aabc}
In a tiling of the sphere by congruent general quadrilaterals, one of $\beta^2\cdots,\gamma^2\cdots,\delta^2\cdots$ is a vertex if and only if all three are vertices. Moreover, if the three are not vertices, then the only vertices are $\alpha^k$ and $\beta\gamma\delta$.
\end{lemma}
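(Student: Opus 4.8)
The plan is to derive the Balance Lemma by feeding the Parity Lemma (Lemma~\ref{parity}) into the Counting Lemma (Lemma~\ref{counting}). The key structural fact is that in the general quadrilateral $a^2bc$ (first of Figure~\ref{quad}) each of $\alpha,\beta,\gamma,\delta$ occurs exactly once; in particular $\beta$, $\gamma$, $\delta$ each appear the same number of times (namely once) in the polygon, which is precisely the hypothesis the Counting Lemma needs to compare any two of them.

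First I would prove the biconditional through its contrapositive. Suppose $\beta^2\cdots$ is not a vertex, so $\#_V\beta\le 1$ at every vertex $V$. By Lemma~\ref{parity}(2) the numbers of $\beta,\gamma,\delta$ at $V$ share a common parity, so if $\#_V\beta=1$ then $\#_V\gamma$ and $\#_V\delta$ are odd and hence $\ge 1$, while if $\#_V\beta=0$ the inequality is trivial. Either way $\#_V\beta\le\#_V\gamma$ and $\#_V\beta\le\#_V\delta$ hold at \emph{every} vertex, so applying Lemma~\ref{counting} to the pairs $(\beta,\gamma)$ and $(\beta,\delta)$ forces $\#_V\beta=\#_V\gamma=\#_V\delta$ at every vertex. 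Since $\#_V\beta\le 1$, all three counts are $\le 1$ everywhere, so none of $\beta^2\cdots,\gamma^2\cdots,\delta^2\cdots$ is a vertex. Running the same argument starting instead from ``$\gamma^2\cdots$ is not a vertex'' or ``$\delta^2\cdots$ is not a vertex'' gives the remaining implications, and together these yield the stated equivalence. The one delicate point here is to orient the parity inequality so that it matches the direction required by the Counting Lemma, and to notice that the argument is symmetric in $\beta,\gamma,\delta$ for this purpose even though the quadrilateral itself only carries the exchange symmetry $(\beta,b)\leftrightarrow(\delta,c)$; I expect this bookkeeping, rather than any geometric difficulty, to be the only thing to watch.

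For the ``moreover'' clause, assume the three are not vertices; by the above, every vertex $V$ satisfies $\#_V\beta=\#_V\gamma=\#_V\delta\in\{0,1\}$. If this common value is $0$, then $V$ involves only $\alpha$, so $V=\alpha^k$ with $k\ge 3$ (every vertex has degree at least $3$). If it is $1$, then $V=\alpha^k\beta\gamma\delta$ for some $k\ge 0$; but the angle sum~\eqref{anglesum} gives $\alpha+\beta+\gamma+\delta=(2+\tfrac{4}{f})\pi>2\pi$, so $k\ge 1$ would make the vertex angle sum exceed $2\pi$, forcing $k=0$ and $V=\beta\gamma\delta$. Hence the only vertices are $\alpha^k$ and $\beta\gamma\delta$. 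Beyond correctly invoking the two named lemmas there is no real obstacle; the only routine check is that the Counting Lemma's premise that $\theta$ and $\rho$ occur equally often in the polygon is satisfied, which holds because each angle appears exactly once in $a^2bc$.
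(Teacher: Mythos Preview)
Your proof is correct and follows essentially the same approach as the paper: assume one of the three is not a vertex, use the parity lemma to get $\#_V\beta\le\#_V\gamma,\#_V\delta$, apply the counting lemma to force equality, and conclude by symmetry; the ``moreover'' clause is then handled via the angle sum exactly as the paper does. The paper actually proves Lemma~\ref{balance_aaab} first and then remarks that the same argument applied to any pair from $\beta,\gamma,\delta$ yields Lemma~\ref{balance_aabc}, but the underlying reasoning is identical to yours.
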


Proposition \ref{bcd} further shows the tiling is $E_{\square}1$.

\begin{lemma}[Balance Lemma]\label{balance_aaab}
In a tiling of the sphere by congruent non-symmetric almost equilateral quadrilaterals, one of $\gamma^2\cdots,\delta^2\cdots$ is a vertex if and only if both are vertices. Moreover, if both are not vertices, then the only vertices are $\alpha^k\beta^l,\alpha^k\gamma\delta,\beta^l\gamma\delta$. 
\end{lemma}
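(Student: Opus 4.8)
The plan is to deduce both parts from the Parity Lemma (part 3) and the Counting Lemma, mirroring the proof of the general-quadrilateral Balance Lemma \ref{balance_aabc}. Two facts are used at the outset: the two $ab$-angles $\gamma$ and $\delta$ each occur exactly once in the quadrilateral, hence exactly $f$ times each in the tiling; and, because the quadrilateral is non-symmetric, $\gamma$ and $\delta$ can be told apart at every vertex of the tiling --- either $\gamma\ne\delta$ as values, or $\alpha\ne\beta$, in which case a $\gamma$-corner is distinguished from a $\delta$-corner by the $a^2$-angle at the other end of its (unique) $a$-edge inside the same tile, which is $\beta$ for $\gamma$ and $\alpha$ for $\delta$. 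Consequently the Parity and Counting Lemmas genuinely apply to the pair $\gamma,\delta$.

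For the ``moreover'' clause, suppose neither $\gamma^2\cdots$ nor $\delta^2\cdots$ is a vertex, so $\#_V\gamma\le 1$ and $\#_V\delta\le 1$ at every vertex $V$. By the Parity Lemma the numbers of $\gamma$ and $\delta$ at $V$ have the same parity, hence $\#_V\gamma=\#_V\delta\in\{0,1\}$, and every vertex has the form $\alpha^k\beta^l$ or $\alpha^k\beta^l\gamma\delta$. For the equivalence, by the exchange symmetry $(\alpha,\delta)\leftrightarrow(\beta,\gamma)$ --- which interchanges $\gamma$ and $\delta$ --- it suffices to show that if $\delta^2\cdots$ is not a vertex then $\gamma^2\cdots$ is not a vertex. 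So assume $\#_V\delta\le 1$ at every vertex. The Parity Lemma gives $\#_V\gamma\equiv\#_V\delta\pmod 2$, so $\#_V\gamma$ is odd, hence $\ge 1$, whenever $\#_V\delta=1$, and $\#_V\gamma\ge 0=\#_V\delta$ whenever $\#_V\delta=0$; thus $\#_V\delta\le\#_V\gamma$ at every vertex. Since $\gamma$ and $\delta$ occur equally often in the tile, the Counting Lemma (with $\theta=\delta$, $\rho=\gamma$) forces $\#_V\gamma=\#_V\delta\le 1$ at every vertex, so $\gamma^2\cdots$ is indeed not a vertex. Together with its mirror image under the exchange symmetry, this gives: $\gamma^2\cdots$ is a vertex if and only if $\delta^2\cdots$ is, which is the assertion.

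The bookkeeping is routine; the only point needing care is the very first one, namely that $\gamma$ and $\delta$ are genuinely separable in the tiling so that the Parity and Counting Lemmas can be invoked for them --- this is precisely the role of the ``non-symmetric'' hypothesis, and in the symmetric case $\alpha=\beta$, $\gamma=\delta$ the two $ab$-angles are indistinguishable and the statement is without content. Everything else is a direct transcription of the argument for $a^2bc$ quadrilaterals in Lemma \ref{balance_aabc}.
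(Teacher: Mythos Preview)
Your proof is correct and follows essentially the same approach as the paper: combine the Parity Lemma (part 3) with the Counting Lemma applied to the pair $\gamma,\delta$. The paper organizes the argument slightly more economically---it assumes only that $\gamma^2\cdots$ is not a vertex, deduces $\#_V\gamma\le\#_V\delta$ everywhere via parity, and then the Counting Lemma gives $\#_V\gamma=\#_V\delta\le 1$ at every vertex, which simultaneously yields both the equivalence and the ``moreover'' clause---whereas you treat these two parts separately. Your extra paragraph on distinguishability of $\gamma$ and $\delta$ is not needed: the paper simply takes ``non-symmetric'' to mean $\gamma\ne\delta$ as values (and by Lemma~\ref{geometry1} this is equivalent to $\alpha\ne\beta$), so the angle-value criterion already suffices.
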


In the lemma, non-symmetric means $\gamma\ne\delta$ (different values). 

\begin{proof}
We first prove the second balance lemma. Suppose $\gamma^2\cdots$ is not a vertex. Then any vertex is $\gamma^m\delta^n\cdots$, with $m=0,1$ and no $\gamma,\delta$ in the remainder. If $m=0$, then $m\le n$. If $m=1$, then by the parity lemma, we know $n$ is odd, which implies $m\le n$. Therefore we have $m\le n$ at every vertex. Applying the counting lemma to $\gamma,\delta$, we get $m=n\le 1$ at every vertex. This means $\alpha^k\beta^l$ and $\alpha^k\beta^l\gamma\delta$ are the only vertices. By the angle sum \eqref{anglesum} for quadrilateral, we must have $k=0$ or $l=0$ in $\alpha^k\beta^l\gamma\delta$.

For the first balance lemma, the same argument can be applied to any pair from $\beta,\gamma,\delta$. Then we conclude that one of $\beta^2\cdots,\gamma^2\cdots,\delta^2\cdots$ is a vertex if and only if all three are vertices. Moreover, if all three are not vertices, then by the (second part of) parity lemma (Lemma \ref{parity}), the only vertices are $\alpha^k,\alpha^k\beta\gamma\delta$. By the angle sum \eqref{anglesum} for quadrilateral, we must have $k=0$ in $\alpha^k\beta\gamma\delta$.
\end{proof}

\begin{lemma}\label{angle_pi}
In a tiling of the sphere by congruent quadrilaterals, at most one angle is $\ge \pi$.
\end{lemma}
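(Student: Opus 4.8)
The plan is to prove this by a short global counting argument; no geometric property of an individual tile beyond the values of its four angles is needed. Assume for contradiction that two of the angles $\alpha,\beta,\gamma,\delta$ of the quadrilateral are $\geq\pi$. Since all tiles are congruent, every tile then has at least two corners whose angle is $\geq\pi$; call such a corner \emph{large}. Counting over the $f$ tiles, the total number $N$ of large corners in the tiling satisfies $N\geq 2f$.

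Next I would bound $N$ from above, vertex by vertex. At any vertex $V$ the angles of the tiles meeting there sum to exactly $2\pi$, and $V$ has degree at least $3$, so at least three corners meet at $V$. If two of those corners were large, then together with at least one further corner of positive angle the sum at $V$ would exceed $2\pi$, which is impossible. Hence each vertex carries at most one large corner, so $N\leq v$, where $v$ is the number of vertices.

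Finally I would combine these two bounds with the elementary identity $\sum_V\deg V=4f$ (there are $4f$ corners altogether, four per tile, and $\deg V$ of them lie at $V$). Since every vertex has degree $\geq 3$, this gives $3v\leq 4f$, i.e. $v\leq\tfrac{4}{3}f$; together with $v\geq N\geq 2f$ we obtain $2f\leq\tfrac{4}{3}f$, a contradiction (indeed $f\geq 6$ by \eqref{quadvcountf}). The argument is entirely combinatorial, and the only step that needs a moment's care is the ``at most one large corner per vertex'' claim, which hinges on tiling vertices having degree $\geq 3$; I do not expect any real obstacle, and in fact the argument shows the slightly stronger statement that no tile can have two corners of angle $\geq\pi$.
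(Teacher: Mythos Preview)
Your argument is correct and follows essentially the same approach as the paper: both proofs hinge on the observation that any vertex (being of degree $\ge 3$ with angle sum $2\pi$) can carry at most one corner of angle $\ge\pi$, and then derive a contradiction by a global count. The only minor difference is the final bookkeeping: the paper compares the total count of large angles against the total count of the two remaining (small) angles, each equal to $2f$, whereas you bound the large-corner count by the number of vertices $v\le\tfrac{4}{3}f$; both work equally well.
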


\begin{proof}
Let $\theta_1,\theta_2$ be two angles $\ge \pi$. Then by the angle sum \eqref{anglesum} for quadrilateral, the other two angles $\rho_1,\rho_2\le\frac{4}{f}\pi<\pi$. By $\theta_i\ge \pi$, we know vertices with $\theta_i$ are $\theta_i\rho_1^k\rho_2^l$. Since all vertices have degree $\ge 3$, we have $k+l\ge 2>1$. Then the total number of $\theta_*$ at every vertex is strictly less than the total number of $\rho_*$. However, the total number of $\theta_*$ in the tiling is $2f$, and the total number of $\rho_*$ in the tiling is also $2f$. We get a contradiction.
\end{proof}

We remark that the proof counts $\theta_1,\theta_2$ as one angle, and counts $\rho_1,\rho_2$ as another angle. By $\theta_*\ge \pi$ and $\rho_*<\pi$, the two angles are distinguished. This is the reason the counting argument is valid.

\begin{lemma}\label{notatdeg4}
In a tiling of the sphere by congruent quadrilaterals, if an angle appears at least twice at every degree $3$ vertex, and at least once at every degree $4$ vertex, then the angle appears at least twice in the quadrilateral.
\end{lemma}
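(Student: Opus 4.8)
The plan is to argue by contradiction using the two counting identities \eqref{quadvcountf} and \eqref{quadvcount3}. Let $\theta$ be the angle in question. As in the remark following the Counting Lemma, $\theta$ is a fixed angle of the quadrilateral distinguished by its value or by its bounding edges, so its multiplicity $p$ in the quadrilateral is well defined and satisfies $p\ge 1$. Since the conclusion is $p\ge 2$, it suffices to rule out $p=1$. So assume $\theta$ appears exactly once in the quadrilateral. Then each of the $f$ tiles contributes exactly one $\theta$, so the total number of occurrences of $\theta$ among all vertices equals $f$; that is, $\sum_V\#_V\theta=f$, where $\#_V\theta$ is the number of $\theta$ at the vertex $V$.

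Next I would bound this sum from below using the hypothesis. Every degree $3$ vertex contributes at least $2$ to the sum, every degree $4$ vertex at least $1$, and every vertex of degree $\ge5$ at least $0$. Hence $f\ge 2v_3+v_4$. Now I would substitute the two identities: by \eqref{quadvcount3} we have $v_3=8+\sum_{k\ge4}(k-4)v_k$, so $2v_3+v_4=16+v_4+\sum_{k\ge5}2(k-4)v_k$; and by \eqref{quadvcountf} we have $f=6+v_4+\sum_{k\ge5}(k-3)v_k$. Feeding both into $f\ge 2v_3+v_4$ and cancelling the common $v_4$ (and the $v_5$ terms, whose coefficients agree) yields
\[
0\ge 10+\ssum_{k\ge6}(k-5)v_k,
\]
which is impossible since every $v_k\ge0$. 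This contradiction shows $p\ge2$, proving the lemma.

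The argument is purely combinatorial and there is essentially no obstacle; the only point requiring care is the bookkeeping with the two Euler-type identities, and the mild subtlety (handled above) of interpreting "appears once in the quadrilateral" so that the multiplicity $p$ is genuinely well defined and at least $1$, so that ruling out $p=1$ is enough.
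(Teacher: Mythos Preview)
Your proof is correct and follows essentially the same approach as the paper: both use the identities \eqref{quadvcountf} and \eqref{quadvcount3} to show $2v_3+v_4-f=10+\sum_{k\ge 5}(k-5)v_k>0$, which contradicts the bound $f\ge 2v_3+v_4$ coming from the hypothesis when the angle appears only once. The only difference is cosmetic: the paper computes $2v_3+v_4-f$ directly and then concludes, whereas you frame it as a contradiction from $p=1$.
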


\begin{proof}
By \eqref{quadvcountf} and \eqref{quadvcount3}, we have 
\begin{align*}
2v_3+v_4-f
&=2(8+\ssum_{k\ge 4}(k-4)v_k)+v_4-6-\ssum_{k\ge 4}(k-3)v_k  \\
&=10+\ssum_{k\ge 5}(k-5)v_k>0. 
\end{align*}
Therefore the total number of times the angle appears in the tiling is $\ge 2v_3+v_4>f$. This implies the angle appears at least twice in the quadrilateral. 
\end{proof}

\begin{lemma}\label{deg3miss}
In a tiling of the sphere by congruent quadrilaterals, if two angles do not appear at any degree $3$ vertex, then the two angles together either appear at least three times at a degree $4$ vertex, or appear five times at a degree $5$ vertex.
\end{lemma}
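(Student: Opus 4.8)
The plan is to argue by contradiction with a single global double count, in the same spirit as the proof of Lemma~\ref{notatdeg4}. Write $\theta,\rho$ for the two angles, and for a vertex $V$ let $n_V$ denote the number of times $\theta$ or $\rho$ occurs at $V$ (this is unambiguous once $\theta,\rho$ are distinguished, by value or by bounding edge lengths, as in the remark after Lemma~\ref{counting}). Suppose the conclusion fails. Then the hypothesis gives $n_V=0$ at every degree $3$ vertex, the failure of the first alternative gives $n_V\le 2$ at every degree $4$ vertex, and the failure of the second gives $n_V\le 4$ at every degree $5$ vertex; at a degree $k\ge 6$ vertex one trivially has $n_V\le k$. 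Summing over all vertices,
\[
\ssum_V n_V\le 2v_4+4v_5+\ssum_{k\ge 6}kv_k.
\]

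On the other hand, $\theta$ and $\rho$ are distinct angles, so each occurs at least once in the quadrilateral; hence the total number of occurrences of $\theta$ or $\rho$ among all $f$ tiles is at least $2f$, and this total equals $\ssum_V n_V$. Using the counting formula \eqref{quadvcountf} in the form $2f=12+\ssum_{k\ge 4}2(k-3)v_k$, I would compute
\[
2f-\Bigl(2v_4+4v_5+\ssum_{k\ge 6}kv_k\Bigr)
=12+\ssum_{k\ge 6}\bigl(2(k-3)-k\bigr)v_k
=12+\ssum_{k\ge 7}(k-6)v_k>0,
\]
so $\ssum_V n_V\ge 2f>2v_4+4v_5+\ssum_{k\ge 6}kv_k\ge\ssum_V n_V$, a contradiction.

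I do not expect a genuine obstacle here: the argument is a routine counting estimate. The only point needing a little care is the bookkeeping that, under the negated conclusion, the per-vertex upper bounds on $n_V$ are exactly $0,2,4$ for degrees $3,4,5$ and at most $k$ for degree $k\ge 6$, and that these coefficients are dominated term-by-term by the $2(k-3)$ appearing in \eqref{quadvcountf}, leaving the manifestly positive remainder $12+\ssum_{k\ge 7}(k-6)v_k$. (It is worth recording explicitly that $2(k-3)-k=k-6\ge 0$ for $k\ge 6$, with equality only at $k=6$, which is what makes the estimate work.)
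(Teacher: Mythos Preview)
Your proof is correct and follows exactly the same approach as the paper: assume the negation, bound the total count of the two angles by $2v_4+4v_5+\ssum_{k\ge 6}kv_k$, and use \eqref{quadvcountf} to show this is strictly less than the lower bound $2f$. You have simply made explicit the computation $2f-(2v_4+4v_5+\ssum_{k\ge 6}kv_k)=12+\ssum_{k\ge 7}(k-6)v_k>0$ that the paper leaves as ``By \eqref{quadvcountf}, the number is $<2f$.''
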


The two angles can be two different angles, or can be one angle appearing twice in the quadrilateral.

\begin{proof}
Suppose the two angles do not appear at any degree $3$ vertex, appear at most twice at any degree $4$ vertex, and at most four times at any degree $5$ vertex. Then the total number of times the two angles appear in the tiling is $\le 2v_4+4v_5+\sum_{k\ge 6}kv_k$. By \eqref{quadvcountf}, the number is $<2f$. On the other hand, the total number of times the two angles appear in the tiling should be $\ge 2f$. 
\end{proof}

\begin{lemma}\label{count-aaa}
In a tiling of the sphere by congruent quadrilaterals, if an angle $\alpha$ appears once in the quadrilateral, and $\alpha^3$ is the only degree $3$ vertex, then we have the following:
\begin{enumerate}
\item $f\ge 24$. Moreover, if $f=24$, then all vertices have degrees $3$ or $4$. 
\item If an angle appears once in the quadrilateral, then it appears at most twice at a degree $4$ vertex.
\end{enumerate}
\end{lemma}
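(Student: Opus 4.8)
The plan is to handle the two parts separately: part~1 is a direct count of the angle $\alpha$, and part~2 is a contradiction argument that uses part~1 together with the Parity Lemma and a counting estimate.

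For part~1, note first that since $\alpha^3$ is a vertex, the angle sum of that vertex gives $3\alpha=2\pi$, so $\alpha=\tfrac23\pi$. By \eqref{quadvcount3} we have $v_3\ge 8$, and by hypothesis every one of these $v_3$ vertices is $\alpha^3$; hence the degree-$3$ vertices alone account for $3v_3$ occurrences of $\alpha$ among the tiles. Since $\alpha$ occurs exactly once in the quadrilateral, the total number of occurrences of $\alpha$ in the tiling is $f$, so $f\ge 3v_3\ge 24$. If $f=24$ then $3v_3\le 24$ forces $v_3=8$, and then \eqref{quadvcount3} reads $0=\ssum_{k\ge 5}(k-4)v_k$, which forces $v_k=0$ for all $k\ge 5$; so every vertex has degree $3$ or $4$.

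For part~2, let $\theta$ be an angle occurring once in the quadrilateral, and suppose for contradiction that $\theta$ appears $m\ge 3$ times at some degree-$4$ vertex $V$. I would first reduce to the case $\theta\ne\alpha$: if $\theta=\alpha$, then $V$ carries at least three copies of $\alpha$, whose angle sum is $\ge 3\alpha=2\pi$ with equality only for $V=\alpha^3$, contradicting that $V$ has degree $4$. Next I would show $\theta<\tfrac23\pi$: otherwise $3\theta\ge 2\pi$, and since $\theta\ne\alpha$ this is strict, so $\theta$ cannot occur three times at any vertex. Thus $\theta<\tfrac23\pi=\alpha$, $V$ is $\theta^3\phi$ or $\theta^4$ for some angle $\phi$, and $\theta$ never occurs at a degree-$3$ vertex (those are all $\alpha^3$); hence all $f$ occurrences of $\theta$, and only $f-3v_3$ of the $f$ occurrences of $\alpha$, lie at vertices of degree $\ge 4$, while also $\alpha$ occurs at most twice at any vertex of degree $\ge 4$. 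When the quadrilateral is the general $a^2bc$, the Parity Lemma (Lemma~\ref{parity}) immediately kills $V=\theta^3\phi$: among $\beta,\gamma,\delta$ the numbers appearing at a vertex all have the same parity, but a vertex $\theta^3\phi$ makes exactly one of $\beta,\gamma,\delta$ occur an odd number of times. For the surviving configurations — $V=\theta^4$ in all cases, and $V=\theta^3\phi$ for kites and almost equilateral quadrilaterals — I would run a counting argument built on $\#\alpha=\#\theta=f$, feeding in \eqref{quadvcountf}, \eqref{quadvcount3}, Lemma~\ref{angle_pi}, the bound $\alpha=\tfrac23\pi$, and the extra degree-$4$ structure forced by the existence of $V$.

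The hard part will be exactly this last counting step. The bare balance $\#\alpha=\#\theta=f$, even combined with $f\ge 24$ and the slack form of \eqref{quadvcountf}--\eqref{quadvcount3}, is consistent with a degree-$4$ vertex carrying three or four copies of $\theta$; the angle-value constraints alone do not force a contradiction either. So the real work is a discharging-type estimate showing that a $\theta^3\phi$ or $\theta^4$ vertex forces enough further degree-$4$ vertices that are poor in $\alpha$ (hence rich in the remaining angles) to overload the $\theta$-count beyond what \eqref{quadvcountf} permits, handled separately for the kite and almost equilateral cases where the Parity Lemma gives less leverage.
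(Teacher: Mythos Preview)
Your part~1 is correct. You count $\alpha$ at the degree-$3$ vertices to get $f\ge 3v_3\ge 24$, whereas the paper counts the three non-$\alpha$ angles at vertices of degree $\ge 4$, obtaining $\sum_{k\ge 4}kv_k\ge 3f$ and hence $v_4\ge 18+\sum_{k\ge 5}(2k-9)v_k$. Either route gives $f\ge 24$ and, for $f=24$, forces $v_k=0$ for all $k\ge 5$. In fact your inequality $f\ge 3v_3$, combined with \eqref{quadvcountf} and \eqref{quadvcount3}, also recovers the paper's bound $v_4\ge 18$.

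For part~2 you are reading the statement as universal (``at \emph{every} degree-$4$ vertex''), but that reading is actually false: in Proposition~\ref{aaa} the combination $\alpha^3,\delta^4$ is explicitly analysed, so a single-occurrence angle \emph{can} appear four times at a degree-$4$ vertex under the hypotheses of the lemma. What the paper proves (and intends) is the existential version: $\theta$ cannot appear at least three times at \emph{all} degree-$4$ vertices simultaneously. Once $v_4\ge 18$ is in hand, this is a single inequality:
\[
f-3v_4 \;=\; 6-2v_4+\textstyle\sum_{k\ge 5}(k-3)v_k \;\le\; -30-3\textstyle\sum_{k\ge 5}(k-5)v_k \;<\;0,
\]
so $f<3v_4$; an angle with total count $f$ therefore cannot occupy $\ge 3$ slots at each of the $v_4$ degree-$4$ vertices. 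Your parity case-split and the projected discharging argument are aimed at the stronger universal claim, which is neither what the lemma asserts nor true, so the ``hard part'' you flag simply does not arise.
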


\begin{proof}
The assumption implies the other three angles appear only at vertices of degree $\ge 4$. Then by \eqref{quadvcountf}, we get
\[
\ssum_{k\ge 4}kv_k
\ge 3f=18+\ssum_{k\ge 4}3(k-3)v_k.
\]
This is the same as
\[
v_4\ge 18+\ssum_{k\ge 5}(2k-9)v_k\ge 18.
\]
Then we get
\begin{align*}
f &=6+\ssum_{k\ge 4}(k-3)v_k
\ge 24+\ssum_{k\ge 5}3(k-4)v_k
\ge 24, \\
f-3v_4
&=6+\ssum_{k\ge 5}(k-3)v_k-2v_4
\le -30-\ssum_{k\ge 5}3(k-5)v_k
< 0.
\end{align*}
The first inequality implies the first statement. The second inequality implies the second statement.
\end{proof}

\begin{lemma}\label{count-att}
In a tiling of the sphere by congruent quadrilaterals, if different angles $\alpha,\theta$ appear once in the quadrilateral, and $\alpha\theta^2$ is the only degree $3$ vertex, then we have the following:
\begin{enumerate}
\item $f\ge 16$. Moreover, if $f=16$, then all vertices have degrees $3$ or $4$. 
\item If an angle is not $\alpha,\theta$, then the angle either appears at a degree $4$ vertex, or appears at least three times at a degree $5$ vertex, or appears at least five times at a degree $6$ vertex, or appears seven times at a degree $7$ vertex.
\end{enumerate}
\end{lemma}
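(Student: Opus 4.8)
The plan is to run two counting arguments in the spirit of Lemma \ref{count-aaa}, both driven by \eqref{quadvcountf} and \eqref{quadvcount3}, but this time squeezing the degree-$3$ vertex rather than $v_4$. For part (1), I would first record the consequence of $\alpha\theta^2$ being the \emph{only} degree-$3$ vertex: every degree-$3$ vertex carries two copies of $\theta$, so among the degree-$3$ vertices $\theta$ occurs $2v_3$ times, and since $\theta$ occurs once in the quadrilateral we get $\#\theta=f$, hence $f\ge 2v_3$. Combined with $v_3\ge 8$ from \eqref{quadvcount3}, this gives $f\ge 16$. When $f=16$ the chain $16=f\ge 2v_3\ge 16$ forces $v_3=8$, so \eqref{quadvcount3} reads $0=\sum_{k\ge5}(k-4)v_k$; therefore $v_k=0$ for all $k\ge5$ and every vertex has degree $3$ or $4$.

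For part (2), let $X$ be any angle other than $\alpha,\theta$, and argue by contradiction against the claimed disjunction. The negation asserts that $X$ occurs $0$ times at every degree-$4$ vertex, at most twice at every degree-$5$ vertex, at most four times at every degree-$6$ vertex, and at most six times at every degree-$7$ vertex; moreover $X$ occurs $0$ times at every degree-$3$ vertex because $\alpha\theta^2$ contains no $X$. Summing over all vertices, and using the trivial cap $k\le 2(k-4)$ for a degree-$k$ vertex with $k\ge8$, we obtain
\[
\#X\ \le\ \sum_{k\ge4}2(k-4)v_k\ =\ 2\sum_{k\ge5}(k-4)v_k\ =\ 2(v_3-8),
\]
where the last equality is \eqref{quadvcount3}. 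With $f\ge 2v_3$ from part (1) this gives $\#X\le f-16<f$. But $X$ occurs at least once in the quadrilateral in every edge type to which the hypothesis applies — once in the general $a^2bc$ and almost equilateral $a^3b$ cases, and twice as the repeated $ab$-angle in the kite $a^2b^2$ case (the rhombus $a^4$ is excluded, since no angle there occurs once) — so $\#X\ge f$, a contradiction. Hence the disjunction holds, and part (2) follows.

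The main obstacle is recognizing which inequality makes the estimate close: comparing $\#X\ge f$ directly against \eqref{quadvcountf} alone is too weak, because that identity still allows vertex distributions heavily loaded with high-degree vertices and the computation does not terminate. The decisive point is that the degree-$3$ structure furnishes the \emph{extra} linear inequality $f\ge 2v_3$, and that through \eqref{quadvcount3} this becomes a lower bound for $f$ whose coefficients on $v_5,v_6,v_7,\dots$ are exactly the $2(k-4)$ that cap the occurrences of $X$ in the negation — so the two sides match term by term and the argument reduces to a one-line inequality. The only remaining work is the routine edge-combination bookkeeping that guarantees $\#X\ge f$, which I would handle as the short parenthetical above.
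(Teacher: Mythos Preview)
Your proof is correct and essentially the same as the paper's: both use $f\ge 2v_3$ from counting $\theta$, feed it through \eqref{quadvcount3}, and for part (2) bound $\#X$ against $2\sum_{k\ge5}(k-4)v_k$ using the caps $0,2,4,6$ at degrees $4,5,6,7$ and $k\le 2(k-4)$ for $k\ge8$. The edge-combination case split you add to justify $\#X\ge f$ is unnecessary, though: since $\alpha,\theta$ each occur once and the quadrilateral has four angles, any remaining angle $X$ occurs at least once in the tile, so $\#X\ge f$ is immediate without looking at edge types.
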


\begin{proof}
By counting the total number of $\theta$ and \eqref{quadvcount3}, the assumption implies
\[
f\ge 2v_3
=16+\ssum_{k\ge 5}2(k-4)v_k
\ge 16+2v_5+4v_6+6v_7+\ssum_{k\ge 8}kv_k
\ge 16.
\]
This immediately implies the first statement. Moreover, by the only degree $3$ vertex $\alpha\theta^2$, if an angle is not $\alpha,\theta$, then it appears only at vertices of degree $\ge 4$. On the other hand, the total number of times it appears in the tiling is $\ge f>2v_5+4v_6+6v_7+\ssum_{k\ge 8}kv_k$. This implies the vertex appears at a vertex of degree $\ge 4$ in the way described in the lemma.
\end{proof}

\subsection{Geometry}
\label{geometry}

To describe the geometrical constraint, we need to fix the precise meaning of polygon. The meaning in this paper is somewhat different from the convention, but is necessary for clarifying the statements in this section.

In this paper, a (spherical) {\em polygon} is a sequence of arcs (called {\em edges}) connected together (at {\em vertices}) to form a closed path, together with the choice of a {\em side}. An arc is part of a great circle. The side means the following. In Figure \ref{polygon}, picking one angle $\theta$ at one vertex determines the choice of all the angles $*$ at the other vertices, by the criterion of being ``on the same side'' of $\theta$ along the path. Picking the complement $2\pi-\theta$ of $\theta$ gives another side, and therefore another polygon. If we forget the side, then we get the closed path only, which we call the {\em boundary} of the polygon.

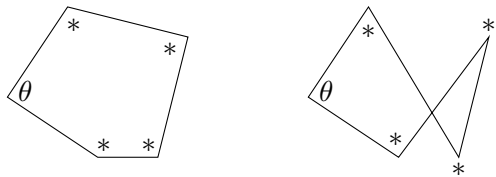
\begin{figure}[htp]
\centering
\begin{tikzpicture}[scale=0.8]

\draw
	(0,0) -- (1,1.5) -- (3,1) -- (2.5,-1) -- (1.5,-1) -- cycle;

\node at (0.3,0.1) {\small $\theta$};
\node at (1.1,1.2) {\small $*$};
\node at (2.7,0.8) {\small $*$};
\node at (2.35,-0.8) {\small $*$};
\node at (1.6,-0.8) {\small $*$};

\begin{scope}[xshift=5cm]

\draw
	(0,0) -- (1,1.5) -- (2.5,-1) -- (3,1) -- (1.5,-1) -- cycle;

\node at (0.3,0.1) {\small $\theta$};
\node at (1,1.1) {\small $*$};
\node at (3,1.2) {\small $*$};
\node at (2.5,-1.2) {\small $*$};
\node at (1.45,-0.7) {\small $*$};
	
\end{scope}

\end{tikzpicture}
\caption{Polygon determined by closed path and one angle.}
\label{polygon}
\end{figure}

A polygon is {\em simple} if the boundary is simple, i.e., not crossing itself. The first of Figure \ref{polygon} is simple, and the second is not simple. A simple polygon is equivalent to a ($2$-dimensional) subset of the sphere, such that the (topological) boundary is a piecewise straight simple closed path. By \cite[Lemma 1]{gsy}, in a tiling of the sphere by congruent polygons, the polygon must be simple. 

The simple property implies that, if two edges are connected at a vertex, then one edge has length $<\pi$. For example, we have $a<\pi$ in all the quadrilaterals in Figure \ref{quad}.

A polygon is {\em convex} if it is simple and all angles are $\le \pi$. This is equivalent to that the polygon is the intersection of finitely many hemispheres. A convex polygon has the property that any two vertices can be connected by an arc inside the polygon.

In the next result, the tiling is not assumed to be edge-to-edge.

\begin{lemma}\label{geometry10}
In a spherical tiling by $f$ congruent convex polygons, all angles $\ge \frac{2}{f}\pi$. If an angle is $\frac{2}{f}\pi$, then the tiling is the hosohedron (earth map tiling by congruent $2$-gons), or its flip modification. 
\end{lemma}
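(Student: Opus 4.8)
The plan is an area argument. Let $V$ be a vertex of the congruent polygon $P$ at which the smallest angle $\theta$ occurs, and let $e_1,e_2$ be the two edges of $P$ meeting at $V$. First I would extend $e_1,e_2$ to full great circles $C_1,C_2$; these intersect at $V$ and at the antipode $-V$, and cut the sphere into four lunar sectors of angles $\theta,\pi-\theta,\theta,\pi-\theta$ at $V$. Since $P$ is convex, it is an intersection of closed hemispheres, and in particular it lies in the closed hemisphere bounded by $C_i$ on the side of the edge $e_i$; hence $P$ is contained in the intersection $L$ of these two hemispheres. Because $P$ occupies the sector of angular width $\theta$ at $V$, the set $L$ is the lune of angle $\theta$, whose area is $2\theta$. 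The $f$ congruent tiles partition the sphere, so $\mathrm{area}(P)=\tfrac{4\pi}{f}$, and therefore $\tfrac{4\pi}{f}=\mathrm{area}(P)\le\mathrm{area}(L)=2\theta$, that is, $\theta\ge\tfrac{2}{f}\pi$. As $\theta$ was the smallest angle, every angle of $P$ is at least $\tfrac{2}{f}\pi$.

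For the equality case, suppose some angle of $P$ equals $\tfrac{2}{f}\pi$. Then the displayed chain is an equality, so $P\subseteq L$ with $\mathrm{area}(P)=\mathrm{area}(L)$ and both sets closed; hence $P=L$. Thus the boundary of $P$ consists of two semicircular arcs of length $\pi$ joining the antipodal points $V$ and $-V$: geometrically $P$ is a $2$-gon, a lune of angle $\tfrac{2}{f}\pi$, even though as a combinatorial polygon it may carry additional degree-$\pi$ vertices in the interiors of these two arcs. Since all tiles are congruent, every tile of the tiling is such a lune.

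It remains to identify the tiling. I would look at a tiling vertex $U$ that is a genuine corner (angle $\tfrac{2}{f}\pi$) of some tile, and write $m$ for the number of tiles having a corner at $U$ and $n$ for the number having a straight ($\pi$) angle there. The angle sum $m\cdot\tfrac{2}{f}\pi+n\pi=2\pi$ gives $\tfrac{2m}{f}+n=2$ with $m\ge1$, so either $n=0,\ m=f$, or $n=1,\ m=f/2$. If $n=0$ at some such $U$, then all $f$ lunes share the apex pair $\{U,-U\}$ and fit together as equal meridional slices; the extra degree-$\pi$ vertices merely subdivide the meridians, with the same pattern on every meridian by congruence — this is the earth map tiling by $2$-gons. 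If instead every corner-vertex has $n=1$, then the $f/2$ corner-tiles at $U$ already fill a hemisphere bounded by a great circle through $U$ and $-U$, the complementary hemisphere is tiled by the remaining $f/2$ lunes, and rerunning the analysis there shows they too form equal slices about an antipodal pair lying on the same great circle; the two apex pairs need not agree, and this configuration is exactly a flip modification of the earth map tiling, obtained by flipping one hemisphere's worth of timezones.

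The main obstacle is making this last step rigorous: upgrading the local dichotomy at corner-vertices into a global description of the tiling, carefully tracking the degree-$\pi$ vertices and using the edge-to-edge condition together with congruence to force the meridian subdivision patterns to be rigid, so that the only remaining freedom is the single hemisphere flip and the outcome coincides with the flip modification of the earth map tiling as described in Section~\ref{construction}. By contrast, the area bound and the identification of the tile as a lune are routine.
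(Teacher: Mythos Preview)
Your argument is correct and essentially identical to the paper's proof: the area comparison $\tfrac{4\pi}{f}\le 2\theta$ gives the inequality, the equality case forces the tile to be the lune $L$ itself, and then the angle-sum dichotomy $m\theta+n\pi=2\pi$ at an apex (with $m\ge1$, hence $n\in\{0,1\}$) yields either the full earth map (some apex has $n=0$, so $m=f$) or two half earth maps in complementary hemispheres (every apex has $n=1$, so $m=f/2$), i.e.\ the flip modification. This matches the paper's proof and its Figure~\ref{loontiling}.

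The obstacle you identify in your final paragraph is not real. The lemma's conclusion is a statement about the underlying tiling by $2$-gons: once you have shown that the tile is geometrically a lune, you may forget the degree-$\pi$ vertices entirely, exactly as the paper does (``after ignoring some vertices, the polygon is the whole $2$-gon''). The resulting not-necessarily-edge-to-edge tiling by congruent lunes is what is being classified as ``the earth map tiling by congruent $2$-gons, or the flip modification''. There is no need to track the combinatorial $\pi$-vertices, no need to invoke the edge-to-edge hypothesis beyond this point, and no need to match anything to the constructions of Section~\ref{construction} (which concern triangles and quadrilaterals, not $2$-gons). Your own sketch of the dichotomy already constitutes a complete proof; the paper's version is no more detailed than what you wrote.
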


\begin{proof}
Let $\theta$ be an angle of the convex polygon. The convexity implies that the polygon is contained in the $2$-gon with angle $\theta$. Since $f$ congruent polygons tile the sphere, the polygon has area $\frac{4}{f}\pi$. Since the $2$-gon has area $2\theta$, we get $\frac{4}{f}\pi\le 2\theta$.

If $\theta=\frac{2}{f}\pi$, then after ignoring some vertices, the polygon is the whole $2$-gon $G$ with angle $\theta$. We get a (not necessarily edge-to-edge) tiling by congruent copies of $G$. 

The angle combination at a vertex $\bullet$ is $\theta^k$ or $\theta^k\pi$, where $\pi$ appears when $\bullet$ lies in the interior of an edge of another tile. If $\bullet$ is $\theta^k$, then the $k$ copies of $G$ at $\bullet$ form an earth map tiling of the sphere. See the first of Figure \ref{loontiling}.

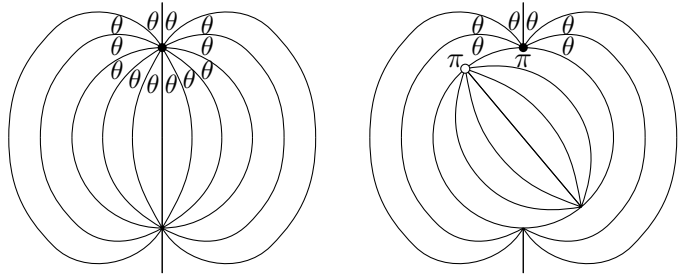
\begin{figure}[htp]
\centering
\begin{tikzpicture}[>=latex, scale=1.2]

\foreach \a in {1,-1}
\foreach \b in {1,-1}
\foreach \c in {0,1}
\draw[xshift=4*\c cm, rotate=40*\c, xscale=\a, yscale=\b]
	(0,0) -- (0,1) arc (90:0:1)
	(0.33,0) to[out=90,in=-60] (0,1)
	(0.66,0) to[out=90,in=-30] (0,1);

\foreach \a in {1,-1}
\foreach \b in {1,-1}
\foreach \c in {0,1}
\draw[xshift=4*\c cm, xscale=\a, yscale=\b]
	(0,1) -- (0,1.5)
	(1.35,0) to[out=90,in=-45] (45:1.35) to[out=135,in=30] (0,1)
	(1.7,0) to[out=90,in=-45] (45:1.7) to[out=135,in=60] (0,1);
	
\fill
	(0,1) circle (0.05)
	(4,1) circle (0.05);

\filldraw[xshift=4cm, fill=white]
	(130:1) circle (0.05);
		
\foreach \a in {1,-1}
{
\begin{scope}[xscale=\a]
	
\node at (0.1,0.6) {\footnotesize $\theta$};
\node at (0.3,0.65) {\footnotesize $\theta$};
\node at (0.5,0.75) {\footnotesize $\theta$};

\node at (0.1,1.3) {\footnotesize $\theta$};
\node at (0.5,1.25) {\footnotesize $\theta$};
\node at (0.5,1.02) {\footnotesize $\theta$};

\end{scope}
}

\foreach \a in {1,-1}
{
\begin{scope}[xshift=4cm, xscale=\a]

\node at (0.1,1.3) {\footnotesize $\theta$};
\node at (0.5,1.25) {\footnotesize $\theta$};
\node at (0.5,1.02) {\footnotesize $\theta$};

\end{scope}
}

\node at (4,0.85) {\small $\pi$};
\node at (3.25,0.85) {\small $\pi$};

\end{tikzpicture}
\caption{Tilings by $2$-gons.}
\label{loontiling}
\end{figure}

Next, we may assume all vertices are $\theta^k\pi$. Then the $k$ copies of $G$ at the $\theta^k$ part of the $\bullet$-vertex $\theta^k\pi$ form a half earth map tiling of a hemisphere. See the second of Figure \ref{loontiling}. Then any tile in the complementary hemisphere has a $\circ$-vertex $\theta^k\pi$, and $k$ copies of $G$ at the $\theta^k$ part of $\circ$ form a half earth map tiling of the complementary hemisphere. The second of Figure \ref{loontiling} is obtained from the first by flipping the complementary hemisphere. 
\end{proof}

\begin{lemma}\label{geometry4}
If a simple almost equilateral quadrilateral satisfies $\gamma<\pi$, then $\alpha+2\beta>\pi$ and $\alpha+2\delta>\pi$. 
\end{lemma}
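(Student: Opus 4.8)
The plan is to reduce both inequalities to one auxiliary isosceles triangle built at the $\alpha$-vertex. Label the quadrilateral $ABCD$ so that the angles at $A,B,C,D$ are $\alpha,\beta,\gamma,\delta$ and the edges are $AB=BC=DA=a$, $CD=b$, as in the third picture of Figure \ref{quad}; recall $a<\pi$, as observed after the definition of simple polygon. I would first dispose of the trivial cases: if $\alpha\ge\pi$ both inequalities hold because $\beta,\delta>0$, so assume $\alpha<\pi$; and if $\beta\ge\pi$ then $\alpha+2\beta>\pi$ is automatic, symmetrically for $\delta$, so it remains to prove $\alpha+2\beta>\pi$ when in addition $\beta<\pi$, and $\alpha+2\delta>\pi$ when in addition $\delta<\pi$.

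The main object is the spherical triangle $T=ABD$ with sides the geodesic arcs $AB$, $BD$, $DA$. Since $|AB|=|DA|=a<\pi$ and $\alpha<\pi$, the angle of $T$ at $A$ equals $\alpha$, so $T$ is isosceles with apex $\alpha$ and a common base angle $\epsilon>0$ at $B$ and at $D$; as $T$ is non-degenerate (a degenerate $T$ would force $\alpha\in\{0,\pi\}$) it has positive area, and its angle sum gives $\alpha+2\epsilon>\pi$. The key claim is that the diagonal $BD$ lies inside the quadrilateral. Granting it, $BD$ splits the quadrilateral into the triangles $ABD$ and $CBD$, both contained in it, so near $B$ the triangle $T$ occupies part of the interior angular sector of opening $\beta$; hence $\epsilon\le\beta$ whenever $\beta<\pi$, and likewise $\epsilon\le\delta$ whenever $\delta<\pi$. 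Then $\alpha+2\beta\ge\alpha+2\epsilon>\pi$ and $\alpha+2\delta\ge\alpha+2\epsilon>\pi$, as wanted.

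So everything comes down to ``$BD$ lies inside'', and this is exactly where the hypothesis $\gamma<\pi$ is needed. In the situation relevant to $\alpha+2\beta$ we also have $\beta<\pi$, so together with $\alpha<\pi$ and $\gamma<\pi$ the vertices $A,B,C$ are all non-reflex and $D$ is the only possible reflex vertex; a simple quadrilateral always possesses a diagonal, and a diagonal issued from a reflex vertex must be the interior one, so the unique diagonal $BD$ at $D$ lies inside (and if moreover $\delta<\pi$, the quadrilateral is convex and $BD$ lies inside by the convexity property recalled just before Lemma \ref{geometry10}). The situation relevant to $\alpha+2\delta$ is symmetric, with $B$ in the role of the lone possible reflex vertex.

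I expect the one genuinely delicate point to be this last step: making ``$BD$ lies inside the simple quadrilateral'' precise for an arbitrary (possibly non-convex) simple spherical quadrilateral, and checking the accompanying additivity $\beta=\angle ABD+\angle DBC$, $\delta=\angle ADB+\angle BDC$. This should come out of the simple-boundary machinery developed in this section, using the observation that in each of the two cases at most one of $B,D$ can fail to be convex, so that $BD$ is either a diagonal of a convex quadrilateral or the forced diagonal emanating from the single reflex vertex.
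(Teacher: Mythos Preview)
Your proposal follows the same core idea as the paper: reduce to the isosceles triangle $\triangle ABD$, use its positive area to get $\alpha+2\theta>\pi$ for the base angle $\theta$, then show $\theta\le\beta$ and $\theta\le\delta$. The paper calls this base angle $\theta$ as well and concludes identically.

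The difference lies in how the inequality $\theta\le\beta$ (and $\theta\le\delta$) is justified. The paper does not split into the subcases $\beta<\pi$, $\delta<\pi$; instead it sets up a stereographic projection with $\bigcirc BD$ as equator (after observing that $\gamma<\pi$ forces $B,D$ non-antipodal), runs through the three cases $CD<\pi$, $CD>\pi$, $CD=\pi$, and reads off from the resulting pictures that the minor arc $BD$ separates $A$ from $C$, so that $\beta=\theta+\beta'$ and $\delta=\theta+\delta'$ with $\beta',\delta'$ the angles of $\triangle BCD$. Your route---at most one reflex vertex among $B,D$, hence $BD$ is the interior diagonal---is a reasonable alternative, but as you yourself flag, it needs the spherical version of ``the diagonal from the reflex vertex is interior'' made precise, together with the angle additivity at $B$ and $D$; you also need to note that $B,D$ are not antipodal (so that the arc $BD$ is well defined), which the paper extracts directly from $\gamma<\pi$. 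In short: same skeleton, and the paper's stereographic case analysis is exactly the ``simple-boundary machinery'' you anticipated needing for your delicate point.
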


\begin{lemma}\label{geometry3}
If a simple almost equilateral quadrilateral satisfies $\alpha,\beta,\gamma<\pi$, then $\beta+\gamma<\delta+\pi$ and $\gamma+\delta<\beta+\pi$. 
\end{lemma}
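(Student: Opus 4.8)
The plan is to cut the quadrilateral by the diagonal joining the $\beta$-vertex to the $\delta$-vertex and to read off both inequalities from the polar (dual) triangle inequality applied to one of the two resulting triangles. Write $A,B,C,D$ for the vertices carrying $\alpha,\beta,\gamma,\delta$, so that $AB=BC=DA=a$ and $CD=b$. Since $\alpha,\beta,\gamma<\pi$, Lemma \ref{angle_pi} forces the only possibly reflex vertex to be $D$; consequently the diagonal $BD$ lies in the closed interior of the quadrilateral (it is the diagonal issuing from the reflex vertex when $D$ is reflex, and an arbitrary diagonal when the quadrilateral is convex). This diagonal splits the quadrilateral into the isosceles triangle $\triangle ABD$ (equal sides $AB=DA=a$, apex angle $\alpha$ at $A$) and the triangle $\triangle BCD$. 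Let $\epsilon$ denote the common base angle of $\triangle ABD$ at $B$ and at $D$. Because $BD$ enters the interior strictly at the convex vertex $B$, we have $0<\epsilon<\beta$, so the three angles of $\triangle BCD$ are $\beta-\epsilon$ at $B$, $\gamma$ at $C$, $\delta-\epsilon$ at $D$, all positive.

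The point of this decomposition is that the $\epsilon$'s cancel: the two claimed inequalities $\beta+\gamma<\delta+\pi$ and $\gamma+\delta<\beta+\pi$ are, verbatim,
\[
(\beta-\epsilon)+\gamma-(\delta-\epsilon)<\pi,
\qquad
\gamma+(\delta-\epsilon)-(\beta-\epsilon)<\pi,
\]
that is, for the triangle $\triangle BCD$, (angle at $B$)$+$(angle at $C$)$-$(angle at $D$)$<\pi$ and (angle at $C$)$+$(angle at $D$)$-$(angle at $B$)$<\pi$. For any genuine spherical triangle (all sides and all angles in $(0,\pi)$) these hold, because the polar triangle has sides equal to $\pi$ minus the three angles, and those sides satisfy the triangle inequality; the inequality is strict since $\triangle BCD$ has positive area. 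So the lemma reduces to checking that $\triangle BCD$ is a genuine spherical triangle.

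For that: $BC=a<\pi$ is known; $BD<\pi$ because it is the base of the isosceles triangle $\triangle ABD$ with equal sides $a<\pi$ and apex $\alpha<\pi$, and a one-line computation $\cos BD=\cos^2 a+\sin^2 a\cos\alpha>-1$ gives $BD<\pi$; the angle of $\triangle BCD$ at $C$ is $\gamma<\pi$ by hypothesis; and the angle at $B$ is $\beta-\epsilon\in(0,\beta)\subset(0,\pi)$. The two remaining points — that $CD=b<\pi$ and that the angle $\delta-\epsilon$ at $D$ is $<\pi$ (an issue only when $D$ is reflex) — are the real work, and this is where I expect the main obstacle to be. I would dispose of the second as soon as I have the first, using that a spherical triangle with all sides $<\pi$ automatically has all angles $<\pi$ (from the angular law of cosines, $\cos(\text{angle})>-1$ whenever the opposite side is $<\pi$). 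For the first, I would deduce $b<\pi$ from simplicity together with $\gamma<\pi$ (the arc $CD$ leaves the convex vertex $C$, and an arc of length $\ge\pi$ there is incompatible with the quadrilateral being simple while $\alpha,\beta<\pi$). Finally, I note that in the reflex sub-case $\delta\ge\pi$ the first inequality $\beta+\gamma<\delta+\pi$ is already immediate from $\beta+\gamma<2\pi\le\delta+\pi$, so only the second inequality has to be extracted from $\triangle BCD$ there; this narrows the case that actually requires the careful verification above.
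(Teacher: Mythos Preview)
Your approach is essentially the paper's: split along the diagonal $BD$ into the isosceles triangle $\triangle ABD$ (base angle $\theta=\epsilon$) and $\triangle BCD$ (angles $\beta'=\beta-\epsilon$, $\gamma$, $\delta'=\delta-\epsilon$), observe that the $\epsilon$'s cancel, and read off the two inequalities from $\triangle BCD$. Your polar-triangle inequality and the paper's ``triangle sits inside each of its two $2$-gons'' area argument are the same fact. Two remarks. First, drop the appeal to Lemma~\ref{angle_pi}: that lemma is about tilings, not about a single quadrilateral, and in any case the hypothesis $\alpha,\beta,\gamma<\pi$ already makes $D$ the only possibly reflex vertex. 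Second, the place you correctly flag as ``the real work'' --- that the diagonal $BD$ (the minor arc) actually lies inside and that $b=CD<\pi$ --- is exactly what the paper settles by the stereographic-projection case analysis of Figure~\ref{sphere-geometry}: when $CD\ge\pi$ (the second and third pictures) one always has $\alpha\ge\pi$ or $\beta\ge\pi$, so $\alpha,\beta,\gamma<\pi$ forces the first picture, where $CD<\pi$ and $\triangle ABD,\triangle BCD$ lie on opposite sides of the minor arc $BD$. Your sketch for this step is in the right direction but would need that analysis to be made precise.
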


\begin{lemma}\label{geometry5}
If a simple almost equilateral quadrilateral satisfies $\gamma,\delta<\pi$, then $\alpha>\gamma$ if and only if $\beta>\delta$.
\end{lemma}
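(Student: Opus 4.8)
The plan is to cut the quadrilateral by each of its two diagonals into an isosceles triangle and a second triangle, and then to read off the two angle comparisons from the single edge comparison between $a$ and $b$, using the spherical fact that in a triangle (with all sides in $(0,\pi)$) the larger side faces the larger angle.

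Write the quadrilateral as $ABCD$ with angles $\alpha,\beta,\gamma,\delta$ at $A,B,C,D$ and edges $AB=BC=DA=a$, $CD=b$ as in Figure \ref{quad}; since the quadrilateral is simple, $a,b<\pi$. The diagonal $AC$ splits it into the isosceles triangle $ABC$ (legs $AB=BC=a$, apex angle $\beta$ at $B$) and the triangle $ACD$ (sides $DA=a$, $DC=b$, and the angle $\delta$ at $D$). Because $ABC$ is isosceles its base angles satisfy $\angle BAC=\angle BCA$, so the parts of $\alpha$ and of $\gamma$ lying in the triangle $ACD$ differ by exactly $\alpha-\gamma$, i.e. $\angle CAD-\angle ACD=\alpha-\gamma$. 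In $ACD$ the angle $\angle CAD$ faces the side $DC=b$ while $\angle ACD$ faces $DA=a$, so $\angle CAD>\angle ACD$ if and only if $b>a$; hence $\alpha>\gamma$ if and only if $b>a$. Repeating the argument with the diagonal $BD$ — which meets the equal sides $AB=AD=a$ at the isosceles apex $A$ and meets $CB=a$, $CD=b$ at $C$ — gives $\beta>\delta$ if and only if $b>a$. Combining the two equivalences gives $\alpha>\gamma\iff b>a\iff\beta>\delta$, which is the assertion.

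For this to be valid each diagonal must lie inside the quadrilateral, so that it genuinely splits the angles at its two endpoints into positive summands, and the two resulting triangles must be honest spherical triangles (all sides and angles in $(0,\pi)$). When the quadrilateral is convex this is automatic: every angle of each sub-triangle is part of an angle $<\pi$, and a convex polygon has the property (recalled above) that any two of its vertices are joined by an interior arc. The remaining possibility is that some angle is $\ge\pi$; by the hypothesis $\gamma,\delta<\pi$ such a reflex angle can only be $\alpha$ or $\beta$, and by the symmetry exchanging $(\alpha,\delta)$ with $(\beta,\gamma)$ it suffices to treat $\alpha\ge\pi$. In that case only the diagonal $AC$, issued from the reflex vertex, is interior, so the first half of the argument still applies and gives $\alpha>\gamma\iff b>a$; since $\gamma<\pi\le\alpha$ the left side is true, so $b>a$, and the only thing left to prove is $\beta>\delta$.

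I expect this last point to be the main obstacle. One route is to show it cannot occur at all: using the straight-edge and simple-boundary technique of this section, together with Lemmas \ref{geometry4} and \ref{geometry3}, one argues that a simple almost equilateral quadrilateral with $\gamma,\delta<\pi$ must actually have $\alpha,\beta<\pi$, reducing everything to the convex case. Alternatively one works directly with the single interior diagonal $q=|AC|$: the spherical law of cosines for $\cos q$ computed in the isosceles triangle $ABC$ and in $ACD$ gives $\cos\beta=(\cos q-\cos^2 a)/\sin^2 a$ and $\cos\delta=(\cos q-\cos a\cos b)/(\sin a\sin b)$, so that $\beta>\delta$ is equivalent to $\cos q\cos\tfrac{a+b}{2}+\cos a\cos\tfrac{b-a}{2}<0$, while writing $\alpha=\angle BAC+\angle CAD$ and applying the law of cosines again turns $\alpha\ge\pi$ into $\cos b\le\cos a\,(2\cos q-1)$; these relations together with $b>a$ are what one must combine to fix the sign. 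Along the way I would also record the identity obtained by computing each diagonal in two ways and cancelling the common term,
\[
\sin a\,(\cos\alpha-\cos\beta)=\sin b\,(\cos\gamma-\cos\delta),
\]
valid whenever $\gamma,\delta\le\pi$; it serves as a consistency check and is the natural bridge to the Coolsaet-type equality used elsewhere in the paper.
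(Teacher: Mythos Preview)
Your convex case is correct and is in the same spirit as the paper's argument. The gap is the non-convex case, which you explicitly leave open. Your first suggested fix --- that $\gamma,\delta<\pi$ forces $\alpha,\beta<\pi$ --- is false: the quadrilateral underlying the AVC~\eqref{acd_avc3} has $\alpha=(\tfrac{4}{3}-\tfrac{4}{3f})\pi>\pi$ with $\gamma,\delta<\pi$ for every $f$ in that family, and \eqref{abb-accangle} gives the boundary example $\alpha=\pi$. Your second route via the law of cosines is not carried through.

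The paper closes the gap by observing that your second half --- the equivalence $\beta>\delta\iff b>a$ via the diagonal $BD$ --- actually needs only $\gamma<\pi$ and no hypothesis on $\alpha$ at all. With $\gamma<\pi$ the triangle $\triangle BCD$ (using the minor arc $BD$) is fixed, with angles $\beta',\delta'$ at $B,D$. Because $AB=AD$, the base angles $\angle ABD$ and $\angle ADB$ are equal; this equality is indifferent to whether $A$ lies on the far side of $BD$ from $C$ ($\alpha<\pi$), on $BD$ itself ($\alpha=\pi$), or on the near side ($\alpha>\pi$), and in every case yields $\beta-\beta'=\delta-\delta'$. Hence $\beta>\delta\iff\beta'>\delta'$, and the latter is equivalent to $b>a$ by the side--angle comparison in $\triangle BCD$ (with a case split on whether $b<\pi$, $b=\pi$, or $b>\pi$, handled via Figure~\ref{sphere-geometry}). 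The other equivalence $\alpha>\gamma\iff b>a$ then follows from the symmetry $(\alpha,\delta)\leftrightarrow(\beta,\gamma)$, using only $\delta<\pi$. So the diagonal you discarded when $\alpha\ge\pi$ is exactly the one to keep; what was missing is that the isosceles relation at $A$ does not require an honest triangular split, only the equality of base angles.
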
 

By the exchange of $(\alpha,\delta)$ with $(\beta,\gamma)$, Lemma \ref{geometry4} says that, if $\delta<\pi$, then we get $2\alpha+\beta>\pi$ and $\beta+2\gamma>\pi$. Moreover, Lemma \ref{geometry3} says that, if $\alpha,\beta,\delta<\pi$, then we get $\alpha+\delta<\gamma+\pi$ and $\gamma+\delta<\alpha+\pi$. 

Akama and van Cleemput \cite[Lemma 2.1]{ac} proved Lemma \ref{geometry3} for convex almost equilateral quadrilateral.

\begin{proof}
We use planer pictures to argue about spherical quadrilaterals. In order for the pictures to be authentic, we draw stereographic projections, which is conformal. Figure \ref{sphere-geometry} describes the stereographic projection of a simple quadrilateral $\square ABCD$ satisfying $\gamma=\angle BCD<\pi$ and $AB,BC,AD<\pi$. The three pictures correspond to the cases $CD<\pi$, $CD>\pi$ and $CD=\pi$. 

By $\gamma<\pi$, we know $B,D$ are not antipodal points. By simple quadrilateral, we also know $B\ne D$. Therefore there is a unique great circle $\bigcirc BD$ passing through $B$ and $D$. Using $\bigcirc BD$ as the {\em equator} (indicated by the dotted circle), we get the stereographic projection. Under the projection, the arcs on the sphere intersect the equator at antipodal points. We always denote the antipode of $X$ by $X^*$. In the third picture, we have $D=C^*$ and $C=D^*$.

We have $CD<\pi$ in the first picture. This means $BC$ and $CD$ lie in the same hemisphere bounded by $\bigcirc BD$. We have $CD>\pi$ in the second picture. This means $BC$ and $CD^*$ lie in the same hemisphere bounded by $\bigcirc BD$, and $DD^*$ lies in the other hemisphere. We have $CD=\pi$ in the third picture. This means $C$ lies on the equator $\bigcirc BD$.

We indicate the vertices $B,C,D$ by $\bullet$, and indicate the vertex $A$ by $\circ$. Multiple $\circ$ means all the possible locations of $A$, such that the quadrilateral is simple. For example, if $A$ is in $\triangle BCD^*$ in the first picture, or $A$ is in $\triangle B^*CD^*$ in the second picture, or $A$ is in $\triangle B^*CD$ in the third picture, then the quadrilateral is not simple.  

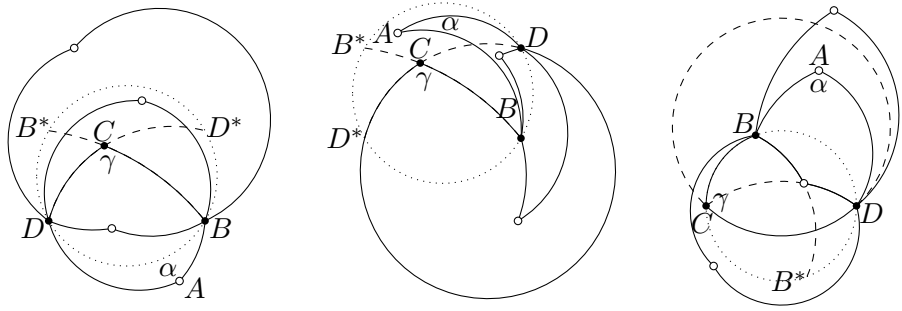
\begin{figure}[htp]
\centering
\begin{tikzpicture}


\begin{scope}[xscale=-1]

\draw[dotted]
	(0,0) circle (1.2);

\coordinate (B1) at (30:1.2);
\coordinate (B2) at (30:-1.2);
\coordinate (C1) at (-30:1.2);
\coordinate (C2) at (-30:-1.2);
\coordinate (D) at (0.3,0.4);
\coordinate (A1) at (-0.7,-1.4);
\coordinate (A2) at (0.7,1.7);
\coordinate (A3) at (-0.2,1);
\coordinate (A4) at (0.2,-0.7);

\arcThroughThreePoints[dashed]{C1}{D}{C2};
\arcThroughThreePoints[dashed]{B1}{D}{B2};

\arcThroughThreePoints{C1}{C2}{D};
\arcThroughThreePoints{D}{B1}{B2};
\arcThroughThreePoints{B2}{B1}{A1};
\arcThroughThreePoints{A1}{C2}{C1};
\arcThroughThreePoints{A2}{B1}{B2};
\arcThroughThreePoints{C1}{C2}{A2};
\arcThroughThreePoints{A3}{B1}{B2};
\arcThroughThreePoints{C1}{C2}{A3};
\arcThroughThreePoints{B2}{B1}{A4};
\arcThroughThreePoints{A4}{C2}{C1};

\fill
	(0.3,0.4) circle (0.05)
	(-30:1.2) circle (0.05)
	(30:-1.2) circle (0.05)
	;

\filldraw[fill=white]
	(-0.7,-1.4) circle (0.05)
	(0.7,1.7) circle (0.05)
	(-0.2,1) circle (0.05)
	(0.2,-0.7) circle (0.05)
	;
		
\node at (0.3,0.6) {\small $C$};
\node at (0.25,0.15) {\small $\gamma$};

\node at (-0.9,-1.5) {\small $A$};
\node at (-0.55,-1.25) {\small $\alpha$};

\node at (-1.25,-0.7) {\small $B$};
\node at (1.25,-0.7) {\small $D$};

\node at (-1.3,0.65) {\small $D^*$};
\node at (1.25,0.65) {\small $B^*$};

\end{scope}


\begin{scope}[shift={(4.2cm, 1.1cm)}, xscale=-1]

\draw[dotted]
	(0,0) circle (1.2);

\coordinate (B1) at (30:1.2);
\coordinate (B2) at (30:-1.2);
\coordinate (C1) at (-30:1.2);
\coordinate (C2) at (-30:-1.2);
\coordinate (D) at (0.3,0.4);
\coordinate (A1) at (-1,-1.7);
\coordinate (A2) at (-0.75,0.5);
\coordinate (A3) at (0.6,0.8);

\arcThroughThreePoints[dashed]{C1}{D}{C2};
\arcThroughThreePoints[dashed]{B1}{D}{B2};

\arcThroughThreePoints{C2}{C1}{D};
\arcThroughThreePoints{D}{B1}{B2};
\arcThroughThreePoints{B2}{B1}{A1};
\arcThroughThreePoints{C2}{C1}{A1};
\arcThroughThreePoints{A2}{B1}{B2};
\arcThroughThreePoints{A2}{C1}{C2};
\arcThroughThreePoints{A3}{B1}{B2};
\arcThroughThreePoints{A3}{C1}{C2};

\fill
	(0.3,0.4) circle (0.05)
	(-30:-1.2) circle (0.05)
	(30:-1.2) circle (0.05)
	;

\filldraw[fill=white]
	(-1,-1.7) circle (0.05)
	(-0.75,0.5) circle (0.05)
	(0.6,0.8) circle (0.05);
	
\node at (0.3,0.6) {\small $C$};
\node at (0.25,0.15) {\small $\gamma$};


\node at (0.8,0.8) {\small $A$};
\node at (-0.1,0.92) {\small $\alpha$};

\node at (-0.85,-0.2) {\small $B$};
\node at (1.3,-0.6) {\small $D^*$};

\node at (-1.25,0.75) {\small $D$};
\node at (1.25,0.65) {\small $B^*$};

\end{scope}


\begin{scope}[shift={(8.7cm, -0.4cm)}]

\draw[dotted]
	(0,0) circle (1);

\draw (110:1) arc (110:180:1);
\coordinate (B) at (110:1);
\coordinate (BB) at (110:-1);

\coordinate (C) at (-1,0);
\coordinate (D) at (1,0);
\coordinate (X) at (0,-0.4);

\coordinate (A1) at (0.3,0.3);
\coordinate (A2) at (0.5,1.8);
\coordinate (A3) at (0.7,2.6);
\coordinate (A4) at (-0.9,-0.8);

\arcThroughThreePoints[dashed]{A1}{D}{C};
\arcThroughThreePoints[dashed]{D}{X}{C};
\arcThroughThreePoints[dashed]{BB}{B}{A1};

\arcThroughThreePoints{C}{X}{D};

\arcThroughThreePoints{A1}{BB}{B};
\arcThroughThreePoints{D}{C}{A1};

\arcThroughThreePoints{A1}{BB}{B};
\arcThroughThreePoints{D}{C}{A1};

\arcThroughThreePoints{A2}{BB}{B};
\arcThroughThreePoints{D}{C}{A2};

\arcThroughThreePoints{A3}{BB}{B};
\arcThroughThreePoints{D}{C}{A3};

\arcThroughThreePoints{B}{BB}{A4};
\arcThroughThreePoints{A4}{C}{D};

\fill
	(B) circle (0.05)
	(C) circle (0.05)
	(D) circle (0.05);

\foreach \a in {1,...,4}
\filldraw[fill=white]
	(A\a) circle (0.05);

\node at (0.5,2) {\small $A$};
\node at (0.5,1.6) {\small $\alpha$};
\node at (-0.5,1.1) {\small $B$};
\node at (0.1,-1) {\small $B^*$};
\node at (-1.05,-0.23) {\small $C$};
\node at (1.2,-0.1) {\small $D$};
\node at (-0.8,0.05) {\small $\gamma$};

\end{scope}

\end{tikzpicture}
\caption{Simple spherical quadrilateral with $\gamma<\pi$ and $AB,BC,AD<\pi$.}
\label{sphere-geometry}
\end{figure}

For simple almost equilateral quadrilateral, we have $AB=BC=AD=a<\pi$. Moreover, all three lemmas assume $\gamma<\pi$.  Therefore the quadrilateral is described by  Figure \ref{sphere-geometry}.

The angle $\gamma<\pi$ determines (according to Figure \ref{polygon}) a triangle $\triangle BCD$, where $BD$ is the minor arc connecting $B,D$. If we also have $\alpha<\pi$, then $\alpha$ also determines an isosceles triangle $\triangle ABD$ with the same minor arc $BD$. Moreover, we find $\triangle ABD,\triangle BCD$ lie on different sides of $BD$. This means the schematic picture in Figure \ref{scheme} is correct. Let $\theta$ be the base angle of the isosceles triangle $\triangle ABD$. Let $\beta',\delta'$ be the angles of $\triangle BCD$ at $B,D$. Then $\beta=\theta+\beta'$ and $\delta=\theta+\delta'$.

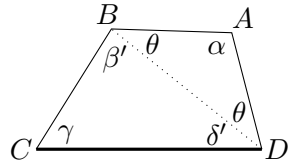
\begin{figure}[htp]
\centering
\begin{tikzpicture}[scale=1]

\draw[dotted]
	(0,0) -- (-2,1.6);

\draw
	(0,0) -- (-0.4,1.55) -- (-2,1.6) -- (-3,0);

\draw[line width=1.2]
	(0,0) -- (-3,0);
	
\node at (-0.25,1.75) {\small $A$};
\node at (-2.05,1.8) {\small $B$};
\node at (-3.2,0) {\small $C$};
\node at (--0.2,0) {\small $D$};

\node at (-0.6,1.35) {\small $\alpha$};
\node at (-1.45,1.4) {\small $\theta$};
\node at (-0.3,0.5) {\small $\theta$};

\node at (-1.95,1.2) {\small $\beta'$};
\node at (-0.6,0.2) {\small $\delta'$};
\node at (-2.6,0.2) {\small $\gamma$};

\end{tikzpicture}
\caption{Schematic picture for proving Lemmas \ref{geometry4}, \ref{geometry3}, \ref{geometry5}.}
\label{scheme}
\end{figure}

For Lemma \ref{geometry4}, the conclusion holds when $\alpha\ge \pi$. Therefore we may assume $\alpha<\pi$. This means we are in the geometric situation described above. The area of $\triangle ABD$ implies $\alpha+2\theta>\pi$. Then by $\beta=\theta+\beta'>\theta$ and $\delta=\theta+\delta'>\theta$, we get $\alpha+2\beta>\pi$ and $\alpha+2\delta>\pi$. This completes the proof of Lemma \ref{geometry4}. 

For Lemma \ref{geometry3}, we are in the same geometric situation. Moreover, in the second and third of Figure \ref{sphere-geometry}, we always have $\alpha\ge\pi$ or $\beta\ge\pi$. Therefore under the assumption $\alpha,\beta<\pi$, we are in the first of Figure \ref{sphere-geometry}. Then all three edges of $\triangle BCD$ are $<\pi$, and the triangle lies in the two $2$-gons with angles $\beta'$ and $\delta'$. The $2$-gons have respective areas $2\beta'$ and $2\delta'$. Therefore we have $\beta'+\gamma+\delta'-\pi<2\beta'$ and $\beta'+\gamma+\delta'-\pi<2\delta'$. These are the same as $\gamma+\delta'<\beta'+\pi$ and $\beta'+\gamma<\delta'+\pi$. Adding $\theta$ to the inequalities, we get $\gamma+\delta<\beta+\pi$ and $\beta+\gamma<\delta+\pi$. This completes the proof of Lemma \ref{geometry3}.

For Lemma \ref{geometry5}, we will actually prove that, if $\gamma<\pi$, then $\beta>\delta$ is equivalent to $b>a$. By the symmetry of exchanging $(\alpha,\delta)$ with $(\beta,\gamma)$, if $\gamma,\delta<\pi$, then we have
\[
\beta>\delta
\iff b>a
\iff \alpha>\gamma.
\]
This proves Lemma \ref{geometry5}.

The isosceles triangle $\triangle ABD$ implies $\beta-\beta'=\delta-\delta'$. This holds even in the case $\alpha=\pi$, and $\triangle ABD$ is reduced to one arc. Therefore $\beta>\delta$ if and only if $\beta'>\delta'$. 

In the first of Figure \ref{sphere-geometry}, we have $b=CD<\pi$. Then $\beta'=\angle CBD>\delta'=\angle CDB$ is equivalent to $b=CD>a=BC$. In the second picture, we have $b>\pi>a$. We also have $\beta'>\pi>\delta'$. In the third picture, we have $b=\pi>a$. We also have $\beta'=\pi>\delta'$. Combining the three cases, we get $\beta'>\delta'$ if and only if $b>a$. 
\end{proof}

The following is \cite[Lemma 3]{wy2}. 

\begin{lemma}\label{geometry1}
For a simple almost equilateral quadrilateral, $\alpha<\beta$ is equivalent to $\gamma<\delta$.
\end{lemma}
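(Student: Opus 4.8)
The plan is to reduce the lemma to a single trigonometric identity obtained by computing each of the two diagonal lengths of the quadrilateral in two ways. Label the quadrilateral $\square ABCD$ as in the third of Figure \ref{quad}, so that $AB=BC=DA=a$, $CD=b$, and $\alpha,\beta,\gamma,\delta$ are the angles at $A,B,C,D$ respectively. Since the quadrilateral is simple, $a<\pi$ (as remarked in Section \ref{geometry}, two edges meeting at a vertex cannot both have length $\ge\pi$, and the two edges at $A$ both have length $a$), and similarly $b<\pi$; in particular $\sin a,\sin b>0$. I will also assume $B,D$ are not antipodal and $A,C$ are not antipodal; these degenerate cases force $a=\tfrac12\pi$ (since then $\cos|BD|=-1=\cos^2a+\sin^2a\cos\alpha$ forces $\cos^2a=0$) and can be treated separately or by a limiting argument.

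Next I would compute $|BD|$ using the triangles $ABD$ and $CBD$. At $A$ the two $a$-edges meet at the angle $\alpha$ (or $2\pi-\alpha$ if $\alpha$ is reflex, which is immaterial since $\cos(2\pi-\alpha)=\cos\alpha$), so the spherical law of cosines gives $\cos|BD|=\cos^2a+\sin^2a\cos\alpha$; using instead the triangle $CBD$, whose two sides from $C$ are $a$ and $b$ meeting at $\gamma$, gives $\cos|BD|=\cos a\cos b+\sin a\sin b\cos\gamma$. The same reasoning applied to $AC$, via the triangle $BAC$ (two $a$-edges meeting at $\beta$ at $B$) and the triangle $DAC$ (sides $a$ and $b$ meeting at $\delta$ at $D$), gives $\cos|AC|=\cos^2a+\sin^2a\cos\beta=\cos a\cos b+\sin a\sin b\cos\delta$. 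Note this uses only that $A,B,C,D$ are the four vertices with the indicated mutual distances; it does not require the diagonals to lie inside the quadrilateral, nor any convexity.

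Equating the two expressions for $\cos|BD|$ and the two for $\cos|AC|$, then subtracting, the $\cos^2a$ and $\cos a\cos b$ terms cancel, and dividing by $\sin a$ yields
\[
\sin a\,(\cos\alpha-\cos\beta)=\sin b\,(\cos\gamma-\cos\delta).
\]
Since $\sin a,\sin b>0$, the quantities $\cos\alpha-\cos\beta$ and $\cos\gamma-\cos\delta$ have the same sign. As $\cos$ is strictly decreasing on $(0,\pi)$, we have $\cos\alpha>\cos\beta\iff\alpha<\beta$ and $\cos\gamma>\cos\delta\iff\gamma<\delta$, so combining gives $\alpha<\beta\iff\gamma<\delta$, which is the lemma.

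The only delicate point is the last step's appeal to the monotonicity of $\cos$, that is, that all four angles genuinely lie in $(0,\pi)$. For the quadrilaterals arising in the tilings treated in this paper at most one angle is $\ge\pi$ by Lemma \ref{angle_pi}, and that single remaining case can be disposed of directly from the displayed identity together with the angle-sum relation \eqref{anglesum}. I expect this small amount of case bookkeeping around reflex angles, rather than the spherical geometry itself, to be the main obstacle; the core computation in the first three paragraphs is short and unconditional.
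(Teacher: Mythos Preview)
The paper does not prove this lemma itself; it is cited from \cite[Lemma~3]{wy2}. Your derivation of the identity
\[
\sin a\,(\cos\alpha-\cos\beta)=\sin b\,(\cos\gamma-\cos\delta)
\]
by computing each diagonal length two ways via the spherical law of cosines is correct, and is exactly equation \eqref{coolsaet_eq5}, which the paper obtains later in the proof of Lemma \ref{geometry6} through the rotation-matrix formalism. Your route to this identity is more elementary and more direct.

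The gap is concrete, though. The claim ``similarly $b<\pi$'' is unjustified: simplicity only says that of two edges meeting at a vertex at least one has length $<\pi$, and since at every vertex of $\square ABCD$ one incident edge already has length $a<\pi$, this places no constraint on $b$. Indeed the second panel of Figure \ref{sphere-geometry} depicts a simple almost equilateral quadrilateral with $b=CD>\pi$, and the paper remarks in the proof of Lemma \ref{geometry3} that in that panel one necessarily has $\alpha\ge\pi$ or $\beta\ge\pi$. So in this regime you simultaneously lose $\sin b>0$ (the two sides of your displayed identity now have \emph{opposite} signs) and lose the monotonicity of $\cos$ on the pair $\alpha,\beta$. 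Your proposed patch via Lemma \ref{angle_pi} and \eqref{anglesum} does not apply, since both are statements about tilings, whereas Lemma \ref{geometry1} is asserted for an arbitrary simple almost equilateral quadrilateral. Your argument is clean and complete for the convex case; the non-convex and $b\ge\pi$ cases --- which, as the paper's own analysis around Figure \ref{sphere-geometry} shows, are intertwined --- require a separate geometric argument, not merely bookkeeping.
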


The lemma implies $\alpha=\beta$ is equivalent to $\gamma=\delta$. In this case, the quadrilateral is {\em symmetric}. In other words, for non-symmetric almost equilateral quadrilateral, we either have $\alpha<\beta$ and $\gamma<\delta$, or have $\alpha>\beta$ and $\gamma>\delta$.

By the parity lemma for a tiling by congruent almost equilateral quadrilaterals, degree $3$ vertices with $\gamma,\delta$ are $\alpha\gamma^2,\alpha\delta^2,\beta\gamma^2,\beta\delta^2,\alpha\gamma\delta,\beta\gamma\delta$. If three such vertices appear, then comparing the angle sums and using Lemma \ref{geometry1}, we know the quadrilateral is symmetric. On the other hand, if $\alpha\gamma\delta$ and $\beta\delta^2$ are vertices, then comparing the angle sums gives $\alpha+\gamma=\beta+\delta$, contradicting Lemma \ref{geometry1}. By similar reason, we may exclude some other combinations of two degree $3$ vertices with $\gamma,\delta$. Then we get the following. The pentagon version of the result is \cite[Lemma 13]{cly}.

\begin{lemma}\label{geometry2}
If an almost equilateral quadrilateral is simple and non-symmetric, then the following are the only possible combinations of two degree $3$ vertices with $\gamma,\delta$:
\begin{enumerate}
\item $\alpha\delta^2,\beta\gamma^2$.
\item $\alpha\gamma\delta,\beta\gamma^2$.
\item $\beta\gamma\delta,\alpha\delta^2$.
\end{enumerate}
\end{lemma}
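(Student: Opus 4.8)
The plan is to combine the parity lemma, the angle sums of degree~$3$ vertices, and Lemma~\ref{geometry1}, read as the statement that for a non-symmetric almost equilateral quadrilateral the differences $\alpha-\beta$ and $\gamma-\delta$ are nonzero and have the same sign. First I would pin down the candidate list. By the parity lemma, $\#\gamma\equiv\#\delta\pmod 2$ at every vertex, so a degree~$3$ vertex involving $\gamma$ or $\delta$ has exactly $\#\gamma+\#\delta=2$, hence is one of $\alpha\gamma^2,\ \alpha\gamma\delta,\ \alpha\delta^2,\ \beta\gamma^2,\ \beta\gamma\delta,\ \beta\delta^2$. Writing down the angle sum of each of these six vertices and subtracting in pairs, any two of $\{\alpha\gamma^2,\alpha\gamma\delta,\alpha\delta^2\}$ force $\gamma=\delta$, and likewise any two of $\{\beta\gamma^2,\beta\gamma\delta,\beta\delta^2\}$ force $\gamma=\delta$; since the quadrilateral is non-symmetric this is impossible. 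Hence at most one vertex from each of the two triples can occur, which already gives the bound of at most two such vertices and shows that if there are two, one carries $\alpha$ and the other $\beta$.

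Next I would run through the nine remaining mixed pairs and discard six. Subtracting the two angle sums in a pair yields a linear relation among $\alpha,\beta,\gamma,\delta$: the pairs $(\alpha\gamma^2,\beta\gamma^2)$, $(\alpha\gamma\delta,\beta\gamma\delta)$, $(\alpha\delta^2,\beta\delta^2)$ give $\alpha=\beta$; the pairs $(\alpha\gamma^2,\beta\gamma\delta)$ and $(\alpha\gamma\delta,\beta\delta^2)$ give $\alpha-\beta=\delta-\gamma$; and $(\alpha\gamma^2,\beta\delta^2)$ gives $\alpha-\beta=2(\delta-\gamma)$. In each of these six cases the relation forces $\alpha-\beta$ and $\gamma-\delta$ to have opposite signs (or both vanish), contradicting the sign constraint from Lemma~\ref{geometry1} together with non-symmetry. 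The three survivors are $(\alpha\delta^2,\beta\gamma^2)$, whose difference gives $\alpha-\beta=2(\gamma-\delta)$, and $(\alpha\gamma\delta,\beta\gamma^2)$ and $(\beta\gamma\delta,\alpha\delta^2)$, whose differences give $\alpha-\beta=\gamma-\delta$; all three are compatible with the sign constraint, and they are exactly items (1)--(3) of the statement.

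The routine part is the bookkeeping of the six angle sums and their pairwise differences. The only point requiring care is the correct use of Lemma~\ref{geometry1}: it must be deployed as $\mathrm{sign}(\alpha-\beta)=\mathrm{sign}(\gamma-\delta)$, and the hypothesis ``non-symmetric'' is precisely what upgrades this to a strict sign and thereby separates the admissible relations (of type $\alpha-\beta=\gamma-\delta$) from the inadmissible ones (of type $\alpha-\beta=\delta-\gamma$, or $\alpha=\beta$). No geometric realizability needs to be checked here, since the lemma only asserts that these combinations are necessary.
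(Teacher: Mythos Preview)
Your proposal is correct and follows essentially the same route as the paper: list the six parity-admissible degree~$3$ vertices, compare angle sums of pairs, and eliminate the bad combinations via the sign constraint of Lemma~\ref{geometry1}. Your organization into the $\alpha$-triple and $\beta$-triple (forcing $\gamma=\delta$ within each) is a tidy way to get the ``at most two'' bound, and your explicit tabulation of the nine mixed pairs with their derived relations is exactly the ``comparing the angle sums'' that the paper sketches but does not write out in full.
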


The strongest geometrical constraint comes from spherical trigonometric identities. In fact, Akama, Wang and Yan \cite{awy} classified edge-to-edge tilings of the sphere by congruent equilateral pentagons by first using these identities to calculate all the pentagons that might be suitable for such tilings. On the other hand, whenever a tiling is constructed in \cite{awy,wy1,wy2} (often by mostly combinatorial means), the existence of the pentagon in the tiling needs to be justified by spherical trigonometric calculations. 

Almost equilateral quadrilaterals allow three free parameters. Therefore the four angles are related by one equation, and we expect any four angles satisfying the equation determine the quadrilateral. The equation was proposed by Ueno and Agaoka \cite{ua2}, and a sharper version was proved for convex almost equilateral quadrilaterals by Coolsaet \cite{coolsaet}. In the following lemma, we give the most comprehensive result, in the sense that we do not require the usual restrictions (which are implicitly assumed in all the other lemmas) that angles and edge lengths must be in $(0,2\pi)$. We only assume $a,b>0$ in the lemma. We also remark that the quadrilateral in the lemma may not be simple.

\begin{lemma}\label{geometry6}
If the angles $\alpha,\beta,\gamma,\delta$ and the edge length $a$ of an almost equilateral quadrilateral satisfy $a\not\in {\bb Z}\pi$, then
\begin{align}
\sin\tfrac{1}{2}\alpha
\sin(\delta-\tfrac{1}{2}\beta)
&=\sin\tfrac{1}{2}\beta
\sin(\gamma-\tfrac{1}{2}\alpha), \label{coolsaet_eq1} \\
(1-\cos\alpha)\sin\delta\cos a
&=\sin\alpha\cos\delta+\sin\gamma, \label{coolsaet_eq2}  \\
(1-\cos\beta)\sin\gamma\cos a
&=\sin\beta\cos\gamma+\sin\delta. \label{coolsaet_eq3} 
\end{align}
Conversely, if $\alpha,\beta,\gamma,\delta,a$ satisfy the three equalities, and $\alpha,\beta\not\in 2{\bb Z}\pi$, then there is an almost equilateral quadrilateral with the given $\alpha,\beta,\gamma,\delta,a$ .
\end{lemma}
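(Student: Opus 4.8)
The plan is to cut the quadrilateral by its two diagonals, each of which splits it into an isosceles triangle and a ``general'' triangle carrying the single $b$-edge. Label the vertices $A,B,C,D$ as in the third picture of Figure \ref{quad}, so that $AB=BC=DA=a$, $CD=b$, and the angles at $A,B,C,D$ are $\alpha,\beta,\gamma,\delta$. The diagonal $BD$ cuts $\square ABCD$ into the isosceles triangle $\triangle ABD$ (with $AB=AD=a$ and apex angle $\alpha$ at $A$) and the triangle $\triangle CBD$ (with $CB=a$, $CD=b$, and angle $\gamma$ at $C$); the $b$-edge lies only in the second piece. Write $\theta=\angle ABD=\angle ADB$ and $\delta'=\angle BDC$, so $\delta=\theta+\delta'$. (This holds for a convex, hence simple, quadrilateral; since \eqref{coolsaet_eq1}--\eqref{coolsaet_eq3} are identities of analytic functions of $\tfrac12\alpha,\tfrac12\beta,\gamma,\delta,a$, it suffices to prove them there and extend through the connected family of almost equilateral quadrilaterals, or one may instead run a sign-agnostic coordinate computation.) In $\triangle ABD$ the spherical law of cosines gives $\cos BD=1-\sin^2 a\,(1-\cos\alpha)$, and the laws of sines and of cosines give $\sin\theta\sin BD=\sin a\sin\alpha$ and $\cos\theta\sin BD=\sin a\cos a\,(1-\cos\alpha)$; in $\triangle CBD$ the law of sines gives $\sin\delta'\sin BD=\sin a\sin\gamma$. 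Substituting these (and $\sin^2 BD=1-\cos^2 BD$) into the expansions of $\sin\delta$ and $\cos\delta$, the unknown $\cos\delta'$ and the edge $b$ both drop out, leaving precisely \eqref{coolsaet_eq2}. Running the same argument with the diagonal $AC$ — which cuts off the isosceles triangle $\triangle BAC$ with apex angle $\beta$, leaves the $b$-edge in $\triangle DAC$ (angle $\delta$ at $D$), and for which $\gamma=\angle BCA+\angle ACD$ now plays the role of $\delta=\theta+\delta'$ — yields \eqref{coolsaet_eq3} after the relabelling $\alpha\mapsto\beta,\ \delta\mapsto\gamma,\ \gamma\mapsto\delta$.

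To obtain \eqref{coolsaet_eq1} I would eliminate $\cos a$ between \eqref{coolsaet_eq2} and \eqref{coolsaet_eq3}: these give $\cos a=\dfrac{\sin\alpha\cos\delta+\sin\gamma}{(1-\cos\alpha)\sin\delta}=\dfrac{\sin\beta\cos\gamma+\sin\delta}{(1-\cos\beta)\sin\gamma}$, and after clearing denominators and using $1-\cos\alpha=2\sin^2\tfrac12\alpha$, $\sin\alpha=2\sin\tfrac12\alpha\cos\tfrac12\alpha$ (and the same for $\beta$) this collapses to \eqref{coolsaet_eq1} up to a nonvanishing trigonometric factor; the boundary cases where a denominator vanishes ($\cos\alpha=1$, $\cos\beta=1$, $\sin\gamma=0$, or $\sin\delta=0$) are forced to be degenerate and are checked by hand. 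A sanity check that \eqref{coolsaet_eq1} cannot be independent of the other two: the almost equilateral quadrilaterals form a $3$-parameter family on which $(\alpha,\beta,\gamma,\delta,a)$ is a coordinate system (with $b$ a function of these), while \eqref{coolsaet_eq2} and \eqref{coolsaet_eq3} already cut out a $3$-dimensional set containing it.

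For the converse I would reverse the construction. Given $\alpha,\beta,\gamma,\delta,a$ satisfying the three equations with $\alpha,\beta\notin 2{\bb Z}\pi$ and $a\notin{\bb Z}\pi$, first build the isosceles triangle $\triangle ABD$ with $AB=AD=a$ and $\angle BAD=\alpha$; since $\sin a\ne 0$ and $\cos\alpha\ne 1$ we get $B\ne D$ and recover $BD$ and $\theta$ from the formulas above. Then attach to the side $BD$, on the far side from $A$, the triangle determined by $BC=a$ and $\angle DBC=\beta-\theta$; this produces a quadrilateral $\square ABCD$ with $AB=BC=DA=a$, a fourth edge $CD=:b>0$, angles $\alpha,\beta$ at $A,B$, and some angles $\gamma_0,\delta_0$ at $C,D$. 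By the forward direction already proved, $(\alpha,\beta,\gamma_0,\delta_0,a)$ also satisfies \eqref{coolsaet_eq1}--\eqref{coolsaet_eq3}; combining this with the hypothesis on $(\alpha,\beta,\gamma,\delta,a)$ and the non-degeneracy, one checks that \eqref{coolsaet_eq1}--\eqref{coolsaet_eq3} determine $(\cos\gamma,\sin\gamma,\cos\delta,\sin\delta)$ from $(\alpha,\beta,a)$ tightly enough that $(\gamma,\delta)=(\gamma_0,\delta_0)$, so the constructed quadrilateral has exactly the prescribed angles and edge length. The step I expect to be the main obstacle is precisely this last one: the bookkeeping of orientations and branch choices in the converse (which side of $BD$ to attach on, and which root of the half-angle relations to take) so that the reconstructed quadrilateral is the intended solution of the polynomial system rather than a spurious one, together with a short list of genuinely degenerate sub-cases where some reconstruction step is not uniquely invertible — which is exactly what the hypotheses $\alpha,\beta\notin 2{\bb Z}\pi$ and $a\notin{\bb Z}\pi$ are there to rule out.
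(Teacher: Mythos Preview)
Your triangle-decomposition route is genuinely different from the paper's, which encodes the quadrilateral as a product of rotations $Y(a)Z(\pi{-}\alpha)Y(a)Z(\pi{-}\beta)Y(a)Z(\pi{-}\gamma)Y(b)Z(\pi{-}\delta)=I$ and reads off all three identities from matrix entries. Your derivation of \eqref{coolsaet_eq2} (hence \eqref{coolsaet_eq3} by symmetry) via the diagonal $BD$ is correct in the convex region and the analytic-continuation extension is reasonable. The gap is in your derivation of \eqref{coolsaet_eq1}: eliminating $\cos a$ between \eqref{coolsaet_eq2} and \eqref{coolsaet_eq3} does \emph{not} give \eqref{coolsaet_eq1} times a nonvanishing factor. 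One checks that the eliminant factors exactly as
\[
\bigl[\sin\tfrac{\alpha}{2}\sin(\delta-\tfrac{\beta}{2})-\sin\tfrac{\beta}{2}\sin(\gamma-\tfrac{\alpha}{2})\bigr]
\bigl[\sin\tfrac{\alpha}{2}\sin(\delta+\tfrac{\beta}{2})+\sin\tfrac{\beta}{2}\sin(\gamma+\tfrac{\alpha}{2})\bigr]=0,
\]
and the second bracket vanishes at honest quadrilaterals --- for the cube face $\alpha=\beta=\gamma=\delta=\tfrac{2}{3}\pi$ it is $2\sin\tfrac{\pi}{3}\sin\pi=0$. A density argument can repair this (the second bracket is not identically zero on the $3$-parameter family), but you must supply it; your list of ``boundary cases'' covers only vanishing denominators, not this factor. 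The paper instead obtains \eqref{coolsaet_eq1} independently, from two further matrix entries that involve $\sin b$, and handles the degenerate case $\sin b=0$ by a separate geometric argument.

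Your converse also has a gap. Building a quadrilateral from $(\alpha,\beta,a)$ does yield some $(\gamma_0,\delta_0)$ satisfying all three equations, but you have not shown that \eqref{coolsaet_eq1}--\eqref{coolsaet_eq3} determine $(\gamma,\delta)$ uniquely from $(\alpha,\beta,a)$ --- and by the factorisation above, \eqref{coolsaet_eq1} is generically a consequence of \eqref{coolsaet_eq2} and \eqref{coolsaet_eq3}, so you effectively have only two equations for the two unknowns $\gamma,\delta$, with discrete ambiguities possible. The paper sidesteps uniqueness entirely: it forms $K=Z(\pi{-}\delta)Y(a)Z(\pi{-}\alpha)Y(a)Z(\pi{-}\beta)Y(a)Z(\pi{-}\gamma)$ and verifies by direct substitution of \eqref{coolsaet_eq1}--\eqref{coolsaet_eq3} that the $(2,2)$-entry of $K$ equals $1$, whence $K=Y(b)^T$ for a unique $b\in(0,2\pi]$, which is exactly the existence of a quadrilateral with the prescribed $(\alpha,\beta,\gamma,\delta,a)$.
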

 
The pentagon version of the result is \cite[Lemma 18]{cly}, and is much more complicated.

We remark that, under the first equality \eqref{coolsaet_eq1}, the second the third equalities \eqref{coolsaet_eq2} and \eqref{coolsaet_eq3} are almost equivalent. More specifically, if we multiply $(1-\cos\beta)\sin\gamma$ to \eqref{coolsaet_eq2}, and multiply $(1-\cos\alpha)\sin\delta$ to \eqref{coolsaet_eq3}, and subtracting the two, then we get
\[
(1-\cos\beta)\sin\gamma(\sin\alpha\cos\delta+\sin\gamma)
=(1-\cos\alpha)\sin\delta(\sin\beta\cos\gamma+\sin\delta).
\]
The left side subtracting the right side is $-2(\sin\frac{1}{2}\alpha\sin(\delta+\frac{1}{2}\beta)+\sin\frac{1}{2}\beta\sin(\gamma+\frac{1}{2}\alpha))$ multiplied to $\sin\frac{1}{2}\alpha\sin(\delta-\frac{1}{2}\beta)-\sin\frac{1}{2}\beta\sin(\gamma-\frac{1}{2}\alpha)$. If \eqref{coolsaet_eq1} holds, then the second factor is $0$. Therefore multiplying $(1-\cos\beta)\sin\gamma$ to \eqref{coolsaet_eq2} is equivalent to multiplying $(1-\cos\alpha)\sin\delta$ to \eqref{coolsaet_eq3}. In particular, if $\alpha\not\in 2{\bb Z}\pi$ and $\delta\not\in {\bb Z}\pi$, then \eqref{coolsaet_eq2} implies \eqref{coolsaet_eq3}. Similarly, if $\beta\not\in 2{\bb Z}\pi$ and $\gamma\not\in {\bb Z}\pi$, then \eqref{coolsaet_eq3} implies \eqref{coolsaet_eq2}.

A consequence of the equality \eqref{coolsaet_eq1} is the following useful constraint. Like all the lemmas except Lemma \ref{geometry6}, the following lemma implicitly assumes all angles are in $(0,2\pi)$.

\begin{lemma}\label{geometry7}
If an almost equilateral quadrilateral satisfies $\gamma,\delta\le \pi$, then $\alpha<2\gamma$ if and only if $\beta<2\delta$, and $\alpha=2\gamma$ if and only if $\beta=2\delta$.
\end{lemma}

To prove Lemma \ref{geometry6}, we regard spherical trigonometry as relations in the orthogonal group. Specifically, suppose we have a spherical $n$-gon $P$ with successive edge lengths $a_1,a_2,\dots,a_n$ and successive angles $\alpha_1,\alpha_2,\dots,\alpha_n$ ($\alpha_i$ is between $a_{i-1}$ and $a_i$). Let $\alpha_1$ be at the $z$-axis, and let $a_n$ be in the positive $x$-direction. See the top of Figure \ref{polygon1}. 

\begin{figure}[htp]
\centering
\begin{tikzpicture}[>=latex]

\begin{scope}[shift={(4.5cm,3.7cm)}]

\begin{scope}[gray!50]

\draw
	(0,0) circle (1.5);

\draw[->]
	(0,0) -- (0,1.8);

\draw[->]
	(0,0) -- (-1.5,-0.99);

\draw[->]
	(0,0) -- (1.8,-0.4);

\node at (-1.6,-1) {\small $x$};
\node at (1.9,-0.4) {\small $y$};
\node at (0.15,1.8) {\small $z$};

\draw
	(1.5,0) arc (0:-180:1.5 and 0.6);
\draw[dashed]
	(1.5,0) arc (0:180:1.5 and 0.6);
		
\draw[rotate=60]
	(1.5,0) arc (0:180:1.5 and 0.5);
\draw[rotate=60, dashed]
	(1.5,0) arc (0:-180:1.5 and 0.5);
	
\draw[rotate=-30]
	(1.5,0) arc (0:180:1.5 and 0.78);
\draw[rotate=-30, dashed]
	(1.5,0) arc (0:-180:1.5 and 0.78);
			
\end{scope}

\draw
	(1,0.8) to[out=170, in=0]
	(0,0.86) to[out=229, in=62] 
	(-0.57,0) to[out=-25, in=175] 
	(1.1,-0.5) to[out=90, in=-80]
	(1,0.8);

\node at (-0.5,0.55) {\small $a_n$};
\node at (0.15,0.65) {\small $\alpha_1$};
\node at (0.5,1) {\small $a_1$};
\node at (0.8,0.65) {\small $\alpha_2$};
\node at (1.3,0.15) {\small $a_2$};

\node at (0.4,0.2) {\small $P$};

\end{scope}


\begin{scope}[gray!50]

\draw[->]
	(-1,0) -- (1.3,0);
\draw[->]
	(0,-0.5) -- (0,1.8);	
\node at (1.45,0) {\small $x$};
\node at (0.2,1.8) {\small $y$};

\end{scope}

\draw
	(0,0) -- ++(130:1) -- ++(30:1.2) -- (0:0.9) -- cycle;

\node at (0.15,0.15) {\small $\alpha_1$};
\node at (-0.4,0.25) {\small $a_1$};
\node at (-0.3,0.7) {\small $\alpha_2$};
\node at (-0.2,1.25) {\small $a_2$};
\node at (0.5,-0.2) {\small $a_n$};

\draw[->, very thick]
	(1.15,1.3) -- ++(1,0);
\node at (1.65,1.6) {\small $Z(\pi-\alpha_1)$};


\begin{scope}[xshift=3.3cm]

\begin{scope}[gray!50]

\draw[->]
	(-1.3,0) -- (0.8,0);
\draw[->]
	(0,-0.5) -- (0,1.8);	
\node at (0.95,0) {\small $x$};
\node at (0.2,1.8) {\small $y$};

\end{scope}

\draw[rotate=50]
	(0,0) -- ++(130:1) -- ++(30:1.2) -- (0:0.9) -- cycle;

\node at (-0.15,0.15) {\small $\alpha_1$};
\node at (-0.4,-0.2) {\small $a_1$};
\node at (-0.75,0.15) {\small $\alpha_2$};
\node at (-1.1,0.6) {\small $a_2$};
\node at (0.5,0.25) {\small $a_n$};

\draw[->, very thick]
	(0.5,1.3) -- ++(1,0);
\node at (1,1.6) {\small $Y(a_1)$};
	
\end{scope}


\begin{scope}[xshift=5.3cm]

\begin{scope}[gray!50]

\draw[->]
	(-0.5,0) -- (1.5,0);
\draw[->]
	(0,-0.5) -- (0,1.8);	
\node at (1.65,0) {\small $x$};
\node at (0.2,1.8) {\small $y$};

\end{scope}

\draw[xshift=1 cm, rotate=50]
	(0,0) -- ++(130:1) -- ++(30:1.2) -- (0:0.9) -- cycle;

\node at (0.85,0.15) {\small $\alpha_1$};
\node at (0.6,-0.2) {\small $a_1$};
\node at (0.25,0.15) {\small $\alpha_2$};
\node at (-0.1,0.6) {\small $a_2$};
\node at (1.5,0.25) {\small $a_n$};

\draw[->, very thick]
	(1.35,1.3) -- ++(1,0);
\node at (1.85,1.6) {\small $Z(\pi-\alpha_2)$};
		
\end{scope}


\begin{scope}[xshift=9cm]

\begin{scope}[gray!50]

\draw[->]
	(-1.5,0) -- (0.5,0);
\draw[->]
	(0,-0.5) -- (0,1.8);	
\node at (0.65,0) {\small $x$};
\node at (0.2,1.8) {\small $y$};

\end{scope}

\draw[rotate=100, xshift=1 cm, rotate=50]
	(0,0) -- ++(130:1) -- ++(30:1.2) -- (0:0.9) -- cycle;

\node at (-0.35,0.9) {\small $\alpha_1$};
\node at (0.15,0.5) {\small $a_1$};
\node at (-0.25,0.15) {\small $\alpha_2$};
\node at (-0.5,-0.2) {\small $a_2$};
\node at (-0.4,1.3) {\small $a_n$};
	
\end{scope}

\end{tikzpicture}
\caption{A polygon is a sequence of rotations.}
\label{polygon1}
\end{figure}
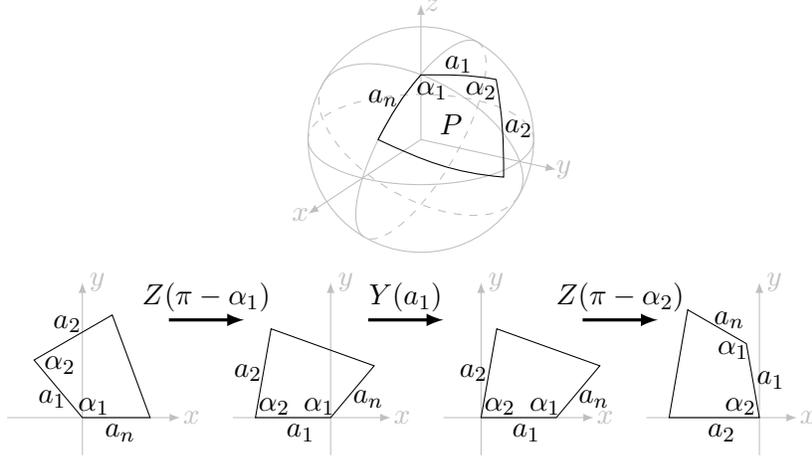

The second row of Figure \ref{polygon1} describes a sequence of movements of the polygon as viewed from above the $z$-axis. We first rotate $P$ such that $a_1$ lies in the $x$-axis. The movement is the rotation $Z(\pi-\alpha_1)$ around the $z$-axis by angle $\pi-\alpha_1$. Next we slide $a_1$ from the negative $x$-direction to the positive $x$-direction, so that $\alpha_2$ is now at the $z$-axis. The movement is the rotation $Y(a_1)$ around the $y$-axis by angle $a_1$. Then we rotate $P$ around the $z$-axis by $\pi-\alpha_2$, etc. By going around $P$, we get a sequence of rotations. At the end, the polygon $P$ is back to the initial position. This means the composition of rotations fixes $P$. Therefore the composition is the identity
\begin{equation}\label{coolsaet_eq0}
Y(a_n)Z(\pi-\alpha_n)\cdots
Y(a_2)Z(\pi-\alpha_2)
Y(a_1)Z(\pi-\alpha_1)=I.
\end{equation}
The equality is equivalent to the existence of the polygon in the sense of Figure \ref{polygon}.

\begin{proof}[Proof of Lemma \ref{geometry6}]
For the quadrilateral in the lemma, the general equality \eqref{coolsaet_eq0} becomes
\[
Y(a)Z(\pi-\alpha)Y(a)Z(\pi-\beta)Y(a)Z(\pi-\gamma)Y(b)Z(\pi-\delta)=I.
\]
Then the existence of the quadrilateral means 
\begin{equation}\label{coolsaet_eq4}
K=Z(\pi-\delta)Y(a)Z(\pi-\alpha)Y(a)Z(\pi-\beta)Y(a)Z(\pi-\gamma)
\end{equation}
is $Y(b)^T$ for some choice of $b$. 

Let $\vec{e}_2=(0,1,0)^T$ be the second standard basis vector of ${\bb R}^3$. Then $K=Y(b)^T$ implies $K\vec{e}_2=\vec{e}_2$. Conversely, if $K\vec{e}_2=\vec{e}_2$, then the $(2,2)$-entry of $K$ is $1$. Since $K$ is a matrix in $SO(3)$, this implies $K=Y(b)^T$ for some $b$. Therefore the existence of the quadrilateral is equivalent to $K\vec{e}_2=\vec{e}_2$. 

Let
\[
L=Y(a)Z(\pi-\beta)Y(a)Z(\pi-\gamma)-Z(\pi-\alpha)^TY(a)^TZ(\pi-\delta)^T.
\]
Then $K\vec{e}_2=\vec{e}_2$ is equivalent to $L\vec{e}_2=\vec{0}$. 

The three coordinates of $L\vec{e}_2$ are
\begin{align*}
l_1
&=
(\cos\beta-1)\sin\gamma\cos^2 a \\
&\quad
+(\cos\alpha\sin\delta+\sin\beta\cos\gamma)\cos a 
+\sin\alpha\cos\delta+\sin\gamma \\
&=
((\cos\beta-1)\sin\gamma\cos a+\sin\beta\cos\gamma+\sin\delta)\cos a \\
&\quad
+(\cos\alpha-1)\sin\delta\cos a+\sin\alpha\cos\delta+\sin\gamma. \\
l_2
&=(\sin\alpha\sin\delta-\sin\beta\sin\gamma)\cos a
-(\cos\alpha\cos\delta-\cos\beta\cos\gamma). \\
l_3
&=((1-\cos\beta)\sin\gamma\cos a-\sin\beta\cos\gamma-\sin\delta)\sin a.
\end{align*}
By 
\[
\cos\tfrac{1}{2}\theta(1-\cos\theta)
=\sin\tfrac{1}{2}\theta\sin\theta, \quad 
\cos\tfrac{1}{2}\theta\sin\theta
=\sin\tfrac{1}{2}\theta(1+\cos\theta),
\]
we get
\begin{align*}
l_2 \sin\tfrac{1}{2}\alpha\sin\tfrac{1}{2}\beta 
&=\sin\tfrac{1}{2}\alpha\sin\tfrac{1}{2}\beta(\sin\alpha\sin\delta\cos a-(1+\cos\alpha)\cos\delta+\cos\delta) \\
&\quad 
-\sin\tfrac{1}{2}\alpha\sin\tfrac{1}{2}\beta(\sin\beta\sin\gamma\cos a-(1+\cos\beta)\cos\gamma+\cos\gamma) \\
&=
\cos\tfrac{1}{2}\alpha\sin\tfrac{1}{2}\beta((1-\cos\alpha)\sin\delta\cos a-\sin\alpha\cos\delta) \\
&\quad 
-\cos\tfrac{1}{2}\beta\sin\tfrac{1}{2}\alpha((1-\cos\beta)\sin\gamma\cos a-\sin\beta\cos\gamma) \\
&\quad
+\sin\tfrac{1}{2}\alpha\sin\tfrac{1}{2}\beta\cos\delta
-\sin\tfrac{1}{2}\alpha\sin\tfrac{1}{2}\beta\cos\gamma \\
&=\cos\tfrac{1}{2}\alpha\sin\tfrac{1}{2}\beta((1-\cos\alpha)\sin\delta\cos a-\sin\gamma-\sin\alpha\cos\delta) \\
&\quad 
-\cos\tfrac{1}{2}\beta\sin\tfrac{1}{2}\alpha((1-\cos\beta)\sin\gamma\cos a-\sin\delta-\sin\beta\cos\gamma) \\
&\quad 
+\sin\tfrac{1}{2}\beta\sin(\gamma-\tfrac{1}{2}\alpha)
-\sin\tfrac{1}{2}\alpha\sin(\delta-\tfrac{1}{2}\beta).
\end{align*}

Suppose $a\not\in{\bb Z}\pi$. Then $\sin a\ne 0$, and $l_3=0$ implies \eqref{coolsaet_eq3}. Then \eqref{coolsaet_eq3} and $l_1=0$ imply \eqref{coolsaet_eq2}. Then by the calculation of $l_2\sin\tfrac{1}{2}\alpha\sin\tfrac{1}{2}\beta$, we see \eqref{coolsaet_eq2}, \eqref{coolsaet_eq3} and $l_2=0$ imply \eqref{coolsaet_eq1}.

Conversely, suppose $\alpha,\beta\not\in 2{\bb Z}\pi$. Then \eqref{coolsaet_eq2} and \eqref{coolsaet_eq3} imply $l_1=0$ and $l_3=0$. Moreover, \eqref{coolsaet_eq1}, \eqref{coolsaet_eq2} and \eqref{coolsaet_eq3} imply $l_2\sin\tfrac{1}{2}\alpha\sin\tfrac{1}{2}\beta=0$. By $\alpha,\beta\not\in 2{\bb Z}\pi$, this further implies $l_2=0$. 
\end{proof}

Now we discuss what happens to Lemma \ref{geometry6} when the assumptions are not satisfied.

Suppose $a\in(2{\bb Z}+1)\pi$. Then $A=C$ and $B=D$, and the two points are antipodal points. Therefore $b\in(2{\bb Z}+1)\pi$. See the first of Figure \ref{coolsaet_f1}, where $A=C$ is the right vertex, and $B=D$ is the left vertex. The only relation between the four angles is $\alpha+\gamma=\beta+\delta$ mod $2\pi$. Therefore \eqref{coolsaet_eq1} does not always hold. 

Suppose $a\in 2{\bb Z}\pi$. Then $A=B=C=D$ is a single point, and $b\in 2{\bb Z}\pi$. See the second of Figure \ref{coolsaet_f1}, where an edge $XY$ leaves $X$ through $X_o$ (for \underline{o}ut of $X$) and arrives at $Y$ through $Y_i$ (for \underline{i}nto $Y$). If we also denote by $X_i,X_o$ the angles of the directions of $X_i,X_o$, then we have $Y_i=X_o+\pi$ mod $2\pi$. Moreover, we have $\alpha=A_i-A_o=D_o-A_o+\pi$ mod $2\pi$, and similar formula for the other angles (all hold mod $2\pi$)
\[
\alpha=D_o-A_o+\pi,\;
\beta=A_o-B_o+\pi,\;
\gamma=B_o-C_o+\pi,\;
\delta=C_o-D_o+\pi.
\]
Therefore the only relation between the four angles is $\alpha+\beta+\gamma+\delta=0$ mod $2\pi$, and \eqref{coolsaet_eq1} does not always hold.

\begin{figure}[htp] 
\centering
\begin{tikzpicture}[>=latex,scale=1.2]


\coordinate (AA) at (-1,0);
\coordinate (A) at (1,0);

\coordinate (X) at (0,0.8);
\coordinate (Y) at (0,0.4);
\coordinate (Z) at (0,-0.5);
\coordinate (W) at (0,-0.2);

\arcThroughThreePoints{A}{X}{AA};
\arcThroughThreePoints[line width=1.2]{A}{Y}{AA};
\arcThroughThreePoints{AA}{Z}{A};
\arcThroughThreePoints{AA}{W}{A};

\draw[->] (0,0.8) -- ++(-0.02,0);
\draw[->] (0,0.4) -- ++(-0.02,0);
\draw[->] (0,-0.2) -- ++(0.02,0);
\draw[->] (0,-0.5) -- ++(0.02,0);
	
\node at (0,1) {\small $BA$};
\node at (0,0) {\small $BC$};
\node at (0,-0.7) {\small $DA$};
\node at (0,0.57) {\small $DC$};

\node at (0.65,0.05) {\small $\alpha$};
\node at (-0.7,0.35) {\small $\beta$};
\node at (0.45,0.05) {\small $\gamma$};
\node at (-0.5,0) {\small $\delta$};

\draw[xshift=1cm]
	(111:0.25) arc (111:227:0.25);
\draw[xshift=1cm]
	(198:0.45) arc (198:144:0.45);
\draw[xshift=-1cm]
	(-18:0.3) arc (-18:70:0.3);
\draw[xshift=-1cm]
	(-45:0.4) arc (-45:36:0.4);

\node at (0,-1.2) {\small $a\in (2{\bb Z}+1)\pi$};


\begin{scope}[shift={(2.7cm,0.2cm)}]

\draw
	(0:1) -- (0:-1)
	(40:1) -- (40:-1)
	(100:0.9) -- (100:-0.9);

\draw[line width=1.2]
	(60:1) -- (60:-1);

\draw[->] (0:0.7) -- ++(0:-0.02);
\draw[->] (100:-0.7) -- ++(100:-0.02);
\draw[->] (100:0.7) -- ++(100:-0.02);
\draw[->] (40:0.7) -- ++(40:0.02);
\draw[->] (40:-0.7) -- ++(40:0.02);
\draw[->] (60:-0.7) -- ++(60:-0.02);
\draw[->] (60:0.7) -- ++(60:-0.02);
\draw[->] (0:-0.7) -- ++(0:-0.02);

\node at (0:1.15) {\small $A_i$};	
\node at (110:-1) {\small $A_o$};	
\node at (110:1) {\small $B_i$};		
\node at (35:1.2) {\small $B_o$};		
\node at (35:-1.2) {\small $C_i$};	
\node at (57:-1.1) {\small $C_o$};
\node at (54:1.15) {\small $D_i$};
\node at (0:-1.15) {\small $D_o$};

\draw
	(0:0.4) arc (0:-80:0.4)
	(40:0.4) arc (40:100:0.4)
	(220:0.22) arc (220:-120:0.22)
	(60:0.31) arc (60:-180:0.31);
	
\node at (-40:0.53) {\small $\alpha$};
\node at (80:0.55) {\small $\beta$};

\node at (0,-1.4) {\small $a\in 2{\bb Z}\pi$};

\end{scope}


\begin{scope}[shift={(5.5cm,0.4cm)}]

\draw
	(0,-0.1) -- (1,-0.1) arc (-90:90:0.05) -- (0,0) -- (220:1)
	(0.8,0) arc (140:220:0.08)
	(220:0.8) arc (40:-140:0.2)
	(220:0.5) arc (220:360:0.5);

\draw[line width=1.2]
	(220:1) -- (220:1.8);

\draw[->]	
	(220:1.4) -- ++(220:0.02);

\begin{scope}[yshift=-0.1cm]

\draw	
	(0.4,0) arc (0:40:0.4);
	
\draw[line width=1.2]	
	(40:0) -- (40:1.2);

\draw[->]	
	(40:0.6) -- ++(40:-0.02);

\end{scope}

\node at (1.2,-0.05) {\small $A$};
\node at (-0.1,0.1) {\small $B$};
\node at (-0.8,-0.5) {\small $C$};
\node at (0,-0.25) {\small $D$};

\node at (0.8,-0.2) {\small $\alpha$};
\node at (0.25,-0.6) {\small $\beta$};
\node at (-0.55,-0.9) {\small $\gamma$};
\node at (0.5,0.15) {\small $\delta$};

\node at (0,-1.4) {\small $\alpha\in 2{\bb Z}\pi$};
\node at (0,-1.8) {\small $\gamma \in (2{\bb Z}+1)\pi$};
	
\end{scope}


\begin{scope}[shift={(8.1cm,0.4cm)}]

\draw
	(0,0) -- (1.2,0) arc (-90:90:0.05) -- (-0.04,0.1) -- 
	++(220:1.2) arc (130:310:0.05)
	(1,0.1) arc (140:220:0.08)
	(0.45,0.1) arc (0:-140:0.5)
	(0.4,0) arc (0:220:0.4);

\draw[rotate=-140, yshift=-0.1 cm]
	(1,0.1) arc (140:220:0.08);
	
\draw[line width=1.2]
	(220:1.18) -- (0,0);

\node at (1.4,0.05) {\small $A$};
\node at (-0.1,0.23) {\small $B$};
\node at (-1.1,-0.9) {\small $C$};
\node at (0.1,-0.15) {\small $D$};

\node at (1,-0.1) {\small $\alpha$};
\node at (0.25,-0.45) {\small $\beta$};
\node at (-0.92,-0.5) {\small $\gamma$};
\node at (-0.2,0.5) {\small $\delta$};

\node at (0,-1.4) {\small $\alpha\in 2{\bb Z}\pi$};
\node at (0,-1.8) {\small $\gamma\in 2{\bb Z}\pi$};
	
\end{scope}

\end{tikzpicture}
\caption{Lemma \ref{geometry6}: Case $a\in {\bb Z}\pi$ and case $\alpha\in 2{\bb Z}\pi$.}
\label{coolsaet_f1}
\end{figure}
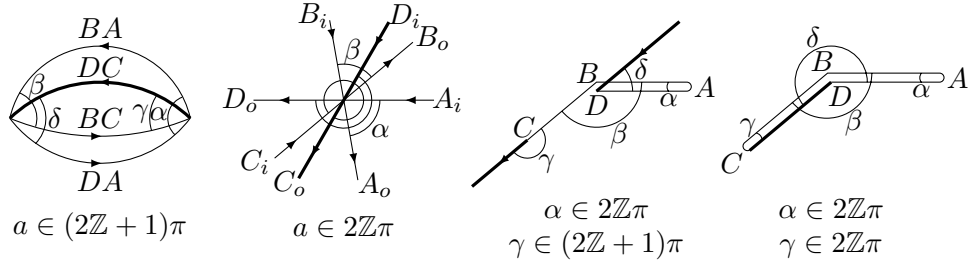

For the converse, suppose $\alpha\in 2{\bb Z}\pi$. Then the three equalities are equivalent to either $\gamma\in (2{\bb Z}+1)\pi$ and $\sin\beta-\sin\delta=0$, or $\gamma\in 2{\bb Z}\pi$ and $\sin\beta+\sin\delta=0$. 

If $\gamma\in (2{\bb Z}+1)\pi$, then the third of Figure \ref{coolsaet_f1} shows $B=D$ and $a+b=0$ mod $2\pi$. The implies the geometrical condition (for the existence of the quadrilateral) $\beta+\delta=\pi$ mod $2\pi$. However, the algebraic condition $\sin\beta-\sin\delta=0$ means $\beta=\delta$ mod $2\pi$, or $\beta+\delta=\pi$ mod $2\pi$. The algebraic condition does not imply the geometrical condition. 

If $\gamma\in 2{\bb Z}\pi$, then the fourth of Figure \ref{coolsaet_f1} shows $B=D$ and $a=b$ mod $2\pi$. See the fourth of Figure \ref{coolsaet_f1}. The implies the geometrical condition $\beta+\delta=0$ mod $2\pi$. However, the algebraic condition $\sin\beta+\sin\delta=0$ means $\beta+\delta=0$ mod $2\pi$, or $\beta=\delta+\pi$ mod $2\pi$. Again algebraic condition does not imply the geometrical condition. 

Finally, we discuss the existence of polygon in a tiling. More specifically, we need to show that a simple spherical polygon with given angles and edge combination actually exists. The issue already appeared in the first paper \cite{gsy} on pentagonal tiling, which give five combinatorially possible tilings. Then further geometrical argument \cite{ay1} showed that four of the five are just the regular dodecahedron. In the subsequent works on pentagonal tilings \cite{awy,wy1,wy2}, after tilings were obtained by combinatorial argument, further geometrical argument was made to show the existence of the pentagon.

The existence issue were less apparent for triangular tilings due to the following simple criterion: There is a simple spherical triangle with given angles $\alpha,\beta,\gamma<\pi$ if and only if the following four equalities hold
\[
\alpha+\beta+\gamma>\pi,\;
\alpha+\pi>\beta+\gamma,\;
\beta+\pi>\alpha+\gamma,\;
\gamma+\pi>\alpha+\beta.
\]
Although the tetrahedron tiling allows simple triangle with an angle $>\pi$, the triangles are convex in all other tilings.

The criteria for the existence of kite and rhombus are also simple because they can be divided into two congruent triangles. Since we obtain not so many tilings by general quadrilaterals, we will give specialised argument for the existence for individual tilings.

We obtain quite a number of tilings by almost equilateral quadrilaterals. The recipe for arguing the existence is the following. First, we make sure the equality \eqref{coolsaet_eq1} holds. Then we use \eqref{coolsaet_eq2} or \eqref{coolsaet_eq3} to uniquely determine $a<\pi$ by
\[
\cos a
=\frac{\sin\alpha\cos\delta+\sin\gamma}{(1-\cos\alpha)\sin\delta}
=\frac{\sin\beta\cos\gamma+\sin\delta}{(1-\cos\beta)\sin\gamma}.
\]
Here the second equality is actually a consequence of \eqref{coolsaet_eq1}. Of course getting $a$ this way assumes the absolute value of the right side is $<1$. This turns out to be true in all the cases. At this stage, by the converse of Lemma \ref{geometry6}, we already know a quadrilateral exists. The problem is whether the quadrilateral is simple. For this purpose, we substitute the four angles and $a$ into $K$ in \eqref{coolsaet_eq4}. Then we use $K=Y(b)^T$ to uniquely determine $0< b\le 2\pi$. We actually find $\sin b>0$ in all cases. Therefore we always get  $0<b<\pi$. Then we may use the following sufficient condition for the quadrilateral to be simple. 

\begin{lemma}\label{geometry8}
If four edges of a spherical quadrilateral are $<\pi$, and three angles are $<\pi$, then the quadrilateral is simple.
\end{lemma}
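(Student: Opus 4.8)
The plan is to argue by contradiction: assuming the quadrilateral $\square ABCD$ (angles $\alpha,\beta,\gamma,\delta$ at $A,B,C,D$; edges $AB,BC,CD,DA$, all $<\pi$ by hypothesis) fails to be simple, I will show that then exactly two of the four angles exceed $\pi$, which contradicts the hypothesis that three of them are $<\pi$.

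First, the boundary can cross itself only at a point interior to two \emph{non-adjacent} edges. Indeed, two edges sharing a vertex $V$ lie on two great circles meeting only in $V$ and the antipode $V^{*}$; these circles are distinct unless the angle at $V$ equals $\pi$ (a degenerate situation treated separately), and since each edge has length $<\pi$ it avoids $V^{*}$, so adjacent edges meet only at $V$. Hence non-simplicity forces $AB$ to meet $CD$, or $BC$ to meet $DA$, at a point $X$ interior to both arcs; I treat the case $X\in BC\cap DA$, the other being symmetric after relabelling.

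The boundary, which now passes through $X$ twice, decomposes there into two closed geodesic triangles $T_{1}=\triangle XAB$ (with sides the sub-arc $XB$ of $BC$, the edge $AB$, and the sub-arc $XA$ of $DA$) and $T_{2}=\triangle XCD$. Each has all sides $<\pi$, hence — barring the degenerate cases $B\in\bigcirc DA$ or $C\in\bigcirc DA$, handled separately — is a genuine simple spherical triangle with all angles in $(0,\pi)$; moreover, since $X$ lies on the segments $DA$ and $BC$, the angle of $T_{1}$ at $A$ (resp. $B$) equals the geometric angle $\angle DAB$ (resp. $\angle ABC$) and the angle of $T_{2}$ at $C$ (resp. $D$) equals $\angle BCD$ (resp. $\angle ADC$). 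Because the crossing is transversal and the relevant sub-arcs have length $<\pi$, the vertices $B$ and $C$ lie strictly on opposite sides of $\bigcirc DA$, and $A$ and $D$ lie strictly on opposite sides of $\bigcirc BC$. I then track the polygon's chosen side along the edges: the interior side of edge $DA$ is a fixed side of $\bigcirc DA$, which a local computation at $X$ (with the four rays $XA,XB,XC,XD$, noting $XA$ is opposite $XD$ and $XB$ opposite $XC$) identifies as the side containing $B$, i.e. the side on which $T_{1}$ lies; similarly the interior side of edge $BC$ is the side of $\bigcirc BC$ containing $A$ — again the side on which $T_{1}$ lies, and, since $A,D$ are separated by $\bigcirc BC$, the side opposite to $T_{2}$. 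It follows that the polygon angle at $A$ equals the angle of $T_{1}$ at $A$, hence $<\pi$; likewise the polygon angle at $B$ is $<\pi$; but the polygon angle at $C$ (resp. $D$) equals $2\pi$ minus the angle of $T_{2}$ at $C$ (resp. $D$), hence $>\pi$. Thus $\gamma,\delta>\pi$ (replacing the chosen side by its complement swaps this with $\alpha,\beta>\pi$, and the symmetric case $X\in AB\cap CD$ gives the reflex pair $\{\beta,\gamma\}$ or $\{\alpha,\delta\}$); in every case exactly two angles exceed $\pi$, contradicting the hypothesis.

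The step I expect to be delicate is this identification of the interior side of $\bigcirc DA$ and $\bigcirc BC$ — a careful analysis of the cyclic arrangement of the four rays at $X$ and of how the polygon's side propagates along each edge — together with disposing of the degenerate sub-cases: an angle exactly $\pi$ (where $\square ABCD$ degenerates toward a triangle, settled directly or by a small perturbation of that angle), coincident vertices, and a vertex lying on $\bigcirc DA$ or $\bigcirc BC$ so that $T_{1}$ or $T_{2}$ degenerates. A useful consistency check throughout is Gauss--Bonnet: $\operatorname{Area}(T_{1})=\angle XAB+\angle XBA+\angle AXB-\pi>0$ and similarly for $T_{2}$, with $\angle AXB=\angle DXC$ a vertical-angle pair, which simultaneously confirms that the triangles are non-degenerate and controls the sign bookkeeping.
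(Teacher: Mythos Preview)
Your argument is correct and takes a genuinely different route from the paper. The paper proves the lemma directly: assuming the three angles $<\pi$ are at $A,B,D$, it forms the $2$-gon $G$ with angle $\alpha$ at $A$ (the intersection of the hemispheres bounded by $\bigcirc ABA^{*}$ and $\bigcirc ADA^{*}$), observes that $B,D$ lie on the boundary arcs of $G$ since $AB,AD<\pi$, and then uses $\angle ABC<\pi$, $BC<\pi$ (and symmetrically at $D$) to force $C\in G$; finally $\triangle ABC$ and $\triangle ACD$ sit in the non-overlapping $2$-gons $ABA^{*}C$ and $ADA^{*}C$, so the quadrilateral is simple. There is no crossing point, no side-propagation, and no degenerate case analysis.

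Your contradiction argument is sound: a transversal crossing of two opposite edges does split the boundary into two simple triangles with all angles in $(0,\pi)$, and tracking the polygon's side across the crossing correctly yields exactly two reflex angles (one complementary pair or the other). The delicate step you flag---pinning down the interior side of $\bigcirc DA$ and $\bigcirc BC$ via the local ray configuration at $X$---is exactly where the work lies, and the degenerate cases (a vertex on the opposing great circle, an angle equal to $\pi$) do need to be disposed of. What your approach buys is a reusable dichotomy (non-simple $\Rightarrow$ precisely two reflex angles), at the cost of that bookkeeping; the paper's construction is shorter and avoids all of it by never looking at a hypothetical crossing.
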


\begin{proof}
Figure \ref{simpleproperty} is the stereographic projection of a quadrilateral $\square ABCD$ with all edges $<\pi$ and angles at $A,B,D$ being $<\pi$. We draw two circles $\bigcirc ABA^*$ and $\bigcirc ADA^*$. By $\angle BAD<\pi$, the $2$-gon $G$ formed by $\angle BAD$ is the intersection of two hemispheres bounded by $\bigcirc ABA^*$ and $\bigcirc ADA^*$. By $AB<\pi$ and $AD<\pi$, we know $B$ and $D$ are on the two respective boundary edges of $G$. 

\begin{figure}[htp] 
\centering
\begin{tikzpicture}[>=latex,scale=0.6]

\tikzmath{\a1 =210; \a2 = 20; }
	
\draw[gray!50]
	(-1,0) circle (2)
	(1,0) circle (2);

\draw
	(1,0) + (120 : 2) arc  (120:195.5:2);

\draw
	(-1,0) + (10 : 2) arc  (10:60:2);

\pgfmathsetmacro{\rA}{(cos(\a1)+sqrt(3+cos(\a1)*cos(\a1))};
\pgfmathsetmacro{\rAA}{(cos(\a1)-sqrt(3+cos(\a1)*cos(\a1))};

\pgfmathsetmacro{\rC}{(-cos(\a2)+sqrt(3+cos(\a2)*cos(\a2))};
\pgfmathsetmacro{\rCC}{(-cos(\a2)-sqrt(3+cos(\a2)*cos(\a2))};

\coordinate (C) at (0.3,0.8);

\coordinate (A) at (0,{sqrt(3)});
\coordinate (AA) at (0,{-sqrt(3)});

\coordinate (B) at (\a1:\rA);
\coordinate (BB) at (\a1:\rAA);

\coordinate (D) at (\a2:\rC);
\coordinate (DD) at (\a2:\rCC);

\arcThroughThreePoints[gray!50]{AA}{C}{A};

\arcThroughThreePoints{C}{BB}{B};
\arcThroughThreePoints{D}{DD}{C};

\arcThroughThreePoints[dashed]{BB}{B}{C};
\arcThroughThreePoints[dashed]{C}{D}{DD};

\node at (0,2) {\small $A$};
\node at (0,-2) {\small $A^*$};
\node at (-1.3,-0.6) {\small $B$};
\node at (2.9,1.5) {\small $B^*$};
\node at (0.25,0.4) {\small $C$};
\node at (1.2,0.3) {\small $D$};
\node at (-2.9,-1.3) {\small $D^*$};

\end{tikzpicture}
\caption{Sufficient condition for quadrilateral to be simple.}
\label{simpleproperty}
\end{figure}
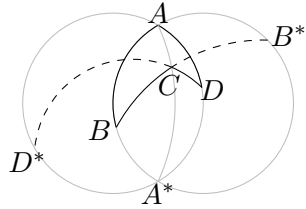

By $BC<\pi$ and $\angle ABC<\pi$, we know $C$ is on the arc $BCB^*$ of length $\pi$ inside the hemisphere bounded by $\bigcirc ABA^*$. By the same reason, we know $C$ is also inside the hemisphere bounded by $\bigcirc ADA^*$. Therefore $C$ lies in the intersection $G$ of the two hemispheres. Then $\triangle ABC$ lies in the $2$-gon $ABA^*C$ and $\triangle ACD$ lies in the $2$-gon $ADA^*C$. Since the two $2$-gons do not overlap, we know the quadrilateral, as the union of two triangles, is simple.
\end{proof}

\subsection{Adjacent Angle Deduction}

The notation for vertices, such as $\alpha^2\beta^2$, does not show the arrangement of angles at the vertex. For example, for almost equilateral quadrilateral, Figure \ref{aad} shows two possible arrangements $\thin\alpha\thin\beta\thin\alpha\thin\beta\thin$ and $\thin\alpha\thin\alpha\thin\beta\thin\beta\thin$ of $\alpha^2\beta^2$. We indicate the arrangements by inserting the edges between the angles. For more example, we have vertices $\thin\alpha\thin\alpha\thin\alpha\thin,\thin\beta\thick\beta\thin\delta\dash\delta\thin,\thick\gamma\dash\gamma\thick\gamma\dash\gamma\thick$ in Figure \ref{aaa-abbB}. 

We may further indicate the arrangements of tiles around the vertex by using the {\em adjacent angle deduction} (abbreviated AAD) notation. We denote the first of Figure \ref{aad} by the AAD $\thin^{\beta}\alpha^{\delta}\thin^{\gamma}\beta^{\alpha}\thin^{\beta}\alpha^{\delta}\thin^{\gamma}\beta^{\alpha}\thin$. Here we use ${}^{\lambda}\theta^{\mu}$ to indicate the angles $\lambda,\mu$ adjacent to $\theta$.  The second $\thin^{\beta}\alpha^{\delta}\thin^{\delta}\alpha^{\beta}\thin^{\gamma}\beta^{\alpha}\thin^{\gamma}\beta^{\alpha}\thin$, the third $\thin^{\beta}\alpha^{\delta}\thin^{\delta}\alpha^{\beta}\thin^{\alpha}\beta^{\gamma}\thin^{\gamma}\beta^{\alpha}\thin$, and the fourth $\thin^{\beta}\alpha^{\delta}\thin^{\delta}\alpha^{\beta}\thin^{\gamma}\beta^{\alpha}\thin^{\alpha}\beta^{\gamma}\thin$ are three different possible AADs of $\thin\alpha\thin\alpha\thin\beta\thin\beta\thin$. The AADs of the vertices in Figure \ref{aaa-abbB} are $\thin^{\beta}\alpha^{\delta}\thin^{\beta}\alpha^{\delta}\thin^{\beta}\alpha^{\delta}\thin,\thin^{\alpha}\beta^{\gamma}\thick^{\gamma}\beta^{\alpha}\thin^{\alpha}\delta^{\gamma}\dash^{\gamma}\delta^{\alpha}\thin,\thick^{\beta}\gamma^{\delta}\dash^{\delta}\gamma^{\beta}\thick^{\beta}\gamma^{\delta}\dash^{\delta}\gamma^{\beta}\thick$.

\begin{figure}[htp]
\centering
\begin{tikzpicture}[>=latex,scale=1]


\foreach \b in {-1,1}
{
\begin{scope}[xshift=-3cm, scale=\b]

\draw	
	(0,1) -- (0,-1) -- (1,-1) -- (1,1) -- (0,1)
	(0,0) -- (1,0);
	
\draw[line width=1.2]
	(-1,1) -- (1,1);

\node at (0.2,0.2) {\small $\alpha$};
\node at (0.8,0.2) {\small $\beta$};
\node at (0.8,0.8) {\small $\gamma$};
\node at (0.2,0.8) {\small $\delta$};
	
\node at (0.8,-0.2) {\small $\alpha$};
\node at (0.2,-0.2) {\small $\beta$};
\node at (0.2,-0.8) {\small $\gamma$};
\node at (0.8,-0.8) {\small $\delta$};

\end{scope}
}

\foreach \a in {0,1,2}
\foreach \b in {1,-1}
{
\begin{scope}[xshift=3*\a cm, xscale=\b]

\draw	
	(0,1) -- (0,-1) -- (1,-1) -- (1,1) -- (0,1)
	(0,0) -- (1,0);

\node at (0.2,-0.2) {\small $\beta$};

\node at (0.2,0.2) {\small $\alpha$};
\node at (0.8,0.2) {\small $\beta$};
\node at (0.8,0.8) {\small $\gamma$};
\node at (0.2,0.8) {\small $\delta$};

\draw[line width=1.2]
	(0,1) -- (1,1);

\end{scope}
}


\draw[line width=1.2]
	(0,-1) -- (1,-1)
	(-1,0) -- (-1,-1);
	
\node at (0.8,-0.2) {\small $\alpha$};
\node at (0.2,-0.8) {\small $\gamma$};
\node at (0.8,-0.8) {\small $\delta$};

\node at (-0.2,-0.8) {\small $\alpha$};
\node at (-0.8,-0.2) {\small $\gamma$};
\node at (-0.8,-0.8) {\small $\delta$};


\foreach \b in {-1,1}
{
\begin{scope}[xshift=3cm, xscale=\b]

\draw[line width=1.2]
	(0,-1) -- (1,-1);
	
\node at (0.8,-0.2) {\small $\alpha$};
\node at (0.2,-0.8) {\small $\gamma$};
\node at (0.8,-0.8) {\small $\delta$};

\end{scope}
}


\foreach \b in {-1,1}
{
\begin{scope}[xshift=6cm, xscale=\b]

\draw[line width=1.2]
	(-1,0) -- (-1,-1);

\node at (-0.2,-0.8) {\small $\alpha$};
\node at (-0.8,-0.2) {\small $\gamma$};
\node at (-0.8,-0.8) {\small $\delta$};

\end{scope}
}

\end{tikzpicture}
\caption{Adjacent angle deduction.}
\label{aad}
\end{figure}
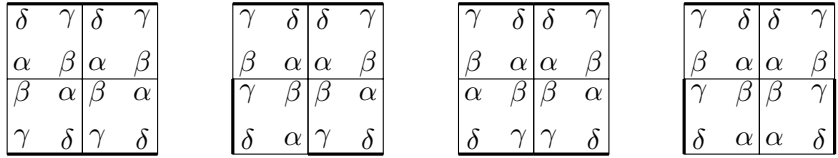

The AAD notation is introduced for pentagon in \cite{cly,wy1}. We will be quite liberal in using the notation. For example, we may also denote the first of Figure \ref{aad} by $\thin\beta\thin\alpha\thin\beta\thin\alpha\thin, \thin\alpha\thin\beta\thin\cdots,\alpha\thin\beta\cdots,\thin^{\gamma}\beta^{\alpha}\thin^{\beta}\alpha^{\delta}\thin^{\gamma}\beta^{\alpha}\thin^{\beta}\alpha^{\delta}\thin, \thin^{\delta}\alpha^{\beta}\thin^{\alpha}\beta^{\gamma}\thin^{\delta}\alpha^{\beta}\thin^{\alpha}\beta^{\gamma}\thin,\thin^{\beta}\alpha^{\delta}\thin^{\gamma}\beta^{\alpha}\thin\cdots$. We also denote  AAD segments such as $\thin\alpha\thin\beta\thin\alpha\thin, \thin^{\beta}\alpha^{\delta}\thin^{\gamma}\beta^{\alpha}\thin$ at the vertex $\alpha^2\beta^2$. 

We remark that ${\theta}^{\lambda}\thin^{\mu}\rho$ in an AAD implies a vertex $\lambda^{\theta}\thin^{\rho}\mu\cdots$. For example, the AAD $\thin^{\beta}\alpha^{\delta}\thin^{\gamma}\beta^{\alpha}\thin^{\beta}\alpha^{\delta}\thin^{\gamma}\beta^{\alpha}\thin$ of the first of Figure \ref{aad} induces vertices $\alpha\thin\beta\cdots=\alpha^{\beta}\thin^{\alpha}\beta\cdots$ and $\gamma\thin\delta\cdots$ on the boundary of the square. This is the {\em reciprocity property}.

Sometimes a vertex may have the {\em unique} AAD. For example, for a general equilateral, the vertex $\gamma^2\delta^2$ can only be arranged as $\dash\gamma\thick\gamma\dash\delta\thin\delta\dash$, and has the unique AAD $\dash^{\delta}\gamma^{\beta}\thick^{\beta}\gamma^{\delta}\dash^{\gamma}\delta^{\alpha}\thin^{\alpha}\delta^{\gamma}\dash$ (see Figure \ref{abb-ccddA}). For another example, for an almost equilateral quadrilateral, we know the possible AADs of consecutive $\alpha\alpha$ are $\thin^{\beta}\alpha^{\delta}\thin^{\beta}\alpha^{\delta}\thin$, $\thin^{\delta}\alpha^{\beta}\thin^{\beta}\alpha^{\delta}\thin$, $\thin^{\beta}\alpha^{\delta}\thin^{\delta}\alpha^{\beta}\thin$. If $\beta\thin\delta\cdots$ and $\delta\thin\delta\cdots$ are not vertices, then $\alpha\alpha$ has the unique AAD $\thin^{\delta}\alpha^{\beta}\thin^{\beta}\alpha^{\delta}\thin$. This further implies no consecutive $\alpha\alpha\alpha$. 

For more detailed discussion about AAD, see \cite[Section 2.5]{wy1}. The AAD is a convenient way of describing a small tiling picture. We may use AAD to make an argument without drawing the corresponding picture. 

Next, we introduce the concept of {\em fans}, which is used only for almost equilateral quadrilaterals. At any vertex, the $b$-edges divide all the angles  into consecutive sequences $\thick\gamma\thin\theta_1\thin\cdots\thin\theta_k\thin\gamma\thick$, $\thick\gamma\thin\theta_1\thin\cdots\thin\theta_k\thin\delta\thick$, $\thick\delta\thin\theta_1\thin\cdots\thin\theta_k\thin\delta\thick$, where $\theta_i$ are $\alpha$ or $\beta$. We call these respectively {\em $\gamma^2$-fan, $\gamma\delta$-fan}, and {\em $\delta^2$-fan}. See Figure \ref{vertex_bedge}. If a vertex has $\gamma,\delta$, then it has fans. Otherwise, it has no fan.

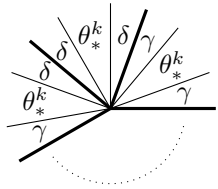
\begin{figure}[htp]
\centering
\begin{tikzpicture}[>=latex,scale=1]


\foreach \a in {20,50,90,120,160,190}
\draw
	(0,0) -- (\a:1.4);

\foreach \b in {0,70,140,210}
\draw[line width=1.2]
	(0,0) -- (\b:1.4);

\foreach \x in {10,60,200}
\node at (\x:1) {\footnotesize $\gamma$};

\foreach \y in {80,130,150}
\node at (\y:1) {\footnotesize $\delta$};

\foreach \z in {35,105,175}
\node at (\z:1) {\footnotesize $\theta_*^k$};

\draw[dotted]
	(220:1) arc (220:350:1);

\end{tikzpicture}
\caption{Fans at a vertex.}
\label{vertex_bedge}
\end{figure}

\begin{lemma}\label{fbalance}
In a tiling of the sphere by congruent non-symmetric almost equilateral quadrilaterals, a vertex has $\gamma^2$-fan if and only if a (possibly different) vertex has $\delta^2$-fan. 
\end{lemma}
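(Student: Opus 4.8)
The plan is to mimic the counting-plus-parity argument used for the Balance Lemmas \ref{balance_aabc} and \ref{balance_aaab}, but now applied to the $b$-edges rather than to individual angles. First I would set up the bookkeeping. At a vertex, the $b$-edges alternate with ``blocks'' of $a$-edges, and each maximal run of consecutive $a$-edges between two $b$-edges carries exactly one fan; a fan is bounded on each side by $\gamma$ or $\delta$ (these being the $b$-edge angles, i.e.\ the $ab$-angle and its companion). So at every vertex the fans come in a cyclic sequence, and the two $b$-edges bounding a given fan contribute its type: a $\gamma^2$-fan sits between two $\gamma$'s, a $\delta^2$-fan between two $\delta$'s, a $\gamma\delta$-fan between one of each. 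Counting the occurrences of $\gamma$ at a vertex by reading off the ends of the fans, each $\gamma$ is shared by two adjacent fans, so $\#_V\gamma = 2\,(\#_V\,\gamma^2\text{-fans}) + \#_V\,\gamma\delta\text{-fans}$, and symmetrically for $\delta$. (If the vertex has no $b$-edge at all, it has no fan and no $\gamma,\delta$, so it contributes nothing to either side.)

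Next I would globalize. Summing over all vertices, $\sum_V \#_V\gamma = \#\gamma = f$, since $\gamma$ appears exactly once in the quadrilateral; likewise $\#\delta = f$. Subtracting the two vertex identities above and summing gives
\[
0 = \#\gamma - \#\delta = 2\bigl(N_{\gamma^2} - N_{\delta^2}\bigr),
\]
where $N_{\gamma^2}$ and $N_{\delta^2}$ are the total numbers of $\gamma^2$-fans and $\delta^2$-fans over all vertices of the tiling (the $\gamma\delta$-fan counts cancel). Hence $N_{\gamma^2} = N_{\delta^2}$. In particular $N_{\gamma^2} > 0$ if and only if $N_{\delta^2} > 0$, which is exactly the statement: some vertex carries a $\gamma^2$-fan iff some (possibly different) vertex carries a $\delta^2$-fan.

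The one point that needs a little care — and the place I expect the ``main obstacle,'' though it is minor — is making sure the end-counting of fans is valid in the degenerate-looking cases: a vertex all of whose angles are $\alpha,\beta$ (no $b$-edge, hence no fan, consistent with contributing $0=0$), and a vertex with exactly one $b$-edge, which would-be impossible since $b$-edges at a vertex come in pairs by the parity consideration behind Lemma \ref{parity} (the numbers of $\gamma$ and $\delta$ have the same parity, and together they count $b$-edge-ends, which is even). Once it is confirmed that every vertex has an even number of $b$-edges and that each fan is correctly bounded by two such edge-ends, the identity $\#_V\gamma = 2\,(\#_V\,\gamma^2) + \#_V\,\gamma\delta$ holds at every vertex and the global cancellation goes through verbatim. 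I do not need non-symmetry anywhere in the counting itself; it is in the hypothesis only because in the symmetric case $\gamma=\delta$ the distinction between the two fan types collapses and the statement is vacuous.
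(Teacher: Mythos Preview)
Your argument is correct and is a slightly more direct repackaging of the paper's. The paper argues the contrapositive through Lemma~\ref{counting}: if no vertex has a $\gamma^2$-fan then $\#_V\gamma\le\#_V\delta$ at every vertex, hence equality everywhere, hence no $\delta^2$-fan anywhere. You instead subtract your two local identities and sum globally to obtain $N_{\gamma^2}=N_{\delta^2}$, which is a strictly stronger numerical conclusion than the lemma asserts.

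Two small slips in the write-up, neither of which damages the proof. First, a $\gamma$ is \emph{not} ``shared by two adjacent fans''; it is a boundary angle of exactly one fan (it lies on the $a$-side of its unique adjacent $b$-edge). Your identity is correct anyway: summing the two boundary slots over all fans at $V$ counts each $\gamma$ and each $\delta$ exactly once, giving $\#_V\gamma = 2\,(\#\gamma^2\text{-fans at }V)+(\#\gamma\delta\text{-fans at }V)$. Second, a vertex \emph{can} have exactly one $b$-edge (for instance $\alpha\gamma\delta$); parity only gives that $\#_V\gamma+\#_V\delta$ is even, and this number equals twice the number of $b$-edges, so it constrains nothing. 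In that case there is a single fan, bounded on both ends by the same $b$-edge, and your identity still holds.
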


\begin{proof}
If no vertex has $\gamma^2$-fan, then every vertex has only $\gamma\delta$-fans and $\delta^2$-fans, or has no fan. This implies the number of $\gamma$ at any vertex is always no more than the number of $\delta$. Then by the counting lemma (Lemma \ref{counting}), the number of $\gamma$ is the same as the number of $\delta$ at all vertices. By no $\gamma^2$-fan, this further implies every vertex has no $\delta^2$-fan. By exchanging $\gamma,\delta$, this also proves that no $\delta^2$-fan implies no $\gamma^2$-fan.
\end{proof}

\begin{lemma}\label{klem3}
In a tiling of the sphere by congruent non-symmetric almost equilateral quadrilaterals, if $\alpha^2\cdots$ is not a vertex, then a $\delta^2$-fan consists of a single $\alpha$, and a vertex has at most one $\delta^2$-fan. If we further know $\gamma>\delta$, then $\thin\alpha\thin^{\gamma}\beta^{\alpha}\thin\cdots=\alpha\beta^k$. 
\end{lemma}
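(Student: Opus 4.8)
The plan is an adjacent angle deduction (AAD) chase driven by the hypothesis that $\alpha^2\cdots$ is not a vertex, with the geometry lemmas brought in only for the finishing touches. First recall from the third of Figure~\ref{quad} the local structure of the tile: the two $a^2$-angles $\alpha,\beta$ never touch a $b$-edge, $\gamma$ is flanked by $\beta$ across an $a$-edge and by $\delta$ across a $b$-edge, and $\delta$ is flanked by $\alpha$ across an $a$-edge and by $\gamma$ across a $b$-edge. Hence in a $\delta^2$-fan $\delta\theta_1\cdots\theta_k\delta$ the two $b$-edges sit at the outer ends, so each bounding $\delta$ has AAD $\thin^{\gamma}\delta^{\alpha}\thin$ with its $\alpha$-neighbour pointing into the fan; equivalently, each bounding $\delta$ contributes an $\alpha$ to the far endpoint of the $a$-edge joining it to the adjacent $\theta_i$. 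Also, since $\alpha^2\cdots$ is not a vertex, at most one of $\theta_1,\dots,\theta_k$ is $\alpha$.

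The engine of the argument is that across any $a$-edge at most one of the two tiles may contribute an $\alpha$ to the far vertex, since two would make that vertex $\alpha^2\cdots$. If $k=0$, the two $\delta$'s share an $a$-edge to whose far endpoint both contribute $\alpha$, a contradiction, so $k\ge1$. Reading the fan from the left bounding $\delta$, which contributes $\alpha$ rightward, any $\theta_i=\beta$ that receives an $\alpha$ from its left is forced to be $\thin^{\gamma}\beta^{\alpha}\thin$ and then contributes $\alpha$ to its right; so a run of $\beta$'s abutting the left end is uniformly $\thin^{\gamma}\beta^{\alpha}\thin$, and symmetrically a run abutting the right end is uniformly $\thin^{\alpha}\beta^{\gamma}\thin$. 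A single $\beta$ cannot be both, so these two directed runs are disjoint; if there is no $\alpha$ at all they collide, a contradiction, so the fan contains exactly one $\alpha$, preceded by a (possibly empty) left run of $\thin^{\gamma}\beta^{\alpha}\thin$'s and followed by a (possibly empty) right run of $\thin^{\alpha}\beta^{\gamma}\thin$'s. This settles the second assertion immediately: two $\delta^2$-fans at a vertex would put two $\alpha$'s there.

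To upgrade ``exactly one $\alpha$'' to ``the fan is $\delta\alpha\delta$'', I would suppose the left run is nonempty. Then the far endpoint $U$ of the $a$-edge joining the left bounding $\delta$ to $\theta_1=\beta$ is forced to be of the form $\alpha\thin\gamma\cdots$, hence carries a fan starting $\gamma\alpha\cdots$; pursuing the deduction at $U$ and its neighbours and feeding in the parity lemma (Lemma~\ref{parity}), the counting and balance lemmas (Lemmas~\ref{counting}, \ref{balance_aaab}, \ref{fbalance}), and the list of admissible degree-$3$ vertices with $\gamma,\delta$ (Lemma~\ref{geometry2}) should produce a contradiction, and the reversal symmetry of a fan then kills a nonempty right run as well. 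For the last assertion, use Lemma~\ref{geometry1} to restate $\gamma>\delta$ as $\alpha>\beta$. At a vertex $V$ carrying the segment $\thin\alpha\thin^{\gamma}\beta^{\alpha}\thin\cdots$, the $\alpha$ is unique at $V$ and the segment lies inside a single fan $F$ (the $\alpha$ and $\beta$ are separated by an $a$-edge, not a $b$-edge). Propagating rightward from the $\beta$ as above, $F$ cannot end in $\delta$ on the right (that $\delta$ would contribute $\alpha$ across the same $a$-edge as the terminal $\beta$), so $F$ reads $\cdots\alpha\beta^{m}\gamma$; combining this fan shape with $\alpha>\beta$, the inequalities of Lemmas~\ref{geometry4}, \ref{geometry5}, \ref{geometry7}, and the vertex angle sum should then force $V$ to contain neither $\gamma$ nor $\delta$, so $V$ has no fan and $V=\alpha\beta^{k}$.

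I expect the main obstacle to be the step that empties the interior $\beta$'s out of a $\delta^2$-fan, and the parallel geometric closing of the last assertion: in both places the AAD chase escapes the original vertex into several neighbours, and a contradiction materialises only after that spreading deduction is cross-checked against parity, counting, and the catalogue of small vertices. This bookkeeping, rather than any single idea, is where the real content and the risk of error lie.
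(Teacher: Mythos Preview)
You have misread the first assertion. ``A $\delta^2$-fan consists of a single $\alpha$'' means the fan contains exactly one $\alpha$ among its interior angles $\theta_1,\dots,\theta_k$; it does \emph{not} mean the fan is literally $\thick\delta\thin\alpha\thin\delta\thick$. The paper uses the lemma in exactly this weaker sense: in the proofs of Propositions~\ref{abb-acc} and~\ref{abb} one explicitly encounters $\delta^2$-fans such as $\thick\delta\thin\alpha\thin\beta\thin\delta\thick$. Your AAD chase up through ``exactly one $\alpha$'' and ``at most one $\delta^2$-fan'' is correct and is essentially the paper's argument, just phrased in the opposite direction. But your attempted ``upgrade'' to $\delta\alpha\delta$ is trying to prove a statement that is false in general, so the promised chase through neighbouring vertices and Lemmas~\ref{parity}, \ref{counting}, \ref{balance_aaab}, \ref{fbalance}, \ref{geometry2} cannot close; you should simply delete that paragraph.

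For the final assertion your plan is also more tangled than necessary, and the closing step (``the inequalities of Lemmas~\ref{geometry4}, \ref{geometry5}, \ref{geometry7} \ldots\ should force $V$ to contain neither $\gamma$ nor $\delta$'') is left as a hope rather than an argument. The paper's route is direct and uses nothing beyond parity and the angle sum: since $\gamma>\delta$, any vertex $\alpha\beta\gamma\cdots$ would by parity be $\alpha\beta\gamma^2\cdots$ or $\alpha\beta\gamma\delta\cdots$, and both exceed $2\pi$ by the quadrilateral angle sum. Hence $\alpha\beta\cdots$ has no $\gamma$, so by no $\alpha^2\cdots$ and the first part it is either $\alpha\beta^k$ or a single $\delta^2$-fan $\alpha\beta^k\delta^2$. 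In the latter, the forced AAD is $\thick^{\gamma}\delta^{\alpha}\thin^{\gamma}\beta^{\alpha}\thin\cdots\thin^{\gamma}\beta^{\alpha}\thin\alpha\thin^{\alpha}\beta^{\gamma}\thin\cdots\thin^{\alpha}\delta^{\gamma}\thick$, which never contains the segment $\thin\alpha\thin^{\gamma}\beta^{\alpha}\thin$. That finishes the lemma without any appeal to the geometric Lemmas~\ref{geometry4}--\ref{geometry7}.
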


\begin{proof}
By no $\alpha^2\cdots$, we get the unique AAD $\thin^{\alpha}\beta^{\gamma}\thin^{\alpha}\delta^{\gamma}\thick$ of $\thin\beta\thin\delta\thick$. Then by no $\alpha^2\cdots$ again,  we get the unique AAD $\thin^{\alpha}\beta^{\gamma}\thin\cdots\thin^{\alpha}\beta^{\gamma}\thin^{\alpha}\delta^{\gamma}\thick$ of consecutive $\beta\cdots\beta\delta$. Since a $\delta^2$-fan $\thick\delta\thin\beta\thin\cdots\thin\beta\thin\delta\thick$ without $\alpha$ has consecutive $\beta\cdots\beta\delta$ in both directions, the unique AAD implies such a fan does not exist. Therefore a $\delta^2$-fan must have $\alpha$. Then by no $\alpha^2\cdots$, a $\delta^2$-fan has a single $\alpha$, and a vertex has at most one $\delta^2$-fan.

Suppose we further know $\gamma>\delta$. Then by the parity lemma, and the angle sum \eqref{anglesum} for quadrilateral, we know $\alpha\beta\gamma\cdots$ is not a vertex. Then by no $\alpha^2\cdots$ and no $\alpha\beta\gamma\cdots$, we know $\alpha\beta\cdots$ is either $\alpha\beta^k$ or a single $\delta^2$-fan $\alpha\beta^k\delta^2$. By no $\alpha^2\cdots$, the AAD of $\alpha\beta^k\delta^2$ is $\thick^{\gamma}\delta^{\alpha}\thin\cdots\thin^{\gamma}\beta^{\alpha}\thin\alpha\thin^{\alpha}\beta^{\gamma}\thin\cdots\thin^{\alpha}\delta^{\gamma}\thick$. Since $\thin\alpha\thin^{\gamma}\beta^{\alpha}\thin$ is incompatible with this AAD, we conclude $\thin\alpha\thin^{\gamma}\beta^{\alpha}\thin\cdots=\alpha\beta^k$. 
\end{proof}

\section{Tiling by Rhombus or Non-general Triangle}
\label{rhombustiling}

We first classify tilings by congruent rhombi in the fourth of Figure \ref{quad}. Then we derive tilings by congruent equilateral or isosceles triangles in Figure \ref{triangle}.

\begin{proposition}\label{rhombus}
Tilings of the sphere by congruent rhombi, are the quadricentric subdivisions $C_{\square}P_4,C_{\square}P_6,C_{\square}P_{12}$ of Platonic solids, the earth map tiling $E_{\square}^R1$, and the flip modification $FE_{\square}^R1$.
\end{proposition}

We remark that $C_{\square}P_4=P_6$, $C_{\square}P_6=C_{\square}P_8=Q_{\square}P_4$, and $C_{\square}P_{12}=C_{\square}P_{20}$. The earth map tiling $E_{\square}^R1$ is obtained in Figure \ref{rhombus_emt1}, and the flip modification $FE_{\square}^R1$ is obtained in Figure \ref{rhombus_emt2}. 

\begin{proof}
The rhombus has two angles $\alpha,\beta$. By the angle sum \eqref{anglesum} for quadrilateral, we get
\[
\alpha+\beta=(1+\tfrac{2}{f})\pi.
\]

If $\alpha=\beta$, then the quadrilateral has four $\alpha$ satisfying $\alpha=(\frac{1}{2}+\tfrac{1}{f})\pi$. By $4\alpha>2\pi$, the only vertex is $\alpha^3$. Therefore $\alpha=\frac{2}{3}\pi$, and the tiling is the regular cube $P_6$, which is also the quadricentric subdivision $C_{\square}P_4$ of the tetrahedron. 

Next, we assume $\alpha\ne\beta$. Up to the symmetry of exchanging $\alpha,\beta$, we may further assume $\alpha<\beta$. Then we have $\alpha+\beta>\pi$ and $4\beta>2\pi$. This implies $\alpha\beta^2,\beta^3,\alpha^k,\alpha^k\beta$ are all the possible vertices. Since the number of $\alpha$ is strictly less than the number of $\beta$ in $\alpha\beta^2,\beta^3$, and the number of $\alpha$ is strictly more than the number of $\beta$ in $\alpha^k,\alpha^k\beta$, by the counting lemma, one of $\alpha\beta^2,\beta^3$ is a vertex, and one of $\alpha^k,\alpha^k\beta$ is a vertex. 

\subsubsection*{Case. $\beta^3$ is a vertex}

The angle sum of $\beta^3$ and the angle sum for quadrilateral imply
\[
\alpha=(\tfrac{1}{3}+\tfrac{2}{f})\pi,\;
\beta=\tfrac{2}{3}\pi. 
\]
We also know one of $\alpha^k,\alpha^k\beta$ is a vertex. By $\alpha<\beta$ and the angle values above, we get the following possibilities:
\begin{align*}
f=12 &\colon
\alpha=\tfrac{1}{2}\pi,\;
\beta=\tfrac{2}{3}\pi,\;
\text{AVC}
=\{\alpha^4,\beta^3\}. \\
f=18 &\colon
\alpha=\tfrac{4}{9}\pi,\;
\beta=\tfrac{2}{3}\pi,\;
\text{AVC}
=\{\alpha^3\beta,\beta^3\}. \\
f=30 &\colon
\alpha=\tfrac{2}{5}\pi,\;
\beta=\tfrac{2}{3}\pi,\;
\text{AVC}
=\{\alpha^5,\beta^3\}.
\end{align*}
For $f=12$, we get a graph with only degree $3$ and degree $4$ vertices, and any edge connects a degree $3$ vertex to a degree $4$ vertex. For $f=30$, we have only degree $3$ and degree $5$ vertices, and any edge connects a degree $3$ vertex to a degree $5$ vertex. Guided by this, in Figure \ref{rhombus_division}, we start with a degree $3$ vertex the at the center and then construct the tiling outwards. We use $\bullet$ to denote $\beta^3$, and use $\circ$ to denote $\alpha^4,\alpha^5$. These are the quadricentric subdivisions $C_{\square}P_6,C_{\square}P_{12}$ of the cube and the dodecahedron, and are the same as the ones in the last row of Figure \ref{subdivision_platonic}.

\begin{figure}[htp]
\centering
\begin{tikzpicture}[>=latex]
	

\foreach \a in {0,1,2}
{
\begin{scope}[rotate=120*\a]

\draw
	(0,0) -- (0,1.2)
	(0,-0.6) -- (0,-1.8)
	(90:0.6) -- (30:0.6) -- (-30:0.6)
	(90:1.2) -- (30:1.2) -- (-30:1.2)
	;

\fill
	(0,0) circle (0.07)
	(30:0.6) circle (0.07)
	(0,1.2) circle (0.07)
	(0,-1.8) -- ++(30:0.1) arc (30:150:0.1);
	
\end{scope}
}

\foreach \a in {0,1,2}
\filldraw[fill=white, rotate=120*\a]
	(0,0.6) circle (0.07)
	(30:1.2) circle (0.07);
	

\foreach \a in {0,1,2}
{
\begin{scope}[xshift=4.5cm, rotate=120*\a]

\draw
	(0,0) -- (0,0.5) 
	(120:0.866) -- (30:0.5) -- (30:1) -- (90:1) -- (150:1)
	(60:0.866) -- (150:0.5)
	(0:1.1547) -- (60:1.1547) -- (120:1.1547)
	(90:1) -- (90:1.6) -- (30:1.6) -- (-30:1.6)
	(60:1.1547) -- (30:1.6) -- (0:1.1547)
	(30:1.6) -- (30:2)
	;

\fill
	(0,0) circle (0.07)
	(30:0.5) circle (0.07)
	(0:0.866) circle (0.07)
	(60:0.866) circle (0.07)
	(0:1.1547) circle (0.07)
	(60:1.1547) circle (0.07)
	(90:1.6) circle (0.07)	
	(-90:2) -- ++(30:0.1) arc (30:150:0.1);
		
\end{scope}
}

\foreach \a in {0,1,2}
\filldraw[xshift=4.5cm, fill=white, rotate=120*\a]
	(0,0.5) circle (0.07)
	(90:1) circle (0.07)
	(30:1) circle (0.07)
	(30:1.6) circle (0.07) ;

\end{tikzpicture}
\caption{Proposition \ref{rhombus}: Tilings for $\{\alpha^4\circ,\beta^3\bullet\}$ and $\{\alpha^5\circ,\beta^3\bullet\}$.}
\label{rhombus_division}
\end{figure}
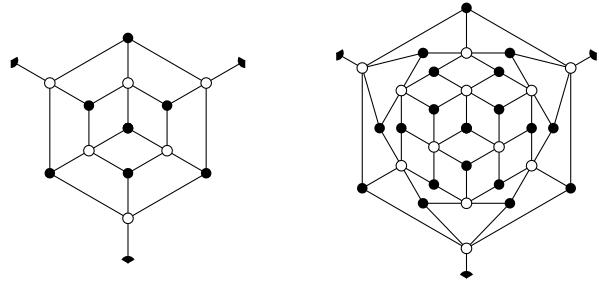

For $f=18$, we argue that the tiling does not exist. In the first of Figure \ref{rhombus_emt1}, we consider four tiles $T_1,T_2,T_3,T_4$ around a vertex $\alpha^3\beta$. Then $T_1,T_2$ share $\beta^2\cdots=\beta^3$ and $T_2,T_3$ share $\beta^2\cdots=\beta^3$. This determines $T_5,T_6$. Then $T_2,T_5,T_6$ share $\alpha^3\cdots=\alpha^3\beta$. This determines $T_7$. Then $T_5,T_7$ share $\alpha\beta\cdots=\alpha^3\beta$. This determines $T_8$. Then $T_1,T_5,T_8$ share $\alpha^2\beta\cdots=\alpha^3\beta$. This determines $T_9$. Then we find $T_1,T_4,T_9$ share $\alpha\beta^2\cdots$, a contradiction. 

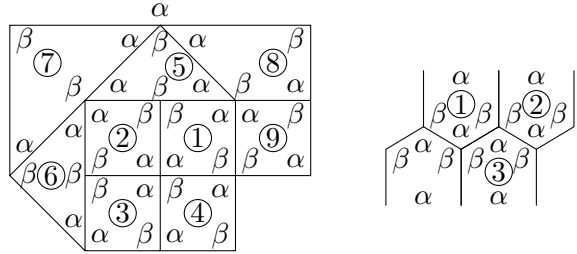
\begin{figure}[htp]
\centering
\begin{tikzpicture}[>=latex]

\draw
	(-1,-1) rectangle (1,1)
	(0,-1) -- (0,1)
	(-1,-1) -- (-2,0) -- (0,2) -- (1,1)
	(-2,0) -- (-2,2) -- (2,2) -- (2,0) -- (-1,0)
	(1,1) -- (2,1);

\node at (0.2,0.8) {\small $\beta$};
\node at (0.8,0.2) {\small $\beta$};
\node at (0.2,0.2) {\small $\alpha$};
\node at (0.8,0.8) {\small $\alpha$};

\node at (-0.2,0.8) {\small $\beta$};
\node at (-0.8,0.2) {\small $\beta$};
\node at (-0.2,0.2) {\small $\alpha$};
\node at (-0.8,0.8) {\small $\alpha$};

\node at (-0.2,-0.8) {\small $\beta$};
\node at (-0.8,-0.2) {\small $\beta$};
\node at (-0.2,-0.2) {\small $\alpha$};
\node at (-0.8,-0.8) {\small $\alpha$};

\node at (0.2,-0.2) {\small $\beta$};
\node at (0.8,-0.8) {\small $\beta$};
\node at (0.2,-0.8) {\small $\alpha$};
\node at (0.8,-0.2) {\small $\alpha$};

\node at (-1.15,0) {\small $\beta$};
\node at (-1.75,0) {\small $\beta$};
\node at (-1.15,0.6) {\small $\alpha$};
\node at (-1.15,-0.6) {\small $\alpha$};

\node at (0,1.15) {\small $\beta$};
\node at (0, 1.75) {\small $\beta$};
\node at (-0.55, 1.2) {\small $\alpha$};
\node at (0.55, 1.2) {\small $\alpha$};

\node at (-1.15,1.15) {\small $\beta$};
\node at (-1.8,1.8) {\small $\beta$};
\node at (-0.4,1.8) {\small $\alpha$};
\node at (-1.8,0.4) {\small $\alpha$};

\node at (1.1,1.2) {\small $\beta$};
\node at (1.8,1.8) {\small $\beta$};
\node at (0.5,1.8) {\small $\alpha$};
\node at (1.8,1.2) {\small $\alpha$};

\node at (0,2.2) {\small $\alpha$};

\node at (1.2,0.2) {\small $\beta$};
\node at (1.8,0.8) {\small $\beta$};
\node at (1.2,0.8) {\small $\alpha$};
\node at (1.8,0.2) {\small $\alpha$};

\node[inner sep=0.5,draw,shape=circle] at (0.5,0.5) {\small $1$};
\node[inner sep=0.5,draw,shape=circle] at (-0.5,0.5) {\small $2$};
\node[inner sep=0.5,draw,shape=circle] at (-0.5,-0.5) {\small $3$};
\node[inner sep=0.5,draw,shape=circle] at (0.5,-0.5) {\small $4$};
\node[inner sep=0.5,draw,shape=circle] at (0.25,1.45) {\small $5$};
\node[inner sep=0.5,draw,shape=circle] at (-1.45,0) {\small $6$};
\node[inner sep=0.5,draw,shape=circle] at (-1.5,1.5) {\small $7$};
\node[inner sep=0.5,draw,shape=circle] at (1.5,1.5) {\small $8$};
\node[inner sep=0.5,draw,shape=circle] at (1.5,0.5) {\small $9$};

\begin{scope}[shift={(3.5cm,0.5cm)}]

\foreach \a in {0,1}
{
\begin{scope}[xshift=\a cm]

\draw
	(0,0.9) -- (0,0.15) -- (-0.5,-0.15) -- (-0.5,-0.9)
	(1,0.9) -- (1,0.15) -- (0.5,-0.15) -- (0.5,-0.9)
	(0,0.15) -- (0.5,-0.15);

\node at (0.5,0.8) {\small $\alpha$};
\node at (0.2,0.3) {\small $\beta$};
\node at (0.5,0.1) {\small $\alpha$};
\node at (0.8,0.3) {\small $\beta$};

\node at (0,-0.8) {\small $\alpha$};
\node at (-0.3,-0.3) {\small $\beta$};
\node at (0,-0.1) {\small $\alpha$};
\node at (0.3,-0.3) {\small $\beta$};

\end{scope}
}

\node[inner sep=0.5,draw,shape=circle] at (0.5,0.45) {\small $1$};
\node[inner sep=0.5,draw,shape=circle] at (1.5,0.45) {\small $2$};
\node[inner sep=0.5,draw,shape=circle] at (1,-0.45) {\small $3$};

\end{scope}

\end{tikzpicture}
\caption{Proposition \ref{rhombus}: No tiling for $\{\alpha^3\beta,\beta^3\}$, and earth map tiling.}
\label{rhombus_emt1}
\end{figure}

The existence of quadricentric subdivision tilings follows from the existence of regular Platonic solids.

\subsubsection*{Case. $\alpha\beta^2$ is a vertex}

The angle sum of $\alpha\beta^2$ and the angle sum for quadrilateral imply
\[
\alpha=\tfrac{4}{f}\pi,\; 
\beta=(1-\tfrac{2}{f})\pi.
\]
Then we get the list of possible vertices
\begin{equation}\label{triavc7}
\text{AVC}
=\{\alpha\beta^2,\alpha^{\frac{f}{2}},\alpha^{\frac{f+2}{4}}\beta\}.
\end{equation}

Given consecutive $\alpha\alpha$ at a vertex, we have $T_1,T_2$ in the second of Figure \ref{rhombus_emt1}. The two tiles share $\beta^2\cdots=\alpha\beta^2$. This determines $T_3$. If $\alpha^{\frac{f}{2}}$ is a vertex, then we may apply the argument to the $\frac{f}{2}$ consecutive $\alpha\alpha$ in $\alpha^{\frac{f}{2}}$, and get the earth map tiling $E_{\square}^R1$.

If $\alpha^{\frac{f}{2}}$ is not a vertex, then by the counting lemma, we know $\alpha^{\frac{f+2}{4}}\beta$ is a vertex. We denote the vertex as $\alpha^{q+1}\beta$, where $f=4q+2$ and $\beta=q\alpha$. Applying the argument in the second of Figure \ref{rhombus_emt1} to the $q$ consecutive $\alpha\alpha$ in the $\alpha^{q+1}$ part of $\alpha^{q+1}\beta$, we get a partial earth map tiling consisting of tiles on the left and right of the shaded edges in Figure \ref{rhombus_emt2}, and containing $T_1,T_4,T_5,T_6$. We also have $T_5$ containing $\beta$ in $\alpha^{q+1}\beta$.

Each tile has two $\alpha$. In the subsequent argument, if we wish to specify which of the two, then we denote one by $\alpha$ and the other by $\dot{\alpha}$. We do the same for $\beta$ and $\dot{\beta}$.

We have $\dot{\alpha}_1\dot{\beta}_5\cdots=\alpha\beta^2,\alpha^{q+1}\beta$. The first of Figure \ref{rhombus_emt2} is the case $\dot{\alpha}_1\dot{\beta}_5\cdots=\alpha\beta^2$. This implies $\dot{\beta}_2\cdots=\alpha^2\beta\cdots=\alpha^{q+1}\beta$. Then the $\alpha^{q+1}$ part of this vertex determines a partial earth map tiling, consisting of $P,T_1,T_5,T_6$. Then $\dot{\beta}_3\cdots=\alpha^q\beta\cdots=\alpha^{q+1}\beta$ determines $T_7$. We note that the part consisting of $P,T_5,T_6,T_7$ is the flip in Figure \ref{flip3}. Therefore the tiling is $FE_{\square}^R1$.

\begin{figure}[htp]
\centering
\begin{tikzpicture}[>=latex]

\foreach \a in {0,6}
\draw[xshift=\a cm, gray, line width=3]
	(0.5,-1.6) -- (0.5,1.6) 
	(2.5,1.6) -- (2.5,0.8) -- (3.5,-0.8) -- (3.5,-1.6); 
	
\foreach \a in {0,3,4,6,10}
\foreach \b in {1,-1}
\draw[xshift=\a cm, scale=\b]
	(0.5,-1.6) -- (0.5,1.6)
	(-0.5,0.8) -- (0.5,-0.8);	

\foreach \a/\b in {3/-1,4/1,4/-1,6/-1,7/-1,9/1,10/1,10/-1}
{
\begin{scope}[xshift=\a cm, scale=\b]
	
\node at (-0.3,0.9) {\small $\beta$};
\node at (0.3,0.8) {\small $\beta$};
\node at (0.3,-0.2) {\small $\alpha$};
\node at (0,1.5) {\small $\alpha$};

\end{scope}
}

\foreach \b in {0,6}
{
\begin{scope}[xshift=\b cm]

\node at (-0.3,0.9) {\small $\beta$};
\node at (0.3,0.8) {\small $\dot{\beta}$};
\node at (0.3,-0.2) {\small $\alpha$};
\node at (0,1.5) {\small $\alpha$};

\end{scope}
}

\node at (0.3,-0.9) {\small $\dot{\beta}$};
\node at (-0.3,-0.8) {\small $\beta$};
\node at (-0.3,0.2) {\small $\alpha$};
\node at (0,-1.5) {\small $\alpha$};

\begin{scope}[xshift=3 cm]

\node at (-0.3,0.9) {\small $\dot{\beta}$};
\node at (0.3,0.8) {\small $\beta$};
\node at (0.3,-0.2) {\small $\alpha$};
\node at (0,1.5) {\small $\alpha$};

\end{scope}


\begin{scope}[xshift=1.5cm]
	
\foreach \b in {0,1}
{
\begin{scope}[xshift=6*\b cm]

\node at (0,1) {\small $\beta$};
\node at (0,1.5) {\small $\beta$};	
\node at (0.8,1) {\small $\alpha$};
\node at (-0.8,1) {\small $\dot{\alpha}$};

\end{scope}
}

\foreach \b in {0,1}
{
\begin{scope}[xshift=7*\b cm]

\node at (0,-1) {\small $\beta$};
\node at (0,-1.5) {\small $\beta$};	
\node at (0.8,-1) {\small $\alpha$};
\node at (-0.8,-1) {\small $\alpha$};

\end{scope}
}

\draw
	(-1,0.8) -- (1,0.8)
	(-1,-0.8) -- (1,-0.8)
	(0,0.8) -- (0,0.5)
	(0,-0.8) -- (0,-0.5);
	
\node at (0.2,0.6) {\small $\beta$};
\node at (-0.2,0.6) {\small $\alpha$};
\node at (-0.8,0.6) {\small $\beta$};

\node at (-0.2,-0.6) {\small $\beta$};
\node at (0.2,-0.6) {\small $\alpha$};
\node at (0.8,-0.6) {\small $\beta$};

\node[rotate=90] at (0.8,0.4) {\small $\alpha^{q-1}$};
\node[rotate=90] at (-0.8,-0.4) {\small $\alpha^{q-1}$};

\node at (0,0) {\small $P$};

\end{scope}

\node[draw,shape=circle, inner sep=0.5] at (0,1.2) {\small $1$};
\node[draw,shape=circle, inner sep=0.5] at (0,-1.2) {\small $2$};
\node[draw,shape=circle, inner sep=0.5] at (3,1.2) {\small $3$};
\node[draw,shape=circle, inner sep=0.5] at (4,-1.2) {\small $4$};
\node[draw,shape=circle, inner sep=0.5] at (1.9,1.2) {\small $5$};
\node[draw,shape=circle, inner sep=0.5] at (1.1,-1.2) {\small $6$};
\node[draw,shape=circle, inner sep=0.5] at (3,-1.2) {\small $7$};


\begin{scope}[xshift=8cm]

\foreach \b in {1,-1}
{
\begin{scope}[scale=\b]

\draw
	(-1.5,0.8) -- (0.5,0.8)
	(0.5,1.6) -- (0.5,0.8) -- (1.5,-0.8)
	(-0.5,0.8) -- ++(0.2,-0.32);

\end{scope}
}

\node at (0.35,0.6) {\small $\beta$};
\node at (-0.2,0.6) {\small $\alpha$};
\node at (-0.6,0.6) {\small $\beta$};

\node at (-0.35,-0.6) {\small $\beta$};
\node at (0.2,-0.6) {\small $\alpha$};
\node at (0.6,-0.6) {\small $\beta$};

\node[rotate=-60] at (-1,0.45) {\small $\alpha^{q-1}$};
\node[rotate=-60] at (1,-0.45) {\small $\alpha^{q-1}$};

\node at (0,0) {\small $P$};

\end{scope}

\begin{scope}[xshift=6cm]
	
\node[draw,shape=circle, inner sep=0.5] at (0,1.2) {\small $1$};
\node[draw,shape=circle, inner sep=0.5] at (0,-1.2) {\small $2$};
\node[draw,shape=circle, inner sep=0.5] at (3,1.2) {\small $3$};
\node[draw,shape=circle, inner sep=0.5] at (4,-1.2) {\small $4$};
\node[draw,shape=circle, inner sep=0.5] at (1.9,1.2) {\small $5$};
\node[draw,shape=circle, inner sep=0.5] at (1,-1.2) {\small $6$};
\node[draw,shape=circle, inner sep=0.5] at (2.1,-1.2) {\small $7$};

\end{scope}

\end{tikzpicture}
\caption{Proposition \ref{rhombus}: Tiling for $\{\alpha\beta^2,\alpha^{q+1}\beta\}$.}
\label{rhombus_emt2}
\end{figure}
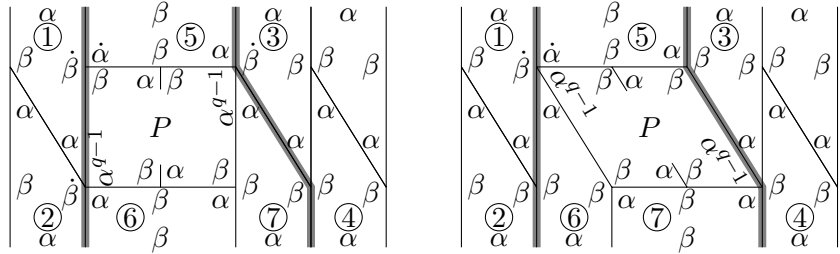

The second of Figure \ref{rhombus_emt2} is the case $\dot{\alpha}_1\dot{\beta}_5\cdots=\cdots=\alpha^{q+1}\beta$. Again the $\alpha^{q+1}$ part of this vertex determines a partial earth map tiling, consisting of $P,T_5,T_6,T_7$. By the horizontal flip, the tiling actually becomes the first one. Therefore we get $FE_{\square}^R1$ again.

Finally, the rhombus in the earth map tiling is the union of four right triangles with $\frac{1}{2}\alpha=\frac{2}{f}\pi$ and $\frac{1}{2}\beta=(\frac{1}{2}-\frac{1}{f})\pi$. By $\frac{1}{2}\alpha,\frac{1}{2}\beta<\frac{1}{2}\pi$ and $\frac{1}{2}\alpha+\frac{1}{2}\beta>\frac{1}{2}\pi$, the right triangle exists. Therefore the rhombus exists. 
\end{proof}

Next we classify tilings by congruent equilateral triangles or isosceles triangles. 

Tilings by congruent equilateral triangles (third of Figure \ref{triangle}) must be Platonic solids. They are the regular tetrahedron $P_4$, the regular octahedron $P_8$, and the regular icosahedron $P_{20}$.

Tilings by congruent isosceles triangles (second of Figure \ref{triangle}) are closely related to the rhombus tilings. Each isosceles triangle has a companion sharing the $b$-edge. The union of the triangle with its companion is a rhombus, as in Figure \ref{isosceles1}. Therefore isosceles triangular tilings give rhombus tilings. If all vertices of the rhombus tilings have degree $\ge 3$, then we may apply Proposition \ref{rhombus}, and conclude the isosceles triangular tilings are the simple triangular subdivisions of the tilings in the proposition. 

\begin{figure}[htp]
\centering
\begin{tikzpicture}[>=latex]

\foreach \a in {0,1}
\foreach \b in {-1,1}
\foreach \c in {0,1}
\draw[xshift=2.6*\c cm, yshift=0.4*\b cm, yscale=\b, rotate=120*\a]
	(-30:0.8) -- (90:0.8);

\draw[line width=1.2]
	(-0.693,0) -- (0.693,0);

\draw[->, very thick]
	(1,0) -- ++(0.6,0);

\node at (-0.5,0.65) {\small $a$};
\node at (0.5,0.65) {\small $a$};
\node at (0,0.2) {\small $b$};
\node at (-0.5,-0.65) {\small $a$};
\node at (0.5,-0.65) {\small $a$};

\node at (0,0.85) {\small $\alpha$};
\node at (-0.35,0.2) {\small $\beta$};
\node at (0.35,0.2) {\small $\beta$};

\node at (0,-0.85) {\small $\alpha$};
\node at (-0.35,-0.2) {\small $\beta$};
\node at (0.35,-0.2) {\small $\beta$};

\begin{scope}[xshift=2.6 cm]

\node at (0,0.85) {\small $\alpha$};
\node at (0,-0.85) {\small $\alpha$};
\node at (-0.35,0) {\small $2\beta$};
\node at (0.35,0) {\small $2\beta$};

\end{scope}

\end{tikzpicture}
\caption{Isosceles triangular tiling to rhombus tiling.}
\label{isosceles1}
\end{figure}
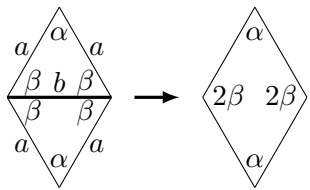

First, we have the simple triangular subdivisions of the quadricentric subdivisions $C_{\square}P_i$ of the regular Platonic solids. We know $C_{\square}P_4$ is the regular cube, and the simple triangular subdivisions of the regular cube are the seven tilings $S_{\triangle}P_6$ in Figure \ref{simple_subdivision_cube}. The simple triangular subdivisions of $C_{\square}P_6=C_{\square}P_8,C_{\square}P_{12}=C_{\square}P_{20}$ are the triangular subdivisions $T_{\triangle}P_6,T_{\triangle}P_8,T_{\triangle}P_{12},T_{\triangle}P_{20}$. 

Second, the simple triangular subdivisions of $E_{\square}^R1$ are $E_{\triangle}^J1$ and $E_{\triangle}4$. The simple triangular subdivisions of the flip modification $FE_{\square}^R1$ are the flip modifications $FE_{\triangle}^J1$ and $FE_{\triangle}4$.

It remains to consider the case the rhombus tiling associated to the isosceles triangular tiling has a degree $2$ vertex. Each angle in the rhombus tiling corresponds to $\alpha$ or $\beta^2=\beta\thick\beta$ in the isosceles triangular tiling. Therefore the degree $2$ vertex is a combination of two from $\alpha,\beta^2$. Moreover, the combination should have degree $\ge 3$ in terms of $\alpha,\beta$. Therefore the degree $2$ vertex in the rhombus tiling corresponds to $\alpha\beta^2$ or $\beta^4$ in the triangular tiling. 

The angle sum of $\alpha\beta^2$ and the angle sum \eqref{3anglesum} for triangle imply $f=4$. Then it is easy to show that the triangular tiling is the tetrahedron tiling $P_4$ in the first of Figure \ref{isosceles2}.

\begin{figure}[htp]
\centering
\begin{tikzpicture}[>=latex]

\begin{scope}[shift={(-2 cm, -0.3cm)}]

\draw
	(-30:0.7) -- (90:0.7) -- (210:0.7)
	(-30:0.7) -- (-30:1.3)
	(210:0.7) -- (210:1.3);

\draw[line width=1.2]
	(-30:0.7) -- (210:0.7)
	(90:0.7) -- (90:1.3);

\end{scope}


\foreach \b in {-1,1}
{
\begin{scope}[xshift=5 cm, scale=\b] 

\draw[gray, line width=3]
	(1,-1) -- (1,1);
	
\draw
	(-1,0) -- (0,0.7) -- (1,0);

\draw[line width=1.2]
	(0,0.4) -- (0,1.2);

\node at (-0.5,1) {\small $\beta$};
\node at (0.5,1) {\small $\beta$};

\node at (-0.2,0.8) {\small $\beta$};
\node at (0.2,0.8) {\small $\beta$};

\node at (0.8,0.35) {\small $\alpha$};
\node at (0.8,-0.35) {\small $\alpha$};

\node at (-0.45,0.07) {\small $\alpha^{q-2}$};

\end{scope}
}

\foreach \a in {0,1,3,6}
\foreach \b in {-1,1}
{
\begin{scope}[xshift=\a cm, yscale=\b]
	
\draw
	(0,0) -- (0,1)
	(1,0) -- (1,1);

\draw[line width=1.2]
	(0,0) -- (1,0);
	
\node at (0.5,0.9) {\small $\alpha$};
\node at (0.2,0.25) {\small $\beta$};
\node at (0.8,0.25) {\small $\beta$};

\end{scope}
}
	
\node[inner sep=0.5,draw,shape=circle] at (3.5,0.5) {\small $1$};
\node[inner sep=0.5,draw,shape=circle] at (4.45,0.65) {\small $2$};
\node[inner sep=0.5,draw,shape=circle] at (5.55,0.65) {\small $3$};
\node[inner sep=0.5,draw,shape=circle] at (6.5,0.5) {\small $4$};
\node[inner sep=0.5,draw,shape=circle] at (3.5,-0.5) {\small $5$};
\node[inner sep=0.5,draw,shape=circle] at (6.5,-0.5) {\small $6$};

\end{tikzpicture}
\caption{Isosceles tetrahedron tiling, and isosceles earth map tiling. }
\label{isosceles2}
\end{figure}
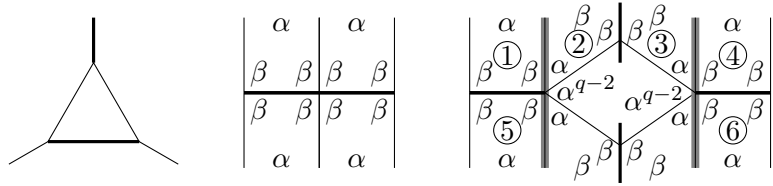

The angle sum of $\beta^4$ and the angle sum for triangle imply 
\[
\alpha=\tfrac{4}{f}\pi,\;
\beta=\tfrac{1}{2}\pi.
\]
By the parity lemma (Lemma \ref{parity}), a vertex has even number of $\beta$. Then we find the list of possible vertices
\[
\text{AVC}=\{\beta^4,\alpha^{\frac{f}{2}},\alpha^{\frac{f}{4}}\beta^2\}.
\]
The triangle exists because it is half of the $2$-gon of angle $\alpha$.

A tile determines a companion tile across the $b$-edge as in Figure \ref{isosceles1}. Then consecutive $\alpha^k=\alpha\cdots\alpha$ determines $k$ companion tiles, which give the other end $\alpha^k$ of a partial earth map tiling. See the second of Figure \ref{isosceles2}. In particular, if $\alpha^{\frac{f}{2}}$ is a vertex, then we get the earth map tiling $E_{\triangle}^I1$.

If $\alpha^{\frac{f}{2}}$ is not a vertex, then $\alpha^q\beta^2=\cdots\thin\alpha\thin\beta\thick\beta\thin\alpha\thin\cdots$, where $f=4q$, is a vertex. We get $T_1,T_2,T_3,T_4$ in the third of Figure \ref{isosceles2}. Moreover, the $\alpha^q$ part of the vertex gives a partial earth map tiling consisting of tiles on the left and right of the shaded edges, and containing $T_1,T_4,T_5,T_6$. Then $\alpha_2\cdots=\alpha_3\cdots=\alpha\beta^2\cdots=\alpha^q\beta^2$. Then the $\alpha^q$ parts of the two vertices give another partial earth map tiling, which fills the middle of the second of Figure \ref{isosceles2}. This middle part is the flip in the second of Figure \ref{flip4} (with $\beta=\gamma$). Therefore the tiling is $FE_{\triangle}^I1$.

\section{Tiling by General Triangle or Kite}
\label{kitetiling}

We first classify tilings by congruent general triangles in the first of Figure \ref{triangle}. Then we derive tilings by congruent kites in the second of Figure \ref{quad}.

By the parity lemma (the first part of Lemma \ref{parity}), the numbers of $\alpha,\beta,\gamma$ at any vertex have the same parity. Depending on whether the parity is even or odd, we call the vertex {\em even} or {\em odd}. It is easy to see that an even (odd) vertex is equivalent to even degree (odd degree) vertex. 

By the parity lemma, a degree $3$ vertex is $\alpha\beta\gamma$. By the angle sum of $\alpha\beta\gamma$ and the angle sum \eqref{3anglesum} for triangle, we get $f=4$. Then by \eqref{trivcountf4}, we know all vertices have degree $3$. Therefore $\alpha\beta\gamma$ is the only vertex. Then it is easy to conclude that the tiling is the deformed tetrahedron $P_4$. 

In the subsequent discussion, therefore, we may assume there is no degree $3$ vertex. Then by \eqref{trivcountf3}, we get $f\ge 8$. Without loss of generality, we may also assume $\alpha<\beta<\gamma$. Then by the angle sum for triangle, we get $R(\gamma^4)<R(\beta^2\gamma^2)<2\alpha$. Then by $\alpha<\beta<\gamma$ and the parity lemma, this implies an even vertex $\gamma\cdots=\beta^2\gamma^2,\gamma^4,\alpha^k\gamma^2$. On the other hand, an odd vertex $\gamma^3\cdots$ is $\alpha\beta\gamma^3\cdots$. By $\alpha<\beta<\gamma$ and the angle sum for triangle, we get $R(\alpha\beta\gamma^3)<\alpha<\beta,\gamma$. Therefore an odd vertex $\gamma^3\cdots=\alpha\beta\gamma^3$. In summary, we get the list of all possible vertices
\begin{equation}\label{triavc1}
\text{AVC}
=\{\alpha\beta\gamma^3,\beta^2\gamma^2,\gamma^4,\alpha^k\gamma^2,\alpha^k\beta^l,\alpha^k\beta^l\gamma\}.
\end{equation}

\begin{proposition}\label{2a2b}
Tilings of the sphere by congruent general triangles, such that $\alpha<\beta<\gamma$ and $\beta^2\gamma^2$ is a vertex, are the earth map tiling $E_{\triangle}1$ and its flip modifications $FE_{\triangle}1,F'E_{\triangle}1,F''E_{\triangle}1$.
\end{proposition}

The earth map tiling $E_{\triangle}1$ has the timezone given by $T_1,T_2,T_3,T_4$ in Figure \ref{3angleB}. The flip modification $FE_{\triangle}1$ is obtained in Figure \ref{3angleD}. The further flip modification $F'E_{\triangle}1$ is obtained in Figure \ref{3angleC}.

\begin{proof}
By $\beta^2\gamma^2$ and $\alpha<\beta<\gamma$, we know $\alpha\beta\gamma^3,\gamma^4,\alpha^2\gamma^2$ are not vertices. Therefore the general AVC \eqref{triavc1} is reduced to
\[
\text{AVC}
=\{\beta^2\gamma^2,\alpha^k\gamma^2(k\ge 4),\alpha^k\beta^l,\alpha^k\beta^l\gamma\}.
\]
This implies $\gamma^2\cdots$ is $\beta^2\gamma^2=\gamma\thick\gamma\cdots$ or $\alpha^k\gamma^2=\gamma\dash\gamma\cdots$. Therefore we get $\gamma\thick\gamma\cdots=\beta^2\gamma^2$, and $\gamma\dash\gamma\cdots=\alpha^k\gamma^2(k\ge 4)$.

A vertex $\thin\beta\dash\beta\thin\alpha\thick\cdots$ determines $T_1,T_2,T_3$ in the first of Figure \ref{3angleA}. Then $\gamma_1\dash\gamma_2\cdots=\alpha^k\gamma^2$ determines $T_4$. Then $\alpha_2\beta_3\gamma_4\cdots=\alpha^k\beta^l\gamma=\dash\gamma_4\thick\alpha_2\thin\beta_3\dash\beta\thin\cdots$ determines $T_5$. Then $\gamma_3\dash\gamma_5\cdots=\alpha^k\gamma^2$ determines $T_6$. This proves $\thin\beta\dash\beta\thin\alpha\thick\cdots=\thin\beta\dash\beta\thin\alpha\thick\gamma\dash\cdots$. Then $\thick\alpha\thin\beta\dash\beta\thin\alpha\thick\cdots=\dash\gamma\thick\alpha\thin\beta\dash\beta\thin\alpha\thick\gamma\dash\cdots=\alpha^2\beta^2\gamma^2\cdots$, contradicting the angle sum for triangle. Therefore $\thick\alpha\thin\beta\dash\beta\thin\alpha\thick\cdots$ is not a vertex. 

\begin{figure}[htp]
\centering
\begin{tikzpicture}[>=latex]


\foreach \a in {-1,1}
{
\begin{scope}[xscale=\a]

\draw[dashed]
	(0,-1.2) -- (1.2,0)
	(0,0) -- (1.2,1.2);

\draw[line width=1.2]
	(0,0) -- (1.2,0) -- (1.2,-1.2);

\draw
	(0,0) -- (0,-1.2) -- (1.2,-1.2)
	(1.2,0) -- (1.2,1.2);

\node at (0.8,-0.15) {\small $\gamma$};
\node at (0.15,-0.8) {\small $\beta$};
\node at (0.15,-0.2) {\small $\alpha$};

\node at (1,-0.4) {\small $\gamma$};
\node at (0.45,-1) {\small $\beta$};
\node at (1.05,-1) {\small $\alpha$};

\node at (1.05,0.8) {\small $\beta$};
\node at (0.45,0.2) {\small $\gamma$};
\node at (1.05,0.2) {\small $\alpha$};

\end{scope}
}
	
\node[inner sep=0.5,draw,shape=circle] at (0.77,-0.77) {\small $1$};
\node[inner sep=0.5,draw,shape=circle] at (0.43,-0.43) {\small $2$};
\node[inner sep=0.5,draw,shape=circle] at (-0.43,-0.43) {\small $3$};
\node[inner sep=0.5,draw,shape=circle] at (-0.77,-0.77) {\small $4$};
\node[inner sep=0.5,draw,shape=circle] at (0.77,0.43) {\small $5$};
\node[inner sep=0.5,draw,shape=circle] at (-0.77,0.43) {\small $6$};


\begin{scope}[xshift=-3.5cm, xscale=-1]

\draw[dashed]
	(-1.2,1.2) -- (0,0) -- (1.2,0)
	(-1.2,0) -- (0,-1.2) -- (1.2,-1.2);
	
\draw[line width=1.2]
	(-1.2,-1.2) -- (-1.2,0) -- (0,0) 
	(1.2,1.2) -- (1.2,0) -- (0,-1.2);

\draw
	(1.2,1.2) -- (0,0) -- (0,-1.2) -- (-1.2,-1.2)
	(-1.2,0) -- (-1.2,1.2)
	(1.2,0) -- (1.2,-1.2);

\node at (0.8,-0.15) {\small $\gamma$};
\node at (0.15,-0.2) {\small $\beta$};
\node at (0.15,-0.8) {\small $\alpha$};

\node at (-0.8,-0.15) {\small $\gamma$};
\node at (-0.15,-0.8) {\small $\beta$};
\node at (-0.15,-0.2) {\small $\alpha$};

\node at (-1,-0.4) {\small $\gamma$};
\node at (-0.45,-1) {\small $\beta$};
\node at (-1.05,-1) {\small $\alpha$};

\node at (1.05,0.2) {\small $\gamma$};
\node at (0.5,0.2) {\small $\beta$};
\node at (1.05,0.8) {\small $\alpha$};

\node at (0.4,-1.05) {\small $\gamma$};
\node at (1.05,-1) {\small $\beta$};
\node at (1.05,-0.4) {\small $\alpha$};

\node at (-1.05,0.8) {\small $\beta$};
\node at (-0.45,0.2) {\small $\gamma$};
\node at (-1.05,0.2) {\small $\alpha$};

\node[inner sep=0.5,draw,shape=circle] at (-0.77,-0.77) {\small $1$};
\node[inner sep=0.5,draw,shape=circle] at (-0.43,-0.43) {\small $2$};
\node[inner sep=0.5,draw,shape=circle] at (0.43,-0.43) {\small $3$};
\node[inner sep=0.5,draw,shape=circle] at (-0.77,0.43) {\small $4$};
\node[inner sep=0.5,draw,shape=circle] at (0.77,0.43) {\small $5$};
\node[inner sep=0.5,draw,shape=circle] at (0.77,-0.77) {\small $6$};

\end{scope}

\end{tikzpicture}
\caption{Proposition \ref{2a2b}: $\thin\beta\dash\beta\thin\alpha\thick$ and $\thin\beta\dash\beta\thin\beta\dash\beta\thin$.}
\label{3angleA}
\end{figure}

A vertex $\thin\beta\dash\beta\thin\beta\dash\beta\thin\cdots$ determines $T_1,T_2,T_3,T_4$ in the second of Figure \ref{3angleA}. Then $\gamma_1\dash\gamma_2\cdots=\gamma_3\dash\gamma_4\cdots=\alpha^k\gamma^2$ determines $T_5,T_6$. Then $\dash\gamma_5\thick\alpha_2\thin\alpha_3\thick\gamma_6\dash\cdots=\alpha^k\gamma^2$. By $\thick\alpha\thin$, the remainder of $\dash\gamma_5\thick\alpha_2\thin\alpha_3\thick\gamma_6\dash\cdots$ has no $\alpha$, contradicting $k\ge 4$. Therefore $\thin\beta\dash\beta\thin\beta\dash\beta\thin\cdots$ is not a vertex. This implies $\beta^l$ is not a vertex.

By the edge consideration, the number $l$ of $\beta$ in a sequence $\thick\alpha\thin\beta^l\thin\alpha\thick=\thick\alpha\thin\beta\cdots\beta\thin\alpha\thick$ is even. Then by no $\thick\alpha\thin\beta\dash\beta\thin\alpha\thick\cdots,\thin\beta\dash\beta\thin\beta\dash\beta\thin\cdots$, we get $l=0$, and the sequence must be $\thick\alpha\thin\alpha\thick$. 

By $\thick\alpha\thin\beta^l\thin\alpha\thick=\thick\alpha\thin\alpha\thick$, no $\thin\beta\dash\beta\thin\beta\dash\beta\thin\cdots$, and the edge consideration, we get $\alpha^k\beta^l=\alpha^k$, and $\alpha^k\beta^l\gamma$ is a combination of one of $\thick\alpha\thin\beta\dash\gamma\thick,\thick\alpha\thin\beta\dash\beta\thin\beta\dash\gamma\thick$, and copies of $\thick\alpha\thin\alpha\thick$. By $\thin\beta\dash\beta\thin\alpha\thick\cdots=\thin\beta\dash\beta\thin\alpha\thick\gamma\dash\cdots$, we get $\thick\alpha\thin\beta\dash\beta\thin\beta\dash\gamma\thick\cdots=\dash\gamma\thick\alpha\thin\beta\dash\beta\thin\beta\dash\gamma\thick\cdots=\alpha^k\beta^l\gamma$. Since the vertex has only one $\gamma$, the vertex is $\thick\alpha\thin\beta\dash\beta\thin\beta\dash\gamma\thick=\alpha\beta^3\gamma$. Therefore we get the updated list of possible vertices
\[
\text{AVC}
=\{\alpha\beta^3\gamma,\beta^2\gamma^2,\alpha^k,\alpha^k\beta\gamma,\alpha^k\gamma^2(k\ge 4)\}.
\]
This implies $\beta\dash\beta\cdots=\alpha\beta^3\gamma$ and $\gamma\dash\gamma\cdots=\alpha^k\gamma^2$.

Given $\thick\alpha\thin\alpha\thick$ at a vertex, we get two triangles sharing a vertex $\beta\thin\beta\cdots=\beta^2\gamma^2,\alpha\beta^3\gamma$, like $T_1,T_2$ or $T_5,T_6$ in Figure \ref{3angleB}. For $\beta_1\thin\beta_2\cdots\cdots=\beta^2\gamma^2$, we get $T_1,T_2,T_3,T_4$. For $\beta_5\thin\beta_6\cdots=\alpha\beta^3\gamma$, we get $T_5,T_6,T_7,T_8,T_9$, and further get $\gamma_6\dash\gamma_9\cdots=\alpha^k\gamma^2=\thick\alpha\thin\alpha\thick\gamma_6\dash\gamma_9\thick\cdots$, which determines $T_{10},T_{11}$. We conclude that, if $\beta\thin\beta\cdots$ induced from $\thick\alpha\thin\alpha\thick$ is $\alpha\beta^3\gamma$, then $\thick\alpha\thin\alpha\thick$ extends to $\thick\alpha\thin\alpha\thick\gamma\dash$. 

Suppose $\alpha^k$ is a vertex. Since the vertex is even, we have $\alpha^k=\thick\alpha\thin\alpha\thick\cdots\thick\alpha\thin\alpha\thick=(\thick\alpha\thin\alpha\thick)^{\frac{k}{2}}$. Since no $\thick\alpha\thin\alpha\thick$ in $\alpha^k$ is extended to $\thick\alpha\thin\alpha\thick\gamma\dash$, we know every $\thick\alpha\thin\alpha\thick$ induces $\beta\thin\beta\cdots=\beta^2\gamma^2$. Then we get the earth map tiling $E_{\triangle}1$ with $T_1,T_2,T_3,T_4$ as the timezone. In general, if $\thick\alpha\thin\alpha\thick$ at both sides of a sequence $(\thick\alpha\thin\alpha\thick)^l=\thick\alpha\thin\alpha\thick\cdots\thick\alpha\thin\alpha\thick$ induce $\beta\thin\beta\cdots=\beta^2\gamma^2$, then the sequence induces a partial earth map tiling, with $(\thin\alpha\thick\alpha\thin)^l=\thin\alpha\thick\alpha\thin\cdots\thin\alpha\thick\alpha\thin$ at the other end.

\begin{figure}[htp]
\centering
\begin{tikzpicture}[>=latex]

\draw[dashed]
	(-4,0) -- (-1,0) -- (0.5,0.5)
	(0.5,-0.5) -- (0.5,-1)
	(2,0.3) -- (2,1)
	(1.6,-0.6) -- (2,0.3) -- (2.4,-0.6)
	(3.5,-0.5) -- (3.5,-1);
	
\draw[line width=1.2]
	(-4,0) -- (-4,1)
	(-3,0) -- (-3,-1)
	(-2,0) -- (-2,1)
	(-1,0) -- (-1,-1)
	(0.5,-0.5) -- (0.5,1)
	(1.6,-0.6) -- (0.5,0.5)
	(2.4,-0.6) -- (3.5,0.5)
	(3.5,-0.5) -- (3.5,1);
	
\draw
	(-4,0) -- (-4,-1)
	(-3,0) -- (-3,1)
	(-2,0) -- (-2,-1)
	(-1,0) -- (-1,1)
	(-1,0) -- (0.5,-0.5)
	(0.5,0.5) -- (2,0.3) -- (3.5,0.5);

\node at (-3.5,0.9) {\small $\alpha$};
\node at (-2.5,0.9) {\small $\alpha$};
\node at (-1.5,0.9) {\small $\alpha$};
\node at (-0.25,0.9) {\small $\alpha$};
\node at (-3.5,-0.9) {\small $\alpha$};
\node at (-2.5,-0.9) {\small $\alpha$};
\node at (-1.5,-0.9) {\small $\alpha$};

\node at (-3.8,0.2) {\small $\gamma$};
\node at (-3.2,0.2) {\small $\beta$};

\node at (-2.8,0.2) {\small $\beta$};
\node at (-2.8,-0.2) {\small $\gamma$};
\node at (-2.2,0.2) {\small $\gamma$};
\node at (-2.2,-0.2) {\small $\beta$};

\node at (-1.8,0.2) {\small $\gamma$};
\node at (-1.8,-0.2) {\small $\beta$};
\node at (-1.2,-0.2) {\small $\gamma$};
\node at (-1.2,0.2) {\small $\beta$};

\node at (-3.8,-0.2) {\small $\beta$};
\node at (-3.2,-0.2) {\small $\gamma$};

\node at (-0.8,0.25) {\small $\beta$};
\node at (0.3,0.65) {\small $\gamma$};
\node at (0.3,0.25) {\small $\gamma$};
\node at (0.3,-0.25) {\small $\alpha$};
\node at (-0.4,0) {\small $\beta$};

\node at (-0.8,-0.25) {\small $\alpha$};
\node at (0.3,-0.65) {\small $\beta$};
\node at (-0.25,-0.9) {\small $\gamma$};

\node at (1.2,0.9) {\small $\gamma$};
\node at (0.7,0.65) {\small $\alpha$};
\node at (1.85,0.5) {\small $\beta$};

\node at (1.55,-0.3) {\small $\gamma$};
\node at (1.75,0.1) {\small $\beta$};
\node at (1,0.25) {\small $\alpha$};

\node at (2.8,0.9) {\small $\gamma$};
\node at (3.3,0.65) {\small $\alpha$};
\node at (2.15,0.5) {\small $\beta$};

\node at (2.45,-0.3) {\small $\gamma$};
\node at (2.3,0.15) {\small $\beta$};
\node at (3,0.25) {\small $\alpha$};

\node[rotate=-90] at (0.75,-0.15) {\small $\alpha^{k-2}$};

\node[inner sep=0.5,draw,shape=circle] at (-3.5,0.5) {\small $1$};
\node[inner sep=0.5,draw,shape=circle] at (-2.5,0.5) {\small $2$};
\node[inner sep=0.5,draw,shape=circle] at (-3.5,-0.5) {\small $3$};
\node[inner sep=0.5,draw,shape=circle] at (-2.5,-0.5) {\small $4$};
\node[inner sep=0.5,draw,shape=circle] at (-0.4,0.5) {\small $6$};
\node[inner sep=0.5,draw,shape=circle] at (-1.5,0.5) {\small $5$};
\node[inner sep=0.5,draw,shape=circle] at (0,0) {\small $9$};
\node[inner sep=0.5,draw,shape=circle] at (-0.4,-0.5) {\small $8$};
\node[inner sep=0.5,draw,shape=circle] at (-1.5,-0.5) {\small $7$};

\node[inner sep=0,draw,shape=circle] at (1.55,0.8) {\footnotesize $10$};
\node[inner sep=0,draw,shape=circle] at (2.45,0.8) {\footnotesize $12$};
\node[inner sep=0,draw,shape=circle] at (1.35,0.1) {\footnotesize $11$};
\node[inner sep=0,draw,shape=circle] at (2.65,0.1) {\footnotesize $13$};

\end{tikzpicture}
\caption{Proposition \ref{2a2b}: Timezone, and impact of the vertex $\alpha\beta^3\gamma$.}
\label{3angleB}
\end{figure}
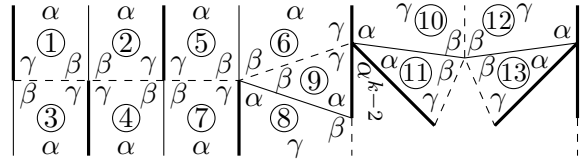

In the subsequent argument, we assume $\alpha^k$ is not a vertex. By applying the counting lemma to $\beta,\gamma$, we know $\alpha\beta^3\gamma$ is a vertex if and only if $\alpha^k\gamma^2$ is a vertex.

\subsubsection*{Case. $\alpha\beta^3\gamma,\alpha^k\gamma^2$ are not vertices}

By no $\alpha^k$, we only need to consider $\text{AVC}
=\{\beta^2\gamma^2,\alpha^k\beta\gamma\}$. 

Since $\alpha^k\beta\gamma$ is an odd vertex, we know $k=2q+1$ is odd. Then $\alpha^{2q+1}\beta\gamma=\thick\alpha\thin\beta\dash\gamma\thick\alpha\thin\alpha\thick\cdots\thick\alpha\thin\alpha\thick=\cdots\thick\alpha\thin\alpha\thick\alpha\thin\beta\dash\gamma\thick\alpha\thin\alpha\thick\cdots$ determines $T_1,T_2,T_3,T_4,T_5,T_6$ in Figure \ref{3angleD}. By $\beta^2\cdots=\gamma^2\cdots=\beta^2\gamma^2$, we know $T_1,T_2$ determine $T_7,T_8$, and $T_5,T_6$ determine $T_9,T_{10}$. The same happens to all the other $\thick\alpha\thin\alpha\thick$ and $\thin\alpha\thick\alpha\thin$ in $\alpha^{2q+1}\beta\gamma$. Then the $\alpha^{2q+1}=\thick\alpha_5\thin\alpha_6\thick\alpha\thin\cdots\thin\alpha_1\thick\alpha_2\thin$ part of $\alpha^{2q+1}\beta\gamma$ induces $\thin\alpha_9\thick\alpha_{10}\thin\alpha\thick\cdots\thin\alpha_7\thin\alpha_8\thick=\alpha^{2q+1}$. Therefore we get a partial earth map tiling with $\alpha^{2q+1}$ at both ends, and consisting of tiles on the left and right of the shaded edges. 

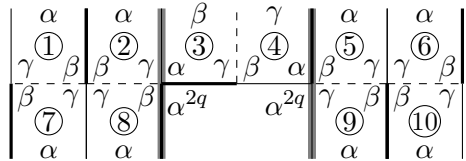
\begin{figure}[htp]
\centering
\begin{tikzpicture}[>=latex]

\draw[gray, line width=3]
	(1,-1) -- (1,1)
	(-1,-1) -- (-1,1);

\foreach \a in {1,-1}	
\foreach \b in {1,-1}
{
\begin{scope}[xshift=-2*\a cm, scale=\a, xscale=\b]

\draw[dashed]
	(0,0) -- (1,0);

\draw
	(1,0) -- (1,1)
	(0,0) -- (0,-1);
	
\draw[line width=1.2]
	(0,0) -- (0,1)
	(1,0) -- (1,-1);

\node at (0.2,0.2) {\small $\beta$};
\node at (0.8,0.2) {\small $\gamma$};	
\node at (0.5,0.9) {\small $\alpha$};

\node at (0.8,-0.2) {\small $\beta$};
\node at (0.2,-0.2) {\small $\gamma$};	
\node at (0.5,-0.9) {\small $\alpha$};

\end{scope}
}	

\draw[dashed]
	(0,0) -- (0,1);

\draw
	(0,0) -- (1,0);

\draw[line width=1.2]
	(0,0) -- (-1,0);

\node at (-0.2,0.2) {\small $\gamma$};
\node at (-0.5,0.9) {\small $\beta$};
\node at (-0.8,0.2) {\small $\alpha$};	

\node at (0.5,0.9) {\small $\gamma$};
\node at (0.2,0.2) {\small $\beta$};
\node at (0.8,0.2) {\small $\alpha$};	

\node at (-0.65,-0.25) {\small $\alpha^{2q}$};
\node at (0.65,-0.25) {\small $\alpha^{2q}$};

\node[inner sep=0.5,draw,shape=circle] at (-2.5,0.5) {\small $1$};
\node[inner sep=0.5,draw,shape=circle] at (-1.5,0.5) {\small $2$};
\node[inner sep=0.5,draw,shape=circle] at (-0.5,0.5) {\small $3$};
\node[inner sep=0.5,draw,shape=circle] at (0.5,0.5) {\small $4$};
\node[inner sep=0.5,draw,shape=circle] at (1.5,0.5) {\small $5$};
\node[inner sep=0.5,draw,shape=circle] at (2.5,0.5) {\small $6$};
\node[inner sep=0.5,draw,shape=circle] at (-2.5,-0.5) {\small $7$};
\node[inner sep=0.5,draw,shape=circle] at (-1.5,-0.5) {\small $8$};
\node[inner sep=0.5,draw,shape=circle] at (1.5,-0.5) {\small $9$};
\node[inner sep=0,draw,shape=circle] at (2.5,-0.5) {\footnotesize $10$};

\end{tikzpicture}
\caption{Proposition \ref{2a2b}: Tiling when $\alpha\beta^3\gamma,\alpha^k\gamma^2$ are not vertices.}
\label{3angleD}
\end{figure}

We have $\alpha_3\beta_8\gamma_2\cdots=\alpha_4\beta_5\gamma_9\cdots=\alpha^{2q+1}\beta\gamma$. By the argument above, the two $\alpha^{2q+1}$ are the two ends of another partial earth map tiling that fills the middle of the picture. This partial earth map tiling is obtained by the flip in the second of Figure \ref{flip4}. Therefore we get the flip modification tiling $FE_{\triangle}1$.

\subsubsection*{Case. $\alpha\beta^3\gamma,\alpha^k\gamma^2$ are vertices}

The angle sums of $\alpha\beta^3\gamma,\beta^2\gamma^2$ and the angle sum for triangle imply
\[
\alpha=\tfrac{4}{f}\pi,\;
\beta=(\tfrac{1}{2}-\tfrac{2}{f})\pi,\;
\gamma=(\tfrac{1}{2}+\tfrac{2}{f})\pi.
\]
Then we get the updated list of vertices
\[
\text{AVC}
=\{\alpha\beta^3\gamma,\beta^2\gamma^2,\alpha^{2q+1}\beta\gamma,\alpha^{2q}\gamma^2\},\quad
f=8q+4.
\]

We know $\alpha^{2q}\gamma^2=\dash\gamma\thick\alpha\thin\alpha\thick\cdots\thick\alpha\thin\alpha\thick\gamma\dash=\dash\gamma(\thick\alpha\thin\alpha\thick)^q\gamma\dash$. If $\thick\alpha\thin\alpha\thick$ at both sides of the sequence $(\thick\alpha\thin\alpha\thick)^q$ induce $\beta\thin\beta\cdots=\alpha\beta^3\gamma$, then we also get $T_{12},T_{13}$ in Figure \ref{3angleB}. Then $\beta_{10}\beta_{11}\beta_{12}\beta_{13}\cdots=\beta^4\cdots$ contradicts the AVC. Therefore at least one side of $(\thick\alpha\thin\alpha\thick)^q$ induces $\beta\thin\beta\cdots=\beta^2\gamma^2$.

Suppose one side of $(\thick\alpha\thin\alpha\thick)^q$ induces $\beta\thin\beta\cdots=\alpha\beta^3\gamma$, and the other side induces $\beta\thin\beta\cdots=\beta^2\gamma^2$. Then we express the vertex as $\alpha^{2q}\gamma^2=\cdots\thick\alpha\thin\alpha\thick\gamma\dash\gamma\thick\alpha\thin\alpha\thick\cdots$, corresponding to $T_1,T_2,T_3,T_4,T_5,T_6$ in the first of Figure \ref{3angleC}. Moreover, we have $\beta_1\thin\beta_2\cdots=\alpha\beta^3\gamma=\thick\alpha\thin\beta\dash\beta_2\thin\beta_1\dash\gamma\thick$ at one side and $\beta_5\thin\beta_6\cdots=\beta^2\gamma^2=\thick\gamma\dash\beta_5\thin\beta_6\dash\gamma\thick$ at the other side. This determines $T_7,T_8,T_9,T_{10},T_{11}$. Then $\alpha_3\gamma_2\gamma_9\cdots=\alpha^{2q}\gamma^2$ determines $T_{12},T_{13}$, with $\alpha^{2q-3}$ between the two tiles. Then $\beta_3\beta_4\beta_{12}\cdots=\alpha\beta^3\gamma$ determines $T_{14},T_{15}$. Then $\alpha_4\beta_{10}\beta_{15}\gamma_5\cdots=\alpha\beta^3\gamma$ determines $T_{16}$. Then $\alpha_{14}\gamma_{15}\gamma_{16}\cdots=\alpha^{2q}\gamma^2$ determines $T_{17},T_{18}$, with $\alpha^{2q-3}$ between $T_{14},T_{18}$. 

Combining $\alpha^{2q-3}$ at $\gamma_{16}\cdots$ with $\alpha_{14}$, we get a sequence $(\thick\alpha\thin\alpha\thick)^{q-1}$. By $\alpha^{2q-3}$ at $\gamma_2\cdots$ and $\gamma_{16}\cdots$, we get $\beta$ just outside $\beta_{14}$ and $\gamma$ just outside $\gamma_{12}$. Therefore $\beta_{14}\gamma_{12}\cdots=\beta^2\gamma^2\cdots=\beta^2\gamma^2$. This shows the $\thick\alpha_{14}\thin\alpha\thick$ side of the sequence $(\thick\alpha\thin\alpha\thick)^{q-1}$ induces $\beta\thin\beta\cdots=\beta^2\gamma^2$. Since the other side of the sequence $(\thick\alpha\thin\alpha\thick)^{q-1}$ is further extended by $\alpha_{18}$ (instead of $\gamma$), it also induces $\beta\thin\beta\cdots=\beta^2\gamma^2$. Therefore $(\thick\alpha\thin\alpha\thick)^{q-1}$ induces a partial earth map tiling. The other end of this partial earth map tiling is $(\thin\alpha\thick\alpha\thin)^{q-1}$ obtained by combining  $\alpha^{2q-3}$ at $\gamma_2\cdots$ with $\alpha_{12}$. This partial earth map tiling fills the rest of the first of Figure \ref{3angleC}. 

\begin{figure}[htp]
\centering
\begin{tikzpicture}[>=latex]


\foreach \a in {-1,1}
{
\begin{scope}[scale=\a]

\draw[dashed]
	(1.2,1.2) -- (1.2,2)
	(1.2,1.2) -- (0.4,0.4)
	(1.2,-1.2) -- (2,0) -- (2.8,0);

\draw[line width=1.2]
	(1.2,1.2) -- (1.2,-2)
	(-1.2,1.2) -- (0.4,0.4)
	(2,0) -- (2,2)
	(2.8,0) -- (2.8,-2);
	
\draw
	(-1.2,1.2) -- (1.2,1.2)
	(1.2,-1.2) -- (0.4,0.4)
	(1.2,1.2) -- (2,0) -- (2,-2)
	(2.8,0) -- (2.8,2);

\node at (0,1.9) {\small $\gamma$};
\node at (1,1.4) {\small $\beta$};
\node at (-1,1.4) {\small $\alpha$};

\node at (0.4,0.6) {\small $\gamma$};
\node at (0.8,1) {\small $\beta$};
\node at (-0.5,1.05) {\small $\alpha$};

\node at (1,0.8) {\small $\gamma$};
\node at (0.65,0.45) {\small $\beta$};
\node at (1.05,-0.5) {\small $\alpha$};

\node at (1.35,-0.7) {\small $\gamma$};
\node at (1.78,-0.1) {\small $\beta$};
\node at (1.35,0.7) {\small $\alpha$};

\node at (1.4,-1.25) {\small $\gamma$};
\node at (1.8,-0.5) {\small $\beta$};
\node at (1.6,-1.9) {\small $\alpha$};

\node at (1.6,1.9) {\small $\gamma$};
\node at (1.4,1.3) {\small $\beta$};
\node at (1.8,0.6) {\small $\alpha$};

\node at (2.2,0.2) {\small $\gamma$};
\node at (2.6,0.2) {\small $\beta$};
\node at (2.4,1.9) {\small $\alpha$};

\node at (2.6,-0.2) {\small $\gamma$};
\node at (2.2,-0.2) {\small $\beta$};
\node at (2.4,-1.9) {\small $\alpha$};

\node[rotate=-45] at (-0.45,0.5) {\small $\alpha^{2q-3}$};

\end{scope}
}

\node at (0.15,0.3) {\small $\gamma$};
\node at (0.35,0.05) {\small $\beta$};

\draw[dashed]
	(2.8,0) -- (3.6,0);

\draw[line width=1.2]
	(3.6,0) -- (3.6,2);

\draw
	(3.6,0) -- (3.6,-2);

\node at (3.4,0.2) {\small $\gamma$};
\node at (3,0.2) {\small $\beta$};
\node at (3.2,1.9) {\small $\alpha$};

\node at (3,-0.2) {\small $\gamma$};
\node at (3.4,-0.2) {\small $\beta$};
\node at (3.2,-1.9) {\small $\alpha$};

\node[inner sep=0.5,draw,shape=circle] at (-2.4,1) {\small $1$};
\node[inner sep=0.5,draw,shape=circle] at (-1.7,1.5) {\small $2$};
\node[inner sep=0.5,draw,shape=circle] at (0,1.5) {\small $3$};
\node[inner sep=0.5,draw,shape=circle] at (1.7,1.5) {\small $4$};
\node[inner sep=0.5,draw,shape=circle] at (2.4,1) {\small $5$};
\node[inner sep=0.5,draw,shape=circle] at (3.2,1) {\small $6$};
\node[inner sep=0.5,draw,shape=circle] at (-2.4,-1) {\small $7$};
\node[inner sep=0.5,draw,shape=circle] at (-1.7,-1.5) {\small $8$};
\node[inner sep=0.5,draw,shape=circle] at (-1.45,0) {\small $9$};
\node[inner sep=0,draw,shape=circle] at (2.4,-1) {\footnotesize $10$};
\node[inner sep=0,draw,shape=circle] at (3.2,-1) {\footnotesize $11$};
\node[inner sep=0,draw,shape=circle] at (0,0.95) {\footnotesize $12$};
\node[inner sep=0,draw,shape=circle] at (-0.9,0) {\footnotesize $13$};
\node[inner sep=0,draw,shape=circle] at (0.9,0) {\footnotesize $14$};
\node[inner sep=0,draw,shape=circle] at (1.5,0) {\footnotesize $15$};
\node[inner sep=0,draw,shape=circle] at (1.7,-1.5) {\footnotesize $16$};
\node[inner sep=0,draw,shape=circle] at (0,-1.5) {\footnotesize $17$};
\node[inner sep=0,draw,shape=circle] at (0,-0.95) {\footnotesize $18$};


\begin{scope}[xshift=8cm]

\draw[dashed]
	(-1.2,1.2) -- (-1.2,2)
	(0.2,-1.2) -- (0.2,-2)
	(-1.2,1.2) -- (-0.4,0)
	(-1.2,-1.2) -- (-2,0) -- (-2.8,0)
	(1,0) -- (1.8,0);

\draw[line width=1.2]
	(-1.2,1.2) -- (-1.2,-2)
	(-2,0) -- (-2,2)
	(-2.8,0) -- (-2.8,-2)
	(0.2,-1.2) -- (1,0)
	(1,0) -- (1,2)
	(1.8,0) -- (1.8,-2)
	(-0.4,0) -- (1,0);
	
\draw
	(-1.2,1.2) to[out=0, in=135] (1,0)
	(-1.2,-1.2) -- (0.2,-1.2)
	(-1.2,1.2) -- (-2,0) -- (-2,-2)
	(-2.8,0) -- (-2.8,2)
	(1,0) -- (1,-2)
	(1.8,0) -- (1.8,2)
	(-0.4,0) -- (-1.2,-1.2);

\node at (-1.35,-0.7) {\small $\gamma$};
\node at (-1.78,0.05) {\small $\beta$};
\node at (-1.35,0.7) {\small $\alpha$};

\node at (-1.4,-1.2) {\small $\gamma$};
\node at (-1.85,-0.6) {\small $\beta$};
\node at (-1.6,-1.9) {\small $\alpha$};

\node at (-1.6,1.9) {\small $\gamma$};
\node at (-1.4,1.2) {\small $\beta$};
\node at (-1.8,0.6) {\small $\alpha$};

\node at (-2.6,-0.2) {\small $\gamma$};
\node at (-2.2,-0.2) {\small $\beta$};
\node at (-2.4,-1.9) {\small $\alpha$};

\node at (-2.2,0.2) {\small $\gamma$};
\node at (-2.6,0.2) {\small $\beta$};
\node at (-2.4,1.9) {\small $\alpha$};

\node at (1.6,-0.2) {\small $\gamma$};
\node at (1.2,-0.2) {\small $\beta$};
\node at (1.4,-1.9) {\small $\alpha$};

\node at (1.2,0.2) {\small $\gamma$};
\node at (1.6,0.2) {\small $\beta$};
\node at (1.4,1.9) {\small $\alpha$};

\node at (1.2,0.2) {\small $\gamma$};
\node at (1.6,0.2) {\small $\beta$};
\node at (1.4,1.9) {\small $\alpha$};

\node at (-0.5,-1.9) {\small $\gamma$};
\node at (0,-1.4) {\small $\beta$};
\node at (-1,-1.4) {\small $\alpha$};

\node at (0.4,-1.25) {\small $\gamma$};
\node at (0.4,-1.9) {\small $\beta$};
\node at (0.85,-0.6) {\small $\alpha$};

\node at (-0.1,1.9) {\small $\gamma$};
\node at (-1,1.4) {\small $\beta$};
\node at (0.8,0.45) {\small $\alpha$};

\node at (-1.05,0.7) {\small $\gamma$};
\node at (-1.05,-0.7) {\small $\alpha$};
\node at (-0.6,-0.05) {\small $\beta$};

\node at (-0.3,0.2) {\small $\gamma$};
\node at (-0.7,0.9) {\small $\beta$};
\node at (0.6,0.2) {\small $\alpha$};

\node[rotate=30] at (-0.55,-0.8) {\small $\alpha^{2q-2}$};
\node[rotate=30] at (0.4,-0.45) {\small $\alpha^{2q-2}$};

\node[inner sep=0.5,draw,shape=circle] at (-2.4,1) {\small $1$};
\node[inner sep=0.5,draw,shape=circle] at (-1.7,1.5) {\small $2$};
\node[inner sep=0.5,draw,shape=circle] at (-0.1,1.4) {\small $3$};
\node[inner sep=0.5,draw,shape=circle] at (1.4,1) {\small $4$};
\node[inner sep=0.5,draw,shape=circle] at (-2.4,-1) {\small $5$};
\node[inner sep=0.5,draw,shape=circle] at (1.4,-1) {\small $6$};
\node[inner sep=0.5,draw,shape=circle] at (-1.45,0) {\small $7$};
\node[inner sep=0.5,draw,shape=circle] at (-0.95,0) {\small $8$};
\node[inner sep=0.5,draw,shape=circle] at (-0.1,0.5) {\small $9$};
\node[inner sep=0,draw,shape=circle] at (-1.7,-1.5) {\footnotesize $10$};
\node[inner sep=0,draw,shape=circle] at (0.7,-1.5) {\footnotesize $11$};
\node[inner sep=0,draw,shape=circle] at (-0.5,-1.5) {\footnotesize $12$};

\end{scope}

\end{tikzpicture}
\caption{Proposition \ref{2a2b}: Tilings when $\alpha\beta^3\gamma,\alpha^k\gamma^2$ are vertices.}
\label{3angleC}
\end{figure}
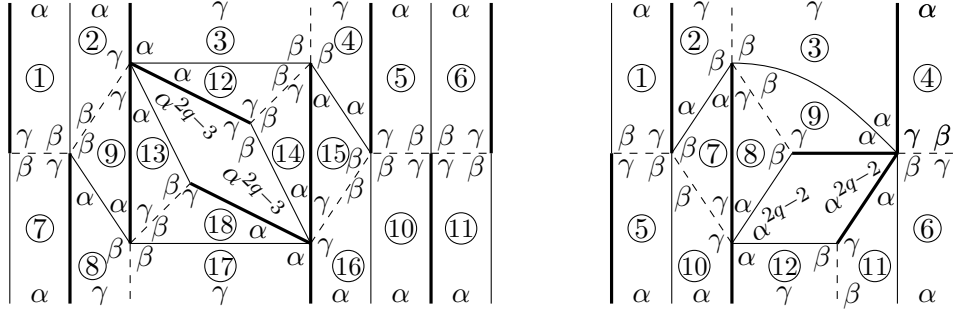

Suppose both sides of $(\thick\alpha\thin\alpha\thick)^q$ induce $\beta\thin\beta\cdots=\beta^2\gamma^2$. Then we get a partial earth map tiling with $(\thin\alpha\thick\alpha\thin)^q$ at the other end. We express the vertex as $\alpha^{2q}\gamma^2=\cdots\thin\alpha\thick\gamma\dash\gamma\thick\alpha\thin\cdots$, corresponding to $T_1,T_2,T_3,T_4$ in the second of Figure \ref{3angleC}. We also have $T_5,T_6$ as part of the partial earth map tiling. We note that, although $T_2,T_3$ are drawn in different sizes, the six tiles are symmetric with respect to the horizontal flip. Therefore by $\beta\dash\beta\cdots=\alpha\beta^3\gamma$, we may assume $T_7,T_8,T_9$ as in the picture. Then $\alpha_2\beta_5\beta_7\gamma_1\cdots=\alpha\beta^3\gamma$ determines $T_{10}$, and $\alpha_3\beta_6\gamma_4\alpha_9\cdots=\alpha^{2q+1}\beta\gamma$ determines $T_{11}$ and gives $\alpha^{2q-2}$ between $T_9,T_{11}$. Then $\alpha_8\gamma_7\gamma_{10}\cdots=\alpha^{2q}\gamma^2$ determines $T_{12}$ and gives $\alpha^{2q-2}$ between $T_8,T_{12}$. Both sides of the sequence $\alpha^{2q-2}=(\thick\alpha\thin\alpha\thick)^{q-1}$ at $\gamma_4\cdots$ are extended by $\alpha_9,\alpha_{11}$ (instead of $\gamma$), and therefore both induce $\beta\thin\beta\cdots=\beta^2\gamma^2$. This implies the sequence $(\thick\alpha\thin\alpha\thick)^{q-1}$ induces a partial earth map tiling, with the other end being $\alpha^{2q-2}=(\thin\alpha\thick\alpha\thin)^{q-1}$ at $\gamma_7\cdots$. This partial earth map tiling fills the rest of the second of Figure \ref{3angleC}. 

We note that the angles in Figure \ref{3angleC} satisfy $\alpha+\beta=\gamma$ as in the third and fourth of Figure \ref{flip4}. Therefore two triangles in the first of Figure \ref{3angleI} form a rectangle. This happens to $T_4,T_{15}$ in the first of Figure \ref{3angleC}, and $T_8,T_9$ in the first of Figure \ref{3angleC}, and $T_2,T_7$ in the second of Figure \ref{3angleC}. Due to the rectangle, we may apply the flip in the second of Figure \ref{flip5} and still get a tiling. The flip changes two $\alpha\beta^3\gamma$ around the rectangles to $\beta^2\gamma^2$. After the change, the tilings have $\text{AVC}=\{\beta^2\gamma^2,\alpha^k\beta\gamma\}$, and is therefore the flip modification $FE_{\triangle}1$. Therefore the tilings in Figure \ref{3angleC} are $F'E_{\triangle}1$ obtained by applying the flip in the fourth of Figure \ref{flip4} to $FE_{\triangle}1$.

\medskip

\noindent{\em Geometry of Triangle}

\medskip

The angles in the earth map tiling satisfy 
\[
\alpha=\tfrac{4}{f}\pi,\;
\beta+\gamma=\pi,\;
f=0\text{ mod }4.
\]
This means the triangle is half of the $2$-gon with angle $\alpha$. See Figure \ref{3angleI}. The tiling has one free parameter, which can be $\gamma\in (\frac{1}{2}\alpha,\pi-\frac{1}{2}\alpha)=(\frac{2}{f}\pi,(1-\frac{2}{f})\pi)$, or can be $b\in (0,\pi)$. We note that $\alpha>\beta$ when $\gamma$ is close to $(1-\frac{2}{f})\pi$. In this case, the triangle still exists, and the tiling also exists and is given by Proposition \ref{2a2c}.

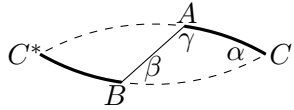
\begin{figure}[htp]
\centering
\begin{tikzpicture}[>=latex]


\foreach \a in {1,-1}
{
\begin{scope}[scale=\a]

\draw[dashed]
	(1.5,0) arc (60:120:3);

\draw
	(1.5,0) arc (60:82:3) -- (0,0);
	
\draw[line width=1.2]
	(1.5,0) arc (60:82:3) ;

\end{scope}
}

\node at (0.45,0.55) {\small $A$};
\node at (-0.5,-0.55) {\small $B$};
\node at (-1.7,0) {\small $C^*$};
\node at (1.7,0) {\small $C$};

\node at (0.45,0.15) {\small $\gamma$};
\node at (0,-0.2) {\small $\beta$};
\node at (1.1,0) {\small $\alpha$};

\end{tikzpicture}
\caption{Proposition \ref{2a2b}: Geometry of triangle.}
\label{3angleI}
\end{figure}

To get flip modified tiling in Figure \ref{3angleD}, we just require $f=4$ mod $8$. For the further rectangular flip, we need $f=4$ mod $8$ and $\gamma=(\tfrac{1}{2}+\tfrac{2}{f})\pi$. The existence of the triangle means $\gamma$ is in the range above, which means $f>8$. By $f=4$ mod $8$, the condition is the same as $f\ne 4$. By the degree $4$ vertex $\beta^2\gamma^2$ and \eqref{trivcountf4}, we indeed have $f\ne 4$.
\end{proof}

\begin{proposition}\label{2a2c}
Tilings of the sphere by congruent general triangles, such that $\alpha<\beta<\gamma$ and $\alpha^2\gamma^2$ is a vertex, are the earth map tiling $E_{\triangle}1$ and its flip modification $FE_{\triangle}1$.
\end{proposition}

We get the two tilings after the exchange of $(\alpha,b)$ with $(\beta,c)$. Moreover, we remark that the earth map tiling includes the octahedron $P_8$ as a special case.

\begin{proof}
By $\alpha^2\gamma^2$ and $\alpha<\beta<\gamma$, we know $\alpha\beta\gamma^3,\beta^2\gamma^2,\gamma^4$ are not vertices. Then the AVC \eqref{triavc1} is reduced to $\{\alpha^2\gamma^2,\alpha^k\beta^l,\alpha^k\beta^l\gamma\}$. By $k\ge 1$ in $\alpha^k\beta^l\gamma$, and applying the counting lemma to $\alpha,\gamma$, we get $k=1$ in $\alpha^k\beta^l\gamma$, and $l=0$ in $\alpha^k\beta^l$. Then we get $\text{AVC}
=\{\alpha^2\gamma^2,\beta^l,\alpha\beta^l\gamma\}$. After exchanging $\alpha,\beta$, this is included in the AVC in Proposition \ref{2a2b}. By the earlier argument, we get the earth map tiling (when $\beta^l$ is a vertex) or the flip modification in Figure \ref{3angleD} (when $\beta^l$ is not a vertex, which corresponds to the case $\alpha\beta^3\gamma,\alpha^k\gamma^2$ are not vertices, in Proposition \ref{2a2b}). 
\end{proof}

\begin{proposition}\label{3abc}
There is no tiling of the sphere by congruent general triangles, such that $\alpha<\beta<\gamma$ and $\alpha\beta\gamma^3$ is a vertex.
\end{proposition}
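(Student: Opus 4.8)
The plan is to mimic the pattern of the two preceding propositions: start from the reduced AVC forced by the hypotheses, extract all structural restrictions on how the angles can sit around a vertex using the parity lemma and edge (AAD) considerations, and then derive a contradiction by following tiles outward from the vertex $\alpha^3\beta\gamma$. First I would record that $\alpha^3\beta\gamma$ together with $\alpha>\beta>\gamma$ and the angle sum \eqref{3anglesum} for triangle pins down all four relevant quantities: the angle sum of $\alpha^3\beta\gamma$ reads $3\alpha+\beta+\gamma=2\pi$, and combined with $\alpha+\beta+\gamma=(1+\tfrac4f)\pi$ this gives $\alpha=(\tfrac12-\tfrac2f)\pi$ and $\beta+\gamma=(1+\tfrac6f)\pi$. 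From $\alpha>\beta>\gamma$ we also get $\alpha<\pi$, and comparing $\alpha$ with $\beta+\gamma$ shows every vertex has limited room. Plugging these into the general AVC \eqref{triavc1} I would eliminate the impossible entries: since $2\alpha=(1-\tfrac4f)\pi<\pi$, the vertex $\alpha^4$ is impossible; $\alpha^2\beta^2$ would force $\beta=(\tfrac14+\tfrac1f)\pi$ hence, with $\alpha=(\tfrac12-\tfrac2f)\pi$, put us in the situation of Proposition~\ref{2a2b} rather than here (I would instead rule it out directly from the angle-sum arithmetic together with $\alpha>\beta$); similarly $\alpha^2\gamma^l$ and $\alpha\beta^k\gamma^l$ and $\beta^k\gamma^l$ each get heavily constrained. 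The upshot is a short explicit AVC, something like $\{\alpha^3\beta\gamma,\ \beta^k\gamma^l,\ \alpha\beta^k\gamma^l\}$ with small bounds on $k,l$ coming from $R(\alpha^3\beta\gamma)<\gamma$ and the analogous inequalities.

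Next I would study the edges. The vertex $\alpha^3\beta\gamma$ has a forced (or nearly forced) AAD because the three $\alpha$'s are bounded by $a$- and $b$-edges in the pattern of the general triangle (first of Figure~\ref{triangle}): three consecutive $\alpha$'s appear as $\thick\alpha\thin\alpha\thick\alpha\thin$ or a rotation, and the $\beta,\gamma$ must attach so that $b$- and $c$-edges match. I would write out the one or two admissible AADs of $\alpha^3\beta\gamma$, then use each deduced adjacent-angle segment ${\theta}^\lambda\thin^\mu\rho$ (resp.\ with $\thick$ or $\dash$) to read off which neighbouring vertices are forced, exactly as in Figures~\ref{3angleA}--\ref{3angleB}. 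The key propagation step is: around $\alpha^3\beta\gamma$ I get tiles $T_1,\dots,T_5$; the pair of $\alpha$'s meeting along an $a$-edge forces a vertex $\alpha^2\cdots$, which by the AVC is again $\alpha^3\beta\gamma$ (there is no other $\alpha^2$-vertex left), and this forces yet more tiles; tracking the $\beta$'s and $\gamma$'s then forces a $\beta^2\cdots$ or $\gamma^2\cdots$ vertex whose only AVC options clash with the edge pattern already laid down. I expect the contradiction to surface as either an illegal vertex of the shape $\alpha^2\beta^2\cdots$ or $\alpha^2\gamma^2\cdots$ (which we have excluded), or a parity violation: a sequence $\thick\gamma\dash\beta^k\dash\gamma\thick$ with $k$ of the wrong parity, or four $\beta$'s forced at a vertex that the AVC cannot accommodate.

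I would organize the argument by cases on whether $\beta^k$ (with no $\gamma$) is a vertex and whether $\alpha\beta^k\gamma$ with $k\ge 2$ is a vertex, since those are the only ways extra $\beta$'s can absorb the tiles generated by repeated $\alpha^3\beta\gamma$'s. In the branch where $\beta^k$ is a vertex, the counting lemma (Lemma~\ref{counting}) applied to a suitable pair of angles, or the balance-type bookkeeping, should immediately contradict the fact that $\alpha$ appears three times and $\beta,\gamma$ only once in $\alpha^3\beta\gamma$ while appearing with the opposite imbalance at $\beta^k$; more precisely, since $\alpha^3\beta\gamma$ is the unique $\alpha$-heavy vertex, there are too many $\alpha$'s globally unless $f$ is forced to a value that then violates \eqref{trivcountf4} or \eqref{trivcountf3}. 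In the remaining branch I would push the explicit tile-by-tile deduction (a picture with tiles $T_1,\dots,T_{10}$ or so, analogous to Figure~\ref{3angleB}) until a vertex $\alpha^4\cdots$, $\alpha^2\beta^2$, or $\alpha^2\gamma^2$ is forced, contradicting the reduced AVC.

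The main obstacle I anticipate is the bookkeeping in the case analysis: making sure the reduced AVC is genuinely as small as claimed (in particular correctly excluding $\alpha^2\beta^2$ and $\alpha^2\gamma^2$ using only $\alpha>\beta>\gamma$ and the two angle-sum equations, not by appeal to the conclusions of Propositions~\ref{2a2b} and~\ref{2a2c}), and then verifying that every admissible AAD of $\alpha^3\beta\gamma$ really does propagate to a forbidden configuration rather than closing up consistently. Handling the $\thick\gamma\dash\beta^k\dash\gamma\thick$ parity sub-argument and the ``three $\alpha$'s force another $\alpha^3\beta\gamma$'' chain carefully — so that one does not accidentally produce a legitimate earth-map-like tiling — is where the real work lies, but since the AVC here is strictly more rigid than in the previous two propositions (there is no vertex type that can serve as a flexible ``equator''), I expect each branch to terminate quickly once set up correctly.
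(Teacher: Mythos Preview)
Your arithmetic slip $\beta+\gamma=(1+\tfrac6f)\pi$ (it should be $(\tfrac12+\tfrac6f)\pi$) is not cosmetic: the correct value, together with $\alpha=(\tfrac12-\tfrac2f)\pi$ and $\alpha>\beta>\gamma$, says every angle is strictly less than $\tfrac12\pi$, and therefore \emph{no vertex of degree $3$ or $4$ exists at all}. You never state this, and it is the structural fact the paper's proof hinges on. Once you have it, the reduced AVC is
\[
\{\alpha^3\beta\gamma,\ \alpha^2\gamma^l\ (l\ge4),\ \alpha\beta^k\gamma^l\ (k{+}l\ge4),\ \beta^k\gamma^l\ (k{+}l\ge6)\},
\]
and in particular $\alpha^2\gamma^l$ with $l\ge4$ is still alive. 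That kills your propagation plan: you claim that around $\alpha^3\beta\gamma$ an adjacent $\alpha^2\cdots$ must again be $\alpha^3\beta\gamma$ ``since there is no other $\alpha^2$-vertex left'', but $\alpha^2\gamma^l$ is precisely such another vertex, so the tiles do not force themselves the way you describe.

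The paper's route is different in content, not just in detail. After noting $k$ in $\alpha\beta^k\gamma^l$ must be $1$ (else $\alpha=\beta$), it does not propagate from $\alpha^3\beta\gamma$; instead it studies the edge pattern $\thick\gamma\dash\gamma\thick\gamma\dash\gamma\thick$ directly and shows via a short AAD argument that four such consecutive $\gamma$'s force a vertex $\alpha^4\cdots$, contradicting the AVC. That single observation kills both $\alpha^2\gamma^l\ (l\ge4)$ and $\alpha\beta\gamma^l\ (l\ge5)$. What remains is $\alpha^3\beta\gamma$ together with exactly one of $\beta^6,\beta^4\gamma^2,\beta^2\gamma^4$, and the counting lemma on $\beta,\gamma$ (not on $\alpha$ versus anything) finishes the job. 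Your proposal never isolates the $\gamma^4$-block argument, and your counting idea compares the wrong pair of angles; so as written the plan does not close.
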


\begin{proof}
The angle sum of $\alpha\beta\gamma^3$ and the angle sum for triangle imply
\[
\alpha+\beta=(\tfrac{1}{2}+\tfrac{6}{f})\pi,\;
\gamma=(\tfrac{1}{2}-\tfrac{2}{f})\pi.
\]
By $\alpha<\beta<\gamma<\tfrac{1}{2}\pi$, there are no vertices of degree $3$ or $4$. Therefore the general AVC \eqref{triavc1} is reduced to
\[
\text{AVC}
=\{\alpha\beta\gamma^3,\alpha^k\beta^l(k+l\ge 6),
\alpha^k\beta^l\gamma(k+l\ge 4),\alpha^k\gamma^2(k\ge 4)\}.
\]

The vertex $\alpha^k\beta^l\gamma$ is odd. By $\alpha<\beta$ and $3\alpha+3\beta+\gamma=(2+\tfrac{16}{f})\pi>2\pi$, we get $\alpha^k\beta^l\gamma=\alpha^k\beta\gamma,\alpha\beta^3\gamma$. By $\alpha\beta\gamma^3$ and $\beta<\gamma$, we know $\alpha\beta^3\gamma$ is not a vertex. By $\alpha\beta\gamma^3$ and $\alpha<\gamma$, we also get $k\ge 5$ in $\alpha^k\beta\gamma$. Therefore $\alpha^k\beta^l\gamma=\alpha^k\beta\gamma(k\ge 5)$. Then by the AVC, we get $\beta^2\cdots=\alpha^k\beta^l$. 

A vertex $\thick\alpha\thin\alpha\thick\alpha\thin\alpha\thick\cdots$ determines $T_1,T_2,T_3,T_4$ in Figure \ref{3angleF}. By $\thick\alpha\thin$, we get $\beta_1\thin\beta_2\cdots=\alpha^k\beta^l=\dash\beta_1\thin\beta_2\dash\beta\thin\cdots$. This determines $T_5$. By the same reason, we determine $T_6$. Then we get $\gamma_2\gamma_3\gamma_5\gamma_6\cdots=\gamma^4\cdots$, contradicting the AVC. Therefore $\thick\alpha\thin\alpha\thick\alpha\thin\alpha\thick\cdots$ is not a vertex. This implies $\alpha^k\beta\gamma(k\ge 5),\alpha^k\gamma^2(k\ge 4)$ are not vertices. 

\begin{figure}[htp]
\centering
\begin{tikzpicture}[>=latex]

\foreach \a in {1,-1}
{
\begin{scope}[xscale=\a]

\draw[dashed]
	(0,0) -- (2,0);
	
\draw[line width=1.2]
	(0,0) -- (0,1)
	(2,0) -- (2,1)
	(0,0) -- (0.2,-1);

\draw
	(1,-1) -- (1,1);

\node at (0.2,0.2) {\small $\gamma$};
\node at (0.8,0.2) {\small $\beta$};
\node at (0.5,0.9) {\small $\alpha$};

\node at (1.8,0.2) {\small $\gamma$};
\node at (1.2,0.2) {\small $\beta$};
\node at (1.5,0.9) {\small $\alpha$};

\node at (0.2,-0.15) {\small $\gamma$};
\node at (0.8,-0.2) {\small $\beta$};
\node at (0.5,-0.9) {\small $\alpha$};

\end{scope}
}

\node[inner sep=0.5,draw,shape=circle] at (1.5,0.5) {\small $1$};
\node[inner sep=0.5,draw,shape=circle] at (0.5,0.5) {\small $2$};
\node[inner sep=0.5,draw,shape=circle] at (-0.5,0.5) {\small $3$};
\node[inner sep=0.5,draw,shape=circle] at (-1.5,0.5) {\small $4$};

\node[inner sep=0.5,draw,shape=circle] at (0.5,-0.5) {\small $5$};
\node[inner sep=0.5,draw,shape=circle] at (-0.5,-0.5) {\small $6$};

\end{tikzpicture}
\caption{Proposition \ref{3abc}: Vertex $\thick\alpha\thin\alpha\thick\alpha\thin\alpha\thick$.}
\label{3angleF}
\end{figure}

The vertex $\alpha^k\beta^l$ is even. By $\alpha<\beta$, and $\alpha+\beta>\frac{1}{2}\pi$, and no degree $3$ or $4$ vertices, we get $\alpha^k\beta^l=\alpha^2\beta^4,\beta^6,\alpha^k(k\ge 6),\alpha^k\beta^2(k\ge 4)$. By no $\thick\alpha\thin\alpha\thick\alpha\thin\alpha\thick\cdots$, we know $k=4$ in $\alpha^k\beta^2$, and $\alpha^k$ is not a vertex. By $\alpha<\beta$, no two of $\alpha^4\beta^2,\alpha^2\beta^4,\beta^6$ can be vertices at the same time. Therefore the vertices are $\alpha\beta\gamma^3$ and one of $\alpha^4\beta^2,\alpha^2\beta^4,\beta^6$. Applying the counting lemma to $\alpha,\beta$, we get a contradiction.
\end{proof}

\begin{proposition}\label{4a}
Tilings of the sphere by congruent general triangles, such that $\alpha<\beta<\gamma$ and $\gamma^4$ is a vertex, are the barycentric subdivisions $B_{\triangle}P_8,B_{\triangle}P_{20}$ of the octahedron and the icosahedron, the flip modification $FB_{\triangle}P_8$ of the barycentric subdivision of the regular octahedron, and the earth map tiling $E_{\triangle}3$ and its flip modification $FE_{\triangle}3$.
\end{proposition}

\begin{proof} 
By $\gamma^4$ and $\beta<\gamma$, we know $\beta^2\gamma^2$ is not a vertex. The angle sum of $\gamma^4$ and the angle sum for triangle imply
\[
\alpha+\beta=(\tfrac{1}{2}+\tfrac{4}{f})\pi,\;\gamma=\tfrac{1}{2}\pi.
\]
By $\alpha<\beta<\gamma=\tfrac{1}{2}\pi$, the only vertex of degree $3$ or $4$ is $\gamma^4$. Moreover, by $\alpha+\beta+3\gamma=(2+\frac{4}{f})\pi>2\pi$, we know $\alpha\beta\gamma^3$ is not a vertex. Therefore the general AVC \eqref{triavc1} is reduced to
\[
\text{AVC}
=\{\gamma^4,\alpha^k\beta^l,\alpha^k\beta^l\gamma,\alpha^k\gamma^2\}.
\]
The AVC implies $\gamma\thick\gamma\cdots=\gamma^4$. We denote the union of four tiles around $\gamma^4$ by $n(\gamma^4)$. See the first of Figure \ref{3angleG}.

The vertex $\alpha^k\beta^l\gamma$ is odd. By $\alpha<\beta$ and $3\alpha+3\beta+\gamma=(2+\frac{12}{f})\pi>2\pi$, we get $\alpha^k\beta^l\gamma=\alpha\beta^3\gamma,\alpha^k\beta\gamma(k\ge 3)$. 

The vertex $\alpha^k\beta^l$ is even. Since $\gamma^4$ is the only degree $4$ vertex, we know $k+l\ge 6$. Then by $\alpha<\beta$ and $\alpha+\beta>\frac{1}{2}\pi$, we get $\alpha^k\beta^l=\alpha^2\beta^4,\beta^6,\alpha^k(k\ge 6),\alpha^k\beta^2(k\ge 4)$. 

We see that the numbers of $\alpha$ and $\beta$ are never the same at any vertex. By applying the counting lemma to $\alpha,\beta$, this implies there is a vertex with strictly fewer $\alpha$ than $\beta$. The vertex is $\alpha^2\beta^4,\alpha\beta^3\gamma,\beta^6$.

\subsubsection*{Case. $\beta^6$ is a vertex}

The angle sum of $\beta^6$ further implies 
\[
\alpha=(\tfrac{1}{6}+\tfrac{4}{f})\pi,\;
\beta=\tfrac{1}{3}\pi,\;
\gamma=\tfrac{1}{2}\pi.
\]
By $\tfrac{1}{6}\pi<\alpha<\beta=\tfrac{1}{3}\pi$ and the parity lemma, besides $\beta^6,\gamma^4$, all the possible vertices are $\alpha^4\gamma^2,\alpha^5\beta\gamma,\alpha^6\beta^2,\alpha^8,\alpha^{10}$. We may also use the angle sum for triangle to calculate the corresponding $f$. Then we get updated list of vertices:
\begin{align*}
f=48 &\colon 
	\alpha=\tfrac{1}{4}\pi, \;
	\beta=\tfrac{1}{3}\pi,\;
	\gamma=\tfrac{1}{2}\pi,\;
	\text{AVC}=\{\beta^6,\gamma^4,\alpha^4\gamma^2,\alpha^8\}. \\
f=60 &\colon
	\alpha=\tfrac{7}{30}\pi, \;
	\beta=\tfrac{1}{3}\pi,\;
	\gamma=\tfrac{1}{2}\pi,\;
	\text{AVC}=\{\beta^6,\gamma^4,\alpha^5\beta\gamma\}. \\
f=72 &\colon
	\alpha=\tfrac{2}{9}\pi, \;
	\beta=\tfrac{1}{3}\pi,\;
	\gamma=\tfrac{1}{2}\pi,\;
	\text{AVC}=\{\beta^6,\gamma^4,\alpha^6\beta^2\}. \\
f=120 &\colon
	\alpha=\tfrac{1}{5}\pi, \;
	\beta=\tfrac{1}{3}\pi,\;
	\gamma=\tfrac{1}{2}\pi,\;
	\text{AVC}=\{\beta^6,\gamma^4,\alpha^{10}\}.
\end{align*}

For $f=48,120$, we have $\beta\cdots=\beta^6$. This implies each tile belongs to one neighborhood $n(\beta^6)$ of $\beta^6$, given by the second of Figure \ref{3angleG}. Then the whole tiling is a tiling of copies of the hexagon $n(\beta^6)$. In fact, by $2\gamma=\pi$ and $2\alpha=\frac{1}{2}\pi(f=48),\frac{2}{5}\pi(f=120)$, the hexagon $n(\beta^6)$ is the regular triangular face $F$ of the octahedron (for $f=48$) or the icosahedron (for $f=120$). Therefore the problem becomes the tiling of the sphere by $F$. Moreover, we get $n(\beta^6)$ by applying the barycentric subdivision to each regular face. 

The midpoint of an edge of $F$ is $\gamma^2\cdots=\alpha^4\gamma^2,\gamma^4$. If the midpoint is always $\gamma^4$, then the tiling by $F$ is edge-to-edge. Then the tiling by $F$ is the regular octahedron $P_8$ or the regular icosahedron $P_{20}$, and the triangular tiling is the barycentric subdivision $B_{\triangle}P_8$ or $B_{\triangle}P_{20}$.

If the midpoint of one edge of one $F$ is $\alpha^4\gamma^2$, then $f=48$ and the edges match only in half (therefore the tiling by $F$ is not edge-to-edge). Then it is easy to see that the tiling by $F$ is obtained from the regular octahedron by rotating half of the tiling by $\frac{1}{4}\pi$. This is equivalent to the flip of the half tiling in Figure \ref{flip1}. Then we get the flip modification $FB_{\triangle}P_8$. 

\begin{figure}[htp]
\centering
\begin{tikzpicture}[>=latex]


\foreach \a in {-1,1}
{
\begin{scope}[xshift=-3cm, scale=\a]

\draw
	(0.8,-0.8) -- (0.8,0.8) -- (-0.8,0.8);
	
\draw[dashed]
	(0,0) -- (0.8,0.8);

\draw[line width=1.2]
	(0,0) -- (0.8,-0.8);

\node at (0.25,0) {\small $\gamma$};
\node at (0,0.25) {\small $\gamma$};

\node at (0.6,0.4) {\small $\beta$};
\node at (0.4,0.6) {\small $\beta$};

\node at (0.6,-0.4) {\small $\alpha$};
\node at (0.4,-0.6) {\small $\alpha$};

\end{scope}
}

\node at (-3,-1.15) {$n(\gamma^4)$};


\foreach \a in {0,1,2}
{
\begin{scope}[rotate=120*\a]

\draw[dashed]
	(0,0) -- (30:0.8);

\draw
	(0,0) -- (-30:1.6);

\draw[line width=1.2]
	(-30:1.6) -- (90:1.6);

\node at (17:0.65) {\small $\gamma$};
\node at (43:0.65) {\small $\gamma$};

\node at (0:0.35) {\small $\beta$};	
\node at (60:0.35) {\small $\beta$};	

\node at (80:0.95) {\small $\alpha$};
\node at (100:0.95) {\small $\alpha$};
	
\end{scope}
}

\node at (0,-1.15) {$n(\beta^6)$};


\begin{scope}[xshift=3.5cm]

\draw[dashed]
	(-30:1.6) -- (-90:0.8)
	(30:0.8) -- (150:0.8) -- (210:1.6);

\draw[line width=1.2]
	(210:1.6) -- (-90:0.8) -- (30:0.8) -- (90:1.6);
	
\draw
	(-30:1.6) -- (30:0.8)
	(90:1.6) -- (150:0.8) -- (-90:0.8);

\node at (30:0.45) {\small $\gamma$};
\node at (150:0.4) {\small $\beta$};
\node at (-90:0.45) {\small $\alpha$};

\node at (150:1) {\small $\beta^3$};

\node at (0.4,0.55) {\small $\gamma$};
\node at (-0.4,0.6) {\small $\beta$};
\node at (0,1.2) {\small $\alpha$};

\node at (0.3,-0.65) {\small $\gamma$};
\node at (-30:1.25) {\small $\beta$};
\node at (0.7,0) {\small $\alpha$};

\node at (210:1.25) {\small $\gamma$};
\node at (-0.7,0) {\small $\beta$};
\node at (-0.35,-0.6) {\small $\alpha$};

\node[inner sep=0.5,draw,shape=circle] at (-30:0.8) {\small $1$};
\node[inner sep=0.5,draw,shape=circle] at (0,0) {\small $2$};
\node[inner sep=0.5,draw,shape=circle] at (210:0.8) {\small $3$};
\node[inner sep=0.5,draw,shape=circle] at (90:0.8) {\small $4$};

\end{scope}

\end{tikzpicture}
\caption{Proposition \ref{4a}: Neighborhoods of $\gamma^4$ and $\beta^6$, and no $\alpha^5\beta\gamma$.}
\label{3angleG}
\end{figure}
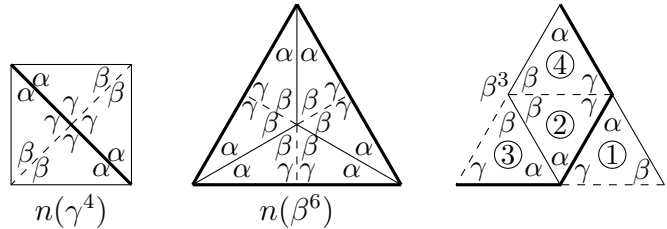

For $f=60$, we have $\thick\alpha\thin\alpha\thick\gamma\dash$ in $\alpha^5\beta\gamma$. This determines $T_1,T_2,T_3$ in the third of Figure \ref{3angleG}. Then $\beta_2\beta_3\cdots=\beta^6$ determines $T_4$. Then we get $\alpha_1\gamma_2\gamma_4\cdots=\alpha\gamma^2\cdots$, contradicting the AVC.

For $f=72$, by $\gamma\cdots=\gamma^4$, the tiling is a tiling of the rhombus $n(\gamma^4)$. The AVC $\{\beta^6,\gamma^4,\alpha^6\beta^2\}$ for the triangular tiling becomes the AVC $\{\bar{\beta}^3,\bar{\alpha}^3\bar{\beta}\}$ for the rhombus tiling, where $\bar{\alpha}=2\alpha$ and $\bar{\beta}=2\beta$. Since there is no degree $2$ vertex, the rhombus tiling is edge-to-edge. By the argument in Figure \ref{rhombus_emt1}, there is no such tiling.

\subsubsection*{Case. $\alpha^2\beta^4$ or $\alpha\beta^3\gamma$ is a vertex}

By the earlier argument, we know the list of vertices 
\[
\text{AVC}=
\{
\alpha^2\beta^4,
\alpha\beta^3\gamma,
\gamma^4,
\alpha^k(k\ge 6),
\alpha^k\beta^2(k\ge 4),
\alpha^k\beta\gamma,
\alpha^k\gamma^2
\}.
\]
By the edge consideration, we have the unique arrangements $\thick\alpha\thin\beta\dash\beta\thin\beta\dash\beta\thin\alpha\thick$, $\thick\alpha\thin\beta\dash\beta\thin\beta\dash\gamma\thick$, $\thick\alpha\thin\beta\dash\beta\thin\alpha\thick\alpha\thin\alpha\thick\cdots\thick\alpha\thin\alpha\thick$ of $\alpha^2\beta^4,\alpha\beta^3\gamma,\alpha^k\beta^2(k\ge 4)$.

In Figure \ref{3angleH}, we consider four tiles $T_1,T_2,T_3,T_4$ determined by $\thick\alpha\thin\alpha\thick\alpha\thin\alpha\thick$. Then $\gamma_2\thick\gamma_3\cdots=\gamma^4$ determines one $n(\gamma^4)$, including $T_5,T_6$. Then $\dash\beta_1\thin\beta_2\dash\beta_5\thin\cdots=\alpha^2\beta^4,\alpha\beta^3\gamma$. By the unique arrangements of $\alpha^2\beta^4,\alpha\beta^3\gamma$, we get $\dash\beta_1\thin\beta_2\dash\beta_5\thin\cdots=\dash\beta_1\thin\beta_2\dash\beta_5\thin\alpha\thick\cdots$. This determines $T_7$. Similarly, we determine $T_8$. Then we get $\dash\beta_7\thin\alpha_5\thick\alpha_6\thin\beta_8\dash\cdots=\alpha^2\beta^4,\alpha^k\beta^2(k\ge 4)$. Since $\dash\beta_7\thin\alpha_5\thick\alpha_6\thin\beta_8\dash$ is incompatible with the unique arrangement of $\alpha^k\beta^2$, we get $\dash\beta_7\thin\alpha_5\thick\alpha_6\thin\beta_8\dash\cdots=\alpha^2\beta^4$. This determines $T_9,T_{10}$. Moreover, by $\alpha^2\beta^4$ and $\alpha+\beta>\gamma$, we know $\alpha\beta^3\gamma$ is not a vertex. This implies $\alpha_8\beta_3\beta_4\beta_6\cdots=\alpha^2\beta^4$, which determines $T_{11},T_{12}$. Then $\gamma_8\gamma_{10}\gamma_{12}\cdots=\gamma^4$ determines one $n(\gamma^4)$, including $T_{13}$.

\begin{figure}[htp]
\centering
\begin{tikzpicture}[>=latex]


\foreach \a in {1,-1}
{
\begin{scope}[xscale=\a]

\draw[dashed]
	(0,0.6) -- (2.4,0.6)
	(0,-0.6) -- (2.4,-0.6);

\draw[line width=1.2]
	(0,-0.6) -- (0,1.6)
	(1.2,0.6) -- (1.2,-1.6)
	(2.4,-0.6) -- (2.4,1.6);
	
\draw
	(0,-1.6) -- (0,-0.6) -- (1.2,0.6) -- (1.2,1.6)
	(1.2,0.6) -- (2.4,-0.6) -- (2.4,-1.6);

\node at (0.2,0.45) {\small $\gamma$};
\node at (0.8,0.4) {\small $\beta$};
\node at (0.2,-0.2) {\small $\alpha$};

\node at (1,-0.45) {\small $\gamma$};
\node at (0.45,-0.4) {\small $\beta$};
\node at (1,0.2) {\small $\alpha$};

\node at (1.4,-0.45) {\small $\gamma$};
\node at (1.95,-0.4) {\small $\beta$};
\node at (1.4,0.2) {\small $\alpha$};

\node at (2.2,0.45) {\small $\gamma$};
\node at (1.65,0.4) {\small $\beta$};
\node at (2.2,-0.2) {\small $\alpha$};

\node at (0.2,0.8) {\small $\gamma$};
\node at (1,0.8) {\small $\beta$};
\node at (0.6,1.5) {\small $\alpha$};

\node at (2.2,0.8) {\small $\gamma$};
\node at (1.4,0.8) {\small $\beta$};
\node at (1.8,1.5) {\small $\alpha$};

\node at (1,-0.8) {\small $\gamma$};
\node at (0.2,-0.8) {\small $\beta$};
\node at (0.6,-1.5) {\small $\alpha$};

\node at (1.4,-0.8) {\small $\gamma$};
\node at (2.2,-0.8) {\small $\beta$};
\node at (1.8,-1.5) {\small $\alpha$};

\end{scope}
}

\node[inner sep=0.5,draw,shape=circle] at (-1.8,1.1) {\small $1$};
\node[inner sep=0.5,draw,shape=circle] at (-0.6,1.1) {\small $2$};
\node[inner sep=0.5,draw,shape=circle] at (0.6,1.1) {\small $3$};
\node[inner sep=0.5,draw,shape=circle] at (1.8,1.1) {\small $4$};

\node[inner sep=0.5,draw,shape=circle] at (-0.43,0.17) {\small $5$};
\node[inner sep=0.5,draw,shape=circle] at (0.43,0.17) {\small $6$};
\node[inner sep=0.5,draw,shape=circle] at (-0.77,-0.17) {\small $7$};
\node[inner sep=0.5,draw,shape=circle] at (0.77,-0.17) {\small $8$};

\node[inner sep=0.5,draw,shape=circle] at (-0.6,-1.1) {\small $9$};
\node[inner sep=0,draw,shape=circle] at (0.6,-1.1) {\footnotesize $10$};

\node[inner sep=0,draw,shape=circle] at (1.97,0.17) {\footnotesize $11$};
\node[inner sep=0,draw,shape=circle] at (1.63,-0.17) {\footnotesize $12$};
\node[inner sep=0,draw,shape=circle] at (1.8,-1.1) {\footnotesize $13$};
\node[inner sep=0,draw,shape=circle] at (3,1.1) {\footnotesize $14$};
\node[inner sep=0,draw,shape=circle] at (2.83,-0.17) {\footnotesize $15$};

\draw[dashed]
	(3.6,0.6) -- (3.6,1.6)
	(3.6,-0.6) -- (2.4,-0.6);

\draw
	(3.6,0.6) -- (2.4,0.6) -- (3.6,-0.6);

\node at (3,1.5) {\small $\gamma$};
\node at (3.4,0.8) {\small $\beta$};
\node at (2.6,0.8) {\small $\alpha$};

\node at (2.6,-0.45) {\small $\gamma$};
\node at (3.1,-0.4) {\small $\beta$};
\node at (2.6,0.2) {\small $\alpha$};

\node at (3.2,0.37) {\small $\alpha^{k-2}$};
	
\end{tikzpicture}
\caption{Proposition \ref{4a}: $\thick\alpha\thin\alpha\thick\alpha\thin\alpha\thick$ and $\thick\alpha\thin\alpha\thick\alpha\thin\alpha\thick\gamma\dash$.}
\label{3angleH}
\end{figure}

If $\thick\alpha\thin\alpha\thick\alpha\thin\alpha\thick$ is part of $\thick\alpha\thin\alpha\thick\alpha\thin\alpha\thick\gamma\dash$, then we further determine $T_{14}$. Then $\alpha_{14}\gamma_4\gamma_{11}\cdots=\alpha^k\gamma^2$ determines $T_{15}$. Then we get $\alpha_{11}\beta_{12}\beta_{13}\gamma_{15}\cdots=\alpha\beta^3\gamma$, contradicting no $\alpha\beta^3\gamma$. Therefore there is no $\thick\alpha\thin\alpha\thick\alpha\thin\alpha\thick\gamma\dash$. This implies $\alpha^k\gamma^2$ is not a vertex, and $k=3$ in $\alpha^k\beta\gamma$. However, the angle sums of $\alpha^2\beta^4,\alpha^3\beta\gamma,\gamma^4$ and the angle sum for triangle imply $\alpha=\frac{2}{5}\pi<\beta=\frac{3}{10}\pi$, contradicting $\alpha<\beta$. Therefore $\alpha^k\beta\gamma$ is not a vertex, and we get the updated list of vertices 
\[
\text{AVC} =
\{
\alpha^2\beta^4,
\gamma^4,
\alpha^{\frac{f}{4}},
\alpha^{\frac{f}{8}+1}\beta^2
\}.
\]
This implies $\gamma\cdots=\gamma^4$. Therefore the whole tiling is actually a tiling of the rhombus $n(\gamma^4)$, with 
\[
\text{AVC} =
\{\bar{\alpha}\bar{\beta}^2,
\bar{\alpha}^{\frac{\bar{f}}{2}},
\bar{\alpha}^{\frac{\bar{f}+2}{4}}\bar{\beta}
\},\quad
\bar{\alpha}=2\alpha,\;
\bar{\beta}=2\beta,\;
\bar{f}=\tfrac{f}{4}.
\]

This is the same as the AVC \eqref{triavc7}. Since there is no degree $2$ vertex, the rhombus tiling is edge-to-edge. By the earlier argument, the rhombus tiling is $E_{\square}^R1$, or its flip modification $FE_{\square}^R1$. Then the tilings for the proposition are the triangular subdivisions $E_{\triangle}3$ and $FE_{\triangle}3$ of the two rhombus tilings.
\end{proof}

After Propositions \ref{2a2b}, \ref{3abc}, \ref{4a}, we may assume $\alpha\beta\gamma^3,\beta^2\gamma^2,\gamma^4$ are not vertices. Then the general AVC \eqref{triavc1} is reduced to $\text{AVC}=\{\alpha^k\gamma^2,\alpha^k\beta^l,\alpha^k\beta^l\gamma\}$. By the parity lemma, we have $k\ge 2$ in $\alpha^k\gamma^2$ and $k\ge 1$ in $\alpha^k\beta^l\gamma$. Then by applying the counting lemma to $\alpha,\gamma$, we get $\text{AVC}=\{\alpha^2\gamma^2,\beta^l,\alpha\beta^l\gamma\}$. By applying the counting lemma again to $\beta,\gamma$, we know either $\alpha^2\gamma^2$ is a vertex, or $\alpha\beta\gamma$ is the only vertex. The case $\alpha^2\gamma^2$ is a vertex is handled by Proposition \ref{2a2c}. As explained at the beginning of the section, the vertex $\alpha\beta\gamma$ implies the tiling is the tetrahedron $P_4$. This completes the classification of tilings of the sphere by congruent general triangles.

Next we classify tilings by congruent kites, given by the second of Figure \ref{quad} and the left of Figure \ref{kite1}. We may divide a kite into two congruent triangles. The triangle may be general, or isosceles. Therefore a kite tiling induces a general or isosceles triangular tiling. 

In the general triangular tiling induced from a kite tiling, any vertex has even number of $\bar{\beta}$. Therefore all vertices are even. In the isosceles triangular tiling induced from a kite tiling, any vertex has even number of $\bar{\alpha}$. Then by the parity lemma (the last part of Lemma \ref{parity}), we also know all vertices have even degree.

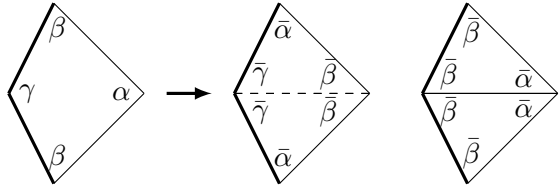
\begin{figure}[htp]
\centering
\begin{tikzpicture}[>=latex]

\foreach \a in {0,3,5.5}
\foreach \b in {-1,1}
{
\begin{scope}[xshift=\a cm, yscale=\b]

\draw
	(0,1.2) -- (1.2,0);

\draw[line width=1.2]
	(-0.6,0) -- (0,1.2);

\end{scope}
}

\node at (0.9,0) {\small $\alpha$};
\node at (0.05,0.85) {\small $\beta$};
\node at (0.05,-0.85) {\small $\beta$};
\node at (-0.35,0) {\small $\gamma$};

\draw[->, very thick]
	(1.5,0) -- ++(0.6,0);

\foreach \b in {-1,1}
{
\begin{scope}[xshift=3 cm, yscale=\b]

\draw[dashed]
	(-0.6,0) -- (1.2,0);

\node at (0.65,0.25) {\small $\bar{\beta}$};
\node at (0.05,0.85) {\small $\bar{\alpha}$};
\node at (-0.25,0.25) {\small $\bar{\gamma}$};

\end{scope}
}

\foreach \b in {-1,1}
{
\begin{scope}[xshift=5.5 cm, yscale=\b]

\draw
	(-0.6,0) -- (1.2,0);

\node at (0.75,0.2) {\small $\bar{\alpha}$};
\node at (0.05,0.8) {\small $\bar{\beta}$};
\node at (-0.25,0.25) {\small $\bar{\beta}$};

\end{scope}
}

\end{tikzpicture}
\caption{Kite tiling to general or isosceles triangular tiling.}
\label{kite1}
\end{figure}

Conversely, we start with a general triangular tiling and try to derive a kite tiling. It is necessary that all vertices have even degree. Moreover, we need to remove all $a$-edges, or remove all $b$-edges, or remove all $c$-edges, and then examine whether we actually get a kite tiling after the removal. 

For the barycentric subdivisions $B_{\triangle}P_8,B_{\triangle}P_{20}$ of the octahedron and icosahedron, and the flip modification $FB_{\triangle}P_8$, we may remove all $a$-edges to get the quadrilateral subdivisions $Q_{\square}P_8,Q_{\square}P_{20}$, and the flip modification $FQ_{\square}P_8$. We cannot remove all $b$-edges or all $c$-edges from $B_{\triangle}P_8,B_{\triangle}P_{20},FB_{\triangle}P_8$ because this produces degree $2$ vertices. 

For the earth map tiling $E_{\triangle}1$, we may remove all $a$-edges or all $b$-edges, and get the kite earth map tiling $E_{\square}^K1$. We cannot remove all $c$-edges because this produces degree $2$ vertices. The flip modification $FE_{\triangle}1$ has odd degree vertex $\bar{\alpha}^{2q+1}\bar{\beta}\bar{\gamma}$, and the further flip modification $F'E_{\triangle}1$ has odd degree vertex $\bar{\alpha}\bar{\beta}^3\bar{\gamma}$. Therefore these modifications cannot give kite tilings.

For the earth map tiling $E_{\triangle}3$, removing all $a$-edges produces quadrilaterals not suitable for tiling (third of Figure \ref{edges2}). Moreover, removing all $b$-edges or all $c$-edges produces degree $2$ vertices. The same reason applies to the flip modification $FE_{\triangle}3$. Therefore $E_{\triangle}3$ and $FE_{\triangle}3$ cannot give kite tilings.

Now we start with an isosceles triangular tiling and try to derive a kite tiling. In the last part of Section \ref{rhombustiling}, we found isosceles triangular tilings are the following: The tetrahedron tiling $P_4$, the triangular subdivisions $T_{\triangle}P_i$, the simple triangular subdivisions $S_{\triangle}P_6$, the earth map tilings $E_{\triangle}^I1,E_{\triangle}^J1,E_{\triangle}4$, and the flip modifications $FE_{\triangle}^I1,FE_{\triangle}^J1,FE_{\triangle}4$. By the requirement that all vertices have even degree, we may dismiss $P_4$, and all except $E_{12\triangle}2$ among $S_{\triangle}P_6$, and $T_{\triangle}P_4,T_{\triangle}P_8,T_{\triangle}P_{12},T_{\triangle}P_{20}$, and $E_{\triangle}4,FE_{\triangle}^J1, FE_{\triangle}4$. For the remaining $T_{\triangle}P_6,E_{\triangle}^I1,E_{\triangle}^J1,FE_{\triangle}^I1$, we try to reduce all consecutive $\bar{\alpha}^{2k}$ to $\alpha^{k}$ for $\alpha=2\bar{\alpha}$, and determine whether we get a kite tiling. Applying the process to $T_{\triangle}P_6$, we get degree $2$ vertex. Applying the process to $E_{\triangle}^I1,E_{\triangle}^J1$, we get the earth map tiling $E_{\square}^K1$. Note that $E_{\triangle}^I1$ has two $\bar{\alpha}^{2q}$. For each $\bar{\alpha}^{2q}$, there are two choices in combining consecutive $\bar{\alpha}\bar{\alpha}$ into $\alpha$. The choices for the two $\bar{\alpha}^{2q}$ need to be related in alternating way in order to get a kite tiling. Due to this requirement, we cannot get a kite tiling from the flip modification $FE_{\triangle}^I1$.

\section{Tiling by General Quadrilateral}
\label{generalquad}

The general quadrilateral is the first of Figure \ref{quad}. Any $b$-edge is shared by exactly two tiles. We call one tile the {\em $b$-companion} of the other. The two tiles may be matched or twisted, according to the locations of $\beta,\gamma$. See Figure \ref{tiling_aabc}. One consequence of the companion pair is that $f$ is twice of the number of $b$-edges. In particular, we know $f$ is even.

\begin{figure}[htp]
\centering
\begin{tikzpicture}


\draw	
	(-1,0) -- (-1,1) -- (1,1) -- (1,0);

\draw[line width=1.2]
	(0,1) -- (0,0);

\draw[dashed]
	(-1,0) -- (1,0);
	
\node at (0.8,0.8) {\small $\alpha$};
\node at (0.2,0.8) {\small $\beta$};
\node at (0.8,0.2) {\small $\delta$};
\node at (0.2,0.2) {\small $\gamma$};

\node at (-0.8,0.8) {\small $\alpha$};
\node at (-0.2,0.8) {\small $\beta$};
\node at (-0.8,0.2) {\small $\delta$};
\node at (-0.2,0.2) {\small $\gamma$};

\node at (0,-0.3) {matched};	


\begin{scope}[xshift=3cm]

\draw	
	(-1,0) -- (-1,1) -- (0,1)
	(0,0) -- (1,0) -- (1,1);

\draw[line width=1.2]
	(0,1) -- (0,0);

\draw[dashed]
	(-1,0) -- (0,0)
	(1,1) -- (0,1);

\node at (0.8,0.2) {\small $\alpha$};
\node at (0.2,0.2) {\small $\beta$};
\node at (0.8,0.8) {\small $\delta$};
\node at (0.2,0.8) {\small $\gamma$};

\node at (-0.8,0.8) {\small $\alpha$};
\node at (-0.2,0.8) {\small $\beta$};
\node at (-0.8,0.2) {\small $\delta$};
\node at (-0.2,0.2) {\small $\gamma$};
	
\node at (0,-0.3) {twisted};	

\end{scope}

\end{tikzpicture}
\caption{$b$-companion.}
\label{tiling_aabc}
\end{figure}
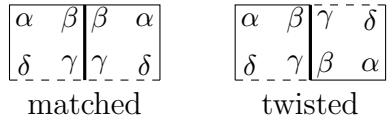

By the angle sum \eqref{anglesum} for quadrilateral, we know $\alpha\beta\gamma\delta\cdots$ is not a vertex. 

\begin{proposition}\label{bnotd}
Tilings of the sphere by congruent general quadrilaterals, such that $\beta=\delta$, is the earth map tiling $E_{\square}1$. 
\end{proposition}

\begin{proof}
In the first of Figure \ref{dispi}, we already have the isosceles triangle $\triangle ABD$. Then $\beta=\delta$ implies either $b=c$ (i.e., $\triangle BCD$ is isosceles), or $\gamma=\pi$ (i.e., $BCD$ is one arc). Since $b\ne c$ for the general quadrilateral, we get $\gamma=\pi$.

By $\gamma=\pi$, we know $\gamma^2\cdots$ is not a vertex. Therefore two $b$-companion tiles must be twisted as $T_1,T_2$ in Figure \ref{dispi}. The same happens to the similar $c$-companion pairs, such as $T_2,T_3$. Then we get a sequence of tiles along the two sides of a great circle indicated by shaded horizontal line in the second of  Figure \ref{dispi}.

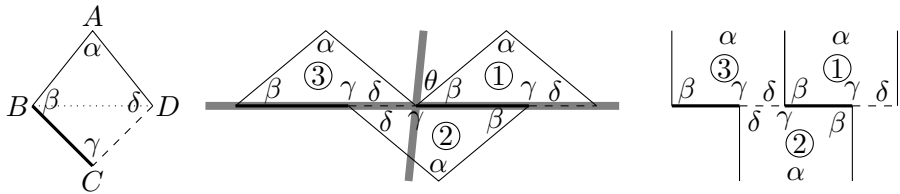
\begin{figure}[htp]
\centering
\begin{tikzpicture}


\begin{scope}[xshift=-3.1cm]

\draw
	(-0.8,0) -- (0,1) -- (0.8,0);
	
\draw[dashed]
	(0.8,0) -- (0,-0.8);
	
\draw[line width=1.2]
	(-0.8,0) -- (0,-0.8);

\draw[dotted]
	(-0.8,0) -- (0.8,0);
	
\node at (0,0.75) {\small $\alpha$};
\node at (-0.55,0.05) {\small $\beta$};
\node at (0,-0.55) {\small $\gamma$};
\node at (0.55,0.05) {\small $\delta$};

\node at (0,1.2) {\small $A$};
\node at (-1,0) {\small $B$};
\node at (0,-1) {\small $C$};
\node at (1,0) {\small $D$};

\end{scope}


\draw[gray,line width=3]
	(-1.6,0) -- (3.9,0)
	(1.1,-1) -- (1.3,1);

\foreach \a in {0,1}
{
\begin{scope}[xshift=2.4*\a cm]

\draw
	(-1.2,0) -- (0,1) -- (1.2,0);

\draw[line width=1.2]
	(-1.2,0) -- (0.3,0);

\draw[dashed]
	(0.3,0) -- (1.2,0);
	
\node at (0,0.8) {\small $\alpha$};
\node at (-0.7,0.2) {\small $\beta$};
\node at (0.3,0.2) {\small $\gamma$};
\node at (0.65,0.2) {\small $\delta$};

\end{scope}
}

\draw
	(0.3,0) -- (1.5,-1) -- (2.7,0);

\node at (1.5,-0.8) {\small $\alpha$};
\node at (2.25,-0.2) {\small $\beta$};
\node at (1.2,-0.2) {\small $\gamma$};
\node at (0.8,-0.2) {\small $\delta$};

\node at (1.4,0.35) {\small $\theta$};

\node[inner sep=0.5,draw,shape=circle] at (-0.1,0.4) {\small $3$};
\node[inner sep=0.5,draw,shape=circle] at (1.6,-0.4) {\small $2$};
\node[inner sep=0.5,draw,shape=circle] at (2.3,0.4) {\small $1$};

\begin{scope}[xshift=4.6cm]

	
\foreach \a in {0,1}
{
\begin{scope}[xshift=1.5*\a cm]

\draw
	(0,0) -- (0,1) 
	(1.5,0) -- (1.5,1)
	(0.9,0) -- (0.9,-1);

\draw[line width=1.2]
	(0,0) -- (0.9,0);

\draw[dashed]
	(0.9,0) -- (1.5,0);

\node at (0.75,0.9) {\small $\alpha$};
\node at (0.2,0.2) {\small $\beta$};
\node at (0.9,0.2) {\small $\gamma$};
\node at (1.3,0.2) {\small $\delta$};

\end{scope}
}

\node at (1.65,-0.9) {\small $\alpha$};
\node at (2.2,-0.25) {\small $\beta$};
\node at (1.5,-0.2) {\small $\gamma$};
\node at (1.1,-0.2) {\small $\delta$};

\node[inner sep=0.5,draw,shape=circle] at (2.2,0.5) {\small $1$};
\node[inner sep=0.5,draw,shape=circle] at (0.7,0.5) {\small $3$};
\node[inner sep=0.5,draw,shape=circle] at (1.7,-0.5) {\small $2$};

\end{scope}

\end{tikzpicture}
\caption{Proposition \ref{bnotd}: $\beta\gamma\delta$ is a vertex, and gives earth map tiling.}
\label{dispi}
\end{figure}

If $\beta\gamma\delta$ is not a vertex,  as in the second of Figure \ref{dispi}, then we have an angle $\theta$ next to $\beta_1$. By no $\alpha\beta\gamma\delta\cdots,\gamma^2\cdots$, we know $\theta=\beta,\delta$. Then we get another sequence of tiles on the two sides of another great circle indicated by another shaded line passing through the vertex $\beta_1\gamma_2\delta_3\cdots$. Since the two sequences of tiles overlap, we get a contradiction. Therefore $\beta\gamma\delta$ is a vertex, as in the third picture. Then we get the earth map tiling $E_{\square}1$.
\end{proof}

After Proposition \ref{bnotd}, we will always implicitly assume $\beta\ne\delta$.

By the parity lemma (the second part of Lemma \ref{parity}), the numbers of $\beta,\gamma,\delta$ at any vertex have the same parity. Depending on whether the parity is even or odd, we call the vertex {\em even} or {\em odd}. An odd vertex is of the form $\beta\gamma\delta\cdots$. Then by the angle sum \eqref{anglesum} for quadrilateral, an odd vertex has no $\alpha$. Therefore $\alpha\cdots$ is an even vertex.

By \eqref{quadvcount3}, there are degree $3$ vertices. By the parity lemma and Lemma \ref{nod}, we know $\alpha^3,\alpha\beta^2,\alpha\delta^2,\beta\gamma\delta$ are all the degree $3$ vertices. 

\begin{proposition}\label{bcd}
Tilings of the sphere by congruent general quadrilaterals, such that $\beta\gamma\delta$ is a vertex, is the earth map tiling $E_{\square}1$.
\end{proposition}

In the geometrical discussion in the proof, we show that, for each $f$, the tiling allows two free parameters that can be parameterised by the points in the region in Figure \ref{bcdC}.

\begin{proof}
The vertex $\beta\gamma\delta$ implies $\beta\gamma\delta$ is the only odd vertex. 

Suppose $\gamma^2\cdots$ is a vertex. By the balance lemma, we know $\beta^2\cdots,\delta^2\cdots$ are also vertices. Then $\beta,\gamma,\delta<\pi$. By $\beta\gamma\delta$, this implies $\beta+\delta,\beta+\gamma,\gamma+\delta>\pi$. Therefore $\beta^2\gamma^2\cdots,\beta^2\delta^2\cdots,\gamma^2\delta^2\cdots$ are not vertices. By no $\beta^2\gamma^2\cdots,\gamma^2\delta^2\cdots$, an even vertex $\gamma\cdots$ has no $\beta,\delta$. Since $\beta\gamma\delta$ is the only odd vertex, this implies $\beta\gamma\cdots=\gamma\delta\cdots=\beta\gamma\delta$.

The vertex $\beta\gamma\delta$ determines $T_1,T_2,T_3$ in Figure \ref{bcdA}. Then $
\gamma_1\delta_3\cdots=\beta\gamma\delta$ determines $T_4$. Then the argument can be repeated for the new $\beta\gamma\delta=\beta_4\gamma_1\delta_3$. More repetitions give the earth map tiling $E_{\square}1$.

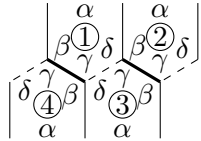
\begin{figure}[htp]
\centering
\begin{tikzpicture}


\foreach \a in {0,1,2}
\draw[xshift=\a cm]
	(0,0.9) -- (0,0.15)
	(-0.5,-0.9) -- (-0.5,-0.15);

\foreach \a in {0,1,2}
\draw[dashed, xshift=\a cm]
	(0,0.15) -- (-0.5,-0.15);	

\foreach \a in {0,1}
{
\begin{scope}[xshift=\a cm]

\draw[line width=1.2]
	(0,0.15) -- (0.5,-0.15);	
	
\node at (0.5,0.8) {\small $\alpha$};
\node at (0.2,0.3) {\small $\beta$};
\node at (0.5,0.1) {\small $\gamma$};
\node at (0.8,0.3) {\small $\delta$};

\node at (0,-0.8) {\small $\alpha$};
\node at (-0.3,-0.25) {\small $\delta$};
\node at (0,-0.1) {\small $\gamma$};
\node at (0.3,-0.35) {\small $\beta$};

\end{scope}
}

\node[inner sep=0.5,draw,shape=circle] at (0.5,0.45) {\small $1$};
\node[inner sep=0.5,draw,shape=circle] at (1.5,0.45) {\small $2$};
\node[inner sep=0.5,draw,shape=circle] at (1,-0.45) {\small $3$};
\node[inner sep=0.5,draw,shape=circle] at (0,-0.45) {\small $4$};
	
\end{tikzpicture}
\caption{Proposition \ref{bcd}: Earth map tiling.}
\label{bcdA}
\end{figure}

\medskip

\noindent{\em Geometry of Quadrilateral}

\medskip

The angle sum of $\beta\gamma\delta$ and the angle sum for quadrilateral imply
\[
\alpha=\tfrac{4}{f}\pi,\;
\beta+\gamma+\delta=2\pi.
\]
A simple general quadrilateral is suitable for the earth map tiling, if and only if it satisfies equalities above. Geometrically, this means $\alpha=\frac{4}{f}\pi$ and the area of the quadrilateral is $\alpha$.

In the first and second of Figure \ref{bcdB}, we first fix $AB=AD=a<\pi$, and $\angle BAD=\alpha$. We have the $2$-gon $G$ with angle $\alpha$ and vertices $A$ and its antipode $A^*$. In the first picture, we connect $A$ and $A^*$ by any half circle inside $G$. Let $C$ be the point on the half circle satisfying $A^*C=a$. Then $\triangle ABC$ and $\triangle A^*BC$ are congruent, and $\triangle ACD$ and $\triangle A^*CD$ are also congruent. Therefore $\text{Area}(\square ABCD)=\text{Area}(\triangle ABC)+\text{Area}(\triangle ACD)=\frac{1}{2}\text{Area}(G)=\alpha$. In other words, the quadrilateral can tile the earth map tiling in the second of Figure \ref{bcdA}.

We note that $A^*C>a$ implies $\text{Area}(\square ABCD)<\alpha$, and $A^*C<a$ implies $\text{Area}(\square ABCD)>\alpha$. Therefore for any given $a$ and any half circle in $G$ connecting $A$ and $A^*$, there is a unique $C$ on the half circle, such that $\square ABCD$ is geometrically suitable for the earth map tiling. If we further vary $0<a<\pi$, then we find $C$ can be any point in the interior of $G$. This corresponds to the first grey region Figure \ref{bcdC}. The first of Figure \ref{bcdC} is the normal earth view, where the equator is the horizontal middle line. The second of Figure \ref{bcdC} is the polar view, where the equator is the inner circle of radius $1$. From the viewpoint of $A$, the range of the first region is $(-\frac{1}{2}\alpha,\frac{1}{2}\alpha)$.

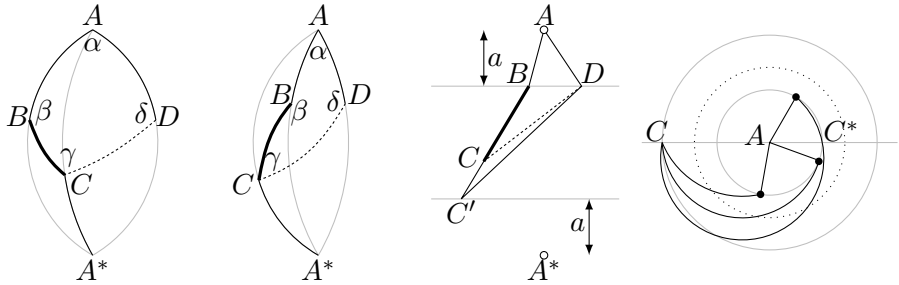
\begin{figure}[htp]
\centering
\begin{tikzpicture}[>=latex]


\draw[gray!50]
	(0,1.5) arc (60:-60:1.732)
	(0,1.5) arc (120:240:1.732)
	(0,1.5) arc (150:210:3);
	
\draw
	(0,1.5) arc (60:10:1.732) 
	(0,1.5) arc (120:170:1.732)
	(0,-1.5) arc (210:188:3);

\draw[line width=1.2]
	(-0.37,-0.43) to[out=140, in=-70] (-0.84,0.3);

\draw[dash pattern=on 1pt off 1pt]	
	(0.84,0.3) to[out=220, in=20] (-0.37,-0.43);

\node at (0,1.7) {\small $A$};
\node at (0,-1.7) {\small $A^*$};
\node at (-1,0.35) {\small $B$};
\node at (-0.15,-0.55) {\small $C$};
\node at (1,0.35) {\small $D$};

\node at (0,1.3) {\small $\alpha$};
\node at (-0.65,0.4) {\small $\beta$};
\node at (-0.32,-0.2) {\small $\gamma$};
\node at (0.65,0.4) {\small $\delta$};


\begin{scope}[xshift=3cm]

\draw[gray!50]
	(0,1.5) arc (150:210:3)
	(0,1.5) arc (30:-30:3)
	(0,1.5) arc (120:240:1.732);

\draw
	(0,1.5) arc (150:170:3)
	(0,1.5) arc (30:10:3)
	(0,-1.5) arc (240:196:1.732);

\draw[line width=1.2]
	(-0.36,0.52) to[out=230,in=80] (-0.79,-0.5);

\draw[dash pattern=on 1pt off 1pt]	
	(0.36,0.52) to[out=240, in=20] (-0.79,-0.5);
		
\node at (0,1.7) {\small $A$};
\node at (0,-1.7) {\small $A^*$};
\node at (-0.5,0.65) {\small $B$};
\node at (-1,-0.55) {\small $C$};
\node at (0.55,0.65) {\small $D$};

\node at (0,1.2) {\small $\alpha$};
\node at (-0.25,0.4) {\small $\beta$};
\node at (-0.6,-0.25) {\small $\gamma$};
\node at (0.2,0.55) {\small $\delta$};

\end{scope}


\begin{scope}[xshift=6cm]

\draw[gray!50]
	(-1.5,0.75) -- (1,0.75)
	(-1.5,-0.75) -- (1,-0.75);
	
\draw
	(0,1.5) -- (-0.2,0.75) -- (-1.1,-0.75)
	(0,1.5) -- (0.5,0.75) -- (-1.1,-0.75);

\draw[line width=1.2]
	(-0.2,0.75) -- (-0.8,-0.25);

\draw[dash pattern=on 1pt off 1pt]
	(0.5,0.75) -- (-0.8,-0.25);

\filldraw[fill=white]
	(0,1.5) circle (0.05)
	(0,-1.5) circle (0.05);	
	
\draw[<->]
	(-0.8,1.5) -- (-0.8,0.75);
\node at (-0.65,1.1) {\small $a$};

\draw[<->]
	(0.6,-1.5) -- (0.6,-0.75);
\node at (0.45,-1.1) {\small $a$};

\node at (0,1.7) {\small $A$};
\node at (0,-1.7) {\small $A^*$};
\node at (-0.35,0.92) {\small $B$};
\node at (0.65,0.92) {\small $D$};
\node at (-1,-0.2) {\small $C$};
\node at (-1.1,-0.92) {\small $C'$};

\end{scope}


\begin{scope}[xshift=9cm]

\draw[dotted]
	(0,0) circle (1);
	
\draw[gray!50]
	(0,0) circle ({1/0.7})
	(0,0) circle (0.7)
	(-1.7,0) -- (1.7,0);
	
\coordinate (CX) at (180:{1/0.7});
\coordinate (CCX) at (0:0.7);

\coordinate (DX) at (-100:0.7);
\coordinate (EX) at (60:0.7);
\coordinate (LX) at (-20:0.7);

\draw
	(0,0) -- (DX)
	(0,0) -- (EX)
	(0,0) -- (LX);
	
\arcThroughThreePoints{CX}{CCX}{DX};
\arcThroughThreePoints{CX}{CCX}{EX};
\arcThroughThreePoints{CX}{CCX}{LX};

\node at (-0.2,0.1) {\small $A$};
\node at (-1.5,0.15) {\small $C$};
\node at (0.95,0.15) {\small $C^*$};

\fill
	(-100:0.7) circle (0.05)
	(-21:0.7) circle (0.05)
	(60:0.7) circle (0.05);
	
\end{scope}
	
\end{tikzpicture}
\caption{Proposition \ref{bcd}: Geometry of quadrilateral.}
\label{bcdB}
\end{figure}

The discussion above is the case $C$ lies inside $G$. The case $C\not\in G$ means $\beta\ge \pi$ or $\delta\ge \pi$. Up to the symmetry of exchanging $\beta,\delta$, we consider the second of Figure \ref{bcdB}, corresponding to $\beta\ge\pi$. We have $\triangle ABD\sub \square ABCD$. Therefore $\text{Area}(\triangle ABD)<\alpha=\frac{1}{2}\text{Area}(G)$. This implies $a<\frac{1}{2}\pi$. Again, if $A^*C=a$, then $\triangle ABC$ and $\triangle ACD$ are congruent respectively to $\triangle A^*BC$ and $\triangle A^*CD$. This implies $\text{Area}(\square ABCD)=\alpha$, and the quadrilateral is geometrically suitable for the earth map tiling. 

To show the necessity of $A^*C=a$, we consider the third (schematic) picture of Figure \ref{bcdB}. The grey lines are the points of distance $a$ from $A$ and $A^*$. If $A^*C>a$, then we may extend $BC$ to intersect the lower grey line at $C'$. Then $A^*C'=a$ and $\triangle BCD\sub \triangle BC'D$. This implies $\text{Area}(\square ABCD)<\text{Area}(\square ABC'D)=\alpha$. Similarly, if $A^*C<a$, then we get $\text{Area}(\square ABCD)>\alpha$. This proves $\text{Area}(\square ABCD)=\alpha$ if and only if $A^*C=a$.

Like the case $C$ lying in $G$, the equality $A^*C=a$ implies the quadrilateral in the earth map tiling is uniquely parameterized by $C$. The fourth of Figure \ref{bcdB} is the stereographic projection from $A^*$ (therefore authentic picture) of various possibilities of the quadrilateral with $\beta\ge \pi$. Here $C$ is fixed, and $C^*$ is the antipode of $C$, and $\bullet$ are possible locations of $B$ and $D$. The dotted circle is the equator, the inner grey circle has distance $a$ to $A$, and the outer grey circle has distance $a$ to $A^*$. We find the condition for the quadrilateral to be simple is exactly $\angle CAB<\pi$. Therefore the possible location of $C$ is the second grey region in Figure \ref{bcdC}. From the viewpoint of $A$, the range of the second region is $(-\frac{1}{2}\alpha-\pi,-\frac{1}{2}\alpha]$.

\begin{figure}[htp]
\centering
\begin{tikzpicture}[>=latex]


\begin{scope}[xshift=0cm]
	
\fill[gray!50]
	(0,1) arc (30:-30:{1/sin(30)}) arc (210:150:{1/sin(30)});

\fill[gray!50]
	(0,-1) arc (-140:0:{1/cos(50)}) -- (0,0)
	(0,-1) arc (-40:-180:{1/cos(50)}) -- (0,0);

\draw[gray]
	(0,-1) arc (-30:0:{1/sin(30)})
	(0,-1) arc (210:180:{1/sin(30)});

\draw[dotted]
	(0,1) arc (140:0:{1/cos(50)})
	(0,1) arc (40:180:{1/cos(50)});
		
\filldraw[fill=white]
	(0,1) circle (0.05)
	(0,-1) circle (0.05);

\node at (0,1.23) {\small $A$};
\node at (0,-1.25) {\small $A^*$};

\node at (-0.6,0.5) {\scriptsize $-\frac{1}{2}\alpha$};
\node at (0.45,0.5) {\scriptsize $\frac{1}{2}\alpha$};
\node at (-1.3,1.3) {\scriptsize $-\frac{1}{2}\alpha-\pi$};
\node at (1.3,1.3) {\scriptsize $\frac{1}{2}\alpha+\pi$};

\node[inner sep=0.5,draw,shape=circle] at (0,0) {\small $1$};
\node[inner sep=0.5,draw,shape=circle] at (-1.3,-0.7) {\small $2$};
\node[inner sep=0.5,draw,shape=circle] at (1.3,-0.7) {\small $3$};

\end{scope}


\begin{scope}[xshift=5.5cm]

\fill[gray!50]
	(0,0) circle (1.4);

\fill[white]
	(-120:0.8) -- (0,0) -- (-60:0.8) arc (-60:240:0.8);

\draw[dotted]
	(0,0) -- (60:0.8)
	(0,0) -- (120:0.8);

\draw[gray]
	(-60:0.8) -- (-60:1.6)
	(60:0.8) -- (60:1.6)
	(-60:-0.8) -- (-60:-1.6)
	(60:-0.8) -- (60:-1.6)
	;

\filldraw[fill=white]
	(0,0) circle (0.05);
	
\draw[<->]
	(-120:1.1) arc (-120:-60:1.1);
\node[inner sep=0.5, draw, fill=gray!50, shape=circle] at (-90:1.1) {\small $1$};

\draw[<->]
	(240:1.1) arc (240:60:1.12);
\node[inner sep=0.5,draw, fill=gray!50, shape=circle] at (150:1.1) {\small $2$};

\draw[<->]
	(-60:1.1) arc (-60:120:1.08);
\node[inner sep=0.5,draw, fill=gray!50, shape=circle] at (30:1.1) {\small $3$};

\node at (0,0.23) {\small $A$};

\node at (0,-0.3) {\small $\alpha$};
\node at (1,-1.5) {\scriptsize $\frac{1}{2}\alpha$};
\node at (-0.95,-1.5) {\scriptsize $-\frac{1}{2}\alpha$};
\node at (-1.3,1.45) {\scriptsize $\frac{1}{2}\alpha+\pi$};
\node at (1.3,1.45) {\scriptsize $-\frac{1}{2}\alpha-\pi$};

\end{scope}
	
\end{tikzpicture}
\caption{Proposition \ref{bcd}: Moduli of earth map tilings.}
\label{bcdC}
\end{figure}
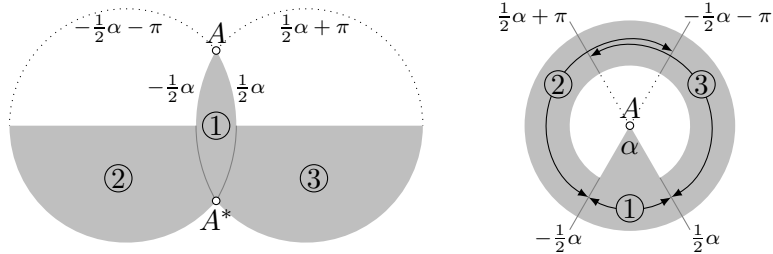

If $\delta\ge \pi$, then we may similarly find the quadrilateral is uniquely parameterized by $C$, which lies in the third grey region in Figure \ref{bcdC}. From the viewpoint of $A$, the range of the third region is $[\frac{1}{2}\alpha,\frac{1}{2}\alpha+\pi)$. We remark that the second and third regions actually overlap. This means the first of Figure \ref{bcdC} is not quite authentic, and the second picture is authentic.

Finally, we remark that the symmetry of exchanging $(\beta,b)$ with $(\delta,c)$ is the horizontal flip of Figure \ref{bcdC}. Moreover, the quadrilateral is reduced to a kite, i.e., $b=c$, if and only if the line $AC$ is at angle $0$. 
\end{proof}

After Proposition \ref{bcd}, we may assume $\beta\gamma\delta$ is not a vertex. For the remaining degree $3$ vertices $\alpha^3,\alpha\beta^2,\alpha\delta^2$, we divide the classification into the following cases:
\begin{enumerate}
\item $\alpha^3,\alpha\beta^2$ are the only degree $3$ vertices.
\item $\alpha^3$ is the only degree $3$ vertex.
\item $\alpha\beta^2$ is the only degree $3$ vertex.
\end{enumerate}
We also recall that $\beta\ne\delta$, after Proposition \ref{bnotd}.

By no $\beta\gamma\delta$ and the balance lemma (Lemma \ref{balance_aabc}), we know $\beta^2\cdots,\gamma^2\cdots,\delta^2\cdots$ are vertices. This implies $\beta,\gamma,\delta<\pi$. Moreover, since $\gamma,\delta$ do not appear at degree $3$ vertices in the three cases above, by Lemma \ref{deg3miss} and the parity lemma, we know $\gamma^4,\delta^4,\gamma^2\delta^2$ is a vertex. Then by \eqref{quadvcountf} and $f$ even, we get $f\ge 8$.

\begin{proposition}\label{abb-ccdd}
There is no tiling of the sphere by congruent general quadrilaterals, such that $\alpha\beta^2,\gamma^2\delta^2$ are vertices.
\end{proposition}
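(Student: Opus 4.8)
The plan is to assume, for contradiction, that both $\alpha\beta^2$ and $\gamma^2\delta^2$ are vertices and derive impossible vertex combinations. The starting point is the angle sum information: from $\alpha\beta^2$ we get $\alpha+2\beta=2\pi$, and from $\gamma^2\delta^2$ we get $\gamma+\delta=\pi$. Combining these with the quadrilateral angle sum \eqref{anglesum}, $\alpha+\beta+\gamma+\delta=(2+\tfrac{4}{f})\pi$, I would solve for all four angles in terms of $f$, obtaining something like $\alpha=(1-\tfrac{4}{f})\pi$, $\beta=(\tfrac12+\tfrac{2}{f})\pi$, and $\gamma+\delta=\pi$ with $\gamma,\delta$ still partly free (one free parameter remains, which is expected for a general quadrilateral). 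The key numerical consequences are that $\beta>\tfrac12\pi$, so $\beta^4\cdots$ is not a vertex, and $\alpha$ is close to $\pi$ for large $f$, so vertices containing $\alpha$ are tightly constrained.

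Next I would enumerate the possible vertices using the parity lemma (Lemma \ref{parity}, part 2: the numbers of $\beta,\gamma,\delta$ have equal parity), Lemma \ref{nod} (no $\alpha^k\gamma^l$ vertex), and the balance lemma (Lemma \ref{balance_aabc}). Since $\gamma^2\delta^2$ is a vertex, one of $\beta^2\cdots,\gamma^2\cdots,\delta^2\cdots$ is a vertex, hence all three are, so $\beta,\gamma,\delta<\pi$ and none of $\alpha^k$, $\beta\gamma\delta$ dominate the picture in a trivial way. Using $\beta>\tfrac12\pi$ and $\gamma+\delta=\pi$, I expect to cut the vertex list down to $\alpha^k$, $\alpha\beta^2$, $\alpha^m\delta^2\cdots$, $\gamma^2\delta^2$, $\gamma^k$, $\delta^k$ type vertices, and then use the counting lemma (Lemma \ref{counting}) applied to $\gamma$ and $\delta$ — since $\gamma$ and $\delta$ each appear once in the quadrilateral — to force the numbers of $\gamma$ and $\delta$ to be equal at every vertex. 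Combined with $\gamma+\delta=\pi$ (so $\gamma^2\cdots$ with no $\delta$ forces too much, etc.), this should collapse the admissible degree-3 and low-degree vertices dramatically.

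The decisive step will be an AAD / adjacent-angle argument around the vertex $\alpha\beta^2$ together with $\gamma^2\delta^2$. The vertex $\gamma^2\delta^2$ has the unique AAD $\dash^{\delta}\gamma^{\beta}\thick^{\beta}\gamma^{\delta}\dash^{\gamma}\delta^{\alpha}\thin^{\alpha}\delta^{\gamma}\dash$ (as already noted in the excerpt), which forces specific neighboring tiles; in particular it produces an $\alpha\beta\cdots$ vertex and a $\beta^2\cdots$ vertex on its boundary. I would trace these forced tiles: the $\beta^2\cdots$ vertex must be $\alpha\beta^2$ (since $\beta^4\cdots$ is excluded and $\beta>\tfrac12\pi$), and then the AAD of $\alpha\beta^2$ propagates back to force a configuration around $\gamma^2\delta^2$ that is incompatible — typically producing a vertex with an illegal angle combination such as $\alpha\gamma\cdots$ (forbidden by Lemma \ref{nod}) or a vertex whose angle sum exceeds $2\pi$, or a parity violation. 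The main obstacle I anticipate is bookkeeping: there is one free geometric parameter ($\gamma$ vs $\delta$), so I may need to split into the subcases $\gamma<\delta$, $\gamma=\delta$ (excluded by $\beta\ne\delta$ / Proposition \ref{bnotd}, or handled separately), and $\gamma>\delta$, and in each run the AAD propagation carefully; I would also need to invoke Lemma \ref{geometry1}-type constraints or the $b$-companion structure of Figure \ref{tiling_aabc} to pin down the twisted-versus-matched choice at each $b$-edge. The contradiction should emerge within two or three tile-adjacency steps once the angle values are fixed, so I do not expect to need heavy trigonometry — the obstruction is combinatorial, arising from $\beta>\tfrac12\pi$ clashing with the rigidity of the $\gamma^2\delta^2$ neighborhood.
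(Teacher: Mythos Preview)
Your angle computation is wrong: from $\alpha+2\beta=2\pi$, $\gamma+\delta=\pi$, and $\alpha+\beta+\gamma+\delta=(2+\tfrac{4}{f})\pi$ one gets $\alpha=\tfrac{8}{f}\pi$ and $\beta=(1-\tfrac{4}{f})\pi$, not the values you wrote. So $\alpha$ is \emph{small} (tending to $0$ as $f$ grows), while $\beta$ is close to $\pi$. Your claim that ``vertices containing $\alpha$ are tightly constrained'' is therefore backwards --- in fact many copies of $\alpha$ can occur at a vertex, and the tight constraint is on $\beta$.

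This error propagates into two further gaps. First, you are missing the key early step: using $R(\beta\gamma\delta)=\tfrac{4}{f}\pi$ together with Lemma~\ref{geometry10} (which gives $\tfrac{4}{f}\pi<2\beta,2\gamma,2\delta$), the paper rules out \emph{all} odd vertices at once. Without this you cannot control $\alpha\delta\cdots$ well enough. Second, your plan to apply the counting lemma to force equal numbers of $\gamma$ and $\delta$ at every vertex is the wrong direction. The paper instead uses the AAD of $\alpha\beta^2$ to produce a vertex $\alpha\delta\cdots=\alpha^k\delta^l$ with $l\ge 2$, which (compared to $\alpha\beta^2$) forces $\beta>\delta$; this asymmetry then kills $\beta^2\gamma^2\cdots$, confines $\gamma\cdots$ to $\gamma^2\delta^2$ or $\gamma^k$, and the counting lemma shows $\gamma^k$ \emph{must} occur --- the opposite of ``equal at every vertex.'' From $\gamma\le\tfrac12\pi\le\delta<\beta$ one then gets $\beta\cdots=\alpha\beta^2$, and only then does the AAD propagation around $\gamma^2\delta^2$ yield the contradiction (Figure~\ref{abb-ccddA}). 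Your final AAD idea is on the right track, but it cannot be executed until these prior reductions --- no odd vertices, $\beta>\delta$, $\gamma\le\tfrac12\pi$, $\beta\cdots=\alpha\beta^2$ --- are in place.
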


\begin{proof}
The angle sums of $\alpha\beta^2,\gamma^2\delta^2$ and the angle sum for quadrilateral imply
\[
\alpha=\tfrac{8}{f}\pi,\;
\beta=(1-\tfrac{4}{f})\pi,\;
\gamma+\delta=\pi.
\]
By $f\ge 8$, we get $\beta\ge\frac{1}{2}\pi$. By Lemma \ref{geometry10}, we get $R(\beta\gamma\delta)=\frac{4}{f}\pi<2\beta,2\gamma,2\delta$. This implies all vertices are even. 

The unique AAD $\thick^{\gamma}\beta^{\alpha}\thin^{\beta}\alpha^{\delta}\thin^{\alpha}\beta^{\gamma}\thick$ of $\alpha\beta^2$ implies $\alpha\delta\cdots$ is a vertex. If $\alpha\delta\cdots$ has $\beta,\gamma$, then (the even vertex) $\alpha\delta\cdots=\alpha\beta^2\delta^2\cdots,\alpha\gamma^2\delta^2\cdots$, contradicting $\alpha\beta^2,\gamma^2\delta^2$. Therefore $\alpha\delta\cdots=\alpha^k\delta^l$($k\ge 1$, $l\ge 2$). By $\alpha\beta^2$ and $\beta\ne\delta$, this implies $\beta>\delta$. 

We know $\gamma\cdots$ is an even vertex. By $\gamma^2\delta^2$ and $\beta>\delta$, we know $\beta^2\gamma^2\cdots$ is not a vertex. Therefore $\beta\gamma\cdots$ is not a vertex. Then by $\alpha\delta\cdots=\alpha^k\delta^l$ and Lemma \ref{nod}, we know $\alpha\gamma\cdots$ is not a vertex. Therefore $\gamma\cdots=\gamma^k\delta^l$. Then by $\gamma^2\delta^2$, we get $\gamma\cdots=\gamma^2\delta^2,\gamma^k(k\ge 4)$. Since $\alpha^k\delta^l$($k\ge 1$, $l\ge 2$) is a vertex, by applying the counting lemma to $\gamma,\delta$, this implies $\gamma^k$ is a vertex. By $k\ge 4$ in $\gamma^k$ and $\gamma+\delta=\pi$, we get $\gamma\le \frac{1}{2}\pi\le \delta<\beta$. 

By $\gamma^2\delta^2$ and $\gamma,\delta<\beta$, we get $R(\beta^2)<2\beta,2\gamma,2\delta$. Then by $\alpha\beta^2$ and all vertices being even, this implies $\beta\cdots=\beta^2\cdots=\alpha\beta^2$. 

\begin{figure}[htp]
\centering
\begin{tikzpicture}

\foreach \a in {1,-1}
{
\begin{scope}[xscale=\a]

\draw
	(0,0) -- (0,-1)
	(0,1) -- (2,1) 
	(0,-1) -- (1,-1)
	(1,0) -- (1,1);

\draw[line width=1.2]
	(0,0) -- (0,1)
	(2,0) -- (2,1)
	(1,0) -- (1,-1);

\draw[dashed]
	(0,0) -- (2,0);

\node at (0.8,0.8) {\small $\alpha$};
\node at (0.2,0.8) {\small $\beta$};
\node at (0.8,0.2) {\small $\delta$};
\node at (0.2,0.2) {\small $\gamma$};

\node at (0.2,-0.8) {\small $\alpha$};
\node at (0.8,-0.8) {\small $\beta$};
\node at (0.2,-0.2) {\small $\delta$};
\node at (0.8,-0.2) {\small $\gamma$};

\node at (1.2,0.8) {\small $\alpha$};
\node at (1.8,0.8) {\small $\beta$};
\node at (1.2,0.2) {\small $\delta$};
\node at (1.8,0.2) {\small $\gamma$};

\node at (1.2,-0.2) {\small $\gamma$};

\node at (0.8,1.2) {\small $\beta$};

\end{scope}
}

\node at (0,1.15) {\small $\alpha$};

\node[inner sep=0.5,draw,shape=circle] at (-0.5,0.5) {\small $1$};
\node[inner sep=0.5,draw,shape=circle] at (0.5,0.5) {\small $2$};
\node[inner sep=0.5,draw,shape=circle] at (0.5,-0.5) {\small $3$};
\node[inner sep=0.5,draw,shape=circle] at (-0.5,-0.5) {\small $4$};
\node[inner sep=0.5,draw,shape=circle] at (-1.5,0.5) {\small $5$};
\node[inner sep=0.5,draw,shape=circle] at (1.5,0.5) {\small $6$};

\end{tikzpicture}
\caption{Proposition \ref{abb-ccdd}: Vertex $\dash\gamma\thick\gamma\dash\delta\thick\delta\dash$.}
\label{abb-ccddA}
\end{figure}

The unique AAD $\dash^{\delta}\gamma^{\beta}\thick^{\beta}\gamma^{\delta}\dash^{\gamma}\delta^{\alpha}\thin^{\alpha}\delta^{\gamma}\dash$ of $\gamma^2\delta^2$ determines $T_1,T_2,T_3,T_4$ in Figure \ref{abb-ccddA}. Then $\gamma_4\delta_1\cdots=\gamma_3\delta_2\cdots=\gamma^2\delta^2$ determines $T_5,T_6$. We also have $\beta_1\beta_2\cdots=\alpha\beta^2$. Then the $\beta$ adjacent to $\alpha$ at $\beta_1\beta_2\cdots$ gives either $\alpha_1\alpha_5\beta\cdots$ or $\alpha_2\alpha_6\beta\cdots$, contradicting $\beta\cdots=\alpha\beta^2$.
\end{proof}

\begin{proposition}\label{aaa-abb}
Tiling of the sphere by congruent general quadrilaterals, such that $\alpha^3,\alpha\beta^2$ are the only degree $3$ vertices, is the flip modification $FQ_{\square}P_6$ of the quadrilateral subdivision of the cube.
\end{proposition}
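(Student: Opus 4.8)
The plan is to read the angle values off the degree-$3$ vertices, use the counting and balance lemmas together with the parity lemma to cut the anglewise vertex combination down to a single list, and then rebuild the tiling by adjacent angle deduction and identify it with $QP_6'$. First I would extract the angles: the vertex $\alpha^3$ forces $\alpha=\frac23\pi$, and then $\alpha\beta^2$ forces $\beta=\frac23\pi$. Since $\alpha^3$ and $\alpha\beta^2$ are the only degree-$3$ vertices, neither $\beta\gamma\delta$ nor $\alpha\delta^2$ is a vertex. By the Balance Lemma (Lemma \ref{balance_aabc}), if $\beta^2\cdots,\gamma^2\cdots,\delta^2\cdots$ were not all vertices then the only vertices would be $\alpha^k$ and $\beta\gamma\delta$, which is impossible because $\gamma$ must occur somewhere while $\beta\gamma\delta$ is excluded. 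Hence $\beta^2\cdots,\gamma^2\cdots,\delta^2\cdots$ are all vertices, so $\beta,\gamma,\delta<\pi$, and the angle sum \eqref{anglesum} gives $\gamma+\delta=(\tfrac23+\tfrac4f)\pi$.

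Next I would force the vertex list. Because $\gamma,\delta$ never appear at a degree-$3$ vertex, Lemma \ref{deg3miss} says they together appear at least three times at a degree-$4$ vertex or five times at a degree-$5$ vertex. A degree-$5$ vertex is odd, hence carries a $\beta$ by the Parity Lemma (Lemma \ref{parity}), leaving room for at most four of $\gamma,\delta$, so the degree-$5$ option is out; and at a degree-$4$ vertex parity rules out $\#\gamma+\#\delta=3$, so $\#\gamma+\#\delta=4$. With $\alpha=\beta=\tfrac23\pi$ this vertex is $\gamma^4$ (so $\gamma=\tfrac12\pi$), $\delta^4$ (so $\delta=\tfrac12\pi$), or $\gamma^2\delta^2$ (so $\gamma+\delta=\pi$ and $f=12$). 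The $\gamma^2\delta^2$ case is excluded at once by Proposition \ref{abb-ccdd}, since $\alpha\beta^2$ is already a vertex. In the $\gamma^4$ case $\gamma=\tfrac12\pi$, and then $\gamma$ can occur only at $\gamma^4$, so the sphere is tiled by copies of the pinwheel neighbourhood $n(\gamma^4)$; feeding this into the forced adjacent angle deductions of $\alpha^3$ and $\alpha\beta^2$ and the requirement that $\delta^2\cdots$ be a vertex, the configuration propagates and (the residual small values of $f$, in particular $f=12$, being ruled out along the way) the only consistent outcome is $\delta=\tfrac13\pi$, $f=24$, and
\[
\text{AVC}=\{\alpha^3,\ \alpha\beta^2,\ \alpha^2\delta^2,\ \beta^2\delta^2,\ \gamma^4\}.
\]
The $\delta^4$ case is treated in parallel and shown to admit no tiling. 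With the AVC fixed, the tiling is determined up to symmetry by adjacent angle deduction from $\gamma^4$ (or $\alpha^3$); it has $24$ tiles realised as the regular cube split into two half-cubes of twelve quadrilaterals each, glued along a $12$-gon with one half rotated, i.e.\ the flip modification $QP_6'$ of the quadrilateral subdivision of the cube (match with Figure \ref{flip1} and Table \ref{flip_data}, equivalently with the second tiling of Figure \ref{simple_subdivision_dodecahedron}). Existence of the quadrilateral and of the tiling is already furnished by the construction of $QP_6'$ from the regular dodecahedron in Section \ref{flip}, so no new trigonometric computation is required.

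The main obstacle is the second half of the middle step: the relation $\gamma+\delta=(\tfrac23+\tfrac4f)\pi$ leaves infinitely many a priori admissible values of $f$, and the alternatives $\delta^4$ and $\gamma^2\delta^2$ must also be disposed of. Because we are in the case where $\alpha\beta^2$ — not $\alpha\delta^2$ — is a degree-$3$ vertex, the exchange symmetry $(\beta,b)\leftrightarrow(\delta,c)$ of the general quadrilateral (Figure \ref{quad}) is unavailable, so $\gamma$ and $\delta$ genuinely have to be tracked separately, and pinning $f=24$ rests on careful bookkeeping of the adjacent angle deductions forced around $\alpha^3$, $\alpha\beta^2$ and $\gamma^4$ (respectively $\delta^4$) rather than on any shortcut with the angle values.
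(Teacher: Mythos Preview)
Your overall architecture is right, and the opening steps (forcing $\alpha=\beta=\tfrac23\pi$, invoking the Balance Lemma, then Lemma \ref{deg3miss} with parity to reduce to one of $\gamma^4,\gamma^2\delta^2,\delta^4$, and killing $\gamma^2\delta^2$ via Proposition \ref{abb-ccdd}) match the paper exactly. The real gap is in the $\gamma^4$ case, at the sentence ``then $\gamma$ can occur only at $\gamma^4$''. This is asserted before $f$ is pinned down, and it is false for general $f$: with $\alpha=\beta=\tfrac23\pi$, $\gamma=\tfrac12\pi$, $\delta=(\tfrac16+\tfrac4f)\pi$, the combinations $\beta\gamma\delta^3$ (at $f=36$) and $\gamma^2\delta^4$ (at $f=48$) both satisfy the angle sum and the parity constraint. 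So you cannot conclude $\gamma\cdots=\gamma^4$ at this stage, and the pinwheel-$n(\gamma^4)$ picture does not immediately propagate to determine $f$.

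The paper fixes $f$ by a different mechanism. The unique AAD $\thick^{\gamma}\beta^{\alpha}\thin^{\beta}\alpha^{\delta}\thin^{\alpha}\beta^{\gamma}\thick$ of $\alpha\beta^2$ forces a vertex $\alpha\delta\cdots$, which (being even) is $\alpha\delta^2\cdots$; this is what kills the $\delta^4$ case directly, since there $R(\alpha\delta^2)=\tfrac13\pi$ leaves only $\alpha\delta^2$, contradicting $\beta\ne\delta$. In the $\gamma^4$ case the paper instead looks at the vertex $\beta\delta\cdots$ forced by the AAD of $\alpha^3$; remainder estimates with parity reduce it to $\beta\gamma\delta^3$ (giving $f=36$) or $\beta^2\delta^2$ (giving $f=24$), and $f=36$ is then eliminated because its angle values admit no even vertex $\alpha\delta^2\cdots$. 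Only after $f=24$ is established does one compute the AVC, which is actually $\{\alpha^3,\alpha\beta^2,\alpha^2\delta^2,\beta^2\delta^2,\gamma^4,\alpha\delta^4,\delta^6\}$ (your list omits $\alpha\delta^4$ and $\delta^6$, which are angle-sum valid at $f=24$ even though they end up unused). From there the paper builds the extended neighbourhood $N(\alpha^3)$ from three copies of $n(\gamma^4)$ and glues two such halves; one gluing gives $QP_6$ (no $\alpha\beta^2$), the other gives $QP_6'$. Your endgame description is fine, but you need the AAD-forced $\alpha\delta\cdots$ and $\beta\delta\cdots$ vertices to reach it.
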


\begin{proof}
We know one of $\gamma^4,\delta^4,\gamma^2\delta^2$ is a vertex. By Proposition \ref{abb-ccdd}, we may assume one of $\gamma^4,\delta^4$ is a vertex. The angle sum of one of $\gamma^4,\delta^4$, and the angle sums of $\alpha^3,\alpha\beta^2$, and the angle sum for quadrilateral, imply
\begin{align*}
\gamma^4 &\colon
\alpha=\beta=\tfrac{2}{3}\pi,\;
\gamma=\tfrac{1}{2}\pi,\;
\delta=(\tfrac{1}{6}+\tfrac{4}{f})\pi. \\
\delta^4 &\colon
\alpha=\beta=\tfrac{2}{3}\pi,\;
\gamma=(\tfrac{1}{6}+\tfrac{4}{f})\pi,\;
\delta=\tfrac{1}{2}\pi.
\end{align*}

The unique AAD $\thick^{\gamma}\beta^{\alpha}\thin^{\beta}\alpha^{\delta}\thin^{\alpha}\beta^{\gamma}\thick$ of $\alpha\beta^2$ implies $\alpha\delta\cdots$ is a vertex. Since an odd vertex has no $\alpha$, we know $\alpha\delta\cdots=\alpha\delta^2\cdots$ is an even vertex.

For the case $\delta^4$ is a vertex, we have $R(\alpha\delta^2)=\frac{1}{3}\pi<\alpha,\beta,2\gamma,\delta$. This implies the even vertex $\alpha\delta^2\cdots=\alpha\delta^2$, contradicting $\alpha\beta^2$ and $\beta\ne\delta$.

For the case $\gamma^4$ is a vertex, we consider the vertex $\beta\delta\cdots$ induced by the possible AADs $\thin^{\beta}\alpha^{\delta}\thin^{\beta}\alpha^{\delta}\thin^{\beta}\alpha^{\delta}\thin$ and $\thin^{\beta}\alpha^{\delta}\thin^{\beta}\alpha^{\delta}\thin^{\delta}\alpha^{\beta}\thin$ of $\alpha^3$. By the parity lemma, we know $\beta\delta\cdots=\beta\gamma\delta\cdots,\beta^2\delta^2\cdots$. We have $0<R(\beta\gamma\delta)=(\tfrac{2}{3}-\tfrac{4}{f})\pi<\alpha,\beta,2\gamma,4\delta$ and $R(\beta^2\delta^2)=(\tfrac{1}{3}-\tfrac{8}{f})\pi<\alpha,\beta,\gamma,2\delta$. Then by the parity lemma, we get $\beta\delta\cdots=\beta\gamma\delta^3,\beta^2\delta^2$. The angle sum of $\beta\gamma\delta^3$ or $\beta^2\delta^2$ further implies
\begin{align*}
\gamma^4,\beta\gamma\delta^3 &\colon 
\alpha=\beta=\tfrac{2}{3}\pi,\;
\gamma=\tfrac{1}{2}\pi,\;
\delta=\tfrac{5}{18}\pi,\;
f=36. \\
\gamma^4,\beta^2\delta^2 &\colon
\alpha=\beta=\tfrac{2}{3}\pi,\;
\gamma=\tfrac{1}{2}\pi,\;
\delta=\tfrac{1}{3}\pi,\;
f=24. 
\end{align*}

For $f=36$, the angle values imply there is no even vertex $\alpha\delta^2\cdots$. Therefore $f=24$, and we derive all vertices satisfying the parity lemma
\begin{equation}\label{aaa-abbeq1}
\text{AVC}
=\{\alpha^3,\alpha\beta^2,\alpha^2\delta^2,\beta^2\delta^2,\gamma^4,\alpha\delta^4,\delta^6\}.
\end{equation}

The unique AAD of $\gamma^4$ determines the neighborhood $n(\gamma^4)$ in the first of Figure \ref{aaa-abbB}. By $\gamma\cdots=\gamma^4$, the whole tiling is a tiling of six copies of $n(\gamma^4)$. 

The unique arrangements $\thin\alpha\thin\beta\thick\beta\thin$ and $\thin\beta\thick\beta\thin\delta\dash\delta\thin$ of $\alpha\beta^2$ and $\beta^2\delta^2$ imply $\beta\thin\beta\cdots$ is not a vertex. This further implies the unique AAD $\thin^{\beta}\alpha^{\delta}\thin^{\beta}\alpha^{\delta}\thin^{\beta}\alpha^{\delta}\thin$ of $\alpha^3$, and determines three $n(\gamma^4)$ around $\alpha^3$ in the second of Figure \ref{aaa-abbB}. The union of the three $n(\gamma^4)$ is the unique tiling $N(\alpha^3)$ of the extended neighborhood of $\alpha^3$. 

\begin{figure}[htp]
\centering
\begin{tikzpicture}[>=latex]


\draw
	(-1,-1) rectangle (1,1);

\draw[dashed]
	(0,-1) -- (0,1);

\draw[line width=1.2]
	(1,0) -- (-1,0);
	
\foreach \a in {-1,1}
\foreach \b in {-1,1}
{
\begin{scope}[xscale=\a, yscale=\b]

\node at (0.8,0.8) {\small $\alpha$};
\node at (0.8,0.2) {\small $\beta$};
\node at (0.2,0.2) {\small $\gamma$};
\node at (0.2,0.8) {\small $\delta$};
	
\end{scope}
}

\node at (0,-1.3) {$n(\gamma^4)$};


\begin{scope}[xshift=3.5cm]

\foreach \a in {0,1,2}
{
\begin{scope}[rotate=120*\a]

\draw
	(0,0) -- (90:2) -- (30:2) -- (-30:2);

\draw[line width=1.2]
	(90:1) -- (-1.732,0);
	
\draw[dashed]
	(90:1) -- (1.732,0);

\node at (30:0.2) {\small $\alpha$};
\node at (30:0.8) {\small $\gamma$};
\node at (30:1.2) {\small $\gamma$};
\node at (30:1.8) {\small $\alpha$};

\node at (0.2,0.65) {\small $\delta$};
\node at (0.2,1.1) {\small $\delta$};
\node at (0.2,1.7) {\small $\alpha$};

\node at (-0.2,0.65) {\small $\beta$};
\node at (-0.2,1.1) {\small $\beta$};
\node at (-0.2,1.7) {\small $\alpha$};

\node at (50:1.05) {\small $\gamma$};
\node at (130:1.05) {\small $\gamma$};

\node at (63:1.5) {\small $\beta$};
\node at (47:1.55) {\small $\beta$};
\node at (115:1.55) {\small $\delta$};
\node at (133:1.6) {\small $\delta$};

\end{scope}
}

\node at (0,-2.3) {$N(\alpha^3)$};

\end{scope}


\foreach \a in {1,-1}
{
\begin{scope}[xshift=6.5cm, yshift=-0.2 cm+ 1.2*\a cm]

\draw
	(-0.5,0) -- (5.5,0)
	(2,0) -- ++(0,-0.5); 

\draw[line width=1.2]
	(3,0) -- ++(0,-0.5);

\draw[dashed]
	(1,0) -- ++(0,-0.5)
	(5,0) -- ++(0,-0.5);
	
\fill
	(2,0) circle (0.07);
	
\node at (0,-0.2) {\small $\alpha$};
\node at (1.2,-0.2) {\small $\delta$};
\node at (0.8,-0.2) {\small $\delta$};
\node at (1.8,-0.2) {\small $\alpha$};
\node at (2.2,-0.2) {\small $\alpha$};
\node at (2.8,-0.2) {\small $\beta$};
\node at (3.2,-0.2) {\small $\beta$};
\node at (4,-0.2) {\small $\alpha$};
\node at (4.8,-0.2) {\small $\delta$};
\node at (5.2,-0.2) {\small $\delta$};

\node at (2.5,-0.8) {\small $N(\alpha^3)$};

\end{scope}
}

\begin{scope}[shift={(6.5cm, 1cm)}]

\draw[<->]
	(0,0.7) -- node[inner sep=0.5, fill=white] {\small $n(\gamma^4)$} (4,0.7);
	
\draw
	(0,0) -- ++(0,0.5)
	(4,0) -- ++(0,0.5); 

\draw[line width=1.2]
	(1,0) -- ++(0,0.5);

\draw[dashed]
	(3,0) -- ++(0,0.5);
	
\node at (0.2,0.2) {\small $\alpha$};
\node at (0.8,0.2) {\small $\beta$};
\node at (1.2,0.2) {\small $\beta$};
\node at (2,0.2) {\small $\alpha$};
\node at (3.2,0.2) {\small $\delta$};
\node at (2.8,0.2) {\small $\delta$};
\node at (3.8,0.2) {\small $\alpha$};

\end{scope}

\begin{scope}[shift={(7.5cm, -1.4cm)}]

\draw[<->]
	(0,0.7) -- node[inner sep=0.5, fill=white] {\small $n(\gamma^4)$} (4,0.7);
	
\draw
	(0,0) -- ++(0,0.5)
	(4,0) -- ++(0,0.5); 

\draw[line width=1.2]
	(3,0) -- ++(0,0.5);

\draw[dashed]
	(1,0) -- ++(0,0.5);
	
\node at (0.2,0.2) {\small $\alpha$};
\node at (0.8,0.2) {\small $\delta$};
\node at (1.2,0.2) {\small $\delta$};
\node at (2,0.2) {\small $\alpha$};
\node at (3.2,0.2) {\small $\beta$};
\node at (2.8,0.2) {\small $\beta$};
\node at (3.8,0.2) {\small $\alpha$};

\end{scope}

\end{tikzpicture}
\caption{Proposition \ref{aaa-abb}: $n(\gamma^4)$, $N(\alpha^3)$, and how to glue $n(\gamma^4)$ to $N(\alpha^3)$.}
\label{aaa-abbB}
\end{figure}
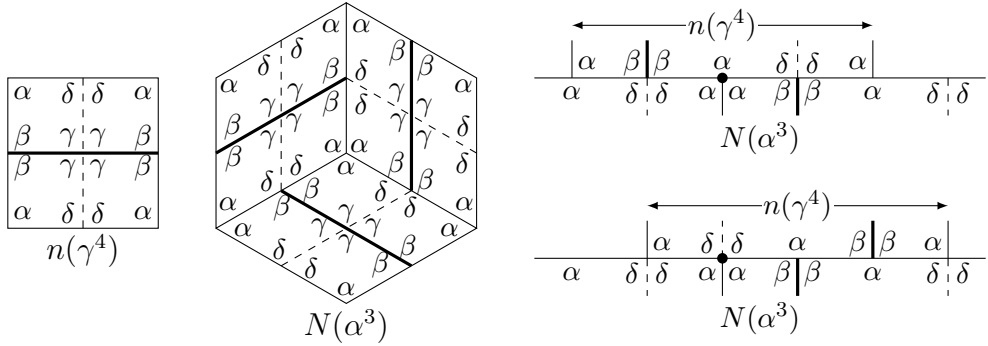

To find how the other three $n(\gamma^4)$ are glued to $N(\alpha^3)$, we draw the angles along the boundary of $N(\alpha^3)$ as the lower parts of the two lines in the third of Figure \ref{aaa-abbB}. Starting with $\alpha\thin\alpha\cdots$ at $\bullet$, we fill the angles above the lines. 

The top line assumes the vertex $\bullet$ is $\alpha\thin\alpha\cdots=\alpha^3$. By no $\beta^3\cdots$, we know $\beta\thick\beta\cdots$ next to $\alpha\thin\alpha\cdots$ is $\beta^2\delta^2$. This further implies $\delta\dash\delta\cdots$ next to $\alpha\thin\alpha\cdots$ is also $\beta^2\delta^2$. Then this determines how one $n(\gamma^4)$ above the line is attached. 

The bottom line assumes the vertex $\bullet$ is  $\alpha\thin\alpha\cdots=\alpha^2\delta^2$. This implies $\beta\thick\beta\cdots$ next to $\alpha\thin\alpha\cdots$ is $\alpha\beta^2$. This further implies $\alpha\cdots$ next to $\beta\thick\beta\cdots$ is $\alpha\beta\cdots=\alpha\beta^2$. Then this determines how one $n(\gamma^4)$ above the line is attached. 

There are three $\alpha\thin\alpha\cdots$ along the boundary of $N(\alpha^3)$. Each $\alpha\thin\alpha\cdots$ leads to one $n(\gamma^4)$ attached to $N(\alpha^3)$ either like the top line or the bottom line. Then it is easy to verify that all three $n(\gamma^4)$ must be attached to $N(\alpha^3)$ in the same way. Moreover, these three $n(\gamma^4)$ together form another copy of the extended neighborhood tiling $N(\alpha^3)$. Therefore the tiling is obtained by glueing two $N(\alpha^3)$ together according to the upper or lower line in Figure \ref{aaa-abbB}. If we glue according to the upper line, then we get the first of Figure \ref{flip8}. If we glue according to the lower line, then we get the fourth of Figure \ref{flip8}. Therefore we obtain the quadrilateral subdivision $Q_{\square}P_6$ of the cube, and its flip modification $FQ_{\square}P_6$.

We note that $Q_{\square}P_6$ has only $\alpha^3,\beta^2\delta^2,\gamma^4$, and fails the assumption of the proposition. In fact, the tiling will appear in Proposition \ref{aaa_general}. The flip modification $FQ_{\square}P_6$ has $\alpha^3,\alpha\beta^2,\alpha^2\delta^2,\beta^2\delta^2,\gamma^4$, and satisfies the assumption of the proposition.

\medskip

\noindent{\em Geometry of Quadrilateral}

\medskip

The three tiles around $\alpha^3$ form a neighborhood tiling $n(\alpha^3)$ in the first of Figure \ref{tiling4}. By $\beta+\delta=\pi$, $n(\alpha^3)$ is an equilateral triangle with angle $\gamma=\frac{1}{2}\pi$. The tiling $Q_{\square}P_6$ has eight $\alpha^3$. Therefore it is the tiling of eight copies of $n(\alpha^3)$, and is actually the regular octahedron with $n(\alpha^3)$ as faces. We also draw the gray $n(\alpha^3)$ to show how two adjacent $n(\alpha^3)$ are glued together. Then we explicitly see that the tiling is the quadrilateral subdivision $Q_{\square}P_8$ ($=Q_{\square}P_6$) of the regular octahedron. Moreover, the tiling can be parameterised by the length $b\in (0,\frac{1}{2}\pi)$, with $b$ and $\frac{1}{2}\pi-b$ corresponding to equivalent tilings.  

\begin{figure}[htp]
\centering
\begin{tikzpicture}

	
\begin{scope}[shift={(-4.3cm,-0.4cm)}]

\foreach \a in {0,1,2}
{
\begin{scope}[rotate=120*\a]

\draw
	(0,0) -- (0.3,-0.75);

\draw[line width=1.2]
	(90:1.5) -- ++(-60:1);

\draw[dashed]
	(90:1.5) -- ++(-120:1.6);

\node at (-0.08,0.18) {\small $\alpha$};
\node at (0.45,-0.55) {\small $\beta$};
\node at (0,-0.55) {\small $\delta$};
\node at (0,1.15) {\small $\gamma$};
	
\end{scope}
}

\node at (-1.1,0.7) {\small $n(\alpha^3)$};

\draw[gray, dashed]
	(-30:1.5) -- (2.6,1.5) -- (90:1.5);

\draw[gray]
	(30:1.5) -- (0.5,0.64)
	(30:1.5) -- (1.8,0.1)
	(30:1.5) -- (1.6,1.5);

\draw[gray, line width=1.2]
	(-30:1.5) -- ++(60:1)
	(2.6,1.5) -- (1.6,1.5);

\draw[dashed]
	(90:1.5) -- ++(-120:1.6);

\end{scope}


\draw[dotted]
	(-0.8,-0.8) rectangle (0.8,0.8);

\draw
	(0.8,0.8) -- (1.1,0) -- (0.8,-0.8) -- (0,-0.5) -- (-0.8,-0.8) -- (-1.1,0) -- (-0.8,0.8) -- (0,0.5) -- cycle;

\draw[line width=1.2]
	(0,0.5) -- (0,-0.5);

\draw[dashed]
	(-1.1,0) -- (1.1,0);

\draw[gray]
	(0.8,0.8) -- (1.6,1.1) -- (2.4,0.8) -- (2.1,0) -- (2.4,-0.8) -- (1.6,-1.1) -- (0.8,-0.8);

\draw[gray,line width=1.2]
	(1.1,0) -- (2.1,0);

\draw[gray,dashed]
	(1.6,1.1) -- (1.6,-1.1);

\node at (0.7,0.6) {\small $\alpha$};
\node at (0.2,0.2) {\small $\gamma$};

\node at (-0.3,0.72) {\tiny $\theta$};
\node at (-0.88,0.35) {\tiny $\theta$};

\node at (0,-1) {\small $n(\gamma^4)$};


\foreach \a in {1,-1}
{
\begin{scope}[xshift=4cm, xscale=\a]

\draw
	(0,1) -- (0.8,0.1) -- (0.5,-1);

\draw[line width=1.2]
	(-0.5,-1) -- (0.5,-1);

\draw[gray, dashed]
	(0,1) -- (0,-1);

\node at (0,0.7) {\small $\bar{\alpha}$};
\node at (0.55,0.05) {\small $\bar{\beta}$};
\node at (0.35,-0.75) {\small $\bar{\delta}$};

\end{scope}
}
	
\end{tikzpicture}
\caption{Proposition \ref{aaa-abb}: Three viewpoints of the tiling.}
\label{tiling4}
\end{figure}
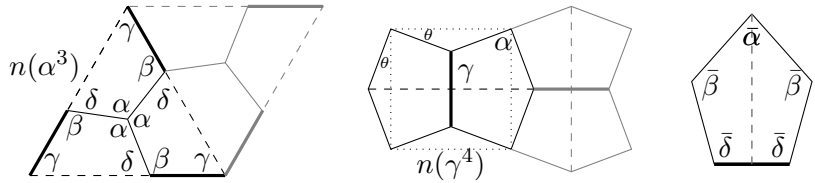

We may also view the tiling by considering the regular cube $P_6$, which is obtained by connecting the centers of eight $n(\alpha^3)$ together. The dotted square in the second of Figure \ref{tiling4} is one face of the regular cube, with angle $\alpha=\frac{2}{3}\pi$. We rotate half edges of the square by the same angle $\theta$, according to the directions indicated by the picture. We get a four way symmetric and equilateral octagon that still maintain angle $\alpha$ at the four square vertices. Then the octagon is divided into four congruent quadrilaterals. This changes the square face into an $n(\gamma^4)$. We carry out the changes for all six faces in the compatible way, with the compatibility indicated by the gray $n(\gamma^4)$. Then we explicitly see that the tiling is the quadrilateral subdivision $Q_{\square}P_6$ of the regular cube. Moreover, the tiling can be parameterised by $\theta\in (-\frac{1}{3}\pi,\frac{1}{3}\pi)$, with $\theta$ and $-\theta$ corresponding to equivalent tilings. 

We have yet the third viewpoint. By $2\gamma=\pi$, we may remove all the $c$-edges and get an edge-to-edge tiling of the sphere by $12$ congruent symmetric pentagons. The third of Figure \ref{tiling4} gives the symmetric pentagon, with angles $\bar{\alpha}=2\delta,\bar{\beta}=\alpha,\bar{\delta}=\beta$ and edges $\bar{a}=a,\bar{b}=2b$. The tiling by $12$ congruent symmetric pentagons is the deformed dodecahedron, and $Q_{\square}P_6$ is the simple quadrilateral subdivision of the dodecahedron. See the first of Figure \ref{simple_subdivision_dodecahedron}. 

For the flip modification $FQ_{\square}P_6$, we have $\bar{\alpha}=\bar{\beta}=\bar{\delta}=\tfrac{2}{3}\pi$. This means the dodecahedron is regular. Then we may use the regular dodecahedron to get $a=\arccos\frac{\sqrt{5}}{3}$, $b=\arccos \frac{\sqrt{5}+1}{2\sqrt{3}}$, $c=\arccos\frac{\sqrt{5}-1}{2\sqrt{3}}$ for the special case.  
\end{proof}

\begin{proposition}\label{aaa_general}
Tiling of the sphere by congruent general quadrilaterals, such that $\alpha^3$ is the only degree $3$ vertex, is the quadrilateral subdivision $Q_{\square}P_6$ of the cube. 
\end{proposition}

The geometrical discussion $Q_{\square}P_6$ is in the proof of Proposition \ref{aaa-abb}. See Figure \ref{tiling4}. The tiling has one free parameter.
 
\begin{proof}
By Lemma \ref{count-aaa}, we know $f\ge 24$. By applying Lemma \ref{deg3miss} to any two of $\beta,\gamma,\delta$, and using the parity lemma, we get the following: 
\begin{itemize}
\item One of $\beta^4,\gamma^4,\beta^2\gamma^2$ is a vertex.
\item One of $\beta^4,\delta^4,\beta^2\delta^2$ is a vertex.
\item One of $\gamma^4,\delta^4,\gamma^2\delta^2$ is a vertex.
\end{itemize}
This is equivalent to that one of the following combinations are vertices:
\[
\{\beta^4,\gamma^4\},\;
\{\beta^4,\delta^4\},\;
\{\gamma^4,\delta^4\},
\]
\[
\{\beta^4,\gamma^2\delta^2\},\;
\{\beta^2\delta^2,\gamma^4\},\;
\{\beta^2\gamma^2,\delta^4\},\;
\{\beta^2\gamma^2,\beta^2\delta^2,\gamma^2\delta^2\}.
\]
Up to the symmetry of exchanging $(\beta,b)$ with $(\delta,c)$, and using $\beta\ne\delta$, we only need to consider $\{\beta^4,\gamma^4\}, \{\beta^4,\gamma^2\delta^2\},\{\beta^2\delta^2,\gamma^4\}$. 

\subsubsection*{Case. $\beta^4,\gamma^4$ are vertices}

The angle sums of $\alpha^3,\beta^4,\gamma^4$ and the angle sum for quadrilateral imply
\[
\alpha=\tfrac{2}{3}\pi,\;
\beta=\gamma=\tfrac{1}{2}\pi,\;
\delta=(\tfrac{1}{3}+\tfrac{4}{f})\pi.
\]
By $\beta\ne\delta$, we get $f\ne 24$. Therefore $f>24$.

By the balance lemma, we know $\delta^2\cdots$ is a vertex. By $\beta+\gamma+3\delta=(2+\tfrac{12}{f})\pi>2\pi$, we know $\delta^2\cdots$ is an even vertex. Then by $6\delta>2\pi$, we get $\delta^2\cdots=\delta^2\cdots,\delta^4\cdots$, with no $\delta$ in the remainders. By the parity lemma and the angle values, this implies either $R(\delta^2)=(\tfrac{4}{3}-\tfrac{8}{f})\pi$ or $R(\delta^4)=(\tfrac{2}{3}-\tfrac{16}{f})\pi$ is of the form $(\tfrac{2}{3}k+l)\pi$, where $k,l$ are non-negative integers. By $f>24$, we find this to be impossible.

\subsubsection*{Case. $\beta^4,\gamma^2\delta^2$ are vertices}

The angle sums of $\alpha^3,\beta^4,\gamma^2\delta^2$ and the angle sum for quadrilateral imply
\[
\alpha=\tfrac{2}{3}\pi,\;
\beta=\tfrac{1}{2}\pi,\;
\gamma+\delta=\pi,\;
f=24.
\]
The possible AADs $\thin^{\beta}\alpha^{\delta}\thin^{\beta}\alpha^{\delta}\thin^{\beta}\alpha^{\delta}\thin$ and $\thin^{\beta}\alpha^{\delta}\thin^{\beta}\alpha^{\delta}\thin^{\delta}\alpha^{\beta}\thin$ of $\alpha^3$ imply $\beta\delta\cdots$ is a vertex. By $f=24$ and the first part of Lemma \ref{count-aaa}, all vertices have degrees $3$ or $4$. Then by $\beta+\gamma+\delta=\tfrac{3}{2}\pi\ne 2\pi$ and the parity lemma, we get $\beta\delta\cdots=\beta^2\delta^2$, contradicting $\gamma^2\delta^2$ and $\beta\ne\delta$. 

\subsubsection*{Case. $\beta^2\delta^2,\gamma^4$ are vertices}

The angle sums of $\alpha^3,\beta^2\delta^2,\gamma^4$ and the angle sum for quadrilateral imply
\[
\alpha=\tfrac{2}{3}\pi,\;
\beta+\delta=\pi,\;
\gamma=\tfrac{1}{2}\pi,\;
f=24.
\]
By $f=24$ and the first part of Lemma \ref{count-aaa}, all vertices have degrees $3$ or $4$. Then by $\beta^2\delta^2,\gamma^4$, and $\beta\ne\delta$, we know a degree $4$ vertex other than $\beta^2\delta^2,\gamma^4$ must have $\alpha$. By Lemma \ref{nod} (or the angle values), we know $\alpha^2\gamma^2$ is not a vertex. Therefore the only vertices besides $\alpha^3,\beta^2\delta^2,\gamma^4$ are $\alpha^2\beta^2,\alpha^2\delta^2$. Up to the symmetry of exchanging $(\beta,b)$ with $(\delta,c)$, and using $\beta\ne\delta$, we may assume $\text{AVC}=\{\alpha^3,\beta^2\delta^2,\gamma^4,\alpha^2\beta^2\}$. 

By applying the counting lemma to $\beta,\delta$, we know $\alpha^2\beta^2$ is not a vertex. Then we get $\text{AVC}
=\{\alpha^3,\beta^2\delta^2,\gamma^4\}$. The AVC is contained in \eqref{aaa-abbeq1}, and we know $\alpha^3$ is a vertex. By the earlier argument, we find the tiling is the quadrilateral subdivision $Q_{\square}P_6$ of the cube. 
\end{proof}

\begin{proposition}\label{abb_general}
Tilings of the sphere by congruent general quadrilaterals, such that $\alpha\beta^2$ is the only degree $3$ vertex, are the earth map tiling $E_{\square}2$ and its flip modifications $F_1E_{\square}2,F_2E_{\square}2$.
\end{proposition}

\begin{proof}
By Lemma \ref{count-att}, we have $f\ge 16$. 

We know $\alpha\cdots$ is an even vertex. Then a vertex $\alpha\gamma\delta\cdots$ implies $\alpha+2\gamma+2\delta\le 2\pi$. Combined with the angle sum of $\alpha\beta^2$, we find the angle sum for quadrilateral is $\le 2\pi$, a contradiction. Therefore $\alpha\gamma\delta\cdots$ is not a vertex. 

By $\alpha\beta^2$, if (the even vertex) $\alpha\cdots$ has $\beta$, then it is $\alpha\beta^2$. This implies that, if $\alpha\cdots$ has $\gamma,\delta$, then it has no $\beta$. By no $\alpha\gamma\delta\cdots$, the vertex is $\alpha^k\gamma^l,\alpha^k\delta^l$. Then by Lemma \ref{nod}, we get $\alpha\cdots=\alpha\beta^2,\alpha^k\delta^l$. Moreover, the unique AAD $\thick^{\gamma}\beta^{\alpha}\thin^{\beta}\alpha^{\delta}\thin^{\alpha}\beta^{\gamma}\thick$ of $\alpha\beta^2$ implies $\alpha\delta\cdots=\alpha^k\delta^l$ ($k\ge 1$, $l\ge 2$) is a vertex.  

By Proposition \ref{abb-ccdd}, we know $\gamma^2\delta^2$ is not a vertex. By applying Lemma \ref{deg3miss} to $\gamma,\delta$, and no $\gamma^2\delta^2$, and the parity lemma, we know one of $\gamma^4,\delta^4$ is a vertex. 

If $\gamma^4$ is a vertex, then we apply the second part of Lemma \ref{count-att} to $\delta$, and use the parity lemma to conclude that one of the following is a vertex:
\begin{itemize}
\item Degree $4$ vertex $\delta\cdots$: By no $\alpha\beta\gamma\delta$, this is an even vertex. Then by no $\gamma^2\delta^2$, this is $\alpha^2\delta^2,\beta^2\delta^2,\delta^4$. 
\item Degree $5$ vertex $\delta^3\cdots$: The even vertex is $\alpha\delta^4$. The odd vertex is $\beta\gamma\delta^3$.
\item Degree $6$ vertex $\delta^5\cdots$: The vertex is even, and must be $\delta^6$.
\item Degree $7$ vertex $\delta^7$: This is not a vertex.
\end{itemize}

Similarly, if $\delta^4$ is a vertex, then we know one of $\beta^2\gamma^2,\gamma^4,\beta\gamma^3\delta,\gamma^6$ is a vertex (no $\alpha^2\gamma^2,\alpha\gamma^4$ by Lemma \ref{nod}). In summary, we know that, in addition to $\alpha\beta^2$, one of the following pairs of vertices appear:
\[
\{\alpha^2\delta^2,\gamma^4\},\;
\{\alpha\delta^4,\gamma^4\},\;
\{\beta^2\gamma^2,\delta^4\},\;
\{\beta^2\delta^2,\gamma^4\},\;
\]
\[
\{\beta\gamma^3\delta,\delta^4\},\;
\{\beta\gamma\delta^3,\gamma^4\},\;
\{\gamma^4,\delta^4\},\;
\{\gamma^4,\delta^6\},\;
\{\gamma^6,\delta^4\}.
\]

\subsubsection*{Case. $\alpha^2\delta^2,\gamma^4$ are vertices}

The angle sums of $\alpha\beta^2,\alpha^2\delta^2,\gamma^4$ and the angle sum for quadrilateral imply
\[
\alpha=(1-\tfrac{8}{f})\pi,\;
\beta=(\tfrac{1}{2}+\tfrac{4}{f})\pi,\;
\gamma=\tfrac{1}{2}\pi,\;
\delta=\tfrac{8}{f}\pi.
\]
By $f\ge 16$, we get $\alpha\ge\frac{1}{2}\pi$.

We know $\alpha\cdots=\alpha\beta^2,\alpha^k\delta^l$, where $l$ is even. By $\alpha+\delta=\pi$, we get $\alpha^k\delta^l=\alpha^k,\alpha^2\delta^2,\alpha\delta^l,\delta^l$. By $\alpha\ge\frac{1}{2}\pi$ and the only degree $3$ vertex $\alpha\beta^2$, we get $k=4$ in $\alpha^k$. Therefore $\alpha\cdots=\alpha\beta^2,\alpha^4,\alpha^2\delta^2,\alpha\delta^l$.

It remains to consider vertices without $\alpha$. By $\beta>\gamma=\frac{1}{2}\pi$, the only vertex with at least four from $\beta,\gamma$ is $\gamma^4$. Then we are left with $\beta^2\delta^l,\gamma^2\delta^l,\beta\gamma\delta^l$. We conclude all the possible vertices
\begin{equation}\label{aaa-abbeq4}
\text{AVC}
=\{\alpha\beta^2,\alpha^4,\alpha^2\delta^2,\gamma^4,\alpha\delta^l,\beta^2\delta^l,\gamma^2\delta^l,\beta\gamma\delta^l,\delta^l\}.
\end{equation}

First we assume $f>16$. By $\alpha=(1-\tfrac{8}{f})\pi$, this implies $\alpha^4$ is not a vertex. 

The AAD $\dash^{\delta}\gamma^{\beta}\thick^{\gamma}\beta^{\alpha}\thin^{\alpha}\delta^{\gamma}\dash$ at $\beta\gamma\delta^l$ determines $T_1,T_2,T_3$ in the first of Figure \ref{abbA}. Then $\alpha_1\alpha_3\cdots=\alpha^2\delta^2=\thin\alpha_1\thin\alpha_3\thin\delta\dash\delta\thin$ determines $T_4$. Then $\alpha_4\thin\delta_1\cdots=\alpha_4\thin\delta_1\dash\delta\cdots$ (this can be $\alpha^2\delta^2$ or $\alpha\delta^l$) gives $\delta$ next to $\delta_1$. On the other hand, $\beta_2\gamma_1\cdots=\beta\gamma\delta^l$ gives $\delta$ next to $\gamma_1$. We get two $\delta$ in a tile, a contradiction. Therefore $\beta\gamma\delta^l$ is not a vertex.

\begin{figure}[htp]
\centering
\begin{tikzpicture}


\draw
	(1,0) -- (0,0)
	(0,-0.3) -- (0,2) 
	(-1,-0.3) -- (-1,0)
	(-1,1) -- (0,1)
	(-1,0) -- (-2,0) -- (-2,1);

\draw[line width=1.2]
	(-1,1) -- (-1,0)
	(0,2) -- (-1,2)
	(1,0) -- (1,1);

\draw[dashed]
	(-1,0) -- (0,0)
	(0,1) -- (1,1)
	(-1,2) -- (-1,1) -- (-2,1);
	
\node at (-0.2,0.8) {\small $\alpha$};
\node at (-0.8,0.8) {\small $\beta$};
\node at (-0.2,0.2) {\small $\delta$};
\node at (-0.8,0.2) {\small $\gamma$};

\node at (-0.8,-0.2) {\small $\delta$};
\node at (-0.2,-0.2) {\small $\delta$};

\node at (0.2,0.2) {\small $\alpha$};
\node at (0.8,0.2) {\small $\beta$};
\node at (0.2,0.8) {\small $\delta$};
\node at (0.8,0.8) {\small $\gamma$};

\node at (0.2,1.2) {\small $\delta$};

\node at (-1.8,0.2) {\small $\alpha$};
\node at (-1.2,0.2) {\small $\beta$};
\node at (-1.8,0.8) {\small $\delta$};
\node at (-1.2,0.8) {\small $\gamma$};

\node at (-0.2,1.2) {\small $\alpha$};
\node at (-0.2,1.75) {\small $\beta$};
\node at (-0.8,1.2) {\small $\delta$};
\node at (-0.8,1.8) {\small $\gamma$};

\node[inner sep=0.5,draw,shape=circle] at (-0.5,0.5) {\small $1$};
\node[inner sep=0.5,draw,shape=circle] at (-1.5,0.5) {\small $2$};
\node[inner sep=0.5,draw,shape=circle] at (-0.5,1.5) {\small $3$};
\node[inner sep=0.5,draw,shape=circle] at (0.5,0.5) {\small $4$};


\begin{scope}[xshift=4cm]

\foreach \a in {1,-1}
{
\begin{scope}[xscale=\a]

\draw
	(0,0) -- (2,0)
	(1,0) -- (1,1)
	(0,1) -- (0.2,2) -- (1.2,2);

\draw[line width=1.2]
	(0,1) -- (0,0)
	(1.2,2) -- (1,1)
	(2,0) -- (2,1);

\draw[dashed]
	(0,1) -- (2,1)
	(1,-0.3) -- (1,0);

\node at (0.8,0.2) {\small $\alpha$};
\node at (0.2,0.2) {\small $\beta$};
\node at (0.8,0.8) {\small $\delta$};
\node at (0.2,0.8) {\small $\gamma$};

\node at (0.8,-0.2) {\small $\delta$};
\node at (1.2,-0.2) {\small $\delta$};

\node at (1.2,0.2) {\small $\alpha$};
\node at (1.8,0.2) {\small $\beta$};
\node at (1.2,0.8) {\small $\delta$};
\node at (1.8,0.8) {\small $\gamma$};

\node at (0.35,1.85) {\small $\alpha$};
\node at (0.95,1.8) {\small $\beta$};
\node at (0.25,1.2) {\small $\delta$};
\node at (0.85,1.2) {\small $\gamma$};

\end{scope}
}

\node[inner sep=0.5,draw,shape=circle] at (-0.5,0.5) {\small $1$};
\node[inner sep=0.5,draw,shape=circle] at (0.5,0.5) {\small $2$};
\node[inner sep=0.5,draw,shape=circle] at (-0.6,1.5) {\small $3$};
\node[inner sep=0.5,draw,shape=circle] at (0.6,1.5) {\small $4$};
\node[inner sep=0.5,draw,shape=circle] at (-1.5,0.5) {\small $5$};
\node[inner sep=0.5,draw,shape=circle] at (1.5,0.5) {\small $6$};

\end{scope}
	
\end{tikzpicture}
\caption{Proposition \ref{abb_general}: $\beta\gamma\delta^l$ and $\gamma^2\delta^l$.}
\label{abbA}
\end{figure}
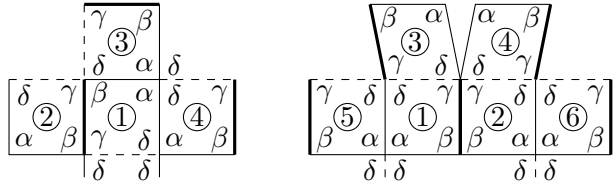

The AAD $\thin^{\alpha}\delta^{\gamma}\dash^{\delta}\gamma^{\beta}\thick^{\beta}\gamma^{\alpha}\dash^{\gamma}\delta^{\alpha}\thin$ at $\gamma^2\delta^l$ determines $T_1,T_2,T_3,T_4$ in the second of Figure \ref{abbA}. By no $\beta\gamma\delta^l$, we get $\gamma_3\delta_1\cdots=\gamma^2\delta^l=\thick\gamma_3\dash\delta_1\thin\delta\dash\cdots$. This determines $T_5$. Then $\alpha_1\thin\alpha_5\cdots=\alpha^2\delta^2=\thin\alpha_1\thin\alpha_5\thin\delta\dash\delta\thin$ gives $\delta$ next to $\alpha_1$. By the same argument, we also get $\delta$ next to $\alpha_2$. On the other hand, we have $\beta_1\beta_2\cdots=\alpha\beta^2,\beta^2\delta^l$. If $\beta_1\beta_2\cdots=\alpha\beta^2$, then we have $\alpha$ adjacent to two $\delta$ in a tile. If $\beta_1\beta_2\cdots=\beta^2\delta^l$, then we have two $\delta$ in a tile. Both are contradictions. Therefore $\gamma^2\delta^l$ is not a vertex.

We conclude the updated list of possible vertices for $f>16$:
\begin{equation}\label{aaa-abbeq2}
\text{AVC}
=\{\alpha\beta^2,\alpha^2\delta^2,\gamma^4,\alpha\delta^l,\beta^2\delta^l,\delta^l\}. 
\end{equation}

For $f=16$, we get
\begin{equation}\label{aaa-abbeq3}
\alpha=\gamma=\delta=\tfrac{1}{2}\pi,\;
\beta=\tfrac{3}{4}\pi.
\end{equation}
By $R(\beta^2)=\alpha\le \beta,\gamma,\delta$ and the parity lemma, we get $\beta^2\cdots=\alpha\beta^2$. Then $\beta^2\delta^l$ is not a vertex, and the unique arrangement $\thick\beta\thin\alpha\thin\beta\thick$ of $\alpha\beta^2$ implies $\beta\thin\beta\cdots$ is not a vertex. By $R(\beta\gamma\delta)=\tfrac{1}{4}\pi<\delta$, we also know $\beta\gamma\delta^l$ is not a vertex. Then by \eqref{aaa-abbeq4}, we know $\beta\delta\cdots$ is not a vertex. By no $\beta\thin\beta\cdots,\beta\delta\cdots$, the AAD implies $\alpha^4$ is not a vertex. Therefore all possible vertices still belong to \eqref{aaa-abbeq2}.

It remains to find the tiling for the AVC \eqref{aaa-abbeq2}. By $\gamma\cdots=\gamma^4$, we may regard the tiling as a tiling of $n(\gamma^4)$ in Figure \ref{aaa-abbB}. We will actually take the alternative approach of removing all the $c$-edges (and all $\gamma^4$ vertices). Then we get a tiling by congruent symmetric pentagons in the third of Figure \ref{tiling4}, with angles $\bar{\alpha}=2\delta,\bar{\beta}=\alpha,\bar{\delta}=\beta$ and edge combination $\bar{a}^4\bar{b}$ ($\bar{a}=a,\bar{b}=2b$). The AVC \eqref{aaa-abbeq2} becomes the following for the symmetric pentagonal tiling
\[
\text{AVC}
=\{\bar{\beta}\bar{\delta}^2,\bar{\alpha}\bar{\beta}^2,\bar{\alpha}^{\bar{l}}\bar{\beta},\bar{\alpha}^{\bar{l}}\bar{\delta}^2,\bar{\alpha}^{\bar{l}}\}, \quad
\bar{f}=\tfrac{f}{2},\;
\bar{l}=\tfrac{l}{2}. 
\]
Since the AVC has no degree $2$ vertex, the pentagonal tiling is edge-to-edge. By \cite[Proposition 1]{cly} (specifically, the proof for the case $\bar{\alpha}\bar{\beta}^2$ is a vertex), we know edge-to-edge tilings of the sphere by congruent symmetric pentagons with edge combination $\bar{a}^4\bar{b}$ are the earth map tiling $E_{\pentagon}1$ and its flip modifications $F_1E_{\pentagon}1,F_2E_{\pentagon}1$. Then our quadrilateral tilings are obtained by adding $c$ back, i.e., simple quadrilateral subdivisions of these tilings. Figure \ref{pemt} shows the simple quadrilateral subdivision of $E_{\pentagon}1$ is $E_{\square}2$. Then the simple quadrilateral subdivisions of $F_1E_{\pentagon}1,F_2E_{\pentagon}1$ are the flip modifications $F_1E_{\square}2,F_2E_{\square}2$ in Figure \ref{flip10}.

We remark that the geometrical existence of the symmetric pentagon for $E_{\pentagon}1$ (no additional condition is required for $F_1E_{\pentagon}1,F_2E_{\pentagon}1$) is discussed in \cite{cly}. In fact, the discussion is exactly the existence of the quadrilateral for $E_{\square}2$.

\subsubsection*{Case. $\alpha\delta^4,\gamma^4$ are vertices}

The angle sums of $\alpha\beta^2,\alpha\delta^4,\gamma^4$ and the angle sum for quadrilateral imply
\[
\alpha=\tfrac{16}{f}\pi,\;
\beta=(1-\tfrac{8}{f})\pi,\;
\gamma=\tfrac{1}{2}\pi,\;
\delta=(\tfrac{1}{2}-\tfrac{4}{f})\pi.
\]
The AAD $\dash^{\gamma}\delta^{\alpha}\thin^{\alpha}\delta^{\gamma}\dash$ at $\alpha\delta^4$ implies $\alpha^2\cdots$ is a vertex. Therefore $\alpha<\pi$, which means $f>16$. Then $\beta>\gamma=\frac{1}{2}\pi>\delta>\frac{1}{4}\pi$. This implies $\beta^2\gamma^2\cdots$ is not a vertex.

By $R(\beta\gamma\delta)=\frac{12}{f}\pi<\alpha,2\beta,2\gamma,4\delta$, and the parity lemma, we know $\beta\gamma\delta\cdots=\beta\gamma\delta^3$. The angle sum of $\beta\gamma\delta^3$ further implies
\[
\alpha=\tfrac{4}{5}\pi,\;
\beta=\tfrac{3}{5}\pi,\;
\gamma=\tfrac{1}{2}\pi,\;
\delta=\tfrac{3}{10}\pi,\;
f=20.
\]
Then $R(\alpha^2)=\tfrac{2}{5}\pi<\alpha,\beta,\gamma,2\delta$. By the parity lemma, this implies $\alpha^2\cdots$ is not a vertex, a contradiction. Therefore $\beta\gamma\delta\cdots$ is not a vertex. In particular, all vertices are even.

By $f>16$, we get $\beta>\gamma=\frac{1}{2}\pi>\delta>\frac{1}{4}\pi$. This implies $n=4$ in $\gamma^n$, and $n=6$ in $\delta^n$, and $\beta^2\gamma^2\cdots,\beta^n$ are not vertices. Moreover, we have $R(\beta^2\delta^2)< R(\gamma^2\delta^2)=\frac{8}{f}\pi<\alpha,\beta,\gamma,2\delta$. This implies $\beta^2\delta^2\cdots=\beta^2\delta^2$ and $\gamma^2\delta^2\cdots=\gamma^2\delta^2$. Then by $\gamma+\delta<\pi$, we know $\gamma^2\delta^2\cdots$ is not a vertex. Therefore $\beta^2\delta^2,\gamma^4,\delta^6$ are all the vertices without $\alpha$. 

We know $\alpha\cdots=\alpha\beta^2,\alpha^k\delta^l$. By $\alpha\delta^4$, we know $\alpha^k\delta^l=\alpha\delta^4,\alpha^k,\alpha^k\delta^2$. Therefore $\alpha\beta^2,\alpha\delta^4,\beta^2\delta^2,\gamma^4,\delta^6,\alpha^k,\alpha^k\delta^2$ are all the vertices.

The angle sum of any one of $\alpha^2\delta^2,\beta^2\delta^2,\delta^6$ further implies $f=24$. If $f\ne 24$, then $\alpha\beta^2,\alpha\delta^4,\gamma^4,\alpha^k,\alpha^k\delta^2(k\ge 3)$ are all the vertices. This implies $\beta\delta\cdots$ and $\beta\thin\beta\cdots$ are not vertices. Then the AAD of $\thin\alpha\thin\alpha\thin$ is $\thin^{\beta}\alpha^{\delta}\thin^{\delta}\alpha^{\beta}\thin$. This further implies no consecutive $\alpha\alpha\alpha$. Therefore $\alpha^k,\alpha^k\delta^2$ are not vertices, and we find $\alpha\beta^2,\alpha\delta^4,\gamma^4$ are all the vertices. Then we get
\[
f=\#\alpha
=\#\alpha\beta^2+\#\alpha\delta^4
=\tfrac{1}{2}\#\beta+\tfrac{1}{4}\#\delta
=\tfrac{3}{4}f,
\]
a contradiction. 

Therefore $f=24$, and we get
\begin{equation}\label{abbeq1}
\alpha=\beta=\tfrac{2}{3}\pi,\;
\gamma=\tfrac{1}{2}\pi,\;
\delta=\tfrac{1}{3}\pi.
\end{equation}
By the only degree $3$ vertex $\alpha\beta^2$, we get all the vertices
\begin{equation}\label{abbeq2}
\text{AVC}
=\{\alpha\beta^2,\alpha\delta^4,\gamma^4,\alpha^2\delta^2,\beta^2\delta^2,\delta^6\}
\end{equation}
This is a special case of the AVC \eqref{aaa-abbeq2}. Then we get the earth map tiling $E_{\square}2$ with three timezones, and the two flip modifications $F_1E_{\square}2,F_2E_{\square}2$.

\subsubsection*{Case. $\beta^2\delta^2,\gamma^4$ are vertices}

The angle sums of $\alpha\beta^2,\beta^2\delta^2,\gamma^4$ and the angle sum for quadrilateral imply
\[
\alpha=(\tfrac{1}{2}+\tfrac{4}{f})\pi,\;
\beta=(\tfrac{3}{4}-\tfrac{2}{f})\pi,\;
\gamma=\tfrac{1}{2}\pi,\;
\delta=(\tfrac{1}{4}+\tfrac{2}{f})\pi.
\]
The AAD $\thin^{\alpha}\beta^{\gamma}\thick^{\gamma}\beta^{\alpha}\thin^{\alpha}\delta^{\gamma}\dash^{\gamma}\delta^{\alpha}\thin$ of $\beta^2\delta^2$ implies $\alpha^2\cdots=\alpha^k\delta^l(k\ge 2)$ is a vertex. The angle values and the only degree $3$ vertex $\alpha\beta^2$ imply the vertex is $\alpha^2\delta^2$. The angle sum of $\alpha^2\delta^2$ further implies \eqref{abbeq1}. Then we get the AVC \eqref{abbeq2}, and the tilings are $E_{\square}2,F_1E_{\square}2,F_2E_{\square}2$.

\subsubsection*{Case. $\beta^2\gamma^2,\delta^4$ are vertices}

The angle sums of $\alpha\beta^2,\beta^2\gamma^2,\delta^4$ and the angle sum for quadrilateral imply
\[
\alpha=(\tfrac{1}{2}+\tfrac{4}{f})\pi,\;
\beta=(\tfrac{3}{4}-\tfrac{2}{f})\pi,\;
\gamma=(\tfrac{1}{4}+\tfrac{2}{f})\pi,\;
\delta=\tfrac{1}{2}\pi.
\]
The AAD $\thick^{\gamma}\beta^{\alpha}\thin^{\alpha}\beta^{\gamma}\thick^{\beta}\gamma^{\delta}\dash^{\delta}\gamma^{\beta}\thick$ of $\beta^2\gamma^2$ implies $\alpha^2\cdots=\alpha^k\delta^l(k\ge 2)$ is a vertex. However, by $\alpha>\delta=\frac{1}{2}\pi$, and the only degree $3$ vertex $\alpha\beta^2$, this is not a vertex.

\subsubsection*{Case. $\gamma^4,\delta^4$ are vertices}

The angle sums of $\alpha\beta^2,\gamma^4,\delta^4$ and the angle sum for quadrilateral imply
\[
\alpha=\tfrac{8}{f}\pi,\;
\beta=(1-\tfrac{4}{f})\pi,\;
\gamma=\delta=\tfrac{1}{2}\pi.
\]
We know $\gamma^2\delta^2$ is not a vertex. By $f\ge 16$, we get $\beta>\gamma=\delta=\tfrac{1}{2}\pi$. Therefore $\gamma^4,\delta^4$ are the only vertices without $\alpha$. We also know $\alpha\cdots=\alpha\beta^2,\alpha^k\delta^l(k\ge 1)$. By $\delta=\tfrac{1}{2}\pi$ and the parity lemma, we get $l=0,2$ in $\alpha^k\delta^l(k\ge 1)$. 

Therefore $\alpha\beta^2,\gamma^4,\delta^4,\alpha^k,\alpha^k\delta^2$ are all the vertices. This implies $\beta\delta\cdots$ and $\beta\thin\beta\cdots$ are not vertices. Then the AAD of $\thin\alpha\thin\alpha\thin$ is $\thin^{\beta}\alpha^{\delta}\thin^{\delta}\alpha^{\beta}\thin$, and further implies no consecutive $\alpha\alpha\alpha$. Therefore $\alpha^k$ is not a vertex, and $k<3$ in $\alpha^k\delta^2$. By the only degree $3$ vertex $\alpha\beta^2$, we get $\alpha^k\delta^2=\alpha^2\delta^2$. The angle sum of $\alpha^2\delta^2$ further implies the angle values in \eqref{aaa-abbeq3}. By the earlier argument, we get the earth map tiling $E_{\square}2$ with $f=16$ (i.e., two timezones).

\subsubsection*{Case. $\gamma^4,\delta^6$ are vertices}

The angle sums of $\alpha\beta^2,\gamma^4,\delta^6$ and the angle sum for quadrilateral imply
\[
\alpha=(\tfrac{1}{3}+\tfrac{8}{f})\pi,\;
\beta=(\tfrac{5}{6}-\tfrac{4}{f})\pi,\;
\gamma=\tfrac{1}{2}\pi,\;
\delta=\tfrac{1}{3}\pi.
\]
Recall $\alpha\delta\cdots=\alpha^k\delta^l$ is a vertex, and $l$ is even. By $f\ge 16$ and the angle values, we get $\alpha\delta\cdots=\alpha\delta^4,\alpha^2\delta^2,\alpha^3\delta^2$. We have discussed the case $\alpha^2\delta^2,\gamma^4$ are vertices, and the case $\alpha\delta^4,\gamma^4$ are vertices. We obtain the tilings $E_{\square}2,F_1E_{\square}2,F_2E_{\square}2$.

The angle sum of $\alpha^3\delta^2$ further implies
\[
\alpha=\tfrac{4}{9}\pi,\;
\beta=\tfrac{7}{9}\pi,\;
\gamma=\tfrac{1}{2}\pi,\;
\delta=\tfrac{1}{3}\pi,\;
f=72.
\]
By $R(\beta\gamma\delta)=\frac{7}{18}\pi<\beta,\gamma,2\delta$, there is no odd vertex. Then by $R(\beta^2)=\alpha<\beta,\gamma,2\delta$, we get $\beta\cdots=\alpha\beta^2$. This implies $\beta\delta\cdots$ and $\beta\thin\beta\cdots$ are not vertices, and further implies no consecutive $\alpha\alpha\alpha$. Therefore $\alpha^3\delta^2$ is not a vertex.

\subsubsection*{Case. $\gamma^6,\delta^4$ are vertices}

The angle sums of $\alpha\beta^2,\gamma^6,\delta^4$ and the angle sum for quadrilateral imply
\[
\alpha=(\tfrac{1}{3}+\tfrac{8}{f})\pi,\;
\beta=(\tfrac{5}{6}-\tfrac{4}{f})\pi,\;
\gamma=\tfrac{1}{3}\pi,\;
\delta=\tfrac{1}{2}\pi.
\]
Recall $\alpha\delta\cdots=\alpha^k\delta^l$ is a vertex, and $l$ is even. By $f\ge 16$ and the angle values, we get $\alpha\delta\cdots=\alpha^2\delta^2$. 

The angle sum of $\alpha^2\delta^2$ further implies
\[
\alpha=\delta=\tfrac{1}{2}\pi,\;
\beta=\tfrac{3}{4}\pi,\;
\gamma=\tfrac{1}{3}\pi,\;
f=48.
\]
By the angle values and the parity lemma, we find $\alpha\beta^2,\alpha^4,\alpha^2\delta^2,\gamma^6,\delta^4$ are all the vertices. This implies $\beta\delta\cdots$ and $\beta\thin\beta\cdots$ are not vertices, and further implies no consecutive $\alpha\alpha\alpha$. Therefore $\alpha^4$ is not a vertex, and we get
\[
\text{AVC}=\{\alpha\beta^2,\alpha^2\delta^2,\gamma^6,\delta^4\}.
\]
The AVC implies all vertices have unique AADs.

The vertex $\delta^4$ determines four tiles around it in Figure \ref{abbF}, including $T_1,T_2$. Then $\gamma_1\cdots=\gamma^6$ determines $T_3,T_4,T_5$. Then $\alpha_1\alpha_2\cdots=\alpha^2\delta^2$ and $\beta_1\beta_3\cdots=\alpha\beta^2$ determine $T_6$. Then $\alpha_3\beta_6\cdots=\alpha\beta^2$ determines $T_7$. Then $\alpha_7\delta_3\delta_4\cdots=\alpha^2\delta^2$ and no $\beta\delta\cdots$ determine $T_8$. By the symmetry, we also determine $T_9$. Then $\beta_4\beta_5\cdots=\alpha\beta^2$ and $\alpha_4\beta_8\cdots=\alpha_5\beta_9\cdots=\alpha\beta^2$ imply that $\alpha$ is adjacent to two $\beta$ in a tile, a contradiction. 

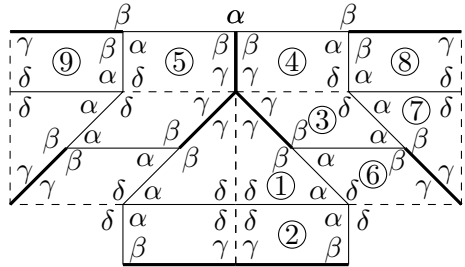
\begin{figure}[htp]
\centering
\begin{tikzpicture}

\foreach \a in {1,-1}
{
\begin{scope}[xscale=\a]

\draw
	(0,0) -- (1.5,0) 
	(1.5,-0.8) -- (1.5,0) -- (0.75,0.75) -- (2.25,0.75) -- (1.5,1.5) -- (1.5,2.3) -- (0,2.3)
	(1.5,1.5) -- (3,1.5);

\draw[line width=1.2]
	(1.5,-0.8) -- (0,-0.8)
	(0.75,0.75) -- (0,1.5)
	(2.25,0.75) -- (3,0)
	(0,1.5) -- (0,2.3)
	(1.5,2.3) -- (3,2.3);

\draw[dashed]
	(0,-0.8) -- (0,1.5)
	(1.5,0) -- (3,0) -- (3,2.3)
	(0,1.5) -- (1.5,1.5);

\node at (1.3,-0.2) {\small $\alpha$};
\node at (1.3,-0.6) {\small $\beta$};
\node at (0.2,-0.2) {\small $\delta$};
\node at (0.2,-0.6) {\small $\gamma$};

\node at (1.1,0.15) {\small $\alpha$};
\node at (0.6,0.6) {\small $\beta$};
\node at (0.2,0.2) {\small $\delta$};
\node at (0.2,1) {\small $\gamma$};

\node at (1.7,-0.2) {\small $\delta$};

\node at (1.2,0.6) {\small $\alpha$};
\node at (2.15,0.55) {\small $\beta$};
\node at (1.55,0.2) {\small $\delta$};
\node at (2.5,0.2) {\small $\gamma$};

\node at (1.9,0.9) {\small $\alpha$};
\node at (0.85,0.95) {\small $\beta$};
\node at (1.45,1.3) {\small $\delta$};
\node at (0.45,1.3) {\small $\gamma$};

\node at (1.3,2.1) {\small $\alpha$};
\node at (0.2,2.1) {\small $\beta$};
\node at (1.3,1.7) {\small $\delta$};
\node at (0.2,1.7) {\small $\gamma$};

\node at (0,2.5) {\small $\alpha$};

\node at (1.95,1.3) {\small $\alpha$};
\node at (2.45,0.85) {\small $\beta$};
\node at (2.8,1.3) {\small $\delta$};
\node at (2.8,0.4) {\small $\gamma$};

\node at (1.7,1.7) {\small $\alpha$};
\node at (1.7,2.05) {\small $\beta$};
\node at (2.8,1.7) {\small $\delta$};
\node at (2.8,2.1) {\small $\gamma$};

\node at (1.5,2.5) {\small $\beta$};

\end{scope}
}

\node[inner sep=0.5,draw,shape=circle] at (0.6,0.25) {\small $1$};
\node[inner sep=0.5,draw,shape=circle] at (0.75,-0.4) {\small $2$};
\node[inner sep=0.5,draw,shape=circle] at (1.15,1.125) {\small $3$};
\node[inner sep=0.5,draw,shape=circle] at (0.75,1.9) {\small $4$};
\node[inner sep=0.5,draw,shape=circle] at (-0.75,1.9) {\small $5$};
\node[inner sep=0.5,draw,shape=circle] at (1.825,0.4) {\small $6$};
\node[inner sep=0.5,draw,shape=circle] at (2.4,1.25) {\small $7$};
\node[inner sep=0.5,draw,shape=circle] at (2.25,1.9) {\small $8$};
\node[inner sep=0.5,draw,shape=circle] at (-2.25,1.9) {\small $9$};

\end{tikzpicture}
\caption{Proposition \ref{abb_general}: No tiling for $\{\alpha\beta^2,\alpha^2\delta^2,\gamma^6,\delta^4\}$.}
\label{abbF}
\end{figure}

\subsubsection*{Case. $\beta\gamma^3\delta,\delta^4$ are vertices}

The angle sums of $\alpha\beta^2,\beta\gamma^3\delta,\delta^4$ and the angle sum for quadrilateral imply
\[
\alpha=(\tfrac{1}{2}+\tfrac{6}{f})\pi,\;
\beta=(\tfrac{3}{4}-\tfrac{3}{f})\pi,\;
\gamma=(\tfrac{1}{4}+\tfrac{1}{f})\pi,\;
\delta=\tfrac{1}{2}\pi.
\]
By $\alpha>\delta=\frac{1}{2}\pi$, and the only degree $3$ vertex $\alpha\beta^2$, we know $\alpha\delta\cdots=\alpha^k\delta^l$ is not a vertex. This is a contradiction.

\subsubsection*{Case. $\beta\gamma\delta^3,\gamma^4$ are vertices}

The angle sums of $\alpha\beta^2,\beta\gamma\delta^3,\gamma^4$ and the angle sum for quadrilateral imply
\[
\alpha=(\tfrac{1}{2}+\tfrac{6}{f})\pi,\;
\beta=(\tfrac{3}{4}-\tfrac{3}{f})\pi,\;
\gamma=\tfrac{1}{2}\pi,\;
\delta=(\tfrac{1}{4}+\tfrac{1}{f})\pi.
\]
Recall $\alpha\delta\cdots=\alpha^k\delta^l$ is a vertex, and $l$ is even. By $f\ge 16$ and the angle values, we get $\alpha\delta\cdots=\alpha^2\delta^2,\alpha\delta^4$. We already discussed the case $\alpha^2\delta^2,\gamma^4$ are vertices, and also the case $\alpha\delta^4,\gamma^4$ are vertices. In both cases, $\beta\gamma\delta^3$ is not a vertex.
\end{proof}

\section{Tiling by Almost Equilateral Quadrilateral}
\label{almostquad}

The almost equilateral quadrilateral is the third of Figure \ref{quad}. Since every tile has a $b$-companion similar to the general quadrilateral in Figure \ref{tiling_aabc}, we know $f$ is even. We call a vertex {\em $b$-vertex} if it has $\gamma,\delta$. Otherwise the vertex is a {\em $\hat{b}$-vertex}.

We first deal with the possibility that some angles may have the same value.

By Lemma \ref{geometry1}, the quadrilateral is symmetric if and only if $\alpha=\beta$ and $\gamma=\delta$. The following shows that there is no tiling for this kind of quadrilaterals. Akama and van Cleemput \cite[Theorem 3.3]{ac} proved the proposition for convex almost equilateral quadrilaterals.

\begin{proposition}\label{symmetric}
There is no tiling of the sphere by congruent almost equilateral quadrilaterals, such that $\alpha=\beta$ and $\gamma=\delta$.
\end{proposition}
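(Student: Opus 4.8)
The strategy is to use the symmetry assumption together with the angle sum and the parity lemma to severely restrict the possible vertices, and then to derive a contradiction from the geometric constraints in Lemma~\ref{geometry6} (or the simpler Lemmas~\ref{geometry4}, \ref{geometry7}). Since $\alpha=\beta$ and $\gamma=\delta$, the angle sum \eqref{anglesum} for quadrilateral becomes $2\alpha+2\gamma=(2+\tfrac{4}{f})\pi$, i.e.\ $\alpha+\gamma=(1+\tfrac{2}{f})\pi$. The symmetry $\alpha=\beta$, $\gamma=\delta$ means there are really only two angle values, and every vertex is of the form $\alpha^m\gamma^n$ where (by the parity lemma applied to $\gamma,\delta$) the number of $\gamma$-corners plus the number of $\delta$-corners is even, hence $n$ is even. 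So every vertex is $\alpha^m\gamma^{2k}$ with $2m\alpha+2k\gamma=2\pi$, i.e.\ $m\alpha+k\gamma=\pi$.

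First I would dispose of the degenerate sub-cases and fix the ordering. Up to the exchange symmetry we may assume $\alpha\ge\gamma$ (the quadrilateral's symmetry of exchanging $(\alpha,\delta)$ with $(\beta,\gamma)$ interacts with $\alpha=\beta$, $\gamma=\delta$, so I would check carefully that WLOG $\alpha\ge\gamma$ or handle both). Then from $m\alpha+k\gamma=\pi$ and $\alpha+\gamma=(1+\tfrac2f)\pi>\pi$ we get that $\alpha,\gamma<\pi$, and we can bound $m,k$: since there are degree-$3$ vertices by \eqref{quadvcount3}, and a degree-$3$ vertex must be $\alpha\gamma^2$ or $\alpha^3$ (it cannot be $\gamma^3$ by parity since $3$ is odd), we split on which degree-$3$ vertices occur. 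If $\alpha^3$ is a vertex then $\alpha=\tfrac23\pi$, forcing $\gamma=(\tfrac13+\tfrac2f)\pi$, and the vertex $\alpha\gamma^2$ gives an additional relation; if $\alpha\gamma^2$ is a vertex then $\alpha+2\gamma=2\pi$, combined with $\alpha+\gamma=(1+\tfrac2f)\pi$ this pins down $\gamma=(1-\tfrac2f)\pi$ and $\alpha=(\tfrac2f)\cdot\,$something, forcing $\alpha$ small — but then I expect a contradiction with Lemma~\ref{geometry4} (if $\gamma=\delta<\pi$ then $\alpha+2\gamma>\pi$ and $\alpha+2\beta=2\alpha+\cdots$), which constrains things further. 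In each branch I would enumerate the (finitely many) candidate AVCs.

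The decisive step is geometric: once the angles are essentially determined in each sub-case, I would invoke Lemma~\ref{geometry6}. For a symmetric quadrilateral, \eqref{coolsaet_eq1} reads $\sin\tfrac12\alpha\sin(\gamma-\tfrac12\alpha)=\sin\tfrac12\alpha\sin(\gamma-\tfrac12\alpha)$, which is automatically satisfied, so \eqref{coolsaet_eq1} gives nothing; the real content is in \eqref{coolsaet_eq2}--\eqref{coolsaet_eq3}, which become a single equation $(1-\cos\alpha)\sin\gamma\cos a=\sin\alpha\cos\gamma+\sin\gamma$. This determines $\cos a$, and the constraint $|\cos a|\le 1$ (indeed $a\notin\mathbb{Z}\pi$ and simplicity) must be checked against the discrete angle values coming from the AVC analysis. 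I expect that in every surviving case either $|\cos a|>1$, giving non-existence of the quadrilateral, or the quadrilateral exists but the AVC cannot be completed into an actual tiling by a short adjacent-angle-deduction argument around a low-degree vertex (analogous to the contradictions reached in Figures~\ref{rhombus_emt1}, \ref{abbF}).

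\textbf{Main obstacle.} The hard part will be the combinatorial bookkeeping: showing that the symmetry assumption together with the parity lemma and the degree-$3$ vertex requirement leaves only a short, explicit list of angle data, and then for each entry producing either the arithmetic obstruction ($|\cos a|>1$ or $a\in\mathbb{Z}\pi$) or a clean tiling-level contradiction. A secondary subtlety is handling the interaction between the quadrilateral's own exchange symmetry $(\alpha,\delta)\leftrightarrow(\beta,\gamma)$ and the hypothesis $\alpha=\beta$, $\gamma=\delta$ — one must be careful that ``$\alpha\ge\gamma$ WLOG'' is genuinely justified, or else run both orderings. I would also need Lemma~\ref{geometry1} only to know what ``symmetric'' means here, but Lemmas~\ref{geometry4} and~\ref{geometry7} are the ones that cut down the angle ranges before the trigonometric identity finishes the job.
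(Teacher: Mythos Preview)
Your outline has the right skeleton—the angle sum $\alpha+\gamma=(1+\tfrac{2}{f})\pi$, parity forcing every vertex to be $\alpha^m\gamma^{2k}$ (your equation should read $m\alpha+2k\gamma=2\pi$, not $2m\alpha+2k\gamma=2\pi$), and the split on the unique degree~$3$ vertex $\alpha^3$ versus $\alpha\gamma^2$ is exactly how the paper proceeds. But your proposed ``decisive step'' misfires in both branches.

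In the $\alpha\gamma^2$ branch one has $\alpha=\tfrac{4}{f}\pi$ and $\gamma=(1-\tfrac{2}{f})\pi$. Lemma~\ref{geometry4} gives only $3\alpha>\pi$, hence $f<12$, leaving $f\in\{8,10\}$ after $f=6$ is discarded. Your plan to finish via \eqref{coolsaet_eq2} and the test $|\cos a|>1$ then fails: for $f=8$ one computes $\cos a=0$, and for $f=10$ one gets $\cos a\approx -0.45$, both admissible. The actual obstruction is \emph{simplicity} of the quadrilateral, and the lemma encoding it is Lemma~\ref{geometry3}, which you do not invoke: since $\alpha=\beta<\pi$ and $\gamma<\pi$, it yields $2\gamma=\gamma+\delta<\beta+\pi=\alpha+\pi$, forcing $f<8$ in one line, hence $f=6$ and $\alpha=\gamma$.

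In the $\alpha^3$ branch your sentence ``the vertex $\alpha\gamma^2$ gives an additional relation'' is precisely the gap: $\alpha\gamma^2$ cannot coexist with $\alpha^3$ when $\alpha\ne\gamma$, so you need a different source of a vertex containing $\gamma$. The paper supplies it with an AAD observation: any arrangement of $\alpha^3$ forces an adjacent vertex $\alpha\gamma\cdots$ (each $\alpha$-value corner of the tile borders a $\gamma$-value corner), which by the AVC and parity must be $\alpha\gamma^k$ with even $k\ge 4$; then $\alpha+4\gamma=(2+\tfrac{8}{f})\pi>2\pi$ gives the contradiction. Without this step the candidate AVC $\{\alpha^3,\gamma^4\}$ at $f=12$ (so $\alpha=\tfrac{2}{3}\pi$, $\gamma=\tfrac{1}{2}\pi$) survives all of Lemmas~\ref{geometry3}, \ref{geometry4}, \ref{geometry7} and yields $\cos a=\tfrac{2}{3}$ from \eqref{coolsaet_eq2}, so the toolkit you listed would not eliminate it.
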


\begin{proof}
By the angle sum for quadrilateral, we get
\[
\alpha+\gamma=(1+\tfrac{2}{f})\pi.
\]
If $\alpha=\gamma$, then $\alpha=\gamma=(\frac{1}{2}+\tfrac{1}{f})\pi<\pi$. This implies the almost equilateral quadrilateral is actually a rhombus, contradicting $a\ne b$. Therefore we have $\alpha\ne\gamma$.

Suppose $\alpha<\gamma$. Then $\alpha<(\frac{1}{2}+\tfrac{1}{f})\pi<\gamma$. By $\alpha+\gamma>\pi$ and the parity lemma (third part of Lemma \ref{parity}), this implies $\gamma\cdots=\alpha\gamma^2$. The angle sum of $\alpha\gamma^2$ further implies
\[
\alpha=\tfrac{4}{f}\pi,\;
\gamma=(1-\tfrac{2}{f})\pi.
\]
By $\alpha,\gamma<\pi$ and Lemma \ref{geometry3}, we get $\alpha+\pi>2\gamma$. This means $f<8$. Therefore $f=6$, and $\alpha=\gamma$, a contradiction. 

Suppose $\alpha>\gamma$. Then $\alpha>(\frac{1}{2}+\tfrac{1}{f})\pi>\gamma$. By $\alpha+\gamma>\pi$ and the parity lemma, this implies $\alpha^3,\alpha\gamma^k,\gamma^k$ are all the vertices. Then by the counting lemma, we know $\alpha^3$ is a vertex. The angle sum of $\alpha^3$ further implies
\[
\alpha=\tfrac{2}{3}\pi,\;
\gamma=(\tfrac{1}{3}+\tfrac{2}{f})\pi.
\]
The possible AADs $\thin^{\beta}\alpha^{\delta}\thin^{\beta}\alpha^{\delta}\thin^{\beta}\alpha^{\delta}\thin$ and $\thin^{\beta}\alpha^{\delta}\thin^{\beta}\alpha^{\delta}\thin^{\delta}\alpha^{\beta}\thin$ of $\alpha^3$ imply $\alpha\gamma\cdots=\alpha\gamma^k$ is a vertex. By $\alpha^3$, and $\alpha\ne\gamma$, and the parity lemma, we know $k\ge 4$ in $\alpha\gamma^k$. By $\alpha+4\gamma=(2+\tfrac{8}{f})\pi>2\pi$, we get a contradiction. 
\end{proof}

\begin{proposition}\label{isosceles}
Tilings of the sphere by congruent almost equilateral quadrilaterals, such that $\beta=\delta$, are the earth map tiling $E_{\square}^A1$ and the flip modification $FE_{\square}^A1$.
\end{proposition}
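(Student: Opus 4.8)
The plan is to analyze the constraints imposed by $\beta=\delta$ on an almost equilateral quadrilateral $a^3b$, then determine all possible anglewise vertex combinations, and finally show each AVC leads either to nothing or to an earth map tiling (or one of its flip modifications). Since $\beta=\delta$ but the quadrilateral is not symmetric (Proposition \ref{symmetric} having disposed of the symmetric case, so we assume $\alpha\neq\beta$), Lemma \ref{geometry1} forces a definite inequality, and the geometry is rigid: the $b$-companion structure (analogous to Figure \ref{tiling_aabc}) plus $\beta=\delta$ should, as in the proof of Proposition \ref{bnotd}, make $\gamma=\pi$ impossible to avoid \emph{or} force a chain structure. Concretely, I would first use the schematic picture (Figure \ref{scheme}) underlying Lemmas \ref{geometry4}--\ref{geometry5}: writing $\beta=\theta+\beta'$, $\delta=\theta+\delta'$ where $\theta$ is the base angle of the isosceles triangle $\triangle ABD$ and $\beta',\delta'$ are the angles of $\triangle BCD$ at $B,D$, the condition $\beta=\delta$ gives $\beta'=\delta'$, hence $\triangle BCD$ is isosceles with $BC=CD$, i.e. $a=b$ (excluded) unless $\gamma=\pi$. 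So the first key step is to conclude $\gamma=\pi$ is impossible since $a\neq b$ — wait, more carefully: $\gamma=\pi$ means $BCD$ degenerates to an arc, which is allowed geometrically, so the dichotomy is $\gamma=\pi$ or $a=b$; the latter is excluded, leaving $\gamma=\pi$ as the only option, exactly parallel to Proposition \ref{bnotd}.

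Given $\gamma=\pi$, I would then run the combinatorial argument: by Lemma \ref{angle_pi} at most one angle is $\geq\pi$, so $\alpha,\beta,\delta<\pi$, and $\gamma^2\cdots$ is not a vertex (two $\gamma=\pi$ already exceed $2\pi$, and more precisely $\gamma$ can only appear once per vertex and then only with nothing else, which is impossible). Hence every $b$-companion pair must be \emph{twisted} (in the almost equilateral analogue of Figure \ref{tiling_aabc}), producing a chain of tiles along a great circle just as in Figure \ref{dispi}. The angle sum \eqref{anglesum} with $\gamma=\pi$ gives $\alpha+\beta+\delta=(1+\tfrac{4}{f})\pi$, and with $\beta=\delta$ this is $\alpha+2\beta=(1+\tfrac{4}{f})\pi$. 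Then I would enumerate degree $3$ vertices using the parity lemma (Lemma \ref{parity}: numbers of $\gamma,\delta$ have the same parity, so since $\gamma$ appears $0$ times at any vertex other than… actually $\gamma$ never appears, so $\delta$ appears an even number of times everywhere — combined with $\beta=\delta$ this is a strong constraint). The degree $3$ vertices are among $\alpha^3$, $\alpha\beta^2=\alpha\delta^2$, and I would use the counting/balance considerations to pin down $\alpha$. If $\alpha^3$ is a vertex then $\alpha=\tfrac23\pi$, $\beta=\delta=(\tfrac16+\tfrac2f)\pi$; if $\alpha\beta^2$ is a vertex then $\alpha=(1-\tfrac4f)\pi$-type values. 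In each surviving case the AVC should reduce to something like $\{\alpha^3,\beta^k\}$ or $\{\alpha\beta^2,\ldots\}$, and the chain-of-twisted-tiles picture plus the vertex structure will force the tiling to be the earth map tiling, with the usual partial-earth-map-tiling flip argument (as in Figures \ref{rhombus_emt2}, \ref{flip2}, \ref{flip3}) producing the flip modifications shown in Figure \ref{acdA}.

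I would organize the write-up as: (1) reduce to $\gamma=\pi$ via Lemma \ref{geometry1} and the schematic triangle picture; (2) deduce no $\gamma^2\cdots$ vertex, hence all companions twisted, hence a great-circle chain; (3) solve the angle sum and enumerate low-degree vertices with the parity lemma to get the AVC in each case; (4) for the main case, build the tiling outward from a vertex (following the $\beta\gamma\delta$-style argument of Proposition \ref{bnotd}, adapted) to get the earth map tiling, and handle the case where the "long" vertex $\beta^k$ is replaced by $\alpha\beta^{k-1}$-type vertices by the standard partial-earth-map flip construction; (5) verify geometric existence by noting the quadrilateral is two congruent triangles glued along $b$ (since $\beta=\delta$ makes it kite-like when $\gamma\neq\pi$, but here $\gamma=\pi$ so it's literally an isosceles triangle with a degenerate vertex), so existence reduces to existence of a spherical isosceles triangle with the prescribed angles, which is elementary. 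The main obstacle I anticipate is step (4): carefully ruling out alternative global gluings and correctly identifying \emph{which} flip modifications arise (and showing the list in Figure \ref{acdA} is complete and non-redundant) — this requires the delicate adjacent-angle-deduction bookkeeping around the poles, exactly the kind of argument that consumed several figures in the rhombus case (Proposition \ref{rhombus}), and I'd expect the almost-equilateral orientation subtlety (the counterclockwise/clockwise convention of Figure \ref{quad_angle}) to introduce extra flip variants that must be tracked. A secondary subtlety is confirming that the $\gamma=\pi$ reduction is genuinely forced rather than merely one branch — i.e. that the "$a=b$" alternative really is the \emph{only} other possibility coming out of $\beta'=\delta'$, which needs the fact that an isosceles spherical triangle with equal base angles has equal legs, valid as long as the triangle is simple, which Lemma \ref{geometry8} or direct spherical trigonometry supplies.
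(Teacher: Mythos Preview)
Your reduction to $\gamma=\pi$ in step (1) is correct and matches the paper exactly. But step (3) contains a genuine error that derails the rest: you assert ``actually $\gamma$ never appears,'' and hence that $\delta$ appears an even number of times at every vertex. This is false. Every tile has a $\gamma$-corner, so $\gamma$ must appear at vertices; what $\gamma=\pi$ forbids is only $\gamma^2\cdots$, not a single $\gamma$. Vertices of the form $\alpha\gamma\delta$ or $\beta^k\gamma\delta$ (with remainder summing to $\pi$) are perfectly compatible with $\gamma=\pi$, and in fact they are the essential vertices here. Your resulting degree~$3$ list ``$\alpha^3$, $\alpha\beta^2=\alpha\delta^2$'' is therefore wrong --- and note that $\alpha\beta^2$ cannot be a vertex at all, since the quadrilateral angle sum with $\gamma=\pi$ and $\beta=\delta$ gives $\alpha+2\beta=(1+\tfrac{4}{f})\pi<2\pi$.

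The paper's argument after $\gamma=\pi$ is short and structurally different from your plan. It does not run a great-circle-chain argument in the style of Proposition~\ref{bnotd}; instead it invokes the balance lemma (Lemma~\ref{balance_aaab}): no $\gamma^2\cdots$ forces no $\delta^2\cdots$, so every vertex containing $\gamma$ or $\delta$ has the form $\alpha^k\beta^l\gamma\delta$. Using $\alpha>\beta$ (from the isosceles-triangle geometry: $\alpha$ faces the longer side $a+b$) and the angle sum, this narrows to $\alpha\gamma\delta$ and $\beta^k\gamma\delta$; a counting-lemma argument then shows $\alpha\gamma\delta$ or $\beta\gamma\delta$ must occur. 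At that point the paper simply observes (up to the $(\alpha,\delta)\leftrightarrow(\beta,\gamma)$ symmetry) that we are in the first case of Proposition~\ref{acd}, with AVC~\eqref{acd_avc1}, and reads off the earth map tiling and its flip modifications from there. Your step~(4) plan to reconstruct the tiling from scratch and track ``extra flip variants'' is unnecessary; the whole case is absorbed into Proposition~\ref{acd}.
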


The proposition also applied to the case $\alpha=\gamma$. The tilings are special cases of $E_{\square}^A1$ and $FE_{\square}^A1$. The general case appears in Proposition \ref{acd}. 

\begin{proof}
The equality $\beta=\delta$ implies $a=b$ or $\gamma=\pi$. Since the quadrilateral is not a rhombus, we have $a\ne b$. Therefore $\gamma=\pi$, and the quadrilateral is actually an isosceles triangle. See Figure \ref{isoscelesquad}. Since $\alpha,\beta$ face edges $a+b,a$ in the isosceles triangle, we get $\alpha>\beta$.

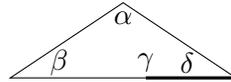
\begin{figure}[htp]
\centering
\begin{tikzpicture}

\draw
	(0.3,-0.5) -- (-1.5,-0.5) -- (0,0.5) -- (1.5,-0.5);

\draw[line width=1.2]
	(0.3,-0.5) -- (1.5,-0.5);

\node at (0,0.3) {\small $\alpha$};
\node at (-0.85,-0.3) {\small $\beta$};
\node at (0.3,-0.3) {\small $\gamma$};
\node at (0.85,-0.3) {\small $\delta$};
	
\end{tikzpicture}
\caption{Proposition \ref{isosceles}: $\gamma=\pi$ implies isosceles triangle.}
\label{isoscelesquad}
\end{figure}

By $\gamma=\pi$, we know $\gamma^2\cdots$ is not a vertex. By the balance lemma and the parity lemma, this implies a $b$-vertex is $\alpha^l\beta^k\gamma\delta$. By $\alpha>\beta$ and the angle sum for quadrilateral, the vertices are $\alpha\gamma\delta,\beta^k\gamma\delta$. If $\alpha\gamma\delta$ is not a vertex, then $\gamma\cdots=\beta^k\gamma\delta$. By applying the counting lemma to $\beta,\gamma$, we get $k=1$ in $\beta^k\gamma\delta$. Therefore either $\alpha\gamma\delta$ or $\beta\gamma\delta$ is a vertex. We also know $\gamma^2\cdots$ and $\delta^2\cdots$ are not vertices. Therefore, up to the exchange of $(\alpha,\delta)$ with $(\beta,\gamma)$, we are in the first case in the proof of Proposition \ref{acd}. In fact, by $\gamma=\pi$, the AVC is \eqref{acd_avc1}. We also note that the argument in the later proof allows $\beta=\delta$. Then we get the earth map tiling $E_{\square}^A1$ and the flip modification $FE_{\square}^A1$.
\end{proof}

By Propositions \ref{symmetric} and \ref{isosceles}, and Lemma \ref{geometry1}, in all the subsequent propositions, we may implicitly assume the quadrilateral satisfy 
\[
\alpha\ne\beta,\;
\gamma\ne\delta,\;
\alpha\ne\gamma,\;
\beta\ne\delta.
\]
By Lemma \ref{geometry1}, this implies either $\alpha<\beta$ and $\gamma<\delta$, or $\alpha>\beta$ and $\gamma>\delta$. 

Further classification is divided into two cases: at least two degree $3$ vertices, and only one degree $3$ vertex.

\subsection{Two Degree $3$ Vertices}

In this section, we assume the tiling has at least two degree $3$ vertices. Degree $3$ $\hat{b}$-vertices are $\alpha^3,\alpha^2\beta,\alpha\beta^2,\beta^3$. By $\alpha\ne\beta$, we cannot have two degree $3$ $\hat{b}$-vertices.

Lemma \ref{geometry2} gives all pairs of degree $3$ $b$-vertices. Up to the exchange of $(\alpha,\delta)$ with $(\beta,\gamma)$, we get two possible combinations
\[
\{\alpha\delta^2,\beta\gamma^2\},\;
\{\alpha\gamma\delta,\beta\gamma^2\}.
\]
The two cases are discussed in Propositions \ref{add-bcc} and \ref{acd-bcc}. Next, we consider one degree $3$ $b$-vertex, and another degree $3$ $\hat{b}$-vertex. Up to the exchange of $(\alpha,\delta)$ with $(\beta,\gamma)$, one vertex is $\alpha\gamma^2,\alpha\delta^2,\alpha\gamma\delta$, and the other vertex is $\alpha^3,\alpha^2\beta,\alpha\beta^2,\beta^3$. For $\alpha\gamma^2$, by $\alpha\ne\gamma$, we only need to consider the following combinations
\[
\{\alpha^2\beta,\alpha\gamma^2\},\;
\{\alpha\beta^2,\alpha\gamma^2\},\;
\{\alpha\gamma^2,\beta^3\}.
\]
These are discussed in Propositions \ref{aab-acc}, \ref{abb-acc}, \ref{acc-bbb}. For $\alpha\delta^2$, by $\beta\ne\delta$, we only need to consider the following combinations
\[
\{\alpha^3,\alpha\delta^2\},\;
\{\alpha^2\beta,\alpha\delta^2\},\;
\{\alpha\delta^2,\beta^3\}.
\]
These are discussed in Propositions \ref{aaa-add}, \ref{aab-add}, \ref{add-bbb}. Then we have four combinations for $\alpha\gamma\delta$. We discuss the four combinations together in Proposition \ref{acd}.

In all the propositions in this section, either $\gamma^2\cdots$ or $\delta^2\cdots$ are vertices. By the balance lemma, we know both $\gamma^2\cdots,\delta^2\cdots$ are vertices. Then they imply $\gamma,\delta<\pi$.

\begin{proposition}\label{add-bcc}
There is no tiling of the sphere by congruent almost equilateral quadrilaterals, such that $\alpha\delta^2,\beta\gamma^2$ are vertices.
\end{proposition}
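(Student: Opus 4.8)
The plan is to assume $\alpha\delta^2$ and $\beta\gamma^2$ are both vertices and derive a contradiction by pinning down the angle values and then running an adjacent-angle-deduction argument. First I would write down the angle sums of $\alpha\delta^2$ and $\beta\gamma^2$ together with the angle sum \eqref{anglesum} for quadrilateral; these three linear relations in $\alpha,\beta,\gamma,\delta$ leave a one-parameter family, and combining them with $\alpha+\beta+\gamma+\delta=(2+\tfrac4f)\pi$ should express all four angles in terms of $f$ (expecting something like $\alpha=(\tfrac12+\tfrac2f)\pi$, $\delta=(\tfrac34-\tfrac1f)\pi$, $\beta=(\tfrac12-\tfrac2f)\pi$, $\gamma=(\tfrac14+\tfrac1f)\pi$, or the values forced after also invoking Lemma~\ref{geometry1} to fix the ordering $\alpha>\beta$, $\gamma>\delta$ or the reverse). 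The key point is that $\alpha\delta^2,\beta\gamma^2$ having the opposite ``matching'' of $(\gamma,\delta)$ is exactly the combination item~(1) of Lemma~\ref{geometry2} allows, so no third degree-$3$ vertex with $\gamma,\delta$ can appear, and by the parity lemma degree-$3$ vertices are among $\alpha^3,\alpha\beta^2,\alpha\delta^2,\beta\gamma^2,\ldots$; I would next rule out or incorporate $\alpha^3,\alpha\beta^2,\beta^3$ using the pinned angle values.

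Second, with the angle values fixed I would compute the remainders $R(\alpha\delta^2)$, $R(\beta\gamma^2)$, $R(\alpha^2\cdots)$, etc., and use the parity lemma plus Lemmas~\ref{geometry4}, \ref{geometry3}, \ref{geometry5}, \ref{geometry7} to cut down the AVC to a short list of admissible vertices. The geometric inequalities should be decisive here: for instance $\gamma<\pi$ gives $\alpha+2\delta>\pi$ and, exchanging, $\beta+2\gamma>\pi$, while $\alpha,\beta,\gamma<\pi$ gives $\beta+\gamma<\delta+\pi$ — these typically forbid several combinatorially-possible vertices and force $\beta^2\cdots$ or $\gamma^2\cdots$ or $\delta^2\cdots$ not to exist, after which the fan lemmas (Lemmas~\ref{fbalance}, \ref{klem3}) and the balance lemma (Lemma~\ref{balance_aaab}) apply. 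I expect the AVC to collapse to essentially $\{\alpha\delta^2,\beta\gamma^2,\text{one or two more vertices}\}$, and then a counting-lemma argument (Lemma~\ref{counting}, applied to a pair of angles appearing equally often in the quadrilateral, such as $\gamma$ versus $\delta$) should force the $\gamma$-count and $\delta$-count to match at every vertex, which together with $\alpha\delta^2$ and $\beta\gamma^2$ being the degree-$3$ vertices produces an immediate incompatibility.

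Third, if the counting and inequality arguments do not close the case outright, I would fall back on an explicit AAD/tiling-picture argument: start from the unique AAD of $\beta\gamma^2$ (the $\gamma$'s are $ab$-adjacent on both sides, and $\alpha\ne\gamma$ pins the arrangement), propagate to the neighboring tiles, and force a vertex of the form $\alpha^2\gamma\cdots$ or $\alpha\beta\gamma\delta\cdots$ that the AVC has already excluded. The adjacency deduction ``${\theta}^{\lambda}\thin^{\mu}\rho$ implies the vertex $\lambda\thin\mu\cdots$'' is the workhorse here. I would similarly exploit the AAD of $\alpha\delta^2$. The standard outcome in these propositions is that two $\beta$'s or two incompatible angles are forced into a single tile, or a vertex forbidden by the parity lemma is forced, giving the contradiction.

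The main obstacle I anticipate is the bookkeeping in step two: getting the AVC down to a manageable list requires applying the right geometric inequality at the right moment, and there may be a stubborn sub-case — most likely one where an extra degree-$4$ vertex like $\alpha^2\delta^2$, $\beta^2\gamma^2$, or $\gamma^2\delta^2$ (note $\gamma^2\delta^2$ is ruled out quickly since it would force symmetry via Lemma~\ref{geometry1} combined with the $\alpha\delta^2,\beta\gamma^2$ angle sums) survives the first cut and needs a dedicated AAD picture. If the angle values turn out to be irrational multiples of $\pi$ (which can happen when only two degree-$3$ vertices are specified and $f$ is not yet pinned), I would instead keep $f$ as a parameter, use \eqref{quadvcountf} and the counting lemmas (Lemmas~\ref{count-aaa}, \ref{count-att}, \ref{notatdeg4}, \ref{deg3miss}) to bound $f$ or the vertex degrees, and conclude; but I expect the three angle-sum relations to fix everything in terms of $f$ and then the parity constraints to fix $f$ itself, so this should be avoidable.
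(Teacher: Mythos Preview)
Your setup contains an error that sends you down an unnecessarily long path. The three relations (angle sums of $\alpha\delta^2$, of $\beta\gamma^2$, and \eqref{anglesum}) are not independent enough to pin down all four angles: adding the two vertex sums gives $\alpha+\beta+2\gamma+2\delta=4\pi$, and subtracting \eqref{anglesum} yields only
\[
\alpha+\beta=\tfrac{8}{f}\pi,\qquad \gamma+\delta=\bigl(2-\tfrac{4}{f}\bigr)\pi.
\]
So there is still a genuine one-parameter family for each $f$, and your ``expected'' values (e.g.\ $\beta=(\tfrac12-\tfrac2f)\pi$, $\gamma=(\tfrac14+\tfrac1f)\pi$) do not even satisfy $\beta+2\gamma=2\pi$. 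The whole machinery of AVC enumeration, counting lemma, and AAD propagation that you outline is therefore aimed at a moving target.

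The paper's proof bypasses all of this with a single stroke of Lemma~\ref{geometry3}. From $\alpha\delta^2,\beta\gamma^2$ one has $\gamma,\delta<\pi$; if $f\ge 8$ then $\alpha+\beta=\tfrac{8}{f}\pi\le\pi$ forces $\alpha,\beta<\pi$ as well, so Lemma~\ref{geometry3} (and its $(\alpha,\delta)\leftrightarrow(\beta,\gamma)$ twin) give $\alpha+\pi>\gamma+\delta$ and $\beta+\pi>\gamma+\delta$. Hence $\alpha,\beta>(1-\tfrac{4}{f})\pi\ge\tfrac12\pi$, contradicting $\alpha+\beta\le\pi$. That leaves only $f=6$, where \eqref{quadvcountf} forces every vertex to have degree~$3$; the AAD of $\alpha\delta^2$ produces a vertex $\alpha\beta\cdots$, which by the parity lemma must be $\alpha^2\beta$ or $\alpha\beta^2$, and either angle sum combined with $\alpha+\beta=\tfrac{4}{3}\pi$ gives $\alpha=\beta$, contradicting non-symmetry. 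You do list Lemma~\ref{geometry3} among your tools, but the point is that it is \emph{the} tool here, applied directly to the sums $\alpha+\beta$ and $\gamma+\delta$ rather than to individual angles after a long AVC reduction.
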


\begin{proof}
The angle sums of $\alpha\delta^2,\beta\gamma^2$ and the angle sum for quadrilateral imply
\[
\alpha+\beta=\tfrac{8}{f}\pi,\;
\gamma+\delta=(2-\tfrac{4}{f})\pi.
\]

If $f\ge 8$, then $\alpha,\beta<\pi$. We also know $\gamma,\delta<\pi$. By Lemma \ref{geometry3}, we get $\alpha+\pi,\beta+\pi>\gamma+\delta=(2-\tfrac{4}{f})\pi$. This implies $\alpha+\beta>(2-\tfrac{8}{f})\pi\ge\pi$, contradicting $\alpha+\beta=\tfrac{8}{f}\pi\le\pi$. 

Therefore we have $f=6$. By \eqref{quadvcountf}, this implies all vertices have degree $3$. The AAD $\thick^{\gamma}\delta^{\alpha}\thin^{\beta}\alpha^{\delta}\thin^{\alpha}\delta^{\gamma}\thick$ of $\alpha\delta^2$ implies $\alpha\beta\cdots$ is a vertex. We also know this is a degree $3$ vertex. By the parity lemma, we get $\alpha\beta\cdots=\alpha^2\beta,\alpha\beta^2$. Combined with $\alpha+\beta=\tfrac{8}{6}\pi$, we get $\alpha=\beta=\tfrac{2}{3}\pi$, a contradiction. 
\end{proof}

\begin{proposition}\label{acd-bcc}
There is no tiling of the sphere by congruent almost equilateral quadrilaterals, such that 
$\alpha\gamma\delta,\beta\gamma^2$ are vertices. 
\end{proposition}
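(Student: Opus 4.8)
The plan is to start from the angle sums. Combining the angle sums of $\alpha\gamma\delta$ and $\beta\gamma^2$ with the angle sum \eqref{anglesum} for quadrilateral gives
\[
\beta=\tfrac{4}{f}\pi,\qquad \gamma=(1-\tfrac{2}{f})\pi,\qquad \alpha+\delta=(1+\tfrac{2}{f})\pi .
\]
In particular $\gamma<\pi$, so every geometric lemma with hypothesis $\gamma<\pi$ applies to the tile, and by the non-symmetry assumption together with Lemma \ref{geometry1} there are exactly two cases to handle: \emph{Case 1}, $\alpha<\beta$ and $\gamma<\delta$; and \emph{Case 2}, $\alpha>\beta$ and $\gamma>\delta$.

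I would first dispose of $f=6$ uniformly. Then $\beta=\gamma=\tfrac{2}{3}\pi$ and $\alpha+\delta=\tfrac{4}{3}\pi$, and by \eqref{quadvcountf} every vertex has degree $3$. Running through the degree-$3$ angle combinations compatible with the parity lemma and discarding those forcing one of the excluded coincidences $\alpha=\beta$, $\alpha=\gamma$, $\beta=\delta$, the only surviving possibilities are $\alpha\gamma\delta$, $\beta\gamma^2$, and $\beta^3$. Since $\delta$ occurs once per tile and only at $\alpha\gamma\delta$, there are exactly $f$ vertices of type $\alpha\gamma\delta$; these already account for all $f$ copies of $\gamma$, so $\beta\gamma^2$ cannot occur, contradicting the hypothesis. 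Hence $f\ge 8$.

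For Case 1 with $f\ge 8$, the key observation is that $\delta^2\cdots$ is never a vertex: using $\alpha<\beta=\tfrac{4}{f}\pi$ and $\delta>\gamma=(1-\tfrac{2}{f})\pi$, each of $3\delta$, $2\delta+\gamma$, $2\delta+\alpha$, $2\delta+\beta$ exceeds $2\pi$, so $\delta$ occurs at most once at any vertex. Then the parity lemma forces $\#_V\delta\le\#_V\gamma$ at every vertex $V$, and since $\gamma$ and $\delta$ each appear once in the quadrilateral, the counting lemma (Lemma \ref{counting}) upgrades this to $\#_V\delta=\#_V\gamma$ for all $V$ --- which is impossible at $\beta\gamma^2$, where $\#\gamma=2$ and $\#\delta=0$.

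Case 2 is where the real work lies, since there $\delta$ may be large enough that $\gamma\delta^2\cdots$-vertices are no longer excluded on size grounds. Here I would split further by comparing $\alpha$ with $\gamma$ (they are unequal by assumption) and, in the branch $\alpha\ge\pi$, isolate that angle via Lemma \ref{angle_pi}, which forces $\delta\le\tfrac{2}{f}\pi$. In each branch I would pin down the admissible vertex types from the angle values and the parity lemma, then invoke the fan balance lemma (Lemma \ref{fbalance}) --- the vertex $\beta\gamma^2$ carries a $\gamma^2$-fan, so some vertex must carry a $\delta^2$-fan --- together with Lemma \ref{klem3} in the generic situation where $\alpha^2\cdots$ is not a vertex, and track the $b$-edges and the (forced) AADs of $\alpha\gamma\delta$ and $\beta\gamma^2$ around their neighborhoods in the manner of Figure \ref{tiling_aabc}. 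I expect this to force either two $b$-angles into a single tile, or a vertex outside the admissible list, or a small value of $f$ that is then eliminated by an explicit neighborhood argument. Managing the Case 2 sub-cases, and in particular locating and exploiting the $\delta^2$-fan supplied by Lemma \ref{fbalance}, is the main obstacle.
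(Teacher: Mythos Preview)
Your handling of $f=6$ and of Case 1 is correct; in fact your Case 1 argument is precisely the content of the balance lemma (Lemma \ref{balance_aaab}) unwound for this situation, and the paper disposes of $\gamma<\delta$ by the same mechanism (it cites the balance lemma to get that $\delta^2\cdots$ must be a vertex, then shows the only candidate $\alpha\delta^2$ forces $\gamma=\delta$).

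Case 2, however, is only a sketch, and the missing steps are not routine. The decisive point in the paper is to show that $\alpha^2\cdots$ is \emph{not} a vertex. This is done in three stages: first, $\alpha^2\gamma^2\cdots$ and $\alpha^2\delta^2\cdots$ are ruled out because either would force $\gamma\ge\pi$ or $\delta\ge\pi$, killing $\gamma^2\cdots$ or $\delta^2\cdots$ and contradicting the balance lemma; so $\alpha^2\cdots=\alpha^k\beta^l$. Second, if $\alpha^2\cdots$ exists then $\alpha<\pi$, and Lemma \ref{geometry3} (not Lemma \ref{angle_pi}) gives $\delta<\beta+\pi-\gamma=\tfrac{6}{f}\pi$, hence $\alpha>(1-\tfrac{4}{f})\pi$ and $\alpha+\beta>\pi$, forcing $\alpha^2\cdots=\alpha^3$ (since $\alpha^2\beta$ would give $\alpha=\gamma$). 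Third, $\alpha^3$ pins down all four angles and forces $6<f<12$, and the two surviving values $f=8,10$ are eliminated by the trigonometric constraint \eqref{coolsaet_eq1}. You never mention \eqref{coolsaet_eq1}, and your proposed split by $\alpha\lessgtr\gamma$ or $\alpha\ge\pi$ does not reach this conclusion; in particular, Lemma \ref{klem3} is unavailable until $\alpha^2\cdots$ has been excluded.

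Once $\alpha^2\cdots$ is gone, there is still a genuine tiling argument to finish: the paper builds the neighborhood of $\beta\gamma^2$ (Figure \ref{acd_bccA}), uses Lemma \ref{klem3} (with $\gamma>\delta$) to force $\thin\alpha\thin^{\gamma}\beta^{\alpha}\thin\cdots=\alpha\beta^k$, and propagates through $\alpha\gamma\cdots=\alpha\gamma\delta$ until two distinct $\alpha$'s meet at a vertex, contradicting no $\alpha^2\cdots$. Your proposal gestures at ``forced AADs'' and ``two $b$-angles in a single tile'' but does not execute this; the argument is short but specific, and nothing in your outline substitutes for it.
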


\begin{proof}
The angle sums of $\alpha\gamma\delta,\beta\gamma^2$ and the angle sum for quadrilateral imply
\[
\alpha+\delta=(1+\tfrac{2}{f})\pi,\;
\beta=\tfrac{4}{f}\pi,\;
\gamma=(1-\tfrac{2}{f})\pi.
\]

If $\gamma<\delta$, then by $\alpha+\delta>\pi$ and $\beta\gamma^2$, we get $R(\delta^2)<2\alpha,\beta$. Then by $\beta=\tfrac{4}{f}\pi<2\gamma<2\delta$, and the parity lemma, this implies $\delta^2\cdots=\alpha\delta^2$, contradicting $\alpha\gamma\delta$ and $\gamma\ne\delta$. Therefore $\gamma>\delta$. By Lemma \ref{geometry1}, this implies $\alpha>\beta$.

By $\alpha\gamma\delta$ and $\delta<\pi$, we know $\alpha+\gamma>\pi$. Therefore $\alpha^2\gamma^2\cdots$ is not a vertex. By the same reason, we also know $\alpha^2\delta^2\cdots$ is not a vertex. Then by $\alpha\gamma\delta$ and the parity lemma, we conclude $\alpha^2\cdots=\alpha^k\beta^l$. 

If $\alpha^2\cdots$ is a vertex, then $\beta<\alpha<\pi$. Then by Lemma \ref{geometry3}, we get $\alpha+\beta+\pi>\alpha+\gamma+\delta=2\pi$. By $\delta<\pi$, this implies $\alpha+\beta>\pi$. Then by $\alpha>\beta$, this implies $\alpha^2\cdots=\alpha^k\beta^l=\alpha^3,\alpha^2\beta$. By $\beta\gamma^2$ and $\alpha\ne\gamma$, we know $\alpha^2\beta$ is not a vertex. Therefore $\alpha^2\cdots=\alpha^3$. The angle sum of $\alpha^3$ further implies
\[
\alpha=\tfrac{2}{3}\pi,\;
\beta=\tfrac{4}{f}\pi,\;
\gamma=(1-\tfrac{2}{f})\pi,\;
\delta=(\tfrac{1}{3}+\tfrac{2}{f})\pi.
\]
By $\alpha>(1-\tfrac{4}{f})\pi$, we get $f<12$. By $\alpha>\beta$, we get $f>6$. Then $f=8,10$, and we get the corresponding angle values: 
\begin{align*}
f=8 &\colon 
\alpha=\tfrac{2}{3}\pi,\;
\beta=\tfrac{1}{2}\pi,\;
\gamma=\tfrac{3}{4}\pi,\;
\delta=\tfrac{7}{12}\pi. \\
f=10 &\colon 
\alpha=\tfrac{2}{3}\pi,\;
\beta=\tfrac{2}{5}\pi,\;
\gamma=\tfrac{4}{5}\pi,\;
\delta=\tfrac{8}{15}\pi.
\end{align*}
Since both fail \eqref{coolsaet_eq1}, we conclude $\alpha^2\cdots$ is not a vertex. 

By $\alpha\gamma\delta$, and $\gamma>\delta$, and the parity lemma, we get $\alpha\gamma\cdots=\alpha\gamma\delta$. This implies $\beta\gamma\cdots$ has no $\alpha$. Then by $\beta\gamma^2$, we get $\beta\gamma\cdots=\beta\gamma^2,\beta^k\gamma\delta^l$.

The AAD $\thick^{\delta}\gamma^{\beta}\thin^{\alpha}\beta^{\gamma}\thin^{\beta}\gamma^{\delta}\thick$ of $\beta\gamma^2$ determines $T_1,T_2,T_3$ in Figure \ref{acd_bccA}. Then $\thin^{\alpha}\beta_1^{\gamma}\thin^{\beta}\gamma_2^{\delta}\thick\cdots=\beta\gamma^2,\beta^k\gamma\delta^l$. In $\beta^k\gamma\delta^l$, $\thin\beta\thin\gamma\thick$ must be part of a $\gamma\delta$-fan without $\alpha$. By no $\alpha^2\cdots$, the AAD of the fan is $\thick^{\gamma}\delta^{\alpha}\thin^{\gamma}\beta^{\alpha}\thin\cdots\thin^{\gamma}\beta^{\alpha}\thin^{\beta}\gamma^{\delta}\thick$. Since this is inconsistent with $\thin^{\alpha}\beta_1^{\gamma}\thin^{\beta}\gamma_2^{\delta}\thick$, we conclude $\thin^{\alpha}\beta_1^{\gamma}\thin^{\beta}\gamma_2^{\delta}\thick\cdots=\beta\gamma^2$. This determines $T_4$. Then by no $\alpha^2\cdots$, and $\gamma>\delta$, and Lemma \ref{klem3}, we get $\thin\alpha_1\thin^{\gamma}\beta_4^{\alpha}\thin\cdots=\thin\alpha_2\thin^{\gamma}\beta_3^{\alpha}\thin\cdots=\alpha\beta^k$. Then by no $\alpha^2\cdots$, the AAD of the two vertices is $\thin\alpha\thin^{\gamma}\beta^{\alpha}\thin\cdots\thin^{\gamma}\beta^{\alpha}\thin$. This determines $T_5,T_6$. Then $\alpha_3\gamma_6\cdots=\alpha\gamma\delta$ determines $T_7$. By no $\alpha^2\cdots$, we get $\alpha_5=\alpha_7$. However, this implies a vertex $\alpha\delta^2$, contradicting $\alpha\gamma\delta$ and $\gamma\ne\delta$.
\end{proof}

\begin{figure}[htp]
\centering
\begin{tikzpicture}

\draw
	(-2.5,-0.7) -- (-2.5,0.7) -- (1,0.7) -- (0.4,0) -- (-0.4,0) -- (-1,-0.7) -- (-2.5,-0.7)
	(0.4,1.4) -- (1.6,0) -- (1,-0.7) -- (-1,-0.7) -- (-1.6,0) -- (-0.4,1.4);

\draw[line width=1.2]
	(-0.4,0) -- (-1,0.7)
	(0.4,0) -- (1,-0.7)
	(0.4,1.4) -- (-0.4,1.4)
	(-1.6,0) -- (-2.5,0);

\node at (0.65,0.55) {\small $\alpha$}; 
\node at (0.35,0.2) {\small $\beta$};
\node at (-0.6,0.53) {\small $\delta$};
\node at (-0.3,0.2) {\small $\gamma$};

\node at (-0.7,-0.55) {\small $\alpha$}; 
\node at (-0.35,-0.2) {\small $\beta$};
\node at (0.6,-0.52) {\small $\delta$};
\node at (0.3,-0.2) {\small $\gamma$};

\node at (-1.4,0) {\small $\alpha$}; 
\node at (-1,-0.4) {\small $\beta$};
\node at (-1.05,0.4) {\small $\delta$};
\node at (-0.65,0) {\small $\gamma$};

\node at (1.4,0) {\small $\alpha$}; 
\node at (0.95,0.4) {\small $\beta$};
\node at (1,-0.4) {\small $\delta$};
\node at (0.65,0) {\small $\gamma$};

\node at (0.55,0.9) {\small $\beta$}; 
\node at (0.3,1.2) {\small $\gamma$};
\node at (-0.7,0.85) {\small $\alpha$};
\node at (-0.35,1.2) {\small $\delta$};

\node at (-2.3,-0.55) {\small $\alpha$}; 
\node at (-1.4,-0.5) {\small $\beta$}; 
\node at (-2.3,-0.2) {\small $\delta$};
\node at (-1.7,-0.2) {\small $\gamma$};

\node at (-1.35,0.55) {\small $\alpha$}; 
\node at (-2.3,0.5) {\small $\beta$}; 
\node at (-1.7,0.2) {\small $\delta$};
\node at (-2.3,0.2) {\small $\gamma$};

\node[draw,shape=circle, inner sep=0.5] at (0,0.35) {\small $1$};
\node[draw,shape=circle, inner sep=0.5] at (0,-0.35) {\small $2$};
\node[draw,shape=circle, inner sep=0.5] at (-1,0) {\small $3$};
\node[draw,shape=circle, inner sep=0.5] at (1,0) {\small $4$};
\node[draw,shape=circle, inner sep=0.5] at (0,1.05) {\small $5$};
\node[draw,shape=circle, inner sep=0.5] at (-2,-0.35) {\small $6$};
\node[draw,shape=circle, inner sep=0.5] at (-2,0.35) {\small $7$};

\end{tikzpicture}
\caption{Proposition \ref{acd-bcc}: no tiling.}
\label{acd_bccA}
\end{figure}
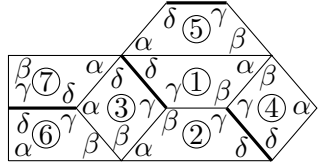

\begin{proposition}\label{aab-acc}
There is no tiling of the sphere by congruent almost equilateral quadrilaterals, such that 
$\alpha^2\beta,\alpha\gamma^2$ are vertices. 
\end{proposition}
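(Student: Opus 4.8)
The plan is to determine all four angles as functions of $\alpha$ and $f$ from the three angle sums, use the geometric lemmas to force $\alpha>\beta$, $\gamma>\delta$ and $f\ge 8$, show that $\alpha^2\beta$ and $\alpha\gamma^2$ are the only degree $3$ vertices and that any vertex carrying two $\alpha$'s equals $\alpha^2\beta$, and finally run an adjacent angle deduction starting from a forced $\delta^2$-fan.

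First I would record the angle sums. Those of $\alpha^2\beta$ and $\alpha\gamma^2$ together with \eqref{anglesum} give $\beta=2\pi-2\alpha$, $\gamma=\pi-\tfrac12\alpha$, and $\delta=\tfrac32\alpha+(\tfrac4f-1)\pi$, so everything is pinned by $\alpha$ and $f$; moreover $\beta>0$ forces $\alpha<\pi$, hence $\gamma\in(\tfrac12\pi,\pi)$. The case $f=6$ is dismissed by counting: by \eqref{quadvcountf} every vertex then has degree $3$, hence is $\alpha^2\beta$ or $\alpha\gamma^2$; since $\beta$ occurs once per tile and only at $\alpha^2\beta$ vertices, there are exactly $f=6$ such vertices, which already use $12>6=\#\alpha$ copies of $\alpha$, a contradiction. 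So $f\ge 8$, and then $\delta<\pi$. By Lemma \ref{geometry1}, $\alpha<\beta\iff\gamma<\delta$; but $\alpha<\beta$ means $\alpha<\tfrac23\pi$ while $\gamma<\delta$ means $\alpha>(1-\tfrac2f)\pi\ge\tfrac34\pi$, so these cannot be equivalent unless both fail. Hence $\alpha>\beta$ (so $\alpha>\tfrac23\pi$) and $\gamma>\delta$ (so $\tfrac23\pi<\alpha<(1-\tfrac2f)\pi$). Lemma \ref{geometry5} then gives $\beta>\delta$ and Lemma \ref{geometry7} gives $\beta<2\delta$.

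Next I would pin down the vertices. Using $\alpha>\tfrac23\pi$, a check of the angle sums shows that any vertex with at least two $\alpha$'s equals $\alpha^2\beta$: a third $\alpha$ or an extra $\gamma$ overshoots $2\pi$, the vertex $\alpha^2\beta$ already sums to $2\pi$, and $\alpha^2\delta$ undershoots while $\alpha^2\delta^2$ overshoots (by $\delta<\beta<2\delta$). Pairing each parity-admissible degree $3$ vertex with the known vertex $\alpha\gamma^2$, and invoking $\alpha\ne\beta,\gamma\ne\delta,\alpha\ne\gamma,\beta\ne\delta$, the inequalities above, and Lemma \ref{geometry2}, eliminates every degree $3$ vertex except $\alpha^2\beta$ and $\alpha\gamma^2$; in particular $\delta$ appears at no degree $3$ vertex.

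For the main step, I would exploit fans. The vertex $\alpha\gamma^2$ is forced to be arranged $\thin\alpha\thin\gamma\thick\gamma\thin$, hence contains a $\gamma^2$-fan, and by Lemma \ref{fbalance} some vertex $W$ contains a $\delta^2$-fan $\delta\theta_1\cdots\theta_k\delta$ with each $\theta_i\in\{\alpha,\beta\}$. Since a vertex with two $\alpha$'s is $\alpha^2\beta$ while $W$ carries $\delta$'s, $W$ has at most one $\alpha$, so its $\delta^2$-fan contains at most one $\alpha$. Also, following the $a$-edges out of the $\alpha\gamma^2$ vertex yields a vertex of the form $\beta\thin\beta\cdots$, whose options are few because $\alpha\beta^2$ and $\beta^3$ are not vertices. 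I would propagate tiles outward from the $\gamma^2$- and $\delta^2$-fans, repeatedly using the unique AAD of $\alpha\gamma^2$ and the fact ``$\alpha^2\cdots=\alpha^2\beta$'', which forces $W$ into a short list of candidates (such as $\beta^3\delta^2$ or $\alpha\beta\delta^2$), each pinning $\alpha$ to a rational multiple of $\pi$ and $f$ to a particular value. Finally these finitely many exceptional $(\alpha,f)$ must be cleared, either by pushing the tile propagation to a local clash or by checking that the resulting angles violate the Coolsaet identity \eqref{coolsaet_eq1} (exactly as the two borderline cases are handled near the end of Proposition \ref{acd-bcc}). I expect this last elimination --- taming the branching of the deduction and then discarding the exceptional angle values --- to be the main obstacle.
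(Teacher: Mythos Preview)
Your setup is essentially the same as the paper's: the angle formulae, the use of Lemma \ref{geometry1} to get $\alpha>\beta$ and $\gamma>\delta$, Lemma \ref{geometry5} to get $\beta>\delta$, Lemma \ref{geometry7} to get $\beta<2\delta$, and the argument that $\alpha^2\cdots=\alpha^2\beta$ all match. Your elimination of the other degree $3$ vertices via Lemma \ref{geometry2} is also correct (though note your $f=6$ paragraph asserts ``only at $\alpha^2\beta$'' before you have established this; the logic should be reordered).

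Where you diverge is in the endgame. You aim at the $\delta^2$-fan via Lemma \ref{fbalance} and then propose to propagate tiles until a finite list of exceptional vertices emerges. The paper instead first kills all $\beta\gamma\cdots$ vertices: from $\alpha>\gamma>\beta>\delta$, $\beta<2\delta$, $\beta+2\gamma+2\delta>2\pi$ and the parity lemma, one shows $\beta\gamma\cdots\in\{\beta^2\gamma^2,\beta^2\gamma\delta,\beta^3\gamma\delta,\beta\gamma\delta^3\}$, and each of these fails \eqref{coolsaet_eq1}. Once $\beta\gamma\cdots$ is gone, the AAD of $\alpha^2\beta=\thin\alpha\thin\alpha\thin\beta\thin$ becomes rigid (specifically $\thin\alpha\thin^{\beta}\alpha^{\delta}\thin^{\gamma}\beta^{\alpha}\thin$), which forces a vertex $\gamma\thin\delta\cdots$ and simultaneously rules out $\delta\thin\delta\cdots$. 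A short estimate using $3\gamma+\delta>2\pi$ and no $\delta\thin\delta$ then pins $\gamma\thin\delta\cdots=\gamma^2\delta^2$, and a fifth application of \eqref{coolsaet_eq1} finishes. So the whole proof is five Coolsaet checks and no branching.

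Your fan-based route can be made to work, but the step you flag as ``the main obstacle'' really is one: without first eliminating $\beta\gamma\cdots$, the vertex carrying the $\delta^2$-fan can still be adjoined to $\gamma\delta$- and $\gamma^2$-fans containing several $\beta$'s, and your enumeration (``such as $\beta^3\delta^2$ or $\alpha\beta\delta^2$'') is not yet exhaustive. Also, your claim that following the $a$-edges out of $\alpha\gamma^2$ yields $\beta\thin\beta\cdots$ depends on the orientation of the $\alpha$ at that vertex; the other orientation gives $\beta\thin\delta\cdots$ instead, so this step needs a case split (which the paper avoids by routing the AAD through $\alpha^2\beta$ rather than $\alpha\gamma^2$). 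The cleanest fix is to insert the $\beta\gamma\cdots$ elimination early; then your $\delta^2$-fan argument collapses to essentially the paper's $\gamma\thin\delta\cdots=\gamma^2\delta^2$.
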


\begin{proof}
By $\alpha^2\beta$ and the angle sum for quadrilateral, we get $\beta+2\gamma+2\delta=(2+\frac{8}{f})\pi$. By $\alpha^2\beta,\alpha\gamma^2$, we get $\alpha+\beta=2\gamma$ and $\beta+2\pi=4\gamma$. Then $3\gamma+\delta=\alpha+\beta+\gamma+\delta=(2+\frac{4}{f})\pi$. Combining the two equalities, we get $\beta+\delta=\gamma+\frac{4}{f}\pi>\gamma$.

Suppose $\alpha<\beta$ and $\gamma<\delta$. By $\alpha+\beta=2\gamma$, we get $\alpha<\gamma<\beta$. Then $4\gamma=\beta+2\pi>\gamma+2\pi$, and $4\gamma<3\gamma+\delta=(2+\frac{4}{f})\pi$. This implies $\frac{2}{3}\pi<\gamma<(\frac{1}{2}+\frac{1}{f})\pi$, or $f<6$, a contradiction. By Lemma \ref{geometry1}, therefore, we know $\alpha>\beta$ and $\gamma>\delta$. Then by $\alpha+\beta=2\gamma$, we get $\alpha>\gamma>\beta$.

By $\alpha^2\beta$, we get $\delta<\gamma<\alpha<\pi$. By Lemma \ref{geometry5}, this implies $\beta>\delta$. Therefore $\alpha>\gamma>\beta>\delta$. By $\alpha<\alpha+\beta=2\gamma$ and Lemma \ref{geometry7}, we get $\beta<2\delta$. 

By the parity lemma, we know $\beta\gamma\cdots=\beta\gamma^2\cdots,\beta\gamma\delta\cdots$. Then by $\gamma>\delta$, and the angle sum for quadrilateral, we know $\beta\gamma\cdots$ has no $\alpha$. By $\alpha\gamma^2$ and $\alpha>\gamma>\beta>\delta$, we know $\beta\gamma^2,\beta\gamma\delta$ are not vertices. By $\beta+2\gamma+2\delta=(2+\frac{8}{f})\pi>2\pi$, we get $R(\beta\gamma^2)<2\delta<2\beta,2\gamma$. By no $\beta\gamma^2$ and the parity lemma, this implies $\beta\gamma^2\cdots=\beta^2\gamma^2$. Then it remains to consider $\beta\gamma\delta\cdots$ with no $\alpha,\gamma$ in the remainder. By $\beta+2\gamma+2\delta>2\pi$, we get $R(\beta\gamma\delta)<\gamma+\delta<\beta+2\delta$. By $\delta<\beta<2\delta$, and no $\beta\gamma\delta$, and the parity lemma, this implies the vertex $\beta\gamma\delta\cdots=\beta^2\gamma\delta,\beta^3\gamma\delta,\beta\gamma\delta^3$.

The angle sum of one of $\beta^2\gamma^2,\beta^2\gamma\delta,\beta^3\gamma\delta,\beta\gamma\delta^3$, and the angle sums of $\alpha^2\beta,\alpha\gamma^2$, and the angle sum for quadrilateral imply the following angle values. We also include the corresponding equation \eqref{coolsaet_eq1}.
\begin{itemize}
\item $\beta^2\gamma^2$:
	$\alpha=\tfrac{4}{5}\pi,\;
	\beta=\tfrac{2}{5}\pi,\;
	\gamma=\tfrac{3}{5}\pi,\;
	\delta=(\tfrac{1}{5}+\tfrac{4}{f})\pi$.
	\newline
	$\sin\tfrac{2}{5}\pi
	\sin\tfrac{4}{f}\pi
	=\sin\tfrac{1}{5}\pi
	\sin\tfrac{1}{5}\pi$.
\item $\beta^2\gamma\delta$:
	$\alpha=(\tfrac{2}{3}+\tfrac{4}{3f})\pi,\;
	\beta=(\tfrac{2}{3}-\tfrac{8}{3f})\pi,\;
	\gamma=(\tfrac{2}{3}-\tfrac{2}{3f})\pi,\;
	\delta=\tfrac{6}{f}\pi$.
	\newline
	$\sin(\tfrac{1}{3}+\tfrac{2}{3f})\pi
	\sin(\tfrac{22}{3f}-\tfrac{1}{3})\pi
	=\sin(\tfrac{1}{3}-\tfrac{4}{3f})\pi
	\sin(\tfrac{1}{3}-\tfrac{4}{3f})\pi$.
\item $\beta^3\gamma\delta$:
	$\alpha=(\tfrac{4}{5}+\tfrac{4}{5f})\pi,\;
	\beta=(\tfrac{2}{5}-\tfrac{8}{5f})\pi,\;
	\gamma=(\tfrac{3}{5}-\tfrac{2}{5f})\pi,\;
	\delta=(\tfrac{1}{5}+\tfrac{26}{5f})\pi$.
	\newline
	$\sin(\tfrac{2}{5}+\tfrac{2}{5f})\pi
	\sin\tfrac{6}{f}\pi
	=\sin(\tfrac{1}{5}-\tfrac{4}{5f})\pi
	\sin(\tfrac{1}{5}-\tfrac{4}{5f})\pi$.
\item $\beta\gamma\delta^3$:
	$\alpha=(1-\tfrac{6}{f})\pi,\;
	\beta=\tfrac{12}{f}\pi,\;
	\gamma=(\tfrac{1}{2}+\tfrac{3}{f})\pi,\;
	\delta=(\tfrac{1}{2}-\tfrac{5}{f})\pi$.
	\newline
	$\sin(\tfrac{1}{2}-\tfrac{3}{f})\pi
	\sin(\tfrac{1}{2}-\tfrac{11}{f})\pi
	=\sin\tfrac{6}{f}\pi
	\sin\tfrac{6}{f}\pi$.
\end{itemize}
By \eqref{quadvcountf}, the vertices of degree $\ge 4$ imply $f>6$. Since all equations have no solution for even $f>6$, we conclude $\beta\gamma\cdots$ is not a vertex. 

By $\alpha^2\beta$, we get $R(\alpha^2)=\beta<\alpha,2\delta$. By $\gamma>\delta$ and the parity lemma, this implies $\alpha^2\cdots=\alpha^2\beta$. By no $\beta\gamma\cdots$, we get $\alpha^2\cdots=\thin\alpha\thin^{\beta}\alpha^{\delta}\thin^{\gamma}\beta^{\alpha}\thin$. This implies the AAD of $\thin\alpha\thin\alpha\thin$ is $\thin\alpha\thin^{\beta}\alpha^{\delta}\thin$, and further implies $\delta\thin\delta\cdots=\thick^{\gamma}\delta^{\alpha}\thin^{\alpha}\delta^{\gamma}\thick\cdots$ is not a vertex.

By $\alpha^2\beta=\thin\alpha\thin^{\beta}\alpha^{\delta}\thin^{\gamma}\beta^{\alpha}\thin$, we know $\gamma\thin\delta\cdots$ is a vertex. By no $\beta\gamma\cdots$, we know $\gamma\thin\delta\cdots$ has no $\beta$. By the edge consideration and $3\gamma+\delta>2\pi$, we get $\gamma\thin\delta\cdots=\gamma\delta^3\cdots,\gamma^2\delta^2\cdots$. By $\alpha+2\gamma+2\delta>\alpha+\gamma+3\delta>\alpha+\beta+\gamma+\delta>2\pi$, we know $\gamma\thin\delta\cdots$ has no $\alpha$. Therefore $\gamma\thin\delta\cdots=\gamma^k\delta^l$. By no $\delta\thin\delta\cdots$ and the edge consideration, we know $k\ge l$. Then by $3\gamma+\delta>2\pi$ and the parity lemma, we conclude $\gamma\thin\delta\cdots=\gamma^2\delta^2$. The angle sum of $\gamma^2\delta^2$ further implies the following angle values. We also include \eqref{coolsaet_eq1}. 
\begin{align*}
\gamma^2\delta^2 &\colon \alpha=(1-\tfrac{4}{f})\pi,\;
\beta=\tfrac{8}{f}\pi,\;
\gamma=(\tfrac{1}{2}+\tfrac{2}{f})\pi,\;
\delta=(\tfrac{1}{2}-\tfrac{2}{f})\pi. \\
&\sin(\tfrac{1}{2}-\tfrac{2}{f})\pi
\sin(\tfrac{1}{2}-\tfrac{6}{f})\pi
=\sin\tfrac{4}{f}\pi
\sin\tfrac{4}{f}\pi.
\end{align*}
The equation has no solution for even $f\ge 6$.  
\end{proof}

\begin{proposition}\label{abb-acc}
There is no tiling of the sphere by congruent almost equilateral quadrilaterals, such that 
$\alpha\beta^2,\alpha\gamma^2$ are vertices. 
\end{proposition}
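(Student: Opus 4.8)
The plan is to read off the angles forced by the two given vertices, locate them with the geometry lemmas, eliminate $\alpha^2\cdots$ as a vertex, and then reach a contradiction from the balance of $\gamma^2$- and $\delta^2$-fans. The angle sums of $\alpha\beta^2$ and $\alpha\gamma^2$ give $\alpha+2\beta=2\pi=\alpha+2\gamma$, so $\beta=\gamma$; together with the angle sum \eqref{anglesum} for quadrilateral this yields $\alpha=2\pi-2\beta$ and $\delta=\tfrac{4}{f}\pi$, the value of $\beta$ still free. By Lemma \ref{geometry1}, since $\alpha\ne\beta$, either $\alpha<\beta$ (hence $\gamma<\delta$, i.e.\ $\beta<\tfrac{4}{f}\pi$, while $\alpha<\beta$ forces $3\beta>2\pi$, so $f<6$, impossible) or $\alpha>\beta$. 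Thus $\alpha>\beta=\gamma>\delta$, so $\alpha>\tfrac{2}{3}\pi$; and $\gamma>\delta$ excludes $f=6$ (there $\delta=\tfrac{2}{3}\pi>\beta=\gamma$), so $f\ge 8$.

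Next I would show $\alpha^2\cdots$ is not a vertex. Since $\alpha>\tfrac{2}{3}\pi$, such a vertex has exactly two $\alpha$'s; writing out its angle sum with $\gamma=\beta$ and $\delta=\tfrac{4}{f}\pi$, and using $\alpha\ne\beta,\gamma$ and the parity lemma to discard the degenerate cases, the only survivor is $\alpha^2\delta^l$ with $l$ even and $l\ge 2$, which forces $\beta=(\tfrac{1}{2}+\tfrac{l}{f})\pi$ and $f\ge 14$. But then $\beta>\tfrac{1}{2}\pi$ gives $\alpha<2\gamma$, so Lemma \ref{geometry7} (available since $\gamma=\beta<\pi$, $\delta<\pi$) gives $\beta<2\delta=\tfrac{8}{f}\pi$; combined with $\beta=(\tfrac{1}{2}+\tfrac{l}{f})\pi$ this forces $f<16-2l\le 12$, contradicting $f\ge 14$. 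Hence no vertex contains $\alpha^2$.

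Now the fan argument. The vertex $\alpha\gamma^2$ has a $\gamma^2$-fan (the single $b$-edge puts $\alpha$ between the two $\gamma$'s), so by Lemma \ref{fbalance} some vertex $V$ carries a $\delta^2$-fan. Since no vertex contains $\alpha^2$, Lemma \ref{klem3} forces that fan to be $\delta\alpha\delta$ and $V$ to have no second $\delta^2$-fan. If $V=\alpha\delta^2$ then $\alpha+2\delta=2\pi=\alpha+2\beta$ gives $\beta=\delta$, excluded; so $V$ has further fans, necessarily $\gamma^2$- or $\gamma\delta$-fans, hence $V$ contains $\gamma$. With exactly one $\alpha$ at $V$ and $\gamma=\beta$, the angle sum of $V$ forces the extra part to be a single $\gamma\delta$-fan equal to $\gamma\delta$, so $V=\alpha\gamma\delta^3$, whence $\beta=\gamma=3\delta=\tfrac{12}{f}\pi$ and $f\ge 20$. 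All four angles are then determined by $f$, and since the quadrilateral is simple with $a\in(0,\pi)$, substituting into \eqref{coolsaet_eq1} (note $\tfrac{1}{2}\alpha=\pi-\tfrac{12}{f}\pi$) reduces, via a product-to-sum identity, to $\sin\tfrac{18}{f}\pi=\sin\tfrac{6}{f}\pi+\sin\tfrac{2}{f}\pi$. I would finish by showing this has no even solution $f\ge 20$: it fails by direct computation for $f=20,22,24,26$, and for $f\ge 28$ one checks $\sin\tfrac{18}{f}\pi>\sin\tfrac{6}{f}\pi+\sin\tfrac{2}{f}\pi$ — most cleanly by reducing \eqref{coolsaet_eq1} to the quartic $w^{4}-3w^{3}+2w-1=0$ in $w=3-4\sin^{2}\tfrac{2}{f}\pi$, whose only root in the relevant range is irrational and corresponds to $f\approx 27.2$. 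Hence $\alpha\gamma\delta^3$ cannot be a vertex, the required contradiction.

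The step I expect to be the real obstacle is this last one: converting ``\eqref{coolsaet_eq1} has no even solution $f\ge 20$'' into a genuine argument rather than a finite check, since a priori infinitely many $f$ survive the combinatorics. I expect to handle it exactly as in the neighbouring propositions, by reducing \eqref{coolsaet_eq1} to a polynomial identity in one trigonometric unknown and observing that the forced value of $f$ is not an integer. A minor point to be careful about is the fan bookkeeping at $V$: one must check that Lemma \ref{klem3} truly pins the $\delta^2$-fan to $\delta\alpha\delta$, and apply the parity lemma so that $V=\alpha\gamma\delta^3$ emerges as the unique option.
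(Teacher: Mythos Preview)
Your overall strategy matches the paper closely: derive $\beta=\gamma$, $\delta=\tfrac{4}{f}\pi$, establish $\alpha>\beta=\gamma>\delta$, eliminate $\alpha^2\cdots$, and then use Lemmas~\ref{fbalance} and~\ref{klem3} to locate a vertex carrying a $\delta^2$-fan. Your elimination of $\alpha^2\cdots$ via Lemma~\ref{geometry7} differs from the paper's use of Lemma~\ref{geometry3} (which gives $\alpha+\delta>\pi$ directly, hence $R(\alpha^2)<2\delta$), but your route is equally valid.

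There is, however, a genuine gap in the fan step. Lemma~\ref{klem3} says only that a $\delta^2$-fan contains \emph{exactly one} $\alpha$; it does not force the fan to be $\thick\delta\thin\alpha\thin\delta\thick$. The fan may also contain $\beta$'s, and since $\alpha+2\beta=2\pi$ there is room for at most one. So the $\delta^2$-fan is either $\thick\delta\thin\alpha\thin\delta\thick$ or $\thick\delta\thin\alpha\thin\beta\thin\delta\thick$. The second fan has value $\alpha+\beta+2\delta$, and any additional $\gamma\delta$- or $\gamma^2$-fan would exceed $2\pi$, so it forces $V=\alpha\beta\delta^2$. This gives $\beta=2\delta$, hence $\alpha=(2-\tfrac{16}{f})\pi$, $\beta=\gamma=\tfrac{8}{f}\pi$, $\delta=\tfrac{4}{f}\pi$, and \eqref{coolsaet_eq1} reduces to $0=\sin\tfrac{4}{f}\pi\sin(1-\tfrac{16}{f})\pi$, with unique even solution $f=16$. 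At $f=16$ one gets $\alpha=\pi$, $\beta=\gamma=\tfrac{1}{2}\pi$, $\delta=\tfrac{1}{4}\pi$, and this case does \emph{not} die by arithmetic: the paper computes the AVC $\{\alpha\beta^2,\alpha\gamma^2,\alpha\beta\delta^2,\beta^4,\beta^2\gamma^2,\beta\gamma^2\delta^2,\gamma^4\}$ and then carries out a substantial AAD tiling argument (around $\beta\gamma^2\delta^2$, then $\beta^2\gamma^2$, then $\alpha\beta^2$) to exclude it. Your proposal omits this entire branch.

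Your treatment of $V=\alpha\gamma\delta^3$ is essentially correct and matches the paper; the paper simply asserts that the resulting instance of \eqref{coolsaet_eq1} has no even solution $f>12$, without the algebraic reduction you sketch.
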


\begin{proof}
By $\alpha\beta^2$ and the angle sum for quadrilateral, we get $\alpha+2\gamma+2\delta=(2+\frac{8}{f})\pi>2\pi$. The angle sums of $\alpha\beta^2,\alpha\gamma^2$ and the angle sum for quadrilateral imply
\[
\beta=\gamma,\;
\delta=\tfrac{4}{f}\pi.
\]
By $f\ge 6$, we get $\delta\le\frac{2}{3}\pi$. Then $\alpha<\beta=\gamma<\delta$ implies $\alpha+2\beta<2\pi$, contradicting $\alpha\beta^2$. By Lemma \ref{geometry1}, therefore, we get $\alpha>\beta=\gamma>\delta$. 

If $\alpha<\pi$, then $\beta=\gamma<\alpha<\pi$. By Lemma \ref{geometry3}, we get $\alpha+\delta>\alpha+\beta+\gamma-\pi=\pi$. If $\alpha\ge\pi$, we also get $\alpha+\delta>\pi$. Therefore we always have $\alpha+\delta>\pi$. Combined with $\alpha\beta^2,\alpha\gamma^2$, and $\alpha>\beta=\gamma$, we get $R(\alpha^2)<\alpha,\beta,\gamma,2\delta$. By the parity lemma, this implies $\alpha^2\cdots$ is not a vertex.

By $\alpha\gamma^2$ and Lemma \ref{fbalance}, there is a vertex with $\delta^2$-fan. By no $\alpha^2\cdots$ and Lemma \ref{klem3}, a $\delta^2$-fan has a single $\alpha$, and a vertex has at most one $\delta^2$-fan. By $\alpha\beta^2$, the $\delta^2$-fan is $\thick\delta\thin\alpha\thin\delta\thick,\thick\delta\thin\alpha\thin\beta\thin\delta\thick$. The vertex is then a single $\delta^2$-fan combined with $\gamma\delta$-fans and $\gamma^2$-fans without $\alpha$. By $\alpha\gamma^2$ and $\gamma>\delta$, we know $\alpha\delta^2$ is not a vertex. Then by $\gamma>\delta$, and $\alpha+2\gamma+2\delta>2\pi$, and the angle sum for quadrilateral, a vertex with $\thick\delta\thin\alpha\thin\delta\thick$ is the fan combined with a single $\thick\gamma\thin\delta\thick$. By the angle sum for quadrilateral, a vertex with $\thick\delta\thin\alpha\thin\beta\thin\delta\thick$ is the fan itself. Therefore the vertex with $\delta^2$-fan is $\alpha\beta\delta^2,\alpha\gamma\delta^3$. 

The angle sum of one of $\alpha\beta\delta^2,\alpha\gamma\delta^3$ further implies the following angle values. We also include \eqref{coolsaet_eq1}.
\begin{itemize}
\item $\alpha\beta\delta^2$:
	$\alpha=(2-\tfrac{16}{f})\pi,\;
	\beta=\gamma=\tfrac{8}{f}\pi,\;
	\delta=\tfrac{4}{f}\pi$.
	\newline
	$0=\sin\tfrac{4}{f}\pi
	\sin(1-\tfrac{16}{f})\pi$. 
\item $\alpha\gamma\delta^3$:
	$\alpha=(2-\tfrac{24}{f})\pi,\;
	\beta=\gamma=\tfrac{12}{f}\pi,\;
	\delta=\tfrac{4}{f}\pi$.
	\newline
	$\sin(1-\tfrac{12}{f})\pi
	\sin\tfrac{2}{f}\pi
	=\sin\tfrac{6}{f}\pi
	\sin(1-\tfrac{24}{f})\pi$. 
\end{itemize}
For $\alpha\gamma\delta^3$, by $\alpha>0$, we have $f>12$. The equation has no solution for even $f>12$. For $\alpha\beta\delta^2$, by $\alpha>0$, we have $f>8$. The only solution of the equation for even $f>8$ is $f=16$. Then we further get
\begin{equation}\label{abb-accangle}
\alpha=\pi,\;
\beta=\gamma=\tfrac{1}{2}\pi,\;
\delta=\tfrac{1}{4}\pi.
\end{equation}
Then by the parity lemma, and the fact that $\alpha\beta\delta^2$ is the only $\delta^2$-fan, we get the list of vertices
\[
\text{AVC}
=\{\alpha\beta^2,\alpha\gamma^2,\alpha\beta\delta^2,\beta^4,\beta^2\gamma^2,\beta\gamma^2\delta^2,\gamma^4\}.
\]
Moreover, we know $\beta\gamma^2\delta^2$ is the combination of two $\gamma\delta$-fans $\thick\gamma\thin\beta\thin\delta\thick$ and $\thick\gamma\thin\delta\thick$. 

By no $\alpha^2\cdots$, the AAD of $\alpha\beta\delta^2$ is $\thick^{\gamma}\delta^{\alpha}\thin\alpha\thin^{\alpha}\beta^{\gamma}\thin^{\alpha}\delta^{\gamma}\thick$. Since $\alpha\beta\cdots=\alpha\beta^2,\alpha\beta\delta^2$ and $\thin\alpha\thin^{\gamma}\beta^{\alpha}\thin$ is not compatible with the AAD of $\alpha\beta\delta^2$, we get $\thin\alpha\thin^{\gamma}\beta^{\alpha}\thin\cdots=\alpha\beta^2$. Then by no $\alpha^2\cdots$, we get $\thin\alpha\thin^{\gamma}\beta^{\alpha}\thin\cdots=\thin\alpha\thin^{\gamma}\beta^{\alpha}\thin^{\gamma}\beta^{\alpha}\thin$. 

If $\beta\gamma^2\delta^2$ is a vertex, then by no $\alpha^2\cdots$, we get the AAD $\thick^{\delta}\gamma^{\beta}\thin^{\alpha}\beta^{\gamma}\thin^{\alpha}\delta^{\gamma}\thick$ at the vertex. This determines $T_1,T_2,T_3$ in the first of Figure \ref{abb-accA}. The fan $\thick\gamma\thin\delta\thick$ at the vertex is yet to be determined. Then $\thin\alpha_2\thin^{\gamma}\beta_1^{\alpha}\thin\cdots=\thin\alpha\thin^{\gamma}\beta^{\alpha}\thin^{\gamma}\beta^{\alpha}\thin$ determines $T_4$, and $\alpha_3\gamma_2\cdots=\alpha\gamma^2$ determines $T_5$. Then $\alpha_1\gamma_4\cdots=\alpha\gamma^2$ determines $T_6$, and $\alpha_4\delta_2\delta_5\cdots=\alpha\beta\delta^2$ and no $\alpha^2\cdots$ determine $T_7$. Then $\alpha_5\gamma_7\cdots=\alpha\gamma^2$ determines $T_8$. Then $\beta_3\beta_5\beta_8\cdots=\beta^4$ and no $\alpha^2\cdots$ determine $T_9$. Then $\alpha_9\gamma_3\cdots=\alpha\gamma^2$ determines $T_{10}$, and further determines the fan $\thick\gamma\thin\delta\thick$ at $\beta\gamma^2\delta^2$, including $T_{11}$. Then $\thin\alpha_{10}\thin^{\gamma}\beta_{11}^{\alpha}\thin\cdots=\thin\alpha\thin^{\gamma}\beta^{\alpha}\thin^{\gamma}\beta^{\alpha}\thin$ determines $T_{12}$. Then $\alpha_{11}\gamma_{12}\cdots=\alpha\gamma^2$ determines $T_{13}$. Then we get a vertex $\beta_6\beta_{13}\delta_1\delta_{11}\cdots$, a contradiction. Therefore $\beta\gamma^2\delta^2$ is not a vertex. This implies $\beta\gamma\cdots=\beta^2\gamma^2$, and $\gamma\delta\cdots$ is not a vertex.

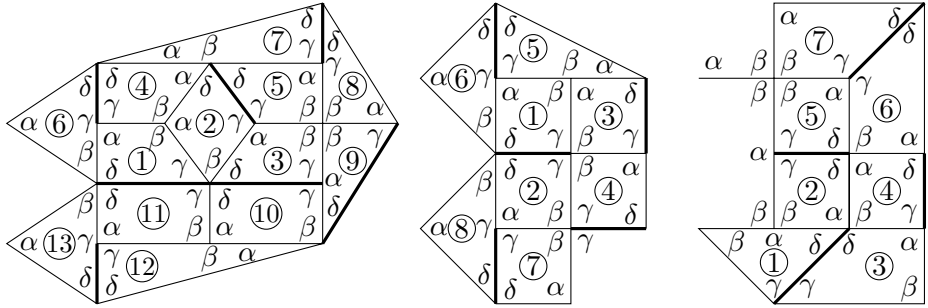
\begin{figure}[htp]
\centering
\begin{tikzpicture}


\draw
	(-1.5,1.6) -- (1.5,1.6) -- (1.5,-0.8) -- (-1.5,-0.8) -- (-1.5,0.8) -- (-0.6,0.8) -- (0,0) -- (0.6,0.8) -- (2.5,0.8) -- (1.5,2.4) -- (-1.5,1.6) -- (-2.7,0.8) -- (-1.5,0) -- (-2.7,-0.8) -- (-1.5,-1.6) -- (1.5,-0.8)
	(0,0) -- (0,-0.8)
	(-0.6,0.8) -- (0,1.6);

\draw[line width=1.2]
	(-1.5,0) -- (1.5,0)
	(-1.5,1.6) -- (-1.5,0.8)
	(0.6,0.8) -- (0,1.6)
	(1.5,2.4) -- (1.5,1.6)
	(-1.5,-1.6) -- (-1.5,-0.8)
	(1.5,-0.8) -- (2.5,0.8);

\node at (0.65,0.6) {\small $\alpha$};
\node at (1.3,0.6) {\small $\beta$};
\node at (1.3,0.2) {\small $\gamma$};
\node at (0.35,0.2) {\small $\delta$};

\node at (-1.3,0.6) {\small $\alpha$};
\node at (-0.7,0.6) {\small $\beta$};
\node at (-0.4,0.2) {\small $\gamma$};
\node at (-1.3,0.2) {\small $\delta$};

\node at (1.3,1.4) {\small $\alpha$};
\node at (1.3,1) {\small $\beta$};
\node at (0.65,1) {\small $\gamma$};
\node at (0.4,1.4) {\small $\delta$};

\node at (-0.35,1.4) {\small $\alpha$};
\node at (-0.65,1) {\small $\beta$};
\node at (-1.3,1) {\small $\gamma$};
\node at (-1.3,1.4) {\small $\delta$};

\node at (-0.35,0.8) {\small $\alpha$};
\node at (0.05,0.3) {\small $\beta$};
\node at (0.35,0.8) {\small $\gamma$};
\node at (-0.05,1.3) {\small $\delta$};

\node at (0.2,-0.6) {\small $\alpha$};
\node at (1.3,-0.6) {\small $\beta$};
\node at (1.3,-0.2) {\small $\gamma$};
\node at (0.2,-0.2) {\small $\delta$};

\node at (-1.3,-0.6) {\small $\alpha$};
\node at (-0.2,-0.6) {\small $\beta$};
\node at (-0.2,-0.2) {\small $\gamma$};
\node at (-1.3,-0.2) {\small $\delta$};

\node at (-0.5,1.75) {\small $\alpha$};
\node at (0,1.8) {\small $\beta$};
\node at (1.3,1.8) {\small $\gamma$};
\node at (1.3,2.2) {\small $\delta$};

\node at (0.5,-0.95) {\small $\alpha$};
\node at (0,-1) {\small $\beta$};
\node at (-1.3,-1) {\small $\gamma$};
\node at (-1.3,-1.35) {\small $\delta$};

\node at (2.2,1) {\small $\alpha$};
\node at (1.65,1) {\small $\beta$};
\node at (1.65,1.55) {\small $\gamma$};
\node at (1.65,1.9) {\small $\delta$};

\node at (1.65,0.05) {\small $\alpha$};
\node at (1.65,0.6) {\small $\beta$};
\node at (2.2,0.6) {\small $\gamma$};
\node at (1.65,-0.3) {\small $\delta$};

\node at (-2.4,0.8) {\small $\alpha$};
\node at (-1.65,0.4) {\small $\beta$};
\node at (-1.65,0.8) {\small $\gamma$};
\node at (-1.65,1.3) {\small $\delta$};

\node at (-2.4,-0.8) {\small $\alpha$};
\node at (-1.65,-0.3) {\small $\beta$};
\node at (-1.65,-0.8) {\small $\gamma$};
\node at (-1.65,-1.25) {\small $\delta$};

\node[inner sep=0.5, draw, shape=circle] at (-0.9,0.3) {\small $1$};
\node[inner sep=0.5, draw, shape=circle] at (0,0.8) {\small $2$};
\node[inner sep=0.5, draw, shape=circle] at (0.9,0.3) {\small $3$};
\node[inner sep=0.5, draw, shape=circle] at (0.9,1.3) {\small $5$};
\node[inner sep=0.5, draw, shape=circle] at (-0.9,1.3) {\small $4$};
\node[inner sep=0.5, draw, shape=circle] at (-2,0.8) {\small $6$};
\node[inner sep=0.5, draw, shape=circle] at (0.9,1.9) {\small $7$};
\node[inner sep=0.5, draw, shape=circle] at (1.9,1.3) {\small $8$};
\node[inner sep=0.5, draw, shape=circle] at (1.9,0.3) {\small $9$};
\node[inner sep=0, draw, shape=circle] at (0.75,-0.4) {\footnotesize $10$};
\node[inner sep=0, draw, shape=circle] at (-0.75,-0.4) {\footnotesize $11$};
\node[inner sep=0, draw, shape=circle] at (-0.9,-1.1) {\footnotesize $12$};
\node[inner sep=0, draw, shape=circle] at (-2,-0.8) {\footnotesize $13$};


\begin{scope}[shift={(4.8cm, 0.4cm)}]

\draw
	(0,-1) -- (-1,-1) -- (-1,1) -- (1,1) -- (-1,2) -- (-2,1) -- (-1,0) -- (-2,-1) -- (-1,-2)
	(-1,-2) -- (0,-2) -- (0,1)
	(0,0) -- (1,0) -- (1,-1)
	;

\draw[line width=1.2]
	(0,0) -- (-1,0)
	(1,0) -- (1,1)
	(0,-1) -- (1,-1)
	(-1,1) -- (-1,2) 
	(-1,-1) -- (-1,-2);

\node at (0.2,0.8) {\small $\alpha$};
\node at (0.2,0.2) {\small $\beta$};
\node at (0.8,0.2) {\small $\gamma$};
\node at (0.8,0.8) {\small $\delta$};

\node at (0.8,-0.2) {\small $\alpha$};
\node at (0.2,-0.2) {\small $\beta$};
\node at (0.2,-0.8) {\small $\gamma$};
\node at (0.8,-0.8) {\small $\delta$};

\node at (-0.8,0.8) {\small $\alpha$};
\node at (-0.2,0.8) {\small $\beta$};
\node at (-0.2,0.2) {\small $\gamma$};
\node at (-0.8,0.2) {\small $\delta$};

\node at (-0.8,-0.8) {\small $\alpha$};
\node at (-0.2,-0.8) {\small $\beta$};
\node at (-0.2,-0.2) {\small $\gamma$};
\node at (-0.8,-0.2) {\small $\delta$};

\node at (-0.2,-1.8) {\small $\alpha$};
\node at (-0.2,-1.2) {\small $\beta$};
\node at (-0.8,-1.2) {\small $\gamma$};
\node at (-0.8,-1.8) {\small $\delta$};

\node at (0.2,-1.2) {\small $\gamma$};

\node at (-1.75,-1) {\small $\alpha$};
\node at (-1.15,-0.4) {\small $\beta$};
\node at (-1.15,-1) {\small $\gamma$};
\node at (-1.15,-1.6) {\small $\delta$};

\node at (-1.75,1) {\small $\alpha$};
\node at (-1.15,0.45) {\small $\beta$};
\node at (-1.15,1) {\small $\gamma$};
\node at (-1.15,1.65) {\small $\delta$};

\node at (0.45,1.15) {\small $\alpha$};
\node at (0,1.2) {\small $\beta$};
\node at (-0.8,1.2) {\small $\gamma$};
\node at (-0.8,1.7) {\small $\delta$};

\node[inner sep=0.5, draw, shape=circle] at (-0.5,0.5) {\small $1$};
\node[inner sep=0.5, draw, shape=circle] at (-0.5,-0.5) {\small $2$};
\node[inner sep=0.5, draw, shape=circle] at (0.5,0.5) {\small $3$};
\node[inner sep=0.5, draw, shape=circle] at (0.5,-0.5) {\small $4$};
\node[inner sep=0.5, draw, shape=circle] at (-0.5,1.4) {\small $5$};
\node[inner sep=0.5, draw, shape=circle] at (-1.45,1) {\small $6$};
\node[inner sep=0.5, draw, shape=circle] at (-0.5,-1.5) {\small $7$};
\node[inner sep=0.5, draw, shape=circle] at (-1.45,-1) {\small $8$};

\end{scope}


\begin{scope}[shift={(7.5cm, 0.4cm)}]

\draw
 	(0,-1) -- (0,2) -- (2,2) -- (2,0) -- (1,0) -- (1,-1)
	(-1,1) -- (1,1) -- (1,-1) -- (-1,-1) -- (0,-2) -- (2,-2) -- (2,-1) -- (1,-1);

\draw[line width=1.2]
	(0,0) -- (1,0)
	(2,2) -- (1,1)
	(2,-1) -- (2,0)
	(0,-2) -- (1,-1);

\node at (0.8,0.8) {\small $\alpha$};
\node at (0.2,0.8) {\small $\beta$};
\node at (0.2,0.2) {\small $\gamma$};
\node at (0.8,0.2) {\small $\delta$};

\node at (0.8,-0.8) {\small $\alpha$};
\node at (0.2,-0.8) {\small $\beta$};
\node at (0.2,-0.2) {\small $\gamma$};
\node at (0.8,-0.2) {\small $\delta$};

\node at (1.2,-0.2) {\small $\alpha$};
\node at (1.2,-0.8) {\small $\beta$};
\node at (1.8,-0.8) {\small $\gamma$};
\node at (1.8,-0.2) {\small $\delta$};

\node at (1.8,0.2) {\small $\alpha$};
\node at (1.2,0.2) {\small $\beta$};
\node at (1.2,0.9) {\small $\gamma$};
\node at (1.8,1.6) {\small $\delta$};

\node at (0.2,1.8) {\small $\alpha$};
\node at (0.2,1.2) {\small $\beta$};
\node at (0.9,1.2) {\small $\gamma$};
\node at (1.6,1.8) {\small $\delta$};

\node at (-0.8,1.2) {\small $\alpha$};
\node at (-0.2,0) {\small $\alpha$};
\node at (-0.2,1.2) {\small $\beta$};
\node at (-0.2,0.8) {\small $\beta$};
\node at (-0.2,-0.8) {\small $\beta$};

\node at (0,-1.15) {\small $\alpha$};
\node at (-0.5,-1.2) {\small $\beta$};
\node at (0,-1.8) {\small $\gamma$};
\node at (0.55,-1.2) {\small $\delta$};

\node at (1.8,-1.2) {\small $\alpha$};
\node at (1.8,-1.8) {\small $\beta$};
\node at (0.45,-1.8) {\small $\gamma$};
\node at (1,-1.2) {\small $\delta$};

\node[inner sep=0.5, draw, shape=circle] at (0,-1.45) {\small $1$};
\node[inner sep=0.5, draw, shape=circle] at (0.5,-0.5) {\small $2$};
\node[inner sep=0.5, draw, shape=circle] at (1.4,-1.5) {\small $3$};
\node[inner sep=0.5, draw, shape=circle] at (1.5,-0.5) {\small $4$};
\node[inner sep=0.5, draw, shape=circle] at (0.5,0.5) {\small $5$};
\node[inner sep=0.5, draw, shape=circle] at (1.5,0.6) {\small $6$};
\node[inner sep=0.5, draw, shape=circle] at (0.6,1.5) {\small $7$};

\end{scope}

\end{tikzpicture}
\caption{Proposition \ref{abb-acc}: $\beta\gamma^2\delta^2$, $\beta^2\gamma^2$, $\alpha\beta^2$.}
\label{abb-accA}
\end{figure}

Now consider $\beta^2\gamma^2=\thick^{\delta}\gamma^{\beta}\thin\beta\thin\beta\thin^{\beta}\gamma^{\delta}\thick$. This determines $T_1,T_2$ in the second of Figure \ref{abb-accA}. The arrangements of $T_3,T_4$ indicate the two possible arrangements of the tiles containing $\beta$. On the one hand, $\thin\alpha_3\thin^{\gamma}\beta_1^{\alpha}\thin\cdots=\thin\alpha\thin^{\gamma}\beta^{\alpha}\thin^{\gamma}\beta^{\alpha}\thin$ determines $T_5$. Then $\alpha_1\gamma_5\cdots=\alpha\gamma^2$ determines $T_6$. On the other hand, $\beta_2\gamma_4\cdots=\beta^2\gamma^2$ and no $\alpha^2\cdots$ determine $T_7$. Then $\alpha_2\gamma_7\cdots=\alpha\gamma^2$ determines $T_8$. We find that, no matter how $T_3,T_4$ are arranged, we always get the same $T_6,T_8$. Then we get $\beta_6\beta_8\delta_1\delta_2\cdots$, a contradiction. Therefore $\beta^2\gamma^2$ is not a vertex. This implies $\beta^2\cdots=\alpha\beta^2,\beta^4$.

Finally, the proposition assumes $\alpha\beta^2$ is a vertex. By no $\gamma\delta\cdots$, the vertex has the AAD $\thin^{\beta}\alpha^{\delta}\thin^{\alpha}\beta^{\gamma}\thin\beta\thin$. The $\thin^{\beta}\alpha^{\delta}\thin^{\alpha}\beta^{\gamma}\thin$ part of the vertex determines $T_1,T_2$ in the third of Figure \ref{abb-accA}. Then $\alpha_2\delta_1\cdots=\alpha\beta\delta^2$ and no $\alpha^2\cdots$ determine $T_3,T_4$. Then $\alpha_4\delta_2\cdots=\alpha\beta\delta^2$ and no $\alpha^2\cdots$ determine $T_5,T_6$. Then $\alpha_5\gamma_6\cdots=\alpha\gamma^2$ determines $T_7$. Then $\gamma_2\gamma_5\cdots=\alpha\gamma^2,\gamma^4$ and $\beta_5\beta_7\cdots=\alpha\beta^2,\beta^4$ imply $\beta_5\beta_7\cdots=\beta^4$. Then by no $\alpha^2\cdots$, the AAD of this $\beta^4$ implies $\gamma_2\gamma_5\cdots=\alpha\gamma^2$. Then we have one $\alpha$ and two $\beta$ in a tile, again a contradiction. 
\end{proof}

\begin{proposition}\label{acc-bbb}
There is no tilings of the sphere by congruent almost equilateral quadrilaterals, such that 
$\alpha\gamma^2,\beta^3$ are vertices.
\end{proposition}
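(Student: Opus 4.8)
The proof will follow the pattern of Propositions \ref{aab-acc} and \ref{abb-acc}: extract the angle relations, pin down the cyclic order of $\alpha,\beta,\gamma,\delta$, use the geometric lemmas to bound $f$, enumerate the vertices that must occur, and eliminate each surviving case either with the Coolsaet identity \eqref{coolsaet_eq1} or with an adjacent angle deduction picture.

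First I would compute. The angle sums of $\alpha\gamma^2$, $\beta^3$ together with \eqref{anglesum} give $\beta=\tfrac23\pi$, $\alpha=2\pi-2\gamma$, and $\delta=\gamma-(\tfrac23-\tfrac4f)\pi$; it is convenient to set $\epsilon=\tfrac23\pi-\gamma$, so that $\alpha=\tfrac23\pi+2\epsilon$, $\gamma=\tfrac23\pi-\epsilon$, $\delta=\tfrac4f\pi-\epsilon$. For $f=6$ this forces $\gamma=\delta$, which is excluded, so $f\ge 8$, hence $\delta<\gamma$; by Lemma \ref{geometry1} this gives $\alpha>\beta$, whence $\gamma<\tfrac23\pi$ and $\alpha>\beta>\gamma>\delta$, with $0<\epsilon<\tfrac4f\pi$. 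Since $\gamma<\pi$, Lemma \ref{geometry4} gives $\alpha+2\delta>\pi$, i.e. $\tfrac23\pi+\tfrac8f\pi>\pi$, so $f<24$ and $f\in\{8,10,12,14,16,18,20,22\}$. The angle bounds also show the only degree $3$ vertices are $\alpha\gamma^2$ and $\beta^3$: the vertices $\alpha^3,\alpha^2\beta,\alpha\beta^2$ have angle sum $>2\pi$, while $\alpha\delta^2,\beta\gamma^2,\beta\delta^2,\alpha\gamma\delta,\beta\gamma\delta$ all fail their angle sums unless $f=6$. Counting $\#\alpha=\#\beta=\#\gamma=\#\delta=f$ shows that, since $\alpha\gamma^2$ and $\beta^3$ carry no $\delta$, there must be vertices carrying $\delta$; and since $\alpha\gamma^2$ contains a $\gamma^2$-fan, Lemma \ref{fbalance} produces a vertex with a $\delta^2$-fan.

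Next I would settle whether $\alpha^2\cdots$ is a vertex. Since $R(\alpha^2)=2\pi-2\alpha=\tfrac23\pi-4\epsilon<\beta$, such a vertex must be of the form $\alpha^2\delta^n$ or $\alpha^2\gamma\delta^n$ with $n$ restricted by the parity lemma; this is a finite list of candidates, each of which (with $\alpha\gamma^2,\beta^3$) pins $\epsilon$ to a rational multiple of $\pi$ and $f$ to a specific value in $\{8,\dots,22\}$, and I expect all of them to be killed by \eqref{coolsaet_eq1}. With $\alpha^2\cdots$ excluded, Lemma \ref{klem3} applies: a $\delta^2$-fan is a single $\alpha$, a vertex has at most one $\delta^2$-fan, and (using $\gamma>\delta$) $\thin\alpha\thin^{\gamma}\beta^{\alpha}\thin\cdots=\alpha\beta^k$ with $k\ge1$, forcing $\alpha=\tfrac43\pi$ — one further case, again disposed of by \eqref{coolsaet_eq1}. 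In the complementary situation the $\delta^2$-fan vertex is $\delta\alpha\delta$ or $\delta\alpha\beta\delta$ combined with fans of the form $\gamma\beta^j\gamma$, $\gamma\beta^j\delta$, $\delta\beta^j\delta$ having no $\alpha$; the angle sum then leaves a short explicit list of vertex types, each determining $(\epsilon,f)$ exactly, and I would eliminate them by substituting into \eqref{coolsaet_eq1} and checking no equality holds for any admissible even $f\le22$. Any type that survives the trigonometric check would be knocked out by an adjacent angle deduction argument in the style of Figures \ref{acd_bccA} and \ref{abb-accA}: starting from the unique AAD of $\beta^3$ or of the $\delta^2$-fan vertex, propagate forced tiles and reach either a forbidden vertex ($\alpha^2\cdots$, $\alpha\beta\gamma\cdots$) or two $\delta$'s (or two $\beta$'s) inside one tile.

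The main obstacle I anticipate is the vertex enumeration rather than any single clean trick: because $\delta=\tfrac4f\pi-\epsilon$ is of order $1/f$, a vertex may contain many $\delta$'s, so the list of candidate vertex types is controlled only through the fan structure (Lemmas \ref{fbalance}, \ref{klem3}) together with the parity lemma, and care is needed not to miss one. The secondary obstacle is the residual case work for the handful of surviving pairs $(f,\epsilon)$: verifying that \eqref{coolsaet_eq1} fails in each is a finite but delicate trigonometric computation, and a particularly stubborn case may require a genuine tiling-picture contradiction instead of an algebraic one. Within this, the step I expect to be most technical is ruling out $\alpha^2\cdots$, since its remainder $\tfrac23\pi-4\epsilon$ is not small and admits several $\delta^n$ and $\gamma\delta^n$ completions that must each be checked.
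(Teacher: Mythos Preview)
Your plan is workable in outline but substantially more laborious than the paper's argument, which cuts the case analysis down to a single instance of \eqref{coolsaet_eq1}. The two shortcuts you miss are these. First, to exclude $\alpha^2\cdots$ the paper does not enumerate completions: if $\alpha^2\cdots$ were a vertex then $\alpha<\pi$, whence $2\gamma>\pi>\alpha$ from $\alpha\gamma^2$, and Lemma~\ref{geometry7} gives $\beta<2\delta$; combined with $R(\alpha^2)<\beta<\alpha,2\delta$ and $\gamma>\delta$, the parity lemma leaves no room for anything in the remainder, so $\alpha^2\cdots$ is impossible in one line. Second, rather than enumerating $\delta^2$-fan vertices, the paper observes that any $\delta^2$-fan forces a vertex $\alpha\beta\cdots$ (if the fan contains $\beta$ this is immediate; if the fan is $\thick\delta\thin\alpha\thin\delta\thick$ then its AAD $\thick^{\gamma}\delta^{\alpha}\thin^{\beta}\alpha^{\delta}\thin^{\alpha}\delta^{\gamma}\thick$ produces an adjacent $\alpha\beta\cdots$). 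Then $R(\alpha\beta)<\alpha,\beta,\gamma+\delta$ plus parity forces $\alpha\beta\cdots=\alpha\beta\delta^k$, and since the vertex (having no $\gamma$) is a single $\delta^2$-fan carrying one $\alpha$, necessarily $k=2$. The angle sum of $\alpha\beta\delta^2$ gives $f=12$, and one check of \eqref{coolsaet_eq1} over the range $\tfrac{2}{3}\pi<\alpha<\tfrac{4}{3}\pi$ finishes the proof.

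Two minor inaccuracies in your outline: your list of $\delta^2$-fans ($\delta\alpha\delta$ or $\delta\alpha\beta\delta$) omits fans $\thick\delta\thin\beta^a\alpha\beta^b\thin\delta\thick$ with $a+b\ge2$, which are not excluded a priori by Lemma~\ref{klem3}; and your reading of the second clause of Lemma~\ref{klem3} as directly ``forcing $\alpha=\tfrac{4}{3}\pi$'' is slightly off---that clause is conditional on the specific AAD $\thin\alpha\thin^{\gamma}\beta^{\alpha}\thin$ actually appearing. Neither gap is fatal to your strategy, but each adds bookkeeping that the paper's route sidesteps entirely.
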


\begin{proof}
The angle sums of $\alpha\gamma^2,\beta^3$ and the angle sum for quadrilateral imply
\[
\beta=\tfrac{2}{3}\pi,\;
\gamma=(\tfrac{2}{3}-\tfrac{4}{f})\pi+\delta.
\]
By $f\ge 6$ and $\gamma\ne\delta$, we get $\gamma>\delta$. By Lemma \ref{geometry1}, this implies $\alpha>\beta$.

If $\alpha^2\cdots$ is a vertex, then $\alpha<\pi$. By $\alpha\gamma^2$, we also know $\gamma<\pi$ and $2\gamma>\pi>\alpha$. By $\alpha<2\gamma$ and Lemma \ref{geometry7}, we get $\beta<2\delta<2\gamma$. By $\beta^3$ and $\alpha>\beta$, we get $R(\alpha^2)<\beta<\alpha,2\delta,2\gamma$. This implies $\alpha^2\cdots$ is not a vertex.

By $\alpha\gamma^2$ and Lemma \ref{fbalance}, we know there is $\delta^2$-fan. By no $\alpha^2\cdots$ and Lemma \ref{klem3}, we know a $\delta^2$-fan has a single $\alpha$. If the fan has $\beta$, then the vertex is $\alpha\beta\cdots$. If the fan has no $\beta$, then it is $\thick\delta\thin\alpha\thin\delta\thick=\thick^{\gamma}\delta^{\alpha}\thin^{\beta}\alpha^{\delta}\thin^{\alpha}\delta^{\gamma}\thick$. The AAD again implies $\alpha\beta\cdots$ is a vertex. Therefore $\alpha\beta\cdots$ is always a vertex.

By $\alpha>\beta=\tfrac{2}{3}\pi$ and the angle sum for quadrilateral, we get $R(\alpha\beta)<\alpha,\beta,\gamma+\delta$. By $\gamma>\delta$ and the parity lemma, this implies $\alpha\beta\cdots=\alpha\beta\delta^k$. Since a $\delta^2$-fan has a single $\alpha$, we get $k=2$ in $\alpha\beta\delta^k$. The angle sum of $\alpha\beta\delta^2$ further implies $f=12$ and the following angle values. We also include \eqref{coolsaet_eq1}.
\begin{align*}
\alpha\beta\delta^2 &\colon
\beta=\tfrac{2}{3}\pi,\;
\gamma=\pi-\tfrac{1}{2}\alpha,\;
\delta=\tfrac{2}{3}\pi-\tfrac{1}{2}\alpha,\;
f=12. \\
&	\sin\tfrac{1}{2}\alpha 
	\sin(\tfrac{1}{3}\pi-\tfrac{1}{2}\alpha)
	=\sin\tfrac{1}{3}\pi 
	\sin(\pi-\alpha).
\end{align*}
By $\alpha>\beta$ and $\delta>0$, we get $\frac{2}{3}\pi<\alpha<\frac{4}{3}\pi$. The equation has no solution in this range.
\end{proof}

\begin{proposition}\label{aaa-add}
There is no tiling of the sphere by congruent almost equilateral quadrilaterals, such that 
$\alpha^3,\alpha\delta^2$ are vertices. 
\end{proposition}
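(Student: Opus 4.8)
The plan is to derive the angle values from the two vertex relations, then use the geometric lemmas and parity/balance constraints to force a contradiction, most likely by showing that some angle equation such as \eqref{coolsaet_eq1} has no solution, or by a direct combinatorial obstruction around a forced configuration. First I would combine the angle sums of $\alpha^3$ and $\alpha\delta^2$ with the angle sum \eqref{anglesum} for quadrilateral. From $\alpha^3$ we get $\alpha=\tfrac{2}{3}\pi$, and then $\alpha\delta^2$ gives $\delta=(\tfrac{2}{3}+\tfrac{1}{f})\pi$, while \eqref{anglesum} yields $\beta+\gamma=(\tfrac{2}{3}-\tfrac{2}{f})\pi+\ldots$; I would record the precise values of $\beta$ and $\gamma$ in terms of $f$ (one more free parameter remains since we only have two vertex equations plus the angle sum, so $\beta,\gamma$ are linked by $\beta+\gamma = \ldots$ but not individually pinned down yet). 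Note $\delta>\tfrac{1}{2}\pi$ and $\delta\ne\beta$ (standing assumption), and by Lemma \ref{geometry1} the quadrilateral is non-symmetric, so either $\alpha<\beta,\gamma<\delta$ or $\alpha>\beta,\gamma>\delta$.

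Next I would analyze the AAD of $\alpha^3$. Since $\alpha^3=\thin\alpha\thin\alpha\thin\alpha\thin$ with three $\alpha$'s and the edges between them all being $a$-edges, the adjacent angles to each $\alpha$ must be from $\{\beta,\delta\}$ (the $a^2$-angles neighbors), so $\alpha^3$ has AAD built from $\thin^\beta\alpha^\delta\thin$ segments, which forces a vertex $\beta\thin\delta\cdots$ (a $\gamma\delta$-fan or $\delta^2$-fan style neighbor) or $\delta\thin\delta\cdots$. I would then exploit $\alpha\delta^2$: its AAD forces either $\alpha\beta\cdots$ or $\alpha\delta\cdots$ type neighbors, and combined with the balance lemma (Lemma \ref{balance_aaab}) — whether $\gamma^2\cdots$ or $\delta^2\cdots$ appears — I would split into the case where only $\alpha^k\beta^l$ and $\alpha^k\beta^l\gamma\delta$ are vertices versus the case where $\gamma^2\cdots,\delta^2\cdots$ are genuine vertices. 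In the balanced case the AVC is very restrictive and I expect a quick contradiction with $\alpha=\tfrac23\pi$, $\delta>\tfrac12\pi$ forcing impossible angle sums. In the unbalanced case I would use Lemma \ref{geometry4} or Lemma \ref{geometry7} (since $\delta<\pi$, and likely $\gamma<\pi$ after checking) to pin down inequalities between $\alpha,\beta,\gamma,\delta$, thereby restricting possible vertices further, and then enumerate the finitely many candidate AVCs.

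For each surviving candidate AVC I would solve the angle sum system completely, obtaining specific rational or algebraic angle values together with a concrete $f$, and then check \eqref{coolsaet_eq1} from Lemma \ref{geometry6}; I expect every candidate to fail this equation for the relevant (even, $\geq 6$) values of $f$, exactly as in the analogous Propositions \ref{aab-acc}, \ref{abb-acc}, \ref{acc-bbb}. Alternatively, if some candidate does satisfy \eqref{coolsaet_eq1}, I would produce a direct tiling-picture contradiction: start from the forced neighborhood of $\alpha^3$ (three tiles around it), propagate using the unique AADs (uniqueness coming from "no $\alpha^2\cdots$" or "no $\beta\thin\beta\cdots$" type statements that I would establish first via the parity lemma), and reach a vertex violating the AVC. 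The main obstacle I anticipate is the bookkeeping in the unbalanced case: there may be several candidate vertices with $\gamma,\delta$ (constrained by Lemma \ref{geometry2} to at most two degree-$3$ vertices, but higher-degree vertices proliferate), and ruling each out cleanly — deciding when to invoke a trigonometric non-existence argument versus when a short AAD propagation suffices — will require care. I would handle it by first using Lemmas \ref{geometry4}, \ref{geometry5}, \ref{geometry7} to collapse the inequality structure as much as possible before any case enumeration, keeping the candidate list short.
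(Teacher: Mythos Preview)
Your very first computation is wrong: from $\alpha^3$ you correctly get $\alpha=\tfrac{2}{3}\pi$, but $\alpha\delta^2$ then gives $\alpha+2\delta=2\pi$, i.e.\ $\delta=\tfrac{2}{3}\pi$, not $(\tfrac{2}{3}+\tfrac{1}{f})\pi$. The angle sum for the quadrilateral then yields $\beta+\gamma=(\tfrac{2}{3}+\tfrac{4}{f})\pi$. The crucial point you miss is that $\alpha=\delta$ exactly, with no $f$-dependence; this special coincidence is what makes the argument short.

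The paper exploits this directly. Since $\alpha\delta^2$ is a vertex, the balance lemma forces $\gamma^2\cdots$ to be a vertex, so $\gamma<\pi$. If also $\beta<\pi$, then Lemma~\ref{geometry3} gives $\beta+\pi>\gamma+\delta$ and $\gamma+\pi>\alpha+\delta$, and a short case split on $\alpha\lessgtr\beta$ (using Lemma~\ref{geometry1}) shows $\beta+\gamma>\pi$ in either case; if $\beta\ge\pi$ this is automatic. But $\beta+\gamma=(\tfrac{2}{3}+\tfrac{4}{f})\pi>\pi$ forces $f<12$, so $f\in\{6,8,10\}$. For each of these three values one writes \eqref{coolsaet_eq1} as an equation in $\beta$ alone and checks there is no admissible solution (the only one is $\beta=\tfrac{2}{3}\pi$ at $f=6$, which gives $\alpha=\beta$, forbidden). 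That is the entire proof.

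Your plan---AAD of $\alpha^3$, balanced/unbalanced split, enumeration of higher-degree vertices, AAD propagation pictures---is the machinery used for the harder propositions in this section, but it is massive overkill here and, with the wrong value of $\delta$, would not even get off the ground correctly. The decisive step you are missing is to use the geometric lemmas (specifically Lemma~\ref{geometry3}) to bound $\beta+\gamma$ from below and thereby bound $f$, collapsing the problem to three explicit trigonometric checks rather than an open-ended vertex enumeration.
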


\begin{proof}
The angle sums of $\alpha^3,\alpha\delta^2$ and the angle sum for quadrilateral imply
\[
\alpha=\delta=\tfrac{2}{3}\pi,\;
\beta+\gamma=(\tfrac{2}{3}+\tfrac{4}{f})\pi.
\]

If $\beta<\pi$, then all angles $<\pi$. By Lemma \ref{geometry3}, we get $\beta+\pi>\gamma+\delta$ and $\gamma+\pi>\alpha+\delta$. If $\beta<\alpha$, then by Lemma \ref{geometry1}, we get $\gamma>\delta$. This implies $\beta+\gamma>2\gamma+\delta-\pi>3\delta-\pi=\pi$. If $\alpha<\beta$, then $\beta+\gamma>\alpha+\beta+\delta-\pi>2\alpha+\delta-\pi=\pi$. 

Since $\beta\ge \pi$ also implies $\beta+\gamma>\pi$, we always have $\beta+\gamma>\pi$. This means $f<12$. Then $f=6,8,10$, and the equality \eqref{coolsaet_eq1} becomes
\begin{align*}
f=6 &\colon
\sin \tfrac{1}{3}\pi\sin(\tfrac{2}{3}\pi-\tfrac{1}{2}\beta)=\sin\tfrac{1}{2}\beta\sin(\pi-\beta). \\
f=8 &\colon
\sin \tfrac{1}{3}\pi\sin(\tfrac{2}{3}\pi-\tfrac{1}{2}\beta)=\sin\tfrac{1}{2}\beta\sin(\tfrac{5}{6}\pi-\beta). \\
f=10 &\colon
\sin \tfrac{1}{3}\pi\sin(\tfrac{2}{3}\pi-\tfrac{1}{2}\beta)=\sin\tfrac{1}{2}\beta\sin(\tfrac{11}{15}\pi-\beta).
\end{align*}
The only solution satisfying $\beta<(\tfrac{2}{3}+\tfrac{4}{f})\pi$ is $\beta=\frac{2}{3}\pi$ for $f=6$. By $\alpha=\beta$, the solution is dismissed.
\end{proof}

\begin{proposition}\label{aab-add}
There is no tiling of the sphere by congruent almost equilateral quadrilaterals, such that 
$\alpha^2\beta,\alpha\delta^2$ are vertices. 
\end{proposition}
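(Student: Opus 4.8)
The plan is to reduce everything to a one‑parameter family of angles, squeeze $f$ down to finitely many values with the geometric lemmas, and then close the remaining cases by counting and Coolsaet's equality \eqref{coolsaet_eq1}.

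First I would record the angle sums of $\alpha^2\beta$ and $\alpha\delta^2$ together with the angle sum for quadrilateral, and solve them to get
$\beta=2\pi-2\alpha$, $\delta=\pi-\tfrac12\alpha$, $\gamma=(\tfrac4f-1)\pi+\tfrac32\alpha$,
so all angles are governed by the single parameter $\alpha$. Since $\alpha\delta^2$ is a vertex, $\delta^2\cdots$ is a vertex, so by the balance lemma (Lemma \ref{balance_aaab}) $\gamma^2\cdots$ is a vertex as well; in particular $\gamma,\delta<\pi$. Because $\beta<2\delta$ is automatic (it is equivalent to $\alpha>0$), Lemma \ref{geometry7} forces $\alpha<2\gamma$, which reads $\alpha>(1-\tfrac4f)\pi$. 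By Lemma \ref{geometry1} the tile satisfies either $\alpha<\beta$ (equivalently $\alpha<\tfrac23\pi$, and then $\gamma<\delta$) or $\alpha>\beta$ (equivalently $\alpha>\tfrac23\pi$, and then $\gamma>\delta$), since $\alpha\ne\beta$ is a standing assumption.

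Next I would dispose of the case $\alpha>\beta$. Here $\beta<\delta$ (this is equivalent to $\alpha>\tfrac23\pi$), so Lemma \ref{geometry5} gives $\alpha<\gamma$, which reads $\alpha>(2-\tfrac8f)\pi$; combined with $\alpha<\pi$ this forces $f<8$, hence $f=6$. In the case $\alpha<\beta$, combining $\alpha<\tfrac23\pi$ with $\alpha>(1-\tfrac4f)\pi$ forces $f<12$, hence $f\in\{6,8,10\}$. Thus in all cases $f\in\{6,8,10\}$. The value $f=6$ is immediate: by \eqref{quadvcountf} every vertex has degree $3$, and one checks that, apart from $\alpha^2\beta$ and $\alpha\delta^2$, every degree‑$3$ vertex allowed by the parity lemma has an angle sum that forces $\alpha=\tfrac23\pi$, contradicting $\alpha\ne\beta$; hence $\gamma$ occurs at no vertex of the tiling, which is absurd.

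For $f=8$ and $f=10$ I would use \eqref{quadvcountf}, \eqref{quadvcount3}, the parity lemma (Lemma \ref{parity}), Lemma \ref{geometry2} (the admissible degree‑$3$ vertices with $\gamma,\delta$), and the counting lemma (Lemma \ref{counting}) to enumerate the few possible vertex sets. Since $\gamma$ must appear and $\gamma^2\cdots$ is a vertex, each admissible vertex of the form $\gamma^2\cdots$ (or any higher‑degree vertex that must occur by the counting bounds) yields a polynomial relation pinning $\alpha$, and hence all four angles, to a specific value for the given $f$; I would then check that each resulting quadruple $(\alpha,\beta,\gamma,\delta)$ violates Coolsaet's equality \eqref{coolsaet_eq1}, so no such quadrilateral exists. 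The main obstacle is exactly this $f\in\{8,10\}$ bookkeeping: with $\gamma$ small and $\beta$ potentially small there are several candidate higher‑degree vertices, and one must be sure the enumeration of admissible vertex sets is complete before the Coolsaet test finishes off each; the earlier geometric inequalities (Lemmas \ref{geometry1}, \ref{geometry5}, \ref{geometry7}) are precisely what make this enumeration finite, and getting those inequalities exactly right is the delicate point.
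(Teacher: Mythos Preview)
Your reduction to $f\in\{6,8,10\}$ is correct, and your treatment of $f=6$ is fine.  The angle parametrisation, the use of Lemma~\ref{geometry7} to get $\alpha>(1-\tfrac4f)\pi$, and the use of Lemma~\ref{geometry5} in the $\alpha>\beta$ branch are all sound.  Where your proposal diverges from the paper is in how the $\alpha<\beta$ case (hence the residual $f=8,10$) is closed.  You propose to enumerate vertex sets for each such $f$ and kill each with \eqref{coolsaet_eq1}; this is in principle workable, but you have only sketched it, not carried it out, and there is no guarantee in advance that every candidate $\alpha$ actually fails the Coolsaet equality (if one survived, you would need a further combinatorial argument).

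The paper does not bound $f$ at all in the $\alpha<\beta$ branch.  Instead it argues structurally: from $\alpha<\delta<\beta$ and $\alpha\delta^2$ one gets $R(\beta^2)<\alpha<\beta,\delta$, so $\beta^2\cdots$ could only be $\beta^2\gamma^k$; Lemma~\ref{geometry4} and parity force $k=2$, and the angle sums of $\alpha^2\beta,\alpha\delta^2,\beta^2\gamma^2$ then give explicit values yielding the incompatible pair $f>12$ (from $\alpha<\delta$) and $f<12$ (from $\alpha<2\gamma$).  Hence $\beta^2\cdots$ is not a vertex.  A second round of remainder estimates then pins down $\beta\cdots=\alpha^2\beta$ or $\beta\gamma\delta$; the counting lemma (applied to $\alpha,\beta$) forces $\beta\gamma\delta$ to occur, and this is forbidden by Proposition~\ref{acd-bcc} after the $(\alpha,\delta)\leftrightarrow(\beta,\gamma)$ swap.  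This route needs no enumeration over $f$ and no Coolsaet check within the present proof; any trigonometric work is already absorbed into Proposition~\ref{acd-bcc}.  Your approach trades that structural argument for a finite but unexecuted case analysis.
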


\begin{proof}
By $\alpha^2\beta$, we get $\alpha<\pi$. We also know $\gamma,\delta<\pi$.

By $\alpha^2\beta,\alpha\delta^2$, we get $\alpha+\beta=2\delta$. By $\beta<2\delta$ and Lemma \ref{geometry7}, we get $\alpha<2\gamma$.

If $\alpha>\beta$, then by $\alpha+\beta=2\delta$, we get $\alpha>\delta>\beta$. Then by Lemma \ref{geometry5}, we get $\alpha<\gamma$. By $\alpha^2\beta$, this implies $R(\gamma^2)<\beta<\alpha,\gamma,\delta$. This implies $\gamma^2\cdots$ is not a vertex, a contradiction. 

Therefore we have $\alpha<\beta$. By Lemma \ref{geometry1}, we get $\gamma<\delta$. By $\alpha+\beta=2\delta$, we get $\alpha<\delta<\beta$. Then by $\alpha\delta^2$ and $\beta>\delta$, we get $R(\beta^2)<\alpha<\beta,\delta$. Therefore $\beta^2\cdots=\beta^2\gamma^k$. By Lemma \ref{geometry4}, we get $\beta+2\gamma>\pi$. By the parity lemma, this implies $k=2$ in $\beta^2\gamma^k$. The angle sums of $\alpha^2\beta,\alpha\delta^2,\beta^2\gamma^2$ and the angle sum for quadrilateral imply
\[
\alpha=\tfrac{8}{f}\pi,\;
\beta=(2-\tfrac{16}{f})\pi,\;
\gamma=(\tfrac{16}{f}-1)\pi,\;
\delta=(1-\tfrac{4}{f})\pi.
\]
By $\alpha<\delta$, we get $f>12$. By $\alpha<2\gamma$, we get $f<12$. The contradiction proves that $\beta^2\cdots$ is not a vertex.

By $\alpha^2\beta$ and $\alpha<\delta$, we get $R(\beta\delta)<\alpha<\beta,\delta$. This implies $\beta\delta\cdots=\beta\gamma^k\delta$. Therefore $\alpha\beta\cdots$ has no $\delta$. Then by $\alpha^2\beta$ and no $\beta^2\cdots$, we get $\alpha\beta\cdots=\alpha^2\beta,\alpha\beta\gamma^k$. Combining $\alpha\beta\cdots=\alpha^2\beta,\alpha\beta\gamma^k$, and $\beta\delta\cdots=\beta\gamma^k\delta$, and no $\beta^2\cdots$, we get $\beta\cdots=\alpha^2\beta,\alpha\beta\gamma^k,\beta\gamma^k,\beta\gamma^k\delta$. 

By $\alpha^2\beta$, and $\alpha<2\gamma$, and the parity lemma, we know $\alpha\beta\gamma^k$ is not a vertex, and $k=2$ in $\beta\gamma^k$. However, by $\alpha^2\beta$ and $\alpha\ne\gamma$, we know $\beta\gamma^2$ is not a vertex. By $\alpha^2\beta$ and $\alpha<\beta,2\gamma,\delta$, we know $k=1$ in $\beta\gamma^k\delta$. Therefore $\beta\cdots=\alpha^2\beta,\beta\gamma\delta$. By applying the counting lemma to $\alpha,\beta$, this implies $\beta\gamma\delta$ is a vertex. However, by Proposition \ref{acd-bcc} (after exchanging $(\alpha,\delta)$ with $(\beta,\gamma)$), there is no tiling with vertices $\alpha^2\beta,\alpha\delta^2,\beta\gamma\delta$.  
\end{proof}

\begin{proposition}\label{add-bbb}
Tiling of the sphere by congruent almost equilateral quadrilaterals, such that 
$\alpha\delta^2,\beta^3$ are vertices, is $S_{12\square}1$. 
\end{proposition}

The tiling is obtained in Figure \ref{add-bbbA}, and is $S_{12\square}1$ in Figure \ref{sporatic_tiling} after exchanging $(\alpha,\delta)$ with $(\beta,\gamma)$.

\begin{proof}
The angle sums of $\alpha\delta^2,\beta^3$ and the angle sum for quadrilateral imply
\[
\alpha+2\gamma=(\tfrac{2}{3}+\tfrac{8}{f})\pi,\;
\beta=\tfrac{2}{3}\pi,\;
\delta=(\tfrac{2}{3}-\tfrac{4}{f})\pi+\gamma.
\]
We have $2\alpha+3\gamma+\delta=2(\alpha+2\delta)+3(\gamma-\delta)=(2+\frac{12}{f})\pi>2\pi$. 

By $f\ge 6$ and $\gamma\ne\delta$, we get $\gamma<\delta$. By Lemma \ref{geometry1}, this implies $\alpha<\beta=\tfrac{2}{3}\pi$. Then by $\alpha\delta^2$, we get $\alpha<\beta<\delta$. By $\beta<\delta$ and Lemma \ref{geometry5}, we get $\alpha<\gamma$. 

By $\alpha\delta^2$, we get $R(\delta^2)=\alpha<\beta,\gamma,\delta$. This implies $\delta^2\cdots=\alpha\delta^2=\delta\thick\delta\cdots$. Therefore $\delta\thin\delta\cdots$ is not a vertex.

By $\delta^2\cdots=\alpha\delta^2$, and $\alpha<\beta$, and the parity lemma, we get $\beta\delta\cdots=\beta\gamma\delta\cdots$, with no $\delta$ in the remainder. By the angle sum for quadrilateral, we get $R(\beta\gamma\delta)<\alpha<\beta,\gamma$. Therefore $\beta\gamma\delta\cdots=\beta\gamma\delta$. By Proposition \ref{acd-bcc} (after exchanging $(\alpha,\delta)$ with $(\beta,\gamma)$), there is no tiling with vertices $\alpha\delta^2,\beta^3,\beta\gamma\delta$. Therefore $\beta\delta\cdots$ is not a vertex.

By no $\beta\delta\cdots,\delta\thin\delta\cdots$, we get the unique AAD $\thin^{\delta}\alpha^{\beta}\thin^{\beta}\alpha^{\delta}\thin$ of $\thin\alpha\thin\alpha\thin$ and the unique AAD $\thin^{\delta}\alpha^{\beta}\thin^{\beta}\gamma^{\delta}\thick$ of $\thin\alpha\thin\gamma\thick$. This implies no consecutive $\alpha\alpha\alpha,\alpha\alpha\gamma,\gamma\alpha\gamma$, and further implies $\alpha^k\gamma^l(k,l\ge 1)$ is not a vertex. 

The possible AADs $\thin^{\alpha}\beta^{\gamma}\thin^{\alpha}\beta^{\gamma}\thin^{\alpha}\beta^{\gamma}\thin$ and $\thin^{\alpha}\beta^{\gamma}\thin^{\alpha}\beta^{\gamma}\thin^{\gamma}\beta^{\alpha}\thin$ of $\beta^3$ imply $\alpha\gamma\cdots$ is a vertex. By $\delta^2\cdots=\alpha\delta^2$ and the parity lemma, we get $\alpha\gamma\cdots=\alpha\gamma^l\cdots(l\ge 2),\alpha\gamma^l\delta\cdots(l\ge 1)$, with no $\gamma,\delta$ in the remainders. 

By no $\alpha^k\gamma^l$($k,l\ge 1$), we get $\alpha\gamma^l\cdots=\alpha\beta\gamma^l\cdots$. Then by $\alpha<\gamma$, and $\alpha+2\gamma>\frac{2}{3}\pi=\beta$, and the parity lemma, we get $\alpha\gamma^l\cdots=\alpha^k\beta\gamma^2,\alpha\beta\gamma^4$. By no consecutive $\alpha\alpha\alpha,\alpha\alpha\gamma,\gamma\alpha\gamma$, and the edge consideration, we get $k=1,2$ in $\alpha^k\beta\gamma^2$.

By the angle sum for quadrilateral, we know $\alpha\gamma^l\delta\cdots$ has no $\beta$. Then by $\alpha+4\gamma+\delta>2\alpha+3\gamma+\delta>2\pi$, and the parity lemma, we get $\alpha\gamma^l\delta\cdots=\alpha^k\gamma\delta,\alpha\gamma^3\delta$. Then by no consecutive $\alpha\alpha\alpha,\alpha\alpha\gamma,\gamma\alpha\gamma$, we get $k=1$ in $\alpha^k\gamma\delta$. However, by $\alpha\delta^2$, we know $\alpha\gamma\delta$ is not a vertex.

We conclude $\alpha\gamma\cdots=\alpha\beta\gamma^2,\alpha\beta\gamma^4,\alpha^2\beta\gamma^2,\alpha\gamma^3\delta$ is a vertex. The angle sum of one of the vertices, and the angle sums of $\alpha\delta^2,\beta^3$, and the angle sum for quadrilateral, imply the following angles. We also include \eqref{coolsaet_eq1}.
\begin{itemize}
\item $\alpha\beta\gamma^2$:
	$\alpha=\tfrac{4}{3}\pi-2\gamma,\;
	\beta=\tfrac{2}{3}\pi,\;
	\delta=\tfrac{1}{3}\pi+\gamma,\;
	f=12$.
	\newline 
	$\sin(\tfrac{2}{3}\pi-\gamma)
	\sin\gamma
	=\sin\tfrac{1}{3}\pi
	\sin(2\gamma-\tfrac{2}{3}\pi)$. 
\item $\alpha\beta\gamma^4$:
	$\alpha=\tfrac{16}{f}\pi,\;
	\beta=\tfrac{2}{3}\pi,\;
	\gamma=(\tfrac{1}{3}-\tfrac{4}{f})\pi,\;
	\delta=(1-\tfrac{8}{f})\pi$.
	\newline 
	$\sin\tfrac{8}{f}\pi
	\sin(\tfrac{2}{3}-\tfrac{8}{f})\pi
	=\sin\tfrac{1}{3}\pi
	\sin(\tfrac{1}{3}-\tfrac{12}{f})\pi$. 
\item $\alpha^2\beta\gamma^2$:
	$\alpha=(\tfrac{2}{3}-\tfrac{8}{f})\pi,\;
	\beta=\tfrac{2}{3}\pi,\;
	\gamma=\tfrac{8}{f}\pi,\;
	\delta=(\tfrac{2}{3}+\tfrac{4}{f})\pi$.
	\newline 
	$\sin(\tfrac{1}{3}-\tfrac{4}{f})\pi
	\sin(\tfrac{1}{3}+\tfrac{4}{f})\pi
	=\sin\tfrac{1}{3}\pi
	\sin(\tfrac{12}{f}-\tfrac{1}{3})\pi$.
\item $\alpha\gamma^3\delta$:
	$\alpha=\tfrac{12}{f}\pi,\;
	\beta=\tfrac{2}{3}\pi,\;
	\gamma=(\tfrac{1}{3}-\tfrac{2}{f})\pi,\;
	\delta=(1-\tfrac{6}{f})\pi$.
	\newline 
	$\sin\tfrac{6}{f}\pi
	\sin(\tfrac{2}{3}-\tfrac{6}{f})\pi
	=\sin\tfrac{1}{3}\pi
	\sin(\tfrac{1}{3}-\tfrac{8}{f})\pi$. 
\end{itemize}
By \eqref{quadvcountf}, the vertices of degree $>3$ imply $f>6$. The last three equations have no solution for even $f>6$. For the first equation, by $\alpha>0$, we get $\gamma<\frac{2}{3}\pi$. The solution satisfying $0<\gamma<\frac{2}{3}\pi$ is $\gamma=0.4568\pi$. Then we get approximate values of $\alpha,\beta,\gamma,\delta$, and the approximate values imply 
\begin{equation}\label{add-bbbAVC}
\text{AVC}=\{\alpha\delta^2,\beta^3,\alpha\beta\gamma^2\}.
\end{equation}
For details on how to calculate the AVC from the approximate values, see \cite[Section 5.1]{awy} and \cite{cl}.

Now we find the tiling for the AVC \eqref{add-bbbAVC}. 
By no $\alpha^2\cdots$, we get the unique AAD $\thin^{\alpha}\beta^{\gamma}\thin^{\alpha}\beta^{\gamma}\thin^{\alpha}\beta^{\gamma}\thin$ of $\beta^3$. This determines the top three tiles in Figure \ref{add-bbbA}. Then by $\delta\cdots=\alpha\delta^2$ and $\alpha\gamma\cdots=\alpha\beta\gamma^2$, we determine the six tiles in the middle row. Then by $\alpha\delta\cdots=\alpha\delta^2$ and $\beta\gamma\cdots=\alpha\beta\gamma^2$, we determine the three bottom tiles. After exchanging $(\alpha,\delta)$ with $(\beta,\gamma)$, the tiling is $S_{12\square}1$.

\begin{figure}[htp]
\centering
\begin{tikzpicture}


\foreach \a in {0,1,2}
\foreach \b in {1,-1}
{
\begin{scope}[xshift=2*\a cm, scale=\b]

\draw
	(1,1.3) -- (1,-1.3)
	(1,0.5) -- (0,0.5) -- (0,0);

\draw[line width=1.2]
	(-1,0.5) -- (0,0.5);

\node at (0.8,0.7) {\small $\alpha$};
\node at (0,1.2) {\small $\beta$};
\node at (0,0.7) {\small $\delta$};
\node at (-0.8,0.7) {\small $\gamma$};

\node at (0.2,0.3) {\small $\alpha$};
\node at (0.8,0.3) {\small $\beta$};
\node at (0.2,-0.3) {\small $\delta$};
\node at (0.8,-0.3) {\small $\gamma$};
	
\end{scope}
}

\end{tikzpicture}
\caption{Proposition \ref{add-bbb}: Tiling for $\{\alpha\delta^2,\beta^3,\alpha\beta\gamma^2\}$. }
\label{add-bbbA}
\end{figure}

\medskip

\noindent{\em Geometry of Quadrilateral}

\medskip

For the AVC $\{\alpha\delta^2,\alpha\beta\gamma^2,\beta^k\}$, we get the similar earth map tiling with $k$ timezones. However, in the subsequent propositions, we only get four timezones. Here we justify the existence of the quadrilateral for three and four timezones, corresponding to the following:
\begin{align*}
f=12 &\colon \alpha=2\pi-2\delta,\;
	\beta=\tfrac{2}{3}\pi,\;
	\gamma=\delta-\tfrac{1}{3}\pi. \\
f=16 &\colon \alpha=2\pi-2\delta,\;
	\beta=\tfrac{1}{2}\pi,\;
	\gamma=\delta-\tfrac{1}{4}\pi.	
\end{align*}
We have $\gamma<\delta$. By Lemma \ref{geometry1}, this implies $\alpha<\beta$. This means $\delta>\frac{2}{3}\pi$ for $f=12$ and $\delta>\frac{3}{4}\pi$ for $f=16$.

Substituting the angle formulae into \eqref{coolsaet_eq1}, we get 
\begin{align*}
f=12 &\colon
8\cos^2\delta-5=0. \\
f=16 &\colon
(1+\sqrt{2})\tan^2\delta+(2-\sqrt{2})\tan\delta-1=0.
\end{align*}
This means $\delta$ is given by 
\[
\tan\delta=\begin{cases}
-\frac{\sqrt{3}}{\sqrt{5}}, & f=12, \\
2-\sqrt{5}-\sqrt{7-3\sqrt{5}}, & f=16,
\end{cases}
\quad \tfrac{1}{2}\pi<\delta<\pi.
\]
This gives precise values of the other angles. Then we substitute the precise values into \eqref{coolsaet_eq2}, and find $a$ is determined by
\[
\cos a=\begin{cases}
\frac{2}{3}\sqrt{5}-1, & f=12, \\
\frac{1}{2}(-3-\sqrt{2}+\sqrt{5}+\sqrt{10}), & f=16,
\end{cases}
\quad a<\pi.
\]
Then we substitute into \eqref{coolsaet_eq4} and use $K=Y(b)^T$ to get $\cos b$ and $\sin b$. We find $\sin b=0.7060$ for $f=12$ and $\sin b=0.3246$ for $f=16$. By $\sin b>0$, we get $b<\pi$, and find $b$ is determined by
\[
\cos b=\begin{cases}
3\sqrt{5}-6, & f=12, \\
-9-6\sqrt{2}+4\sqrt{5}+3\sqrt{10}, & f=16,
\end{cases}
\quad b<\pi.
\]

In summary, we get the following approximate values
\begin{align*}
f=12 &\colon 
\alpha=0.4195\pi,\;
\beta=\tfrac{2}{3}\pi,\;
\gamma=0.4568\pi,\;
\delta=0.7902\pi, \\
&\quad
a=0.3367\pi,\;
b=0.2495\pi.
\\
f=16 &\colon 
\alpha=0.4202\pi,\;
\beta=\tfrac{1}{2}\pi,\;
\gamma=0.5398\pi,\;
\delta=0.7898\pi, \\
&\quad
a=0.3362\pi,\;
b=0.1052\pi.
\end{align*}
The equality $K=Y(b)^T$ means there are quadrilaterals with the given data. By Lemma \ref{geometry8}, we also know the quadrilaterals are simple. Therefore the quadrilaterals are geometrically suitable for tilings.
\end{proof}

\subsection{Single Degree $3$ Vertex}

In this section, we assume the tiling has only one degree $3$ vertex. Up to the exchange of $(\alpha,\delta)$ with $(\beta,\gamma)$, the only degree $3$ vertex is $\alpha^3,\alpha\beta^2,\alpha\gamma^2,\alpha\delta^2,\alpha\gamma\delta$. 

All propositions still implicitly assume $\alpha\ne\beta,\gamma\ne\delta,\alpha\ne\gamma,\beta\ne\delta$. In the proof, we will omit mentioning the parity lemma, which says the total number of $\gamma,\delta$ at any vertex is even. We will also omit mentioning the angle sum \eqref{anglesum} for quadrilateral, and its consequence $\alpha+\beta+\gamma+\delta>2\pi$. 

\begin{proposition}\label{aaa}
Tiling of the sphere by congruent almost equilateral quadrilaterals, such that $\alpha^3$ is the only degree $3$ vertex, is the $a=b$ reduction of $Q_{\square}P_6$.
\end{proposition}

\begin{proof}
By Lemma \ref{count-aaa}, we know $f\ge 24$.  

By the only degree $3$ vertex $\alpha^3$, we know $\gamma,\delta$ do not appear at degree $3$ vertices. By Lemma \ref{deg3miss} (and the parity lemma), we know one of $\gamma^4,\gamma^3\delta,\gamma^2\delta^2,\gamma\delta^3,\delta^4$ is a vertex. By the balance lemma, we know $\gamma^2\cdots,\delta^2\cdots$ are always vertices. Therefore $\gamma,\delta<\pi$.

\subsubsection*{Case. $\alpha^3,\gamma^4$ are vertices}

The angle sums of $\alpha^3,\gamma^4$ (and the angle sum for quadrilateral) imply
\[
\alpha=\tfrac{2}{3}\pi,\;
\beta+\delta=(\tfrac{5}{6}+\tfrac{4}{f})\pi,\;
\gamma=\tfrac{1}{2}\pi.
\]
By $\alpha>\gamma$ and Lemma \ref{geometry5}, we get $\beta>\delta$. This implies $3\beta+2\delta>\frac{5}{2}(\beta+\delta)>2\pi$. 

By the only degree $3$ vertex $\alpha^3$, we know $\beta,\delta$ do not appear at degree $3$ vertices. Then by Lemma \ref{deg3miss}, we know one of $\alpha\beta^3,\beta^4,\alpha\beta\delta^2,\beta^2\gamma\delta,\beta^2\delta^2,\gamma\delta^3,\delta^4,\beta^5,\beta^3\delta^2,\beta\delta^4$ is a vertex. By $\gamma^4$ and $\gamma\ne\delta$, we know $\gamma\delta^3,\delta^4$ are not vertices. By $\beta>\delta$ and $3\beta+2\delta>2\pi$, we know $\beta^5,\beta^3\delta^2$ are not vertices. The angle sum of one of $\alpha\beta^3,\beta^4,\alpha\beta\delta^2,\beta^2\gamma\delta,\beta^2\delta^2,\beta\delta^4$, and the angle sums of $\alpha^3,\gamma^4$, imply the following, including \eqref{coolsaet_eq1}.
\begin{itemize}
\item $\alpha\beta^3$:
	$\alpha=\tfrac{2}{3}\pi,\;
	\beta=\tfrac{4}{9}\pi,\;
	\gamma=\tfrac{1}{2}\pi,\;
	\delta=(\tfrac{7}{18}+\tfrac{4}{f})\pi$.
	\newline 
	$\sin\tfrac{1}{3}\pi
	\sin(\tfrac{1}{6}+\tfrac{4}{f})\pi
	=\sin\tfrac{2}{9}\pi
	\sin\tfrac{1}{6}\pi$. 
\item $\beta^4$:
	$\alpha=\tfrac{2}{3}\pi,\;
	\beta=\tfrac{1}{2}\pi,\;
	\gamma=\tfrac{1}{2}\pi,\;
	\delta=(\tfrac{1}{3}+\tfrac{4}{f})\pi$.
	\newline 
	$\sin\tfrac{1}{3}\pi
	\sin(\tfrac{1}{12}+\tfrac{4}{f})\pi
	=\sin\tfrac{1}{4}\pi
	\sin\tfrac{1}{6}\pi$. 
\item $\alpha\beta\delta^2$:
	$\alpha=\tfrac{2}{3}\pi,\;
	\beta=(\tfrac{1}{3}+\tfrac{8}{f})\pi,\;
	\gamma=\tfrac{1}{2}\pi,\;
	\delta=(\tfrac{1}{2}-\tfrac{4}{f})\pi$.
	\newline 
	$\sin\tfrac{1}{3}\pi
	\sin(\tfrac{1}{3}-\tfrac{8}{f})\pi
	=\sin(\tfrac{1}{6}+\tfrac{4}{f})\pi
	\sin\tfrac{1}{6}\pi$. 
\item $\beta^2\gamma\delta$:
	$\alpha=\tfrac{2}{3}\pi,\;
	\beta=(\tfrac{2}{3}-\tfrac{4}{f})\pi,\;
	\gamma=\tfrac{1}{2}\pi,\;
	\delta=(\tfrac{1}{6}+\tfrac{8}{f})\pi$.
	\newline 
	$\sin\tfrac{1}{3}\pi
	\sin(\tfrac{10}{f}-\tfrac{1}{6})\pi
	=\sin(\tfrac{1}{3}-\tfrac{2}{f})\pi
	\sin\tfrac{1}{6}\pi$. 
\item $\beta^2\delta^2$:
	$\alpha=\tfrac{2}{3}\pi,\;
	\beta+\delta=\pi,\;
	\gamma=\tfrac{1}{2}\pi,\;
	f=24$.
	\newline 
	$\sin\tfrac{1}{3}\pi
	\sin(\pi-\tfrac{3}{2}\beta)
	=\sin\tfrac{1}{2}\beta
	\sin\tfrac{1}{6}\pi$. 
\item $\beta\delta^4$:
	$\alpha=\tfrac{2}{3}\pi,\;
	\beta=(\tfrac{4}{9}+\tfrac{16}{3f})\pi,\;
	\gamma=\tfrac{1}{2}\pi,\;
	\delta=(\tfrac{7}{18}-\tfrac{4}{3f})\pi$.
	\newline 
	$\sin\tfrac{1}{3}\pi
	\sin(\tfrac{1}{6}-\tfrac{4}{f})\pi
	=\sin(\tfrac{2}{9}+\tfrac{8}{3f})\pi
	\sin\tfrac{1}{6}\pi$.
\end{itemize}
All equations except the one for $\beta^2\delta^2$ have no solution for even $f\ge 24$. The solution for $\beta^2\delta^2$ is $\beta=0.5677\pi$. Then we use approximate angle values to get all the vertices
\[
\text{AVC}=\{\alpha^3,\beta^2\delta^2,\gamma^4\}.
\]

The AAD $\thin^{\beta}\gamma^{\delta}\thick^{\delta}\gamma^{\beta}\thin^{\beta}\gamma^{\delta}\thick^{\delta}\gamma^{\beta}\thin$ of $\gamma^4$ determines the center four tiles in Figure \ref{aaaA} that we denote by $n(\gamma^4)$. Then $\beta^2\cdots=\beta^2\delta^2$, $\delta^2\cdots=\beta^2\delta^2$, and $\alpha\cdots=\alpha^3$ determine the first layer of eight tiles around the center tiles. Then $\gamma^2\cdots=\gamma^4$ determines the second layer of another eight tiles. Then $\beta^2\cdots=\beta^2\delta^2$, $\delta^2\cdots=\beta^2\delta^2$, and $\alpha^2\cdots=\alpha^3$ determine the last four tiles. The four neighborhood tilings $n(\gamma^4)$ form six faces of a cube, and the tiling is (the $a=b$ reduction of) the quadrilateral subdivision $Q_{\square}P_6$ of this cube. 

\begin{figure}[htp]
\centering
\begin{tikzpicture}

\foreach \a in {-1,1}
\foreach \b in {-1,1}
{
\begin{scope}[xscale=\a, yscale=\b]

\draw
	(0,0) -- (0,1) -- (1,1) -- (1,0) -- (2.5,0)
	(1,1) -- (2.5,2.5)
	(0,2.5) -- (2.5,2.5) -- (2.5,0)
	(0,1.75) -- (1.75,1.75)
	(0,2.5) -- (0,3.2)
	;

\draw[line width=1.2]
	(0,0) -- (1,0)
	(0,1) -- (0,2.5)
	(1.75,1.75) -- (1.75,-1.75)
	(2.5,0) -- (3.2,0);
	
\node at (0.8,0.8) {\small $\alpha$};
\node at (0.8,0.23) {\small $\delta$};
\node at (0.2,0.8) {\small $\beta$};
\node at (0.2,0.2) {\small $\gamma$};

\node at (0.95,1.2) {\small $\alpha$};
\node at (0.2,1.2) {\small $\delta$};
\node at (1.3,1.55) {\small $\beta$};
\node at (0.2,1.55) {\small $\gamma$};

\node at (1.2,0.95) {\small $\alpha$};
\node at (1.57,1.3) {\small $\delta$};
\node at (1.2,0.2) {\small $\beta$};
\node at (1.57,0.2) {\small $\gamma$};

\node at (2.1,2.35) {\small $\alpha$};
\node at (0.2,2.3) {\small $\delta$};
\node at (1.75,1.95) {\small $\beta$};
\node at (0.2,1.95) {\small $\gamma$};

\node at (2.32,2.1) {\small $\alpha$};
\node at (1.92,1.65) {\small $\delta$};
\node at (2.32,0.2) {\small $\beta$};
\node at (1.92,0.2) {\small $\gamma$};

\node at (2.6,2.6) {\small $\alpha$};
\node at (2.7,0.23) {\small $\delta$};
\node at (0.2,2.7) {\small $\beta$};
\node at (2.9,2.9) {\small $\gamma$};

\end{scope}
}
	
\end{tikzpicture}
\caption{Proposition \ref{aaa}: Tiling for $\{\alpha^3,\beta^2\delta^2,\gamma^4\}$.}
\label{aaaA}
\end{figure}

\medskip

\noindent{\em Geometry of Quadrilateral}

\medskip

The tiling in Figure \ref{aaaA} is derived from the equation $\sin\tfrac{1}{3}\pi\sin(\pi-\tfrac{3}{2}\beta)=\sin\tfrac{1}{2}\beta\sin\tfrac{1}{6}\pi$. By $\beta<\pi$, the equation implies
\[
\cos\beta=\tfrac{1}{2\sqrt{3}}(1-\sqrt{3}),\quad
\beta<\pi.
\]
Then we substitute the precise values into \eqref{coolsaet_eq2}, and find
\[
\cos a=
\tfrac{1}{\sqrt{5-2\sqrt{3}}}, 
\quad a<\pi.
\]
Then we substitute into \eqref{coolsaet_eq4} and use $K=Y(b)^T$ to determine $b$. The calculation shows $a+b=\frac{1}{2}\pi$, which can also obtained by a simple geometric argument. Then by Lemma \ref{geometry8}, the quadrilateral is geometrically suitable for tiling.

\subsubsection*{Case. $\alpha^3,\gamma^3\delta$ are vertices}

The angle sums of $\alpha^3,\gamma^3\delta$ imply
\[
\alpha=\tfrac{2}{3}\pi,\;
\gamma=(\tfrac{1}{3}-\tfrac{2}{f})\pi+\tfrac{1}{2}\beta,\;
\delta=(1+\tfrac{6}{f})\pi-\tfrac{3}{2}\beta.
\]
We have $3\beta+2\delta=(2+\frac{12}{f})\pi>2\pi$.

If $\alpha<\beta$, then $\gamma-\delta=2\beta-(\frac{2}{3}+\tfrac{8}{f})\pi>2\alpha-(\frac{2}{3}+\tfrac{8}{f})\pi=(\frac{2}{3}-\tfrac{8}{f})\pi>0$. By Lemma \ref{geometry1}, this is a contradiction, and we get $\alpha>\beta$ and $\gamma>\delta$. Then by $\gamma<(\tfrac{1}{3}-\tfrac{2}{f})\pi+\tfrac{1}{2}\alpha<\alpha$ and Lemma \ref{geometry5}, we get $\beta>\delta$. Then by $3\beta+2\delta>2\pi$, we get $\beta>\frac{2}{5}\pi$. Then $\alpha+\beta>\pi$ and $2\gamma=(\tfrac{2}{3}-\tfrac{4}{f})\pi+\beta>\alpha$. By Lemma \ref{geometry7}, this implies $\beta<2\delta$. 

The AAD $\thick^{\delta}\gamma^{\beta}\thin^{\alpha}\delta^{\gamma}\thick$ at $\gamma^3\delta$ implies $\alpha\beta\cdots$ is a vertex. By the only degree $3$ vertex $\alpha^3$, the vertex has degree $\ge 4$. If $\alpha\beta\cdots$ is a $b$-vertex, then by $\gamma>\delta$ (and $\alpha+\beta+\gamma+\delta>2\pi$), the vertex has no $\gamma$. Then by $\alpha+\beta+4\delta>\alpha+2\beta+2\delta>3\beta+2\delta>2\pi$, we get $\alpha\beta\cdots=\alpha\beta\delta^2$. If $\alpha\beta\cdots$ is a $\hat{b}$-vertex, then by $\alpha>\beta$, and $\alpha+\beta>\pi$, and $\alpha+4\beta>3\beta+2\delta>2\pi$, we get $\alpha\beta\cdots=\alpha^2\beta,\alpha\beta^2,\alpha\beta^3$. By $\alpha^3$ and $\alpha>\beta$, we know $\alpha^2\beta,\alpha\beta^2$ are not vertices. The angle sum of one of $\alpha\beta^3,\alpha\beta\delta^2$ further implies the following, including \eqref{coolsaet_eq1}.
\begin{itemize}
\item $\alpha\beta^3$:
	$\alpha=\tfrac{2}{3}\pi,\;
	\beta=\tfrac{4}{9}\pi,\;
	\gamma=(\tfrac{5}{9}-\tfrac{2}{f})\pi,\;
	\delta=(\tfrac{1}{3}+\tfrac{6}{f})\pi$.
	\newline 
	$\sin\tfrac{1}{3}\pi
	\sin(\tfrac{1}{9}+\tfrac{6}{f})\pi
	=\sin\tfrac{2}{9}\pi
	\sin(\tfrac{2}{9}-\tfrac{2}{f})\pi$. 
\item $\alpha\beta\delta^2$:
	$\alpha=\tfrac{2}{3}\pi,\;
	\beta=(\tfrac{1}{3}+\tfrac{6}{f})\pi,\;
	\gamma=(\tfrac{1}{2}+\tfrac{1}{f})\pi,\;
	\delta=(\tfrac{1}{2}-\tfrac{3}{f})\pi$.
	\newline 
	$\sin\tfrac{1}{3}\pi
	\sin(\tfrac{1}{3}-\tfrac{6}{f})\pi
	=\sin(\tfrac{1}{6}+\tfrac{3}{f})\pi
	\sin(\tfrac{1}{6}+\tfrac{1}{f})\pi$. 
\end{itemize}
The equations have no solution for even $f\ge 24$.

\subsubsection*{Case. $\alpha^3,\gamma^2\delta^2$ are vertices}

The angle sums of $\alpha^3,\gamma^2\delta^2$ imply
\[
\alpha=\tfrac{2}{3}\pi,\;
\beta=(\tfrac{1}{3}+\tfrac{4}{f})\pi,\;
\gamma+\delta=\pi.
\]
We have $\alpha>\beta$. By Lemma \ref{geometry1}, this implies $\gamma>\delta$. 

By $\gamma>\delta$, we get $R(\gamma^2)<R(\gamma\delta)<\alpha+\beta$. By $\beta<\alpha<2\beta$ and the only degree $3$ vertex $\alpha^3$, this implies that, if the remainders have no $\gamma,\delta$, then we get $\gamma^2\cdots=\beta^2\gamma^2$ and $\gamma\delta\cdots=\beta^2\gamma\delta$. By $\gamma^2\delta^2$ and $\beta\ne\delta$, we know $\beta^2\gamma^2$ is not a vertex. Therefore $R(\gamma^2)$ has $\gamma,\delta$. Then by $\gamma^2\delta^2$, and $\gamma>\delta$, we get $\gamma^2\cdots=\gamma^2\delta^2$. This implies the number of $\gamma$ is always no more than the number of $\delta$ at all the vertices. Then by the counting lemma, we know every vertex has the same number of $\gamma$ and $\delta$. Therefore  $\beta^2\gamma\delta,\gamma^2\delta^2$ are all the $b$-vertices. 

The possible AADs $\thin^{\beta}\alpha^{\delta}\thin^{\beta}\alpha^{\delta}\thin^{\beta}\alpha^{\delta}\thin$ and $\thin^{\beta}\alpha^{\delta}\thin^{\beta}\alpha^{\delta}\thin^{\delta}\alpha^{\beta}\thin$ of $\alpha^3$ imply $\beta\delta\cdots=\beta^2\gamma\delta$ is a vertex. The angle sum of $\beta^2\gamma\delta$ further implies 
\[
\alpha=\tfrac{2}{3}\pi,\;
	\beta=\tfrac{1}{2}\pi,\;
	\gamma+\delta=\pi,\;
	f=24.
\]
Then we use the only $b$-vertices $\gamma^2\delta^2,\beta^2\gamma\delta$ to get all the vertices
\[
\text{AVC}=\{\alpha^3,\beta^4,\beta^2\gamma\delta,\gamma^2\delta^2\}. 
\]
By $\alpha\cdots=\alpha^3$, the AAD of $\beta^2\gamma\delta$ is $\thick^{\gamma}\delta^{\alpha}\thin^{\alpha}\beta^{\gamma}\thin\beta\thin^{\beta}\gamma^{\delta}\thick$. This implies $\alpha\beta\cdots$ or $\alpha\gamma\cdots$ is a vertex, a contradiction.

\subsubsection*{Case. $\alpha^3,\gamma\delta^3$ are vertices}

The angle sums of $\alpha^3,\gamma\delta^3$ imply
\[
\alpha=\tfrac{2}{3}\pi,\;
\gamma=(1+\tfrac{6}{f})\pi-\tfrac{3}{2}\beta,\;
\delta=(\tfrac{1}{3}-\tfrac{2}{f})\pi+\tfrac{1}{2}\beta.
\]
By $\gamma\delta^3$, we get $\delta<\frac{2}{3}\pi$. 
By $\gamma>0$, we get $\beta<(\frac{2}{3}+\frac{4}{f})\pi< \pi$. 

By Lemma \ref{geometry3}, we get $\gamma+\pi>\alpha+\delta$. This implies $\beta<(\frac{1}{2}+\tfrac{4}{f})\pi\le\alpha$. Then by Lemma \ref{geometry1}, we get $\gamma>\delta$. This implies $\beta<(\frac{1}{3}+\frac{4}{f})\pi$, and $\delta-\beta=(\tfrac{1}{3}-\tfrac{2}{f})\pi-\tfrac{1}{2}\beta>(\frac{1}{6}-\frac{4}{f})\pi\ge 0$. Then by Lemma \ref{geometry5}, we get $\alpha<\gamma$. 

We know $\gamma^2\cdots$ is a vertex. By $\gamma\delta^3$ and $\gamma>\delta$, we know $R(\gamma^2)$ has no $\gamma,\delta$. By $\alpha^3$ and $\alpha<\gamma$, we know $R(\gamma^2)$ has no $\alpha$. Then by $3\beta+2\gamma=(2+\tfrac{12}{f})\pi>2\pi$, and the only degree $3$ vertex $\alpha^3$, we get $\gamma^2\cdots=\beta^2\gamma^2$. The angle sum of $\beta^2\gamma^2$ further implies the following, including \eqref{coolsaet_eq1}.
\begin{align*}
\beta^2\gamma^2 &\colon
	\alpha=\tfrac{2}{3}\pi,\;
	\beta=\tfrac{12}{f}\pi,\;
	\gamma=(1-\tfrac{12}{f})\pi,\;
	\delta=(\tfrac{1}{3}+\tfrac{4}{f})\pi. \\
&	\sin\tfrac{1}{3}\pi
	\sin(\tfrac{1}{3}-\tfrac{2}{f})\pi
	=\sin\tfrac{6}{f}\pi
	\sin(\tfrac{2}{3}-\tfrac{12}{f})\pi.
\end{align*}
The equation has no solution for even $f\ge 24$. 

\subsubsection*{Case. $\alpha^3,\delta^4$ are vertices}

The angle sums of $\alpha^3,\delta^4$ imply
\[
\alpha=\tfrac{2}{3}\pi,\;
\beta+\gamma=(\tfrac{5}{6}+\tfrac{4}{f})\pi,\;
\delta=\tfrac{1}{2}\pi.
\]
By $\beta<\pi=2\delta$ and Lemma \ref{geometry7}, we get $\alpha<2\gamma$. This implies $\gamma>\frac{1}{3}\pi$. Then by $\beta+\gamma=(\tfrac{5}{6}+\tfrac{4}{f})\pi\le\pi$, we get $\beta\le \pi-\gamma<\alpha$. By Lemma \ref{geometry1}, this implies $\gamma>\delta$. Then by $\beta+\gamma\le\pi=2\delta$, we get $\beta<\delta$. By Lemma \ref{geometry5}, this implies $\alpha<\gamma$. 

By the only degree $3$ vertex $\alpha^3$, we know $\gamma\cdots=\gamma^2\cdots,\gamma\delta\cdots$ has degree $\ge 4$. By $\delta^4$ and $\gamma>\delta$, we get $R(\gamma^2)<R(\gamma\delta)<\alpha+\beta$, and the remainders have no $\gamma,\delta$. Then by $\alpha>\beta$, we get $\gamma\cdots=\beta^k\gamma^2,\beta^k\gamma\delta$, with $k\ge 2$. By applying the counting lemma to $\beta,\gamma$, this implies $\beta\cdots=\gamma\cdots=\beta^2\gamma^2$. Then $\beta\delta\cdots$ is not a vertex, contradicting the AAD of $\alpha^3$.
\end{proof}

\begin{proposition}\label{abb}
Tilings of the sphere by congruent almost equilateral quadrilaterals, such that $\alpha\beta^2$ is the only degree $3$ vertex, are $S_{16\square}4$ and $S_{36\square}5$.
\end{proposition}

\begin{proof}
By Lemma \ref{count-att}, we know $f\ge 16$. By $\alpha\beta^2$, we know $\beta<\pi$, and $\alpha+2\gamma+2\delta=2(\alpha+\beta+\gamma+\delta)-(\alpha+2\beta)=(2+\frac{8}{f})\pi>2\pi$. 

By the only degree $3$ vertex $\alpha\beta^2$, we know $\gamma,\delta$ do not appear at degree $3$ vertices. By Lemma \ref{deg3miss}, we know one of $\gamma^4,\gamma^3\delta,\gamma^2\delta^2,\gamma\delta^3,\delta^4$ is a vertex. By the balance lemma, we know $\gamma^2\cdots,\delta^2\cdots$ are always vertices. Therefore $\gamma,\delta<\pi$.

\subsubsection*{Case. $\alpha\beta^2,\gamma^4$ are vertices}

By $\gamma^4$ and $\alpha+2\gamma+2\delta=(2+\frac{8}{f})\pi$, we get $\alpha+2\delta=(1+\frac{8}{f})\pi>\pi$. 

Suppose $\alpha<\beta$ and $\gamma<\delta$. By the only degree $3$ vertex $\alpha^3$, we know $\delta\cdots=\delta^2\cdots,\gamma\delta\cdots$ has degree $\ge 4$. By $\gamma^4$ and $\gamma<\delta$, we get $R(\delta^2)<R(\gamma\delta)<\alpha+\beta$, and the remainders have no $\gamma,\delta$. Then by $\alpha<\beta$, we get $\delta\cdots=\alpha^k\gamma\delta,\alpha^k\delta^2$, with $k\ge 2$. By $\alpha\beta^2$, we get a contradiction by applying the counting lemma to $\alpha,\delta$. 

By Lemma \ref{geometry1}, therefore, we have $\alpha>\beta$ and $\gamma>\delta$. By $\alpha\beta^2$ and $\alpha>\beta$, we know $R(\alpha^2)$ has no $\alpha,\beta$. Then by $\gamma>\delta$, and $2\alpha+\gamma+\delta>\alpha+\beta+\gamma+\delta>2\pi$, and $\alpha+2\delta>\pi$, we get $\alpha^2\cdots=\alpha^2\delta^2$. The angle sum of $\alpha^2\delta^2$ further implies the following, including \eqref{coolsaet_eq1}.
\begin{align*}
\alpha^2\delta^2 &\colon
	\alpha=(1-\tfrac{8}{f})\pi,\;
	\beta=(\tfrac{1}{2}+\tfrac{4}{f})\pi,\;
	\gamma=\tfrac{1}{2}\pi,\;
	\delta=\tfrac{8}{f}\pi. \\
& 	\sin(\tfrac{1}{2}-\tfrac{4}{f})\pi
	\sin(\tfrac{6}{f}-\tfrac{1}{4})\pi
	=\sin(\tfrac{1}{4}+\tfrac{2}{f})\pi 
	\sin\tfrac{4}{f}\pi.
\end{align*}
Since the equation has no solution for even $f\ge 16$, we know $\alpha^2\cdots$ is not a vertex. 

By $\gamma^4$ and Lemma \ref{fbalance}, there is a vertex with $\delta^2$-fan. By no $\alpha^2\cdots$ and Lemma \ref{klem3}, a $\delta^2$-fan has a single $\alpha$, and a vertex has at most one $\delta^2$-fan. Then by the only degree $3$ vertex $\alpha\beta^2$, the $\delta^2$-fan is $\thick\delta\thin\alpha\thin\delta\thick,\thick\delta\thin\alpha\thin\beta\thin\delta\thick$, and the vertex is a single $\delta^2$-fan combined with $\gamma\delta$-fans and $\gamma^2$-fans without $\alpha$, and has degree $\ge 4$. By $\alpha+2\gamma+2\delta>2\pi$,  a vertex with $\thick\delta\thin\alpha\thin\delta\thick$ is the fan combined with a single $\thick\gamma\thin\delta\thick$, which is $\alpha\gamma\delta^3$. Moreover, a vertex with $\thick\delta\thin\alpha\thin\beta\thin\delta\thick$ is the fan itself, which is $\alpha\beta\delta^2$. The angle sum of one of $\alpha\beta\delta^2,\alpha\gamma\delta^3$, and the angle sums of $\alpha\beta^2,\gamma^4$, imply the following, including \eqref{coolsaet_eq1}.
\begin{itemize}
\item $\alpha\beta\delta^2$:
	$\alpha=\tfrac{16}{f}\pi,\;
	\beta=(1-\tfrac{8}{f})\pi,\;
	\gamma=\tfrac{1}{2}\pi,\;
	\delta=(\tfrac{1}{2}-\tfrac{4}{f})\pi$.
	\newline
	$0=\sin(\tfrac{1}{2}-\tfrac{4}{f})\pi 
	\sin(\tfrac{1}{2}-\tfrac{8}{f})\pi$.
\item $\alpha\gamma\delta^3 $:
	$\alpha=\tfrac{24}{f}\pi,\;
	\beta=(1-\tfrac{12}{f})\pi,\;
	\gamma=\tfrac{1}{2}\pi,\;
	\delta=(\tfrac{1}{2}-\tfrac{8}{f})\pi$.
	\newline
	$-\sin\tfrac{12}{f}\pi
	\sin\tfrac{2}{f}\pi
	=\sin(\tfrac{1}{2}-\tfrac{6}{f})\pi 
	\sin(\tfrac{1}{2}-\tfrac{12}{f})\pi$.
\end{itemize}
The solution for even $f\ge 16$ is $f=16$ for both cases. For $\alpha\beta\delta^2$, $f=16$ implies the angle values in \eqref{abb-accangle}. The earlier proof shows there is no tiling. For $\alpha\gamma\delta^3$, $f=16$ implies $\delta=0$, a contradiction.

\subsubsection*{Case. $\alpha\beta^2,\delta^4$ are vertices}

By $\delta^4$ and $\alpha+2\gamma+2\delta=(2+\frac{8}{f})\pi$, we get $\delta=\frac{1}{2}\pi$ and $\alpha+2\gamma=(1+\frac{8}{f})\pi>\pi$. 

If $\alpha>\beta$ and $\gamma>\delta$, then $\beta<\alpha<(1+\frac{8}{f})\pi-2\delta=\frac{8}{f}\pi\le \frac{1}{2}\pi$. This implies $\alpha\beta^2$ is not a vertex, a contradiction. By Lemma \ref{geometry1}, therefore, we have $\alpha<\beta$ and $\gamma<\delta$. 

By $\alpha\beta^2$ and $\alpha<\beta$, we get $\frac{2}{3}\pi<\beta<\pi$. By $\delta=\frac{1}{2}\pi$, this implies $\delta<\beta<2\delta$. By Lemmas \ref{geometry5} and \ref{geometry7}, this implies $\gamma<\alpha<2\gamma$. Then $4\gamma>\alpha+2\gamma>\pi$.

By the only degree $3$ vertex $\alpha\beta^2$, we know $\delta\cdots=\gamma\delta\cdots,\delta^2\cdots$ has degree $\ge 4$. By $\gamma<\delta$, we get $R(\beta\delta^2)<R(\beta\gamma\delta)<\alpha<\beta,2\gamma,2\delta$. This implies $\beta\delta\cdots$ is not a vertex.

We have the AAD $\thin^{\beta}\alpha^{\delta}\thin\beta\thin=\thin^{\beta}\alpha^{\delta}\thin^{\alpha}\beta^{\gamma}\thin,\thin^{\beta}\alpha^{\delta}\thin^{\gamma}\beta^{\alpha}\thin$ at the only degree $3$ vertex $\alpha\beta^2$. This implies one of $\alpha\delta\cdots,\gamma\delta\cdots$ is a vertex of degree $\ge 4$. By no $\beta\delta\cdots$, the vertex is $\alpha\delta^2\cdots,\alpha\gamma\delta\cdots,\gamma^k\delta^l(k,l\ge 1)$, with no $\beta$ in $R(\alpha\delta^2)$, and no $\beta,\delta$ in $R(\alpha\gamma\delta)$. By $\alpha+2\gamma+2\delta>2\pi$, we get $R(\alpha\delta^2)<2\gamma<2\alpha,2\delta$. This implies $\alpha\delta^2\cdots=\alpha^2\delta^2$. By $R(\alpha\gamma\delta)<\beta<\pi<\alpha+2\gamma<3\alpha,4\gamma$, we get $\alpha\gamma\delta\cdots=\alpha^2\gamma\delta,\alpha^3\gamma\delta,\alpha\gamma^3\delta$. By $\frac{1}{4}\pi<\gamma<\delta=\frac{1}{2}\pi$, for $k,l\ge 1$, we get $\gamma^k\delta^l=\gamma^5\delta$.

The angle sum of one of $\alpha^2\delta^2,\alpha^2\gamma\delta,\alpha^3\gamma\delta,\alpha\gamma^3\delta,\gamma^5\delta$, and the angle sums of $\alpha\beta^2,\delta^4$, imply the following, including \eqref{coolsaet_eq1}.
\begin{itemize}
\item $\alpha^2\delta^2$:
	$\alpha=\tfrac{1}{2}\pi,\;
	\beta=\tfrac{3}{4}\pi,\;
	\gamma=(\tfrac{1}{4}+\tfrac{4}{f})\pi,\;
	\delta=\tfrac{1}{2}\pi$.
	\newline 
	$\sin\tfrac{1}{4}\pi
	\sin\tfrac{1}{8}\pi
	=\sin\tfrac{3}{8}\pi
	\sin\tfrac{4}{f}\pi$. 
\item $\alpha^2\gamma\delta$:
	$\alpha=(\tfrac{2}{3}-\tfrac{8}{3f})\pi,\;
	\beta=(\tfrac{2}{3}+\tfrac{4}{3f})\pi,\;
	\gamma=(\tfrac{1}{6}+\tfrac{16}{3f})\pi,\;
	\delta=\tfrac{1}{2}\pi$.
	\newline 
	$\sin(\tfrac{1}{3}-\tfrac{4}{3f})\pi
	\sin(\tfrac{1}{6}-\tfrac{2}{3f})\pi
	=\sin(\tfrac{1}{3}+\tfrac{2}{3f})\pi
	\sin(\tfrac{20}{3f}-\tfrac{1}{6})\pi$. 
\item $\alpha^3\gamma\delta$:
	$\alpha=(\tfrac{2}{5}-\tfrac{8}{5f})\pi,\;
	\beta=(\tfrac{4}{5}+\tfrac{4}{5f})\pi,\;
	\gamma=(\tfrac{3}{10}+\tfrac{24}{5f})\pi,\;
	\delta=\tfrac{1}{2}\pi$.
	\newline 
	$\sin(\tfrac{1}{5}-\tfrac{4}{5f})\pi
	\sin(\tfrac{1}{10}-\tfrac{2}{5f})\pi
	=\sin(\tfrac{2}{5}+\tfrac{2}{5f})\pi
	\sin(\tfrac{1}{10}+\tfrac{28}{5f})\pi$. 
\item $\alpha\gamma^3\delta$:
	$\alpha=\tfrac{24}{f}\pi,\;
	\beta=(1-\tfrac{12}{f})\pi,\;
	\gamma=(\tfrac{1}{2}-\tfrac{8}{f})\pi,\;
	\delta=\tfrac{1}{2}\pi$.
	\newline 
	$\sin\tfrac{12}{f}\pi
	\sin\tfrac{6}{f}\pi
	=\sin(\tfrac{1}{2}-\tfrac{6}{f})\pi
	\sin(\tfrac{1}{2}-\tfrac{20}{f})\pi$. 
\item $\gamma^5\delta$:
	$\alpha=(\tfrac{2}{5}+\tfrac{8}{f})\pi,\;
	\beta=(\tfrac{4}{5}-\tfrac{4}{f})\pi,\;
	\gamma=\tfrac{3}{10}\pi,\;
	\delta=\tfrac{1}{2}\pi$.
	\newline 
	$\sin(\tfrac{1}{5}+\tfrac{4}{f})\pi
	\sin(\tfrac{1}{10}+\tfrac{2}{f})\pi
	=\sin(\tfrac{2}{5}-\tfrac{2}{f})\pi
	\sin(\tfrac{1}{10}-\tfrac{4}{f})\pi$.
\end{itemize}
The equations have no solution for even $f\ge 16$.

\subsubsection*{Case. $\alpha\beta^2,\gamma^2\delta^2$ are vertices}

The angle sums of $\alpha\beta^2,\gamma^2\delta^2$ imply
\[
\alpha=\tfrac{8}{f}\pi,\;
\beta=(1-\tfrac{4}{f})\pi,\;
\gamma+\delta=\pi.
\]
We have $\alpha<\beta$. By Lemma \ref{geometry1}, this implies $\gamma<\delta$.

By $\gamma<\delta$, we get $R(\delta^2)<R(\gamma\delta)<\alpha+\beta$. By $\alpha<\beta$ and the only degree $3$ vertex $\alpha\beta^2$, this implies that, if the remainders have no $\gamma,\delta$, then we get $\gamma\delta\cdots=\alpha^k\gamma\delta$ and $\delta^2\cdots=\alpha^k\delta^2$, with $k\ge 2$. 

By $\gamma^2\delta^2$, and $\gamma<\delta$, a vertex with strictly fewer $\gamma$ than $\delta$ is $\delta^2\cdots$, with no $\gamma,\delta$ in the remainder. Therefore the vertex is $\alpha^k\delta^2$. This implies $\alpha+\delta\le\pi$. On the other hand, $\alpha\beta^2$ implies $\alpha+\beta>\pi$. Therefore $\beta>\delta$. Then by Lemma \ref{geometry5}, we get $\alpha>\gamma$. This implies $\gamma+\delta<\alpha+\delta\le\pi$, contradicting $\gamma+\delta=\pi$.

By applying the counting lemma to $\gamma,\delta$, therefore, we have the same number of $\gamma,\delta$ at any vertex. By $\gamma^2\delta^2$, this means $b$-vertices are $\gamma^2\delta^2$ and $\gamma\delta\cdots$ with no $\gamma,\delta$ in the remainder. We know this $\gamma\delta\cdots=\alpha^k\gamma\delta(k\ge 2)$. Moreover, by $\alpha\beta^2$ and $\alpha<\beta$, the $\hat{b}$-vertices are $\alpha\beta^2,\alpha^k,\alpha^k\beta$. We conclude the list of vertices
\[
\text{AVC}
=\{\alpha\beta^2,\gamma^2\delta^2,\alpha^k,\alpha^k\beta,\alpha^k\gamma\delta\}.
\]

The AVC implies $\thin^{\gamma}\beta^{\alpha}\thin^{\alpha}\beta^{\gamma}\thin\cdots=\alpha\beta^2=\thin^{\alpha}\beta^{\gamma}\thin^{\beta}\alpha^{\delta}\thin^{\gamma}\beta^{\alpha}\thin$. Since this contradicts no $\beta\gamma\cdots$ in the AVC, we do not have the AAD $\thin^{\gamma}\beta^{\alpha}\thin^{\alpha}\beta^{\gamma}\thin$. This implies no $\thin^{\delta}\alpha^{\beta}\thin^{\beta}\alpha^{\delta}\thin$. The AVC also implies no $\beta\delta\cdots$, and therefore no $\thin^{\beta}\alpha^{\delta}\thin^{\beta}\alpha^{\delta}\thin$. Then we get the unique AAD $\thin^{\beta}\alpha^{\delta}\thin^{\delta}\alpha^{\beta}\thin$, which implies no consecutive $\alpha\alpha\alpha$. Then by the only degree $3$ vertex $\alpha\beta^2$, we know $\alpha^k,\alpha^k\beta$ are not vertices, and $k=2$ in $\alpha^k\gamma\delta$. We get the updated list of vertices
\[
\text{AVC}
=\{\alpha\beta^2,\gamma^2\delta^2,\alpha^2\gamma\delta\}.
\]

By no $\beta\delta\cdots$, we get the AAD $\thick^{\delta}\gamma^{\beta}\thin^{\beta}\alpha^{\delta}\thin^{\delta}\alpha^{\beta}\thin^{\alpha}\delta^{\gamma}\thick$ of $\alpha^2\gamma\delta$. This determines $T_1,T_2,T_3,T_4$ in Figure \ref{abb-ccddB}. Then $\delta_3\delta_4\cdots=\gamma^2\delta^2$ determines $T_5,T_6$. Then  $\beta_2\beta_3\cdots=\alpha\beta^2$ and no $\beta\gamma\delta\cdots$ determine $T_7$. Then $\gamma_3\delta_5\delta_7\cdots=\gamma^2\delta^2$ determines $T_8$. Then $\beta_5\beta_6\cdots=\alpha_5\beta_8\cdots=\alpha\beta^2$ determines $T_9$. Then the AAD of $\alpha_8\gamma_9\cdots=\alpha^2\gamma\delta$ determines $T_{10},T_{11}$. Then $\gamma_7\delta_8\delta_{10}\cdots=\gamma^2\delta^2$ determines $T_{12}$. Then we may apply the argument starting with $\alpha_3\alpha_4\gamma_2\delta_1$ to $\alpha_{12}\gamma_1\delta_2\cdots=\alpha^2\gamma\delta$ and get the rest of the tiles. We get the sporadic tiling $S_{16\square}4$.

\begin{figure}[htp]
\centering
\begin{tikzpicture}

\foreach \a in {0,...,5}
{
\begin{scope}[rotate=60*\a]

\draw 
	(0:0.8) -- (60:0.8)
	(0:2.3) -- (60:2.3)
	(0.8,0) -- (2.3,0);

\draw[line width=1.2]
	(0:1.6) -- (60:1.6);

\end{scope}
}

\foreach \a in {1,-1}
{
\begin{scope}[scale=\a]

\draw[line width=1.2]
	(0,0) -- (0.8,0)
	(60:2.3) -- (60:2.7);
	
\node at (0.3,0.55) {\small $\alpha$};
\node at (-0.3,0.5) {\small $\beta$};
\node at (0.5,0.2) {\small $\delta$};
\node at (-0.5,0.2) {\small $\gamma$};

\node at (-0.3,0.85) {\small $\alpha$};
\node at (0.3,0.9) {\small $\beta$};
\node at (-0.5,1.2) {\small $\delta$};
\node at (0.5,1.2) {\small $\gamma$};

\node at (0.9,0.15) {\small $\alpha$};
\node at (0.6,0.7) {\small $\beta$};
\node at (1.27,0.2) {\small $\delta$};
\node at (0.8,1) {\small $\gamma$};

\node at (0.9,-0.15) {\small $\alpha$};
\node at (0.6,-0.7) {\small $\beta$};
\node at (1.27,-0.2) {\small $\delta$};
\node at (0.8,-1.05) {\small $\gamma$};

\node at (0.9,1.85) {\small $\alpha$};
\node at (-0.82,1.8) {\small $\beta$};
\node at (0.7,1.57) {\small $\delta$};
\node at (-0.7,1.57) {\small $\gamma$};

\node at (1.15,1.65) {\small $\alpha$};
\node at (1.95,0.2) {\small $\beta$};
\node at (1,1.4) {\small $\delta$};
\node at (1.7,0.2) {\small $\gamma$};

\node at (1.15,-1.65) {\small $\alpha$};
\node at (2,-0.2) {\small $\beta$};
\node at (1,-1.4) {\small $\delta$};
\node at (1.7,-0.2) {\small $\gamma$};

\node at (2.45,0) {\small $\alpha$};
\node at (-60:2.45) {\small $\beta$};
\node at (1.35,2) {\small $\delta$};
\node at (1.05,2.2) {\small $\gamma$};

\end{scope}
}

\node[draw,shape=circle, inner sep=0.5] at (0,0.3) {\small $1$};
\node[draw,shape=circle, inner sep=0.5] at (0,-0.3) {\small $2$};
\node[draw,shape=circle, inner sep=0.5] at (-30:1.1) {\small $3$};
\node[draw,shape=circle, inner sep=0.5] at (30:1.1) {\small $4$};
\node[draw,shape=circle, inner sep=0.5] at (-30:1.7) {\small $5$};
\node[draw,shape=circle, inner sep=0.5] at (30:1.7) {\small $6$};
\node[draw,shape=circle, inner sep=0.5] at (-90:1.1) {\small $7$};
\node[draw,shape=circle, inner sep=0.5] at (-90:1.7) {\small $8$};
\node[draw,shape=circle, inner sep=0.5] at (-30:2.4) {\small $9$};
\node[draw,shape=circle, inner sep=0] at (210:1.7) {\footnotesize $10$};
\node[draw,shape=circle, inner sep=0] at (150:2.4) {\footnotesize $11$};
\node[draw,shape=circle, inner sep=0] at (210:1.1) {\footnotesize $12$};

\end{tikzpicture}
\caption{Proposition \ref{abb}: Tiling for $\{\alpha\beta^2,\gamma^2\delta^2,\alpha^2\gamma\delta\}$.}
\label{abb-ccddB}
\end{figure}
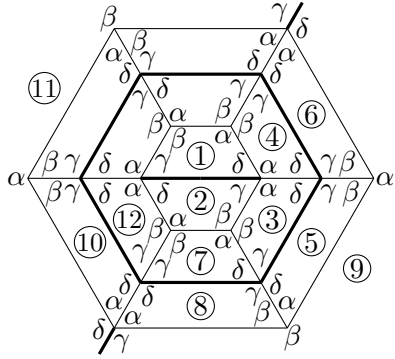

\medskip

\noindent{\em Geometry of Quadrilateral}

\medskip

The tiling $S_{16\square}4$ satisfies
\[
\alpha=\tfrac{1}{2}\pi,\;
\beta=\tfrac{3}{4}\pi,\;
\gamma+\delta=\pi.
\]
By $\gamma<\delta$ and \eqref{coolsaet_eq1}, we find $\gamma$ is determined by 
\[
\tan\gamma=2+\sqrt{2},\quad \gamma<\tfrac{1}{2}\pi.
\]
This gives precise values of the other angles. Then we substitute the precise values into \eqref{coolsaet_eq2}, and get $\cos a=\frac{1}{\sqrt{2}}$. Therefore $a=\frac{1}{4}\pi$. Then we substitute into \eqref{coolsaet_eq4} and use $K=Y(b)^T$ to find $b$ is determined by
\[
\cos b=\tfrac{1}{4}(2\sqrt{2}-1),
\quad b<\pi.
\]
We get the following approximate values
\[
\alpha=\tfrac{1}{2}\pi,\;
\beta=\tfrac{3}{4}\pi,\;
\gamma=0.4093\pi,\;
\delta=0.5906\pi,\;
a=\tfrac{1}{4}\pi,\;
b=0.3488\pi.
\]
By Lemma \ref{geometry8}, the quadrilateral is geometrically suitable for tiling.

\subsubsection*{Case. $\alpha\beta^2,\gamma^3\delta$ are vertices}

The angle sums of $\alpha\beta^2,\gamma^3\delta$ imply
\[
\alpha=(\tfrac{2}{3}+\tfrac{8}{f})\pi-\tfrac{4}{3}\delta,\;
\beta=(\tfrac{2}{3}-\tfrac{4}{f})\pi+\tfrac{2}{3}\delta,\;
\gamma=\tfrac{2}{3}\pi-\tfrac{1}{3}\delta.
\]

By $\delta<\pi$, we get $\beta-\delta=(\frac{2}{3}-\frac{4}{f})\pi-\frac{1}{3}\delta>0$. By Lemma \ref{geometry5}, this implies $\alpha>\gamma$, which means $\delta<\frac{8}{f}\pi\le\frac{1}{2}\pi$. Then by $\gamma-\delta=\frac{2}{3}\pi-\frac{4}{3}\delta>0$ and Lemma \ref{geometry1}, we get $\alpha>\beta$, which means $\delta<\frac{6}{f}\pi$. By $2\gamma-\alpha=(\tfrac{2}{3}-\tfrac{8}{f})\pi+\tfrac{2}{3}\delta>0$ and Lemma \ref{geometry7}, we also get $\beta<2\delta$, which means $\delta>(\tfrac{1}{2}-\tfrac{3}{f})\pi$. Combined with $\delta<\frac{6}{f}\pi$, we get $f<18$. This implies $f=16$, and we have
\[
\alpha=\tfrac{7}{6}\pi-\tfrac{4}{3}\delta,\;
\beta=\tfrac{5}{12}\pi+\tfrac{2}{3}\delta,\;
\gamma=\tfrac{2}{3}\pi-\tfrac{1}{3}\delta.
\]
By $\delta<\frac{6}{16}\pi=\frac{3}{8}\pi$, we get $\alpha+\delta=\tfrac{7}{6}\pi-\tfrac{1}{3}\delta>\pi$. 

By $\alpha\beta^2$ and the counting lemma, we know there is a vertex with strictly more $\alpha$ than $\beta$. By $f=16$ and the first part of Lemma \ref{count-att}, we know the vertex has degree $4$. By $\alpha\beta^2$ and $\alpha>\beta$, the vertex has one or two $\alpha$, and has no $\beta$. Therefore the vertex is $\alpha^2\gamma^2,\alpha^2\gamma\delta,\alpha^2\delta^2$. However, by $\gamma>\delta$ and $\alpha+\delta>\pi$, none of these can be a vertex.

\subsubsection*{Case. $\alpha\beta^2,\gamma\delta^3$ are vertices}

The angle sums of $\alpha\beta^2,\gamma\delta^3$ imply
\[
\alpha=(\tfrac{2}{3}+\tfrac{8}{f})\pi-\tfrac{4}{3}\gamma,\;
\beta=(\tfrac{2}{3}-\tfrac{4}{f})\pi+\tfrac{2}{3}\gamma,\;
\delta=\tfrac{2}{3}\pi-\tfrac{1}{3}\gamma.
\]
We have $\gamma+\delta>\beta$, and $2\alpha+\beta+2\gamma=(2+\frac{12}{f})\pi>2\pi$, and $3\alpha+4\gamma=(2+\frac{24}{f})\pi>2\pi$. 

If $\alpha>\beta$, then we get $\gamma<\frac{6}{f}\pi$. This implies $\delta-\gamma=\tfrac{2}{3}\pi-\tfrac{4}{3}\gamma>0$, contradicting Lemma \ref{geometry1}. Therefore we have $\alpha<\beta$ and $\gamma<\delta$. By $\alpha<\beta$, we get $\gamma>\frac{6}{f}\pi$. This implies $\beta-\delta=\gamma-\frac{4}{f}\pi>0$. We also have $\beta<\gamma+\delta<2\delta$. Then by Lemmas \ref{geometry5} and \ref{geometry7}, we get $\gamma<\alpha<2\gamma$. 

First, we study $\beta\cdots$. Besides $\alpha\beta^2$, the vertex $\beta\cdots$ has degree $\ge 4$. 

By $\alpha\beta^2$ and $\alpha<\beta,2\gamma,2\delta$, we get $\beta^2\cdots=\alpha\beta^2$, and a $\hat{b}$-vertex $\beta\cdots$ is $\alpha\beta^2,\alpha^k\beta$. By $4\alpha+\beta>2\alpha+\beta+2\gamma>2\pi$, we get $\alpha^k\beta=\alpha^3\beta$.

By $\beta^2\cdots=\alpha\beta^2$, a $b$-vertex $\beta\cdots$ is $\beta\gamma^2\cdots,\beta\delta^2\cdots,\beta\gamma\delta\cdots$, with no $\beta$ in the remainders. By $\gamma<\delta$, we get $R(\beta\delta^2)<R(\beta\gamma\delta)<\alpha<2\gamma,2\delta$. This implies $\beta\delta\cdots=\beta\delta^2\cdots,\beta\gamma\delta\cdots$ is not a vertex. By $2\alpha+\beta+2\gamma>2\pi$, we get $R(\beta\gamma^2)<2\alpha<\alpha+2\gamma,4\gamma$. This implies $\beta\gamma^2\cdots=\alpha\beta\gamma^2,\beta\gamma^4$. Combining all the discussions, we get $\beta\cdots=\alpha\beta^2,\alpha^3\beta,\alpha\beta\gamma^2,\beta\gamma^4$.

By no $\beta\delta\cdots$, the AAD implies no $\alpha^k$ for odd $k$, and no consecutive $\gamma\alpha\gamma$. 

The angle sum of one of $\alpha^3\beta,\alpha\beta\gamma^2,\beta\gamma^4$, and the angle sums of $\alpha\beta^2,\gamma\delta^3$, imply the following, including \eqref{coolsaet_eq1}.
\begin{itemize}
\item $\alpha^3\beta$:
	$\alpha=\tfrac{2}{5}\pi,\;
	\beta=\tfrac{4}{5}\pi,\;
	\gamma=(\tfrac{1}{5}+\tfrac{6}{f})\pi,\;
	\delta=(\tfrac{3}{5}-\tfrac{2}{f})\pi$.
	\newline
	$\sin\tfrac{1}{5}\pi
	\sin(\tfrac{1}{5}-\tfrac{2}{f})\pi
	=\sin\tfrac{2}{5}\pi
	\sin\tfrac{6}{f}\pi$. 
\item $\alpha\beta\gamma^2$:
	$\alpha=\tfrac{12}{f}\pi,\;
	\beta=(1-\tfrac{6}{f})\pi,\;
	\gamma=(\tfrac{1}{2}-\tfrac{3}{f})\pi,\;
	\delta=(\tfrac{1}{2}+\tfrac{1}{f})\pi$.
	\newline
	$\sin\tfrac{6}{f}\pi
	\sin\tfrac{4}{f}\pi
	=\sin(\tfrac{1}{2}-\tfrac{3}{f})\pi
	\sin(\tfrac{1}{2}-\tfrac{9}{f})\pi$. 
\item $\beta\gamma^4$:
	$\alpha=(\tfrac{2}{7}+\tfrac{48}{7f})\pi,\;
	\beta=(\tfrac{6}{7}-\tfrac{24}{7f})\pi,\;
	\gamma=(\tfrac{2}{7}+\tfrac{6}{7f})\pi,\;
	\delta=(\tfrac{4}{7}-\tfrac{2}{7f})\pi$.
	\newline
	$\sin(\tfrac{1}{7}+\tfrac{24}{7f})\pi
	\sin(\tfrac{1}{7}+\tfrac{10}{7f})\pi
	=\sin(\tfrac{3}{7}-\tfrac{12}{7f})\pi
	\sin(\tfrac{1}{7}-\tfrac{18}{7f})\pi$.
\end{itemize}
The solutions for even $f\ge 16$ are the following. We also calculate all the possible vertices satisfying the parity lemma. 
\begin{align*}
\alpha\beta\gamma^2 &\colon
	\alpha=\tfrac{1}{2}\pi,\;
	\beta=\tfrac{3}{4}\pi,\;
	\gamma=\tfrac{3}{8}\pi,\;
	\delta=\tfrac{13}{24}\pi, \;
	f=24. \\
&	\text{AVC}=
	\{\alpha\beta^2,\gamma\delta^3,\alpha^4,\alpha\beta\gamma^2\}. 
\\ 
\alpha^3\beta/\beta\gamma^4 & \colon
	\alpha=\tfrac{2}{5}\pi,\;
	\beta=\tfrac{4}{5}\pi,\;
	\gamma=\tfrac{3}{10}\pi,\;
	\delta=\tfrac{17}{30}\pi, \;
	f=60. \\
&	\text{AVC}=
	\{\alpha\beta^2,\gamma\delta^3,\alpha^3\beta,\alpha^2\gamma^4,\beta\gamma^4\}.
\end{align*}
We remark that $\alpha\gamma^4$ for $f=24$ is excluded by no consecutive $\gamma\alpha\gamma$, and $\alpha^5$ for $f=60$ is excluded by no $\alpha^k$ for odd $k$.

For $f=24$, by no $\alpha\delta\cdots,\beta\delta\cdots$, we get the AAD $\thick^{\delta}\gamma^{\beta}\thin^{\beta}\alpha^{\delta}\thin^{\gamma}\beta^{\alpha}\thin^{\beta}\gamma^{\delta}\thick$ of $\alpha\beta\gamma^2$. This determines $T_1,T_2,T_3$ in Figure \ref{abb-cdddA}. Then $\beta_1\beta_2\cdots=\alpha\beta^2$ and no $\alpha\delta\cdots$ determine $T_4$. Then $\gamma_2\delta_4\cdots=\gamma_3\delta_2\cdots=\gamma\delta^3$ implies two $\delta$ in a tile, a contradiction. 

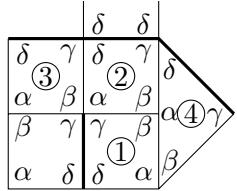
\begin{figure}[htp]
\centering
\begin{tikzpicture}


\draw
	(-1,1) -- (-1,-1) -- (1,-1) -- (2,0)
	(-1,0) -- (1,0)
	(0,0) -- (0,1.5)
	(1,-1) -- (1,1.5);

\draw[line width=1.2]	
	(0,0) -- (0,-1)
	(-1,1) -- (1,1) -- (2,0);

\node at (-0.8,-0.8) {\small $\alpha$};
\node at (-0.2,-0.8) {\small $\delta$};
\node at (-0.8,-0.2) {\small $\beta$};
\node at (-0.2,-0.2) {\small $\gamma$};

\node at (0.8,-0.8) {\small $\alpha$};
\node at (0.2,-0.8) {\small $\delta$};
\node at (0.8,-0.2) {\small $\beta$};
\node at (0.2,-0.2) {\small $\gamma$};

\node at (0.2,0.2) {\small $\alpha$};
\node at (0.8,0.2) {\small $\beta$};
\node at (0.8,0.8) {\small $\gamma$};
\node at (0.2,0.8) {\small $\delta$};

\node at (-0.8,0.2) {\small $\alpha$};
\node at (-0.2,0.2) {\small $\beta$};
\node at (-0.2,0.8) {\small $\gamma$};
\node at (-0.8,0.8) {\small $\delta$};

\node at (1.15,0) {\small $\alpha$};
\node at (1.15,-0.65) {\small $\beta$};
\node at (1.75,-0.05) {\small $\gamma$};
\node at (1.15,0.6) {\small $\delta$};

\node at (0.2,1.2) {\small $\delta$};
\node at (0.8,1.2) {\small $\delta$};

\node[draw,shape=circle, inner sep=0.5] at (0.5,-0.5) {\small $1$};
\node[draw,shape=circle, inner sep=0.5] at (0.5,0.5) {\small $2$};
\node[draw,shape=circle, inner sep=0.5] at (-0.5,0.5) {\small $3$};
\node[draw,shape=circle, inner sep=0.5] at (1.43,0) {\small $4$};

\end{tikzpicture}
\caption{Proposition \ref{abb}: Vertex $\alpha\beta\gamma^2$.}
\label{abb-cdddA}
\end{figure}

For $f=60$, the AAD $\thick^{\delta}\gamma^{\beta}\thin^{\beta}\gamma^{\delta}\thick$ of $\thick\gamma\thin\gamma\thick$ implies a vertex $\thin^{\alpha}\beta^{\gamma}\thin^{\gamma}\beta^{\alpha}\thin\cdots=\alpha\beta^2=\thin^{\gamma}\beta^{\alpha}\thin^{\beta}\alpha^{\delta}\thin^{\alpha}\beta^{\gamma}\thin$, contradicting no $\alpha\delta\cdots$. Therefore $\gamma\thin\gamma\cdots$ is not a vertex. Combined with no consecutive $\gamma\alpha\gamma$, we know $\alpha^2\gamma^4,\beta\gamma^4$ are not vertices. Therefore the AVC is reduced to $\{\alpha\beta^2,\gamma\delta^3,\alpha^3\beta\}$. Applying the counting lemma to $\gamma,\delta$, we get a contradiction. 

We conclude $\alpha^3\beta,\alpha\beta\gamma^2,\beta\gamma^4$ are not vertices, and $\beta\cdots=\alpha\beta^2$. 

The AAD $\thin^{\delta}\alpha^{\beta}\thin^{\beta}\alpha^{\delta}\thin$ implies a vertex $\thin^{\gamma}\beta^{\alpha}\thin^{\alpha}\beta^{\gamma}\thin\cdots=\alpha\beta^2=\thin^{\alpha}\beta^{\gamma}\thin^{\beta}\alpha^{\delta}\thin^{\gamma}\beta^{\alpha}\thin$, contradicting no $\beta\gamma\cdots$. By no $\beta\delta\cdots$, we also know there is no $\thin^{\delta}\alpha^{\beta}\thin^{\delta}\alpha^{\beta}\thin$. Therefore the AAD of $\thin\alpha\thin\alpha\thin$ is $\thin^{\beta}\alpha^{\delta}\thin^{\delta}\alpha^{\beta}\thin$. This implies no consecutive $\alpha\alpha\alpha$. We also recall there is no consecutive $\gamma\alpha\gamma$. 

By $\beta\cdots=\alpha\beta^2$ and the counting lemma, there is a vertex $\alpha\cdots$ without $\beta$. The vertex has degree $\ge 4$. By no consecutive $\alpha\alpha\alpha$, the vertex is not $\alpha^k$. Therefore it is a $b$-vertex. 

By $\alpha+2\gamma+2\delta>2\pi$, we get $R(\alpha\delta^2)<2\gamma<2\alpha,2\delta$. This implies $\alpha\delta^2\cdots=\alpha^2\delta^2$. Therefore a $b$-vertex $\alpha\cdots=\alpha^2\delta^2,\alpha^k\gamma^l,\alpha^k\gamma^l\delta$. By no consecutive $\alpha\alpha\alpha,\gamma\alpha\gamma$, we know $\alpha^k\gamma^l$ is a combination of some $\gamma^2$-fans $\thick\gamma\thin\alpha\thin\alpha\thin\gamma\thick,\thick\gamma\thin\gamma\thick$, and $\alpha^k\gamma^l\delta$ is a combination of one of $\thick\gamma\thin\alpha\thin\alpha\thin\delta\thick$, $\thick\gamma\thin\alpha\thin\delta\thick$, $ \thick\gamma\thin\delta\thick$, and some $\gamma^2$-fans $\thick\gamma\thin\alpha\thin\alpha\thin\gamma\thick$, $ \thick\gamma\thin\gamma\thick$.

If $\alpha^2\gamma^2$ is a vertex, then we get $\gamma=(\frac{24}{f}-1)\pi$. By $2\gamma-\alpha=(\tfrac{72}{f}-4)\pi>0$, we get $f<18$. Therefore $f=16$, and $\beta=\frac{3}{4}\pi$. This implies $\alpha=\frac{1}{2}\pi=\gamma$, a contradiction. Then by $3\alpha+4\gamma>2\pi$ and no $\alpha^2\gamma^2$, we know $\alpha^k\gamma^l$ is a combination of one $\thick\gamma\thin\alpha\thin\alpha\thin\gamma\thick$ with at least one $\thick\gamma\thin\gamma\thick$. The AAD $\thick^{\delta}\gamma^{\beta}\thin^{\beta}\gamma^{\delta}\thick$ of $\thick\gamma\thin\gamma\thick$ implies a vertex $\thin^{\alpha}\beta^{\gamma}\thin^{\gamma}\beta^{\alpha}\thin\cdots=\alpha\beta^2=\thin^{\gamma}\beta^{\alpha}\thin^{\beta}\alpha^{\delta}\thin^{\alpha}\beta^{\gamma}\thin$. This further implies a vertex $\alpha\delta\cdots=\alpha^2\delta^2,\alpha^k\gamma^l\delta$.

By $\alpha+5\gamma+\delta>2\alpha+3\gamma+\delta>2\pi$, we know $\thick\gamma\thin\alpha\thin\alpha\thin\delta\thick$ cannot be combined with $\thick\gamma\thin\alpha\thin\alpha\thin\gamma\thick$ or $ \thick\gamma\thin\gamma\thick$, and $\thick\gamma\thin\alpha\thin\delta\thick$ must be combined with a single $\thick\gamma\thin\gamma\thick$, and $ \thick\gamma\thin\delta\thick$ cannot be combined with $\thick\gamma\thin\alpha\thin\alpha\thin\gamma\thick$. Therefore $\alpha^k\gamma^l\delta=\alpha^2\gamma\delta,\alpha\gamma^3\delta$.

The angle sum of one of $\alpha^2\gamma\delta,\alpha^2\delta^2,\alpha\gamma^3\delta$, and the angle sums of $\alpha\beta^2,\gamma\delta^3$, imply the following, including \eqref{coolsaet_eq1}.
\begin{itemize}
\item $\alpha^2\gamma\delta$:
	$\alpha=(\tfrac{2}{3}-\tfrac{8}{3f})\pi,\;
	\beta=(\tfrac{2}{3}+\tfrac{4}{3f})\pi,\;
	\gamma=\tfrac{8}{f}\pi,\;
	\delta=(\tfrac{2}{3}-\tfrac{8}{3f})\pi$.
	\newline
	$\sin(\tfrac{1}{3}-\tfrac{4}{3f})\pi
	\sin(\tfrac{1}{3}-\tfrac{10}{3f})\pi
	=\sin(\tfrac{1}{3}+\tfrac{2}{3f})\pi
	\sin(\tfrac{28}{3f}-\tfrac{1}{3})\pi$. 
\item $\alpha^2\delta^2$:
	$\alpha=(\tfrac{2}{5}+\tfrac{8}{5f})\pi,\;
	\beta=(\tfrac{4}{5}-\tfrac{4}{5f})\pi,\;
	\gamma=(\tfrac{1}{5}+\tfrac{24}{5f})\pi,\;
	\delta=(\tfrac{3}{5}-\tfrac{8}{5f})\pi$.
	\newline
	$\sin(\tfrac{1}{5}+\tfrac{4}{5f})\pi
	\sin(\tfrac{1}{5}-\tfrac{6}{5f})\pi
	=\sin(\tfrac{2}{5}-\tfrac{2}{5f})\pi
	\sin\tfrac{4}{f}\pi$. 
\item $\alpha\gamma^3\delta$:
	$\alpha=\tfrac{16}{f}\pi,\;
	\beta=(1-\tfrac{8}{f})\pi,\;
	\gamma=(\tfrac{1}{2}-\tfrac{6}{f})\pi,\;
	\delta=(\tfrac{1}{2}+\tfrac{2}{f})\pi$.
	\newline
	$\sin\tfrac{8}{f}\pi
	\sin\tfrac{6}{f}\pi
	=\sin(\tfrac{1}{2}-\tfrac{4}{f})\pi
	\sin(\tfrac{1}{2}-\tfrac{14}{f})\pi$. 
\end{itemize}
The solutions for even $f\ge 16$ are the following. We also calculate all the possible vertices satisfying the parity lemma. 
\begin{align*}
\alpha^2\gamma\delta &\colon
	\alpha=\tfrac{8}{15}\pi,\;
	\beta=\tfrac{11}{15}\pi,\;
	\gamma=\tfrac{2}{5}\pi,\;
	\delta=\tfrac{8}{15}\pi,\;
	f=20. \\
& 	\text{AVC}=
	\{\alpha\beta^2,\gamma\delta^3,\alpha^2\gamma\delta\}. \\
\alpha^2\delta^2/\alpha\gamma^3\delta &\colon
	\alpha=\tfrac{4}{9}\pi,\;
	\beta=\tfrac{7}{9}\pi,\;
	\gamma=\tfrac{1}{3}\pi,\;
	\delta=\tfrac{5}{9}\pi,\;
	f=36.  \\
&	\text{AVC}=
	\{\alpha\beta^2,\gamma\delta^3,\alpha^2\delta^2,\alpha\gamma^3\delta,\gamma^6\}.
\end{align*}
For $f=20$, we get a contradiction by applying the counting lemma to $\gamma,\delta$. 

For $f=36$, by no $\beta\delta\cdots$ and no consecutive $\gamma\alpha\gamma$, we know the AAD of $\alpha\gamma^3\delta$ is $\thick^{\delta}\gamma^{\beta}\thin^{\beta}\alpha^{\delta}\thin^{\alpha}\delta^{\gamma}\thick^{\delta}\gamma^{\beta}\thin^{\beta}\gamma^{\delta}\thick$. Moreover, the AAD $\thin^{\beta}\alpha^{\delta}\thin^{\delta}\alpha^{\beta}\thin$ of $\thin\alpha\thin\alpha\thin$ implies the AAD $\thick^{\gamma}\delta^{\alpha}\thin^{\beta}\alpha^{\delta}\thin^{\delta}\alpha^{\beta}\thin^{\alpha}\delta^{\gamma}\thick$ of $\alpha^2\delta^2$. The AADs of $\gamma\delta^3,\gamma^6$ are the combinations of $\thick^{\delta}\gamma^{\beta}\thin^{\alpha}\delta^{\gamma}\thick$ and $\thick^{\delta}\gamma^{\beta}\thin^{\beta}\gamma^{\delta}\thick$. The AADs determine the tiles around the vertices.

The AAD of $\gamma\delta^3$ determines $T_1,T_2,T_3,T_4$ 
in Figure \ref{abb-cdddB}. Then $\alpha_1\alpha_3\cdots=\alpha^2\delta^2$ determines $T_5,T_6$. Then $\alpha_2\beta_4\cdots=\alpha_5\beta_3\cdots=\alpha_6\beta_1\cdots=\alpha\beta^2$ and no $\beta\gamma\cdots$ determine $T_9,T_8,T_7$. Then $\gamma_3\gamma_8\delta_4\cdots=\alpha\gamma^3\delta$ determines $T_{10},T_{11}$. Then $\beta_{10}\beta_{11}\cdots=\alpha\beta^2$ and no $\beta\gamma\cdots$ determine $T_{12}$, and $\alpha_4\gamma_9\delta_{11}\cdots=\alpha\gamma^3\delta$ determines $T_{13},T_{14}$. Then $\gamma_{11}\delta_{12}\delta_{14}\cdots=\gamma\delta^3$ determines $T_{15}$. Then $\alpha_{14}\alpha_{15}\cdots=\alpha\delta^2$ determines $T_{16},T_{17}$. We have $\alpha_9\beta_2\cdots=\alpha_{13}\beta_{16}\cdots=\alpha\beta^2$ and $\delta_9\delta_{13}\cdots=\alpha^2\delta^2,\gamma\delta^3$. If $\delta_9\delta_{13}\cdots=\gamma\delta^3$, then we find $\beta,\delta$ adjacent in a tile, a contradiction. Therefore $\delta_9\delta_{13}\cdots=\alpha^2\delta^2$, which determines $T_{18},T_{19}$. Then $\gamma_1\gamma_2\gamma_7\gamma_{18}\cdots=\gamma^6$ determines $T_{20},T_{21}$. Then $\delta_{18}\delta_{19}\delta_{20}\cdots=\gamma\delta^3$ determines $T_{22}$. We may repeat the argument with $T_{18},T_{20},T_{19},T_{22}$ in place of $T_1,T_2,T_3,T_4$. One more repetition of the argument gives the tiling $S_{36\square}5$. 

\begin{figure}[htp]
\centering
\begin{tikzpicture}[xscale=-1]

\foreach \a in {0,1,2}
\foreach \b in {1,-1}
{
\begin{scope}[xshift=3.2*\a cm + 0.4*\b cm, scale=\b]

\draw
	(-1.6,0) -- (1.6,0)
	(-1.6,1.2) -- (1.6,1.2)
	(-0.8,0) -- (-0.8,1.2)
	(0.8,0) -- (0.8,1.2)
	(0,1.2) -- (0,2);

\draw[line width=1.2]
	(-1.6,2) -- (-1.6,0)
	(1.6,2) -- (1.6,0)
	(-0.8,1.2) -- (0.8,0);

\node at (0.8,1.35) {\small $\alpha$};
\node at (0.2,1.4) {\small $\beta$};
\node at (1.4,1.4) {\small $\delta$};
\node at (0.8,1.9) {\small $\gamma$};

\node at (-0.8,1.35) {\small $\alpha$};
\node at (-0.2,1.4) {\small $\beta$};
\node at (-1.4,1.4) {\small $\delta$};
\node at (-0.8,1.9) {\small $\gamma$};

\node at (0,1.05) {\small $\alpha$};
\node at (0.65,1) {\small $\beta$};
\node at (-0.3,1.05) {\small $\delta$};
\node at (0.65,0.4) {\small $\gamma$};

\node at (-0.65,0.2) {\small $\alpha$};
\node at (0,0.2) {\small $\beta$};
\node at (-0.65,0.8) {\small $\delta$};
\node at (0.25,0.17) {\small $\gamma$};

\node at (1,0.2) {\small $\alpha$};
\node at (1,1) {\small $\beta$};
\node at (1.4,0.2) {\small $\delta$};
\node at (1.4,1) {\small $\gamma$};

\node at (-1,1) {\small $\alpha$};
\node at (-1,0.2) {\small $\beta$};
\node at (-1.4,1) {\small $\delta$};
\node at (-1.4,0.2) {\small $\gamma$};

\end{scope}
}

\node[draw,shape=circle, inner sep=0.5] at (4.8,1.8) {\small $2$};
\node[draw,shape=circle, inner sep=0.5] at (5.6,1.8) {\small $1$};
\node[draw,shape=circle, inner sep=0.5] at (4.8,0.6) {\small $4$};
\node[draw,shape=circle, inner sep=0.5] at (5.6,0.6) {\small $3$};
\node[draw,shape=circle, inner sep=0.5] at (6.5,0.45) {\small $5$};
\node[draw,shape=circle, inner sep=0.5] at (7.1,0.75) {\small $6$};
\node[draw,shape=circle, inner sep=0.5] at (8,1.8) {\small $7$};
\node[draw,shape=circle, inner sep=0.5] at (6.3,-0.45) {\small $8$};
\node[draw,shape=circle, inner sep=0.5] at (3.9,0.75) {\small $9$};
\node[draw,shape=circle, inner sep=0] at (5.7,-0.75) {\footnotesize $10$};
\node[draw,shape=circle, inner sep=0] at (4.8,-0.6) {\footnotesize $11$};
\node[draw,shape=circle, inner sep=0] at (4.8,-1.8) {\footnotesize $12$};
\node[draw,shape=circle, inner sep=0] at (3.3,0.45) {\footnotesize $13$};
\node[draw,shape=circle, inner sep=0] at (4,-0.6) {\footnotesize $14$};
\node[draw,shape=circle, inner sep=0] at (4,-1.8) {\footnotesize $15$};
\node[draw,shape=circle, inner sep=0] at (3.1,-0.45) {\footnotesize $16$};
\node[draw,shape=circle, inner sep=0] at (2.5,-0.75) {\footnotesize $17$};
\node[draw,shape=circle, inner sep=0] at (2.4,1.8) {\footnotesize $18$};
\node[draw,shape=circle, inner sep=0] at (2.4,0.6) {\footnotesize $19$};
\node[draw,shape=circle, inner sep=0] at (1.6,1.8) {\footnotesize $20$};
\node[draw,shape=circle, inner sep=0] at (-0.8,1.8) {\footnotesize $21$};
\node[draw,shape=circle, inner sep=0] at (1.6,0.6) {\footnotesize $22$};

\end{tikzpicture}
\caption{Proposition \ref{abb}: Tiling for $\{\alpha\beta^2,\gamma\delta^3,\alpha^2\delta^2,\alpha\gamma^3\delta,\gamma^6\}$.}
\label{abb-cdddB}
\end{figure}
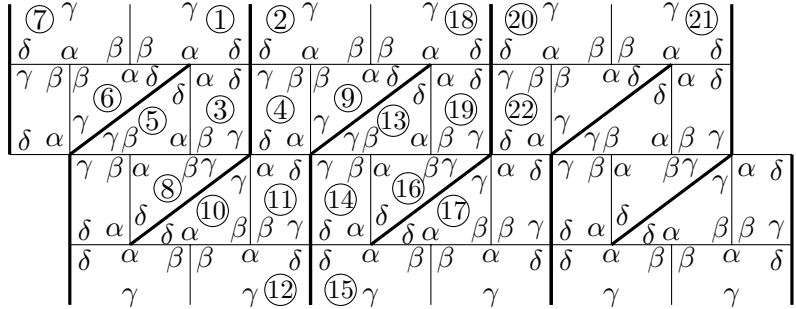

\medskip

\noindent{\em Geometry of Quadrilateral}

\medskip

For the tiling $S_{36\square}5$, we already know all the angle values. Then we use the same routine for $S_{16\square}4$ to find
\begin{align*}
\cos a
&=\tfrac{4}{\sqrt{3}}\sin\tfrac{2}{9}\pi-\sqrt{3}\tan\tfrac{1}{9}\pi,\quad a<\pi, \\
\cos b
&=\tfrac{2}{3}(1-\cos\tfrac{2}{9}\pi)(1+4\cos\tfrac{2}{9}\pi)
-\tfrac{1}{\sqrt{3}}(1-2\cos\tfrac{2}{9}\pi)\cot\tfrac{4}{9}\pi,
\quad b<\pi.
\end{align*}
We note that $\cos a$ and $\cos b$ are the largest (and the only positive) roots of $3t^3+27t^2-3t-19$ and $9t^3+9t^2-9t-1$. We get the approximate values $a=0.1741\pi$ and $b=0.2584\pi$. Then by Lemma \ref{geometry8}, the quadrilateral is geometrically suitable for tiling.
\end{proof}

\begin{proposition}\label{acc}
Tilings of the sphere by congruent almost equilateral quadrilaterals, such that $\alpha\gamma^2$ is the only degree $3$ vertex, are $S_{16\square}2,S_{16\square}3,FS_{16\square}3$.
\end{proposition}

\begin{proof}
By Lemma \ref{count-att}, we know $f\ge 16$. By $\alpha\gamma^2$ and the balance lemma, we know $\delta^2\cdots$ is a vertex. Therefore $\gamma,\delta<\pi$. By $\alpha\gamma^2$, we get $\alpha+2\beta+2\delta=2(\alpha+\beta+\gamma+\delta)-(\alpha+2\gamma)=(2+\frac{8}{f})\pi>2\pi$. 

By the only degree $3$ vertex $\alpha\gamma^2$, we know $\beta,\delta$ do not appear at degree $3$ vertices. By Lemma \ref{deg3miss}, we know one of $\alpha\beta^3,\beta^4,\alpha\beta\delta^2,\beta^2\gamma\delta,\beta^2\delta^2,\gamma\delta^3,\delta^4,\beta^5,\beta^3\delta^2,\beta\delta^4$ is a vertex.

\subsubsection*{Case. $\alpha\gamma^2,\alpha\beta\delta^2$ are vertices}

The angle sums of $\alpha\gamma^2,\alpha\beta\delta^2$ imply
\begin{equation}\label{acc_eq1}
\alpha=2\pi-2\gamma,\;
\beta=\tfrac{8}{f}\pi,\;
\delta=\gamma-\tfrac{4}{f}\pi.
\end{equation}
We have $\gamma>\delta$. By Lemma \ref{geometry1}, this implies $\alpha>\beta$. By $\alpha\beta\delta^2$ and $\alpha>\beta$, we get $\alpha+\delta>\pi>\beta+\delta$. Therefore $\delta<\pi-\beta=(1-\frac{8}{f})\pi$ and $\gamma<(1-\frac{4}{f})\pi$. 

By Lemma \ref{geometry5}, we may further divide the discussion into the case $\alpha>\gamma$ and $\beta>\delta$, and the case $\alpha<\gamma$ and $\beta<\delta$.

\subsubsection*{Subcase. $\alpha>\gamma$ and $\beta>\delta$}

By $\alpha\gamma^2$ and $\alpha>\gamma$, we get $\alpha>\frac{2}{3}\pi>\gamma$. By $\alpha\beta\delta^2$ and $\alpha>\beta$, we get $R(\alpha^2)<R(\alpha\beta)=2\delta$. Then by $\delta<\alpha,\beta,\gamma$, and the only degree $3$ vertex $\alpha\gamma^2$, we know $\alpha^2\cdots$ is not a vertex, and $\alpha\beta\cdots=\alpha\beta\delta^2$. By the only degree $3$ vertex $\alpha\gamma^2$ again, this implies $\alpha\cdots=\alpha\gamma^2,\alpha\beta\delta^2,\alpha\gamma\delta^k(k\ge 3),\alpha\delta^k(k\ge 4)$. Then we get 
\begin{align*}
f
=\#\alpha
&=\#\alpha\gamma^2+\#\alpha\beta\delta^2+\#\alpha\gamma\delta^k+\#\alpha\delta^k \\
&\ge \#\alpha\gamma^2+\#\alpha\beta\delta^2
=\tfrac{1}{2}\#\gamma+\tfrac{1}{2}\#\delta=f.
\end{align*}
Since both sides are equal, we know $\alpha\gamma\delta^k(k\ge 3),\alpha\delta^k(k\ge 4)$ are not vertices, and $\alpha\gamma^2,\alpha\beta\delta^2$ are the only vertices with $\alpha,\gamma,\delta$. Then we get the list of vertices
\begin{equation}\label{acc-abddAVC}
\text{AVC}=\{\alpha\gamma^2,\alpha\beta\delta^2,\beta^k\}.
\end{equation}

Next we find the tiling for the AVC \eqref{acc-abddAVC}. By no $\alpha^2\cdots$, we get the AAD $\thin^{\alpha}\beta^{\gamma}\thin^{\alpha}\beta^{\gamma}\thin\cdots\thin^{\alpha}\beta^{\gamma}\thin$ of $\beta^k$. The AAD $\thin^{\alpha}\beta^{\gamma}\thin^{\alpha}\beta^{\gamma}\thin$ determines $T_1,T_2$ in Figure \ref{accA}. Then $\alpha_1\gamma_2\cdots=\alpha\gamma^2$ determines $T_3$. Then $\beta_3\delta_1\cdots=\alpha\beta\delta^2$ determines $T_4$ and gives $\alpha_5$. There are two ways of arranging $T_5$, given by the two pictures. In the first picture, $\alpha_3\beta_5\cdots=\alpha\beta\delta^2$ determines $T_6,T_7$. Then $\alpha_7\gamma_5\cdots=\alpha\gamma^2$ determines $T_8$. In the second picture, $\alpha_4\beta_5\cdots=\alpha\beta\delta^2$ determines $T_6,T_7$. Then $\alpha_7\gamma_5\cdots=\alpha\gamma^2$ determines $T_8$. 

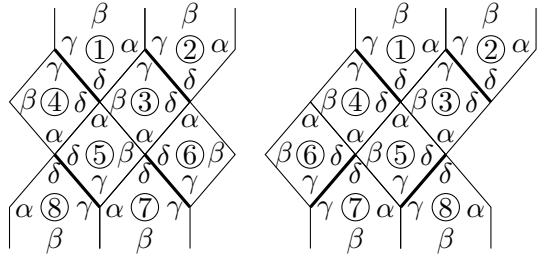
\begin{figure}[htp]
\centering
\begin{tikzpicture}

\foreach \a in {0,1}
{
\begin{scope}[xshift=4*\a cm]

\foreach \b in {0,1}
{
\begin{scope}[xshift=1.2*\b cm]

\draw
	(0,1.6) -- (0,1.05) -- (-0.6,0.35) -- (0,-0.35) -- (1.2,1.05) -- (1.2,1.6);

\draw[line width=1.2]
	(0,1.05) -- (0.6,0.35);

\node at (1,1.1) {\small $\alpha$};
\node at (0.6,1.5) {\small $\beta$};
\node at (0.2,1.1) {\small $\gamma$};
\node at (0.6,0.65) {\small $\delta$};

\node at (0,-0.1) {\small $\alpha$};
\node at (-0.35,0.35) {\small $\beta$};
\node at (0,0.75) {\small $\gamma$};
\node at (0.35,0.35) {\small $\delta$};
	
\end{scope}
}

\node[draw,shape=circle, inner sep=0.5] at (0.6,1.05) {\small $1$};
\node[draw,shape=circle, inner sep=0.5] at (1.8,1.05) {\small $2$};
\node[draw,shape=circle, inner sep=0.5] at (1.2,0.35) {\small $3$};
\node[draw,shape=circle, inner sep=0.5] at (0,0.35) {\small $4$};

\end{scope}
}

\foreach \a/\c in {0/1,5.2/-1}
\foreach \b in {0,1}
{
\begin{scope}[xshift=\a cm, xscale=\c, xshift=1.2*\b cm]

\draw
	(0.6,-1.6) -- (0.6,-1.05) -- (1.2,-0.35) -- (0.6,0.35) -- (-0.6,-1.05) -- (-0.6,-1.6);

\draw[line width=1.2]
	(0,-0.35) -- (0.6,-1.05);
	
\node at (-0.4,-1.1) {\small $\alpha$};
\node at (0,-1.5) {\small $\beta$};
\node at (0.4,-1.1) {\small $\gamma$};
\node at (0,-0.6) {\small $\delta$};

\node at (0.6,0.1) {\small $\alpha$};
\node at (0.95,-0.35) {\small $\beta$};
\node at (0.6,-0.75) {\small $\gamma$};
\node at (0.25,-0.35) {\small $\delta$};

\end{scope}
}

\foreach \a/\c in {0/1,5.2/-1}
{
\begin{scope}[xshift=\a cm, xscale=\c]

\node[draw,shape=circle, inner sep=0.5] at (0.6,-0.35) {\small $5$};
\node[draw,shape=circle, inner sep=0.5] at (1.8,-0.35) {\small $6$};
\node[draw,shape=circle, inner sep=0.5] at (1.2,-1.05) {\small $7$};
\node[draw,shape=circle, inner sep=0.5] at (0,-1.05) {\small $8$};

\end{scope}
}

\end{tikzpicture}
\caption{Proposition \ref{acc}: Tilings for $\{\alpha\gamma^2,\alpha\beta\delta^2,\beta^k\}$.}
\label{accA}
\end{figure}

The argument can be applied to any pair $\thin^{\alpha}\beta^{\gamma}\thin^{\alpha}\beta^{\gamma}\thin$ in $\beta^k$. Moreover, the choice of the arrangement of $T_5$ implies the same choice of the arrangement of $T_6$. Therefore we get two earth map tilings. We also note that $T_1,T_2,T_3,T_4$ and their extensions form the upper half of the earth map tiling, and $T_5,T_6,T_7,T_8$ and their extensions form the lower half of the earth map tiling. The two earth map tilings have the same upper half, and the lower half of the second is the horizontal flip of the lower half of the first.

\subsubsection*{Subcase. $\alpha<\gamma$ and $\beta<\delta$} 

By $\alpha\gamma^2$ and $\alpha<\gamma$, we get $\alpha<\frac{2}{3}\pi<\gamma$. This implies $\gamma+\delta=2\gamma-\frac{4}{f}\pi>(\frac{4}{3}-\frac{4}{f})\pi>\pi$. By $\alpha\gamma^2$, this implies $\alpha<2\delta$. Moreover, by Lemma \ref{geometry3}, we get $(1+\frac{8}{f})\pi=\beta+\pi>\gamma+\delta>(\frac{4}{3}-\frac{4}{f})\pi$. This implies $f<36$. Then $3\beta=\frac{24}{f}\pi>\frac{2}{3}\pi>\alpha$. 

A $b$-vertex is $\gamma^2\cdots,\gamma\delta\cdots,\delta^2\cdots$, and we may assume the remainders of $\gamma\delta\cdots,\delta^2\cdots$ have no $\gamma$. Moreover, the only degree $3$ vertex $\alpha\gamma^2$ implies $\gamma\delta\cdots,\delta^2\cdots$ have degree $\ge 4$.

By $\alpha\gamma^2$, we get $R(\gamma^2)=\alpha<3\beta,\gamma,2\delta$. This implies $\gamma^2\cdots=\alpha\gamma^2,\beta^2\gamma^2$. By $\alpha\beta\delta^2$ and $\gamma>\delta$, we get $R(\gamma\delta)<R(\delta^2)=\alpha+\beta$. Then by $\alpha<2\delta$, and $\beta<\alpha<3\beta$, and $\beta<\delta$, and degree $\ge 4$, this implies $\gamma\delta\cdots,\delta^2\cdots$ (with no $\gamma$ in the remainders) are $\alpha\beta\delta^2,\beta^2\gamma\delta,\beta^3\gamma\delta,\beta^2\delta^2,\beta^3\delta^2,\gamma\delta^3,\delta^4$. By $\beta+\delta<\pi$, we know $\beta^2\delta^2$ is actually not a vertex. 

The angle sum of one of $\beta^2\gamma^2,\beta^2\gamma\delta,\beta^3\gamma\delta,\beta^3\delta^2,\gamma\delta^3,\delta^4$, and the angle sums of $\alpha\gamma^2,\alpha\beta\delta^2$, imply the following, including \eqref{coolsaet_eq1}.
\begin{itemize}
\item $\beta^2\gamma^2/\beta^3\delta^2$:
	$\alpha=\tfrac{16}{f}\pi,\;
	\beta=\tfrac{8}{f}\pi,\;
	\gamma=(1-\tfrac{8}{f})\pi,\;
	\delta=(1-\tfrac{12}{f})\pi$.
	\newline 
	$\sin\tfrac{8}{f}\pi
	\sin(1-\tfrac{16}{f})\pi
	=\sin\tfrac{4}{f}\pi
	\sin(1-\tfrac{16}{f})\pi$. 
\item $\beta^2\gamma\delta$:
	$\alpha=\tfrac{12}{f}\pi,\;
	\beta=\tfrac{8}{f}\pi,\;
	\gamma=(1-\tfrac{6}{f})\pi,\;
	\delta=(1-\tfrac{10}{f})\pi$.
	\newline 
	$\sin\tfrac{6}{f}\pi
	\sin(1-\tfrac{14}{f})\pi
	=\sin\tfrac{4}{f}\pi
	\sin(1-\tfrac{12}{f})\pi$. 
\item $\beta^3\gamma\delta$:
	$\alpha=\tfrac{20}{f}\pi,\;
	\beta=\tfrac{8}{f}\pi,\;
	\gamma=(1-\tfrac{10}{f})\pi,\;
	\delta=(1-\tfrac{14}{f})\pi$.
	\newline 
	$\sin\tfrac{10}{f}\pi
	\sin(1-\tfrac{18}{f})\pi
	=\sin\tfrac{4}{f}\pi
	\sin(1-\tfrac{20}{f})\pi$. 
\item $\gamma\delta^3$:
	$\alpha=(1-\tfrac{6}{f})\pi,\;
	\beta=\tfrac{8}{f}\pi,\;
	\gamma=(\tfrac{1}{2}+\tfrac{3}{f})\pi,\;
	\delta=(\tfrac{1}{2}-\tfrac{1}{f})\pi$.
	\newline 
	$\sin(\tfrac{1}{2}-\tfrac{3}{f})\pi
	\sin(\tfrac{1}{2}-\tfrac{5}{f})\pi
	=\sin\tfrac{4}{f}\pi
	\sin\tfrac{6}{f}\pi$. 
\item $\delta^4$:
	$\alpha=(1-\tfrac{8}{f})\pi,\;
	\beta=\tfrac{8}{f}\pi,\;
	\gamma=(\tfrac{1}{2}+\tfrac{4}{f})\pi,\;
	\delta=\tfrac{1}{2}\pi$.
	\newline 
	$\sin(\tfrac{1}{2}-\tfrac{4}{f})\pi
	\sin(\tfrac{1}{2}-\tfrac{4}{f})\pi
	=\sin\tfrac{4}{f}\pi
	\sin\tfrac{8}{f}\pi$. 
\end{itemize}
The only solutions for even $f\ge 16$ are $f=16$ for $\beta^2\gamma^2$ and $\beta^3\delta^2$, and $f=18$ for $\beta^2\gamma\delta$ and $\gamma\delta^3$. However, $f=16$ implies $\alpha>\gamma$, and $f=18$ implies $\alpha=\gamma$. Both are contradictions. 

Therefore $\alpha\gamma^2,\alpha\beta\delta^2$ are the only $b$-vertices. This implies
\[
f=\tfrac{1}{2}\#\gamma+\tfrac{1}{2}\#\delta
=\#\alpha\gamma^2+\#\alpha\beta\delta^2
\le \#\alpha=f.
\]
Since both sides are equal, we find $\alpha\gamma^2,\alpha\beta\delta^2$ are the only vertices with $\alpha,\gamma,\delta$. Then we get the same AVC \eqref{acc-abddAVC}, and two earth map tilings in Figure \ref{accA}.

\medskip

\noindent{\em Geometry of Quadrilateral}

\medskip

The angles in the tilings in Figure \ref{accA} satisfy \eqref{acc_eq1}. Substituting \eqref{acc_eq1} into \eqref{coolsaet_eq1} and using $\gamma<\pi$, we get
\begin{equation}\label{acc_eq2}
\cos\tfrac{8}{f}\pi\sin\gamma
=2\sin\tfrac{4}{f}\pi(\cos\tfrac{4}{f}\pi-1)\cos\gamma.
\end{equation}
For $f>16$, this implies $\gamma>\frac{1}{2}\pi$. Then by $\alpha\gamma^2$, we get $\alpha<\pi$. By Lemma \ref{geometry3}, we get $\beta+\pi>\gamma+\delta$. This means $\gamma<(\frac{1}{2}+\frac{6}{f})\pi$. Then \eqref{acc_eq2} implies 
\[
\cos\tfrac{8}{f}\pi\cos\tfrac{6}{f}\pi
<2\sin\tfrac{4}{f}\pi(1-\cos\tfrac{4}{f}\pi)\sin\tfrac{6}{f}\pi.
\]
The inequality implies $f<20$. By the only degree $3$ vertex $\alpha\gamma^2$ and the vertex $\beta^{\frac{f}{4}}$, we get $f=16$. In other words, the tiling has four timezones. Then by $\gamma<\pi$ and \eqref{acc_eq2}, we get $\gamma=\frac{1}{2}\pi$, and
\[
f=16\colon \alpha=\pi,\;
\beta=\gamma=\tfrac{1}{2}\pi,\;
\delta=\tfrac{1}{4}\pi.
\]
Therefore the quadrilateral is actually an isosceles triangle with top angle $\delta$ and base angles $\beta,\gamma$. This implies $a=\frac{1}{4}\pi$ and $b=\frac{1}{2}\pi$. 

Figure \ref{accC} gives the two full tilings with four timezones. After exchanging $(\alpha,\delta)$ with $(\beta,\gamma)$, the two earth map tilings become $S_{16\square}3,FS_{16\square}3$. 

\begin{figure}[htp]
\centering
\begin{tikzpicture}[scale=1]

\foreach \a in {0,...,3}
{
\begin{scope}[rotate=90*\a]

\draw
	(0,0) -- (0.8,0) -- (1,1) -- (0,0.8)
	(1.6,0) -- (1,1) -- (0,1.6) -- (1.6,1.6) -- (-1.6,1.6)
	(1.6,1.6) -- (2,2);

\draw[line width=1.2]
	(0.8,0) -- (1,1)
	(1.6,1.6) -- (0,1.6);

\node at (0.2,0.65) {\small $\alpha$};
\node at (0.2,0.2) {\small $\beta$};
\node at (0.65,0.2) {\small $\gamma$};
\node at (0.75,0.75) {\small $\delta$};

\node at (1.4,0) {\small $\alpha$};
\node at (1.07,-0.5) {\small $\beta$};
\node at (1,0) {\small $\gamma$};
\node at (1.1,0.5) {\small $\delta$};

\node at (1.1,1.1) {\small $\alpha$};
\node at (1.4,0.65) {\small $\beta$};
\node at (1.4,1.4) {\small $\gamma$};
\node at (0.65,1.4) {\small $\delta$};

\node at (1.8,1.5) {\small $\alpha$};
\node at (2.2,0) {\small $\beta$};
\node at (1.8,-1.5) {\small $\gamma$};
\node at (1.8,0) {\small $\delta$};

\end{scope}

\begin{scope}[xshift=-5.5cm, rotate=90*\a]

\draw
	(0,0) -- (0.8,0) -- (1,1) -- (0,0.8)
	(1.6,0) -- (1,1) -- (0,1.6) -- (1.6,1.6) -- (-1.6,1.6)
	(1.6,1.6) -- (2,2);

\draw[line width=1.2]
	(0.8,0) -- (1,1)
	(1.6,1.6) -- (1.6,0);

\node at (0.2,0.65) {\small $\alpha$};
\node at (0.2,0.2) {\small $\beta$};
\node at (0.65,0.2) {\small $\gamma$};
\node at (0.75,0.75) {\small $\delta$};

\node at (1.4,0) {\small $\alpha$};
\node at (1.07,-0.5) {\small $\beta$};
\node at (1,0) {\small $\gamma$};
\node at (1.1,0.5) {\small $\delta$};

\node at (1.1,1.1) {\small $\alpha$};
\node at (0.65,1.4) {\small $\beta$};
\node at (1.4,1.4) {\small $\gamma$};
\node at (1.4,0.65) {\small $\delta$};

\node at (1.8,-1.5) {\small $\alpha$};
\node at (2.2,0) {\small $\beta$};
\node at (1.8,1.5) {\small $\gamma$};
\node at (1.8,0) {\small $\delta$};

\end{scope}
}

\end{tikzpicture}
\caption{Proposition \ref{acc}: Earth map tilings have only four timezones.}
\label{accC}
\end{figure}

\subsubsection*{Case. $\alpha\gamma^2$ and one of $\alpha\beta^3,\beta^4,\beta^5$ are vertices}

The angle sum of one of $\alpha\beta^3,\beta^4,\beta^5$, and the angle sum of $\alpha\gamma^2$, imply
\begin{align*}
\alpha\gamma^2,\alpha\beta^3 &\colon
\alpha=2\pi-2\gamma,\;
\beta=\tfrac{2}{3}\gamma,\;
\delta=\tfrac{1}{3}\gamma+\tfrac{4}{f}\pi. \\
\alpha\gamma^2,\beta^4 &\colon
\alpha=2\pi-2\gamma,\;
\beta=\tfrac{1}{2}\pi,\;
\delta=\gamma-(\tfrac{1}{2}-\tfrac{4}{f})\pi. \\
\alpha\gamma^2,\beta^5 &\colon
\alpha=2\pi-2\gamma,\;
\beta=\tfrac{2}{5}\pi,\;
\delta=\gamma-(\tfrac{2}{5}-\tfrac{4}{f})\pi.
\end{align*}

Suppose $\alpha<\beta$ and $\gamma<\delta$ in the $\alpha\beta^3$ case. Then $\gamma<\delta$ implies $\gamma<\frac{6}{f}\pi$, and we get $\beta+2\gamma=\frac{8}{3}\gamma<\frac{16}{f}\pi\le \pi$, contradicting Lemma \ref{geometry4}. In the $\beta^4$ and $\beta^5$ cases, we already have $\gamma>\delta$. By Lemma \ref{geometry1}, therefore, we have $\alpha>\beta$ and $\gamma>\delta$ in all three cases. 

By the only degree $3$ vertex $\alpha\gamma^2$, we know $k+l\ge 4$ in $\hat{b}$-vertex $\alpha^k\beta^l$. For $k\ge 1$, by $\alpha>\beta$, we get $\alpha^k\beta^l=\alpha\beta^3$ for the $\alpha\beta^3$ case, and no $\alpha^k\beta^l$ for the $\beta^4$ case, and $\alpha^k\beta^l=\alpha^4,\alpha^3\beta,\alpha^2\beta^2,\alpha\beta^3$ for the $\beta^5$ case. The angle sum of one of $\alpha^4,\alpha^3\beta,\alpha^2\beta^2$, and the angle sum of $\alpha\gamma^2,\beta^5$, imply the following, including \eqref{coolsaet_eq1}.
\begin{itemize}
\item $\alpha\gamma^2,\beta^5,\alpha^4$:
	$\alpha=\tfrac{1}{2}\pi,\;
	\beta=\tfrac{2}{5}\pi,\;
	\gamma=\tfrac{3}{4}\pi,\;
	\delta=(\tfrac{7}{20}+\tfrac{4}{f})\pi$.
	\newline 
	$\sin\tfrac{1}{4}\pi
	\sin(\tfrac{3}{20}+\tfrac{4}{f})\pi
	=\sin\tfrac{1}{5}\pi
	\sin\tfrac{1}{2}\pi$. 
\item $\alpha\gamma^2,\beta^5,\alpha^3\beta$:
	$\alpha=\tfrac{8}{15}\pi,\;
	\beta=\tfrac{2}{5}\pi,\;
	\gamma=\tfrac{11}{15}\pi,\;
	\delta=(\tfrac{1}{3}+\tfrac{4}{f})\pi$.
	\newline 
	$\sin\tfrac{4}{15}\pi
	\sin(\tfrac{2}{15}+\tfrac{4}{f})\pi
	=\sin\tfrac{1}{5}\pi
	\sin\tfrac{7}{15}\pi$. 
\item $\alpha\gamma^2,\beta^5,\alpha^2\beta^2$:
	$\alpha=\tfrac{3}{5}\pi,\;
	\beta=\tfrac{2}{5}\pi,\;
	\gamma=\tfrac{7}{10}\pi,\;
	\delta=(\tfrac{3}{10}+\tfrac{4}{f})\pi$.
	\newline 
	$\sin\tfrac{3}{10}\pi
	\sin(\tfrac{1}{10}+\tfrac{4}{f})\pi
	=\sin\tfrac{1}{5}\pi
	\sin\tfrac{2}{5}\pi$. 
\end{itemize}
The equations have no solution for even $f\ge 16$. Therefore a $\hat{b}$-vertex $\alpha\cdots$ is $\alpha\beta^3$ for the $\alpha\beta^3$ and $\beta^5$ cases, and there is no $\hat{b}$-vertex $\alpha\cdots$ for the $\beta^4$ case.

Therefore $\alpha^2\cdots$ is a $b$-vertex. By Lemma \ref{geometry4}, we get $\alpha+2\delta>\pi$. Moreover, we have $2\alpha+\gamma+\delta>\alpha+\beta+\gamma+\delta>2\pi$. Then by $\gamma>\delta$, we get $\alpha^2\cdots=\alpha^2\delta^2\cdots$, with no $\gamma,\delta$ in the remainder. Then by $\alpha>\beta$ and $2\alpha+\beta+2\delta>\alpha+2\beta+2\delta>2\pi$, we get $\alpha^2\cdots=\alpha^2\delta^2$. The angle sum of $\alpha^2\delta^2$ further implies the following, including \eqref{coolsaet_eq1}.
\begin{itemize}
\item $\alpha\gamma^2,\alpha\beta^3,\alpha^2\delta^2$:
	$\alpha=(\tfrac{4}{5}-\tfrac{24}{5f})\pi,\;
	\beta=(\tfrac{2}{5}+\tfrac{8}{5f})\pi,\;
	\gamma=(\tfrac{3}{5}+\tfrac{12}{5f})\pi,\;
	\delta=(\tfrac{1}{5}+\tfrac{24}{5f})\pi$.
	\newline
	$\sin(\tfrac{2}{5}-\tfrac{12}{5f})\pi
	\sin\tfrac{4}{f}\pi
	=\sin(\tfrac{1}{5}+\tfrac{4}{5f})\pi
	\sin(\tfrac{1}{5}+\tfrac{24}{5f})\pi$. 
\item $\alpha\gamma^2,\beta^4,\alpha^2\delta^2$:
	$\alpha=(1-\tfrac{8}{f})\pi,\;
	\beta=\tfrac{1}{2}\pi,\;
	\gamma=(\tfrac{1}{2}+\tfrac{4}{f})\pi,\;
	\delta=\tfrac{8}{f}\pi$.
	\newline
	$\sin(\tfrac{1}{2}-\tfrac{4}{f})\pi
	\sin(\tfrac{8}{f}-\tfrac{1}{4})\pi
	=\sin\tfrac{1}{4}\pi
	\sin\tfrac{8}{f}\pi$.  
\item $\alpha\gamma^2,\beta^5,\alpha^2\delta^2$:
	$\alpha=(\tfrac{4}{5}-\tfrac{8}{f})\pi,\;
	\beta=\tfrac{2}{5}\pi,\;
	\gamma=(\tfrac{3}{5}+\tfrac{4}{f})\pi,\;
	\delta=(\tfrac{1}{5}+\tfrac{8}{f})\pi$.
	\newline
	$\sin(\tfrac{2}{5}-\tfrac{4}{f})\pi
	\sin\tfrac{8}{f}\pi
	=\sin\tfrac{1}{5}\pi
	\sin(\tfrac{1}{5}+\tfrac{8}{f})\pi$. 
\end{itemize}
The only solution for even $f\ge 16$ is $f=20$ for $\alpha\gamma^2,\beta^5,\alpha^2\delta^2$. However, this implies $\alpha=\beta$, a contradiction. Therefore $\alpha^2\cdots$ is not a vertex. By Lemma \ref{klem3}, this implies a $\delta^2$-fan has a single $\alpha$. 

By $\alpha\gamma^2$, and no $\alpha^2\cdots$, and $\gamma>\delta$, and $\alpha+2\beta+2\delta>2\pi$, a $b$-vertex $\alpha\cdots$ is $\alpha\gamma^2,\alpha\beta\delta^k,\alpha\gamma\delta^k,\alpha\delta^k$. By the only degree $3$ vertex $\alpha\gamma^2$, we get $k\ge 3$ in $\alpha\gamma\delta^k$ and $k\ge 4$ in $\alpha\delta^k$. Since a $\delta^2$-fan has a single $\alpha$, we know $\alpha\delta^k$ is not a vertex, and $\alpha\beta\delta^k=\alpha\beta\delta^2$, and $\alpha\gamma\delta^k=\alpha\gamma\delta^3$. Therefore a $b$-vertex $\alpha\cdots$ is $\alpha\gamma^2,\alpha\beta\delta^2,\alpha\gamma\delta^3$.

For the case $\alpha\beta^3$ is a vertex, we know $\alpha\cdots=\alpha\gamma^2,\alpha\beta^3,\alpha\beta\delta^2,\alpha\gamma\delta^3$. The angle sum of $\alpha\beta\delta^2$ further implies the following, including \eqref{coolsaet_eq1}.
\begin{align*}
\alpha\gamma^2,\alpha\beta^3,\alpha\beta\delta^2 &\colon
	\alpha=(2-\tfrac{24}{f})\pi,\;
	\beta=\delta=\tfrac{8}{f}\pi,\;
	\gamma=\tfrac{12}{f}\pi. \\
& 	\sin(1-\tfrac{12}{f})\pi
	\sin\tfrac{4}{f}\pi
	=\sin\tfrac{4}{f}\pi
	\sin(\tfrac{24}{f}-1)\pi. 
\end{align*}
The solution for even $f\ge 16$ is $f=18$. However, this implies $\alpha=\gamma=\tfrac{2}{3}\pi$, a contradiction. Moreover, adding the angle sums of $\alpha\gamma^2,\alpha\beta^3,\alpha\gamma\delta^3$ together gives $3(\alpha+\beta+\gamma+\delta)=6\pi$, a contradiction. Therefore $\alpha\cdots=\alpha\beta^3,\alpha\gamma^2$. Then we get 
\[
f=\#\alpha
=\#\alpha\gamma^2+\#\alpha\beta^3
\le \tfrac{1}{2}\#\gamma+\tfrac{1}{3}\#\beta
=\tfrac{1}{2}f+\tfrac{1}{3}f
<f,
\]
a contradiction. 

For the case $\beta^4$ is a vertex, we know $\alpha\cdots=\alpha\gamma^2,\alpha\beta\delta^2,\alpha\gamma\delta^3$. Then we get
\[
f=\#\alpha
=\#\alpha\gamma^2+\#\alpha\beta\delta^2+\#\alpha\gamma\delta^3
\le\tfrac{1}{2}\#\gamma+\tfrac{1}{2}\#\delta=f.
\]
Since both sides are equal, we know $\alpha\gamma\delta^3$ is not a vertex, and $\alpha\gamma^2,\alpha\beta\delta^2$ are the only vertices with $\alpha,\gamma,\delta$. This implies the list of vertices
\[
\text{AVC}=\{\alpha\gamma^2,\alpha\beta\delta^2,\beta^4\}.
\]
This is the case $k=4$ of the AVC \eqref{acc-abddAVC}. Then we get two earth map tilings in Figure \ref{accA} with four timezones. In other words, they are $S_{16\square}3,FS_{16\square}3$ in Figure \ref{accC}.

For the case $\beta^5$ is a vertex, we know $\alpha\cdots=\alpha\gamma^2,\alpha\beta^3,\alpha\beta\delta^2,\alpha\gamma\delta^3$. Since we already proved $\alpha\beta^3$ is not a vertex, we get $\alpha\cdots=\alpha\gamma^2,\alpha\beta\delta^2,\alpha\gamma\delta^3$. This implies
\[
f=\#\alpha
=\#\alpha\gamma^2+\#\alpha\beta\delta^2+\#\alpha\gamma\delta^3
\le\tfrac{1}{2}\#\gamma+\tfrac{1}{2}\#\delta=f.
\]
Since both sides are equal, we get the list of vertices similar to the $\beta^4$ case
\[
\text{AVC}=\{\alpha\gamma^2,\alpha\beta\delta^2,\beta^5\}.
\]
This is the case $k=5$ of the AVC \eqref{acc-abddAVC}. Then we get two earth map tilings in Figure \ref{accA} with five timezones. In fact, further geometrical argument shows the quadrilateral does not exist.

\subsubsection*{Case. $\alpha\gamma^2,\beta^2\delta^2$ are vertices}

The angle sums of $\alpha\gamma^2,\beta^2\delta^2$ imply
\[
\alpha=\tfrac{8}{f}\pi,\;
\beta+\delta=\pi,\;
\gamma=(1-\tfrac{4}{f})\pi.
\]
By Lemma \ref{geometry3}, we get $\beta+\pi>\gamma+\delta$. This implies $\delta<(\frac{1}{2}+\frac{2}{f})\pi$. By $f\ge 16$, we get $\delta<\gamma$. Then by Lemma \ref{geometry1}, we get $\tfrac{8}{f}\pi=\alpha>\beta=\pi-\delta>(\frac{1}{2}-\frac{2}{f})\pi$. This means $f<20$, or $f=16,18$. Then we get the following, including \eqref{coolsaet_eq1}.
\begin{align*}
f=16 &\colon
	\alpha=\tfrac{1}{2}\pi,\;
	\beta+\delta=\pi,\;
	\gamma=\tfrac{3}{4}\pi. \\
& 	\sin\tfrac{1}{4}\pi
	\sin(\pi-\tfrac{3}{2}\beta)
	=\sin\tfrac{1}{2}\beta
	\sin\tfrac{1}{2}\pi. \\
f=18 &\colon
	\alpha=\tfrac{4}{9}\pi,\;
	\beta+\delta=\pi,\;
	\gamma=\tfrac{7}{9}\pi.  \\
& 	\sin\tfrac{2}{9}\pi
	\sin(\pi-\tfrac{3}{2}\beta)
	=\sin\tfrac{1}{2}\beta
	\sin\tfrac{5}{9}\pi.
\end{align*}
The solutions satisfying $0<\beta<\pi$ are $\beta=0.4335\pi$ for $f=16$, and $\beta=0.4142\pi$ for $f=18$. Using the precise values of $\alpha,\gamma$ and approximate values of $\beta,\delta$, we get $\text{AVC}=\{\alpha\gamma^2,\beta^2\delta^2,\alpha^4\}$ for $f=16$, and $\text{AVC}=\{\alpha\gamma^2,\beta^2\delta^2\}$ for $f=18$. 

By applying the counting lemma to $\alpha,\gamma$, we find no tiling for $f=18$, and $\alpha^4$ is a vertex for $f=16$. By $\delta^2\cdots=\beta^2\delta^2=\delta\thick\delta\cdots$, we know $\delta\thin\delta\cdots$ is not a vertex. This implies the AAD of $\alpha^4$ is $\thin^{\beta}\alpha^{\delta}\thin^{\beta}\alpha^{\delta}\thin^{\beta}\alpha^{\delta}\thin^{\beta}\alpha^{\delta}\thin$, which determines $T_1,T_2,T_3,T_4$ in Figure \ref{accB}. Then $\beta_2\delta_1\cdots=\beta^2\delta^2$ and $\gamma_2\cdots=\alpha\gamma^2$ determine $T_5,T_6$. Then $\alpha_5\gamma_6\cdots=\alpha\gamma^2$ determines $T_7$. Similar argument determines all the other tiles. We get the tiling $S_{16\square}2$.

\begin{figure}[htp]
\centering
\begin{tikzpicture}

\foreach \a in {0,1,2,3}
{
\begin{scope}[rotate=90*\a]

\draw
	(0,0) -- (1.8,0) -- (1.8,-1.8)
	(1,0) -- (1,1) -- (2.3,2.3);

\draw[line width=1.2]
	(0,1) -- (1,1)
	(1.8,0) -- (1.8,1.8);
	
\node at (0.2,0.2) {\small $\alpha$};
\node at (0.8,0.2) {\small $\beta$};
\node at (0.2,0.8) {\small $\delta$};
\node at (0.8,0.8) {\small $\gamma$};

\node at (1.2,0.9) {\small $\alpha$};
\node at (1.2,0.2) {\small $\beta$};
\node at (1.6,1.35) {\small $\delta$};
\node at (1.6,0.2) {\small $\gamma$};

\node at (1.2,-0.9) {\small $\gamma$};
\node at (1.2,-0.2) {\small $\delta$};
\node at (1.6,-1.35) {\small $\beta$};
\node at (1.6,-0.2) {\small $\alpha$};

\node at (2.3,0) {\small $\alpha$};
\node at (2,-1.7) {\small $\beta$};
\node at (2,1.7) {\small $\delta$};
\node at (2,0) {\small $\gamma$};

\end{scope}
}

\node[inner sep=0.5, draw, shape=circle] at (0.5,0.5) {\small 1};
\node[inner sep=0.5, draw, shape=circle] at (-0.5,0.5) {\small 2};
\node[inner sep=0.5, draw, shape=circle] at (-0.5,-0.5) {\small 3};
\node[inner sep=0.5, draw, shape=circle] at (0.5,-0.5) {\small 4};
\node[inner sep=0.5, draw, shape=circle] at (0.6,1.4) {\small 5};
\node[inner sep=0.5, draw, shape=circle] at (-0.6,1.4) {\small 6};
\node[inner sep=0.5, draw, shape=circle] at (0.8,2.1) {\small 7};

\end{tikzpicture}
\caption{Proposition \ref{acc}: Tiling for $\{\alpha\gamma^2,\beta^2\delta^2,\alpha^4\}$.}
\label{accB}
\end{figure}

\medskip

\noindent{\em Geometry of Quadrilateral}

\medskip

The tiling $S_{16\square}2$ satisfies
\[
\alpha=\tfrac{1}{2}\pi,\;
\beta+\delta=\pi,\;
\gamma=\tfrac{3}{4}\pi.
\]
Substituting into \eqref{coolsaet_eq1}, we find $\beta$ is determined by 
\[
\cos\beta=\tfrac{1}{2}(\sqrt{2}-1),\quad \beta<\pi.
\]
Then we substitute the precise values into \eqref{coolsaet_eq2} and \eqref{coolsaet_eq4}, and use $K=Y(b)^T$ to get
\[
\cos a=\sqrt{\tfrac{1}{\sqrt{7}}(2\sqrt{2}-1)},\;
\cos b=\sqrt{\tfrac{1}{\sqrt{7}}(22\sqrt{2}-25)},
\quad a,b<\pi.
\]
We get the following approximate values
\[
\alpha=\tfrac{1}{2}\pi,\;
\beta=0.4335\pi,\;
\gamma=\tfrac{3}{4}\pi,\;
\delta=0.5664\pi,\;
a=0.3292\pi,\;
b=0.1158\pi.
\]
By Lemma \ref{geometry8}, the quadrilateral is simple and suitable for tiling.

\subsubsection*{Case. $\alpha\gamma^2,\delta^4$ are vertices}

The angle sums of $\alpha\gamma^2,\delta^4$ imply
\[
\alpha=2\pi-2\gamma,\;
\beta=\gamma-(\tfrac{1}{2}-\tfrac{4}{f})\pi,\;
\delta=\tfrac{1}{2}\pi.
\]

If $\alpha<\beta$ and $\gamma<\delta$, then we get $\alpha<\beta<\delta-(\tfrac{1}{2}-\tfrac{4}{f})\pi=\frac{4}{f}\pi$. Then by Lemma \ref{geometry4}, we get $\pi<\alpha+2\beta<\frac{12}{f}\pi$, contradicting $f\ge 16$. By Lemma \ref{geometry1}, therefore, we get $\alpha>\beta$ and $\gamma>\delta$. 

If $\alpha>\gamma$, then by $\alpha\gamma^2$, we get $\alpha>\frac{2}{3}\pi>\gamma$. By Lemma \ref{geometry5}, this implies $\beta>\delta$, which means $\gamma>(1-\tfrac{4}{f})\pi\ge \frac{3}{4}\pi$, contradicting $\gamma<\frac{2}{3}\pi$. Therefore we have $\alpha<\gamma$. By Lemma \ref{geometry5}, this implies $\beta<\delta$. By $\alpha\gamma^2$ and $\alpha<\gamma$, we get $\alpha<\frac{2}{3}\pi<\gamma$. This implies $\beta>(\tfrac{1}{6}+\tfrac{4}{f})\pi>\tfrac{1}{6}\pi$, and $4\beta>\frac{2}{3}\pi>\alpha$.

The AAD $\thick^{\delta}\gamma^{\beta}\thin^{\beta}\alpha^{\delta}\thin^{\beta}\gamma^{\delta}\thick$ of $\alpha\gamma^2$ implies $\beta\delta\cdots$ is a vertex. By $\gamma>\delta=\frac{1}{2}\pi$, we know $\beta\delta\cdots=\beta\gamma\delta\cdots,\beta\delta^2\cdots$, with no $\gamma,\delta$ in the remainders. We have $R(\beta\gamma\delta)<\alpha$. By $\alpha+2\beta+2\delta>2\pi$, we get $R(\beta\delta^2)<\alpha+\beta$. Then by $\beta<\alpha<4\beta$ and the only degree $3$ vertex $\alpha\gamma^2$, the remainder estimations imply $\beta\delta\cdots=\alpha\beta\delta^2,\beta^2\gamma\delta,\beta^3\gamma\delta,\beta^4\gamma\delta,\beta^2\delta^2,\beta^3\delta^2,\beta^4\delta^2,\beta^5\delta^2$. We proved that the tilings with the only degree $3$ vertex $\alpha\gamma^2$ and the vertex $\alpha\beta\delta^2$ has the AVC $\{\alpha\gamma^2,\alpha\beta\delta^2,\beta^k\}$ (see \eqref{acc-abddAVC} and Figure \ref{accA}). We also proved that the tiling with the only degree $3$ vertex $\alpha\gamma^2$ and the vertex $\beta^2\delta^2$ has the AVC $\{\alpha\gamma^2,\beta^2\delta^2,\alpha^4\}$ (see Figure \ref{accB}). These tilings do not have $\delta^4$. 

The angle sum of one of $\beta^2\gamma\delta,\beta^3\gamma\delta,\beta^4\gamma\delta,\beta^3\delta^2,\beta^4\delta^2,\beta^5\delta^2$, and the angle sums of $\alpha\gamma^2,\delta^4$, imply the following, including \eqref{coolsaet_eq1}.
\begin{itemize}
\item $\beta^2\gamma\delta$:
	$\alpha=(\tfrac{1}{3}+\tfrac{16}{3f})\pi,\;
	\beta=(\tfrac{1}{3}+\tfrac{4}{3f})\pi,\;
	\gamma=(\tfrac{5}{6}-\tfrac{8}{3f})\pi,\;
	\delta=\tfrac{1}{2}\pi$.
	\newline 
	$\sin(\tfrac{1}{6}+\tfrac{8}{3f})\pi
	\sin(\tfrac{1}{3}-\tfrac{2}{3f})\pi
	=\sin(\tfrac{1}{6}+\tfrac{2}{3f})\pi
	\sin(\tfrac{2}{3}-\tfrac{16}{3f})\pi$. 
\item $\beta^3\gamma\delta$:
	$\alpha=(\tfrac{1}{2}+\tfrac{6}{f})\pi,\;
	\beta=(\tfrac{1}{4}+\tfrac{1}{f})\pi,\;
	\gamma=(\tfrac{3}{4}-\tfrac{3}{f})\pi,\;
	\delta=\tfrac{1}{2}\pi$.
	\newline 
	$\sin(\tfrac{1}{4}+\tfrac{3}{f})\pi
	\sin(\tfrac{3}{8}-\tfrac{1}{2f})\pi
	=\sin(\tfrac{1}{8}+\tfrac{1}{2f})\pi
	\sin(\tfrac{1}{2}-\tfrac{6}{f})\pi$. 
\item $\beta^4\gamma\delta$:
	$\alpha=(\tfrac{3}{5}+\tfrac{32}{5f})\pi,\;
	\beta=(\tfrac{1}{5}+\tfrac{4}{5f})\pi,\;
	\gamma=(\tfrac{7}{10}-\tfrac{16}{5f})\pi,\;
	\delta=\tfrac{1}{2}\pi$.
	\newline 
	$\sin(\tfrac{3}{10}+\tfrac{16}{5f})\pi
	\sin(\tfrac{2}{5}-\tfrac{2}{5f})\pi
	=\sin(\tfrac{1}{10}+\tfrac{2}{5f})\pi
	\sin(\tfrac{2}{5}-\tfrac{32}{5f})\pi$. 
\item $\beta^3\delta^2$:
	$\alpha=(\tfrac{1}{3}+\tfrac{8}{f})\pi,\;
	\beta=\tfrac{1}{3}\pi,\;
	\gamma=(\tfrac{5}{6}-\tfrac{4}{f})\pi,\;
	\delta=\tfrac{1}{2}\pi$.
	\newline 
	$\sin(\tfrac{1}{6}+\tfrac{4}{f})\pi
	\sin\tfrac{1}{3}\pi
	=\sin\tfrac{1}{6}\pi
	\sin(\tfrac{2}{3}-\tfrac{8}{f})\pi$. 
\item $\beta^4\delta^2$:
	$\alpha=(\tfrac{1}{2}+\tfrac{8}{f})\pi,\;
	\beta=\tfrac{1}{4}\pi,\;
	\gamma=(\tfrac{3}{4}-\tfrac{4}{f})\pi,\;
	\delta=\tfrac{1}{2}\pi$.
	\newline 
	$\sin(\tfrac{1}{4}+\tfrac{4}{f})\pi
	\sin\tfrac{3}{8}\pi
	=\sin\tfrac{1}{8}\pi
	\sin(\tfrac{1}{2}-\tfrac{8}{f})\pi$. 
\item $\beta^5\delta^2$:
	$\alpha=(\tfrac{3}{5}+\tfrac{8}{f})\pi,\;
	\beta=\tfrac{1}{5}\pi,\;
	\gamma=(\tfrac{7}{10}-\tfrac{4}{f})\pi,\;
	\delta=\tfrac{1}{2}\pi$.
	\newline 
	$\sin(\tfrac{3}{10}+\tfrac{4}{f})\pi
	\sin\tfrac{2}{5}\pi
	=\sin\tfrac{1}{10}\pi
	\sin(\tfrac{2}{5}-\tfrac{8}{f})\pi$. 
\end{itemize}
The equations have no solution for even $f\ge 16$. 

\subsubsection*{Case. $\alpha\gamma^2$ and one of $\beta^2\gamma\delta,\beta^3\delta^2,\beta\delta^4,\gamma\delta^3$ are vertices}

The only degree $3$ vertex $\alpha\gamma^2$ implies $\gamma$ appears twice at every degree $3$ vertex. Then by Lemma \ref{notatdeg4}, we know there is a degree $4$ vertex without $\gamma$. These are $\alpha^4,\alpha^3\beta,\alpha^2\beta^2,\alpha\beta^3,\beta^4,\alpha^2\delta^2,\beta^2\delta^2,\alpha\beta\delta^2,\delta^4$. We have already discussed tilings such that $\alpha\gamma^2$ is the only degree $3$ vertex, and one of $\alpha\beta^3,\beta^4,\alpha\beta\delta^2,\beta^2\delta^2,\delta^4$ is a vertex. The angle sum of one of $\alpha^4,\alpha^3\beta,\alpha^2\beta^2,\alpha^2\delta^2$, and the angle sum of one of $\beta^2\gamma\delta,\beta^3\delta^2,\beta\delta^4,\gamma\delta^3$, and the angle sum of $\alpha\gamma^2$, imply the following, including \eqref{coolsaet_eq1}.
\begin{itemize}
\item $\alpha\gamma^2,\beta^2\gamma\delta,\alpha^4$:
	$\alpha=\tfrac{1}{2}\pi,\;
	\beta=(\tfrac{1}{2}-\tfrac{4}{f})\pi,\;
	\gamma=\tfrac{3}{4}\pi,\;
	\delta=(\tfrac{1}{4}+\tfrac{8}{f})\pi$.
	\newline 
	$\sin\tfrac{1}{4}\pi
	\sin\tfrac{10}{f}\pi
	=\sin(\tfrac{1}{4}-\tfrac{2}{f})\pi
	\sin\tfrac{1}{2}\pi$. 
\item $\alpha\gamma^2,\beta^2\gamma\delta,\alpha^3\beta$:
	$\alpha=(\tfrac{1}{2}+\tfrac{1}{f})\pi,\;
	\beta=(\tfrac{1}{2}-\tfrac{3}{f})\pi,\;
	\gamma=(\tfrac{3}{4}-\tfrac{1}{2f})\pi,\;
	\delta=(\tfrac{1}{4}+\tfrac{13}{2f})\pi$.
	\newline 
	$\sin(\tfrac{1}{4}+\tfrac{1}{2f})\pi
	\sin\tfrac{8}{f}\pi
	=\sin(\tfrac{1}{4}-\tfrac{3}{2f})\pi
	\sin(\tfrac{1}{2}-\tfrac{1}{f})\pi$. 
\item $\alpha\gamma^2,\beta^2\gamma\delta,\alpha^2\beta^2$:
	$\alpha=(\tfrac{1}{2}+\tfrac{2}{f})\pi,\;
	\beta=(\tfrac{1}{2}-\tfrac{2}{f})\pi,\;
	\gamma=(\tfrac{3}{4}-\tfrac{1}{f})\pi,\;
	\delta=(\tfrac{1}{4}+\tfrac{5}{f})\pi$.
	\newline 
	$\sin(\tfrac{1}{4}+\tfrac{1}{f})\pi
	\sin\tfrac{6}{f}\pi
	=\sin(\tfrac{1}{4}-\tfrac{1}{f})\pi
	\sin(\tfrac{1}{2}-\tfrac{2}{f})\pi$.
\item $\alpha\gamma^2,\beta^2\gamma\delta,\alpha^2\delta^2$:
	$\alpha=\tfrac{16}{f}\pi,\;
	\beta=\tfrac{12}{f}\pi,\;
	\gamma=(1-\tfrac{8}{f})\pi,\;
	\delta=(1-\tfrac{16}{f})\pi$.
	\newline 
	$\sin\tfrac{8}{f}\pi
	\sin(1-\tfrac{22}{f})\pi
	=\sin\tfrac{6}{f}\pi
	\sin(1-\tfrac{16}{f})\pi$.
\item $\alpha\gamma^2,\beta^3\delta^2,\alpha^4$:
	$\alpha=\tfrac{1}{2}\pi,\;
	\beta=(\tfrac{1}{2}-\tfrac{8}{f})\pi,\;
	\gamma=\tfrac{3}{4}\pi,\;
	\delta=(\tfrac{1}{4}+\tfrac{12}{f})\pi$.
	\newline 
	$\sin\tfrac{1}{4}\pi
	\sin\tfrac{16}{f}\pi
	=\sin(\tfrac{1}{4}-\tfrac{4}{f})\pi
	\sin\tfrac{1}{2}\pi$.
\item $\alpha\gamma^2,\beta^3\delta^2,\alpha^3\beta$:
	$\alpha=(\tfrac{1}{2}+\tfrac{2}{f})\pi,\;
	\beta=(\tfrac{1}{2}-\tfrac{6}{f})\pi,\;
	\gamma=(\tfrac{3}{4}-\tfrac{1}{f})\pi,\;
	\delta=(\tfrac{1}{4}+\tfrac{9}{f})\pi$.
	\newline 
	$\sin(\tfrac{1}{4}+\tfrac{1}{f})\pi
	\sin\tfrac{12}{f}\pi
	=\sin(\tfrac{1}{4}-\tfrac{3}{f})\pi
	\sin(\tfrac{1}{2}-\tfrac{2}{f})\pi$.
\item $\alpha\gamma^2,\beta^3\delta^2,\alpha^2\beta^2$:
	$\alpha=(\tfrac{1}{2}+\tfrac{4}{f})\pi,\;
	\beta=(\tfrac{1}{2}-\tfrac{4}{f})\pi,\;
	\gamma=(\tfrac{3}{4}-\tfrac{2}{f})\pi,\;
	\delta=(\tfrac{1}{4}+\tfrac{6}{f})\pi$.
	\newline 
	$\sin(\tfrac{1}{4}+\tfrac{2}{f})\pi
	\sin\tfrac{8}{f}\pi
	=\sin(\tfrac{1}{4}-\tfrac{2}{f})\pi
	\sin(\tfrac{1}{2}-\tfrac{4}{f})\pi$.
\item $\alpha\gamma^2,\beta^3\delta^2,\alpha^2\delta^2$:
	$\alpha=\tfrac{24}{f}\pi,\;
	\beta=\tfrac{16}{f}\pi,\;
	\gamma=(1-\tfrac{12}{f})\pi,\;
	\delta=(1-\tfrac{24}{f})\pi$.
	\newline 
	$\sin\tfrac{12}{f}\pi
	\sin(1-\tfrac{32}{f})\pi
	=\sin\tfrac{8}{f}\pi
	\sin(1-\tfrac{24}{f})\pi$.
\item $\alpha\gamma^2,\beta\delta^4,\alpha^4$:
	$\alpha=\tfrac{1}{2}\pi,\;
	\beta=(\tfrac{1}{3}+\tfrac{16}{3f})\pi,\;
	\gamma=\tfrac{3}{4}\pi,\;
	\delta=(\tfrac{5}{12}-\tfrac{4}{3f})\pi$.
	\newline 
	$\sin\tfrac{1}{4}\pi
	\sin(\tfrac{1}{4}-\tfrac{4}{f})\pi
	=\sin(\tfrac{1}{6}+\tfrac{8}{3f})\pi
	\sin\tfrac{1}{2}\pi$.
\item $\alpha\gamma^2,\beta\delta^4,\alpha^3\beta$:
	$\alpha=(\tfrac{4}{7}-\tfrac{16}{7f})\pi,\;
	\beta=(\tfrac{2}{7}+\tfrac{48}{7f})\pi,\;
	\gamma=(\tfrac{5}{7}+\tfrac{8}{7f})\pi,\;
	\delta=(\tfrac{3}{7}-\tfrac{12}{7f})\pi$.
	\newline 
	$\sin(\tfrac{2}{7}-\tfrac{8}{7f})\pi
	\sin(\tfrac{2}{7}-\tfrac{36}{7f})\pi
	=\sin(\tfrac{1}{7}+\tfrac{24}{7f})\pi
	\sin(\tfrac{3}{7}+\tfrac{16}{7f})\pi$.
\item $\alpha\gamma^2,\beta\delta^4,\alpha^2\beta^2$:
	$\alpha=(1-\tfrac{16}{f})\pi,\;
	\beta=\tfrac{16}{f}\pi,\;
	\gamma=(\tfrac{1}{2}+\tfrac{8}{f})\pi,\;
	\delta=(\tfrac{1}{2}-\tfrac{4}{f})\pi$.
	\newline 
	$\sin(\tfrac{1}{2}-\tfrac{8}{f})\pi
	\sin(\tfrac{1}{2}-\tfrac{12}{f})\pi
	=\sin\tfrac{8}{f}\pi
	\sin\tfrac{16}{f}\pi$.
\item $\alpha\gamma^2,\beta\delta^4,\alpha^2\delta^2$:
	$\alpha=(\tfrac{4}{7}+\tfrac{8}{7f})\pi,\;
	\beta=(\tfrac{2}{7}+\tfrac{32}{7f})\pi,\;
	\gamma=(\tfrac{5}{7}-\tfrac{4}{7f})\pi,\;
	\delta=(\tfrac{3}{7}-\tfrac{8}{7f})\pi$.
	\newline 
	$\sin(\tfrac{2}{7}+\tfrac{4}{7f})\pi
	\sin(\tfrac{2}{7}-\tfrac{24}{7f})\pi
	=\sin(\tfrac{1}{7}+\tfrac{16}{7f})\pi
	\sin(\tfrac{3}{7}-\tfrac{8}{7f})\pi$.
\item $\alpha\gamma^2,\gamma\delta^3,\alpha^4$:
	$\alpha=\tfrac{1}{2}\pi,\;
	\beta=(\tfrac{1}{3}+\tfrac{4}{f})\pi,\;
	\gamma=\tfrac{3}{4}\pi,\;
	\delta=\tfrac{5}{12}\pi$.
	\newline 
	$\sin\tfrac{1}{4}\pi
	\sin(\tfrac{1}{4}-\tfrac{2}{f})\pi
	=\sin(\tfrac{1}{6}+\tfrac{2}{f})\pi
	\sin\tfrac{1}{2}\pi$.
\item $\alpha\gamma^2,\gamma\delta^3,\alpha^3\beta$:
	$\alpha=(\tfrac{4}{7}-\tfrac{12}{7f})\pi,\;
	\beta=(\tfrac{2}{7}+\tfrac{36}{7f})\pi,\;
	\gamma=(\tfrac{5}{7}+\tfrac{6}{7f})\pi,\;
	\delta=(\tfrac{3}{7}-\tfrac{2}{7f})\pi$.
	\newline 
	$\sin(\tfrac{2}{7}-\tfrac{6}{7f})\pi
	\sin(\tfrac{2}{7}-\tfrac{20}{7f})\pi
	=\sin(\tfrac{1}{7}+\tfrac{18}{7f})\pi
	\sin(\tfrac{3}{7}+\tfrac{12}{7f})\pi$.
\item $\alpha\gamma^2,\gamma\delta^3,\alpha^2\beta^2$:
	$\alpha=(1-\tfrac{12}{f})\pi,\;
	\beta=\tfrac{12}{f}\pi,\;
	\gamma=(\tfrac{1}{2}+\tfrac{6}{f})\pi,\;
	\delta=(\tfrac{1}{2}-\tfrac{2}{f})\pi$.
	\newline 
	$\sin(\tfrac{1}{2}-\tfrac{6}{f})\pi
	\sin(\tfrac{1}{2}-\tfrac{8}{f})\pi
	=\sin\tfrac{6}{f}\pi
	\sin\tfrac{12}{f}\pi$.
\item $\alpha\gamma^2,\gamma\delta^3,\alpha^2\delta^2$:
	$\alpha=\tfrac{4}{7}\pi,\;
	\beta=(\tfrac{2}{7}+\tfrac{4}{f})\pi,\;
	\gamma=\tfrac{5}{7}\pi,\;
	\delta=\tfrac{3}{7}\pi$.
	\newline 
	$\sin\tfrac{2}{7}\pi
	\sin(\tfrac{2}{7}-\tfrac{2}{f})\pi
	=\sin(\tfrac{1}{7}+\tfrac{2}{f})\pi
	\sin\tfrac{3}{7}\pi$.
\end{itemize}
The solutions for even $f\ge 16$ are $f=16$ for $\beta^3\delta^2,\alpha^4$, and for $\beta\delta^4,\alpha^2\beta^2$. However, we get $\beta=0$ and $\alpha=0$ respectively. Therefore there is no tiling.
\end{proof}

\begin{proposition}\label{add}
Tilings of the sphere by congruent almost equilateral quadrilaterals, such that $\alpha\delta^2$ is the only degree $3$ vertex, are $S_{16\square}1$ and $S_{36\square}6$.
\end{proposition}

The tiling $S_{16\square}1$ obtained in the proof is the four timezone version of the earth map tiling in Figure \ref{add-bbbA}. After exchanging $(\alpha,\delta)$ with $(\beta,\gamma)$, the tiling is $S_{16\square}1$ in Figure \ref{sporatic_tiling}.

The tiling $S_{36\square}6$ is obtained in Figure \ref{add-abbbB}. The picture for $S_{36\square}6$ in Figure \ref{sporatic_tiling} is another way of presenting the same tiling.

\begin{proof}
By Lemma \ref{count-att}, we know $f\ge 16$. By $\alpha\delta^2$ and the balance lemma, we know $\gamma^2\cdots$ is a vertex. Therefore $\gamma,\delta<\pi$. By $\alpha\delta^2$, we get $\alpha+2\beta+2\gamma=2(\alpha+\beta+\gamma+\delta)-(\alpha+2\delta)=(2+\frac{8}{f})\pi>2\pi$. 

By the only degree $3$ vertex $\alpha\delta^2$, we know $\beta,\gamma$ do not appear at degree $3$ vertices. By Lemma \ref{deg3miss}, we know one of $\alpha\beta^3,\beta^4,\alpha\beta\gamma^2,\beta^2\gamma^2,\beta^2\gamma\delta,\gamma^4,\gamma^3\delta,\beta^5,\beta^3\gamma^2,\beta\gamma^4$ is a vertex. 

\subsubsection*{Case. $\alpha\delta^2$ and one of $\alpha\beta^3,\beta^4,\beta^5$ are vertices}

The angle sum of one of $\alpha\beta^3,\beta^4,\beta^5$, and the angle sum of $\alpha\delta^2$, imply
\begin{align*}
\alpha\delta^2,\alpha\beta^3 &\colon
	\alpha=2\pi-2\delta,\;
	\beta=\tfrac{2}{3}\delta,\;
	\gamma=\tfrac{4}{f}\pi+\tfrac{1}{3}\delta. \\
\alpha\delta^2,\beta^4 &\colon
	\alpha=2\pi-2\delta,\;
	\beta=\tfrac{1}{2}\pi,\;
	\gamma=(\tfrac{4}{f}-\tfrac{1}{2})\pi+\delta. \\
\alpha\delta^2,\beta^5 &\colon
	\alpha=2\pi-2\delta,\;
	\beta=\tfrac{2}{5}\pi,\;
	\gamma=(\tfrac{4}{f}-\tfrac{2}{5})\pi+\delta.
\end{align*}

For $\alpha\beta^3$, we have $\beta<\delta$. By Lemma \ref{geometry5}, this implies $\alpha<\gamma$. This means $\delta>(\frac{6}{7}-\frac{12}{7f})\pi$. Therefore $\beta-\gamma=\tfrac{1}{3}\delta-\tfrac{4}{f}\pi>(\frac{2}{7}-\frac{32}{7f})\pi\ge 0$. We get $\alpha<\gamma<\beta<\delta$. 

For $\beta^4,\beta^5$, we have $\gamma<\delta$. By Lemma \ref{geometry1}, this implies $\alpha<\beta$. This means $\delta>\frac{3}{4}\pi>\beta$ for $\beta^4$ and $\delta>\frac{4}{5}\pi>\beta$ for $\beta^5$. Then by Lemma \ref{geometry5}, we get $\alpha<\gamma$. 

We conclude $\alpha<\beta,\gamma$ and $\beta,\gamma<\delta$ in all three cases. Then by $\alpha\delta^2$, this implies $\delta^2\cdots=\alpha\delta^2$, and further implies $\delta\thin\delta\cdots$ is not a vertex. 

By $R(\beta\gamma\delta)<\alpha<\beta,\gamma,\delta$, and the only degree $3$ vertex $\alpha\delta^2$, we know $\beta\gamma\delta\cdots$ is not a vertex. Then by $\delta^2\cdots=\alpha\delta^2$, we know $\beta\delta\cdots$ is not a vertex. 

By no $\beta\delta\cdots,\delta\thin\delta\cdots$, we know the AADs of $\thin\alpha\thin\alpha\thin$ and $\thin\alpha\thin\gamma\thick$ are $\thin^{\delta}\alpha^{\beta}\thin^{\beta}\alpha^{\delta}\thin$  and $\thin^{\delta}\alpha^{\beta}\thin^{\beta}\gamma^{\delta}\thick$. This implies no consecutive $\alpha\alpha\alpha,\alpha\alpha\gamma,\gamma\alpha\gamma$. 

Applying the second part of Lemma \ref{count-att} to $\gamma$, we know there is a degree $4$ vertex $\gamma\cdots$, or a degree $5$ vertex $\gamma^3\cdots$, or a degree $6$ vertex $\gamma^5\cdots$. By $\delta^2\cdots=\alpha\delta^2$, and no $\beta\delta\cdots$, and $\alpha<\beta$, the vertex is \newline $\alpha^2\gamma^2,\alpha^2\gamma\delta,\alpha\beta\gamma^2,\beta^2\gamma^2,\gamma^4,\gamma^3\delta,\alpha\gamma^4,\beta\gamma^4,\alpha\gamma^3\delta,\gamma^6,\gamma^5\delta$. Then by no consecutive $\alpha\alpha\gamma,\gamma\alpha\gamma$, we know $\alpha^2\gamma^2,\alpha^2\gamma\delta,\alpha\gamma^4$ are not vertices. 

For $\alpha\beta^3$, by $\alpha<\gamma<\beta$, we further know $\alpha\beta\gamma^2$ is not a vertex, and $\beta>\frac{1}{2}\pi$. Then $\delta>\frac{3}{4}\pi$, and $5\gamma+\delta=\tfrac{20}{f}\pi+\tfrac{8}{3}\delta>2\pi$. Therefore $\gamma^5\delta$ is not a vertex. Moreover, if $\alpha\gamma^3\delta$ is a vertex, then adding the angle sums of $\alpha\delta^2,\alpha\beta^3,\alpha\gamma^3\delta$ together gives $3(\alpha+\beta+\gamma+\delta)=6\pi$, a contradiction. It remains to consider the combinations of $\alpha\beta^3$ with one of $\beta^2\gamma^2,\gamma^4,\gamma^3\delta,\beta\gamma^4,\gamma^6$.

For $\beta^4$, we have $\alpha+3\gamma+\delta=(\tfrac{1}{2}+\tfrac{12}{f})\pi+2\delta>(2+\tfrac{12}{f})\pi>2\pi$ and $5\gamma+\delta=(\tfrac{20}{f}-\tfrac{5}{2})\pi+6\delta>(2+\tfrac{20}{f})\pi>2\pi$. Therefore $\alpha\gamma^3\delta,\gamma^5\delta$ are not vertices. It remains to consider the combinations of $\beta^5$ with one of $\alpha\beta\gamma^2,\beta^2\gamma^2,\gamma^4,\gamma^3\delta,\beta\gamma^4,\gamma^6$.

For $\beta^5$, we have $\gamma>(\tfrac{2}{5}+\tfrac{4}{f})\pi>\beta$. By $\beta^5$, this implies $\beta\gamma^4,\gamma^6,\gamma^5\delta$ are not vertices. We also have $3\gamma+\delta=(\tfrac{12}{f}-\tfrac{6}{5})\pi+4\delta>(2+\tfrac{12}{f})\pi>2\pi$. Therefore $\gamma^3\delta,\alpha\gamma^3\delta$ are not vertices. It remains to consider the combinations of $\beta^5$ with one of $\alpha\beta\gamma^2,\beta^2\gamma^2,\gamma^4$.

The angle sums of one of the combination pairs listed above, and the angle sum of $\alpha\delta^2$, imply the following, including \eqref{coolsaet_eq1}.
\begin{itemize}
\item $\alpha\delta^2,\alpha\beta^3,\beta^2\gamma^2$:
	$\alpha=\tfrac{8}{f}\pi,\;
	\beta=(\tfrac{2}{3}-\tfrac{8}{3f})\pi,\;
	\gamma=(\tfrac{1}{3}+\tfrac{8}{3f})\pi,\;
	\delta=(1-\tfrac{4}{f})\pi$.
	\newline 
	$\sin\tfrac{4}{f}\pi
	\sin(\tfrac{2}{3}-\tfrac{8}{3f})\pi
	=\sin(\tfrac{1}{3}-\tfrac{4}{3f})\pi
	\sin(\tfrac{1}{3}-\tfrac{4}{3f})\pi$. 
\item $\alpha\delta^2,\alpha\beta^3,\gamma^4$:
	$\alpha=(\tfrac{24}{f}-1)\pi,\;
	\beta=(1-\tfrac{8}{f})\pi,\;
	\gamma=\tfrac{1}{2}\pi,\;
	\delta=(\tfrac{3}{2}-\tfrac{12}{f})\pi$.
	\newline 
	$\sin(\tfrac{12}{f}-\tfrac{1}{2})\pi
	\sin(1-\tfrac{8}{f})\pi
	=\sin(\tfrac{1}{2}-\tfrac{4}{f})\pi
	\sin(1-\tfrac{12}{f})\pi$. 
\item $\alpha\delta^2,\alpha\beta^3,\gamma^3\delta$:
	$\alpha=\tfrac{12}{f}\pi,\;
	\beta=(\tfrac{2}{3}-\tfrac{4}{f})\pi,\;
	\gamma=(\tfrac{1}{3}+\tfrac{2}{f})\pi,\;
	\delta=(1-\tfrac{6}{f})\pi$.
	\newline 
	$\sin\tfrac{6}{f}\pi
	\sin(\tfrac{2}{3}-\tfrac{4}{f})\pi
	=\sin(\tfrac{1}{3}-\tfrac{2}{f})\pi
	\sin(\tfrac{1}{3}-\tfrac{4}{f})\pi$.  
\item $\alpha\delta^2,\alpha\beta^3,\beta\gamma^4$:
	$\alpha=\tfrac{16}{f}\pi,\;
	\beta=(\tfrac{2}{3}-\tfrac{16}{3f})\pi,\;
	\gamma=(\tfrac{1}{3}+\tfrac{4}{3f})\pi,\;
	\delta=(1-\tfrac{8}{f})\pi$.
	\newline 
	$\sin\tfrac{8}{f}\pi
	\sin(\tfrac{2}{3}-\tfrac{16}{3f})\pi
	=\sin(\tfrac{1}{3}-\tfrac{8}{3f})\pi
	\sin(\tfrac{1}{3}-\tfrac{20}{3f})\pi$. 
\item $\alpha\delta^2,\alpha\beta^3,\gamma^6$:
	$\alpha=\tfrac{24}{f}\pi,\;
	\beta=(\tfrac{2}{3}-\tfrac{8}{f})\pi,\;
	\gamma=\tfrac{1}{3}\pi,\;
	\delta=(1-\tfrac{12}{f})\pi$.
	\newline 
	$\sin\tfrac{12}{f}\pi
	\sin(\tfrac{2}{3}-\tfrac{8}{f})\pi
	=\sin(\tfrac{1}{3}-\tfrac{4}{f})\pi
	\sin(\tfrac{1}{3}-\tfrac{12}{f})\pi$.
\item $\alpha\delta^2,\beta^4,\alpha\beta\gamma^2$:
	$\alpha=2\pi-2\delta,\;
	\beta=\tfrac{1}{2}\pi,\;
	\gamma=\delta-\tfrac{1}{4}\pi,\; f=16$.
	\newline 
	$\sin(\pi-\delta)
	\sin(\delta-\tfrac{1}{4}\pi)
	=\sin\tfrac{1}{4}\pi
	\sin(2\delta-\tfrac{5}{4}\pi)$. 
\item $\alpha\delta^2,\beta^4,\beta^2\gamma^2/\gamma^4$:
	$\alpha=\tfrac{8}{f}\pi,\;
	\beta=\tfrac{1}{2}\pi,\;
	\gamma=\tfrac{1}{2}\pi,\;
	\delta=(1-\tfrac{4}{f})\pi$.
	\newline 
	$\sin\tfrac{4}{f}\pi
	\sin(\tfrac{3}{4}-\tfrac{4}{f})\pi
	=\sin\tfrac{1}{4}\pi
	\sin(\tfrac{1}{2}-\tfrac{4}{f})\pi$. 
\item $\alpha\delta^2,\beta^4,\gamma^3\delta$:
	$\alpha=(\tfrac{1}{4}+\tfrac{6}{f})\pi,\;
	\beta=\tfrac{1}{2}\pi,\;
	\gamma=(\tfrac{3}{8}+\tfrac{1}{f})\pi,\;
	\delta=(\tfrac{7}{8}-\tfrac{3}{f})\pi$.
	\newline 
	$\sin(\tfrac{1}{8}+\tfrac{3}{f})\pi
	\sin(\tfrac{5}{8}-\tfrac{3}{f})\pi
	=\sin\tfrac{1}{4}\pi
	\sin(\tfrac{1}{4}-\tfrac{2}{f})\pi$. 
\item $\alpha\delta^2,\beta^4,\beta\gamma^4$:
	$\alpha=(\tfrac{1}{4}+\tfrac{8}{f})\pi,\;
	\beta=\tfrac{1}{2}\pi,\;
	\gamma=\tfrac{3}{8}\pi,\;
	\delta=(\tfrac{7}{8}-\tfrac{4}{f})\pi$.
	\newline 
	$\sin(\tfrac{1}{8}+\tfrac{4}{f})\pi
	\sin(\tfrac{5}{8}-\tfrac{4}{f})\pi
	=\sin\tfrac{1}{4}\pi
	\sin(\tfrac{1}{4}-\tfrac{4}{f})\pi$. 
\item $\alpha\delta^2,\beta^4,\gamma^6$:
	$\alpha=(\tfrac{1}{3}+\tfrac{8}{f})\pi,\;
	\beta=\tfrac{1}{2}\pi,\;
	\gamma=\tfrac{1}{3}\pi,\;
	\delta=(\tfrac{5}{6}-\tfrac{4}{f})\pi$.
	\newline 
	$\sin(\tfrac{1}{6}+\tfrac{4}{f})\pi
	\sin(\tfrac{7}{12}-\tfrac{4}{f})\pi
	=\sin\tfrac{1}{4}\pi
	\sin(\tfrac{1}{6}-\tfrac{4}{f})\pi$. 
\item $\alpha\delta^2,\beta^5,\alpha\beta\gamma^2$:
	$\alpha=2\pi-2\delta,\;
	\beta=\tfrac{2}{5}\pi,\;
	\gamma=\delta-\tfrac{1}{5}\pi,\; f=20$.
	\newline 
	$\sin(\pi-\delta)
	\sin(\delta-\tfrac{1}{5}\pi)
	=\sin\tfrac{1}{5}\pi
	\sin(2\delta-\tfrac{6}{5}\pi)$. 
\item $\alpha\delta^2,\beta^5,\beta^2\gamma^2$:
	$\alpha=\tfrac{8}{f}\pi,\;
	\beta=\tfrac{2}{5}\pi,\;
	\gamma=\tfrac{3}{5}\pi,\;
	\delta=(1-\tfrac{4}{f})\pi$.
	\newline 
	$\sin\tfrac{4}{f}\pi
	\sin(\tfrac{4}{5}-\tfrac{4}{f})\pi
	=\sin\tfrac{1}{5}\pi
	\sin(\tfrac{3}{5}-\tfrac{4}{f})\pi$. 
\item $\alpha\delta^2,\beta^5,\gamma^4$:
	$\alpha=(\tfrac{1}{5}+\tfrac{8}{f})\pi,\;
	\beta=\tfrac{2}{5}\pi,\;
	\gamma=\tfrac{1}{2}\pi,\;
	\delta=(\tfrac{9}{10}-\tfrac{4}{f})\pi$.
	\newline 
	$\sin(\tfrac{1}{10}+\tfrac{4}{f})\pi
	\sin(\tfrac{7}{10}-\tfrac{4}{f})\pi
	=\sin\tfrac{1}{5}\pi
	\sin(\tfrac{2}{5}-\tfrac{4}{f})\pi$.
\end{itemize}
For $\alpha\delta^2,\beta^4,\alpha\beta\gamma^2$, the solution for $\frac{3}{4}\pi<\delta<\pi$ is $\delta=0.7898\pi$. For $\alpha\delta^2,\beta^5,\alpha\beta\gamma^2$, the equation has no solution satisfying $\frac{4}{5}\pi<\delta<\pi$. For all the remaining cases, the only solution for even $f\ge 16$ are $f=36$ for $\alpha\delta^2,\alpha\beta^3,\gamma^3\delta$, and $f=20$ for $\alpha\delta^2,\beta^5,\beta^2\gamma^2$. However, $f=20$ implies $\delta=\frac{4}{5}\pi$, contradicting $\delta>\frac{4}{5}\pi$.

For $\alpha\delta^2,\beta^4,\alpha\beta\gamma^2$ and $f=16$, we use the approximate angles values to get 
\begin{equation}\label{add-abbbavc1}
\text{AVC}=\{\alpha\delta^2,\beta^4,\alpha\beta\gamma^2\}.
\end{equation}
This is the AVC \eqref{add-bbbAVC}, with $\beta^3$ replaced by $\beta^4$. The same argument gives the earth map tiling with four timezones, similar to Figure \ref{add-bbbA}. In the earlier discussion, we also argued the geometrical existence of the tiling. After exchanging $(\alpha,\delta)$ with $(\beta,\gamma)$, the tiling is $S_{16\square}1$.

For $\alpha\delta^2,\alpha\beta^3,\gamma^3\delta$ and $f=36$, we calculate the angles and get the list of vertices
\begin{align}
\alpha\delta^2,\alpha\beta^3,\gamma^3\delta &\colon
	\alpha=\tfrac{1}{3}\pi,\;
	\beta=\tfrac{5}{9}\pi,\;
	\gamma=\tfrac{7}{18}\pi, \;
	\delta=\tfrac{5}{6}\pi. \label{add-abbbavc2} \\
&	\text{AVC}=
	\{\alpha\delta^2,\alpha\beta^3,\gamma^3\delta,\alpha^2\beta\gamma^2\}. \nonumber 
\end{align}
We note that $\alpha^6$ is not included, by no consecutive $\alpha\alpha\alpha$. 

If $\alpha^2\beta\gamma^2$ is not a vertex, then we get
\[
f=\#\alpha
=\#\alpha\delta^2+\#\alpha\beta^3
\le \tfrac{1}{2}\#\delta+\tfrac{1}{3}\#\beta
=\tfrac{1}{2}f+\tfrac{1}{3}f
<f,
\]
a contradiction. Therefore $\alpha^2\beta\gamma^2$ is a vertex. By no consecutive $\alpha\alpha\gamma$, the vertex is $\thick\gamma\thin\alpha\thin\beta\thin\alpha\thin\gamma\thick$. By $\alpha^2\cdots=\alpha^2\beta\gamma^2$, this implies $\alpha\thin\alpha\cdots$ is not a vertex. 

By the AAD $\thin^{\delta}\alpha^{\beta}\thin^{\beta}\gamma^{\delta}\thick$ of $\thin\alpha\thin\gamma\thick$, we know the AAD of $\alpha^2\beta\gamma^2$ is $\thick^{\delta}\gamma^{\beta}\thin^{\beta}\alpha^{\delta}\thin^{\alpha}\beta^{\gamma}\thin^{\delta}\alpha^{\beta}\thin^{\beta}\gamma^{\delta}\thick$. This determines $T_1,T_2,T_3,T_4,T_5$ in Figure \ref{add-abbbA}. Then $\alpha_3\delta_4\cdots=\alpha\delta^2$ determines $T_6$. Then $\alpha_6\delta_3\cdots=\alpha\delta^2$ determines $T_7$. Then $\gamma_3\gamma_7\delta_2\cdots=\gamma^3\delta$ determines $T_8$. Then $\gamma_2\delta_8\cdots=\gamma^3\delta$ determines $T_9,T_{10}$.

We have $\gamma_4\gamma_6\cdots=\alpha^2\beta\gamma^2,\gamma^3\delta$. If the vertex is $\gamma^3\delta$, then we get $T_{11},T_{12}$ in one of the two pictures in Figure \ref{add-abbbA}. In the first picture, $\alpha_7\alpha_{12}\beta_6\cdots=\alpha^2\beta\gamma^2$ determines $T_{13},T_{14}$. Then $\beta_7\beta_8\beta_{14}\cdots=\alpha\beta^3$ and no $\alpha\beta\delta\cdots$ determine $T_{15}$. Then $\alpha_8\beta_9\beta_{15}\cdots=\alpha\beta^3$ and no $\alpha\thin\alpha\cdots$ determine $T_{16}$. Then we find $\alpha_{16}\cdots=\gamma_{16}\cdots=\alpha\gamma\cdots=\alpha^2\beta\gamma^2$. In the second picture, $\gamma_{11}\delta_{12}\cdots=\gamma^3\delta$ determines $T_{13}$. Then $\alpha_{11}\beta_4\beta_5\cdots=\beta_{11}\beta_{13}\cdots=\alpha\beta^3$ determine $T_{14}$. We find $\alpha_5\cdots=\gamma_5\cdots=\alpha\gamma\cdots=\alpha^2\beta\gamma^2$.

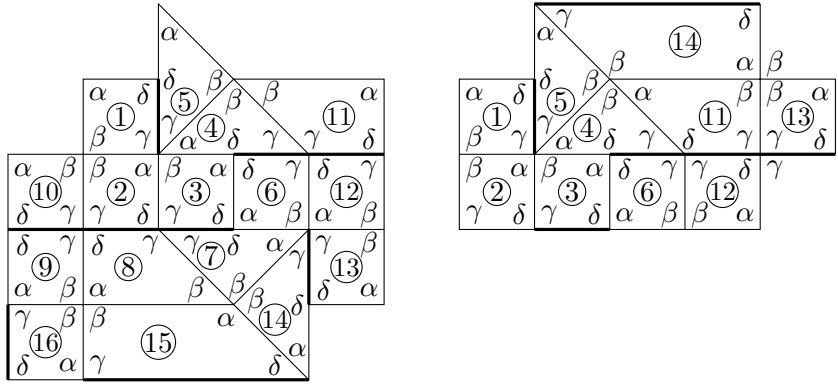
\begin{figure}[htp]
\centering
\begin{tikzpicture}


\draw
	(3,1) -- (-2,1) -- (-2,-2) -- (2,-2) -- (2,1) -- (0,3) -- (0,0) -- (2,-2)
	(-2,-1) -- (1,-1) -- (2,0)
	(0,2) -- (-1,2) -- (-1,-2)
	(3,0) -- (1,0) -- (1,1)
	(2,-1) -- (3,-1) -- (3,2) -- (1,2) -- (0,1)
	;

\draw[line width=1.2]
	(-2,0) -- (1,0)
	(0,1) -- (0,2)
	(1,1) -- (3,1)
	(2,0) -- (2,-1)
	(-1,-2) -- (2,-2)
	(-2,-2) -- (-2,-1);

\node at (0.8,0.8) {\small $\alpha$};
\node at (0.2,0.8) {\small $\beta$};
\node at (0.2,0.2) {\small $\gamma$};
\node at (0.8,0.2) {\small $\delta$};

\node at (-0.2,0.8) {\small $\alpha$};
\node at (-0.8,0.8) {\small $\beta$};
\node at (-0.8,0.2) {\small $\gamma$};
\node at (-0.2,0.2) {\small $\delta$};

\node at (-1.8,0.8) {\small $\alpha$};
\node at (-1.2,0.8) {\small $\beta$};
\node at (-1.2,0.2) {\small $\gamma$};
\node at (-1.8,0.2) {\small $\delta$};

\node at (-1.8,-0.8) {\small $\alpha$};
\node at (-1.2,-0.8) {\small $\beta$};
\node at (-1.2,-0.2) {\small $\gamma$};
\node at (-1.8,-0.2) {\small $\delta$};

\node at (-0.8,-0.8) {\small $\alpha$};
\node at (0.5,-0.8) {\small $\beta$};
\node at (-0.1,-0.2) {\small $\gamma$};
\node at (-0.8,-0.2) {\small $\delta$};

\node at (-0.8,1.8) {\small $\alpha$};
\node at (-0.8,1.2) {\small $\beta$};
\node at (-0.2,1.2) {\small $\gamma$};
\node at (-0.2,1.8) {\small $\delta$};

\node at (1.2,0.2) {\small $\alpha$};
\node at (1.8,0.2) {\small $\beta$};
\node at (1.8,0.8) {\small $\gamma$};
\node at (1.2,0.8) {\small $\delta$};

\node at (1.55,-0.2) {\small $\alpha$};
\node at (1.05,-0.75) {\small $\beta$};
\node at (0.45,-0.2) {\small $\gamma$};
\node at (1,-0.2) {\small $\delta$};

\node at (0.4,1.15) {\small $\alpha$};
\node at (1,1.7) {\small $\beta$};
\node at (1.5,1.2) {\small $\gamma$};
\node at (1,1.2) {\small $\delta$};

\node at (0.15,2.6) {\small $\alpha$};
\node at (0.75,1.95) {\small $\beta$};
\node at (0.15,1.4) {\small $\gamma$};
\node at (0.15,2) {\small $\delta$};

\node[draw,shape=circle, inner sep=0.5] at (-0.5,1.5) {\small $1$};
\node[draw,shape=circle, inner sep=0.5] at (-0.5,0.5) {\small $2$};
\node[draw,shape=circle, inner sep=0.5] at (0.5,0.5) {\small $3$};
\node[draw,shape=circle, inner sep=0.5] at (0.7,1.35) {\small $4$};
\node[draw,shape=circle, inner sep=0.5] at (0.35,1.7) {\small $5$};
\node[draw,shape=circle, inner sep=0.5] at (1.5,0.5) {\small $6$};
\node[draw,shape=circle, inner sep=0.5] at (0.7,-0.35) {\small $7$};
\node[draw,shape=circle, inner sep=0.5] at (-0.4,-0.5) {\small $8$};
\node[draw,shape=circle, inner sep=0.5] at (-1.5,-0.5) {\small $9$};
\node[draw,shape=circle, inner sep=0] at (-1.5,0.5) {\footnotesize $10$};
\node[draw,shape=circle, inner sep=0] at (2.4,1.5) {\footnotesize $11$};
\node[draw,shape=circle, inner sep=0] at (2.5,0.5) {\footnotesize $12$};

\node at (2.8,1.8) {\small $\alpha$};
\node at (1.5,1.8) {\small $\beta$};
\node at (2.05,1.2) {\small $\gamma$};
\node at (2.8,1.2) {\small $\delta$};

\node at (2.2,0.2) {\small $\alpha$};
\node at (2.8,0.2) {\small $\beta$};
\node at (2.8,0.8) {\small $\gamma$};
\node at (2.2,0.8) {\small $\delta$};

\node at (2.8,-0.8) {\small $\alpha$};
\node at (2.8,-0.2) {\small $\beta$};
\node at (2.2,-0.2) {\small $\gamma$};
\node at (2.2,-0.8) {\small $\delta$};

\node at (1.85,-1.6) {\small $\alpha$};
\node at (1.3,-0.95) {\small $\beta$};
\node at (1.85,-0.4) {\small $\gamma$};
\node at (1.85,-1) {\small $\delta$};

\node at (0.9,-1.2) {\small $\alpha$};
\node at (-0.8,-1.2) {\small $\beta$};
\node at (-0.8,-1.8) {\small $\gamma$};
\node at (1.55,-1.8) {\small $\delta$};

\node at (-1.2,-1.8) {\small $\alpha$};
\node at (-1.2,-1.2) {\small $\beta$};
\node at (-1.8,-1.2) {\small $\gamma$};
\node at (-1.8,-1.8) {\small $\delta$};

\node[draw,shape=circle, inner sep=0] at (2.5,-0.5) {\footnotesize $13$};
\node[draw,shape=circle, inner sep=0] at (1.55,-1.2) {\footnotesize $14$};
\node[draw,shape=circle, inner sep=0] at (0,-1.5) {\small $15$};
\node[draw,shape=circle, inner sep=0] at (-1.5,-1.5) {\footnotesize $16$};

\begin{scope}[xshift=5cm]

\draw
	(0,2) -- (-1,2) -- (-1,0) -- (3,0) -- (3,2) -- (3,3)
	(4,1) -- (4,2) -- (1,2) -- (0,1)
	(-1,1) -- (1,1) -- (1,0)
	(2,0) -- (2,1) -- (0,3) -- (0,0);

\draw[line width=1.2]
	(0,0) -- (1,0)
	(0,1) -- (0,2)
	(1,1) -- (4,1)
	(3,3) -- (0,3);

\node at (-0.8,1.8) {\small $\alpha$};
\node at (-0.8,1.2) {\small $\beta$};
\node at (-0.2,1.2) {\small $\gamma$};
\node at (-0.2,1.8) {\small $\delta$};

\node at (-0.2,0.8) {\small $\alpha$};
\node at (-0.8,0.8) {\small $\beta$};
\node at (-0.8,0.2) {\small $\gamma$};
\node at (-0.2,0.2) {\small $\delta$};

\node at (0.8,0.8) {\small $\alpha$};
\node at (0.2,0.8) {\small $\beta$};
\node at (0.2,0.2) {\small $\gamma$};
\node at (0.8,0.2) {\small $\delta$};

\node at (0.4,1.15) {\small $\alpha$};
\node at (1,1.7) {\small $\beta$};
\node at (1.5,1.2) {\small $\gamma$};
\node at (1,1.2) {\small $\delta$};

\node at (0.15,2.6) {\small $\alpha$};
\node at (0.75,1.95) {\small $\beta$};
\node at (0.15,1.4) {\small $\gamma$};
\node at (0.15,2) {\small $\delta$};

\node at (1.2,0.2) {\small $\alpha$};
\node at (1.8,0.2) {\small $\beta$};
\node at (1.8,0.8) {\small $\gamma$};
\node at (1.2,0.8) {\small $\delta$};

\node at (2.8,1.8) {\small $\beta$};
\node at (1.45,1.8) {\small $\alpha$};
\node at (2.05,1.2) {\small $\delta$};
\node at (2.8,1.2) {\small $\gamma$};

\node at (2.8,0.2) {\small $\alpha$};
\node at (2.2,0.2) {\small $\beta$};
\node at (2.2,0.8) {\small $\gamma$};
\node at (2.8,0.8) {\small $\delta$};

\node at (3.2,0.8) {\small $\gamma$};

\node at (3.8,1.8) {\small $\alpha$};
\node at (3.2,1.8) {\small $\beta$};
\node at (3.2,1.2) {\small $\gamma$};
\node at (3.8,1.2) {\small $\delta$};

\node[draw,shape=circle, inner sep=0.5] at (-0.5,1.5) {\small $1$};
\node[draw,shape=circle, inner sep=0.5] at (-0.5,0.5) {\small $2$};
\node[draw,shape=circle, inner sep=0.5] at (0.5,0.5) {\small $3$};
\node[draw,shape=circle, inner sep=0.5] at (0.7,1.35) {\small $4$};
\node[draw,shape=circle, inner sep=0.5] at (0.35,1.7) {\small $5$};
\node[draw,shape=circle, inner sep=0.5] at (1.5,0.5) {\small $6$};
\node[draw,shape=circle, inner sep=0] at (2.4,1.5) {\footnotesize $11$};
\node[draw,shape=circle, inner sep=0] at (2.5,0.5) {\footnotesize $12$};
\node[draw,shape=circle, inner sep=0] at (3.5,1.5) {\footnotesize $13$};
\node[draw,shape=circle, inner sep=0] at (2,2.5) {\footnotesize $14$};

\node at (2.8,2.2) {\small $\alpha$};
\node at (1.1,2.2) {\small $\beta$};
\node at (0.4,2.8) {\small $\gamma$};
\node at (2.8,2.8) {\small $\delta$};

\node at (3.2,2.2) {\small $\beta$};

\end{scope}

\end{tikzpicture}
\caption{Proposition \ref{add}: There is a tile with $\alpha\cdots=\gamma\cdots=\alpha^2\beta\gamma^2$.}
\label{add-abbbA}
\end{figure}

The discussion by Figure \ref{add-abbbA} shows that there is a tile with $\alpha\cdots=\gamma\cdots=\alpha^2\beta\gamma^2$. In Figure \ref{add-abbbB}, therefore, we have the tiles $T_1,T_2,\dots,T_{10}$ like the first of Figure \ref{add-abbbA}, and we may also assume $\alpha_5\cdots=\alpha^2\beta\gamma^2$. Then $\delta_1\delta_5\cdots=\alpha\delta^2$ and the assumption $\alpha_5\cdots=\alpha^2\beta\gamma^2$ determine $T_{11}$. Then the AAD of $\alpha_5\cdots=\alpha^2\beta\gamma^2$ further determines $T_{12},T_{13},T_{14}$. Then we may determine $(T_1),T_{15},T_{16},T_{17},T_{18}$ like $T_6,T_7,T_8,T_9,T_{10}$ in the first of Figure \ref{add-abbbA}. Then $\delta_9\delta_{10}\cdots=\alpha\delta^2$ and $\alpha_{10}\beta_{15}\beta_{16}\cdots=\alpha\beta^3$ determine $T_{19}$. Then $\alpha_{16}\beta_{17}\gamma_{19}\cdots=\alpha^2\beta\gamma^2$ determines $T_{20},T_{21}$. Then $\delta_{17}\delta_{18}\cdots=\alpha_{17}\delta_{21}\cdots=\alpha\delta^2$ determines $T_{22}$. Then $\beta_{12}\beta_{13}\beta_{18}\cdots=\alpha\beta^3$ and no $\alpha\beta\delta\cdots$ determine $T_{23}$. Then $\delta_{13}\delta_{14}\cdots=\alpha_{13}\delta_{23}\cdots=\alpha\delta^2$ determines $T_{24}$. Then $\beta_4\beta_5\beta_{14}\cdots=\alpha\beta^3$ and no $\alpha\beta\delta\cdots$ determine $T_{25}$. Then $\gamma_4\gamma_6\delta_{25}\cdots=\gamma^3\delta$ determines $T_{26}$. Then $\gamma_{25}\delta_{26}\cdots=\gamma^3\delta$ determines $T_{27},T_{28}$. Then we get the rest of the tiling by using the obvious $180^{\circ}$ rotation symmetry. The tiling is $S_{36\square}6$.

\begin{figure}[htp]
\centering
\begin{tikzpicture}

\foreach \a in {1,-1}
{
\begin{scope}[scale=\a]

\draw
	(-0,0) -- (-0,0.8) -- (-1.6,-0.8) -- (-0,-0.8)
	(-0.8,-1.6) -- (-2.4,0) -- (-0.8,1.6) -- (-0,0.8)
	(-0.8,0) -- (-1.6,0.8)
	(-3.4,1.6) -- (3.4,1.6)
	(-2.4,2.6) -- (-2.4,-2.6)
	(3.4,2.6) -- (2.4,2.6) -- (0.8,2.4) -- (-0.8,2.4) -- (-2.4,2.6) -- (-3.4,2.6) -- (-3.4,-2.6)
	(-0.8,1.6) -- (-0.8,2.4)
	(-4.4,2.6) -- (-4.4,3.6)
	(2.4,2.6) to[out=150,in=0] (0,3) to[out=180,in=30] (-2.4,2.6)
	(3.4,2.6) to[out=140,in=0] (0,3.6) to[out=180,in=40] (-3.4,2.6)
	(3.4,-1.6) to[out=60,in=-90] (4,0.5) to[out=90,in=-60] (3.4,2.6)
	(4.4,-2.6) to[out=90,in=-90] (4.4,1) to[out=90,in=40] (3.4,2.6);

\draw[line width=1.2]
	(-1.6,0.8) -- (-0.8,1.6)
	(-0.8,0) -- (-1.6,-0.8) -- (-0.8,-1.6) -- (-0.8,-2.4)
	(-2.4,0) -- (-3.4,0)
	(-4.4,2.6) -- (-2.4,2.6) -- (-0.8,2.4)
	(3.4,2.6) -- (3.4,1.6);

\node at (-0.2,0.4) {\small $\alpha$};
\node at (-0.2,-0.6) {\small $\beta$};
\node at (-1.15,-0.6) {\small $\gamma$};
\node at (-0.7,-0.1) {\small $\delta$};

\node at (-1.6,0.55) {\small $\alpha$};
\node at (-2.1,0.05) {\small $\beta$};
\node at (-1.6,-0.5) {\small $\gamma$};
\node at (-1.1,0) {\small $\delta$};

\node at (-0.8,0.25) {\small $\alpha$};
\node at (-0.3,0.75) {\small $\beta$};
\node at (-0.8,1.3) {\small $\gamma$};
\node at (-1.3,0.85) {\small $\delta$};

\node at (-0.35,1.45) {\small $\alpha$};
\node at (0.1,1) {\small $\beta$};
\node at (1.15,1) {\small $\gamma$};
\node at (0.75,1.4) {\small $\delta$};

\node at (-2.2,0.4) {\small $\alpha$};
\node at (-2.2,1.4) {\small $\beta$};
\node at (-1.2,1.4) {\small $\gamma$};
\node at (-1.7,0.95) {\small $\delta$};

\node at (-2.2,-0.4) {\small $\alpha$};
\node at (-2.23,-1.43) {\small $\beta$};
\node at (-1.25,-1.4) {\small $\gamma$};
\node at (-1.7,-0.9) {\small $\delta$};

\node at (-3.2,1.4) {\small $\alpha$};
\node at (-2.6,1.4) {\small $\beta$};
\node at (-2.6,0.2) {\small $\gamma$};
\node at (-3.2,0.2) {\small $\delta$};

\node at (-3.2,-1.4) {\small $\alpha$};
\node at (-2.6,-1.4) {\small $\beta$};
\node at (-2.6,-0.2) {\small $\gamma$};
\node at (-3.2,-0.2) {\small $\delta$};

\node at (-2.6,1.8) {\small $\alpha$};
\node at (-3.2,1.8) {\small $\beta$};
\node at (-3.2,2.4) {\small $\gamma$};
\node at (-2.6,2.4) {\small $\delta$};

\node at (-2.6,-1.8) {\small $\alpha$};
\node at (-2.6,-2.4) {\small $\beta$};
\node at (-3.2,-2.4) {\small $\gamma$};
\node at (-3.2,-1.8) {\small $\delta$};

\node at (-1,1.8) {\small $\alpha$};
\node at (-2.2,1.8) {\small $\beta$};
\node at (-2.2,2.4) {\small $\gamma$};
\node at (-1,2.2) {\small $\delta$};

\node at (-0.6,2.2) {\small $\alpha$};
\node at (-0.6,1.8) {\small $\beta$};
\node at (0.6,1.8) {\small $\gamma$};
\node at (0.6,2.2) {\small $\delta$};

\node at (2.2,2.4) {\small $\alpha$};
\node at (2.2,1.8) {\small $\beta$};
\node at (1,1.8) {\small $\gamma$};
\node at (1,2.2) {\small $\delta$};

\node at (0.8,2.6) {\small $\alpha$};
\node at (1.65,2.7) {\small $\beta$};
\node at (-1.7,2.7) {\small $\gamma$};
\node at (-0.8,2.6) {\small $\delta$};

\node at (2.95,2.75) {\small $\alpha$};
\node at (2.5,2.8) {\small $\beta$};
\node at (-2.4,2.8) {\small $\gamma$};
\node at (-2.9,2.8) {\small $\delta$};

\node at (-3.55,0) {\small $\alpha$};
\node at (-3.55,0.95) {\small $\beta$};
\node at (-3.55,-2) {\small $\gamma$};
\node at (-3.55,-1.6) {\small $\delta$};

\node at (-3.65,-2.5) {\small $\alpha$};
\node at (-3.6,1.6) {\small $\beta$};
\node at (-3.6,2.4) {\small $\gamma$};
\node at (-4.2,2.4) {\small $\delta$};

\node at (-4.6,2.6) {\small $\alpha$};
\node at (3.45,2.85) {\small $\beta$};
\node at (-3.5,2.8) {\small $\gamma$};
\node at (-4.2,2.8) {\small $\delta$};

\end{scope}
}

\node[draw,shape=circle, inner sep=0.5] at (-0.8,0.8) {\small $1$};
\node[draw,shape=circle, inner sep=0.5] at (0.4,1.2) {\small $2$};
\node[draw,shape=circle, inner sep=0.5] at (0,2) {\small $3$};
\node[draw,shape=circle, inner sep=0.5] at (-1.6,2) {\small $4$};
\node[draw,shape=circle, inner sep=0.5] at (-1.95,1.15) {\small $5$};
\node[draw,shape=circle, inner sep=0.5] at (0,2.7) {\small $6$};
\node[draw,shape=circle, inner sep=0.5] at (1.6,2) {\small $7$};
\node[draw,shape=circle, inner sep=0.5] at (1.95,1.15) {\small $8$};
\node[draw,shape=circle, inner sep=0.5] at (1.6,0) {\small $9$};
\node[draw,shape=circle, inner sep=0] at (0.45,0.35) {\footnotesize $10$};
\node[draw,shape=circle, inner sep=0] at (-1.6,0) {\footnotesize $11$};
\node[draw,shape=circle, inner sep=0] at (-1.95,-1.15) {\footnotesize $12$};
\node[draw,shape=circle, inner sep=0] at (-2.9,-0.8) {\footnotesize $13$};
\node[draw,shape=circle, inner sep=0] at (-2.9,0.8) {\footnotesize $14$};
\node[draw,shape=circle, inner sep=0] at (-0.45,-0.35) {\footnotesize $15$};
\node[draw,shape=circle, inner sep=0] at (-0.4,-1.2) {\footnotesize $16$};
\node[draw,shape=circle, inner sep=0] at (0,-2) {\footnotesize $17$};
\node[draw,shape=circle, inner sep=0] at (-1.6,-2) {\footnotesize $18$};
\node[draw,shape=circle, inner sep=0] at (0.8,-0.8) {\footnotesize $19$};
\node[draw,shape=circle, inner sep=0] at (1.95,-1.15) {\footnotesize $20$};
\node[draw,shape=circle, inner sep=0] at (1.6,-2) {\footnotesize $21$};
\node[draw,shape=circle, inner sep=0] at (0,-2.7) {\footnotesize $22$};
\node[draw,shape=circle, inner sep=0] at (-2.9,-2.1) {\footnotesize $23$};
\node[draw,shape=circle, inner sep=0] at (-3.7,-0.5) {\footnotesize $24$};
\node[draw,shape=circle, inner sep=0] at (-2.9,2.1) {\footnotesize $25$};
\node[draw,shape=circle, inner sep=0] at (0,3.3) {\footnotesize $26$};
\node[draw,shape=circle, inner sep=0] at (-3.9,2) {\footnotesize $27$};
\node[draw,shape=circle, inner sep=0] at (4,3.2) {\footnotesize $28$};

\end{tikzpicture}
\caption{Proposition \ref{add}: Tiling for $\{\alpha\delta^2,\alpha\beta^3,\gamma^3\delta,\alpha^2\beta\gamma^2\}$.}
\label{add-abbbB}
\end{figure}
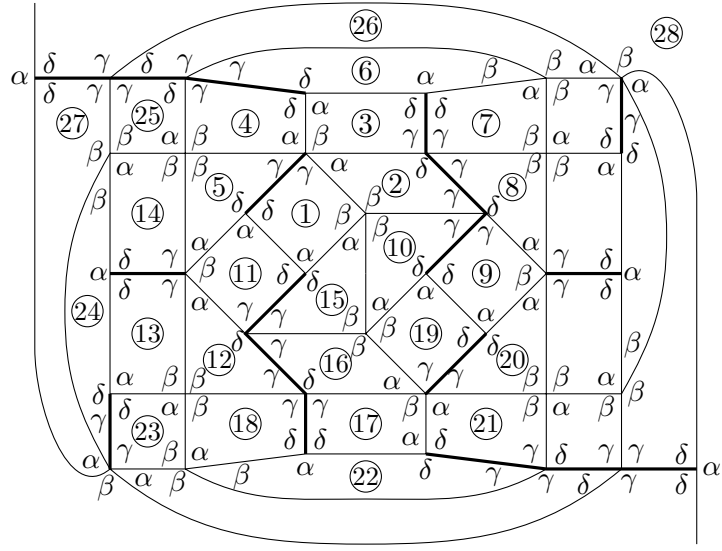

\medskip

\noindent{\em Geometry of Quadrilateral}

\medskip

The tiling $S_{36\square}6$ has angles given by \eqref{add-abbbavc2}. The angle values are obtained by verifying \eqref{coolsaet_eq1}. Then we substitute the values into \eqref{coolsaet_eq2} and \eqref{coolsaet_eq4}, and use $K=Y(b)^T$ to get 
\begin{align*}
\cos a &=4\cos\tfrac{1}{9}\pi-3,\quad a<\pi, \\
\cos b &=6\cos\tfrac{1}{9}\pi+2\sqrt{3}\sin\tfrac{1}{9}\pi-3\sqrt{3}\tan\tfrac{1}{9}\pi-4, \quad b<\pi.
\end{align*}
We note that $\cos a$ and $\cos b$ are the largest (and the only positive) roots of $t^3+9t^2+15t-17$ and $t^3+39t^2+39t-71$. Then we may use Lemma \ref{geometry8} verify that the quadrilateral is simple and suitable for tiling.

\subsubsection*{Case. $\alpha\delta^2,\alpha\beta\gamma^2$ are vertices}

The angle sums of $\alpha\delta^2,\alpha\beta\gamma^2$ imply
\[
\alpha=2\pi-2\delta,\;
\beta=\tfrac{8}{f}\pi,\;
\gamma=\delta-\tfrac{4}{f}\pi.
\]
We have $\gamma<\delta$. By Lemma \ref{geometry1}, this implies $\alpha<\beta$. This means $\delta>(1-\frac{4}{f})\pi$. By Lemma \ref{geometry4}, we get $2\alpha+\beta>\pi$. This implies $\alpha>(\frac{1}{2}-\frac{4}{f})\pi$. Combined with $\alpha<\beta=\tfrac{8}{f}\pi$, we get $f<24$. Therefore we get $f=16,18,20,22$ and the following, including \eqref{coolsaet_eq1}.
\begin{align*}
f=16 &\colon
	\alpha=2\pi-2\delta,\;
	\beta=\tfrac{1}{2}\pi,\;
	\gamma=\delta-\tfrac{1}{4}\pi. \\
	 &\sin(\pi-\delta)
	\sin(\delta-\tfrac{1}{4}\pi)
	=\sin\tfrac{1}{4}\pi
	\sin(2\delta-\tfrac{5}{4}\pi). \\
f=18 &\colon
	\alpha=2\pi-2\delta,\;
	\beta=\tfrac{4}{9}\pi,\;
	\gamma=\delta-\tfrac{2}{9}\pi. \\
	 &\sin(\pi-\delta)
	\sin(\delta-\tfrac{2}{9}\pi)
	=\sin\tfrac{2}{9}\pi
	\sin(2\delta-\tfrac{11}{9}\pi). \\
f=20 &\colon
	\alpha=2\pi-2\delta,\;
	\beta=\tfrac{2}{5}\pi,\;
	\gamma=\delta-\tfrac{1}{5}\pi. \\
	 &\sin(\pi-\delta)
	\sin(\delta-\tfrac{1}{5}\pi)
	=\sin\tfrac{1}{5}\pi
	\sin(2\delta-\tfrac{6}{5}\pi). \\
f=22 &\colon
	\alpha=2\pi-2\delta,\;
	\beta=\tfrac{4}{11}\pi,\;
	\gamma=\delta-\tfrac{2}{11}\pi. \\
	 &\sin(\pi-\delta)
	\sin(\delta-\tfrac{2}{11}\pi)
	=\sin\tfrac{2}{11}\pi
	\sin(2\delta-\tfrac{13}{11}\pi).
\end{align*}
The only solution satisfying $(1-\frac{4}{f})\pi<\delta<\pi$ is $\delta=0.7898\pi$ for $f=16$, and $\delta=0.7942\pi$ for $f=18$. For $f=16$, we use the approximate angle values to get the AVC \eqref{add-abbbavc1}. Then we get $S_{16\square}1$ as before. For the case $f=18$, we use the approximate angle values to get $\text{AVC}=\{\alpha\delta^2,\alpha\beta\gamma^2\}$. Applying the counting lemma to $\alpha,\beta$, we get a contradiction.

\subsubsection*{Case. $\alpha\delta^2,\beta^2\gamma\delta$ are vertices}

The angle sums of $\alpha\delta^2,\beta^2\gamma\delta$  imply
\[
\alpha=\tfrac{4}{f}\pi+\beta,\;
\gamma=(1+\tfrac{2}{f})\pi-\tfrac{3}{2}\beta,\;
\delta=(1-\tfrac{2}{f})\pi-\tfrac{1}{2}\beta.
\]
We have $\alpha>\beta$. By Lemma \ref{geometry1}, this implies $\gamma>\delta$. This means $\beta<\frac{4}{f}\pi$, and implies $\alpha,\beta<\pi$. Then by Lemma \ref{geometry3}, we get $(1+\tfrac{4}{f})\pi+\beta=\alpha+\pi>\gamma+\delta=2\pi-2\beta$. This implies $(1-\tfrac{4}{f})\pi<3\beta<\frac{12}{f}\pi$, contradicting $f\ge 16$.

\subsubsection*{Case. $\alpha\delta^2,\beta^2\gamma^2$ are vertices}

The angle sums of $\alpha\delta^2,\beta^2\gamma^2$ imply
\[
\alpha=\tfrac{8}{f}\pi,\;
\beta+\gamma=\pi,\;
\delta=(1-\tfrac{4}{f})\pi.
\]
By Lemma \ref{geometry3}, we get $\beta+\pi>\gamma+\delta$ and $\gamma+\pi>\beta+\delta$. These imply $(\frac{1}{2}-\frac{2}{f})\pi<\beta,\gamma<(\frac{1}{2}+\frac{2}{f})\pi<\delta$. By Lemmas \ref{geometry5} and \ref{geometry1}, we get $\alpha<\beta,\gamma$.

Similar to the case one of $\alpha\beta^3,\beta^4,\beta^5$ is a vertex, by $\alpha<\beta,\gamma$ and $\beta,\gamma<\delta$, we get $\delta^2\cdots=\alpha\delta^2$, and $\beta\delta\cdots,\delta\thin\delta\cdots$ are not vertices, and there are no consecutive $\alpha\alpha\alpha,\alpha\alpha\gamma,\gamma\alpha\gamma$. 

By $R(\alpha\beta\delta)<\gamma<\delta$, we know $\alpha\beta\delta\cdots$ is not a vertex.

By $\beta^2\gamma^2$ and $\alpha<\gamma$, we know $\alpha^2\beta^2$ is not a vertex. By $\beta>(\frac{1}{2}-\frac{2}{f})\pi$, we get $R(\beta^4)<\frac{8}{f}\pi=\alpha<\beta,\gamma,\delta$. This implies $\beta^4\cdots=\beta^4$. 

Suppose $\gamma\thin\gamma\cdots$ is a vertex. Then we get the AAD $\thick^{\delta}\gamma^{\beta}\thin^{\beta}\gamma^{\delta}\thick$, which determines $T_1,T_2$ in the first of Figure \ref{add-bbddB}. By $\beta^2\gamma^2$, and $\gamma<\delta$, we know $\beta_1\thin\beta_2\cdots$ is $\beta^2\gamma^2$ or a $\hat{b}$-vertex. If it is a $\hat{b}$-vertex, then by $\beta^4\cdots=\beta^4$, and no $\alpha^2\beta^2$, and no consecutive $\alpha\alpha\alpha$, and the only degree $3$ vertex $\alpha\delta^2$, the vertex is $\alpha\beta^3,\alpha^2\beta^3,\alpha^3\beta^3,\alpha^4\beta^3,\beta^4$. 

If $\beta_1\thin\beta_2\cdots=\beta^2\gamma^2$, then we determine $T_3,T_4$. On the other hand, by $\delta^2\cdots=\alpha\delta^2$, we know $\gamma_1\gamma_2\cdots$ is not $\gamma^2\delta^2\cdots$. Therefore one of $\delta_1\cdots,\delta_2\cdots$ is $\delta^2\cdots=\alpha\delta^2$. By symmetry, we may assume $\delta_1\cdots=\alpha\delta^2$, which gives $\alpha_5$. Then by no $\alpha_1\beta_3\delta\cdots$, we determine $T_5$. By $\beta^2\gamma^2$ and $\gamma<\delta$, we know $\beta_3\thin\alpha_1\thin\beta_5\cdots$ is a $\hat{b}$-vertex. Then by $\beta^4\cdots=\beta^4$, and no consecutive $\alpha\alpha\alpha$, and no $\alpha^2\beta^2$, and the only degree $3$ vertex $\alpha\delta^2$, we get $\beta_3\thin\alpha_1\thin\beta_5\cdots=\alpha\beta^3,\alpha^2\beta^3,\alpha^3\beta^2,\alpha^3\beta^3,\alpha^4\beta^3,\alpha^5\beta^3$.

\begin{figure}[htp]
\centering
\begin{tikzpicture}[>=latex,scale=1]


\draw
	(1,-1) -- (2,-1)
	(1,0) -- (2,0);

\draw[line width=1.2]
	(0,0) -- (0,1)
	(-1,-1) -- (1,-1)
	(2,0) -- (2,-1);

\foreach \b in {1,-1}
{
\begin{scope}[xscale=\b]

\draw
	(0,1) -- (1,1) -- (1,-1)
	(1,0) -- (0,0) -- (0,-1);

\draw[line width=1.2]
	(0,0) -- (0,1)
	(-1,-1) -- (1,-1);
	
\node at (0.8,0.8) {\small $\alpha$};
\node at (0.8,0.2) {\small $\beta$};
\node at (0.2,0.8) {\small $\delta$};
\node at (0.2,0.2) {\small $\gamma$};

\node at (0.8,-0.2) {\small $\alpha$};
\node at (0.2,-0.2) {\small $\beta$};
\node at (0.8,-0.8) {\small $\delta$};
\node at (0.2,-0.8) {\small $\gamma$};

\end{scope}
}

\node at (1.2,-0.8) {\small $\alpha$};
\node at (1.2,-0.2) {\small $\beta$};
\node at (1.8,-0.8) {\small $\delta$};
\node at (1.8,-0.2) {\small $\gamma$};

\node at (1,-1.2) {\small $\delta$};

\node[draw,shape=circle, inner sep=0.5] at (0.5,-0.5) {\small $1$};
\node[draw,shape=circle, inner sep=0.5] at (-0.5,-0.5) {\small $2$};
\node[draw,shape=circle, inner sep=0.5] at (0.5,0.5) {\small $3$};
\node[draw,shape=circle, inner sep=0.5] at (-0.5,0.5) {\small $4$};
\node[draw,shape=circle, inner sep=0.5] at (1.5,-0.5) {\small $5$};

\begin{scope}[xshift=5cm]

\foreach \b in {1,-1}
{
\begin{scope}[xscale=\b]

\draw
	(-2,-1) rectangle (2,1)
	(-2,0) -- (2,0)
	(0,-1) -- (0,0)
	(1,1) -- (1,0);

\draw[line width=1.2]	
	(0,0) -- (0,1)
	(1,0) -- (1,-1)
	(2,0) -- (2,1);

\node at (0.8,0.8) {\small $\alpha$};
\node at (0.8,0.2) {\small $\beta$};
\node at (0.2,0.8) {\small $\delta$};
\node at (0.2,0.2) {\small $\gamma$};

\node at (1.2,0.8) {\small $\alpha$};
\node at (1.2,0.2) {\small $\beta$};
\node at (1.8,0.8) {\small $\delta$};
\node at (1.8,0.2) {\small $\gamma$};

\node at (0.2,-0.8) {\small $\alpha$};
\node at (0.2,-0.2) {\small $\beta$};
\node at (0.8,-0.8) {\small $\delta$};
\node at (0.8,-0.2) {\small $\gamma$};

\node at (1.8,-0.8) {\small $\alpha$};
\node at (1.8,-0.2) {\small $\beta$};
\node at (1.2,-0.8) {\small $\delta$};
\node at (1.2,-0.2) {\small $\gamma$};

\end{scope}
}

\node at (0,1.15) {\small $\alpha$};

\node[draw,shape=circle, inner sep=0.5] at (-0.5,0.5) {\small $1$};
\node[draw,shape=circle, inner sep=0.5] at (0.5,0.5) {\small $2$};
\node[draw,shape=circle, inner sep=0.5] at (-0.5,-0.5) {\small $3$};
\node[draw,shape=circle, inner sep=0.5] at (0.5,-0.5) {\small $4$};
\node[draw,shape=circle, inner sep=0.5] at (-1.5,0.5) {\small $5$};
\node[draw,shape=circle, inner sep=0.5] at (1.5,0.5) {\small $6$};

\end{scope}

\end{tikzpicture}
\caption{Proposition \ref{add}: $\thick\gamma\thin\gamma\thick$ and vertex $\thick\gamma\thin\beta\thin\beta\thin\gamma\thick$.}
\label{add-bbddB}
\end{figure}

We conclude that, if $\gamma\thin\gamma\cdots$ is a vertex, then one of $\alpha\beta^3$, $\alpha^2\beta^3$, $\alpha^3\beta^2$, $\alpha^3\beta^3$, $\alpha^4\beta^3$, $\alpha^5\beta^3$, $\beta^4$ is a vertex. We have argued that, if $\alpha\delta^2$ is the only degree $3$ vertex, and $\alpha\beta^3$ or $\beta^4$ is a vertex, then the tiling is $S_{16\square}1$ (see Figure \ref{add-bbbA}) or $S_{36\square}6$ (see Figure \ref{add-abbbB}) . Since both tilings do not have $\beta^2\gamma^2$, we conclude $\alpha\beta^3,\beta^4$ are not vertices.

The angle sum of one of $\alpha^2\beta^3,\alpha^3\beta^2,\alpha^3\beta^3,\alpha^4\beta^3,\alpha^5\beta^3$, and the angle sums of $\alpha\delta^2,\beta^2\gamma^2$, imply the following, including \eqref{coolsaet_eq1}.
\begin{itemize}
\item $\alpha^2\beta^3$:
	$\alpha=\tfrac{8}{f}\pi,\;
	\beta=(\tfrac{2}{3}-\tfrac{16}{3f})\pi,\;
	\gamma=(\tfrac{1}{3}+\tfrac{16}{3f})\pi,\;
	\delta=(1-\tfrac{4}{f})\pi$.
	\newline 
	$\sin\tfrac{4}{f}\pi
	\sin(\tfrac{2}{3}-\tfrac{4}{3f})\pi
	=\sin(\tfrac{1}{3}-\tfrac{8}{3f})\pi
	\sin(\tfrac{1}{3}+\tfrac{4}{3f})\pi$. 
\item $\alpha^3\beta^2$: 
	$\alpha=\tfrac{8}{f}\pi,\;
	\beta=(1-\tfrac{12}{f})\pi,\;
	\gamma=\tfrac{12}{f}\pi,\;
	\delta=(1-\tfrac{4}{f})\pi$.
	\newline 
	$\sin\tfrac{4}{f}\pi
	\sin(\tfrac{1}{2}+\tfrac{2}{f})\pi
	=\sin(\tfrac{1}{2}-\tfrac{6}{f})\pi
	\sin\tfrac{8}{f}\pi$.
\item $\alpha^3\beta^3$:
	$\alpha=\tfrac{8}{f}\pi,\;
	\beta=(\tfrac{2}{3}-\tfrac{8}{f})\pi,\;
	\gamma=(\tfrac{1}{3}+\tfrac{8}{f})\pi,\;
	\delta=(1-\tfrac{4}{f})\pi$.
	\newline 
	$\sin\tfrac{4}{f}\pi
	\sin\tfrac{2}{3}\pi
	=\sin(\tfrac{1}{3}-\tfrac{4}{f})\pi
	\sin(\tfrac{1}{3}+\tfrac{4}{f})\pi$. 
\item $\alpha^4\beta^3$:
	$\alpha=\tfrac{8}{f}\pi,\;
	\beta=(\tfrac{2}{3}-\tfrac{32}{3f})\pi,\;
	\gamma=(\tfrac{1}{3}+\tfrac{32}{3f})\pi,\;
	\delta=(1-\tfrac{4}{f})\pi$.
	\newline 
	$\sin\tfrac{4}{f}\pi
	\sin(\tfrac{2}{3}+\tfrac{4}{3f})\pi
	=\sin(\tfrac{1}{3}-\tfrac{16}{3f})\pi
	\sin(\tfrac{1}{3}+\tfrac{20}{3f})\pi$. 
\item $\alpha^5\beta^3$:
	$\alpha=\tfrac{8}{f}\pi,\;
	\beta=(\tfrac{2}{3}-\tfrac{40}{3f})\pi,\;
	\gamma=(\tfrac{1}{3}+\tfrac{40}{3f})\pi,\;
	\delta=(1-\tfrac{4}{f})\pi$.
	\newline 
	$\sin\tfrac{4}{f}\pi
	\sin(\tfrac{2}{3}+\tfrac{8}{3f})\pi
	=\sin(\tfrac{1}{3}-\tfrac{20}{3f})\pi
	\sin(\tfrac{1}{3}+\tfrac{28}{3f})\pi$. 
\end{itemize}
The only solution for even $f\ge 16$ is $f=20$ for $\alpha^2\beta^3,\alpha^3\beta^2$. However, this implies $\alpha=\beta$, a contradiction.

Therefore $\gamma\thin\gamma\cdots$ is not a vertex. Combined with $\delta^2\cdots=\alpha\delta^2$ and no $\beta\delta\cdots$, we get $\gamma\delta\cdots=\alpha^k\gamma^l\delta$. By no $\gamma\thin\gamma\cdots$ and no consecutive $\alpha\alpha\alpha,\alpha\alpha\gamma,\gamma\alpha\gamma$, we get $\alpha^k\gamma^l\delta=\alpha\gamma\delta$, contradicting the only degree $3$ vertex $\alpha\delta^2$. Therefore $\gamma\delta\cdots$ is not a vertex.

By no $\gamma\delta\cdots$, we know a vertex $\gamma\cdots$ is a combination of $\gamma^2$-fans. By $\beta^2\gamma^2$, and no $\gamma\thin\gamma\cdots$, and no consecutive $\alpha\alpha\alpha,\alpha\alpha\gamma,\gamma\alpha\gamma$, we know the only $\gamma^2$-fans are the vertex $\beta^2\gamma^2$, and $\thick\gamma\thin\beta\thin\gamma\thick,\thick\gamma\thin\alpha\thin\beta\thin\gamma\thick,\thick\gamma\thin\alpha\thin\beta\thin\alpha\thin\gamma\thick$. By $\beta^2\gamma^2$, we get $\beta+2\gamma>\pi$. Therefore a vertex cannot have more than one $\gamma^2$-fan. By $\beta^2\gamma^2$, and $\alpha<\beta$, and the only degree $3$ vertex $\alpha\delta^2$, this implies $\gamma\cdots=\beta^2\gamma^2,\alpha^2\beta\gamma^2$. 

The angle sum of $\alpha^2\beta\gamma^2$ further implies the following, including \eqref{coolsaet_eq1}.
\begin{align*}
\alpha^2\beta\gamma^2 &\colon
	\alpha=\tfrac{8}{f}\pi,\;
	\beta=\tfrac{16}{f}\pi,\;
	\gamma=(1-\tfrac{16}{f})\pi,\;
	\delta=(1-\tfrac{4}{f})\pi. \\
&	\sin\tfrac{4}{f}\pi
	\sin(1-\tfrac{12}{f})\pi
	=\sin\tfrac{8}{f}\pi
	\sin(1-\tfrac{20}{f})\pi.
\end{align*}
The equation has no solution for even $f\ge 16$. 

We conclude $\gamma\cdots=\beta^2\gamma^2$. Combined with $\delta^2\cdots=\alpha\delta^2$, we know $\alpha\delta^2,\beta^2\gamma^2$ are all the $b$-vertices. By no $\alpha\gamma\cdots,\gamma\thin\gamma\cdots$, we get the AAD $\thick^{\delta}\gamma^{\beta}\thin^{\gamma}\beta^{\alpha}\thin^{\alpha}\beta^{\gamma}\thin^{\beta}\gamma^{\delta}\thick$ of $\beta^2\gamma^2$. This determines $T_1,T_2,T_3,T_4$ in the second of Figure \ref{add-bbddB}. Then the AAD of $\beta_1\gamma_3\cdots=\beta_2\gamma_4\cdots=\beta^2\gamma^2$ determines $T_5,T_6$. Then $\delta_1\delta_2\cdots=\alpha\delta^2$ gives an $\alpha$, and this $\alpha$ implies either $\alpha_1\alpha_5\cdots$ or $\alpha_2\alpha_6\cdots$ is $\alpha^2\delta\cdots$, contradicting all the $b$-vertices $\alpha\delta^2,\beta^2\gamma^2$.

\subsubsection*{Case. $\alpha\delta^2,\beta^3\gamma^2$ are vertices}

The angle sums of $\alpha\delta^2,\beta^3\gamma^2$ imply
\[
\alpha=\tfrac{8}{f}\pi+\beta,\;
\gamma=\pi-\tfrac{3}{2}\beta,\;
\delta=(1-\tfrac{4}{f})\pi-\tfrac{1}{2}\beta.
\]
By $\alpha>\beta$ and Lemma \ref{geometry1}, we get $\gamma>\delta$. By $\alpha\delta^2,\beta^3\gamma^2$, this implies $\alpha>3\beta$. This means $\beta<\frac{4}{f}\pi$, and we get $\alpha<\tfrac{12}{f}\pi<\pi$. Then by Lemma \ref{geometry3}, we get $\beta+\pi>\gamma+\delta$. This means $\beta>(\frac{1}{3}-\frac{4}{3f})\pi$. Combined with $\beta<\frac{4}{f}\pi$, we get $f<16$, a contradiction.

\subsubsection*{Case. $\alpha\delta^2,\beta\gamma^4$ are vertices}

The angle sums of $\alpha\delta^2,\beta\gamma^4$ imply
\[
\alpha=(1+\tfrac{8}{f})\pi-\tfrac{3}{2}\beta,\;
\gamma=\tfrac{1}{2}\pi-\tfrac{1}{4}\beta,\;
\delta=(\tfrac{1}{2}-\tfrac{4}{f})\pi+\tfrac{3}{4}\beta.
\]
We have $2\alpha+3\beta=(2+\tfrac{16}{f})\pi>2\pi$. By Lemma \ref{geometry4}, we also have $2\alpha+\beta>\pi$.

By $2\delta=(1-\tfrac{8}{f})\pi+\tfrac{3}{2}\beta>\beta$ and Lemma \ref{geometry7}, we get $\alpha<2\gamma$. This means $\beta>\frac{8}{f}\pi$. Then  $\delta-\gamma=\beta-\frac{4}{f}\pi>0$. By Lemma \ref{geometry1}, this implies $\alpha<\beta$. This means $\beta>(\frac{2}{5}+\frac{16}{5f})\pi$, and further implies $\beta-\gamma=\frac{5}{4}\beta-\frac{1}{2}\pi>0$. 

By $\alpha\delta^2$ and $\alpha<\beta$, we know $\beta\delta^2\cdots$ is not a vertex. By $\alpha<\beta$ and $\alpha+\beta+\gamma+\delta>2\pi$, we know $\beta^2\gamma\delta\cdots$ is not a vertex. Then by $\beta\gamma^4$ and $\gamma<\beta,\delta$, we get $\beta^2\cdots=\alpha^k\beta^l,\alpha^k\beta^l\gamma^2$, with $l\ge 2$. By $\alpha<\beta$, and $2\alpha+\beta>\pi$, and $2\alpha+3\beta>2\pi$, we get $\alpha^k\beta^l=\alpha\beta^3,\alpha^2\beta^2,\beta^4,\alpha^3\beta^2$. By $\alpha<\beta$ and $\alpha+2\beta+2\gamma>2\pi$, we get $\alpha^k\beta^l\gamma^2=\beta^2\gamma^2$. We have argued that, if $\alpha\delta^2$ is the only degree $3$ vertex, and one of $\alpha\beta^3,\beta^4,\beta^2\gamma^2$ is a vertex, then the tiling is $S_{16\square}1$ (see Figure \ref{add-bbbA}) or $S_{36\square}6$ (see Figure \ref{add-abbbB}) . Since both tilings do not have $\beta\gamma^4$, we conclude $\alpha\beta^3,\beta^4,\beta^2\gamma^2$ are not vertices.

The angle sum of one of $\alpha^2\beta^2,\alpha^3\beta^2$ further implies the following, including \eqref{coolsaet_eq1}.
\begin{itemize}
\item $\alpha^2\beta^2 $:
	$\alpha=(1-\tfrac{16}{f})\pi,\;
	\beta=\tfrac{16}{f}\pi,\;
	\gamma=(\tfrac{1}{2}-\tfrac{4}{f})\pi,\;
	\delta=(\tfrac{1}{2}+\tfrac{8}{f})\pi$.
	\newline
	$\sin(\tfrac{1}{2}-\tfrac{8}{f})\pi
	\sin\tfrac{1}{2}\pi
	=\sin\tfrac{8}{f}\pi
	\sin\tfrac{4}{f}\pi$.
\item $\alpha^3\beta^2$:
	$\alpha=(\tfrac{2}{5}-\tfrac{32}{5f})\pi,\;
	\beta=(\tfrac{2}{5}+\tfrac{48}{5f})\pi,\;
	\gamma=(\tfrac{2}{5}-\tfrac{12}{5f})\pi,\;
	\delta=(\tfrac{4}{5}+\tfrac{16}{5f})\pi$.
	\newline
	$\sin(\tfrac{1}{5}-\tfrac{16}{5f})\pi
	\sin(\tfrac{3}{5}-\tfrac{8}{5f})\pi
	=\sin(\tfrac{1}{5}+\tfrac{24}{5f})\pi
	\sin(\tfrac{1}{5}+\tfrac{4}{5f})\pi$.
\end{itemize}
There is no solution for even $f\ge 16$.

\subsubsection*{Case. $\alpha\delta^2,\gamma^4$ are vertices}

The angle sums of $\alpha\delta^2,\gamma^4$ imply
\[
\beta=(\tfrac{1}{2}+\tfrac{4}{f})\pi-\tfrac{1}{2}\alpha,\;
\gamma=\tfrac{1}{2}\pi,\;
\delta=\pi-\tfrac{1}{2}\alpha.
\]
We have $\beta<\delta$. By Lemma \ref{geometry5}, this implies $\alpha<\gamma$. Then we get $\delta>\frac{3}{4}\pi>\gamma$. By Lemma \ref{geometry1}, we get $\alpha<\beta$. This means $\alpha<(\frac{1}{3}+\frac{8}{3f})\pi$, and implies $\alpha+\beta=(\tfrac{1}{2}+\tfrac{4}{f})\pi+\tfrac{1}{2}\alpha<(\frac{2}{3}+\frac{16}{3f})\pi\le\pi$. Then by Lemma \ref{geometry3}, we get $\alpha+\pi>\gamma+\delta$. This means $\alpha>\frac{1}{3}\pi$. 

By $\alpha\delta^2$ and $\alpha<\beta,\gamma,\delta$, we get $\delta^2\cdots=\alpha\delta^2$. This implies $\delta\thin\delta\cdots$ is not a vertex. By $\delta^2\cdots=\alpha\delta^2$ and $R(\beta\gamma\delta)<\alpha<\beta,\gamma,\delta$, we know $\beta\delta\cdots$ is not a vertex. By no $\beta\delta\cdots,\delta\thin\delta\cdots$, we know the AAD of $\thin\alpha\thin\alpha\thin$ is $\thin^{\delta}\alpha^{\beta}\thin^{\beta}\alpha^{\delta}\thin$. This implies no consecutive $\alpha\alpha\alpha$.

By the only degree $3$ vertex $\alpha\delta^2$, the AAD $\thick^{\delta}\gamma^{\beta}\thin^{\beta}\gamma^{\delta}\thick$ at $\gamma^4$ implies a vertex $\beta\thin\beta\cdots$ of degree $\ge 4$. If this is a $b$-vertex, then by no $\beta\delta\cdots$, and $\alpha+2\beta+2\gamma>2\pi$, and $\alpha<\beta,\gamma$, we get $\beta\thin\beta\cdots=\beta^2\gamma^2$. If this is a $\hat{b}$-vertex, then by $\alpha+\beta<\pi$, and no consecutive $\alpha\alpha\alpha$, we know the remainder of $\beta\thin\beta\cdots$ must have $\beta$. By $\beta>\alpha>\frac{1}{3}\pi$, the vertex is $\alpha\beta^3,\alpha\beta^4,\alpha^2\beta^3,\beta^4,\beta^5$. 

We have argued that, if $\alpha\delta^2$ is the only degree $3$ vertex, and one of $\alpha\beta^3,\beta^4,\beta^5,\beta^2\gamma^2$ is a vertex, then the tiling is either the earth map tiling with four timezones similar to Figure \ref{add-bbbA}, or the tiling in Figure \ref{add-abbbB}. Since both tilings do not have $\gamma^4$, we conclude $\alpha\beta^3,\beta^4,\beta^5,\beta^2\gamma^2$ are not vertices.

The angle sum of one of $\alpha\beta^4,\alpha^2\beta^3$, and the angle sums of $\alpha\delta^2,\gamma^4$, imply the following, including \eqref{coolsaet_eq1}. 
\begin{itemize}
\item $\alpha\beta^4$:
	$\alpha=\tfrac{16}{f}\pi,\;
	\beta=(\tfrac{1}{2}-\tfrac{4}{f})\pi,\;
	\gamma=\tfrac{1}{2}\pi,\;
	\delta=(1-\tfrac{8}{f})\pi$.
	\newline
	$\sin\tfrac{8}{f}\pi
	\sin(\tfrac{3}{4}-\tfrac{6}{f})\pi
	=\sin(\tfrac{1}{4}-\tfrac{2}{f})\pi
	\sin(\tfrac{1}{2}-\tfrac{8}{f})\pi$.
\item $\alpha^2\beta^3$:
	$\alpha=(1-\tfrac{24}{f})\pi,\;
	\beta=\tfrac{16}{f}\pi,\;
	\gamma=\tfrac{1}{2}\pi,\;
	\delta=(\tfrac{1}{2}+\tfrac{12}{f})\pi$.
	\newline
	$\sin(\tfrac{1}{2}-\tfrac{12}{f})\pi
	\sin(\tfrac{1}{2}+\tfrac{4}{f})\pi
	=\sin\tfrac{8}{f}\pi
	\sin\tfrac{12}{f}\pi$. 
\end{itemize}
Both equations have no solution for even $f\ge 16$.

\subsubsection*{Case. $\alpha\delta^2,\gamma^3\delta$ are vertices}

The angle sums of $\alpha\delta^2,\gamma^3\delta$ imply
\[
\beta=(\tfrac{2}{3}+\tfrac{4}{f})\pi-\tfrac{2}{3}\alpha,\;
\gamma=\tfrac{1}{3}\pi+\tfrac{1}{6}\alpha,\;
\delta=\pi-\tfrac{1}{2}\alpha.
\]
We have $2\alpha+3\beta=(2+\tfrac{12}{f})\pi>2\pi$.

By $\delta-\beta=(\tfrac{1}{3}-\tfrac{4}{f})\pi+\tfrac{1}{6}\alpha>0$ and Lemma \ref{geometry5}, we get $\alpha<\gamma$. This means $\alpha<\frac{2}{5}\pi$. Then $\beta-\gamma=(\tfrac{1}{3}+\tfrac{4}{f})\pi-\frac{5}{6}\alpha>0$. We conclude $\alpha<\gamma<\beta<\delta$.

Similar to the case $\gamma^4$ is a vertex, by $\alpha<\beta,\gamma$ and $\gamma<\delta$, we know $\beta\delta\cdots,\delta\thin\delta\cdots$ are not vertices. Moreover, we get the unique AAD $\thin^{\delta}\alpha^{\beta}\thin^{\beta}\alpha^{\delta}\thin$ of $\thin\alpha\thin\alpha\thin$, which implies no consecutive $\alpha\alpha\alpha$.

By the only degree $3$ vertex $\alpha\delta^2$, the AAD $\thick^{\delta}\gamma^{\beta}\thin^{\beta}\gamma^{\delta}\thick$ at $\gamma^3\delta$ gives a vertex $\beta\thin\beta\cdots$ of degree $\ge 4$. If the vertex has $\gamma,\delta$, then by no $\beta\delta\cdots$, and $\alpha+2\beta+2\gamma>2\pi$, and $\alpha<\beta,\gamma$, we get $\beta\thin\beta\cdots=\beta^2\gamma^2$. If this is a $\hat{b}$-vertex, then by $\alpha<\beta$, and $2\alpha+3\beta>2\pi$, and no consecutive $\alpha\alpha\alpha$, we know $\beta\thin\beta\cdots=\alpha^2\beta^2,\alpha\beta^3,\beta^4$.

We have argued that, if $\alpha\delta^2$ is the only degree $3$ vertex, and one of $\alpha\beta^3,\beta^4,\beta^2\gamma^2$ is a vertex, then the tiling is $S_{16\square}1$ (see Figure \ref{add-bbbA}) or $S_{36\square}6$ (see Figure \ref{add-abbbB}) . Among the two tilings, only $S_{36\square}6$ has $\gamma^3\delta$. 

The angle sum of $\alpha^2\beta^2$ further implies the following, including \eqref{coolsaet_eq1}.
\begin{align*}
\alpha^2\beta^2 &\colon
	\alpha=(1-\tfrac{12}{f})\pi,\;
	\beta=\tfrac{12}{f}\pi,\;
	\gamma=(\tfrac{1}{2}-\tfrac{2}{f})\pi,\;
	\delta=(\tfrac{1}{2}+\tfrac{6}{f})\pi. \\
& 	\sin(\tfrac{1}{2}-\tfrac{6}{f})\pi
	\sin\tfrac{1}{2}\pi
	=\sin\tfrac{6}{f}\pi
	\sin\tfrac{4}{f}\pi.
\end{align*}
The equation has no solution for even $f\ge 16$. 
\end{proof}

\begin{proposition}\label{acd}
Tilings of the sphere by congruent almost equilateral quadrilaterals, such that $\alpha\gamma\delta$ is a vertex, are the earth map tiling $E_{\square}^A1$ and its modifications $FE_{\square}^A1,RE_{\square}^A1$.
\end{proposition}

\begin{proof}
By $\alpha\gamma\delta$, we get $\beta=\frac{4}{f}\pi<\pi$. By Lemma \ref{fbalance}, we may divide the discussion according to whether both $\gamma^2$-fan and $\delta^2$-fan exist or not.

\subsubsection*{Case. There are no $\gamma^2$-fan and $\delta^2$-fan}

The only fan is the $\gamma\delta$-fan, and we always have the same number of $\gamma$ and $\delta$ at every vertex. By $\alpha\gamma\delta$, we have either $\alpha<\pi$ and $\gamma+\delta>\pi$, or $\alpha\ge \pi$ and $\gamma+\delta\le \pi$.

Suppose $\alpha<\pi$ and $\gamma+\delta>\pi$. Then $\gamma^2\delta^2\cdots$ is not a vertex, and $\alpha\gamma\delta,\alpha^t\beta^k,\beta^s\gamma\delta$ are all the vertices. We have $p\beta=2\pi$ for $p=\tfrac{f}{2}$, and $\alpha=s\beta$, and $k=p-st$. Then we get the list of vertices
\begin{equation}\label{acd_avc1}
\text{AVC}=\{\alpha\gamma\delta,\alpha^t\beta^{p-st},\beta^s\gamma\delta\},\quad 
0\le st\le p=\tfrac{f}{2}.
\end{equation}
We note that $p,s$ are fixed by the quadrilateral, and we may allow $\alpha^t\beta^{p-st}$ for various $t$. If all $\alpha^t\beta^{p-st}$ are not vertices, then $\alpha\gamma\delta,\beta^s\gamma\delta$ are the only vertices. Applying the counting lemma to $\alpha,\delta$, we find $\beta^s\gamma\delta$ is not a vertex. Then $\alpha\gamma\delta$ is the only vertex, a contradiction. Therefore $\alpha^t\beta^{p-st}$ is a vertex for some $t$.

Suppose $\alpha\ge \pi$ and $\gamma+\delta\le \pi$. Then $\alpha^2\cdots$ is not a vertex, and $\alpha\gamma\delta,\alpha\beta^s,\beta^k\gamma^t\delta^t$ are all the vertices. We have $p\beta=2\pi$ for $p=\tfrac{f}{2}$, and $\gamma+\delta=s\beta$, and $k=p-st$. Then we get the list of vertices
\begin{equation}\label{acd_avc2}
\text{AVC}
=\{\alpha\gamma\delta,\alpha\beta^s,\beta^{p-st}\gamma^t\delta^t\},\quad 
0\le st\le p=\tfrac{f}{2}.
\end{equation} 
Again $p,s$ are fixed by the quadrilateral, and we may allow $\beta^{p-st}\gamma^t\delta^t$ for various $t$. Moreover, by the similar reason, we know $\beta^{p-st}\gamma^t\delta^t$ is a vertex for some $t$.

Now we construct the tiling for the AVC \eqref{acd_avc1}. 

The AVC implies $\gamma^2\cdots,\delta^2\cdots,\gamma\thin\delta\cdots$ are not vertices. This implies the AAD of $\alpha^t\beta^{p-st}$ is a combination of $\thin^{\beta}\alpha^{\delta}\thin^{\beta}\alpha^{\delta}\thin$, $\thin^{\beta}\alpha^{\delta}\thin^{\alpha}\beta^{\gamma}\thin$, $\thin^{\alpha}\beta^{\gamma}\thin^{\beta}\alpha^{\delta}\thin$, $\thin^{\alpha}\beta^{\gamma}\thin^{\alpha}\beta^{\gamma}\thin$. In other words, the $\gamma,\delta$ angles are always ``on the same side''.

The AAD $\thin^{\alpha}\beta^{\gamma}\thin^{\alpha}\beta^{\gamma}\thin$ determines $T_1,T_2$ in the first of Figure \ref{acdG}. Then $\alpha_1\gamma_2\cdots=\alpha\gamma\delta$ determines $T_3$. Then $\alpha_3\delta_1\cdots=\alpha\gamma\delta$ determines $T_4$. Therefore consecutive $\beta^s=\thin^{\alpha}\beta^{\gamma}\thin\cdots \thin^{\alpha}\beta^{\gamma}\thin$ determines a partial earth map tiling ${\mc A}_s$, with $\beta^s$ at the other end. In particular, if $\beta^p$ is a vertex (which corresponds to $t=0$), then we actually get the earth map tiling $E_{\square}^A1$.

\begin{figure}[htp]
\centering
\begin{tikzpicture}[>=latex]


\begin{scope}[xshift=-4.5 cm]

\foreach \a in {0,1,2}
\draw[xshift=1*\a cm]
	(0.5,0.9) -- (0.5,0.15) -- (0,-0.15) -- (0,-0.9);

\foreach \a in {0,1}
{
\begin{scope}[xshift=1*\a cm]
	
\draw[line width=1.2]
	(0.5,0.15) -- (1,-0.15);

\node at (1,0.8) {\small $\beta$};
\node at (0.7,0.3) {\small $\gamma$};
\node at (1.3,0.3) {\small $\alpha$};
\node at (1,0.1) {\small $\delta$};

\node at (0.5,-0.85) {\small $\beta$};
\node at (0.8,-0.3) {\small $\gamma$};
\node at (0.2,-0.3) {\small $\alpha$};
\node at (0.5,-0.1) {\small $\delta$};

\end{scope}
}

\node[draw,shape=circle, inner sep=0.5] at (1,0.45) {\small $1$};
\node[draw,shape=circle, inner sep=0.5] at (2,0.45) {\small $2$};
\node[draw,shape=circle, inner sep=0.5] at (1.5,-0.45) {\small $3$};
\node[draw,shape=circle, inner sep=0.5] at (0.5,-0.45) {\small $4$};

\end{scope}


\foreach \a in {1,-1}
\foreach \b in {0,1}
{
\begin{scope}[xshift=6*\b cm, scale=\a]

\draw
	(1,1.6) -- (1,-1.6)
	(-1,0.8) -- (1,0.8)
	(-1,-0.8) -- (0,0.8);

\draw[line width=1.2]
	(-1,0.8) -- (0,0.8);

\node at (0,1.5) {\small $\alpha$};
\node at (0.8,1) {\small $\beta$};
\node at (-0.8,1) {\small $\delta$};
\node at (0,1) {\small $\gamma$};

\node at (0.6,-0.1) {\small $\alpha$};
\node at (0.85,0.3) {\small $\beta$};
\node at (0.3,-0.6) {\small $\delta$};
\node at (0.8,-0.6) {\small $\gamma$};

\end{scope}
}


\draw
 	(1,0.8) -- (3,0.8) -- (3,1.6);

\draw[line width=1.2]
	(1,0.8) -- (2,0.8);

\node at (2,1.5) {\small $\alpha$};
\node at (2.8,1) {\small $\beta$};
\node at (1.2,1) {\small $\delta$};
\node at (2,1) {\small $\gamma$};

\node at (1.2,0.6) {\small $\gamma$};

\node at (0.45,0.55) {\small $\beta^{s-2}$};
\node at (-0.4,-0.55) {\small $\beta^{s-2}$};
\node at (0,0) {\small $P$};

\node[draw,shape=circle, inner sep=0.5] at (0.4,1.2) {\small $1$};
\node[draw,shape=circle, inner sep=0.5] at (2.4,1.2) {\small $2$};
\node[draw,shape=circle, inner sep=0.5] at (0.55,-0.4) {\small $4$};
\node[draw,shape=circle, inner sep=0.5] at (-0.55,0.4) {\small $3$};
\node[draw,shape=circle, inner sep=0.5] at (-0.4,-1.2) {\small $5$};


\begin{scope}[xshift=6cm]

\foreach \a in {-1.5,1.5}
\foreach \b in {1,-1}
{
\begin{scope}[xshift=\a cm, scale=\b]

\draw
	(0.5,-1.6) -- (0.5,1.6);

\draw[line width=1.2]
	(0,0) -- (0.5,-0.8);

\node at (-0.3,-0.8) {\small $\alpha$};
\node at (0,-1.5) {\small $\beta$};	
\node at (-0.35,0.2) {\small $\delta$};
\node at (0.3,-0.85) {\small $\gamma$};

\end{scope}
}

\node at (0.45,0.55) {\small $\beta^{s-2}$};
\node at (-0.4,-0.55) {\small $\beta^{s-2}$};
\node at (0,0) {\small $P$};

\node[draw,shape=circle, inner sep=0.5] at (-1.5,1) {\small $1$};
\node[draw,shape=circle, inner sep=0.5] at (0.4,1.2) {\small $2$};
\node[draw,shape=circle, inner sep=0.5] at (1.5,1) {\small $3$};
\node[draw,shape=circle, inner sep=0.5] at (-0.4,-1.2) {\small $6$};
\node[draw,shape=circle, inner sep=0.5] at (0.55,-0.4) {\small $5$};
\node[draw,shape=circle, inner sep=0.5] at (-0.55,0.4) {\small $4$};

\end{scope}

\end{tikzpicture}
\caption{Proposition \ref{acd}: Tiling for $\{\alpha\gamma\delta,\alpha^t\beta^{p-st},\beta^s\gamma\delta\}$.}
\label{acdG}
\end{figure}

The AAD $\thin^{\beta}\alpha^{\delta}\thin^{\beta}\alpha^{\delta}\thin$ at $\alpha^t\beta^{p-st}$ determines $T_1,T_2$ in the second of Figure \ref{acdG}. By no $\gamma^2\cdots$, we get $\thick^{\gamma}\delta_2^{\alpha}\thin^{\alpha}\beta_1^{\gamma}\thin\cdots=\beta^s\gamma\delta=\thick^{\gamma}\delta_2^{\alpha}\thin^{\alpha}\beta_1^{\gamma}\thin^{\alpha}\beta^{\gamma}\thin\cdots\thin^{\alpha}\beta_4^{\gamma}\thin^{\beta}\gamma^{\delta}\thick$. Then the $\beta^s=\thin^{\alpha}\beta_1^{\gamma}\thin^{\alpha}\beta^{\gamma}\thin\cdots\thin^{\alpha}\beta_4^{\gamma}\thin$ part of the vertex determines a partial earth map tiling ${\mc A}_s$ consisting of  $P,T_1,T_3,T_4,T_5$. This ${\mc A}_s$ is obtained by the first flip in Figure \ref{flip7}. If we flip back this ${\mc A}_s$, then $\alpha_1$ is reverted to $\beta^s$, and the vertex $\alpha^t\beta^{p-st}$ is reverted to $\alpha^{t-1}\beta^{p-s(t-1)}$. If the new vertex $\alpha^{t-1}\beta^{p-s(t-1)}$ still has consecutive $\alpha\alpha$, then we may apply the  flip back again. More flip backs give a tiling with  vertex $\alpha^{t'}\beta^{p-s{t'}}$ that has no consecutive $\alpha\alpha$.  

In a vertex $\alpha^t\beta^{p-st}$ with no consecutive $\alpha\alpha$, any $\alpha$ appears in consecutive $\beta\alpha\beta$. Moreover, the AAD of $\beta\alpha\beta$ is $\thin^{\alpha}\beta^{\gamma}\thin^{\beta}\alpha^{\delta}\thin^{\alpha}\beta^{\gamma}\thin$. This determines $T_1,T_2,T_3$ in the third of Figure \ref{acdG}. By no $\gamma^2\cdots$, we get $\thick^{\delta}\gamma_3^{\beta}\thin^{\alpha}\beta_2^{\gamma}\thin\cdots=\beta^s\gamma\delta=\thick^{\delta}\gamma_3^{\beta}\thin^{\alpha}\beta_2^{\gamma}\thin^{\alpha}\beta^{\gamma}\thin\cdots\thin^{\alpha}\beta_5^{\gamma}\thin^{\alpha}\delta^{\gamma}\thick$. Then the $\beta^s=\thin^{\alpha}\beta_2^{\gamma}\thin^{\alpha}\beta^{\gamma}\thin\cdots\thin^{\alpha}\beta_5^{\gamma}\thin$ part of the vertex determines a partial earth map tiling ${\mc A}_s$ consisting of  $P,T_2,T_4,T_5,T_6$. This ${\mc A}_s$ is again obtained by the first flip in Figure \ref{flip7}. If we flip back this ${\mc A}_s$, then the vertex $\alpha^t\beta^{p-st}$ is reverted to $\alpha^{t-1}\beta^{p-s(t-1)}$. More flip backs give a tiling with vertex $\beta^{p}$, which we know is the earth map tiling $E_{\square}^A1$.  

We conclude that, if $\alpha^t\beta^{p-st}$ is a vertex, then the tiling for the AVC \eqref{acd_avc1} is obtained by repeatedly applying the first flip in Figure \ref{flip7} to $E_{\square}^A1$. The tiling is the flip modification $FE_{\square}^A1$.

Next we construct the tiling for the AVC \eqref{acd_avc2}. 

The AVC implies $\alpha^2\cdots$ is not a vertex. By no $\gamma^2$-fan, we know $\gamma\thin\gamma\cdots$ is not a vertex. By no $\alpha^2\cdots,\gamma\thin\gamma\cdots$, we know the AAD of $\thin\beta\thin\beta\thin$ is $\thin^{\alpha}\beta^{\gamma}\thin^{\alpha}\beta^{\gamma}\thin$. Then by the same argument for the AVC \eqref{acd_avc1}, we get the first of Figure \ref{acdG} as before. Moreover, consecutive $\beta^s=\beta\beta\cdots$ determines the partial earth map tiling ${\mc A}_s$, and a vertex $\beta^p$ implies the earth map tiling $E_{\square}^A1$. 

If $\delta\thick\delta\cdots$ is a vertex, then we get $T_1,T_2$ in the first and second of Figure \ref{acdE}. We know $\delta_1\thick\delta_2\cdots=\beta^{p-st}\gamma^t\delta^t$, and $\delta_1$ is part of a $\gamma\delta$-fan $\thick\delta_1\thin\beta\thin\cdots\thin\beta\thin\gamma\thick$. If the fan has $\beta$, then we get $\beta_3$ in the first of Figure \ref{acdE}. If the fan has no $\beta$, then we get $\gamma_3$ in the second of Figure \ref{acdE}. In the first picture, by $\beta_3$ and no $\alpha^2\cdots$, we determine $T_3$. Then $\alpha_1\gamma_3\cdots=\alpha\gamma\delta$ determines $T_4$. Then $\alpha_4\beta_1\cdots=\alpha\beta^s$ implies an angle $\alpha$ or $\gamma$ just outside $\gamma_1$. Then $\gamma_1\gamma_2\cdots=\alpha\gamma^2\cdots$, or we get a $\gamma^2$-fan $\thick\gamma\thin\gamma\thick$, both contradictions. In the second picture, $\gamma_3$ determines $T_3$. Then by no $\alpha^2\cdots$, we get $\thin^{\beta}\alpha_1^{\delta}\thin^{\gamma}\beta_3^{\alpha}\thin\cdots=\alpha\beta^s=\thin^{\beta}\alpha_1^{\delta}\thin^{\gamma}\beta_3^{\alpha}\thin^{\gamma}\beta^{\alpha}\thin\cdots\thin^{\gamma}\beta^{\alpha}\thin$. This gives an angle $\alpha$ just outside $\beta_1$, similar to $\alpha_4$ in the first picture. Then $\beta_1\cdots=\alpha\beta\cdots=\alpha\beta^s$, and we get the same contradiction as in the first picture.

\begin{figure}[htp]
\centering
\begin{tikzpicture}[>=latex]


\begin{scope}[shift={(-9cm,-0.4cm)}]

\foreach \a in {0,1}
{
\begin{scope}[xshift=3.5*\a cm]

\draw
	(0,0) -- (1.8,0) -- (1.8,1.8)
	(0,1.8) -- (0,-1) -- (1,-1) -- (1,1)
	(0,1) -- (1,1) -- (1.8,1.8);
	
\node at (0.8,0.8) {\small $\alpha$};
\node at (0.2,0.8) {\small $\beta$};
\node at (0.2,0.2) {\small $\gamma$};
\node at (0.8,0.2) {\small $\delta$};

\node at (0.8,-0.8) {\small $\alpha$};
\node at (0.2,-0.8) {\small $\beta$};
\node at (0.2,-0.2) {\small $\gamma$};
\node at (0.8,-0.2) {\small $\delta$};

\node at (-0.38,1) {\small $\beta^{s-1}$};
\node at (-0.35,0.2) {\small $\alpha/\gamma$};

\node[draw,shape=circle, inner sep=0.5] at (0.5,0.5) {\small $1$};
\node[draw,shape=circle, inner sep=0.5] at (0.5,-0.5) {\small $2$};
\node[draw,shape=circle, inner sep=0.5] at (1.4,0.6) {\small $3$};

\end{scope}
}

\draw
	(1.8,1.8) -- (0,1.8);

\draw[line width=1.2]
	(0,0) -- (1,0)
	(1,1) -- (1.8,1.8);

\node at (1.6,0.2) {\small $\alpha$};
\node at (1.2,0.2) {\small $\beta$};
\node at (1.2,0.9) {\small $\gamma$};
\node at (1.6,1.35) {\small $\delta$};

\node at (0.2,1.6) {\small $\beta$};
\node at (0.2,1.2) {\small $\alpha$};
\node at (0.9,1.2) {\small $\delta$};
\node at (1.4,1.6) {\small $\gamma$};

\node[draw,shape=circle, inner sep=0.5] at (0.6,1.4) {\small $4$};

\begin{scope}[xshift=3.5 cm]

\draw[line width=1.2]
	(0,0) -- (1.8,0);

\node at (1.6,0.2) {\small $\delta$};
\node at (1.2,0.2) {\small $\gamma$};
\node at (1.2,0.95) {\small $\beta$};
\node at (1.6,1.4) {\small $\alpha$};

\node at (0.2,1.2) {\small $\alpha$};
\node at (0.85,1.2) {\small $\beta^{s-1}$};

\end{scope}

\end{scope}


\draw
	(-1.5,-1.6) -- (-1.5,1.6)
	(1.5,-1.6) -- (1.5,1.6);

\draw[dotted]
	(-2.5,1.6) -- (-2.5,0.8) -- (-1.5,-0.8);
	
\node at (-1.7,0.8) {\small $\alpha$};
\node at (-2,1.5) {\small $\theta$};

\foreach \b in {1,-1}
{
\begin{scope}[scale=\b]

\draw
	(1.5,-0.8) -- (0.5,0.8) -- (-1.5,0.8);

\draw[line width=1.2]
	(0.5,0.8) -- (0.5,1.6)
	(-0.5,0.8) -- ++(0.2,-0.32);
	
\node at (-0.5,1) {\small $\alpha$};
\node at (-1.3,1) {\small $\beta$};	
\node at (0.3,1) {\small $\delta$};
\node at (-0.5,1.5) {\small $\gamma$};

\node at (1.3,0.8) {\small $\alpha$};
\node at (1.3,-0.1) {\small $\beta$};	
\node at (1,1.5) {\small $\delta$};
\node at (0.7,0.9) {\small $\gamma$};

\node at (0.4,0.6) {\small $\alpha$};
\node at (-0.17,0.63) {\small $\delta$};
\node at (-0.6,0.6) {\small $\gamma$};

\node[rotate=-60] at (-1,0.45) {\small $\beta^{s-2}$};

\end{scope}
}

\node at (0,0) {\small $P$};

\node[draw,shape=circle, inner sep=0.5] at (-2,1) {\small $3$};
\node[draw,shape=circle, inner sep=0.5] at (-0.1,1.2) {\small $2$};
\node[draw,shape=circle, inner sep=0.5] at (-1,-1) {\small $4$};
\node[draw,shape=circle, inner sep=0.5] at (1,1) {\small $1$};
\node[draw,shape=circle, inner sep=0.5] at (0.1,-1.2) {\small $5$};

\end{tikzpicture}
\caption{Proposition \ref{acd}: Tiling for $\{\alpha\gamma\delta,\alpha\beta^s,\beta^{p-st}\gamma^t\delta^t\}$.}
\label{acdE}
\end{figure}

Therefore $\delta\thick\delta\cdots$ is not a vertex. This implies $\beta^{p-st}\gamma^t\delta^t$ is $t$ $\gamma\delta$-fans $\thick\delta\thin\beta\thin\cdots\thin\beta\thin\gamma\thick$ glued together along $\gamma\thick\delta=\thin^{\beta}\gamma^{\delta}\thick^{\gamma}\delta^{\alpha}\thin$. By no $\alpha^2\cdots$, the angle $\gamma$ in the AAD $\thin^{\beta}\gamma^{\delta}\thick^{\gamma}\delta^{\alpha}\thin$ belongs to a $\gamma\delta$-fan $\thick^{\gamma}\delta^{\alpha}\thin^{\gamma}\beta^{\alpha}\thin\cdots\thin^{\gamma}\beta^{\alpha}\thin^{\beta}\gamma^{\delta}\thick$. Therefore the AAD $\thin^{\beta}\gamma^{\delta}\thick^{\gamma}\delta^{\alpha}\thin$ is extended to $\theta^{\alpha}\thin^{\beta}\gamma^{\delta}\thick^{\gamma}\delta^{\alpha}\thin$, where $\theta=\beta,\delta$, depending on whether the $\gamma\delta$-fan has $\beta$. This determines $T_1,T_2,T_3$ in the third of Figure \ref{acdE}. Then by no $\alpha^2\cdots$, we get $\thin\alpha_3^{\theta}\thin^{\gamma}\beta_2^{\alpha}\thin\cdots=\alpha\beta^s=\thin\alpha_3^{\theta}\thin^{\gamma}\beta_2^{\alpha}\thin\cdots\thin^{\gamma}\beta_4^{\alpha}\thin$. Then the $\beta^s$ part of the vertex determines a partial earth map tiling ${\mc A}_s$ consisting of  $P,T_1,T_2,T_4,T_5$. This ${\mc A}_s$ is obtained by the second flip in Figure \ref{flip7}. If we flip back this ${\mc A}_s$, then the vertex $\beta^{p-st}\gamma^t\delta^t$ is reverted to $\beta^{p-s(t-1)}\gamma^{t-1}\delta^{t-1}$. If we apply the flip back to all $t$ $\gamma\thick\delta$ in $\beta^{p-st}\gamma^t\delta^t$, then we get a tiling with vertex $\beta^{p}$, which we know is the earth map tiling $E_{\square}^A1$. Therefore the tiling with the vertex $\beta^{p-st}\gamma^t\delta^t$ is the flip modification $FE_{\square}^A1$.

\subsubsection*{Case. There are $\gamma^2$-fan and $\delta^2$-fan}

The assumption implies both $\gamma^2\cdots,\delta^2\cdots$ are vertices, and further implies $\gamma,\delta<\pi$.

If $\alpha^2\cdots$ is a vertex, then $\alpha<\pi$. By Lemma \ref{geometry3}, we get $\gamma+\delta<\beta+\pi=(1+\frac{4}{f})\pi$. By $\alpha\gamma\delta$, this implies $\alpha>(1-\frac{4}{f})\pi>\beta$. Then by Lemma \ref{geometry1}, we get $\gamma>\delta$. By $\alpha\gamma\delta$ and $\alpha<\pi$, we also get $\gamma+\delta>\pi$. Then by $\alpha\gamma\delta$, and $\gamma>\delta$, and $\gamma+\delta>\pi$, we get $R(\gamma^2)<\alpha,2\gamma,2\delta$. This implies $\gamma^2\cdots=\beta^k\gamma^2$. By the only degree $3$ vertex $\alpha\gamma\delta$, we get $k\ge 2$ in $\beta^k\gamma^2$. This implies $\beta+\gamma\le \pi<\gamma+\delta$. Therefore $\beta<\delta$. Then by Lemma \ref{geometry5}, we get $\alpha<\gamma$. Then we get $\beta+\gamma>\alpha+\beta>(1-\frac{4}{f})\pi+\beta=\pi$, contradicting $\beta^k\gamma^2(k\ge 2)$. 

Therefore $\alpha^2\cdots$ is not a vertex. Then by Lemma \ref{klem3}, we know a $\delta^2$-fan has a single $\alpha$. Then by $\alpha\gamma\delta$, a vertex with $\delta^2$-fan has no $\gamma$. Then by no $\alpha^2\cdots$ again, a vertex with $\delta^2$-fan is a single $\delta^2$-fan, which is $\alpha\beta^k\delta^2$. By $\alpha\gamma\delta$ and $\gamma\ne\delta$, we get $k\ge 1$ in $\alpha\beta^k\delta^2$. The existence of $\delta^2$-fan implies $\alpha\beta^k\delta^2$ is a vertex.

Comparing $\alpha\gamma\delta,\alpha\beta^k\delta^2$, we get $\gamma>\delta$. Then by $\alpha\gamma\delta$, this implies $\alpha\gamma\cdots=\alpha\gamma\delta$. Then by the discussion about $\delta^2$-fan, we know a $b$-vertex $\alpha\cdots=\alpha\gamma\delta,\alpha\beta^k\delta^2$. Then by no $\alpha^2\cdots$, we get $\alpha\cdots=\alpha\gamma\delta,\alpha\beta^{k'},\alpha\beta^k\delta^2$. Then by $\alpha\beta^k\delta^2$ being a vertex, and applying the counting lemma to $\alpha,\delta$, we know $\alpha\beta^{k'}$ is also a vertex. Comparing $\alpha\beta^{k'},\alpha\beta^k\delta^2$, we get $k'>k$, and $\beta\le (k'-k)\beta=2\delta$. By Lemma \ref{geometry7}, this implies $\alpha\le 2\gamma$, and $3\gamma+\delta\ge \alpha+\gamma+\delta=2\pi$. 

By $k\ge 1$ in $\alpha\beta^k\delta^2$, we have $\thin\beta\thin\delta\thick\delta\thin$ at the vertex. By no $\alpha^2\cdots$, the AAD of $\thin\beta\thin\delta\thick\delta\thin$ is $\thin^{\alpha}\beta^{\gamma}\thin^{\alpha}\delta^{\gamma}\thick^{\gamma}\delta^{\alpha}\thin$. This determines $T_1,T_2,T_3$ in the first of Figure \ref{acdB}. Then $\alpha_2\gamma_1\cdots=\alpha\gamma\delta$ determines $T_4$. By $\alpha_4\beta_2\cdots=\alpha\beta^{k'},\alpha\beta^k\delta^2$, we get $\beta$ or $\delta$ just outside $\beta_2$. This implies the angle just outside $\gamma_2$ is not $\beta,\delta$. By $\alpha\gamma\cdots=\alpha\gamma\delta$, this angle is not $\alpha$. Therefore the angle is $\gamma$, and $\gamma_2\gamma_3\cdots=\gamma^3\cdots$. Then by $\gamma>\delta$, and $3\gamma+\delta\ge 2\pi$, we get $\gamma^3\cdots=\gamma^3\delta$. Since $\alpha\beta^k\delta^2$ is a vertex, we know $\gamma^3\delta$ is a vertex.

The AAD $\thin^{\alpha}\beta^{\gamma}\thin^{\alpha}\delta^{\gamma}\thick^{\gamma}\delta^{\alpha}\thin^{\gamma}\beta^{\alpha}\thin$ of $\thin\beta\thin\delta\thick\delta\thin\beta\thin$ would imply another $\beta$ just outside $\delta_3$ in the first of Figure \ref{acdB}. Then the same argument gives another $\gamma$ just outside $\gamma_3$, contradicting $\gamma^3\cdots=\gamma^3\delta$. This proves there is no $\thin\beta\thin\delta\thick\delta\thin\beta\thin$. Therefore $\alpha\beta^k\delta^2=\thick\delta\thin\alpha\thin\beta\thin\cdots\thin\beta\thin\delta\thick$. Then by no $\alpha^2\cdots$, we get the AAD $\thick^{\gamma}\delta^{\alpha}\thin\alpha\thin^{\alpha}\beta^{\gamma}\thin\cdots\thin^{\alpha}\beta^{\gamma}\thin^{\alpha}\delta^{\gamma}\thick$ of the vertex. 

Since a vertex with $\delta^2$-fan is $\alpha\beta^k\delta^2$, a vertex without $\alpha$ has no $\delta^2$-fan. In such a vertex, the number of $\gamma$ is no less than the number of $\delta$. Then by $\gamma^3\cdots=\gamma^3\delta$, a vertex without $\alpha$ is $\gamma^3\delta,\beta^k\gamma^2,\beta^k,\beta^k\gamma\delta,\beta^k\gamma^2\delta^2$. Combined with $\alpha\cdots=\alpha\gamma\delta,\alpha\beta^k,\alpha\beta^k\delta^2$, we get all the vertices. 

By $\alpha\gamma\delta,\gamma^3\delta$, we get $\alpha=2\gamma$. By Lemma \ref{geometry7}, this implies $\beta=2\delta$. This further implies 
\[
\alpha=(\tfrac{4}{3}-\tfrac{4}{3f})\pi,\;
\beta=\tfrac{4}{f}\pi,\;
\gamma=(\tfrac{2}{3}-\tfrac{2}{3f})\pi,\;
\delta=\tfrac{2}{f}\pi.
\]
Then we get the more refined information about all the vertices
\begin{equation}\label{acd_avc3}
\text{AVC}
=\{\alpha\gamma\delta,\gamma^3\delta,\alpha\beta^{q+1},\beta^{3q+2},\alpha\beta^q\delta^2,\beta^{q+1}\gamma^2,\beta^{2q+1}\gamma\delta,\beta^q\gamma^2\delta^2\},\quad
q=\tfrac{f-4}{6}.
\end{equation}
We know $\alpha\gamma\delta,\gamma^3\delta,\alpha\beta^{q+1},\alpha\beta^q\delta^2$ are vertices.

\begin{figure}[htp]
\centering
\begin{tikzpicture}


\draw 
	(1,1) -- (0,1) -- (0,0)
	(1,1) -- (2,0) -- (1,-1) -- (-1,-1) -- (-1,0) -- (1,0) --(1,-1);

\draw[line width=1.2]
	(0,0) -- (0,-1.4)
	(1,1) -- (1,0);

\node at (0.2,-0.8) {\small $\gamma$};
\node at (0.8,-0.8) {\small $\beta$};
\node at (0.2,-0.2) {\small $\delta$};
\node at (0.8,-0.2) {\small $\alpha$};

\node at (0.2,0.8) {\small $\alpha$};
\node at (0.8,0.8) {\small $\delta$};
\node at (0.2,0.2) {\small $\beta$};
\node at (0.8,0.2) {\small $\gamma$};

\node at (-0.8,-0.2) {\small $\alpha$};
\node at (-0.2,-0.2) {\small $\delta$};
\node at (-0.8,-0.8) {\small $\beta$};
\node at (-0.2,-0.8) {\small $\gamma$};

\node at (1.2,0.55) {\small $\gamma$};
\node at (1.75,-0.05) {\small $\beta$};
\node at (1.15,0) {\small $\delta$};
\node at (1.2,-0.55) {\small $\alpha$};

\node at (0.2,-1.2) {\small $\gamma$};
\node at (-0.2,-1.2) {\small $\delta$};
\node at (1,-1.2) {\small $\beta/\delta$};

\node[draw,shape=circle, inner sep=0.5] at (0.5,0.5) {\small $1$};
\node[draw,shape=circle, inner sep=0.5] at (0.5,-0.5) {\small $2$};
\node[draw,shape=circle, inner sep=0.5] at (-0.5,-0.5) {\small $3$};
\node[draw,shape=circle, inner sep=0.5] at (1.45,0) {\small $4$};


\begin{scope}[xshift=3.3cm]

\foreach \a in {0,1,2}
\draw[xshift=1*\a cm]
	(0.5,0.9) -- (0.5,0.15) -- (0,-0.15) -- (0,-0.9);

\foreach \a in {0,1}
{
\begin{scope}[xshift=1*\a cm]

\draw[line width=1.2]
	(1,-0.15) -- (0.5,0.15);

\node at (1,0.8) {\small $\beta$};
\node at (0.7,0.3) {\small $\gamma$};
\node at (1.3,0.3) {\small $\alpha$};
\node at (1,0.1) {\small $\delta$};

\node at (0.5,-0.85) {\small $\beta$};
\node at (0.8,-0.3) {\small $\gamma$};
\node at (0.2,-0.3) {\small $\alpha$};
\node at (0.5,-0.1) {\small $\delta$};

\end{scope}
}

\draw
	(0,-0.15) -- (-0.5,0.15) -- (-0.5,0.9);
	
\draw[line width=1.2]
	(-0.5,0.15) -- (-0.5,0.9);
	
\node at (0,0.8) {\small $\delta$};
\node at (-0.3,0.3) {\small $\gamma$};
\node at (0.3,0.3) {\small $\alpha$};
\node at (0,0.1) {\small $\beta$};
	
\node[draw,shape=circle, inner sep=0.5] at (1,0.45) {\small $1$};
\node[draw,shape=circle, inner sep=0.5] at (2,0.45) {\small $2$};
\node[draw,shape=circle, inner sep=0.5] at (1.5,-0.45) {\small $3$};
\node[draw,shape=circle, inner sep=0.5] at (0.5,-0.45) {\small $4$};
\node[draw,shape=circle, inner sep=0.5] at (0,0.45) {\small $5$};

\end{scope}


\begin{scope}[shift={(7.5cm,1cm)}]

\draw[gray]
	(0.1,0.1) -- (0.1,0.9) -- (2.4,0.9) -- (2.4,-0.9) -- (1.1,-0.9) -- (1.1,0.1) -- (0.1,0.1) 
	(2.4,-1.1) -- (2.4,-2.9) -- (-0.9,-2.9) -- (-0.9,-1.1) -- (2.4,-1.1);

\draw 
	(-1,-3) -- (0,-2) -- (1,-2) -- (1,0) -- (-1,0) -- (-1,-3) -- (2.5,-3) -- (2.5,1) -- (0,1) -- (0,0)
	(2.5,-1) -- (-1,-1);

\draw[line width=1.2]
	(0,0) -- (0,-2)
	(1,0) -- ++(0.3,0.3)
	(2.5,1) -- ++(-0.3,-0.3)
	(1,-2) -- ++(0.3,-0.3)
	(2.5,-3) -- ++(-0.3,0.3);
	
\fill (2.5,-1) circle (0.05);

\node at (0.8,-0.2) {\small $\alpha$};
\node at (0.8,-0.8) {\small $\beta$};
\node at (0.2,-0.2) {\small $\delta$};
\node at (0.2,-0.8) {\small $\gamma$};

\node at (-0.8,-0.2) {\small $\alpha$};
\node at (-0.8,-0.8) {\small $\beta$};
\node at (-0.2,-0.2) {\small $\delta$};
\node at (-0.2,-0.8) {\small $\gamma$};

\node at (0.2,0.8) {\small $\alpha$};
\node at (0.25,0.2) {\small $\beta^q$};
\node at (1.2,-0.8) {\small $\alpha$};
\node at (2.25,-0.8) {\small $\beta^q$};
\node at (2.1,0.8) {\small $\delta$};
\node at (2.3,0.55) {\small $\gamma$};
\node at (1.2,-0.1) {\small $\delta$};
\node at (0.9,0.2) {\small $\gamma$};

\node at (-0.2,0.2) {\small $\alpha$};

\node at (1.75,-0.1) {\small ${\mc A}_q$};

\node at (0.8,-1.8) {\small $\alpha$};
\node at (0.8,-1.2) {\small $\beta$};
\node at (0.2,-1.8) {\small $\delta$};
\node at (0.2,-1.2) {\small $\gamma$};
\node at (0.1,-2.2) {\small $\alpha$};

\node at (-0.8,-1.2) {\small $\alpha$};
\node at (-0.8,-2.6) {\small $\beta$};
\node at (-0.2,-1.2) {\small $\delta$};
\node at (-0.2,-1.9) {\small $\gamma$};

\node at (2.3,-1.2) {\small $\alpha$};
\node at (1.45,-1.25) {\small $\beta^{q-1}$};
\node at (-0.25,-2.75) {\small $\beta^{q-1}$};
\node at (2.05,-2.8) {\small $\delta$};
\node at (2.3,-2.6) {\small $\gamma$};
\node at (1.2,-1.9) {\small $\delta$};
\node at (0.9,-2.2) {\small $\gamma$};

\node at (1.75,-1.9) {\small ${\mc A}_q'$};

\node[draw,shape=circle, inner sep=0.5] at (0.5,-0.5) {\small $2$};
\node[draw,shape=circle, inner sep=0.5] at (-0.5,-0.5) {\small $3$};
\node[draw,shape=circle, inner sep=0.5] at (0.5,-1.5) {\small $5$};
\node[draw,shape=circle, inner sep=0.5] at (-0.5,-1.7) {\small $6$};

\end{scope}

\end{tikzpicture}
\caption{Proposition \ref{acd}: $\thin\beta\thin\delta\thick\delta\thin$ and partial earth map tiling for \eqref{acd_avc3}.}
\label{acdB}
\end{figure}

The AAD $\thin^{\alpha}\beta^{\gamma}\thin^{\alpha}\beta^{\gamma}\thin$ determines $T_1,T_2$ in the second of Figure \ref{acdB}. Then $\alpha_1\gamma_2\cdots=\alpha\gamma\delta$ determines $T_3$. Unlike the first of Figure \ref{acdG}, the current AVC does not imply $T_4$. Therefore $\beta^s=\thin^{\alpha}\beta^{\gamma}\thin\cdots\thin^{\alpha}\beta^{\gamma}\thin$ determines a partial earth map tiling with $\beta^{s-1}=\thin^{\alpha}\beta^{\gamma}\thin\cdots\thin^{\alpha}\beta^{\gamma}\thin$ at the other end. On the other hand, the $\thin^{\alpha}\beta^{\gamma}\thin\cdots\thin^{\alpha}\beta^{\gamma}\thin^{\alpha}\delta^{\gamma}\thick$ part of $\alpha\beta^q\delta^2=\thick^{\gamma}\delta^{\alpha}\thin\alpha\thin^{\alpha}\beta^{\gamma}\thin\cdots\thin^{\alpha}\beta^{\gamma}\thin^{\alpha}\delta^{\gamma}\thick$ determines $T_5$ in the second of Figure \ref{acdB}. Then $\alpha_5\gamma_1\cdots=\alpha\gamma\delta$ determines $T_4$. Therefore $\thin\beta\thin\cdots\thin\beta\thin\delta\thick$ determines the partial earth map tiling ${\mc A}_q$ (in the first of Figure \ref{flip7}) with $\beta^q$ at both ends. 

Back to the vertex $\alpha\beta^q\delta^2$ in the first of Figure \ref{acdB}. The picture is extended to the third of Figure \ref{acdB}. We have $\delta_2\delta_3\cdots=\thick\delta_2\thin\alpha\thin\beta\thin\cdots\thin\beta\thin\delta_3\thick$, and $\gamma_2\gamma_3\cdots=\thick\gamma_2\thin\gamma_6\thick\delta_7\thin\gamma_3\thick$ determines $T_5,T_6$. As explained above, the $\thin\beta\thin\cdots\thin\beta\thin\delta_3\thick$ part of the vertex $\delta_2\delta_3\cdots=\alpha\beta^q\delta^2$ determines the partial earth map tiling ${\mc A}_q$, outlined by the grey lines. 

We have $\beta_2\beta_5\cdots=\alpha\beta^2\cdots=\alpha\beta^{q+1},\alpha\beta^q\delta^2$ ($\alpha$ is from ${\mc A}_q$, just outside $\beta_2$). Since $\thin^{\alpha}\beta_2^{\gamma}\thin^{\gamma}\beta_5^{\alpha}\thin$ is incompatible with the AAD $\thick^{\gamma}\delta^{\alpha}\thin\alpha\thin^{\alpha}\beta^{\gamma}\thin\cdots\thin^{\alpha}\beta^{\gamma}\thin^{\alpha}\delta^{\gamma}\thick$ of $\alpha\beta^q\delta^2$, we get $\alpha\beta_2\beta_5\cdots=\alpha\beta^{q+1}$. By no $\alpha^2\cdots$, the AAD of the vertex is $\thin^{\beta}\alpha^{\delta}\thin^{\alpha}\beta_2^{\gamma}\thin^{\gamma}\beta_5^{\alpha}\thin^{\gamma}\beta^{\alpha}\thin\cdots\thin^{\gamma}\beta^{\alpha}\thin$. The $\beta^q=\thin^{\gamma}\beta_5^{\alpha}\thin^{\gamma}\beta^{\alpha}\thin\cdots\thin^{\gamma}\beta^{\alpha}\thin$ part of the vertex determines a partial earth map tiling with $\beta^q$ at one end and $\beta_{q-1}$ at the other end. If we also include $T_6$, then the partial earth map tiling is extended to another ${\mc A}_q$, that we denote by ${\mc A}'_q$ and outline by the grey lines.

Now we tile beyond the fourth of Figure \ref{acdB}. We redraw the picture as $T_2,T_3,{\mc A}_q,{\mc A}'_q$ in Figure \ref{acdD}. We also have $\alpha$ between $T_3$ and ${\mc A}_q$, as part of $\delta_2\delta_3\cdots=\alpha\beta^q\delta^2$. We also use $\bullet$ to denote one vertex shared by ${\mc A}_q,{\mc A}'_q$ (not the one shared with $T_2$). The $\bullet$-vertex is $\alpha\beta^q\cdots=\alpha\beta^{q+1},\alpha\beta^q\delta^2$. 

In the first and the second of Figure \ref{acdD}, we assume the $\bullet$-vertex is $\alpha\beta^{q+1}$. This gives $\beta_7$, and the two pictures are two ways of arranging $T_7$. In the first of Figure \ref{acdD}, by the angle $\alpha$ between $T_3$ and ${\mc A}_q$, we know $\delta_7\cdots=\alpha\delta\cdots=\alpha\gamma\delta,\alpha\beta^q\delta^2$ is actually $\alpha\beta^q\delta^2$. This determines $T_8$, and the $\thin\beta\thin\cdots\thin\beta\thin\delta\thick$ part of $\delta_7\cdots=\thick^{\gamma}\delta_7^{\alpha}\thin\alpha\thin^{\alpha}\beta^{\gamma}\thin\cdots\thin^{\alpha}\beta^{\gamma}\thin^{\alpha}\delta^{\gamma}\thick$ determines a partial earth map tiling ${\mc A}_q$ that we denote by ${\mc A}''_q$.

In the second of Figure \ref{acdD}, we have $\gamma_7\cdots=\thick\gamma\thin\gamma\thick\delta\thin\cdots=\gamma^3\delta,\beta^q\gamma^2\delta^2$. If $\gamma_7\cdots=\thick\gamma\thin\gamma\thick\cdots=\beta^q\gamma^2\delta^2$, then we get a $\delta^2$-fan without $\alpha$, contradicting the only $\delta^2$-fan $\alpha\beta^q\delta^2$. Therefore $\gamma_7\cdots=\gamma^3\delta$. This determines $T_8$. Then $\beta_8\cdots=\thin^{\beta}\alpha^{\delta}\thin^{\gamma}\beta_8^{\alpha}\thin\cdots=\alpha\beta^{q+1},\alpha\beta^q\delta^2$ ($\alpha$ is from ${\mc A}_q$). Since $\thin^{\beta}\alpha^{\delta}\thin^{\gamma}\beta_8^{\alpha}\thin$ is incompatible with the AAD $\thick^{\gamma}\delta^{\alpha}\thin\alpha\thin^{\alpha}\beta^{\gamma}\thin\cdots\thin^{\alpha}\beta^{\gamma}\thin^{\alpha}\delta^{\gamma}\thick$ of $\alpha\beta^q\delta^2$, we get $\beta_8\cdots=\alpha\beta^{q+1}$. Then by no $\alpha^2\cdots$, the AAD of the vertex is $\thin^{\beta}\alpha^{\delta}\thin^{\gamma}\beta_8^{\alpha}\thin^{\gamma}\beta^{\alpha}\thin\cdots\thin^{\gamma}\beta^{\alpha}\thin$. Then the $\beta^{q+1}=\thin\beta\thin\cdots\thin\beta\thin$ part of the vertex determines a partial earth map tiling with $\beta^{q+1}$ and $\beta^q$ at the two ends. This partial earth map tiling is a copy of ${\mc A}_q$ (which we denote by ${\mc A}''_q$) together with $T_8$.

\begin{figure}[htp]
\centering
\begin{tikzpicture}

\foreach \a in {0,1,2}
\foreach \b in {0,1,2} 
{
\begin{scope}[xshift=4.7*\b cm, rotate=120*\a]

\draw
	(120:1.8) -- (60:1.8) -- (0:1.8) -- (0:0.8) -- (60:0.8) -- (120:0.8);

\draw[line width=1.2]
	(60:0.8) -- (60:1.2)
	(60:1.8) -- (60:1.4);

\node at (0.9,0.2) {\small $\alpha$};
\node at (0.95,-0.25) {\small $\beta^q$};
\node at (1.5,-0.2) {\small $\alpha$};
\node at (1.4,0.22) {\small $\beta^q$};

\node at (-0.9,0.2) {\small $\delta$};
\node at (-0.9,-0.2) {\small $\gamma$};
\node at (-1.5,-0.2) {\small $\delta$};
\node at (-1.5,0.2) {\small $\gamma$};

\end{scope}
}

\foreach \b in {0,1,2} 
{
\begin{scope}[xshift=4.7*\b cm]

\draw[line width=1.2]
	(-0.8,0) -- (0.8,0)
	;

\fill (240:1.8) circle (0.07);
	
\node at (60:0.6) {\small $\alpha$};
\node at (125:0.6) {\small $\beta$};
\node at (0.5,0.2) {\small $\delta$};
\node at (-0.5,0.2) {\small $\gamma$};

\node at (120:-0.6) {\small $\alpha$};
\node at (60:-0.6) {\small $\beta$};
\node at (0.5,-0.2) {\small $\delta$};
\node at (-0.5,-0.2) {\small $\gamma$};

\node at (-30:1.25) {\small ${\mc A}_q$};
\node at (205:1.25) {\small ${\mc A}'_q$};
\node at (85:1.3) {\small ${\mc A}''_q$};

\node[draw,shape=circle, inner sep=0.5] at (0,-0.3) {\small $2$};
\node[draw,shape=circle, inner sep=0.5] at (0,0.3) {\small $3$};

\node[draw,shape=circle, inner sep=0.5] at (210:2) {\small $7$};
\node[draw,shape=circle, inner sep=0.5] at (30:2) {\small $8$};

\end{scope}
}

\foreach \b in {0,1,2} 
{
\begin{scope}[xshift=4.7*\b cm, rotate=120*\b]

\draw[line width=1.2]
	(1.8,0) -- (2.2,0)
	(-1.8,0) -- (-2.2,0)
	;
	
\node at (1.9,-0.2) {\small $\delta$};
\node at (1.9,0.2) {\small $\delta$};
\node at (-1.9,-0.2) {\small $\gamma$};
\node at (-1.9,0.2) {\small $\gamma$};

\node at (60:1.95) {\small $\alpha$};
\node at (-60:1.95) {\small $\alpha$};

\end{scope}
}

\node at (120:2) {\small $\beta$};
\node at (-120:2.1) {\small $\beta$};

\begin{scope}[xshift=4.7 cm] 

\node at (0:2) {\small $\beta$};
\node at (-120:2.1) {\small $\beta$};

\end{scope}

\begin{scope}[xshift=9.4 cm] 

\node at (0:2) {\small $\beta$};
\node at (120:2) {\small $\beta$};

\end{scope}

\end{tikzpicture}
\caption{Proposition \ref{acd}: Tilings for the AVC \eqref{acd_avc3}.}
\label{acdD}
\end{figure}

Finally, in the third of Figure \ref{acdD}, the $\bullet$-vertex is $\alpha\beta^q\delta\cdots=\alpha\beta^q\delta^2$. This determines $T_7,T_8$, similar to $T_2,T_3$. Then we get ${\mc A}_q,{\mc A}''_q$ similar to ${\mc A}_q,{\mc A}'_q$ in the fourth of Figure \ref{acdB}. 

All three tilings in Figure \ref{acdD} are the rearrangement tilings $RE_{\square}^A1$.

\medskip

\noindent{\em Geometry of Quadrilateral}

\medskip

The quadrilateral in $E_{\square}^A1$ satisfies $\beta=\frac{4}{f}\pi$, and the area of the quadrilateral is $\beta$. It is the reduction of the quadrilateral in Proposition \ref{bcd}, and $\alpha,\beta,\gamma,\delta$ in the earlier proposition become $\beta,\gamma,\delta,\alpha$ of the current proposition. The first three of Figure \ref{acdF} are the three possible cases ($\alpha,\gamma<\pi$, and $\gamma>\pi$, and $\alpha>\pi$). They correspond to the first two of Figure \ref{bcdB}, and $B$ in Figure \ref{acdF} is $A$ in Figure \ref{bcdB}. Moreover, the three cases correspond to the three regions in Figure \ref{bcdC}.

Let $\theta=\angle ABD$. Then we get the fourth of Figure \ref{acdF}. The isosceles triangle $\triangle ABD$ has side length $a$, base length $\pi-a$, base angle $\theta$, and top angle $\angle BAD=\pi-\theta$. By spherical trigonometry, we get 
\begin{equation}\label{acdeqC}
\cos a=\frac{\cos\theta}{1+\cos\theta}.
\end{equation}
By $\cos(-\theta)=\cos\theta$, we will take $\theta\ge 0$ for the third of Figure \ref{acdF} (this keeps $\theta$) and take $\theta<0$ for the first and second of Figure \ref{acdF} (this changes $\theta$ to its negative). Then we get
\[
\alpha=\theta+\pi,\quad
\cos a=-\frac{\cos\alpha}{1-\cos\alpha}.
\]

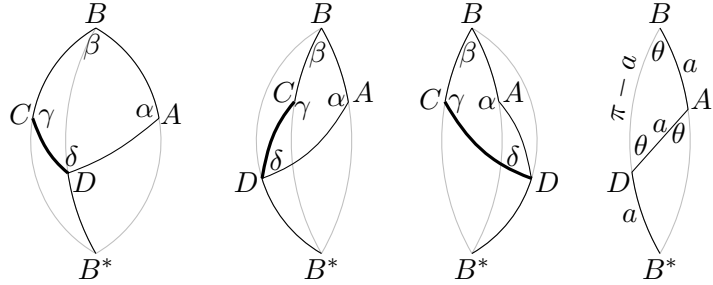
\begin{figure}[htp]
\centering
\begin{tikzpicture}[>=latex]


\draw[gray!50]
	(0,1.5) arc (60:-60:1.732)
	(0,1.5) arc (120:240:1.732)
	(0,1.5) arc (150:210:3);
	
\draw
	(0,1.5) arc (60:10:1.732) 
	(0,1.5) arc (120:170:1.732)
	(0,-1.5) arc (210:188:3)
	(0.84,0.3) to[out=220, in=20] (-0.37,-0.43);

\draw[line width=1.2]
	(-0.37,-0.43) to[out=140, in=-70] (-0.84,0.3);

\node at (1,0.35) {\small $A$};
\node at (0,1.7) {\small $B$};
\node at (0,-1.7) {\small $B^*$};
\node at (-1,0.35) {\small $C$};
\node at (-0.15,-0.55) {\small $D$};

\node at (0.65,0.4) {\small $\alpha$};
\node at (-0.05,1.25) {\small $\beta$};
\node at (-0.65,0.3) {\small $\gamma$};
\node at (-0.32,-0.2) {\small $\delta$};


\begin{scope}[xshift=3cm]

\draw[gray!50]
	(0,1.5) arc (150:210:3)
	(0,1.5) arc (30:-30:3)
	(0,1.5) arc (120:240:1.732);

\draw
	(0,1.5) arc (150:170:3)
	(0,1.5) arc (30:10:3)
	(0,-1.5) arc (240:196:1.732)
	(0.36,0.52) to[out=240, in=20] (-0.79,-0.5);

\draw[line width=1.2]
	(-0.36,0.52) to[out=230,in=80] (-0.79,-0.5);
		
\node at (0.55,0.65) {\small $A$};
\node at (0,1.7) {\small $B$};
\node at (0,-1.7) {\small $B^*$};
\node at (-0.5,0.65) {\small $C$};
\node at (-1,-0.55) {\small $D$};

\node at (-0.05,1.15) {\small $\beta$};
\node at (-0.25,0.4) {\small $\gamma$};
\node at (-0.6,-0.25) {\small $\delta$};
\node at (0.2,0.55) {\small $\alpha$};

\end{scope}


\begin{scope}[xshift=5cm]

\draw[gray!50]
	(0,1.5) arc (150:210:3)
	(0,1.5) arc (30:-30:3)
	(0,1.5) arc (60:-60:1.732);

\draw
	(0,1.5) arc (150:170:3)
	(0,1.5) arc (30:10:3)
	(0,-1.5) arc (-60:-16:1.732)
	(0.36,0.52) to[out=-50,in=100] (0.79,-0.5);

\draw[line width=1.2]	
	(-0.36,0.52) to[out=-60, in=160] (0.79,-0.5);
		
\node at (0.55,0.65) {\small $A$};
\node at (0,1.7) {\small $B$};
\node at (0,-1.7) {\small $B^*$};
\node at (-0.55,0.55) {\small $C$};
\node at (1,-0.55) {\small $D$};

\node at (0.2,0.5) {\small $\alpha$};
\node at (-0.05,1.15) {\small $\beta$};
\node at (-0.2,0.5) {\small $\gamma$};
\node at (0.55,-0.2) {\small $\delta$};

\end{scope}


\begin{scope}[xshift=7.5cm]

\draw[gray!50]
	(0,1.5) arc (150:210:3)
	(0,1.5) arc (30:-30:3);

\draw
	(0,1.5) arc (30:8:3) -- (0,0)
	(0,-1.5) arc (210:188:3) -- (0,0);
		
\node at (0.55,0.55) {\small $A$};
\node at (0,1.7) {\small $B$};
\node at (0,-1.7) {\small $B^*$};
\node at (-0.55,-0.55) {\small $D$};

\node at (-0.02,1.2) {\small $\theta$};
\node at (-0.25,-0.05) {\small $\theta$};
\node at (0.25,0.1) {\small $\theta$};

\node at (0.4,1) {\small $a$};
\node at (-0.4,-1) {\small $a$};
\node at (0,0.2) {\small $a$};
\node[rotate=78] at (-0.5,0.7) {\small $\pi-a$};

\end{scope}
	
\end{tikzpicture}
\caption{Proposition \ref{acd}: Geometry of quadrilateral.}
\label{acdF}
\end{figure}

Next we find the range of $\theta$, such that the corresponding quadrilateral is suitable for tiling. This means the point $D$ determined by $DB^*=a$ with $a$ given by \eqref{acdeqC} lies in the region in Figure \ref{bcdC}. The first region corresponds to $\theta\in(-\beta,0)$. We require $0<a<\pi$ in this range, which means $|\cos a|<1$. By \eqref{acdeqC}, this is the same as $\theta\in (-\frac{2}{3}\pi,\frac{2}{3}\pi)$. Then we get the range $\theta\in(-\beta,0)\cap(-\frac{2}{3}\pi,\frac{2}{3}\pi)=(-\beta,0)$ for the first region. The second region corresponds to $\theta\in (-\beta-\pi,-\beta]$. We require $0<a<\frac{1}{2}\pi$ in this range, which means $0<\cos a<1$, and is the same as $\theta\in (-\frac{1}{2}\pi,\frac{1}{2}\pi)$. Then we get the range $\theta\in(-\beta-\pi,-\beta]\cap(-\frac{1}{2}\pi,\frac{1}{2}\pi)=(-\frac{1}{2}\pi,-\beta]$ for the second region. We note that the range is non-empty only for $f\ge 10$. The third region corresponds to $\theta\in [0,\pi)$. We require $0<a<\frac{1}{2}\pi$ in this range, which is the same as $\theta\in (-\frac{1}{2}\pi,\frac{1}{2}\pi)$. Then we get the range $\theta\in[0,\pi)\cap(-\frac{1}{2}\pi,\frac{1}{2}\pi)=[0,\frac{1}{2}\pi)$ for the third region.

Taking the union of three ranges, we conclude the range $\theta\in(-\frac{1}{2}\pi,\frac{1}{2}\pi)$ for $f\ge 8$, and the range $\theta\in(-\beta,\frac{1}{2}\pi)$ for $f=6$. In terms of $\alpha=\theta+\pi$, the range is $\alpha\in (\frac{1}{2}\pi,\frac{3}{2}\pi)$ for $f\ge 8$, and $\alpha\in(\frac{1}{3}\pi,\frac{3}{2}\pi)$ for $f=6$. We remark that the quadrilateral is a rhombus, i.e., $a=b$, when $\theta=-\frac{1}{2}\beta=-\frac{2}{f}\pi$. This corresponds to $\alpha=(1-\frac{2}{f})\pi$.

The flip modification $FE_{\square}^A1$ requires $\alpha=s\beta$ or $\gamma+\delta=s\beta$. This means
\begin{align*}
\alpha=s\beta &\colon
\alpha=\tfrac{4s}{f}\pi,\;
\beta=\tfrac{4}{f}\pi,\;
\gamma+\delta=(2-\tfrac{4s}{f})\pi. \\
\gamma+\delta=s\beta &\colon
\alpha=(2-\tfrac{4s}{f})\pi,\;
\beta=\tfrac{4}{f}\pi,\;
\gamma+\delta=\tfrac{4s}{f}\pi.
\end{align*}
The quadrilateral is uniquely determined by $f,s$. Moreover, $t$ such flips require $ts\beta\le 2\pi$. For $f\ge 8$, by $\frac{1}{2}\pi<\alpha<\frac{3}{2}\pi$, the exact conditions for $s,t$ are
\begin{equation}\label{acdeqB}
\tfrac{1}{8}f<s<\tfrac{3}{8}f,\quad
ts\le \tfrac{1}{2}f.
\end{equation}
The two conditions imply $t\le 3$. For any $s,t$ satisfying the conditions, we may pick any $t$ non-overlapping copies of consecutive $s$ timezones ${\mc A}_s$ and flip them simultaneously. 

For $f=6$, by $\alpha\ne \beta$, the flip modifications require $\alpha=2\beta$. Since both flips produce degree $2$ vertex $\alpha\beta$, there is no flip modification for $f=6$.

Finally, the rearrangement tiling $RE_{\square}^A1$ requires very specific quadrilateral. In the first of Figure \ref{acdC}, we form a hexagon using six triangles with angles $\frac{1}{3}\pi$, $\frac{1}{2}\beta=\frac{2}{f}\pi$ and $\gamma=(\tfrac{2}{3}-\tfrac{2}{3f})\pi$. The quadrilateral is obtained by dividing the hexagon into two equal halves. The second of Figure \ref{acdC} is the earth map tiling $E_{\square}^A1$ constructed from this tile. The tiling has the underlying tiling by congruent triangles as indicated by the grey lines, and the triangular tiling is not edge-to-edge.
\end{proof}

\begin{figure}[htp]
\centering
\begin{tikzpicture}[scale=0.8]

\begin{scope}[yshift=-0.5cm]

\foreach \a in {0,1,2}
{
\begin{scope}[rotate=120*\a]

\draw[gray!50]
	(0,-0.8) -- (0,2)
	(0,2) -- (30:0.8) -- (-30:2);

\end{scope}
}

\draw[line width=1.2]
	(0,-0.8) -- (0,2);

\draw
	(0,2) -- (30:0.8) -- (-30:2)
	(0,-0.8) -- (-30:2);	

\node at (30:0.6) {\small $\alpha$};
\node at (-30:1.4) {\small $\beta$};
\node at (0.2,-0.6) {\small $\gamma$};	
\node at (0.2,1.2) {\small $\delta$};

\end{scope}

\foreach \a in {0,1,2}
{
\begin{scope}[scale=0.8, xshift=5 cm+1.6*\a cm]

\foreach \c in {-1,1}
\draw[gray!50, scale=\c]
	(0,0) -- (1.2,0.6) -- (1.2,1.8)
	(0.4,0.2) -- (0.4,1.8)
	(0.4,0.2) -- (-0.4,0.6) -- (-0.4,1.8)
	(1.2,0.6) -- (0.4,-0.6);

\draw
	(-0.4,1.8) -- (-0.4,0.6) -- (-1.2,-0.6) -- (-1.2,-1.8)
	(1.2,1.8) -- (1.2,0.6) -- (0.4,-0.6) -- (0.4,-1.8)
	(1.2,0.6) -- (-1.2,-0.6);

\draw[line width=1.2]
	(1.2,0.6) -- (-1.2,-0.6);

\node at (-0.15,0.55) {\small $\alpha$};
\node at (0.15,-0.55) {\small $\alpha$};
\node at (0.4,1.7) {\small $\beta$};
\node at (-0.4,-1.7) {\small $\beta$};
\node at (0.9,0.75) {\small $\gamma$};
\node at (-0.9,-0.75) {\small $\gamma$};
\node at (0.6,0.06) {\small $\delta$};
\node at (-0.6,-0.03) {\small $\delta$};

\end{scope}
}

\end{tikzpicture}
\caption{Proposition \ref{acd}: Triangular tiling underlying the tiling for \eqref{acd_avc3}.}
\label{acdC}
\end{figure}
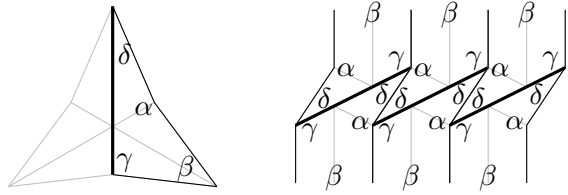

\end{document}